\DeclareMathOperator{\tw}{tw}
\newcommand{\QQ}{\mathbb{Q}}
\newcommand{\cc}{{\bf c}}
\DeclareMathOperator{\Hom}{Hom}
\DeclareMathOperator{\id}{Id}
\DeclareMathOperator{\Id}{Id}
\DeclareMathOperator{\Tr}{Tr}
\DeclareMathOperator{\sign}{sgn}
\DeclareMathOperator{\SL}{SL}
\DeclareMathOperator{\prin}{prin}
\DeclareMathOperator{\In}{in}
\DeclareMathOperator{\Ends}{Ends}
\DeclareMathOperator{\Joints}{Joints}
\DeclareMathOperator{\EE}{\mathbb{E}}
\DeclareMathOperator{\PSL}{PSL}
\DeclareMathOperator{\PGL}{PGL}
\DeclareMathOperator{\Supp}{Supp}
\DeclareMathOperator{\Li}{Li}
\DeclareMathOperator{\Ad}{Ad}
\DeclareMathOperator{\ad}{ad}
\DeclareMathOperator{\up}{up}
\DeclareMathOperator{\midd}{mid}
\DeclareMathOperator{\ord}{ord}
\DeclareMathOperator{\can}{can}
\DeclareMathOperator{\scat}{Scat}
\DeclareMathOperator{\uf}{uf}
\DeclareMathOperator{\Links}{Links}
\DeclareMathOperator{\Sk}{Sk}
\DeclareMathOperator{\SMulti}{SMulti}
\DeclareMathOperator{\wSMulti}{wSMulti}
\DeclareMathOperator{\Bang}{Bang}
\DeclareMathOperator{\Brac}{Brac}
\DeclareMathOperator{\Band}{Band}
\DeclareMathOperator{\gr}{wt}
\newcommand{\E}[1]{\langle #1 \rangle}
\newcommand{\BangE}[1]{\langle #1 \rangle_{\Bang}}
\newcommand{\BracE}[1]{\langle #1 \rangle_{\Brac}}
\newcommand{\BandE}[1]{\langle #1 \rangle_{\Band}}
\def\!{\mskip-\thinmuskip}
\let\llb=\llbracket
\let\rrb=\rrbracket
\let\bb=\mathbb
\let\rar=\rightarrow
\let\f=\mathfrak
\let\s=\mathcal
\let\wh=\widehat
\let\wt=\widetilde
\let\mr=\mathring
\def\risom{\buildrel\sim\over{\smashedlongrightarrow}}
 \def\smashedlongrightarrow{\setbox0=\hbox{$\longrightarrow$}\ht0=1.25pt\box0}
\newcommand {\kk} {\Bbbk}
\newcommand {\kt} {\kk_t}
\newcommand {\sQ} {\mathcal{Q}}
\newcommand {\SSS} {{\bf S}}
\newcommand {\sd} {{\bf s}}
\newcommand {\MM} {{\bf M}}
\newcommand {\jj} {\vec{\jmath}}
\newcommand {\iii} {{\bf i}}
\newcommand {\kkk} {\vec{k}}
\theoremstyle{plain}
	\theoremstyle{plain}
 	\newtheorem{thm}{Theorem}[section]
	\theoremstyle{plain}
 	\newtheorem{prop}[thm]{Proposition}
	\theoremstyle{plain}
 	\newtheorem{cor}[thm]{Corollary}
	\theoremstyle{plain}
 	\newtheorem{lem}[thm]{Lemma}
\def
	\theoremstyle{definition}
	\theoremstyle{definition}
	\newtheorem{defn}[thm]{Definition}
	\theoremstyle{definition}
    \newtheorem{example}[thm]{Example}
		\theoremstyle{definition}
        \newtheorem{eg}[thm]{Example}
		\theoremstyle{remark}
        \newtheorem{rem}[thm]{Remark}
  \newtheorem{ntn}[thm]{Notation}
  \newtheorem{dfn}[thm]{Definition}
  \newtheorem{conj}[thm]{Conjecture}
    \newtheorem{asm}[thm]{Assumption}
\newcommand{\Q}{{\mathbb Q}}
\newcommand{\R}{{\mathbb R}}
\newcommand{\Z}{{\mathbb Z}}
\newcommand {\A} {{\bf A}}
\newcommand{\cA}{\mathcal{A}}
\newcommand{\supp}{\operatorname{supp}}
\newcommand{\Mono}{\operatorname{Mono}}
\newcommand{\pr}{\operatorname{pr}} 
\newcommand{\DT}{\operatorname{DT}} 
\newenvironment{myproof}[1][\proofname]{\proof[#1]\mbox{}}{\endproof}
\newcommand{\cross}{\mathfrak{p}}
\newcommand{\NN}{\mathbb{N}}
\newcommand{\ZZ}{\mathbb{Z}}
\newcommand{\op}{{\mathrm{op}}}
\newcommand{\hf}{\frac{1}{2}}
\newcommand{\SMultiTag}{\SMulti^{\Box}}
\newcommand{\wSMultiTag}{\wSMulti^{\Box}}
\newcommand{\Teich}{\operatorname{Teich}}
\newcommand{\ufv}{{\operatorname{uf}}}
\newcommand{\cX}{{\mathcal X}}
\newcommand{\uC}{\underline{C}}
\renewcommand{\tw}{\operatorname{tw}}
\newcommand{\supp}{\operatorname{supp}}
	\newcommand{\tw}{{\opname tw}}
    \renewcommand{\G}{{\mathbb G}}
   \newcommand{\G}{{\mathbb G}}
    \renewcommand{\C}{{\mathbb C}}
   \newcommand{\C}{{\mathbb C}}
\title{Bracelets bases are theta bases}
\author{Travis Mandel}
\address{Department of Mathematics \\ University of Oklahoma \\ Norman, OK 73019 \\ USA}
\email{tmandel{\char'100}ou.edu}
\author{Fan Qin}
\address{School of Mathematical Sciences \\ Shanghai Jiao Tong University \\ China}
\email{qin.fan.math@gmail.com}
\dedicatory{Dedicated to Bernhard Keller and Sean Keel on the occasion of their 60th birthdays}
\thanks{When work on this project began, the first author was being supported by the Starter Grant ``Categorified Donaldson-Thomas Theory'' no. 759967 of the European Research Council.}
\begin{document}

        \begin{abstract}	
		The skein algebra of a marked surface, possibly with punctures, admits the basis of (tagged) bracelet elements constructed by Fock-Goncharov and Musiker-Schiffler-Williams.  As a cluster algebra, it also admits the theta basis of Gross-Hacking-Keel-Kontsevich, quantized by Davison-Mandel. We show that these two bases coincide (with a caveat for notched arcs in once-punctured tori).  In unpunctured cases, one may consider the quantum skein algebra. We show that the quantized bases also coincide.  Even for cases with punctures, we define quantum bracelets for the cluster algebras with coefficients, and we prove that these are again theta functions.

			On the corresponding cluster Poisson varieties (parameterizing framed $\PGL_2$-local systems), we prove in general that the canonical coordinates of Fock-Goncharov, quantized by Bonahon-Wong and Allegretti-Kim, coincide with the associated (quantum) theta functions. 
			
			Long-standing conjectures on strong positivity and atomicity follow as corollaries. Of potentially independent interest, we examine the behavior of cluster scattering diagrams under folding.
	\end{abstract}

        \maketitle
 
	\setcounter{tocdepth}{1} \tableofcontents{}
	
	\section{Introduction}
	
	\label{intro}

	\subsection{Overview}
	
	Cluster algebras are algebras with certain combinatorial structures. They were introduced by Fomin and Zelevinsky \cite{FominZelevinsky02} with the desire to understand the dual canonical basis \cite{Lusztig90, Lusztig91,Kas:crystal} and the theory of total positivity \cite{Lusztig96}. In particular, looking for good bases has always been a fundamental and important problem in the theory of cluster algebras. Well-known and important bases for cluster algebras include:
	\begin{itemize}
	\item the generic basis in the spirit of \cite{dupont2011generic}: this generalizes the dual semi-canonical basis to cluster algebras \cite{Lusztig00,GeissLeclercSchroeer10b}, see \cite{plamondon2013generic,qin2019bases} for its construction and existence;
	\item the common triangular basis introduced by \cite{qin2017triangular}: this is a Kazhdan-Lusztig type basis generalizing the dual canonical basis to cluster algebras \cite{qin2020dual}, see also \cite{BerensteinZelevinsky2012} for \textit{acyclic} cases;
	\item the theta basis introduced by \cite{gross2018canonical}: this consists of theta functions arising from the study of mirror symmetry and scattering diagrams.
 
	\end{itemize}
	
	Following \cite{FockGoncharov06a} and \cite{FominShapiroThurston08, fomin2018cluster}, one can construct cluster algebras on the (decorated) Teichmüller spaces of surfaces with marked points. In this case, algebra elements are represented by topological objects: the homotopy classes of unions of curves, which are subject to Kauffman skein relations. In addition to being of interest in Teichm\"uller theory and geometric representation theory, this surface model provides a convenient framework for visualizing the algebraic structures. There are three well-known bases represented by topological objects \cite{musiker2013bases,Thurst}: the bangles basis, the bands basis, and the bracelets basis.  We note that \cite{FockGoncharov06a} has also constructed the bracelets basis as certain coordinates on the moduli space of twisted decorated $\SL_2$-local systems, cf. \S \ref{sec:dec_SL2_moduli}.

	It has been long conjectured \cite{FockGoncharov06a,Thurst} that the bracelets basis satisfies the following properties:
	\begin{itemize}
	\item It is (strongly) positive, i.e., its structure constants under multiplication are non-negative.
	\item It is atomic, i.e., it is the extremal positive basis.
	\end{itemize}
	The strong positivity conjecture was verified by \cite{FockGoncharov06a,Thurst} in the classical case.
	
	In this paper, we prove that the bracelets basis coincides with the theta basis (in most cases) at both the classical and the quantum level. This result enables us to visualize theta functions in terms of topological objects. The above conjectures follow as consequences using the corresponding known properties of the theta bases \cite{gross2018canonical,ManAtomic,davison2019strong}.

	We will not treat other bases in this paper (see \S \ref{sub:other} for further information).

	\subsection{Background and main results}

	\subsubsection*{Background}
	Let us first recall necessary notions and constructions, see \S \ref{SkeinSection}-\S \ref{section:bracelet_skein} for details.
	
	We fix a base ring $\kk\supset \bb{Z}$ and denote $\kk_t=\kk[t^{\pm D}]$ for some sufficiently divisible positive integer $D$.
	
	Let $\SSS$ be a compact oriented surface, possibly with boundary. Let $\MM$ be a finite collection of distinct marked points in $\SSS$. Markings in the interior of $\SSS$ will be called punctures. We will consider the marked surface $\Sigma=(\SSS,\MM)$ with a mild condition that $\Sigma$ is triangulable (see \S \ref{sub:ClSk}).
	
	We consider curves on $\Sigma$ which either end at $\MM$ or are closed (meaning with empty boundary).  The closed curves are called loops, and the non-closed curves arcs.  A multicurve is a union of curves considered up to homotopy fixing the endpoints. The skein algebra $\?{\Sk}(\Sigma)$ associated to $\Sigma$ is the $\kk$-algebra generated by the multicurves, subject to the Kauffman skein relations (cf. \S \ref{SectionSkeinDef}), whose multiplication is provided by the union.  Diagrams which cannot be reduced by the skein relations are said to be simple. The (localized) skein algebra $\Sk(\Sigma)$ is the localization of $\?{\Sk}(\Sigma)$ at the boundary arcs. 
	
	For any ideal triangulation $\Delta$ of $\Sigma$, we have a seed $\sd_\Delta$ with boundary coefficients as in \cite{FockGoncharov06a,FominShapiroThurston08,muller2016skein}. Then $\Sk(\Sigma)$ is contained in the corresponding upper cluster algebra $\s{A}^{\up}(\sd_\Delta)$. A weighted simple multicurve $C=\bigsqcup w_i C_i$ is a weighted sum of non-intersecting non-homotopic components $C_i$, such that $C_i$ are simple arcs or loops, $w_i\in \Z$, and $w_i\in \NN$ whenever $C_i$ is not a boundary arc. By \cite{musiker2013bases,Thurst}, we can construct bracelet elements $\BracE{C}=\prod \BracE{w_i C_i}\in \s{A}^{\up}(\sd_\Delta)$. Here, $\BracE{w_iC_i}\coloneqq [C_i]^{w_i}$ if $C_i$ is an arc, and if $C_i$ is a loop, then $\BracE{w_iC_i}\coloneqq T_{w_i}([C_i])$ where $T_{w_i}$ is the $w_i$-th Chebyshev polynomial of the first kind (cf. \S \ref{sec:bracelet_band}). The bracelet elements form a basis for $\Sk(\Sigma)$.
	
	Following \cite{muller2016skein}, when $\Sigma$ is unpunctured, we can also introduced the quantum skein algebra $\Sk_t(\Sigma)$ by working with $\kk_t$ instead of $\kk$, using links instead of multicurves, and refining the skein relations with powers of $q=t^2$ (cf. Figure \ref{SkeinFig}). One also refines $\sd_\Delta$ into a quantum seed. Then $\Sk_t(\Sigma)$ is contained in the quantum upper cluster algebra $\s{A}^{\up}_t(\sd_\Delta)$. Quantum bracelets $\BracE{C}$ can be defined similarly \cite{Thurst}.
	
	\subsubsection*{Generalization}
	Before stating our results, we need to generalize the previous notions and constructions to $\Sigma$ possibly with punctures.
	
	Following \cite{fomin2018cluster}, we consider (generalized) tagged arcs by tagging the endings of arcs as plain  or notched. Then we extend $\Sk(\Sigma)$ to the extended skein algebra $\Sk^{\Box}(\Sigma)$ by adjoining tagged arcs (\S \ref{S:tagged_sk}). Using techniques involving ramified covering spaces, we prove that tagged arcs are always contained in $\s{A}^{\up}(\sd_\Delta)$ even if they do not correspond to cluster variables (Propositions \ref{prop:compound} and \ref{SkAPropPun}).  Our definition of the generalized tagged arcs in \S \ref{S:tagged_sk} is in terms of lambda lengths between horocycles as in \cite{FT}, but following a suggestion from G. Muller, we later show that $\Sk^{\Box}(\Sigma)$ can equivalently be defined using a nice collection of generators and relations; cf. \S \ref{subsec:local_digon} and \S \ref{subsub:tagged-skein}.
 
   In \S \ref{Sec:tag-brac}, we generalize weighted simple multicurves to weighted tagged simple multicurves by including compatible tagged arcs as components.  We then use this to generalize the definition of bracelets to a definition of tagged bracelets.
	
	We say two seeds are similar of they have the same principal exchange matrices (i.e., they only differ in their frozen parts). For $\sd$ similar to $\sd_{\Delta}$, \cite{fomin2018cluster} shows how $\sd$ can be understood in terms of laminated Teichm\"uller space.  In the case of principal coefficients, \cite{musiker2013bases} gives a construction of bracelets bases extending the constructions described above.  On the other hand, the correction technique in cluster algebras \cite{Qin12,qin2017triangular} gives a simple way to relate bases for similar seeds (see \S \ref{sec:similarity}), and we thus characterize the bracelets for any $\sd$ similar to $\sd_{\Delta}$ (\S \ref{sec:class-brac-coeff}).
 
    For surfaces $\Sigma$ with punctures, it is unclear how to quantize $\Sk(\Sigma)$ directly (cf. Remark \ref{rem:no-q}),\footnote{A quantization of skein algebras is proposed in the ongoing work \cite{Shen2023skein}. Special cases were treated in \cite{roger2014skein}.} but quantization of the upper cluster algebra becomes possible if we replace $\sd_{\Delta}$ by a similar seed satisfying the Injectivity Assumption (e.g., we may take $\sd\coloneqq\sd_{\Delta}^{\prin}$).  The correction techniques mentioned above apply to theta functions in the quantum setting as well.

	So suppose that we have fixed a quantum seed $\sd$ similar to $\sd_{\Delta}$.  It remains to construct the quantum bracelet $\BracE{C}$ in the quantum upper cluster algebra $\s{A}^{\up}_t(\sd)$ for any weighted tagged simple multicurve $C$ (\S \ref{sec:q_tag_bracelet}). For any loop $L$, $\BracE{L}$ is constructed from a quantum trace \cite{Allegretti2015duality,BW} (cf. \S \ref{sec:qbracelet_loop}), or using a cut-and-paste technique (\S \ref{sec:cut}). For a tagged arc $\gamma$, $\BracE{\gamma}$ can usually be constructed as a quantum cluster variable. But for $\gamma$ a notched arc on a once-punctured closed surface, $\BracE{\gamma}$ is defined via the Donaldson-Thomas transformation (when the genus is at least $2$). A more direct and natural definition for this case would be desirable, cf. Remark \ref{rem:quantization_tagged_arc}.
	
	\subsubsection*{Main results}

	We will show that (quantum) tagged bracelets are (quantum) theta functions, with a caveat for notched arcs in the once-punctured torus. 
	\begin{thm}[{Theorem \ref{thm:punctured_bracelet_theta}}]
		Let $C=\bigsqcup_{1\leq i\leq s} w_i C_i$ denote any weighted tagged simple multicurve.
		\begin{enumerate}[label=(\roman*), ref=(\roman*)]
			\item If no component $C_i$ is a notched arc in a once-punctured torus, then $\BracE{C}$ coincides with the quantum theta function $\vartheta_{g(C)}$ in $\s{A}^{\up}_t(\sd)$, where $g(C)$ denotes the $g$-vector (leading degree) of $\BracE{C}$.
		\item In the coefficient-free classical setting, $\BracE{C}=4^k\vartheta_{g(C)}$ where $k$ is the sum of the weights $w_i$ for all notched arcs $C_i$ in once-punctured closed torus components.
		\end{enumerate}
	\end{thm}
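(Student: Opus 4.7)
The plan is to reduce the statement to the case of a single component and then identify each component with a theta function, using folding to pass from unpunctured to punctured surfaces and isolating the once-punctured torus as the exceptional case.

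First, I would establish multiplicativity. For a weighted tagged simple multicurve $C=\bigsqcup w_iC_i$, the bracelet factors as $\BracE{C}=\prod_i\BracE{w_iC_i}$ by definition, and its leading degree satisfies $g(C)=\sum_i w_i g(C_i)$. Because the components $C_i$ are pairwise non-intersecting, I would expect their $g$-vectors to be simultaneously contained in a common chamber-like region of the scattering diagram, forcing the identity $\vartheta_{g(C)}=\prod_i\vartheta_{w_i g(C_i)}$ in $\s{A}^{\up}_t(\sd)$. Granting this compatibility, the theorem reduces to three single-component situations: a single tagged arc that is (or is similar to) a cluster variable, a single notched arc in a once-punctured closed surface, and a power $\BracE{wL}$ of a loop.

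Tagged arcs that appear as cluster variables in some seed mutation-equivalent to $\sd$ are theta functions by \cite{gross2018canonical} and its quantum refinement; remaining tagged arcs associated with $\sd$ are handled by transporting the identification between similar seeds via the correction technique of \S \ref{sec:similarity}. For a notched arc in a once-punctured closed surface of genus at least $2$, I would use that the Donaldson-Thomas transformation exchanges plain and notched taggings and carries theta functions to theta functions with a prescribed shift of $g$-vector; combining this with the already-established plain-arc case gives the identification. For a power of a loop, $\BracE{wL}=T_w([L])$ with $T_w$ the Chebyshev polynomial of the first kind, and I would expand this in the initial quantum cluster via the quantum trace of Bonahon-Wong and Allegretti-Kim (as alluded to in \S \ref{sec:qbracelet_loop}), then match the resulting Laurent polynomial against the broken-line expansion of $\vartheta_{g(wL)}$. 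This matching is cleanest in the unpunctured case, where the surface scattering diagram admits a tractable description.

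The final and most delicate step is to transfer the identification from unpunctured surfaces to punctured ones using the folding of cluster scattering diagrams advertised in the abstract. A surface with a puncture arises from folding an unpunctured double cover, and I would have to verify that bracelets, cluster variables, and theta functions are all compatible with folding at the level of broken-line counts. The main obstacle I anticipate lies exactly here, in tracking the effect of folding on broken lines that cross the fixed locus of the involution; precisely this phenomenon should account for the once-punctured torus anomaly. To isolate the factor $4^k$ in part (ii), I would compute directly on the once-punctured torus, expanding $\BracE{\gamma}$ for a notched arc $\gamma$ and comparing with the classical $\vartheta_{g(\gamma)}$; an extra symmetry of the torus doubles the relevant broken-line contribution at each notched endpoint, yielding the factor $4=2\cdot 2$ per notched arc and hence $4^k$ by multiplicativity. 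In all other cases the folding correction is trivial, so part (i) follows from the unpunctured case together with the component-wise reduction above.
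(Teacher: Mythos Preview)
Your outline has the right coarse shape but contains two genuine gaps where the proposed mechanism would not work.

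First, the multiplicativity step. You write that since the $C_i$ are pairwise non-intersecting, their $g$-vectors should lie in a common chamber, forcing $\vartheta_{g(C)}=\prod_i\vartheta_{w_ig(C_i)}$. This is false as stated: the $g$-vector of a weighted loop lies on a wall (indeed in the closure of infinitely many chambers, cf.\ \eqref{eq:loop_deg_monoid}), not in the interior of any cluster chamber, so there is no chamber containing all the $g(C_i)$ in general. Establishing this product formula is one of the hardest parts of the paper (Lemma~\ref{LemDisjoint} and \S\ref{Disj-union-section}): it requires constructing, via iterated Dehn twists (Lemma~\ref{lem-sigma-beta}), chambers of $\f{D}_\ell$ arbitrarily close to the cone spanned by the $g(C_i)$, controlling the broken lines ending there (Lemma~\ref{loop-broken-pm}), and then invoking the structure-constants formula. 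For punctured surfaces the paper instead uses that arcs are cluster monomials and that loops commute with compatible tagged arcs (Lemma~\ref{lem:commute_arc_loop}), then applies Proposition~\ref{prop:cluster-commute-theta}. Your one-line assertion skips all of this.

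Second, your folding mechanism is geometrically backwards. You say ``a surface with a puncture arises from folding an unpunctured double cover,'' but a cover of an unpunctured surface is unpunctured. The paper's passage from unpunctured to punctured surfaces (for loops) goes the other way: one \emph{cuts} the punctured surface along arcs (Lemma~\ref{lem:triangulation}, \S\ref{sec:cut}) to obtain an unpunctured one, then glues back using the Gluing Lemma (Lemma~\ref{glue}) and unfreezing (Lemma~\ref{lem:positive_unfreeze_theta}). Folding and covering spaces are used only for once-punctured \emph{closed} surfaces (\S\ref{sec:closed_surface}), where one passes to a cover with \emph{more} punctures so that notched arcs become cluster variables there. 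The factor $4$ for the once-punctured torus does not come from ``doubled broken-line contributions at notched endpoints'' but from an extra wall $(H,f_H)$ that appears in the folded scattering diagram $\?{\f{D}^{\sd^{\prin}_{\wt{\Delta}}}}$ (Lemma~\ref{lem:H}, Theorem~\ref{thm:1p-bracelet}); it is pinned down by the explicit relation $\gamma^{\diamond}\cdot\gamma=(2[L])^2$ compared against Zhou's computation of $\vartheta_{-g(\gamma)}$.
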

	
    The extra factors in the once-punctured torus case can be interpreted as follows.  The theta functions we consider are those associated to a scattering diagram $\f{D}=\f{D}^{\sd_{\Delta}^{\prin}}$ as in \cite{gross2018canonical} (setting frozen variables equal to $1$ after computing the theta functions).  One can define a new scattering diagram $\f{D}'\coloneqq \f{D}\sqcup \{(H,f_H)\}$ consisting of a single additional wall with support a single hyperplane $H$ and central scattering function $f_H$, cf. Theorem \ref{thm:tag_sk_up_cl_alg}.  The bracelets for the once-punctured torus coincide with the theta functions associated to this $\f{D}'$.  We speculate that $\f{D}'$ may be the stability scattering diagram \cite{bridgeland2017scattering} for the associated quiver (with a non-degenerate potential); cf.  \cite{qin2019bases}\cite{mou2019scattering}.  Upcoming work will investigate this by examining the behavior of stability scattering diagrams under folding \cite{chen2023comparison}.
 
     We obtain the following important consequences concerning skein algebras and the atomicity of their bases (see \S \ref{sec:positivity_atlas}).

	\begin{thm}[{Theorem \ref{thm:sk_unpunct_basis}}]
		When $\Sigma$ is unpunctured, the (untagged) quantum bracelets form the atomic basis for the quantum skein algebra $\Sk_t(\Sigma)$ with respect to the cluster atlas.
	\end{thm}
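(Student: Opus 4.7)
The plan is to combine the identification of bracelets with quantum theta functions established in Theorem~\ref{thm:punctured_bracelet_theta} with the known atomicity of the quantum theta basis.

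First, since $\Sigma$ is unpunctured, every weighted simple multicurve $C$ has only untagged components, so Theorem~\ref{thm:punctured_bracelet_theta}(i) applies and gives $\BracE{C} = \vartheta_{g(C)}$ in the ambient quantum upper cluster algebra $\s{A}^{\up}_t(\sd)$. Since the quantum bracelets already form a basis of $\Sk_t(\Sigma)$ (from the constructions recalled in \S\ref{section:bracelet_skein}), this identifies $\Sk_t(\Sigma)$, as a $\kk_t$-submodule of $\s{A}^{\up}_t(\sd)$, with the span of the theta functions whose indexing $g$-vectors arise from simple multicurves on $\Sigma$.

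Next, I would invoke the atomicity of the quantum theta basis on $\s{A}^{\up}_t(\sd)$ with respect to the cluster atlas, which is due to Davison--Mandel \cite{davison2019strong} (building on the classical atomicity of \cite{ManAtomic}). Concretely, each quantum theta function $\vartheta_g$ is extremal among cluster-atlas-positive elements: any expression of $\vartheta_g$ as a non-negative $\kk_t$-linear combination of such positive elements must have summands that are non-negative scalar multiples of $\vartheta_g$ itself. Strong positivity of the theta basis also tells us that each $\vartheta_{g(C)}$ is itself positive in every cluster chart, so the bracelets lie in the positive cone.

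Finally, I would transfer atomicity from $\s{A}^{\up}_t(\sd)$ down to $\Sk_t(\Sigma)$. Since the cluster atlas on $\Sk_t(\Sigma)$ is, by construction, the restriction of the atlas on the ambient upper cluster algebra, any element which is positive in the atlas sense inside $\Sk_t(\Sigma)$ remains positive in $\s{A}^{\up}_t(\sd)$. Extremality of $\BracE{C}=\vartheta_{g(C)}$ in the larger algebra therefore descends immediately to extremality within $\Sk_t(\Sigma)$. The main obstacle is verifying that the quantum atomicity statement cited from the literature applies under the coefficient and quantization conventions used here (in particular for the seed $\sd$ similar to $\sd_\Delta$ used to quantize, rather than for $\sd_\Delta$ itself); once this alignment is made using the correction techniques of \S\ref{sec:similarity}, the reduction to theta functions closes the argument.
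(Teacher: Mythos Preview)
Your overall strategy is right, and it matches the paper's: identify bracelets with theta functions, then invoke atomicity of the theta basis. But there is a genuine gap in your citation of the atomicity result.

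The Davison--Mandel theorem (Proposition~\ref{AtomicProp} here) proves that quantum theta functions are atomic with respect to the \emph{scattering atlas}, not the cluster atlas. These are generally different: cluster-atlas positivity is a weaker condition (fewer charts), so there are more cluster-positive elements, and a theta function could in principle decompose as a sum of cluster-positive elements that are not scattering-positive. Indeed, the paper notes (footnote after Proposition~\ref{AtomicProp}) that for general cluster algebras the cluster-atomic elements need not even be linearly independent. So you cannot simply cite \cite{davison2019strong} for cluster-atlas atomicity. The missing ingredient is Proposition~\ref{gdense}: for unpunctured surfaces the cluster complex is dense in $M_{\bb{R}}$, which forces the two notions of positivity to coincide (cf.\ Corollary~\ref{cor:surface-theta-atomic}). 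Once you add this, your argument closes.

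Two smaller points. First, your worry about working with a seed $\sd$ similar to $\sd_\Delta$ is unnecessary here: for unpunctured $\Sigma$ the seed $\sd_\Delta$ itself satisfies the Injectivity Assumption (Lemma~\ref{lem:sdLambda-comp}), so the quantization is directly on $\sd_\Delta$ and no correction techniques are needed. Second, the paper streamlines your ``transfer'' step by invoking the full Fock--Goncharov conjecture (Theorem~\ref{fFG-surfaces}) to get $\Sk_t(\Sigma)=\s{A}_t^{\up}(\sd_\Delta)$ outright, so there is no proper subalgebra to descend to; your descent argument is correct but becomes redundant.
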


        Theorem \ref{thm:punctured_bracelet_theta} shows that our quantum tagged bracelets are theta functions for punctured surfaces as well (except with the once-punctured torus).  The following summarizes some of our main results for the classical setting.
 
	\begin{thm}[{See Theorem \ref{thm:tag_sk_up_cl_alg} for details}]
		The following statements are true.
		\begin{enumerate}[label=(\roman*)]
			\item The bracelets form the atomic basis for the skein algebra $\Sk(\Sigma)$ with respect to the ideal triangulation atlas.
			
			\item The tagged bracelets form the atomic basis for the extended skein algebra $\Sk^{\Box}(\Sigma)$ with respect to the scattering atlas\footnote{The scattering atlas is the same as the extended cluster atlas in \cite[Examples after Conj. 5.7]{FG1}.  We will also refer to this as the tagged triangulation atlas. 
 If no component of $\Sigma$ is a once-punctured closed surface, then this also agrees with the cluster atlas.  See \S \ref{PosSub} for more on the cluster atlas and \S \ref{sec:positivity_atlas} for other atlases.}.
			
			\item When no component of $\Sigma$ is a once-punctured torus, $\Sk^{\Box}(\Sigma)$ coincides with the middle cluster algebra $\s{A}^{\midd}(\sd_\Delta)$. Moreover, if every component of $\Sigma$ has non-empty boundary, then the full Fock-Goncharov conjecture holds (cf. Theorem \ref{fFG-surfaces}).	
		\end{enumerate}
	\end{thm}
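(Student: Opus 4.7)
The plan is to derive all three parts by combining the identification of (tagged) bracelets with theta functions established in Theorem \ref{thm:punctured_bracelet_theta} with known properties of theta bases from \cite{gross2018canonical,ManAtomic,davison2019strong}. The common pattern is that (tagged) bracelets equal theta functions (or positive scalar multiples thereof, in the once-punctured torus case), and theta bases are atomic with respect to the appropriate atlases, so these properties transfer to bracelets.

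For part (i), the untagged bracelets are identified, without any $4^k$ rescaling (since $k=0$ when no notched arcs are present), with theta functions in $\s{A}^{\up}(\sd_\Delta)$ whose $g$-vectors correspond to simple multicurves. Atomicity of the theta basis with respect to the cluster atlas, established in \cite{ManAtomic}, then transfers directly. The main check is that the $g$-vectors of untagged bracelets index precisely the theta functions belonging to $\Sk(\Sigma)$, which follows from the correspondence between simple multicurves and their integer tropical parameters. Part (ii) proceeds analogously, but with the finer scattering atlas in place of the cluster atlas and with $\f{D}'$ (carrying the extra wall $(H,f_H)$ in the once-punctured torus case) in place of $\f{D}$. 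The rescalings by $4^k$ do not disturb atomicity, since positive rescalings preserve extremality of basis elements.

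For part (iii), suppose first that no component of $\Sigma$ is a once-punctured torus, so Theorem \ref{thm:punctured_bracelet_theta} gives a clean identification of tagged bracelets with theta functions for $\f{D}$. The middle cluster algebra $\s{A}^{\midd}(\sd_\Delta)$ is the $\kk$-span of theta functions whose $g$-vector expansions are finitely supported, and one checks that the $g$-vectors of tagged bracelets sweep out exactly this parameter set; combined with the already-established containment $\Sk^{\Box}(\Sigma)\subseteq \s{A}^{\up}(\sd_\Delta)$ from Propositions \ref{prop:compound} and \ref{SkAPropPun}, equality follows. For the full Fock-Goncharov conjecture under the non-empty boundary hypothesis, I would use that the positive boundary coefficients endow the scattering diagram with a controlled global structure forcing $\s{A}^{\midd}(\sd_\Delta)=\s{A}^{\up}(\sd_\Delta)$, so the theta functions form a genuine global basis. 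The main obstacle will be carefully tracking the parametrization in the closed once-punctured cases, where notched arcs are constructed via the Donaldson-Thomas transformation; checking that the $g$-vectors of these tagged bracelets still exhaust the expected lattice of theta parameters is the delicate combinatorial core of the argument.
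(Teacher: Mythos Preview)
Your overall strategy --- identify (tagged) bracelets with theta functions via Theorem~\ref{thm:punctured_bracelet_theta}, then transfer the known atomicity of theta functions --- matches the paper's approach for part~(ii) and the first half of part~(iii). The paper's proof of the equality $\Sk^{\Box}(\Sigma)=\s{A}^{\midd}(\sd_\Delta)$ runs essentially as you describe: $\Sk^{\Box}(\Sigma)$ is by construction generated by tagged bracelets, these are theta functions, and the $g$-vector parametrization (via shear coordinates, Proposition~\ref{prop:shear_coord}) together with $\Theta^{\midd}=M$ (Theorem~\ref{fFG-surfaces}) gives the reverse containment. For part~(ii) the paper invokes Corollary~\ref{cor:surface-theta-atomic} (ultimately resting on \cite{ManAtomic,davison2019strong}), exactly as you propose.

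However, there is a genuine gap in your argument for part~(i). The ideal triangulation atlas is strictly coarser than the cluster (equivalently scattering or tagged-triangulation) atlas whenever $\Sigma$ has punctures: it contains only the charts coming from \emph{ideal} triangulations, not all tagged ones. Universal positivity with respect to a coarser atlas is a \emph{weaker} condition, so there are \emph{more} positive elements, and hence a given element has more opportunities to decompose nontrivially. Atomicity with respect to the coarser atlas is therefore \emph{not} implied by atomicity with respect to the finer one, and the result of \cite{ManAtomic} does not ``transfer directly''. The paper handles this via a separate argument: Proposition~\ref{Theta-pos} gives an atomicity criterion relative to a restricted $\Theta_{\bb{R}}$-atlas, and Lemma~\ref{lem:untag_skein_atom_basis} applies it by observing that the $g$-vectors of untagged bracelets cut out a cone $\Theta_{\bb{R}}$ whose associated atlas is precisely the ideal triangulation atlas. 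This step requires that $\Theta_{\bb{R}}\subset\?{\Xi}$ and that $\s{A}^{\Theta}=\Sk(\Sigma)$ is closed under multiplication, both of which are checked separately.

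Your treatment of the full Fock-Goncharov conjecture in part~(iii) is also too vague to stand as a proof. The paper's argument (Theorem~\ref{fFG-surfaces}) uses two specific external inputs: convexity of $M^{\oplus}$ for surfaces with nonempty boundary from \cite[Thm.~6.8]{geiss2020generic}, and density of the cluster complex from \cite[Thm.~1.2]{yurikusa2020density} (Proposition~\ref{gdense}), which together feed into the criterion of Proposition~\ref{FGc2}. Your phrase ``positive boundary coefficients endow the scattering diagram with a controlled global structure'' does not identify either ingredient.
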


	 Since the theta basis has positive structure constants (i.e. strong positivity), the above bases are strongly positive as well.  We thus prove D. Thurston's Conjecture \cite[Conj. 4.20]{Thurst} on the strong positivity of the quantum bracelets basis for unpunctured surfaces; cf. Theorem \ref{Thurst-conj}---our proof applies to all (not necessarily triangulable) unpunctured surfaces with non-empty boundary.

    In Corollaries \ref{cor:tag_sk_up_cl_alg} and \ref{cor:q_cluster_skein_equal}, we show that the non-localized skein algebras $\?{\Sk}_t(\Sigma)$ and $\?{\Sk}^{\Box}(\Sigma)$ have bases consisting of the bracelets associated to weighted tagged simple multicurves whose weights are all non-negative.

    \subsubsection{Canonical coordinates for moduli of framed $\PGL_2$-local systems} Given a seed $\sd_{\Delta}$ associated to an ideal triangulation of a marked surface as above, Fock and Goncharov \cite{FockGoncharov06a} interpret the corresponding cluster Poisson variety as a moduli space $\s{X}_{\Sigma}$ of framed $\PGL_2$-local systems.  They then construct canonical coordinates in terms of features like traces and eigenvalues of the monodromies of these local systems.  Allegretti and Kim \cite{Allegretti2015duality} develop a quantum analog of these canonical coordinates using Bonahon-Wong's \cite{BW} quantum trace map.  We review these constructions in \S \ref{sec:cluster_poisson}.
 
 Universal positivity for these quantum canonical bases has been proved in \cite{cho2020laurent}, and strong positivity is known in the classical setting by \cite[Thm. 12.2.5]{FockGoncharov06a}.  Up to this point, atomicity in the classical and quantum setting (\cite[Conj. 12.2]{FockGoncharov06a} and \cite[Conj. 1.5]{musiker2013bases}) and strong positivity in the quantum setting \cite[Conj. 12.4.5]{FockGoncharov06a} have been open.  These conjectures follow as immediate corollaries of the following (and of the analogous known results for theta bases):
 \begin{thm}[Theorem \ref{thm:ThetaX}]
The (quantum) canonical coordinates on $\s{X}_{\Sigma}$, as constructed in \cite{FockGoncharov06a,Allegretti2015duality}, are precisely the (quantum) theta bases.
 \end{thm}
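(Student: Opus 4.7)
The strategy is to leverage the main identification of (quantum) bracelets with (quantum) theta functions on the $\s{A}$-side (Theorem \ref{thm:punctured_bracelet_theta}) and to transport it to the $\s{X}$-side via the cluster ensemble map. To set up the comparison I would first recall the canonical coordinate construction: for a simple loop $L$ on $\Sigma$, the Fock-Goncharov canonical coordinate on $\s{X}_\Sigma$ is built from the trace of the monodromy around $L$, and the Bonahon-Wong/Allegretti-Kim quantum version replaces this classical trace with the quantum trace map into the skein algebra. For arcs ending at punctures, the canonical coordinate is built from the framing eigenvalue data at those punctures. In every case these recipes are manifestly parallel to the (tagged) bracelet constructions entering $\s{A}^{\up}_t(\sd_\Delta^{\prin})$, arising from the very same topological picture of weighted simple multicurves on $\Sigma$.

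Next, I would use the cluster ensemble map $p\colon \s{A}_{\prin}\to \s{X}$ to pass between the two sides. The key claim is that the canonical coordinate on $\s{X}_\Sigma$ attached to a (tagged) weighted simple multicurve $C$ equals the $p^*$-pullback of (a monomial multiple of) the bracelet $\BracE{C}$ in $\s{A}^{\up}_t(\sd_\Delta^{\prin})$, since both are defined by the same quantum trace formulas applied in different coordinate rings. Combined with Theorem \ref{thm:punctured_bracelet_theta}, this identifies the canonical coordinates on $\s{X}_\Sigma$ with $p^*$-pullbacks of theta functions from $\s{A}^{\up}_t(\sd_\Delta^{\prin})$.

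Finally, I would invoke the general compatibility between the $\s{A}_{\prin}$- and $\s{X}$-side theta bases: theta functions on $\s{X}$ pull back along $p^*$ to theta functions on $\s{A}_{\prin}$, with the corresponding change of indexing lattice (tropical points of $\s{X}^{\vee}$ versus tropical points of $\s{A}^{\vee}_{\prin}$). This is standard in the GHKK framework classically and was adapted to the quantum setting by Davison-Mandel. Together with the previous step, this yields the desired equality of canonical coordinates and theta functions on $\s{X}_\Sigma$.

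The main obstacle I anticipate will be the careful bookkeeping of lattices, $g$-vectors, and normalizations across the ensemble map, especially in the quantum setting where braiding depends on a subtle choice of coefficients. A secondary subtle point is the treatment of arcs at punctures: on the $\s{X}$-side the framing introduces extra data, and one must verify that the tagging conventions entering $\BracE{\gamma}$ match exactly the Fock-Goncharov/Allegretti-Kim prescription. In the once-punctured torus case one must further check that the $4^k$ correction appearing in Theorem \ref{thm:punctured_bracelet_theta} does not contaminate the $\s{X}$-statement, presumably because the relevant coordinates on $\s{X}$ are loops rather than notched arcs, or because the offending factor is absorbed into the kernel of $p^*$.
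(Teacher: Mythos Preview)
Your core strategy---relate the $\s{X}$-side canonical coordinates to $\s{A}$-side bracelets via the ensemble map, invoke Theorem~\ref{thm:punctured_bracelet_theta}, and then use that $\s{X}$-theta functions are exactly those $\s{A}^{\prin}$-theta functions in the image of $\xi$---is correct and is essentially the paper's approach. However, several points of your proposal are garbled and one is a genuine misconception.

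First, the direction of the pullback is stated backwards. The map is $\xi:\s{X}^{\up}\hookrightarrow\s{A}^{\prin,\up}$ (see \eqref{xi} and Definition~\ref{X-Aprin}); one applies $\xi$ to the $\s{X}$-canonical coordinate $\bb{I}_t(\ell)$ to obtain the bracelet $\BracE{L}^{\prin}=\xi(\bb{I}_t(L))$ in $\s{A}^{\prin}$ (this is literally the \emph{definition} of the quantum bracelet for a loop, cf.~\eqref{Lprin-def} and \eqref{eq:q_loop_bracelet}). You wrote it as ``the canonical coordinate equals the $p^*$-pullback of the bracelet,'' which is backwards.

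Second, and more substantively: the $\s{X}$-canonical coordinates are parametrized by \emph{bounded} laminations $\s{A}_\Sigma(\bb{Z}^t)$ (loops and arcs with endpoints on $\partial\SSS$), not by tagged simple multicurves. There are no notched arcs or framing-eigenvalue data on the $\s{X}$-side; those belong to the $\s{A}$-side, parametrized by unbounded laminations. Consequently your worry about the $4^k$ correction from the once-punctured torus is misplaced---that correction arises only for notched arcs, which simply do not appear here. The arc-laminate case is handled trivially because $\bb{I}_t$ of a bounded arc-laminate is by definition a cluster monomial, hence already a theta function.

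As a minor comparison: the paper does not go directly through $\s{A}^{\prin}$. For unpunctured $\Sigma$ it uses $p^*=\omega_1:\s{X}^{\up}\to\s{A}^{\up}$ with only boundary coefficients (Lemma~\ref{PullbackLem}), where the Injectivity Assumption holds; the quantum case then follows from bar-invariance and positivity (Lemma~\ref{lem:1theta-implies-qtheta}). Punctured surfaces are reduced to the unpunctured case by cutting along arcs (Lemma~\ref{lem:triangulation}) and invoking the State Sum Property of $\Tr_q$ (Lemma~\ref{lem:Xglue}). Your route via $\xi$ into $\s{A}^{\prin}$ is arguably more direct, since $\xi$ is always injective and the equality $\xi(\bb{I}_t(L))=\BracE{L}^{\prin}$ holds by definition, so once Lemma~\ref{lem:loop_puncture_bracelet} is in hand the loop case is immediate.
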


     \subsection{Structure of the paper}

     \S \ref{sec:review}-\S \ref{sec:bracelet_cluster_alg} serve to introduce the relevant background, definitions, and lemmas.  Additional background and lemmas for the cluster Poisson setting are introduced in \S \ref{sec:cluster_poisson}.  The bulk of the proofs for our main results are in \S \ref{S:BracTheta}-\S \ref{sec:general_surface} and \S \ref{sec:X-q-can}-\S \ref{sec:ThetaX}. We outline this in more detail below.
     
     \subsubsection{Background and setup}

    In \S \ref{sec:review}, we review basics of cluster algebras and their quantizations. Positivity properties (for the cluster atlas) are reviewed in \S \ref{PosSub}.
	
    In \S \ref{SkeinSection}, we review skein algebras and cluster algebras on marked surfaces following \cite{FominShapiroThurston08,muller2016skein}.  We also extend to tagged skein algebras as in \cite{fomin2018cluster,MusikerSchifflerWilliams09}, understood here in terms of functions on Teichm\"uller space.  Of possibly independent interest, we prove that if one adjoins tagged arcs to $\Sk(\Sigma)$ to produce $\Sk^{\Box}(\Sigma)$, then $\Sk^{\Box}(\Sigma)$ will contain all generalized tagged arcs as well (Proposition \ref{prop:compound}). In fact, we prove in \S \ref{subsec:local_digon} that generalized tagged arcs satisfy a ``local digon relation,'' and in \S \ref{subsub:tagged-skein} (as an application of our main results) we find that the usual skein algebra relations plus the local digon relation can be used to give a relatively simple alternative construction of $\Sk^{\Box}(\Sigma)$.  We  review laminations, the associated tropical coordinates, and Dehn twists in \S \ref{sec:shear_coord}.

In \S \ref{section:bracelet_skein}, we review known construction of the bracelets and quantum bracelets as in \cite{musiker2013bases,Thurst}. Then we extend the classical bracelets in the punctured cases by allowing contributions from tagged arcs (as suggested in \cite{musiker2013bases}).

In \S \ref{Section_Scat} we switch to discussing scattering diagrams and theta functions as in \cite{gross2018canonical,davison2019strong}.  Proposition \ref{indep} is a new general result saying that theta functions corresponding to points in the \textit{closure} of the cluster complex are linearly independent (and a bit more than this), thus generalizing \cite[Thm. 7.20]{gross2018canonical}.  We consider some special properties for cluster algebras from surfaces---\cite[Thm. 1.2]{yurikusa2020density} says that the cluster complex is dense everywhere or in a half-space for cluster algebras from surfaces (cf. Proposition \ref{gdense}), and we apply this to prove Theorem \ref{fFG-surfaces} regarding the full Fock-Goncharov conjecture for cluster algebras from surfaces.  In \S \ref{sec:positivity_atlas}, we discuss properties of theta functions, both in general and in the case of cluster algebras from surfaces.  Positivity properties for bracelets are discussed in \S \ref{sec:positivity_skein_bracelets}.

In \S \ref{sec:seed_change}, we examine the behavior of theta functions under various manipulations of seeds.  \S \ref{sec:similarity} relates theta functions for similar seeds, i.e., seeds with the same principal (or non-frozen) part.  This is applied in \S \ref{sec:gluing_frozen_vertices} to show how theta functions behave when frozen vertices are glued together.  In \S \ref{sec:unfreezing} we look at conditions implying that a theta function remains a theta function when an index is unfrozen (Lemmas \ref{lem:positive_unfreeze_theta} and \ref{lem:same_support_theta_func}).

In \S \ref{sec:bracelet_cluster_alg}, we define bracelets for cluster algebras from surfaces with arbitrary coefficients (\S \ref{sec:class-brac-coeff}) and their quantizations (\S \ref{sec:q_tag_bracelet}).  For loops, these definitions rely on constructions from \S \ref{sec:cluster_poisson} (the cluster Poisson setting), and for tagged arcs we define the bracelets to be the corresponding cluster variables (except with notcched arcs in once-punctured closed surfaces, which are more complicated).  In \S \ref{sec:cut} we prove a useful lemma about cutting surfaces into simpler pieces (Lemma \ref{lem:triangulation}) and we use this to give another (equivalent) characterization of the quantum bracelet elements associated to loops.

 \subsubsection{Outline of the main proofs}
    The bulk of our proofs lies in \S \ref{S:BracTheta}-\S \ref{sec:cluster_poisson}, plus some details in the appendices.  Here we give a rough outline of these arguments. We begin with \S \ref{S:BracTheta}, where we assume that $\Sigma$ is unpunctured.
    
    \textbf{The Gluing Lemma (\S \ref{Sec:glue}):} Let $\Sigma$ be a marked surface, and let $\Sigma'$ be another marked surface obtained from $\Sigma$ by gluing pairs of boundary arcs together.  Assume $\Sigma$ and $\Sigma'$ are both unpunctured.  Our Gluing Lemma (Lemma \ref{glue}) says that if a bracelet $\BracE{C}\in \Sk_t(\Sigma)$ is a theta function, then the corresponding element $\BracE{C'}\in \Sk_t(\Sigma')$ is also a theta function.

    To prove this, we note that gluing two boundary arcs together corresponds to identifying two frozen indices, followed by unfreezing the glued index (this process is called ``amalgamation'' in \cite[\S 19.1]{goncharov2019quantum}).  As we mentioned above, the operation of gluing frozen indices is shown to act on theta functions in a simple and natural way in \S \ref{sec:gluing_frozen_vertices}.  Then Lemma \ref{lem:positive_unfreeze_theta} shows that the operation of unfreezing also respects theta functions under a certain positivity assumption, and this positivity assumption holds for bracelets as an immediate consequence of known results for classical bracelets in unpunctured surfaces (their universal positivity).

    \textbf{Annular and Non-annular loops (\S \ref{sec:annular_loops} - \S \ref{sec:non-annular-loop}):} In light of the Gluing Lemma, to show that a weighted loop $\BracE{wL}\in \Sk_t(\Sigma)$ is a theta function, we can cut $\Sigma$ into simpler surfaces first.  In fact, this cutting allows us to reduce to the following cases:
    \begin{enumerate}[label=\roman*]
    \item (Annular loops) Lemma \ref{ArcLoop1Lem} shows that we can often cut as in Figure \ref{AnnCut} to reduce to the case where $\Sigma$ is an annulus with one marking on each boundary component and no other markings, and $L$ is the unique simple loop in $\Sigma$.  In these cases, $\sd_{\Delta}$ is the seed associated to the Kronecker quiver (with frozen vertices).  The claim that $\BracE{wL}$ is a theta function in these cases is checked in Lemma \ref{AnnLoop} (also cf. Example \ref{Ann-broken}) using explicit computations.
    \item (Non-annular loops) By Lemma \ref{non-annular-L} and cutting as in Figure \ref{DLoop}, the other cases to consider are where $\Sigma$ has a single boundary component with a single marked point, and $L$ is homotopic to this boundary component.  We show that $\BracE{L}$ is a theta function in \S \ref{subsub:nonannular1} by using the skein relations to multiply by an arc and then applying properties of bracelets (e.g., positivity; cf. Lemma \ref{lem:q-univ-pos}) and of theta functions (e.g., atomicity and bar-invariance).
    
    To deal with $\BracE{wL}$, we first prove some lemmas on the equivariance of the mapping class group action on $\Sk_t(\Sigma)$ and on the theta functions; cf. \S \ref{subsub:mcg}.  Then in \S \ref{subsub:Dehn}, we repeatedly apply Dehn twists to $\Delta$ to obtain a triangulation $\Delta'$ whose corresponding chamber $C_{\Delta'}$ in the scattering diagram is arbitrarily close to $g(wL)$ (using a result of \cite{yurikusa2020density}).  By analyzing the Laurent expansions of $\vartheta_{g(wL)}$ in these chambers, we deduce that the theta functions satisfy the desired Chebyshev recursion.  The equality $\BracE{wL}=\vartheta_{g(wL)}$ follows.
    \end{enumerate}

    \textbf{Unions of compatible bracelets (\S \ref{Disj-union-section}):} In \S \ref{thm:bracelet-theta-no-punct}, we consider an arbitrary bracelet $\BracE{C}$ for $C=\bigsqcup_i w_iC_i$.  Using Dehn twists as before, we construct a chamber $C_{\Delta'}$ of the scattering diagram which is arbitrarily close to all $g(w_iL_i)$.  We then choose a base-point $\sQ\in C_{\Delta'}$ and apply arguments similar to those from the non-annular case to prove that $\vartheta_{g(C)}=\prod_i \vartheta_{g(w_iC_i)}$.  This completes the proof that the bracelets and theta bases agree for unpunctured surfaces.

    \textbf{Punctured surfaces (\S \ref{sec:theta_punctured}):} We extend to allow for punctured surfaces in \S \ref{sec:theta_punctured}.  First, we show in Lemma \ref{lem:loop_puncture_bracelet} that bracelets consisting only of weighted loops are theta functions.  This is essentially by using the Gluing Lemma to reduce to the unpunctured setting, but we first have to prove the classical universal positivity of loops in punctured surfaces so that we can apply Lemma \ref{lem:positive_unfreeze_theta} (unfreezing respects theta functions, assuming positivity).  In Lemma \ref{lem:commute_arc_loop}, we prove in the quantum setting that loops commute with tagged arcs which they don't intersect---here we use new results on quantum DT-transformations (\S \ref{sec:DT}) to deal with doubly-notched arcs.  Proposition \ref{prop:cluster-commute-theta} tells us that a product of $t$-commuting theta functions, one of which is a cluster monomial, will still be a theta function (up to a power of $t$).  It then follows that bracelets and theta functions coincide in most cases.

    \textbf{Once-punctured closed surfaces:} In these cases, products of compatible notched arcs and weighted loops still give (tagged) bracelets, but since the notched arcs in these cases are not cluster variables, we need another argument to show that they are theta functions.  To do this, we first prove new results on quiver folding and scattering diagrams in \S \ref{GenFoldApp}.  We then apply this to finite covering spaces of $\Sigma$ in \S \ref{sec:closed_surface}.  The scattering diagram for a once-punctured closed surface $\Sigma$ is thus related to ``slice'' of the scattering diagram for a closed surface $\wt{\Sigma}$ with multiple punctures, with the former giving the usual theta functions and the latter giving the bracelets.  We find that these two scattering diagrams agree except in the case where $\Sigma$ is a once-punctured torus---in this case the scattering diagram from the covering space gives the one extra wall $(H,f_H)$.

    We put our findings together in \S \ref{sec:skein_atomic_bases} to prove a number of theorems for bases of skein algebras.

    \textbf{Cluster Poisson Varieties (\S \ref{sec:cluster_poisson})}    In \S \ref{sec:mod_PGL2}, we review Fock-Goncharov's construction of canonical coordinates for the moduli space $\s{X}$ of framed $\PGL_2$-local systems in \S \ref{sec:mod_PGL2}---these spaces $\s{X}$ are the cluster Poisson varieties associated to the cluster algebras from surfaces.  Then in \S \ref{sec:dec_SL2_moduli} we review the construction of moduli of twisted decorated $\SL_2$-local systems as in \cite{FockGoncharov06a}---this gives another interpretation of the surface skein algebras we consider.  We review the quantization $\s{X}_q$ of $\s{X}$ in \S \ref{sec:Chek-Fock}, and the quantum canoncial coordinates of \cite{Allegretti2015duality} in \S \ref{sec:X-q-can}.  Finally, in \S \ref{sec:ThetaX}, we prove that the (quantum) canonical coordinates coincide with the (quantum) theta functions.  For unpunctured surfaces, this is essentially because the cluster $\s{A}$ and $\s{X}$-varieties nearly coincide when the injectivity assumption holds, so we reduce to the usual surface skein algebra setting.  Punctured cases then follow by a cutting-and-gluing argument.  

    \subsubsection{The appendices}

    In \S \ref{GenFoldApp}, we review the notions of covering and folding of seeds (including for skew-symmetrizable seeds), and we look at how scattering diagrams behave under covering of seeds (Theorem \ref{thm:folding-D}) and folding of seeds (Corollary \ref{cor:gtr} and Lemma \ref{lem:restriction-strong}).
    
    We apply this to covering spaces of surfaces in \S \ref{sec:closed_surface}.  For once-punctured closed surfaces, this allows us to relate notched arcs to theta functions

    Finally, in \S \ref{sec:DT} we prove some features and useful lemmas regarding the action of the quantum DT-transformation on theta functions and bracelets.

    \subsection{Relation to other works}\label{sub:other}

	The original construction of a quantum bracelets basis (for the case of a closed torus), including the application of the Chebyshev polynomials, was \cite{frohman2000skein}.

	The bangles basis is conjectured to coincide with the generic basis. This has been proved in \cite{geiss2020generic} for unpunctured surfaces. The bands basis is conjectured to provide an analog of the dual canonical basis \cite{Thurst}, i.e. it should coincide with the common triangular basis.

	\cite{bousseau2020strong} proves that the (untagged) quantum bracelet elements are theta functions for the once-punctured torus and the four-punctured sphere, both for the cluster algebra and the associated cluster Poisson algebra, thus imply Thurston's strong positivity conjecture in these cases.  Bousseau's arguments utilize the characterization of the quantum scattering diagram in terms of certain higher-genus log Gromov-Witten invariants \cite{Bou3}.  Bousseau also suggests a mathematical physics interpretation for the agreement between bracelets and theta bases.

	 \cite{QW-ewp,QW-Khovanov} studies positivity of quantum bracelets and other bases using categorification. Building on this, \cite{queffelec2022gl2} proves strong positivity for the quantum bands basis for surfaces without marked points.

	\cite{FeT,CT} extended bracelets bases (for unpunctured cases) to orbifolds, with the help of unfolding. It is possible that our main result (bracelets $=$ theta functions) extends to orbifolds as well.

    Cluster structures and cluster Poisson structures have been defined on moduli of twisted decorated $G$-local systems and framed $G^{\vee}$-local systems, respectively, for much more general Lie groups $G$ with Langlands dual $G^{\vee}$ \cite{FockGoncharov06a,LeIan,GonSh,goncharov2019quantum}---the case considered in this work is $G=\SL_2$, $G^{\vee}=\PGL_2$. It would be very interesting to prove a geometric/representation-theoretic characterization of the theta bases for other groups $G$.  This more general setting is significantly more complicated, but many of our tools do generalize naturally, including the equivariance of the mapping class group action and the relationship between unfolding and covering spaces.  Moreover, upcoming results in \cite{CMMM} will use properties of tropical theta functions to prove a general version of our Gluing Lemma.

	\subsection{Acknowledgements} The second author was inspired to study algebras on surfaces by attending the conference ``Interactions between Representation Theory and Homological Mirror Symmetry'' in Leicester, UK, in 2019.  He would like to thank the organizers, Pierre-Guy Plamondon and Sibylle Schroll, for inviting him. This work began in October 2019 when the second author visited the first at the University of Edinburgh, and they would like to thank the university for its hospitality.  We are grateful to Dylan Allegretti, Peigen Cao, Ben Davison, Emily Gunawan, Min Huang, Greg Muller, and Gregg Musiker for many enlightening conversations.

	\section{Review of cluster algebras}\label{sec:review}
	
	We will work over a fixed base ring $\kk$.  Often one takes $\kk$ to be $\ZZ$, $\QQ$, or $\bb{C}$, but unless otherwise stated, any commutative ring containing $\ZZ$ as a $\bb{Z}$-subalgebra will work equally well.

	\label{Cluster_Section}
	
	\subsection{Seeds}
	
	A (skew-symmetric) cluster algebra is determined by the data of a \textbf{seed} $\sd$,
	by which we mean the following collection of data: 
	\begin{align*}
		\sd=(N,I,E=\{e_{i}|i\in I\},F,\omega)
	\end{align*}
	where $N$ is a lattice of finite rank $r$, $I$ is an index-set
	with $|I|=r$ (whose elements are called vertices), $E$ is a basis for $N$ indexed by $I$, $F$ is a
	subset of $I$, and $\omega$ is a $\bb{Q}$-valued skew-symmetric\footnote{We shall not consider the more general ``skew-symmetrizable'' cases except in Appendix \S \ref{GenFoldApp}.}
	form on $N$ such that $\omega(e_{i},e_{j})\in\bb{Z}$ whenever $i$
	or $j$ is in $I_{\ufv}:=I\setminus F$. The elements $e_{i}\in E$ with $i\in F$ (resp. $i\in I_{\ufv}$)
	are called the \textbf{frozen} vectors (resp. \textbf{unfrozen} vectors). We denote $E_{\uf}:=\{e_{i}|i\in I\setminus F\}$,
	and we let $N_{\uf}\subset N$ denote the span of $E_{\uf}$. As is
	common practice in the cluster literature,\footnote{Beware, the pairing called $B$ in \cite{davison2019strong} is actually the pairing $\omega=B^T$ here.} we shall denote the transpose
	of $\omega$ by $B$. Denote $b_{ij}=B(e_i,e_j)$. The matrix $(b_{ij})_{i,j\in I_\ufv}$ is called the principal $B$-matrix. If $\sd$ is not clear from context, we may
	decorate the data with the subscript $\sd$, e.g., as in $N_{\sd}$
	or $e_{\sd,i}$.
	\begin{rem}
		\label{QS} Consider a seed $\sd$ as above for which $\omega$ is
		$\bb{Z}$-valued on all of $N$. The data of such a seed is equivalent
		to the data of a finite quiver $Q$ without loops ($1$-cycles) or
		oriented $2$-cycles, and with some vertices labelled as frozen. Given
		$\sd$, the vertices $Q_{0}$ of the quiver correspond to the elements
		of $I$, with $F$ corresponding to the frozen vertices. For $i\in I$,
		let $V_{i}$ denote the corresponding vertex. Then the number of arrows
		from $V_{i}$ to $V_{j}$ is $B(e_{i},e_{j})$, where the arrows go
		the reverse direction if $B(e_{i},e_{j})<0$. Similarly, given the
		quiver $Q$, one recovers a seed $\sd$ with lattice $N=\bb{Z}^{Q_{0}}$
		by inverting this correspondence.  
	\end{rem}
	
	We next introduce some examples which shall be built upon throughout
	the paper. 
	\begin{example}
		\label{A2ex} The $A_{2}$-quiver $1\leftarrow2$ corresponds to a
		seed with $N\cong\bb{Z}^{2}$, $I=\{1,2\}$, $E=\{e_{1},e_{2}\}$,
		$F=\emptyset$, and $\omega(e_{1},e_{2})=1$. 
	\end{example}

	\begin{example}
		\label{QuivEx} Consider the quiver $Q$ of Figure \ref{QAn}. For
		the corresponding seed $\sd$, we have $N=\bb{Z}^{4}$, $I=\{1,2,3,4\}$,
		$E=\{e_{i}\}_{i\in I}$ the standard basis, $F=\{3,4\}$, and $B$
		(the transpose of $\omega$) given in the basis $E$ by the matrix
		\begin{align*}
			B=\left(\begin{matrix}0 & -2 & 1 & 1\\
				2 & 0 & -1 & -1\\
				-1 & 1 & 0 & 0\\
				-1 & 1 & 0 & 0
			\end{matrix}\right).
		\end{align*}
	\end{example}
	
	\begin{figure}[htb]
		\global\long\def\svgwidth{150pt}
\begingroup%
  \makeatletter%
  \providecommand\color[2][]{%
    \errmessage{(Inkscape) Color is used for the text in Inkscape, but the package 'color.sty' is not loaded}%
    \renewcommand\color[2][]{}%
  }%
  \providecommand\transparent[1]{%
    \errmessage{(Inkscape) Transparency is used (non-zero) for the text in Inkscape, but the package 'transparent.sty' is not loaded}%
    \renewcommand\transparent[1]{}%
  }%
  \providecommand\rotatebox[2]{#2}%
  \newcommand*\fsize{\dimexpr\f@size pt\relax}%
  \newcommand*\lineheight[1]{\fontsize{\fsize}{#1\fsize}\selectfont}%
  \ifx\svgwidth\undefined%
    \setlength{\unitlength}{68.22899951bp}%
    \ifx\svgscale\undefined%
      \relax%
    \else%
      \setlength{\unitlength}{\unitlength * \real{\svgscale}}%
    \fi%
  \else%
    \setlength{\unitlength}{\svgwidth}%
  \fi%
  \global\let\svgwidth\undefined%
  \global\let\svgscale\undefined%
  \makeatother%
  \begin{picture}(1,0.54346638)%
    \lineheight{1}%
    \setlength\tabcolsep{0pt}%
    \put(0,0){\includegraphics[width=\unitlength,page=1]{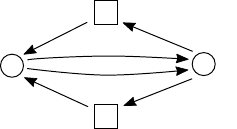}}%
    \put(0.03004147,0.2417988){\color[rgb]{0,0,0}\makebox(0,0)[lt]{\lineheight{1.25}\smash{\begin{tabular}[t]{l}$2$\end{tabular}}}}%
    \put(0.83936507,0.24959202){\color[rgb]{0,0,0}\makebox(0,0)[lt]{\lineheight{1.25}\smash{\begin{tabular}[t]{l}$1$\end{tabular}}}}%
    \put(0.42949717,0.02685807){\color[rgb]{0,0,0}\makebox(0,0)[lt]{\lineheight{1.25}\smash{\begin{tabular}[t]{l}$4$\end{tabular}}}}%
    \put(0.42934661,0.46709055){\color[rgb]{0,0,0}\makebox(0,0)[lt]{\lineheight{1.25}\smash{\begin{tabular}[t]{l}$3$\end{tabular}}}}%
  \end{picture}%
\endgroup%
 \caption{A quiver with frozen vertices represented as boxes.}\label{QAn}
	\end{figure}
	
	Given a seed $\sd$ as above, let $M:=N^{*}=\Hom(N,\bb{Z})$, and
	let $\{e_{i}^{*}|i\in I\}\subset M$ be the dual basis to $E$. Let
	$N_{\bb{Q}}:=N\otimes\bb{Q}$ and $M_{\bb{Q}}:=M\otimes\bb{Q}$. Denote
	\begin{align}
		\omega_{1}:N_{\bb{Q}}\rar M_{\bb{Q}},\quad\omega_{1}(n)=\omega(n,\cdot).\label{om1}
	\end{align}
	Note that $\omega_{1}|_{N_{\uf}}$ has image in $M$. We denote $M_{\uf}:=\omega_{1}(N_{\uf})$.  It will also be useful to denote $M_F:=\bb{Z}\langle e_i^*|i\in F\rangle$.  We may denote the dual pairing between $N$ and $M$ by $\langle\cdot,\cdot\rangle:N\times M\rar \bb{Z}$.
	
	\subsection{Compatible pairs}
	
	Let $\Lambda$ be a $\bb{Q}$-valued skew-symmetric form on $M$.
	Denote $\Lambda_{1}(m):=\Lambda(m,\cdot)\in N_{\bb{Q}}$ and $\Lambda_{2}(m):=\Lambda(\cdot,m)$.
	As in \cite[(47)]{davison2019strong} (originally due to \cite{BerensteinZelevinsky05} with a different sign; cf. Remark \ref{comp-mat-rmk}), we say that $(\sd,\Lambda)$ is a \textbf{compatible pair} if 
\begin{align}
	\Lambda_{2}(\omega_{1}(e_{i}))=de_{i}\label{Lambda-B}
\end{align}
for each $i\in I\setminus F$ and for some fixed\footnote{In \cite{davison2019strong}, the factor $d$ is always taken to be $1$. But the generalization
	to the setup here is straightforward---e.g.,
	dividing $\Lambda$ by $d$ while substituting $t^{1/D}\mapsto t^{d/D}$
	in \eqref{qtor} preserves the quantum torus algebra, and similar
	substitutions are easily made for all other constructions we consider.  We will mostly avoid the more
	general skew-symmetrizable setups like \cite{BerensteinZelevinsky05} where $d$ is replaced by different numbers
	$d'_{i}$ for different $i$ (cf. \S \ref{sec:skew-symmetrizable}).\label{foot:d}
} multiplier $d\in\bb{Q}_{>0}$  --- for quantum cluster algebras from
surfaces as we shall consider, we will always have $d=4$. 
Note that $\Lambda$ being compatible with $\sd$ implies 
\begin{align}
	\Lambda(\omega_{1}(e_{i}),\omega_{1}(e_{j}))=d\omega(e_i,e_j)=dB(e_{j},e_{i})\label{Lambda-v1v2}
\end{align}
whenever $i$ or $j$ is in $I\setminus F$.

\begin{example}
	\label{A2ex2} Consider again the seed $\sd$ associated to the $A_{2}$-quiver
	$1\leftarrow2$ as in Example \ref{A2ex}. Since there are no frozen
	indices, \eqref{Lambda-B} uniquely determines $\Lambda$ up to the
	factor $d$. Explicitly, we have 
	\begin{align*}
		\omega_{1}(e_{1})=e_{2}^{*}\quad\mbox{and}\quad\omega_{1}(e_{2})=-e_{1}^{*},
	\end{align*}
	so using \eqref{Lambda-v1v2}, we see that $\Lambda$ satisfies 
	\begin{align*}
		\Lambda(e_{1}^{*},e_{2}^{*})=\Lambda(-\omega_1(e_2),\omega_1(e_1)) = -d\omega(e_2,e_1)=d.
	\end{align*}
\end{example}

\begin{rem}
	\label{comp-mat-rmk} It is common to express \eqref{Lambda-B} in
	terms of matrix multiplication, viewing $\omega$ and $B$ as matrices
	in the basis $E$ and $\Lambda$ as a matrix in the dual basis $\{e_{i}^{T}\}_{i\in I}$.  Notice that the vector $\omega_{1}(e_{i})$, viewed as a column vector, agrees with $(e_i^T\omega)^T=\omega^T e_i=Be_{i}$
	in this matrix convention.  Thus, $\Lambda_2(\omega_1(e_i))=\Lambda B e_i$, and so the compatibility condition \eqref{Lambda-B}
	becomes 
	\begin{align}\label{comp-matrix}
		\Lambda B e_i = de_i
	\end{align}
	for all $i\in I\setminus F$.  In other words, the columns of $\Lambda B$ indexed by $I\setminus F$ agree with the corresponding columns of the diagonal matrix $d\Id_{|I|\times |I|}$. 
	
	On the other hand, taking the transpose of \eqref{comp-matrix} and using the skew-symmetry of $\Lambda$ yields $$e_i^T B^T \Lambda = -de_i^T.$$  I.e., the rows of $B^T\Lambda$ indexed by $I\setminus F$ agree with the corresponding rows of the diagonal matrix $-d\Id_{|I|\times |I|}$.  This is the compatibility condition of \cite[Def. 3.1]{BerensteinZelevinsky05} except for the minus sign.  Because of this sign, our quantum parameter $t$ will correspond to $q^{-1/2}$ in \cite{BerensteinZelevinsky05}.
\end{rem}

\begin{example}
	\label{AnComp} Consider the seed $\sd$ of Example \ref{QuivEx}.
	With respect to the dual basis to $E$, a compatible $\Lambda$ is
	given by
	\begin{align*}
		\Lambda=\left(\begin{matrix}0 & 2 & 0 & 0\\
			-2 & 0 & 0 & 0\\
			0 & 0 & 0 & 0\\
			0 & 0 & 0 & 0
		\end{matrix}\right).
	\end{align*}
	Indeed, we have 
	\begin{align*}
		\Lambda B=\left(\begin{matrix}4 & 0 & -2 & -2\\
			0 & 4 & -2 & -2\\
			0 & 0 & 0 & 0\\
			0 & 0 & 0 & 0
		\end{matrix}\right),
	\end{align*}
	so the condition as in Remark \ref{comp-mat-rmk} is satisfied for
	$d=4$.
\end{example}

We will often wish to impose the following assumption:

\begin{asm}[The Injectivity Assumption]\label{inj-assumption}
	One says the Injectivity Assumption holds for $\sd$ if $\omega_{1}|_{N_{\uf}}$
	is injective.
\end{asm}

Notice the assumption is equivalent to the matrix $(B(e_i,e_k))_{i\in I,k\in I_\ufv}$ being full rank, and this is also equivalent to the existence of a compatible
$\Lambda$; see \cite{GekhtmanShapiroVainshtein03}\cite{GekhtmanShapiroVainshtein05}.

\subsection{Principal coefficients}

\label{prinsub}

A common approach for dealing with cases where the Injectivity Assumption
fails is to work with \textbf{principal coefficients}. For this, given
a seed $\sd$ as above, one associates another seed $\sd^{\prin}$
defined as follows: 
\begin{itemize}
	\item $N_{\sd^{\prin}}\coloneqq N\oplus M$. 
	\item $I_{\sd^{\prin}}$ is the disjoint union of two copies of $I$. We
	shall call them $I_{1}$ and $I_{2}$ to distinguish between them. 
	\item $E_{\sd^{\prin}}\coloneqq\{(e_{i},0)\colon i\in I_{1}\}\cup\{(0,e_{i}^{*})\colon i\in I_{2}\}$. 
	\item $F_{\sd^{\prin}}\coloneqq F_{1}\cup I_{2}$, where $F_{1}$ is $F$
	viewed as a subset of $I_{1}$. 
	\item $\omega_{\sd^{\prin}}((n_{1},m_{1}),(n_{2},m_{2}))\coloneqq\omega(n_{1},n_{2})+m_{2}(n_{1})-m_{1}(n_{2})$. 
\end{itemize}
We may also denote $N_{\sd^{\prin}}$ as $N^{\prin}$, $\omega_{\sd^{\prin}}$ as $\omega^{\prin}$, and similarly for the other pieces of data.  We denote the analog of $\omega_{1}$ for $\omega_{\sd^{\prin}}$
by $\omega_{1}^{\prin}\colon N^{\prin}\rar M^{\prin}\coloneqq(N^{\prin})^{*}=M\oplus N$,
identifying $N=N^{**}$. I.e., 
\begin{align}
	\omega_{1}^{\prin}\colon(n,m)\mapsto\omega_{\sd^{\prin}}((n,m),\cdot)=(\omega_{1}(n)-m,n).\label{pi1prin}
\end{align}
The form $\omega_{\sd^{\prin}}$ is unimodular (i.e., has determinant
$1$) on $N_{\sd^{\prin}}$. In particular, the Injectivity Assumption
always holds for $\sd^{\prin}$.

Given a seed $\sd$ and the corresponding principal coefficients seed
$\sd^{\prin}$, define 
\begin{align}
	\rho:M^{\prin}=M\oplus N\rar M,\qquad(m,n)\mapsto m.\label{rhodef}
\end{align}
Note that if $(\sd,\Lambda)$ is a compatible pair, then one can pull
$\Lambda$ back by $\rho$ to give a compatible pair $(\sd^{\prin},\rho^{*}\Lambda)$
with the same value of $d$. Here, 
\begin{align*}
\rho^{*}\Lambda((m_{1},n_{1}),(m_{2},n_{2}))=\Lambda(m_{1},m_{2}).
\end{align*}

In general, even if the Injectivity Assumption fails for $\sd$ and
so there is no compatible $\Lambda$, one can still define a compatible
pair $(\sd^{\prin},\Lambda^{\prin})$ with $d=1$ \ 
\begin{align}
	\Lambda^{\prin}((m_{1},n_{1}),(m_{2},n_{2})):=\omega(n_2,n_1)-m_2(n_1)+m_1(n_2).\label{LambdaPrin}
\end{align}
Indeed, 
\begin{align*}
	\Lambda^{\prin}(\omega_{1}^{\prin}(n_{1},m_{1}),\omega_{1}^{\prin}(n_{2},m_{2}))&=\Lambda^{\prin}((\omega_1(n_1)-m_1,n_1),(\omega_1(n_2)-m_2,n_2))
	\\
	&=\omega(n_2,n_1)-(\omega_1(n_2)-m_2)(n_1)+(\omega_1(n_1)-m_1)(n_2) \\
	&=\omega(n_2,n_1)-\omega_1(n_2)(n_1)+\omega_1(n_1)(n_2)+m_2(n_1)-m_1(n_2) \\
	&=\omega(n_1,n_2)+m_2(n_1)-m_1(n_2) \\
	&=\omega^{\prin}((n_{1},m_{1}),(n_{2},m_{2}))
\end{align*}
as in \eqref{Lambda-v1v2}.  Equivalently, $\Lambda^{\prin}=(B^{\prin})^{-1}$.

\begin{eg}\label{eg:A_2_prin_Lambda}
	Building on Example \ref{A2ex}, we construct $I_{\sd^{\prin}}=I_{1}\sqcup I_{2}=\{1,2\}\sqcup\{1',2'\}$
	from two copies of $I$. We have $N^{\prin}=N\oplus M=\Z e_{1}\oplus \Z e_{2} \oplus\Z e_{1}^{*}\oplus \Z e_{2}^{*} $,
	$M^{\prin}=M\oplus N=\Z e_{1}^{*}\oplus \Z e_{2}^{*}\oplus \Z e_{1}\oplus \Z e_{2}$. In the basis $\{e_{1},e_2,e_1^*,e_{2}^{*}\}$
	of $N^{\prin}$, the form $\omega_{\sd^{\prin}}$ takes the following
	matrix form:
	\begin{eqnarray*}
		\omega_{\sd^{\prin}} & = & \left(\begin{array}{cccc}
			0 & 1&1&0\\
			-1 & 0&0&1\\
			-1 & 0&0&0\\
			0 & -1&0&0
		\end{array}\right).
	\end{eqnarray*}
	
	In the basis $\{e_1^*,e_{2}^{*},e_{1},e_2\}$ of $M^{\prin}$, we have $\omega_{1}^{\prin}(e_{1})=\left(\begin{array}{c}
		0\\
		1\\
		1\\
		0
	\end{array}\right)$, $\omega_{1}^{\prin}(e_{2})=\left(\begin{array}{c}
		-1\\
		0\\
		0\\
		1
	\end{array}\right)$. The matrix for $\Lambda^{\prin}$ is
	\begin{eqnarray*}
		\Lambda^{\prin}  =  (\omega^{\prin})^{-T} =  \left(\begin{array}{cccc}
			0 & 0&1&0\\
			0 & 0&0&1\\
			-1 & 0&0&-1\\
			0 & -1&1&0
		\end{array}\right).
	\end{eqnarray*}
	\end{eg}

\subsection{Seed mutations}\label{sec:seed-mut}

Given a seed $\sd=(N,I,E=\{e_{i}\}_{i\in I},F,\omega)$, the (positive)
\textbf{mutation} of $\sd$ with respect to $j\in I\setminus F$ is
the seed $\mu_{j}(\sd)\coloneqq(N,I,E'=\{e_{i}'\}_{i\in I},F,\omega)$,
where the vectors $e_{i}'$ are defined by 
\begin{align}
	e'_{i}\coloneqq\mu_{j}(e_{i})\coloneqq\left\{ \begin{array}{lr}
		e_{i}+\max(0,\omega(e_{i},e_{j}))e_{j} & \mbox{if \ensuremath{i\neq j}~}\\
		-e_{j} & \mbox{if \ensuremath{i=j}.}
	\end{array}\right.\label{eprime}
\end{align}
We extend this to compatible pairs $(\sd,\Lambda)$ by defining $\mu_{j}(\sd,\Lambda)=(\mu_{j}(\sd),\Lambda)$,
i.e., $\Lambda$ is unchanged by mutation\footnote{The matrix representations
	of $B$ and $\Lambda$ with respect to $E$ and the dual basis do change as the bases change, but the
	coordinate-independent versions used here do not.}. Let $f_i=e_i^*$, $i\in I$, denote the dual basis for $E$ in $M$. As in \cite[(8)]{FG1}, mutation of this dual basis is given by
\begin{align}
	f'_{i}\coloneqq\mu_{j}(f_{i})\coloneqq\left\{ \begin{array}{lr}
		f_{i}& \mbox{if \ensuremath{i\neq j}~}\\
		-f_{j}+\sum_{s\in I}\max(0,-\omega(e_{j},e_{s}))f_{s}& \mbox{if \ensuremath{i=j}.}
	\end{array}\right.\label{fprime}
\end{align}

\begin{rem}
	
	For simplicity, we do not define a signed mutation (see \cite[(2.1)]{qin2019bases}).
	Positive mutations alone suffice for calculating upper cluster algebra
	elements, see \cite[Rmk. 2.5]{gross2013birational}.
	
\end{rem}

Following \cite[(1.23)]{gross2018canonical} and \cite[\S 4.5]{davison2019strong}, we also consider the piecewise-linear ``tropical'' transformations $T_j^{\sd}:M_{\bb{R}}\rar M_{\bb{R}}$ defined by
\begin{align}\label{eq:diagram_mutation}
T_j^\sd(m)=m+\max(0,\langle e_j,m\rangle) \omega_1(e_j).
\end{align}
By design, $T_j^{\sd}(\omega_1(e_{\sd,i})) = \omega_1(e_{\mu_j(\sd),i})$ for all $i\neq j$, while $T_j^{\sd}(\omega_1(e_{\sd,j})) = -\omega_1((e_{\mu_j(\sd),j}))$.  We also note that $T_j^{\sd}(f'_i)=f'_i$ for all $i$.

Consider a sequence $\jj=(j_{1},\ldots,j_{s})$ of elements $j_{i}$
in $I\setminus F$.  For each $k=1,\ldots,s$, let $\jj_k$ be the truncated tuple $(j_1,\ldots,j_{k-1})$.  We recursively define $\sd_{\jj}$ in terms of $\sd_{\jj_{s-1}}$ as follows.  Take $\sd_{()}\coloneqq \sd$, and denote
\begin{align}\label{Tjj}
T_{\jj}\coloneqq T_{j_s}^{\sd_{\jj_s}} \circ \cdots \circ T_{j_1}^{\sd_{\jj_1}}:M_{\bb{R}}\rar M_{\bb{R}}.
\end{align}
We now define $\sd_{\jj}$ to be the seed $(N,I,E_{\jj},F,\omega)$ where $E_{\jj}$ is the dual basis to 
\begin{align*}
\{f_{\sd_{\jj},i}\coloneqq T_{\jj}(f_i)|i\in I\}.
\end{align*}
In particular, $\sd_j\coloneqq\sd_{(j)}$ is equal to $\mu_j(\sd)$ for each $j\in I\setminus F$.

\subsection{Quantum torus algebras}

\label{qtoralg}

Let there be given a compatible pair $(\sd,\Lambda)$. Choose and fix a positive integer $D$ such that $\Lambda$ and $\omega$ are $\frac{1}{D}\Z$-valued. Let $t$ denote a formal parameter and define $\kk_{t}:=\kk[t^{\pm 1/D}]$. We consider the \textbf{quantum torus algebra} 
\begin{align}
	\kk_{t}^{\Lambda}[M]:=\kk_{t}[z^{m}|m\in M]/\langle z^{u}z^{v}=t^{\Lambda(u,v)}z^{u+v}|u,v\in M\rangle.\label{qtor}
\end{align}
We may leave the superscript $\Lambda$ out of the notation. Note
that setting $t=1$ recovers the classical torus algebra 
\begin{align*}
	\kk[M]:=\kk[z^{m}|m\in M]/\langle z^{u}z^{v}=z^{u+v}|u,v\in M\rangle.
\end{align*}
For any submonoid $M'\subset M$, we may define $\kk_{t}[M']\subset\kk_{t}[M]$
by restricting to those exponents which lie in $M'$.

Let $M^{\oplus}$ denote the $\bb{Z}_{\geq0}$-span of $\{\omega_{1}(e_{i})|i\in I\setminus F\}\subset M_{\uf}$,
and let $M^{+}:=M^{\oplus}\setminus\{0\}$. Suppose the injectivity
assumption holds, so $M^{\oplus}$ is contained in a strongly convex\footnote{For $L$ a lattice, a cone $\sigma\subset L_{\bb{R}}:=L\otimes \bb{R}$ is called \textbf{strongly convex} if it  is convex and contains no line through the origin.  By an abuse of terminology, we may also describe $\sigma\cap L$ as being strongly convex.}
cone. Then let 
\[
\kk_{t}\llb M\rrb\coloneqq\kk_{t}[M]\otimes_{\kk_{t}[M^{\oplus}]}\kk_{t}\llb M^{\oplus}\rrb,
\]
i.e., $\kk_{t}\llb M\rrb$ is spanned over $\kk_{t}$ by formal sums
of the form $\sum_{v\in M^{\oplus}}a_{v}z^{m+v}$ for various $m\in M$ and
coefficients $a_{v}\in\kt$.

Given a subset $P$ of $M$ (typically $P=M^{+}$) and any $k\in\bb{Z}_{\geq1}$,
we denote 
\begin{align}\label{eq:k_copy_subsets}
	kP=\{m_{1}+\ldots+m_{k}|m_{1},\ldots,m_{k}\in P\}.
\end{align}

We also consider (Langlands) dual versions of these concepts: 
as in \cite[\S 3.1]{FG1}, even without the Injectivity Assumption, we can define the dual quantum torus algebra 
\begin{align*}
	\kk_{t}^{\omega}[N]:=\kk_{t}[z^{n}|n\in N]/\langle z^{u}z^{v}=t^{\omega(u,v)}z^{u+v}|u,v\in N\rangle.
\end{align*}
As before, we may leave the superscript $\omega$ out of the notation.  Under the Injectivity Assumption, \eqref{Lambda-v1v2} implies there is a natural $\kk_{t}$-algebra embedding (denoted $\omega_1$ by abuse of notation) defined by
\begin{align}\label{omega1}
	\omega_1:\kk_{t}^{\omega}[N_{\uf}]\hookrightarrow \kk_{t}^{\Lambda}[M], \qquad t^{\frac{1}{D}} \mapsto t^{\frac{d}{D}}, \quad z^n\mapsto z^{\omega_1(n)}.
\end{align}

As in \cite{bridgeland2017scattering,davison2019strong}, let $N^{\oplus}$ denote the $\bb{Z}_{\geq0}$-span of $\{e_{i}|i\in I\setminus F\}$,
so $M^{\oplus}=\omega_{1}(N^{\oplus})$, and let $N^{+}:=N^{\oplus}\setminus\{0\}$.
Then define 
\begin{align*}
	\kk_{t}\llb N\rrb:=\kk_{t}[N]\otimes_{\kk_{t}[N^{\oplus}]}\kk_{t}\llb N^{\oplus}\rrb
\end{align*}
so $\kk\llb N\rrb$ is spanned over $\kk_{t}$ by formal sums of the
form $\sum_{v\in N^{\oplus}}a_{v}z^{n+v}$ for $n\in N$ and coefficients
$a_{v}\in\kk_{t}$. Given a subset $P$ of $N$ (typically $P=N^{+}$)
and any $k\in\bb{Z}_{\geq1}$, we denote 
\[
kP=\{n_{1}+\ldots+n_{k}|n_{1},\ldots,n_{k}\in P\}.
\]

The quantum torus algebras $\kk_t[N]$ and $\kk_t[M]$ admit involutive $\kk$-algebra automorphisms, called the \textbf{bar involutions}, induced by $t\mapsto t^{-1}$ and $z^v\mapsto z^v$ for all $v$ in $N$ or $M$, respectively.  These involutions naturally extend to the completions $\kk_t \llb N\rrb$ and $\kk_t\llb M\rrb$.  We say an element $f$ of any of these algebras is \textbf{bar-invariant} if $f$ is invariant under the appropriate bar involution.

\subsection{Cluster mutation}

\label{ClAlg} In the following, we assume the existence of a compatible
$\Lambda$ in order to define quantum cluster $\s{A}$-algebras.  The existence of compatible $\Lambda$ is not required for the quantum $\s{X}$-analog.  These same constructions also apply when $t=1$ without assuming the existence of compatible $\Lambda$, and this yields the classical cluster $\s{A}$- and $\s{X}$-algebras.

Given a seed $\sd$, denote the associated (quantum) torus algebra
\[
\s{A}_{t}^{\sd}:=\kk_{t}[M]
\]
and the (partially compactified) subalgebra
\begin{equation}\label{eq:A-bar}
	\?{\s{A}}_{t}^{\sd}:=\kk_{t}[\sigma_{\sd}]
\end{equation}
where $\sigma_{\sd}$ is the monoid 
\begin{align*}
	\sigma_{\sd}:=\bb{Z}\langle e_{i}^{*}|i\in I\setminus F\rangle+\bb{Z}_{\geq0}\langle e_{i}^{*}|i\in F\rangle\subset M.
\end{align*}
We consider the monomials $$A_{i}:=z^{e_{i}^{*}}\in\kk_{t}[M]$$ for
each $i\in I$. When $\sd$ is not clear from context, we will write
$A_{i}$ as $A_{\sd,i}$.

Dually, let 
\begin{align*}
	\s{X}_{t}^{\sd}:=\kk_{t}[N].
\end{align*}
Given a seed $\sd$, for each $i\in I$, let 
\begin{align}
	X_{i}:=z^{e_{i}}\in\s{X}_{t}^{\sd}.\label{Xi}
\end{align}

Let $\mr{\s{A}}_{t}^{\sd}$ and $\mr{\s{X}}_{t}^{\sd}$ denote the
skew-fields of fractions of $\s{A}_{t}^{\sd}$ and $\s{X}_{t}^{\sd}$,
respectively.\footnote{See \cite[\S 11]{BerensteinZelevinsky05} for a brief review of skew-fields of fractions and Ore localization in this context.} For each $j\in I\setminus F$, we define an isomorphism
$\mu_{j}^{\s{A}}:\mr{\s{A}}_{t}^{\sd}\risom\mr{\s{A}}_{t}^{\sd_{j}}$ such that the corresponding inverse map gives
\begin{align}
	(\mu_{j}^{\s{A}})^{-1}(A_{\sd_j,i}):=\begin{cases}
		A_{\sd,i} & \mbox{if \ensuremath{i\neq j},}\\
		z^{e_{\sd_j,j}^{*}}+z^{e_{\sd_j,j}^{*}+\omega_{1}(e_{\sd,j})} & \mbox{if \ensuremath{i=j}.}
	\end{cases}\label{mujAInverse}
\end{align}
Note that the dual bases in $\sd$ and $\sd_j=\mu_j(\sd)$ are related by \eqref{fprime}. Then the quantum mutation at $A_{\sd_{j},j}$ can be expressed as in \cite[(4.23)]{BerensteinZelevinsky05} via 
\begin{align*}
	(\mu_{j}^{\s{A}})^{-1}(A_{\sd_{j},j})=z^{-e_{j}^*+\sum_{i|\omega(e_{i},e_{j})>0}\omega(e_{i},e_{j})e_{i}^*}+z^{-e_{j}^*-\sum_{i|\omega(e_{i},e_{j})<0}\omega(e_{i},e_{j})e_{i}^*}.
\end{align*}
If we take $t=1$, then we recover the classical mutations given in \cite{FominZelevinsky02} as follows: 
\begin{align*}
	A_{\sd,j}\cdot \left((\mu_{j}^{\s{A}})^{-1}(A_{\sd_{j},j})\right)=\prod_{\omega(e_{i},e_{j})>0}A_{\sd,i}^{\omega(e_{i},e_{j})}+\prod_{\omega(e_{i},e_{j})<0}A_{\sd,i}^{-\omega(e_{i},e_{j})}.
\end{align*}

Given a tuple $\jj=(j_{1},\ldots,j_{s})\in(I\setminus F)^{s}$ and denoting $\sd_{\jj}$ as in \S \ref{sec:seed-mut}, we
define 
\[
\mu_{\jj}^{\s{A}}:\mr{\s{A}}_{t}^{\sd}\risom\mr{\s{A}}_{t}^{\sd_{\jj}}
\]
as the composition 
\begin{align*}
	\mu_{\jj}^{\s{A}}:=\mu_{j_{s}}^{\s{A}}\circ\cdots\circ\mu_{j_{1}}^{\s{A}}
\end{align*}
where $\mu_{j_{k}}^{\s{A}}$ here is viewed as mapping $\mr{\s{A}}_{t}^{\sd_{(j_{1},\ldots,j_{k-1})}}\risom\mr{\s{A}}_{t}^{\sd_{(j_{1},\ldots,j_{k})}}$.
Let 
\begin{align*}
	A_{\sd_{\jj_{1}},i}^{\sd_{\jj_{2}}}:=\mu_{\jj_{2}}^{\s{A}}\circ(\mu_{\jj_{1}}^{\s{A}})^{-1}(A_{\sd_{\jj_{1}},i}).
\end{align*}
In particular, 
\[
A_{\sd_{\jj},i}^{\sd}=(\mu_{\jj}^{\s{A}})^{-1}(A_{\sd_{\jj},i}).
\]
We may denote $A_{\sd_{\jj},i}^{\sd}$ more simply as $A_{\jj,i}$. 

\begin{defn}[Quantum cluster variables]\label{def:q-cluster-var}
	The elements $A_{\jj,i}=A_{\sd_{\jj},i}^{\sd}$ for any mutation sequence $\jj$
	and index $i\in I$ are called the (quantum) cluster variables. 
	
	Note that, for frozen vertices $i\in F$, we always have $A_{\jj,i}=A_i$.  These are called frozen variables or coefficients. The Laurent monomials in the frozen variables are called the frozen factors.
\end{defn}

\begin{rem}[{\cite[Rmk. 2.5]{gross2013birational}}]
	It is straightforward to check that $A_{(j, j),i}=A_i$, for all $i$, even though $e_{\sd_{(j, j)},j}^*\neq e_{\sd,j}^*$ in general.
\end{rem}

Similarly, for each $j\in I\setminus F$, one defines an isomorphism
$\mu_{j}^{\s{X}}:\mr{\s{X}}_{t}^{\sd}\risom\mr{\s{X}}_{t}^{\sd_{j}}$
by specifying that for $n\in N$ with $\omega(n,e_{j})\geq0$, we
have 
\begin{align}
	\mu_{j}^{\s{X}}(z^{n})=\sum_{k=0}^{\omega(n,e_{j})}\binom{\omega(n,e_{j})}{k}_{t}z^{n+ke_{j}}\label{Xmut}
\end{align}
as in \cite[(53)]{davison2019strong}---the coefficients are explained below; cf. \eqref{qbinom}.  Then, for $\omega(n,e_{j})\leq 0$, the inverse map reads (cf. \cite[(2.3)]{qin2020dual}):
\begin{align}
	(\mu_{j}^{\s{X}})^{-1}(z^{n})=\sum_{k=0}^{-\omega(n,e_{j})}\binom{-\omega(n,e_{j})}{k}_{t}z^{n+ke_{j}}.\label{XmutInverse}
\end{align}

Here, $\binom{a}{k}_{t}$ denotes the (bar-invariant) quantum binomial
coefficient defined as follows: for each integer $k$, let $[k]_{t}:=\frac{t^{k}-t^{-k}}{t-t^{-1}}$.
Note that $\lim_{t\rar1}[k]_{t}=k$. Then define $[0]_{t}!\coloneqq1$
and 
\begin{align*}
	[k]_{t}!\coloneqq[k]_{t}[k-1]_{t}\cdots[2]_{t}[1]_{t}
\end{align*}
for $k\in\bb{Z}_{\geq1}$. Now for $a,k\in\bb{Z}_{\geq0}$ with $a\geq k$,
one defines 
\begin{align}
	\binom{a}{k}_{t}\coloneqq\frac{[a]_{t}!}{[k]_{t}![a-k]_{t}!}\in\bb{Z}_{\geq0}[t^{\pm1}].\label{qbinom}
\end{align}
We note that, regardless of the sign of $\omega(n,e_{j})$, the classical
limit of this mutation induced by \eqref{Xmut} is given by 
\begin{align*}
	\lim_{t\rar1}\mu_{j}^{\s{X}}(z^{n})=z^{n}(1+z^{e_{j}})^{\omega(n,e_{j})},
\end{align*}
as in \cite[(2.5)]{gross2013birational}. We also note that $\mu_{j}^{\s{A}}$ could similarly be expressed
in terms of quantum binomial coefficients, and both $\mu_{j}^{\s{A}}$
and $\mu_{j}^{\s{X}}$ could be expressed in terms of conjugation
by a quantum exponential, cf. \eqref{AdPsiBinom} and \eqref{mu-dilog}.

As on the $\s{A}$-side, given a tuple $\jj=(j_{1},\ldots,j_{s})\in(I\setminus F)^{s}$,
we define 
\[
\mu_{\jj}^{\s{X}}:\mr{\s{X}}_{t}^{\sd}\risom\mr{\s{X}}_{t}^{\sd_{\jj}}
\]
as the composition 
\begin{align}
	\mu_{\jj}^{\s{X}}:=\mu_{j_{s}}^{\s{X}}\circ\cdots\circ\mu_{j_{1}}^{\s{X}}\label{mujjX}
\end{align}
where $\mu_{j_{k}}^{\s{X}}$ is viewed as mapping $\mr{\s{X}}_{t}^{\sd_{(j_{1},\ldots,j_{k-1})}}\risom\mr{\s{X}}_{t}^{\sd_{(j_{1},\ldots,j_{k})}}$. As before, define
\begin{align*}
	X_{\sd_{\jj_{1}},i}^{\sd_{\jj_{2}}}:=\mu_{\jj_{2}}^{\s{X}}\circ(\mu_{\jj_{1}}^{\s{X}})^{-1}(X_{\sd_{\jj_{1}},i}).
\end{align*}
and denote $X_{\jj,i}=X_{\sd_{\jj},i}^{\sd}$.

\begin{eg} Let us take $\sd$ as in Example \ref{A2ex} and take $t=1$ for simplicity. For $\sd_1=\mu_1 (\sd)$, we have $e_{\sd_1,1}=-e_1$, $e_{\sd_1,2}=e_2$. Since $\omega(e_{\sd_1,2},e_{1})=-1<0$, we have $(\mu_1^{\s{X}})^{-1}(z^{e_{\sd_1,2}})=z^{e_{\sd_1,2}}(1+z^{e_{1}})$ or, equivalently, $(\mu_1^{\s{X}})^{-1}(X_{\sd_1,2})=X_{\sd,2}(1+X_{\sd,1})$. 
	
	Similarly, for $\sd_2=\mu_2(\sd)$, we have $e_{\sd_2,1}=e_1+e_2$, $e_{\sd_2,2}=-e_2$. Since $\omega(e_{\sd_2,1},e_2)=1>0$, we have $\mu_2^{\s{X}}(z^{e_{\sd_2,1}})=z^{e_{\sd_2,1}}(1+z^{e_2})$ or, equivalently, $z^{e_1+e_2}(1+z^{e_2})^{-1}=(\mu_2^{\s{X}})^{-1}(z^{e_1+e_2})$. Recall that $X_{\sd_2,1}=z^{e_1+e_2}$. The last equation can be rewritten as $X_{\sd,1}X_{\sd,2}(1+X_{\sd,2})^{-1}=(\mu_2^{\s{X}})^{-1}(X_{\sd_2,1})$.
\end{eg}

\begin{eg}\label{Ex:mu1}
	Take $(\sd,\Lambda)$ as in Examples \ref{QuivEx} and \ref{AnComp}.  Then
	\begin{align*}
		(\mu_1^{\s{A}})^{-1}(A'_1) =z^{-e_1^*+e_3^*+e_4^*}+z^{-e_1^*+2e_2^*}.
	\end{align*}
	Denoting $A'_1:=(\mu_1^{\s{A}})^{-1}(A_{\sd_1,1})$, we can rewrite this relation as
	\begin{align}\label{Eq:mu1}
		A_1A'_1=A_3A_4+t^4 A_2^2.
	\end{align}
\end{eg}

\subsection{$g$-vectors}\label{sub:g}

\begin{thm}[\cite{FominZelevinsky07,DerksenWeymanZelevinsky09,Tran09,gross2018canonical}]\label{thm:cluster_expansion}
	The quantum cluster variable $A_{\jj,i}$
	takes the following form:
	\begin{eqnarray}
		A_{\jj,i} & = & \sum_{n\in N^{\oplus}}c_{n}z^{g_{\jj,i}+\omega_1(n)} \label{Aji}
	\end{eqnarray}
	for $c_{n}\in\kk_{t}$, $c_{0}=1$, and some $g_{\jj,i}\in M$ such that $g_{\jj,i}(e_h)\geq0$ for all $h\in F$.
\end{thm}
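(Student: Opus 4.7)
The plan is to combine the quantum Laurent phenomenon with sign-coherence of $c$-vectors, proceeding by induction on the length $s$ of $\jj=(j_1,\ldots,j_s)$. For the base case $s=0$ we have $A_{(),i}=z^{e_i^*}$, so $g_{(),i}=e_i^*$, the sum reduces to its $n=0$ term with $c_0=1$, and the frozen-positivity $e_i^*(e_h)=\delta_{ih}\geq 0$ is immediate.

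For the inductive step, I would apply a single additional mutation $\mu_{j_{s+1}}^{\s{A}}$ and track how the Laurent expansion in $\sd$ changes. Formula \eqref{mujAInverse} shows that the only cluster variable affected is $A_{\sd_\jj,j_{s+1}}$, which is replaced by a binomial of the shape $z^{f'}+z^{f'+\omega_1(e_{\sd_\jj,j_{s+1}})}$, so pulling back to the initial cluster shifts exponents by elements of the $\bb{Z}$-span of $\omega_1(N)$. The substantive point — that these shifts actually lie in $\omega_1(N^\oplus)$, so that a single well-defined leading exponent $g_{\jj,i}$ survives — is the sign-coherence of $c$-vectors for $\sd_\jj$. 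This is \cite[Thm.~3.5]{FominZelevinsky07} with principal coefficients, extended to all skew-symmetric classical cases by \cite{DerksenWeymanZelevinsky09} via decorated quiver representations, refined to the quantum setting by \cite{Tran09}, and reproved via scattering diagrams and broken lines in \cite[\S 4]{gross2018canonical}. The normalization $c_0=1$ propagates by tracking the leading term under each mutation.

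For the frozen positivity $g_{\jj,i}(e_h)\geq 0$, $h\in F$, I would observe directly from \eqref{mujAInverse} that no mutation introduces negative powers of the frozen monomials $z^{e_h^*}$: the only negative contribution on the right-hand side is the term $-e_j^*$, with $j\in I\setminus F$ unfrozen, while all other exponents attached to $e_h^*$ for $h\in F$ are non-negative. Hence by induction $A_{\jj,i}\in \?{\s{A}}_t^{\sd}=\kk_t[\sigma_\sd]$, which forces every exponent of a Laurent monomial in its expansion — and in particular the leading exponent $g_{\jj,i}$ — to pair non-negatively with each $e_h$, $h\in F$. The main obstacle in this plan is the sign-coherence step in the inductive argument: this is a genuine theorem and not a formal bookkeeping consequence of the mutation formulas, so it is the one point requiring substantial input from the cited references; everything else is direct induction.
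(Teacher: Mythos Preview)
The paper does not prove this theorem itself; it is cited as a known result from \cite{FominZelevinsky07,DerksenWeymanZelevinsky09,Tran09,gross2018canonical}. Your outline correctly identifies the two essential ingredients (quantum Laurent phenomenon and sign-coherence of $c$-vectors) and attributes the deep step to the right references. Two parts of the execution, however, are imprecise or gapped.

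For the pointed structure, the phrase ``pulling back to the initial cluster shifts exponents'' obscures that $(\mu_\jj^{\s{A}})^{-1}$ substitutes Laurent polynomials, not monomials: expressing $A_{(\jj,j),j}$ in the initial cluster requires inverting $A_{\jj,j}$, which only makes sense in the completion $\kk_t\llb M\rrb$. The correct induction runs there: by hypothesis each $A_{\jj,k}$ is pointed at $g_{\jj,k}$, hence $A_{\jj,j}^{-1}$ is pointed at $-g_{\jj,j}$; the two exchange monomials are pointed with leading exponents differing by $-\omega_1(e_{\sd_\jj,j})$; sign-coherence puts $e_{\sd_\jj,j}\in\pm N^\oplus$, so one term dominates and their sum is pointed; finally the Laurent phenomenon brings the result back to $\kk_t[M]$. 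This is presumably what you mean, but the division is the substance of the step, not bookkeeping.

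Your argument for $g_{\jj,i}(e_h)\geq 0$ has a genuine gap. The observation from \eqref{mujAInverse} is correct for a single mutation from the initial seed, but it does not iterate: the claim ``hence by induction $A_{\jj,i}\in\?{\s{A}}_t^\sd$'' fails because the inductive step again divides by $A_{\jj,j}$, and a quotient of elements of $\?{\s{A}}_t^\sd$ landing in $\s{A}_t^\sd$ need not lie in $\?{\s{A}}_t^\sd$ (take $1$ divided by $z^{e_h^*}$). Frozen-positivity is true, but it too is a consequence of sign-coherence rather than of the naive recursion. One clean route: unfreeze $h$, observe that the $h$-th row of the $G$-matrix retains the entry $1$ at column $h$ (since $\jj$ never mutates at $h$), and apply row sign-coherence of $G$-vectors (dual to $c$-vector sign-coherence) to force the whole row non-negative.
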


Note that the bases $E$ and $E^*=\{e_i^*:i\in I\}$ give natural identifications of $N=\bb{Z}^I$ and $M=\bb{Z}^I$. Denote the natural projection $\pr_{I_{\uf}}:\Z^I\rightarrow \Z^{I_{\uf}}$.

\begin{defn}\label{def:g-vec}
	The vector $g_{\jj,i}\in\Z^{I}$ is
	called the \textbf{$i$-th (extended) $g$-vector} of the seed $\sd_{\jj}$
	with respect to the initial seed $\sd$. Its projection $\pr_{I_{\uf}}g_{\jj,i}$
	is called the \textbf{principal $g$-vector}.
	
	The sum $\sum_{n\in N^{\oplus}}c_{n}z^{\omega_1(n)}$ with coefficients as in \eqref{Aji} is called the \textbf{$F$-polynomial} of the quantum cluster variable $A_{\jj,i}$. 
\end{defn}

We note that $g_{\jj,i}=e_{\sd_{\jj},i}^*$ under the identification $M=\bb{Z}^I$.

\begin{rem}
	There is a multivariate polynomial $F_{\jj,i}$ in the variables $X_k:=z^{e_k}\in \kk_t[N_{\uf}]$, $k\in I_{\uf}$, such that the factor $\sum_{n\in N^{\oplus}}c_{n}z^{\omega_1(n)}$ is obtained from it by setting $X_k=A^{\omega_1(e_k)}$ and replacing $t$ by $t^{d}$; i.e., by applying $\omega_1$ as in \eqref{omega1}. The polynomial $F_{\jj,i}$ is called the
	$i$-th $F$-polynomial of the seed $\sd_{\jj}$ with respect to the
	initial seed $\sd$.  
	It is well-known that $\pr_{I_{\uf}}g_{\jj,i}$
	and $F_{\jj,i}$ only depend on $\jj$, $i$, and the
	submatrix $(\omega_{ij})_{i,j\in I_{\uf}}$, see \cite{FominZelevinsky07}.
\end{rem}

\subsection{Cluster algebras}

The \textbf{ordinary quantum cluster algebra} $\s{A}_{t}^{\ord}$
is defined to be the sub $\kk_{t}$-algebra of $\mr{\s{A}}_{t}^{\sd}$
generated by the cluster variables together with $A_i^{-1}$
for $i\in F$. 

In some settings, it is desirable to not include the inverses of frozen
variables. We will write $\?{\s{A}}_{t}^{\ord}$ to denote the sub
$\kk_{t}$-algebra of $\s{A}_{t}^{\ord}$ generated only by the cluster
variables $A_{\jj,i}$ for all $\jj$ and all $i\in I$.

For each $\jj$, let 
\begin{align}
	\?{C_{\jj}}^{+}:=\bb{R}_{\geq0}\langle e_{\sd_{\jj},i}^{*}|i\in I\rangle\subset M_{\bb{R}}\label{Cjplusplus}
\end{align}
(i.e., $\?{C_{\jj}}^{+}$ is generated by the extended $g$-vectors of $\sd_{\jj}$), and let 
\begin{align}
	C_{\jj}^{+}:=\?{C_{\jj}}^{+}+\bb{R}_{\geq0}\langle-e_{\sd_{\jj},i}^{*}|i\in F\rangle\subset M_{\bb{R}}.\label{Cjplus}
\end{align}
Then $(\mu_{\jj}^{\s{A}})^{-1}(z^{m})$ is in $\s{A}_{t}^{\ord}$
(respectively, $\?{\s{A}}_{t}^{\ord}$) for each $m\in C_{\jj}^{+}\cap M$
(respectively, each $m\in \?{C_{\jj}}^{+}\cap M$). The elements of this
form are called (quantum) \textbf{cluster monomials}.

For each sequence $\jj$ in $I\setminus F$ as above, the subring 
\[
\s{A}_{\jj,t}:=(\mu_{\jj}^{\s{A}})^{-1}(\s{A}_{t}^{\sd_{\jj}})
\]
of $\mr{\s{A}}_{t}^{\sd}$ is called an $\s{A}$-\textbf{cluster},
or simply a cluster. The \textbf{upper quantum cluster algebra}
$\s{A}_{t}^{\up}$ is defined to be the intersection in $\mr{\s{A}}_{t}^{\sd}$
of all the clusters, i.e., 
\begin{align*}
	\s{A}_{t}^{\up}:=\bigcap_{\jj}\s{A}_{\jj,t}\subset\mr{\s{A}}_{t}^{\sd}.
\end{align*}
Additionally, we denote
\begin{align*}
	\?{\s{A}}_{\jj,t}:=(\mu_{\jj}^{\s{A}})^{-1}(\?{\s{A}}_{t}^{\sd_{\jj}})
\end{align*}
for $\?{\s{A}}_t^{{\sd}_{\jj}}$ as in \eqref{eq:A-bar} and then define 
\begin{align*}
	\?{\s{A}}_{t}^{\up}:=\bigcap_{\jj}\?{\s{A}}_{\jj,t}\subset\mr{\s{A}}_{t}^{\sd}.
\end{align*}
Note that $\?{\s{A}}_{t}^{\up}\subset\s{A}_{t}^{\up}$.

Similarly, for each $\jj$, the subring 
\[
\s{X}_{\jj,t}:=(\mu_{\jj}^{\s{X}})^{-1}(\s{X}_{t}^{\sd_{\jj}})
\]
of $\mr{\s{X}}_{t}^{\sd}$ is called an $\s{X}$-\textbf{cluster}.
Then we define the quantum cluster Poisson algebra
\begin{align*}
	\s{X}_{t}^{\up}:=\bigcap_{\jj}\s{X}_{\jj,t}\subset\mr{\s{X}}_{t}^{\sd}.
\end{align*}

As noted at the start of this subsection, the above definitions make
sense in the classical limit as well, and mutations may be defined
as in \eqref{mujAInverse} and \eqref{Xmut} with $t=1$. In this way one obtains the corresponding classical cluster algebras $\s{A}^{\ord}$, $\s{A}^{\up}$, $\?{\s{A}}^{\ord}$, and $\?{\s{A}}^{\up}$. Similarly, $\s{X}^{\up}:=\s{X}_{1}^{\up}$. We note that defining
these classical algebras does not require the injectivity
assumption to hold.

Note that Theorem \ref{thm:cluster_expansion} implies the ``Laurent phenomenon,'': $\s{A}_t^{\ord}\subset \s{A}_t^{\up}$ and $\?{\s{A}}_t^{\ord}\subset \?{\s{A}}_t^{\up}$, see \cite{FominZelevinsky02} and \cite{BerensteinZelevinsky05}.

If we want to make clear that the algebra $\s{A}_t^{\ord}$, $\s{A}_t^{\up}$, $\s{X}_t^{\up}$, etc., under consideration was constructed from a certain seed (or mutation equivalence class of seeds) $\sd$, we may write $\sd$ in parentheses as in $\s{A}_t^{\ord}(\sd)$, $\s{A}_t^{\up}(\sd)$, $\s{X}_t^{\up}(\sd)$, etc.

Given a seed $\sd$ and the corresponding principal coefficients seed
$\sd^{\prin}$, let $\s{A}^{\prin,\up}=\s{A}^{\up}(\sd^{\prin})$ denote the upper cluster
algebra with principal coefficients, i.e., the upper cluster algebra
associated to $\sd^{\prin}$. Recall $\rho:M^{\prin}\rar M$ as in \eqref{rhodef}.
Applying $\rho$ to the exponents of monomials induces an
algebra homomorphism 
\[
\rho:\s{A}^{\prin,\up}\rar\s{A}^{\up}.
\]
Recall that if $(\sd,\Lambda)$ is a compatible pair, then so is $(\sd^{\prin},\rho^{*}\Lambda)$.
When quantizing using these compatible pairs, $\rho$ also induces
an algebra homomorphism (cf. \cite[Lem. 4.1]{davison2019strong}) 
\[
\rho:\s{A}_{t}^{\prin,\up}\rar\s{A}_{t}^{\up}.
\]

\subsection{Positivity}

\label{PosSub}

An element of $\kk_{t}=\kk[t^{\pm1/D}]$ is called \textbf{non-negative} if
its coefficients lie in $\bb{Z}_{\geq0}$.

Let there be given an initial seed $\sd$ and compatible $\Lambda$.
Consider the upper cluster algebra $\s{A}_{t}^{\up}\subset\s{A}_{t}^{\sd}=\kk_t[M]$
as before. Given any element $f\in\s{A}_{t}^{\up}\subset\s{A}_{t}^{\sd}$
and any sequence of non-frozen indices $\jj$, applying $\mu_{\jj}^{\s{A}}$
to $f$ yields a Laurent polynomial 
\[
\mu_{\jj}^{\s{A}}(f)=\sum_{m\in M}a_{\jj,m}z^{m}\in\s{A}_{t}^{\sd_{\jj}}=\kk_{t}[M].
\]
With this notation, one says that $f$ is \textbf{universally positive
	with respect to the cluster atlas}, or \textbf{cluster positive}
for short, if $a_{\jj,m}$ is non-negative for each $\jj$ and $m$.
Similarly for the classical setting but with ``non-negative'' taking the usual meaning, i.e., $a_{\jj,m}\geq \bb{Z}_{\geq0}$.

Now let $f\in\s{A}_{t}^{\up}$ (or $\s{A}^{\up}$) be a nonzero cluster
positive element. We say that such $f$ is \textbf{atomic with respect to the cluster atlas}, or \textbf{cluster
	atomic} for short, if it cannot be written as a sum of two other nonzero
cluster positive elements.

Next suppose that $\s{B}=\{f_{m}\}_{m\in M}$ is a $\kk_{t}$-module
basis for a $\kk_{t}$-algebra $\s{A}_{t}$ (respectively, a $\kk$-module
basis for a $\kk$-algebra $\s{A}$). Given $p_{1},\ldots,p_{s},p\in M$,
the \textbf{structure constant} $\alpha(p_{1},\ldots,p_{s};p)\in\kk_{t}$
(respectively, $\kk$) is defined by 
\begin{align*}
	f_{p_{1}}\cdots f_{p_{s}}=\sum_{p\in M}\alpha(p_{1},\ldots,p_{s};p)f_{p}.
\end{align*}
One says that $\s{B}$ is \textbf{strongly positive} (in the quantum
or classical setting) if all of the structure constants are non-negative.

One says that the basis $\s{B}$ is universally positive
with respect to the cluster atlas if each element of $\s{B}$ is universally positive with respect to the cluster atlas. It is further called atomic with respect to the cluster atlas if it consists precisely of the cluster atomic elements of the algebra.

\begin{lem}
	\label{PosImplications} Let $\s{B}$ be a basis for $\s{A}_{t}^{\up}$
	(or $\s{A}^{\up}$) which includes all the cluster monomials. Then
	we have the following implications: 
	\begin{align*}
		& \mbox{\ensuremath{\s{B}} is atomic with respect to the cluster atlas}\\
		\Rightarrow & \mbox{\ensuremath{\s{B}} is strongly positive}\\
		\Rightarrow & \mbox{\ensuremath{\s{B}} is universally positive with respect to the cluster atlas.}
	\end{align*}
\end{lem}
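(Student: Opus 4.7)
The plan is to prove both implications by exploiting the basic observation that the cone $C$ of cluster positive elements is closed under products and non-negative $\kk_t$-linear combinations. In the quantum setting this uses the fact that the quantum torus commutation $z^u z^v = t^{\Lambda(u,v)} z^{u+v}$ carries a non-negative coefficient $t^{\Lambda(u,v)} \in \kk_t$, so that the product of two Laurent polynomials with coefficients in $\bb{Z}_{\geq 0}[t^{\pm 1/D}]$ again has coefficients there; the classical case is immediate.

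For ``strongly positive $\Rightarrow$ universally positive,'' I would fix $f_m \in \s{B}$ and a mutation sequence $\jj$, and consider the Laurent expansion
\begin{align*}
\mu_\jj^{\s{A}}(f_m) \;=\; \sum_{m'\in M} a_{\jj,m'}\, z^{m'},
\end{align*}
a finite sum by the Laurent phenomenon. Choose $N \in \bb{Z}_{\geq 0}$ large enough that $m' + N\sum_{i \in I\setminus F} e^*_{\sd_\jj,i} \in C_\jj^+$ for every $m'$ with $a_{\jj,m'} \neq 0$, and set $g \coloneqq \prod_{i \in I\setminus F} A_{\jj,i}^N$. Since $g$ is itself a cluster monomial it lies in $\s{B}$ by hypothesis, and each product $g \cdot z^{m'}$ equals $t^{\gamma(m')}$ times a distinct cluster monomial $h_{m'} \in \s{B}$ for some $\gamma(m') \in \tfrac{1}{D}\bb{Z}$. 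Hence
\begin{align*}
g \cdot f_m \;=\; \sum_{m'} a_{\jj,m'}\, t^{\gamma(m')}\, h_{m'},
\end{align*}
while strong positivity applied to $g \cdot f_m$ expresses the same element as a combination of elements of $\s{B}$ with coefficients in $\bb{Z}_{\geq 0}[t^{\pm 1/D}]$. Uniqueness of basis expansions forces $t^{\gamma(m')} a_{\jj,m'}$ non-negative, and thus $a_{\jj,m'}$ is non-negative.

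For ``atomic $\Rightarrow$ strongly positive,'' atomicity gives each $f_r \in \s{B}$ cluster positive, so $g \coloneqq f_{p_1} \cdots f_{p_s}$ is cluster positive. Expanding $g = \sum_r \alpha(p_1,\ldots,p_s;r) f_r$, I would introduce the finite-dimensional $\kk_t$-subspace $V \subset \s{A}_t^{\up}$ spanned by the basis elements appearing, and study the closed convex cone $V \cap C$. Each $f_r$ appearing in the expansion is extremal in $V \cap C$, because atomicity in the full algebra precludes decomposing $f_r$ as a sum of two nonzero elements of $C$ and a fortiori of $V \cap C$. A finite-dimensional convex-cone argument (Minkowski's theorem) combined with a careful use of atomicity to exclude extremal rays of $V \cap C$ with mixed-sign basis expansion then shows $V \cap C$ equals the $\bb{Z}_{\geq 0}[t^{\pm 1/D}]$-span of these $f_r$. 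Hence $g \in V \cap C$ has non-negative basis coefficients, establishing strong positivity.

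The main obstacle lies in the convex-cone step just sketched. An element $h = \sum_r \lambda_r f_r \in V \cap C$ with some $\lambda_{r_0}$ having a negative part yields, via the canonical decomposition $h + h^- = h^+$ (where $h^{\pm}$ are the non-negative combinations built from the positive and negative parts of the $\lambda_r$), an expression of $h^+$ as a nontrivial sum of nonzero cluster positive elements. Turning this into a genuine contradiction requires iteratively reducing to the case where $h^+$ is supported on a single basis element $f_{r^*}$ with $r^* \neq r_0$, at which point the resulting decomposition of $h^+ = \lambda_{r^*} f_{r^*}$ into nonzero cluster-positive summands—neither of which is a positive multiple of $f_{r^*}$—contradicts the atomicity of $f_{r^*}$.
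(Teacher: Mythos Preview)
Your argument for the second implication (strongly positive $\Rightarrow$ universally positive) is correct and is precisely the standard argument the paper cites from Hernandez--Leclerc.

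For the first implication you correctly reduce to showing that any cluster positive element $g$ has non-negative $\s B$-expansion, but your convex-cone argument for this step has a real gap, exactly where you locate it. Even granting the unexplained ``iterative reduction'' to the case $h^+ = \lambda_{r^*} f_{r^*}$, the decomposition $\lambda_{r^*} f_{r^*} = h + h^-$ with $h,h^-$ nonzero and not proportional to $f_{r^*}$ does \emph{not} contradict atomicity of $f_{r^*}$ unless $\lambda_{r^*}=1$: atomicity forbids nontrivial decompositions of $f_{r^*}$ itself, not of its multiples. What you would actually need is that the ray through $f_{r^*}$ is extremal in the full cone $C$, but deducing that from atomicity alone is equivalent to the statement you are trying to prove, so the argument is circular. (There is also a secondary imprecision: over $\kk_t$ you are not working in an ordered field, so invoking Minkowski's theorem on $V\cap C$ requires some care you have not supplied.)

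The clean fix bypasses convex geometry entirely. Fix any chart $\jj$ and for $h\in C$ set $\operatorname{ht}(h)$ equal to the sum of all Laurent coefficients of $\mu_{\jj}^{\s A}(h)$, evaluated at $t=1$ in the quantum case. This is additive, $\bb Z_{\ge 0}$-valued on $C$, and vanishes only at $0$. Now induct on $\operatorname{ht}$: if $h$ is atomic then $h\in\s B$ by hypothesis; otherwise $h=h_1+h_2$ with $h_i\in C$ nonzero, whence $\operatorname{ht}(h_i)<\operatorname{ht}(h)$, and by induction each $h_i$---hence $h$---is a non-negative $\s B$-combination. This one-paragraph induction is presumably what the paper's terse sentence has in mind.
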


\begin{proof}

	The first implication is straightforward because the product of any collection of universally positive elements remains universally positive.

The second implication is also well-known in cluster theory; see the proof of \cite[Prop. 2.2]{HernandezLeclerc09}.
\end{proof}
We note that one similarly defines cluster positivity, cluster atomicity,
and strong positivity for elements/bases of $\s{X}_{t}^{\up}$
and $\s{X}^{\up}$.

\section{Cluster algebras from surfaces}

\label{SkeinSection}

We now review cluster algebras arising from marked surfaces. See \cite{FominShapiroThurston08,MusikerSchifflerWilliams09,musiker2013bases,fomin2018cluster}
for more on these constructions in the classical setting, and see
\cite{Thurst,muller2016skein} for the quantum setting.

\subsection{Definition of the skein algebras}

\label{SectionSkeinDef}

Let $\SSS$ be a compact oriented surface, possibly with boundary---we will say $\SSS$ is ``closed'' to mean $\partial \SSS=\emptyset$.
Let $\MM$ be a finite collection of distinct marked points in $\SSS$. Such a pair $\Sigma=(\SSS,\MM)$ is called a \textbf{marked surface}. Markings in the interior of $\SSS$ are called \textbf{punctures}. Starting in \S \ref{sub:ClSk}, we will assume that $\Sigma$ is triangulable, as defined there.

A \textbf{multicurve} is an immersion $\phi:C\rar\SSS$ of a compact
unoriented $1$-manifold $C$ such that the boundary of $C$ maps
to $\MM$, but no interior points of $C$ map to $\partial\SSS$ or
$\MM$. One calls two multicurves \textbf{homotopic} if there is a
homotopy between them whose fibers are all multicurves. A \textbf{curve}
is a connected multicurve. A \textbf{loop} is a curve whose domain is homeomorphic to a circle,
and an \textbf{arc} is a curve whose domain is homeomorphic to a closed interval. A \textbf{boundary
	arc} is an arc in $\Sigma$ which is homotopic to the closure of a
component of $\partial\SSS\setminus(\MM\cap\partial\SSS)$.  An arc which is not a boundary arc is called an \textbf{interior arc}.

For $p\in\SSS$, a \textbf{strand} of a multicurve $\phi:C\rar\SSS$
near $p$ is a connected component of $\phi^{-1}(D_{\epsilon}(p))$,
where $D_{\epsilon}(p)$ is an arbitrarily small closed disk around $p$,
or a closed half disk if $p\in\partial\SSS$. For any given
arc, we choose two strands which contain the endpoints and
do not intersect in $\SSS\setminus\MM$, and we call these strands the \textbf{ends}
of the arc.

A multicurve is called \textbf{transverse} if all self-intersections
are transverse (i.e., all strands have different tangent directions
at intersection points) and all interior crossings are between only
two strands --- by an \textbf{interior crossing} we mean an intersection
between strands at a point $p$ in $\SSS\setminus\MM$.  A transverse
multicurve is called \textbf{simple} if it has no interior crossings
and no contractible components.

By a \textbf{contractible arc}, we mean an arc which has both endpoints at the same marked point $p\in \MM$ and is contractible in $\SSS\setminus (\MM\setminus p)$.  Similarly, a \textbf{contractible loop} is a loop which has no interior crossings and is contractible in $\SSS\setminus \MM$.

A \textbf{link} is a transverse multicurve together with a choice
of ordering of the strands at each interior crossing,\footnote{\cite{muller2016skein} also chooses equivalence relations and orderings of
	strands at marked points. This is useful for some computations and
	gives a geometric interpretation for multiplication by $t^{\pm1}$
	and for the $t^{k}$-factor in the definition of the superposition
	product. However, it is not necessary for defining the skein algebra,
	cf. \cite[Rmk. 3.1]{muller2016skein}. So for simplicity we view all intersections
	at endpoints as being at the same height. \label{boundary_ordering_footnote}} i.e., a choice of which strand is ``over'' and which is ``under.''
As in \cite{muller2016skein}, we require homotopies between links to remain
in the set of transverse multicurves.\footnote{This transversality condition ensures that intersection points are
	not created nor removed under homotopy, and so the over/under labellings
	of strands are also preserved. Alternatively, one could consider framed links in $\SSS\times [0,1]$ up to ambient isotopy.  These descriptions differ in that the
	former does not allow for certain framed Reidemeister moves, but these moves
	follow later as a consequence of the skein relations, cf. \cite[Rmk. 2.3 and \S 3.2]{muller2016skein}.\label{foot:framed-Reid}} Recall $\kk_{t}:=\kk[t^{\pm 1/D}]$ for some fixed positive integer $D$.  Let $\kk_{t}^{\Links}(\Sigma)$ denote the free $\kk_{t}$-module with basis given by the homotopy equivalence classes $[C]$ of links $C$ in $\Sigma$.

Assume for now that $\Sigma$ has no punctures.  Then the skein module $\?{\Sk}_{t}(\Sigma)$
is defined to be $\kk_{t}^{\Links}(\Sigma)/R$, where $R$ denotes
the module of relations in $\kk_{t}^{\Links}(\Sigma)$ generated by the following
``skein'' relations --- here and throughout, we use the notation 
	\begin{align*}
		q=t^{-2}
	\end{align*}
	so $t$ can be viewed as a fixed choice of square root of $q^{-1}$:

	\begin{itemize}
		\item Contractible arcs are equivalent to $0$; 
		\begin{figure}[htb]
			\begin{minipage}[b]{0.15\linewidth}
				\centering    \def\svgwidth{60pt}
\begingroup%
  \makeatletter%
  \providecommand\color[2][]{%
    \errmessage{(Inkscape) Color is used for the text in Inkscape, but the package 'color.sty' is not loaded}%
    \renewcommand\color[2][]{}%
  }%
  \providecommand\transparent[1]{%
    \errmessage{(Inkscape) Transparency is used (non-zero) for the text in Inkscape, but the package 'transparent.sty' is not loaded}%
    \renewcommand\transparent[1]{}%
  }%
  \providecommand\rotatebox[2]{#2}%
  \newcommand*\fsize{\dimexpr\f@size pt\relax}%
  \newcommand*\lineheight[1]{\fontsize{\fsize}{#1\fsize}\selectfont}%
  \ifx\svgwidth\undefined%
    \setlength{\unitlength}{158.46346964bp}%
    \ifx\svgscale\undefined%
      \relax%
    \else%
      \setlength{\unitlength}{\unitlength * \real{\svgscale}}%
    \fi%
  \else%
    \setlength{\unitlength}{\svgwidth}%
  \fi%
  \global\let\svgwidth\undefined%
  \global\let\svgscale\undefined%
  \makeatother%
  \begin{picture}(1,0.42423836)%
    \lineheight{1}%
    \setlength\tabcolsep{0pt}%
    \put(0,0){\includegraphics[width=\unitlength,page=1]{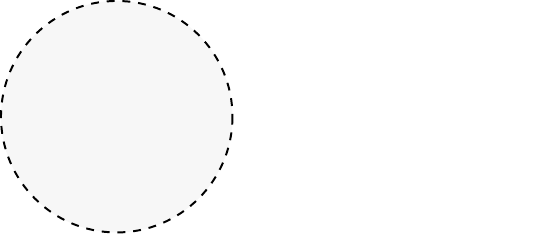}}%
    \put(0.49555397,0.14930836){\color[rgb]{0,0,0}\makebox(0,0)[lt]{\lineheight{1.25}\smash{\begin{tabular}[t]{l}$=0$\end{tabular}}}}%
    \put(0,0){\includegraphics[width=\unitlength,page=2]{p-contractible.pdf}}%
  \end{picture}%
\endgroup%

			\end{minipage}
			\hspace{0.4cm}
			\begin{minipage}[b]{0.15\linewidth}
				\centering\def\svgwidth{60pt}
\begingroup%
  \makeatletter%
  \providecommand\color[2][]{%
    \errmessage{(Inkscape) Color is used for the text in Inkscape, but the package 'color.sty' is not loaded}%
    \renewcommand\color[2][]{}%
  }%
  \providecommand\transparent[1]{%
    \errmessage{(Inkscape) Transparency is used (non-zero) for the text in Inkscape, but the package 'transparent.sty' is not loaded}%
    \renewcommand\transparent[1]{}%
  }%
  \providecommand\rotatebox[2]{#2}%
  \newcommand*\fsize{\dimexpr\f@size pt\relax}%
  \newcommand*\lineheight[1]{\fontsize{\fsize}{#1\fsize}\selectfont}%
  \ifx\svgwidth\undefined%
    \setlength{\unitlength}{158.46346964bp}%
    \ifx\svgscale\undefined%
      \relax%
    \else%
      \setlength{\unitlength}{\unitlength * \real{\svgscale}}%
    \fi%
  \else%
    \setlength{\unitlength}{\svgwidth}%
  \fi%
  \global\let\svgwidth\undefined%
  \global\let\svgscale\undefined%
  \makeatother%
  \begin{picture}(1,0.42926957)%
    \lineheight{1}%
    \setlength\tabcolsep{0pt}%
    \put(0,0){\includegraphics[width=\unitlength,page=1]{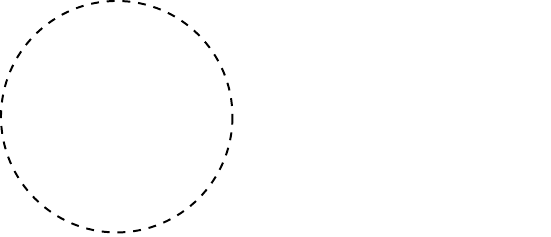}}%
    \put(0.49555397,0.15433958){\color[rgb]{0,0,0}\makebox(0,0)[lt]{\lineheight{1.25}\smash{\begin{tabular}[t]{l}$=0$\end{tabular}}}}%
    \put(0,0){\includegraphics[width=\unitlength,page=2]{bdy-contractible.pdf}}%
  \end{picture}%
\endgroup%
 
			\end{minipage}
		\end{figure}
		\item A contractible loop is equivalent to $-(q^{2}+q^{-2})\cdot[\emptyset]$, 
		where $[\emptyset]$ denotes the empty link; 
		\begin{figure}[htb]
			\def\svgwidth{140pt}
\begingroup%
  \makeatletter%
  \providecommand\color[2][]{%
    \errmessage{(Inkscape) Color is used for the text in Inkscape, but the package 'color.sty' is not loaded}%
    \renewcommand\color[2][]{}%
  }%
  \providecommand\transparent[1]{%
    \errmessage{(Inkscape) Transparency is used (non-zero) for the text in Inkscape, but the package 'transparent.sty' is not loaded}%
    \renewcommand\transparent[1]{}%
  }%
  \providecommand\rotatebox[2]{#2}%
  \newcommand*\fsize{\dimexpr\f@size pt\relax}%
  \newcommand*\lineheight[1]{\fontsize{\fsize}{#1\fsize}\selectfont}%
  \ifx\svgwidth\undefined%
    \setlength{\unitlength}{370.94810633bp}%
    \ifx\svgscale\undefined%
      \relax%
    \else%
      \setlength{\unitlength}{\unitlength * \real{\svgscale}}%
    \fi%
  \else%
    \setlength{\unitlength}{\svgwidth}%
  \fi%
  \global\let\svgwidth\undefined%
  \global\let\svgscale\undefined%
  \makeatother%
  \begin{picture}(1,0.18122827)%
    \lineheight{1}%
    \setlength\tabcolsep{0pt}%
    \put(0,0){\includegraphics[width=\unitlength,page=1]{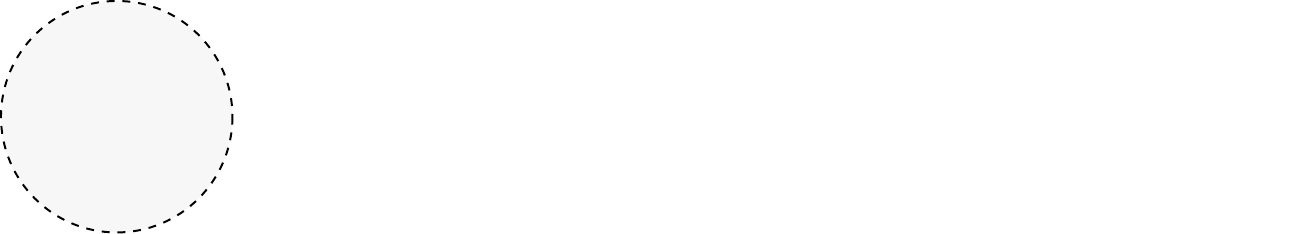}}%
    \put(0.20243955,0.06892327){\color[rgb]{0,0,0}\makebox(0,0)[lt]{\lineheight{1.25}\smash{\begin{tabular}[t]{l}$=-(q^2+q^{-2})$\end{tabular}}}}%
    \put(0,0){\includegraphics[width=\unitlength,page=2]{q_unknot.pdf}}%
  \end{picture}%
\endgroup%
\end{figure}
		\item The Kauffman skein relation, i.e., the following equivalence, understood
		to apply to links locally inside some disk while the link is preserved
		outside the disk: 
		\begin{figure}[htb]
			\global\long\def\svgwidth{150pt}
\begingroup%
  \makeatletter%
  \providecommand\color[2][]{%
    \errmessage{(Inkscape) Color is used for the text in Inkscape, but the package 'color.sty' is not loaded}%
    \renewcommand\color[2][]{}%
  }%
  \providecommand\transparent[1]{%
    \errmessage{(Inkscape) Transparency is used (non-zero) for the text in Inkscape, but the package 'transparent.sty' is not loaded}%
    \renewcommand\transparent[1]{}%
  }%
  \providecommand\rotatebox[2]{#2}%
  \newcommand*\fsize{\dimexpr\f@size pt\relax}%
  \newcommand*\lineheight[1]{\fontsize{\fsize}{#1\fsize}\selectfont}%
  \ifx\svgwidth\undefined%
    \setlength{\unitlength}{394.18421609bp}%
    \ifx\svgscale\undefined%
      \relax%
    \else%
      \setlength{\unitlength}{\unitlength * \real{\svgscale}}%
    \fi%
  \else%
    \setlength{\unitlength}{\svgwidth}%
  \fi%
  \global\let\svgwidth\undefined%
  \global\let\svgscale\undefined%
  \makeatother%
  \begin{picture}(1,0.17213835)%
    \lineheight{1}%
    \setlength\tabcolsep{0pt}%
    \put(0,0){\includegraphics[width=\unitlength,page=1]{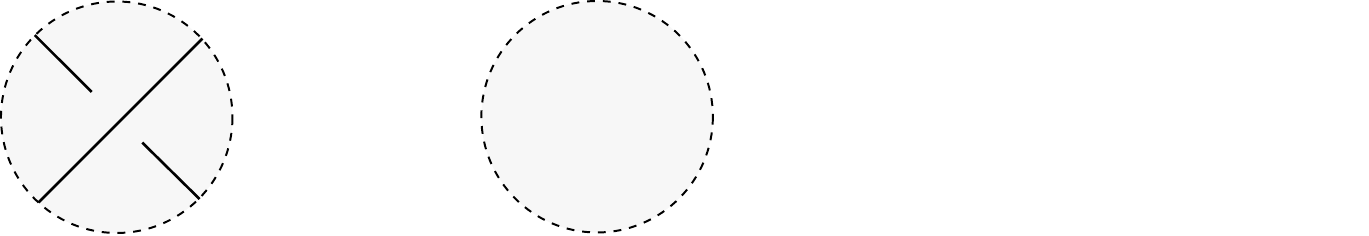}}%
    \put(0.19921501,0.06122817){\color[rgb]{0,0,0}\makebox(0,0)[lt]{\lineheight{1.25}\smash{\begin{tabular}[t]{l}$=q$\end{tabular}}}}%
    \put(0,0){\includegraphics[width=\unitlength,page=2]{Skein.pdf}}%
    \put(0.55673884,0.06126882){\color[rgb]{0,0,0}\makebox(0,0)[lt]{\lineheight{1.25}\smash{\begin{tabular}[t]{l}$+q^{-1}$\end{tabular}}}}%
  \end{picture}%
\endgroup
		\end{figure}
	\end{itemize}
	One makes $\?{\Sk}_{t}(\Sigma)$ into an associative algebra with unit
	$[\emptyset]$ using the \textbf{superposition product}: if $X$
	and $Y$ are links such that $X\cup Y$ has transverse crossings,
	then $[X]\cdot[Y]$ is, for a certain $k\in\bb{Z}$ (defined below),
	equal to $t^{k}$ times the class of the link $[X\cup Y]$ in which
	strands of $X$ always cross over strands of $Y$ at each interior crossing --- if $X$ and
	$Y$ are not transverse to each other, one first replaces them with
	homotopic links which are transverse. The integer $k$ is defined as follows: for each arc $i$, let $\partial_{1}(i)$
	and $\partial_{2}(i)$ denote the two ends of $i$ (for arbitrary
	numbering). Then given two arcs $i,j$, define\footnote{Here, ``$\partial_{a}(i)$ is clockwise of $\partial_{b}(j)$'' means that we can obtain $\partial_{a}(i)$ from  $\partial_{b}(j)$ by a clockwise rotation of $\partial_{b}(j)$ inside the surface with their common endpoint on the boundary fixed. Our definition of $\Lambda$ differs from \cite[\S 6.2]{muller2016skein} by a sign, see Remark \ref{comp-mat-rmk}.}
	\begin{align}
		\Lambda(i,j):=\sum_{a,b\in\{1,2\}}\begin{cases}
			0 & \mbox{if \ensuremath{\partial_{a}(i)} and \ensuremath{\partial_{b}(j)} have different endpoints,}\\
			-1 & \mbox{if \ensuremath{\partial_{a}(i)} is clockwise of \ensuremath{\partial_{b}(j)},}\\
			1 & \mbox{if \ensuremath{\partial_{a}(i)} is counterclockwise of \ensuremath{\partial_{b}(j)}.}
		\end{cases}\label{Lambdaij}
	\end{align}
	Then the exponent $k$ is given by $k=\sum_{i,j}\Lambda(i,j)$,
	where the sum is over all pairs of arcs $i\in X$ and $j\in Y$. See Example \ref{AnnEx}.
	
	This algebra $\?{\Sk}_{t}(\Sigma)$ with the superposition product is
	called the \textbf{(Kauffman) skein algebra} of $\Sigma$. We may
	also call this the quantum skein algebra, and we may refer to $\?{\Sk}_{1}(\Sigma)$
	as the classical skein algebra. For brevity, we will often work only
	in the quantum setup in this section since the classical analog following
	easily by setting $t=1$. In the classical setting, the Kauffman
	skein relation implies that swapping the top and bottom strands of
	a link preserves the corresponding element of the skein algebra, so
	one may work with multicurves rather than links.
	
	In cases with punctures, we still define the \textbf{classical skein
		algebra} $\?{\Sk}(\Sigma)$ in the same way (with $t=1$) except with one more set of relations: 
	\begin{itemize}
		\item A simple loop around a single puncture is equivalent to $2$. 
		\begin{figure}[htb]
			\def\svgwidth{60pt}
\begingroup%
  \makeatletter%
  \providecommand\color[2][]{%
    \errmessage{(Inkscape) Color is used for the text in Inkscape, but the package 'color.sty' is not loaded}%
    \renewcommand\color[2][]{}%
  }%
  \providecommand\transparent[1]{%
    \errmessage{(Inkscape) Transparency is used (non-zero) for the text in Inkscape, but the package 'transparent.sty' is not loaded}%
    \renewcommand\transparent[1]{}%
  }%
  \providecommand\rotatebox[2]{#2}%
  \newcommand*\fsize{\dimexpr\f@size pt\relax}%
  \newcommand*\lineheight[1]{\fontsize{\fsize}{#1\fsize}\selectfont}%
  \ifx\svgwidth\undefined%
    \setlength{\unitlength}{153.34212142bp}%
    \ifx\svgscale\undefined%
      \relax%
    \else%
      \setlength{\unitlength}{\unitlength * \real{\svgscale}}%
    \fi%
  \else%
    \setlength{\unitlength}{\svgwidth}%
  \fi%
  \global\let\svgwidth\undefined%
  \global\let\svgscale\undefined%
  \makeatother%
  \begin{picture}(1,0.43840715)%
    \lineheight{1}%
    \setlength\tabcolsep{0pt}%
    \put(0.47870639,0.17002802){\color[rgb]{0,0,0}\makebox(0,0)[lt]{\lineheight{1.25}\smash{\begin{tabular}[t]{l}$=2$\end{tabular}}}}%
    \put(0,0){\includegraphics[width=\unitlength,page=1]{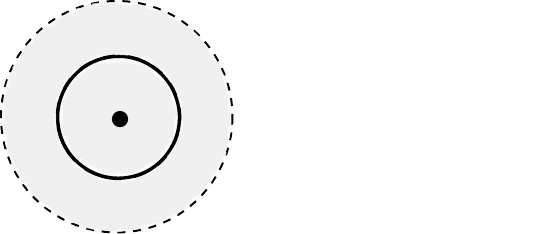}}%
  \end{picture}%
\endgroup%
\end{figure}
	\end{itemize}
	For triangulable $\Sigma$ (cf. \S \ref{sub:ClSk} for the definition of ``triangulable'') this relation is essentially forced on us by the the previous relations if we want $\?{\Sk}(\Sigma)$ to be an integral domain.  Indeed, if we multiply a loop $L$ around a puncture by an arc $\alpha$
	which intersects $L$ once (or, if $\Sigma$ is a once-punctured surface without boundary, multiply $L$ by a non-contractible arc $\alpha$ which intersects $L$ twice), then the result is $2\alpha$; hence $(L-2)\alpha=0$.
	\begin{rem}\label{rem:no-q}
		We note that if one attempts to define $\?{\Sk}_{t}(\Sigma)$ as above
		for a punctured surface, multiplying a loop $L$ around a puncture
		$p$ by an arc $\alpha$ intersecting $L$ once yields $(q+q^{-1})\alpha$,
		suggesting $L=q+q^{-1}$. But then doing the same for an arc
		$\alpha$ which hits $L$ twice yields different results (e.g.,
		one computes $L=q^{2}+q^{-2}$ if both ends of $\alpha$ are
		at $p$ and both strands of $\alpha$ pass under $L$), so the
		quantum relations are incompatible in the punctured cases without
		imposing conditions on $q$. Thus, without specializing $t$, $\?{\Sk}_{t}(\Sigma)$ is only defined
		for unpunctured surfaces. Introducing enough extra coefficients (e.g.,
		principal coefficients like in \cite{musiker2013matrix,fomin2018cluster}) would make
		quantization possible in general---we consider this in \S \ref{sec:similarity}. See \cite{roger2014skein} for another approach in which one adjoins variables associated to the punctures, and see \cite[\S 8]{Le-qtrace} for an approach in which, for each puncture, one cuts out a small disk containing the puncture in its boundary.
	\end{rem}
	
	The \textbf{localized skein algebra} $\Sk_{t}(\Sigma)$ is
	the Ore localization of $\?{\Sk}_{t}(\Sigma)$ at the set of boundary
	arcs of $\Sigma$, cf \cite[\S 5.1]{muller2016skein}. Similarly, $\Sk(\Sigma)$ is the localization
	of $\?{\Sk}(\Sigma)$ at the set of boundary arcs of $\Sigma$.
	
	\textit{For convenience, we shall often continue to include $t$ in our notation
		as in the quantized setting, even when quantization is not possible,
		with the understanding that one should set $t=1$ if necessary.}
	
	Given an element of $\?{\Sk}_t(\Sigma)$ or $\Sk_t(\Sigma)$ represented by a link $L$, let $\?{L}$ denote the element obtained by reversing the orderings of all crossings of $L$; i.e., at each crossing, we change which strand is on top.
	
	\begin{lem}[\cite{muller2016skein}]\label{lem:bar_involution_Sk}
		There is an involutive $\kk$-algebra automorphism of $\?{\Sk}_t(\Sigma)$ mapping $t\mapsto t^{-1}$ and $L\mapsto \?{L}$ for all links $L$. Furthermore, this extends to an involutive $\kk$-algebra automorphism of $\Sk_t(\Sigma)$.
	\end{lem}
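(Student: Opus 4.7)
The plan is to define the map on the free module of links first and then verify it descends to the two skein algebras in sequence. Explicitly, let $\phi\colon \kk_t^{\Links}(\Sigma) \to \kk_t^{\Links}(\Sigma)$ be the $\kk$-linear map sending $t^{1/D}\mapsto t^{-1/D}$ and $[L]\mapsto [\?L]$ on each link $L$. I will check in turn: (i) $\phi$ descends to $\?{\Sk}_t(\Sigma)$; (ii) $\phi$ is multiplicative for the superposition product; (iii) $\phi^2 = \id$; (iv) $\phi$ extends to $\Sk_t(\Sigma)$.

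For (i), I would check the three generating skein relations one by one. The contractible arc relation has both sides fixed by $\phi$ (the arc has no crossings to reverse, and $0\mapsto 0$). The contractible loop relation $L=-(q^2+q^{-2})[\emptyset]$ has no crossings on the left, and the scalar $-(q^2+q^{-2})$ on the right is symmetric under $q\leftrightarrow q^{-1}$, hence invariant under $t\mapsto t^{-1}$. For the Kauffman relation $[X_\times]=q[X_0]+q^{-1}[X_\infty]$, reversing the crossing on the left sends it to $[\?X_\times]$ while the scalar substitution swaps the coefficients of the two resolutions, yielding exactly the Kauffman relation for the oppositely-signed crossing, which is again in the defining ideal.

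For (ii), the key is the formula $[X]\cdot [Y] = t^k[X\cup Y]_{X\text{-over}}$ with $k=\sum_{i\in X,\,j\in Y}\Lambda(i,j)$ as in \eqref{Lambdaij}. Applying $\phi$ gives $t^{-k}\cdot[\?X\cup \?Y]$, where all three kinds of crossings (self-crossings of $X$, of $Y$, and the new $X$--$Y$ crossings) are reversed; in particular the $X$--$Y$ crossings are now $Y$-over. On the other hand, by antisymmetry of $\Lambda(i,j)$ under swapping the two endpoints, one has $k(Y,X)=-k(X,Y)$, so $\phi([Y])\cdot \phi([X]) = [\?Y]\cdot[\?X] = t^{-k}[\?X\cup \?Y]_{\?Y\text{-over}}$, which matches $\phi([X]\cdot[Y])$. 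This shows $\phi$ is a $\kk$-algebra anti-homomorphism on the superposition product. (Composing with the natural opposite-algebra identification, or equivalently noting that for any simple multicurve $C$ one has $\?C=C$ so that the two orders agree on a generating set once one tracks $t$-powers, gives the algebra automorphism claimed in the statement; the bookkeeping is exactly as for the quantum torus bar involution of \S \ref{qtoralg}.)

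Step (iii) is immediate: reversing all crossings is an involution on links, and $t\mapsto t^{-1}\mapsto t$. For (iv), since a boundary arc is a simple arc without interior crossings, $\phi$ fixes each boundary arc, so the set of boundary arcs at which we localize is preserved, and $\phi$ extends uniquely to $\Sk_t(\Sigma)$ by $\phi(\alpha^{-1})=\alpha^{-1}$. The main technical obstacle is the careful bookkeeping in (ii): making sure the bar reversal acts consistently on self-crossings within a factor and on crossings between the two factors, together with the antisymmetry of $\Lambda$, so that the $t^{-k}$ picked up by $\phi$ is exactly absorbed by the order swap in the superposition product.
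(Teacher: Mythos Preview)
The paper's proof is simply a citation to \cite{muller2016skein}, so your sketch gives more detail than the paper does. Your steps (i), (iii), and (iv) are fine. The real content is (ii), and there your computation is correct: you show $\phi([X]\cdot[Y])=\phi([Y])\cdot\phi([X])$, i.e., $\phi$ is an \emph{anti}-automorphism. Your parenthetical attempt to upgrade this to a genuine homomorphism does not work and should be dropped: the bar involution really is anti-multiplicative, as one already sees on the quantum torus in \S\ref{qtoralg}, and as the paper itself indicates by writing $\?{(xy^{-1})}=\?{y}^{-1}\?{x}$ in its proof. The word ``automorphism'' in the lemma statement is a mild abuse (one should read ``involutive $\kk$-algebra anti-automorphism''); this is common usage for bar involutions. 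So the correct fix is simply to state the conclusion of (ii) as anti-multiplicativity and move on. For (iv), extending an anti-automorphism that fixes the Ore set to the localization is the standard fact encoded in the formula $\?{(xy^{-1})}=\?{y}^{-1}\?{x}$; your observation that boundary arcs have no crossings (hence are $\phi$-fixed) is exactly what is needed.
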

	\begin{proof}
		The statement for $\?{\Sk}_t(\Sigma)$ is \cite[Prop. 3.11]{muller2016skein}.  The extension to $\Sk_t(\Sigma)$ is via  $\?{(xy^{-1})}=\?{y}^{-1}\?{x}$; cf. the discussion following \cite[proof of Prop. 5.2]{muller2016skein}.
	\end{proof}
	
	The involution of Lemma \ref{lem:bar_involution_Sk} is called the \textbf{bar involution} for the skein algebra.  Elements which are invariant under the bar involution are said to be \textbf{bar-invariant}.

	\subsection{Ideal triangulations and cluster structure}
	
	\label{sub:ClSk}

	An \textbf{(ideal) triangulation} $\Delta$ of $\Sigma$ is a maximal
	collection of pairwise non-homotopic simple arcs in $\Sigma$ which
	do not intersect each other in the interior of $\SSS\setminus\MM$.
	A triangle with only two distinct sides is called \textbf{self-folded},
	cf. Figure \ref{self-fold}. We shall refer to the boundary of a self-folded
	triangle as a \textbf{noose} --- i.e., a noose is an arc which bounds
	a once-punctured monogon.
	
	\begin{figure}[htb]
		{ \includegraphics{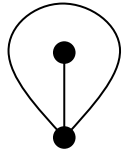} } \caption{A self-folded triangle. The outer arc is called a noose. Note that
			the marked point in the center must be a puncture.}
		\label{self-fold} 
	\end{figure}

	A marked surface $\Sigma=(\SSS,\MM)$ is \textbf{triangulable} if all the following hold (see \cite[Section 2]{FominShapiroThurston08}):
	\begin{itemize}
		\item each connected component of $\SSS$ contains at least one marked point; 
		\item each boundary component of $\SSS$ contains at least one marked point; 
		\item no connected component is a disk with $\leq 2$ marked points or a sphere with $\leq 3$ marked points.\footnote{We exclude the $3$-punctured sphere for technical reasons despite it admitting a nice triangulation. Nevertheless, the structure for the associated cluster algebra is easily understood: it is associated with the quiver consisting of three disconnected unfrozen vertices.}
	\end{itemize}
	 We will assume from now on that $\Sigma$ is triangulable.  
	
	Given a triangulation $\Delta$ of $\Sigma$, one associates a seed
	\begin{align}
		\sd_{\Delta}=(N,I,E=\{e_{i}|i\in I\},F,\omega)\label{sdDelta}
	\end{align}
	as follows. First, we take the index-set $I$ to be the set of arcs
	in $\Delta$, with $F$ the subset consisting of boundary arcs. Then
	$N:=\bb{Z}^{I}$, and $E=\{e_{i}\}_{i\in I}$ is the obvious basis.
	
	One defines $\omega=B^T$ as follows. First, for each arc $i\in\Delta$,
	if $i$ is the arc inside a self-folded triangle, let $i'$ be the
	corresponding arc which bounds the self-folded triangle. For arcs
	$i\in\Delta$ which are not inside self-folded triangles, let $i'=i$.
	Then for each pair of arcs $i,j\in\Delta$, we define $B(e_{i},e_{j})$
	in terms of the following sum over all triangles $T$ in $\Delta$
	which are not self-folded:\footnote{Here, we view a triangle $T$ as an oriented circle via homotopy, with the orientation induced by that of $\SSS$, and we consider the relative positions of the arcs $i'$ and $j'$ on the circle.} 
	\begin{align*} 
		B(e_{i},e_{j}):=\sum_{T\subset\Delta}\begin{cases}
			0 & \mbox{if \ensuremath{i'} and \ensuremath{j'} are not both in \ensuremath{T};}\\
			-1 & \mbox{if \ensuremath{j'} is the arc immediately counterclockwise of \ensuremath{i'} in \ensuremath{T};}\\
			1 & \mbox{if \ensuremath{j'} is the arc immediately clockwise of \ensuremath{i'} in \ensuremath{T}.}
		\end{cases}
	\end{align*}
	In particular, $B(e_{i},e_{j})\in\{-2,-1,0,1,2\}$ for each
	pair of arcs $i,j\in\Delta$. Following \cite{FominShapiroThurston08,muller2016skein}, $B$ is called
	the \textbf{signed adjacency matrix} or skew-adjacency matrix. See Example \ref{AnnEx}.
	
	In cases without punctures, the \textbf{orientation matrix} $\Lambda$
	is defined to be the matrix associated to the pairing defined on $M=N^*$
	by 
	\begin{align}
		\Lambda(e_{i}^{*},e_{j}^{*})=\Lambda(i,j)\label{Lambda-ei-ej}
	\end{align}
	for $\Lambda$ as in \eqref{Lambdaij}.
	\begin{lem}[\cite{muller2016skein}, Prop. 7.8]
		\label{lem:sdLambda-comp} For $\Sigma$ a marked surface without
		punctures, $(\sd_{\Delta},\Lambda)$ is a compatible pair satisfying \eqref{Lambda-B}
		for $d=4$. 
	\end{lem}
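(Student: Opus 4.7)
Since the Injectivity Assumption is implicit in the existence of a compatible $\Lambda$, it suffices to verify the matrix identity $\Lambda B\,e_j = 4e_j$ from Remark~\ref{comp-mat-rmk} for every interior arc $j\in I\setminus F$. Equivalently, I would prove
\[
(\Lambda B)_{ij} \;=\; \sum_{k\in I}\Lambda(e_i^*,e_k^*)\,B(e_k,e_j) \;=\; 4\,\delta_{ij}
\]
for every $i\in I$ and every interior arc $j$. Since $\Sigma$ has no punctures, there are no self-folded triangles, so $k'=k$ for all arcs, and in particular the ambiguity about $i'$ vs.\ $i$ in the definition of $B$ disappears.

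The key is that both $\Lambda$ and $B$ are defined as sums of \emph{local} data. Write $\Lambda(i,k)=\sum_{a,b\in\{1,2\}}\Lambda_{ab}(i,k)$ where each summand records the configuration of the ends $\partial_a(i)$ and $\partial_b(k)$ at a common marked point (and vanishes otherwise), and $B(e_k,e_j)=\sum_T B_T(k,j)$ where $B_T(k,j)\in\{-1,0,1\}$ records the position of $k$ and $j$ in a triangle $T$. Expanding,
\[
(\Lambda B)_{ij} \;=\; \sum_{T}\;\sum_{k\in I}\;\sum_{a,b}\Lambda_{ab}(i,k)\,B_T(k,j).
\]
For fixed interior $j$, only the two triangles $T_1,T_2$ of $\Delta$ on either side of $j$ contribute (these are distinct in the unpunctured setting, since otherwise $j$ would bound a monogon and thus enclose a puncture). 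Moreover, for each such $T_s$, only the two arcs of $T_s$ other than $j$ contribute nonzero values of $B_{T_s}(k,j)$.

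The computation then reduces to a local check at each vertex of $j$. Let $p$ be an endpoint of $j$, and let $\alpha_s$ be the arc of $T_s$ at $p$ adjacent to $j$ for $s=1,2$. By construction $B_{T_s}(\alpha_s,j)=\pm 1$ with a sign determined by whether $\alpha_s$ is immediately clockwise or counterclockwise of $j$ inside $T_s$, using the ambient orientation of $\SSS$. On the other hand, $\Lambda$ evaluated at the corresponding pair of ends of $j$ and $\alpha_s$ at $p$ gives the opposite sign, because ``clockwise of $\partial_b(\alpha_s)$ at $p$'' and ``immediately clockwise of $j$ inside $T_s$ at the vertex $p$'' are opposite conventions (rotating inside $T_s$ versus rotating around $p$ through the surface). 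Hence the product $\Lambda_{ab}(j,\alpha_s)B_{T_s}(\alpha_s,j)$ equals $+1$. Summing over the two triangles at $p$ and over the two endpoints $p,q$ of $j$ yields $(\Lambda B)_{jj}=4$.

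It remains to show $(\Lambda B)_{ij}=0$ for $i\neq j$. I would argue this by grouping, for each marked point $p$, all contributions coming from ends of arcs at $p$. At a fixed vertex $p$ of $j$, the arcs $k$ that contribute are those in $T_1\cup T_2$ sharing the endpoint $p$ with $i$. The contribution at $p$ is then a signed sum over the (up to two) arcs $\alpha_1,\alpha_2$ adjacent to $j$ at $p$ of the quantity $\Lambda_{ab}(i,\alpha_s)\cdot(\pm 1)$. Using the matching of signs between the $\Lambda$-convention and the triangle-convention established above, this signed sum telescopes: the contribution of $\alpha_1$ and of $\alpha_2$ at $p$ cancel whenever $i$ is neither $j$ nor a boundary arc whose end is $\partial_b\alpha_s$, and even the remaining terms assemble into a cancellation once one tracks both endpoints $p,q$ of $j$. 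Repeating at $q$ yields $(\Lambda B)_{ij}=0$ for $i\neq j$, completing the verification of \eqref{comp-matrix} with $d=4$.

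The main obstacle is the careful bookkeeping of signs: one must consistently match the ``clockwise around a vertex of $\SSS$'' convention defining $\Lambda$ in \eqref{Lambdaij} with the ``clockwise around a triangle'' convention defining $B$, and verify that at each vertex $p$ of $j$ the two contributing triangles each yield the same sign $+1$ rather than cancelling. Once this local sign matching is pinned down, the factor of $4=2\cdot 2$ (two endpoints of $j$, two triangles at each) appears automatically, and the off-diagonal cancellation follows by a straightforward pairing argument triangle-by-triangle.
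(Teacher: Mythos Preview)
Your approach is exactly the paper's: write $\omega_1(e_j)=\sum_{s=1}^4 (-1)^{s-1}e_{x_{k_s}}^*$ where $x_{k_1},\dots,x_{k_4}$ are the four sides of the quadrilateral containing $j$, then check $\Lambda(e_i^*,\omega_1(e_j))$ vertex by vertex.

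Your off-diagonal argument has a genuine gap. You announce that you will group contributions ``for each marked point $p$,'' but then you only treat the two endpoints $p,q$ of $j$. The four arcs $x_{k_s}$ have ends at \emph{all four} vertices of the quadrilateral, and an arc $i\neq j$ may well have an end at one of the two vertices that are not endpoints of $j$ (e.g.\ $i$ could be one of the $x_{k_s}$ themselves, or any other arc of $\Delta$ meeting the quadrilateral only at those corners). At such a vertex there are nonzero terms $\Lambda_{ab}(i,x_{k_s})$ and $\Lambda_{ab}(i,x_{k_{s+1}})$ that never enter your count. The fix is exactly what the paper does: at every vertex $v_s$ of the quadrilateral the two incident quadrilateral arcs are $x_{k_s}$ and $x_{k_{s+1}}$, carrying opposite signs $(-1)^{s-1}$ and $(-1)^s$; any strand of $i\neq j$ at $v_s$ lies outside the quadrilateral (or can be perturbed into it when $i=x_{k_s}$), hence sees both $x_{k_s}$ and $x_{k_{s+1}}$ on the same side, so the two $\Lambda$-contributions are equal and cancel in the alternating sum. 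This argument is uniform over all four vertices, not just the two you singled out.

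Your diagonal sign discussion is also vaguer than necessary. Rather than comparing ``clockwise around a vertex'' with ``clockwise around a triangle,'' it is cleaner to observe (as the paper does) that at an endpoint of $j$ the strand of $j$ \emph{separates} the two incident quadrilateral arcs $x_{k_s},x_{k_{s+1}}$, so the two $\Lambda$-contributions there have \emph{opposite} signs; in the alternating sum they therefore add rather than cancel, giving $+2$ per endpoint and $4$ in total.
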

	\begin{proof} 
Any internal arc $x_j$ is contained in exactly two triangles, whose remaining arcs are denoted by $x_{k_i}$, $1\leq i\leq 4$, see Figure \ref{fig:adjacentArcs}. Note that the two endpoints of $x_j$ might be the same, and some remaining arcs might be repeated same.  Denote $x_{k_{i+4}}=x_{k_i}$ for simplicity. Let $v_i$ denote to the vertex of the quadrilateral containing $x_j$ which corresponds to the intersection $x_{k_i}\cap x_{k_{i+1}}$ in Figure \ref{fig:adjacentArcs}. Note that $x_{k_{i}}$ and $x_{k_{i+1}}$ can not be the same arc, because otherwise $v_i$ would become a puncture.
		
Let $e_j$ denote the basis vector in $N$ corresponding to $x_j$. For any arc $\gamma\in \Delta$, let $e_{\gamma}^*$ be the corresponding basis vector in $M$. By our definition of the matrix $B$, we get $\omega_1(e_j)=\sum_{1\leq i\leq 4}(-1)^{i-1}e_{x_{k_i}}^*$. We have to show that $\Lambda(e_\gamma^{*},\omega_1(e_j))=0$ for any arc $\gamma\in \Delta$ non-homotopic to $x_j$, and $\Lambda(e_{x_j}^*,\omega_1(e_j))=4$.
		
		Let us first consider $\Lambda(e_\gamma^{*},\omega_1(e_j))$ for $\gamma$ not homotopic to $x_j$. In view of \eqref{Lambdaij}, in order to calculate $\Lambda(\gamma,x_{k_i})$, we compute the contribution from the strands of the pair $(\gamma,x_{k_i})$ at each vertex of the quadrilateral, where we can assume the vertices are distinct in the computation because we work with strands.  If $\gamma=x_{k_i}$ we may perturb $x_{k_i}$ to a homotopic arc bending into the quadrilateral when computing $\Lambda$.  Now at any vertex $v_i$, $1\leq i\leq 4$, the contributions to $\Lambda(\gamma,x_{k_i})$ and $\Lambda(\gamma,x_{k_{i+1}})$ are the same, while the contributions to $\Lambda(\gamma,x_{k_{i+2}})$ and $\Lambda(\gamma,x_{k_{i+3}})$ are zero. We deduce that $\Lambda(e^*_{\gamma},\omega_1(e_j))=\sum_{1\leq i\leq 4}(-1)^{i-1}\Lambda(\gamma,x_{k_i})=0$.
		
		Similarly, at the vertex $v_i$, $i=1,3$, the contribution to $\Lambda(x_j,x_{k_i})$ is $1$ and the contribution to $\Lambda(x_j,x_{k_{i+1}})$ is $-1$, while the contributions to $(\gamma,x_{k_{i+2}})$ and $(\gamma,x_{k_{i+3}})$ are zero. We deduce that $\Lambda(e^*_{x_j},\omega_1(e_j))=\sum_{1\leq i\leq 4}(-1)^{i-1}\Lambda(x_j,x_{k_i})=4$.
	\end{proof}
	
\begin{figure}[htb]
	\label{fig:adjacentArcs}\global\long\def\svgwidth{100pt}
\begingroup%
  \makeatletter%
  \providecommand\color[2][]{%
    \errmessage{(Inkscape) Color is used for the text in Inkscape, but the package 'color.sty' is not loaded}%
    \renewcommand\color[2][]{}%
  }%
  \providecommand\transparent[1]{%
    \errmessage{(Inkscape) Transparency is used (non-zero) for the text in Inkscape, but the package 'transparent.sty' is not loaded}%
    \renewcommand\transparent[1]{}%
  }%
  \providecommand\rotatebox[2]{#2}%
  \newcommand*\fsize{\dimexpr\f@size pt\relax}%
  \newcommand*\lineheight[1]{\fontsize{\fsize}{#1\fsize}\selectfont}%
  \ifx\svgwidth\undefined%
    \setlength{\unitlength}{271.13396974bp}%
    \ifx\svgscale\undefined%
      \relax%
    \else%
      \setlength{\unitlength}{\unitlength * \real{\svgscale}}%
    \fi%
  \else%
    \setlength{\unitlength}{\svgwidth}%
  \fi%
  \global\let\svgwidth\undefined%
  \global\let\svgscale\undefined%
  \makeatother%
  \begin{picture}(1,0.99983488)%
    \lineheight{1}%
    \setlength\tabcolsep{0pt}%
    \put(0,0){\includegraphics[width=\unitlength,page=1]{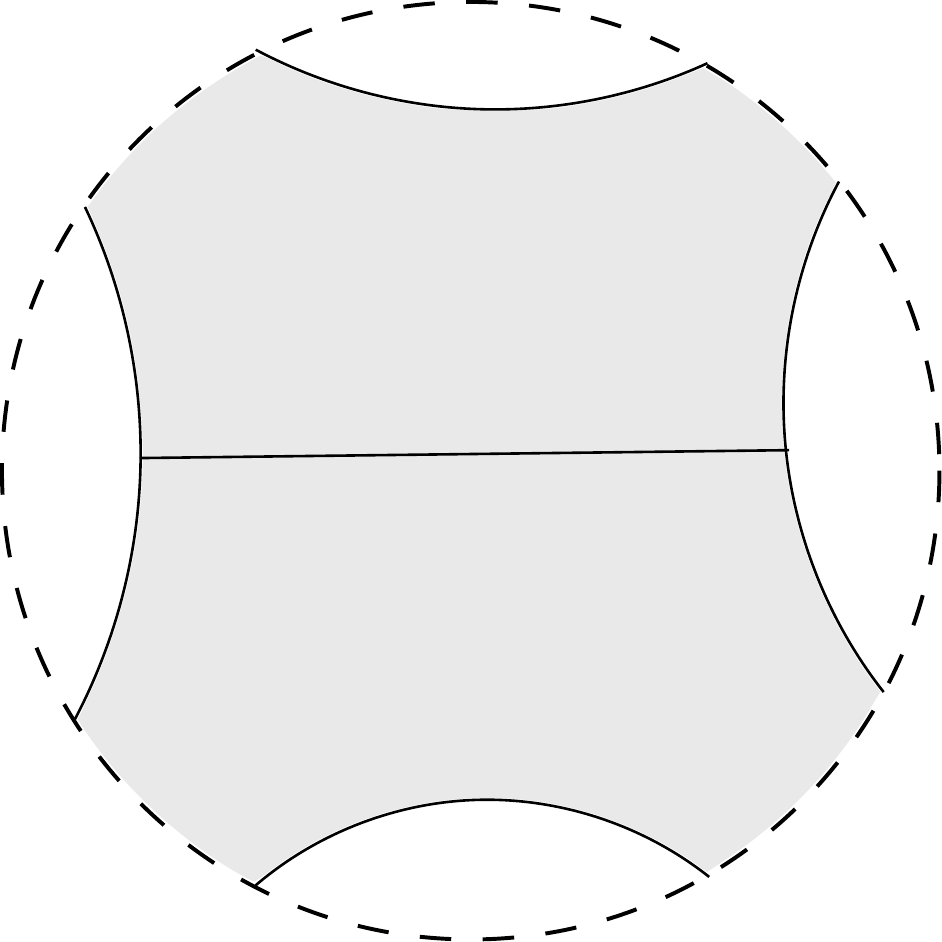}}%
    \put(0.44722771,0.55976899){\makebox(0,0)[lt]{\lineheight{1.25}\smash{\begin{tabular}[t]{l}$x_j$\end{tabular}}}}%
    \put(0,0){\includegraphics[width=\unitlength,page=2]{compatible_pair_proof_2.pdf}}%
    \put(0.66145584,0.73145271){\makebox(0,0)[lt]{\lineheight{1.25}\smash{\begin{tabular}[t]{l}$x_{k_1}$\end{tabular}}}}%
    \put(0.67531357,0.28489211){\makebox(0,0)[lt]{\lineheight{1.25}\smash{\begin{tabular}[t]{l}$x_{k_2}$\end{tabular}}}}%
    \put(0.19791503,0.28431351){\makebox(0,0)[lt]{\lineheight{1.25}\smash{\begin{tabular}[t]{l}$x_{k_3}$\end{tabular}}}}%
    \put(0.1921691,0.74774061){\makebox(0,0)[lt]{\lineheight{1.25}\smash{\begin{tabular}[t]{l}$x_{k_4}$\end{tabular}}}}%
    \put(0,0){\includegraphics[width=\unitlength,page=3]{compatible_pair_proof_2.pdf}}%
  \end{picture}%
\endgroup%

	\caption{The arcs adjacent to an internal arc $x_j$.}
\end{figure}

	Given a triangulation $\Delta$ and an interior arc $i\in\Delta$
	which is not inside a self-folded triangle, note that there is a unique
	quadrilateral $R$ in $\Delta$ with $i$ as a diagonal. The \textbf{flip}
	(or mutation, or Ptolemy transform) of $\Delta$ at $i$, denoted
	$\mu_{i}(\Delta)$, is the triangulation obtained by replacing $i$
	with the other diagonal $i'$ of $R$ while keeping the rest of $\Delta$
	the same.  See Figure \ref{Ptolemy}.
	
	\begin{figure}[htb]
		\global\long\def\svgwidth{220pt}
\begingroup%
  \makeatletter%
  \providecommand\color[2][]{%
    \errmessage{(Inkscape) Color is used for the text in Inkscape, but the package 'color.sty' is not loaded}%
    \renewcommand\color[2][]{}%
  }%
  \providecommand\transparent[1]{%
    \errmessage{(Inkscape) Transparency is used (non-zero) for the text in Inkscape, but the package 'transparent.sty' is not loaded}%
    \renewcommand\transparent[1]{}%
  }%
  \providecommand\rotatebox[2]{#2}%
  \newcommand*\fsize{\dimexpr\f@size pt\relax}%
  \newcommand*\lineheight[1]{\fontsize{\fsize}{#1\fsize}\selectfont}%
  \ifx\svgwidth\undefined%
    \setlength{\unitlength}{504.10388577bp}%
    \ifx\svgscale\undefined%
      \relax%
    \else%
      \setlength{\unitlength}{\unitlength * \real{\svgscale}}%
    \fi%
  \else%
    \setlength{\unitlength}{\svgwidth}%
  \fi%
  \global\let\svgwidth\undefined%
  \global\let\svgscale\undefined%
  \makeatother%
  \begin{picture}(1,0.25956738)%
    \lineheight{1}%
    \setlength\tabcolsep{0pt}%
    \put(0,0){\includegraphics[width=\unitlength,page=1]{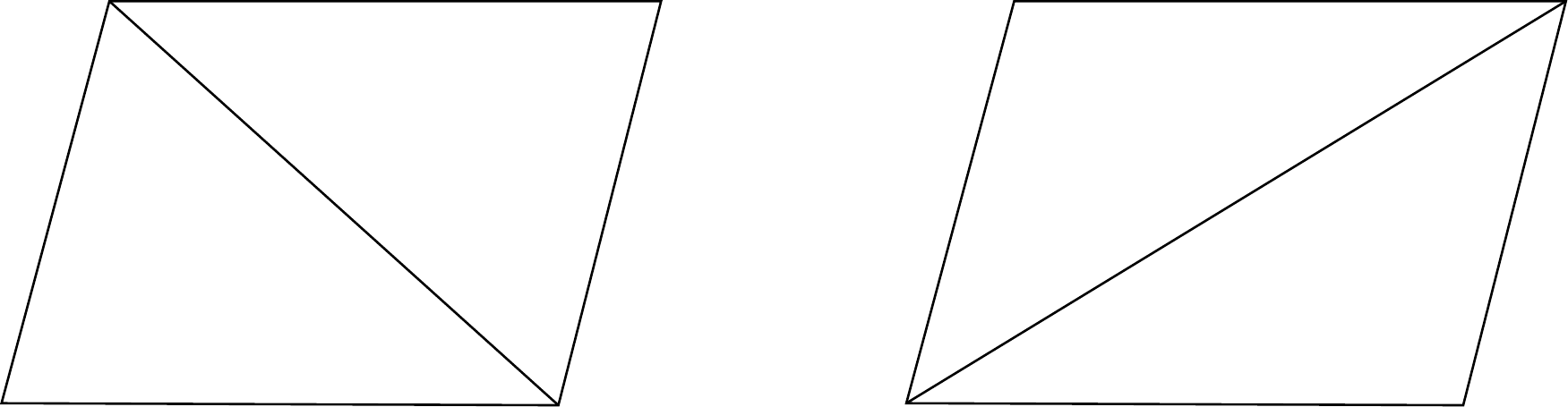}}%
    \put(0.20600564,0.14698929){\color[rgb]{0,0,0}\makebox(0,0)[lt]{\lineheight{1.25}\smash{\begin{tabular}[t]{l}$\gamma$\end{tabular}}}}%
    \put(0.78325306,0.09960202){\color[rgb]{0,0,0}\makebox(0,0)[lt]{\lineheight{1.25}\smash{\begin{tabular}[t]{l}$\gamma'$\end{tabular}}}}%
  \end{picture}%
\endgroup%
 \caption{Two triangulations of a quadrilateral related by the flip at the arc
			$\gamma$. \label{Ptolemy}}
	\end{figure}
	
	\begin{example}\label{AnnEx} 
		Let $\Sigma$ be the annulus with one marking on each boundary component
		as in Figure \ref{Annulus}. The annulus on the left side of the figure
		has the triangulation $\Delta$ consisting of two boundary arcs $b_{1},b_{2}$ and two additional
		arcs $\gamma_{1},\gamma_{2}$. One checks that if we label $\gamma_{1},\gamma_{2},b_{1},b_{2}$
		as $1,2,3,4$, respectively, then the corresponding compatible pair
		$(\sd_{\Delta},\Lambda)$ is that of Examples \ref{QuivEx} and \ref{AnComp}.

		In Proposition \ref{SkAProp}, we will see that $\Sk_t(\Sigma)$ can be identified with the cluster algebra associated to this $(\sd_{\Delta},\Lambda)$. Using the Kauffman skein relation, we see $[\gamma_1][\gamma_1']=q^{-2} [\gamma_2]^2 + [b_1][b_2]$. This equality recovers the the mutation $\mu_1$ on $\sd_{\Delta}$; compare to \eqref{Eq:mu1} using $[\gamma_1]=A_1$, $[\gamma'_1]=A'_1$, $[\gamma_2]=A_2$, $[b_1]=A_3$, $[b_2]=A_4$, and $q=t^{-2}$. 
	\end{example}
	
	\begin{figure}[htb]
		\begin{tabular}{cc}
			\global\long\def\svgwidth{140pt}
\begingroup%
  \makeatletter%
  \providecommand\color[2][]{%
    \errmessage{(Inkscape) Color is used for the text in Inkscape, but the package 'color.sty' is not loaded}%
    \renewcommand\color[2][]{}%
  }%
  \providecommand\transparent[1]{%
    \errmessage{(Inkscape) Transparency is used (non-zero) for the text in Inkscape, but the package 'transparent.sty' is not loaded}%
    \renewcommand\transparent[1]{}%
  }%
  \providecommand\rotatebox[2]{#2}%
  \newcommand*\fsize{\dimexpr\f@size pt\relax}%
  \newcommand*\lineheight[1]{\fontsize{\fsize}{#1\fsize}\selectfont}%
  \ifx\svgwidth\undefined%
    \setlength{\unitlength}{112.14528195bp}%
    \ifx\svgscale\undefined%
      \relax%
    \else%
      \setlength{\unitlength}{\unitlength * \real{\svgscale}}%
    \fi%
  \else%
    \setlength{\unitlength}{\svgwidth}%
  \fi%
  \global\let\svgwidth\undefined%
  \global\let\svgscale\undefined%
  \makeatother%
  \begin{picture}(1,0.76708861)%
    \lineheight{1}%
    \setlength\tabcolsep{0pt}%
    \put(0,0){\includegraphics[width=\unitlength,page=1]{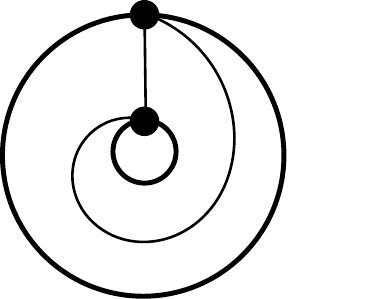}}%
    \put(0.73592953,0.33059038){\color[rgb]{0,0,0}\makebox(0,0)[lt]{\lineheight{1.25}\smash{\begin{tabular}[t]{l}$b_1$\end{tabular}}}}%
    \put(0.31211488,0.2356366){\color[rgb]{0,0,0}\makebox(0,0)[lt]{\lineheight{1.25}\smash{\begin{tabular}[t]{l}$b_2$\end{tabular}}}}%
    \put(0.12792085,0.36085495){\color[rgb]{0,0,0}\makebox(0,0)[lt]{\lineheight{1.25}\smash{\begin{tabular}[t]{l}$\gamma_1$\end{tabular}}}}%
    \put(0.37703549,0.57041603){\color[rgb]{0,0,0}\makebox(0,0)[lt]{\lineheight{1.25}\smash{\begin{tabular}[t]{l}$\gamma_2$\end{tabular}}}}%
  \end{picture}%
\endgroup%
  & 
			\global\long\def\svgwidth{160pt}
\begingroup%
  \makeatletter%
  \providecommand\color[2][]{%
    \errmessage{(Inkscape) Color is used for the text in Inkscape, but the package 'color.sty' is not loaded}%
    \renewcommand\color[2][]{}%
  }%
  \providecommand\transparent[1]{%
    \errmessage{(Inkscape) Transparency is used (non-zero) for the text in Inkscape, but the package 'transparent.sty' is not loaded}%
    \renewcommand\transparent[1]{}%
  }%
  \providecommand\rotatebox[2]{#2}%
  \newcommand*\fsize{\dimexpr\f@size pt\relax}%
  \newcommand*\lineheight[1]{\fontsize{\fsize}{#1\fsize}\selectfont}%
  \ifx\svgwidth\undefined%
    \setlength{\unitlength}{128.26971894bp}%
    \ifx\svgscale\undefined%
      \relax%
    \else%
      \setlength{\unitlength}{\unitlength * \real{\svgscale}}%
    \fi%
  \else%
    \setlength{\unitlength}{\svgwidth}%
  \fi%
  \global\let\svgwidth\undefined%
  \global\let\svgscale\undefined%
  \makeatother%
  \begin{picture}(1,0.67065999)%
    \lineheight{1}%
    \setlength\tabcolsep{0pt}%
    \put(0,0){\includegraphics[width=\unitlength,page=1]{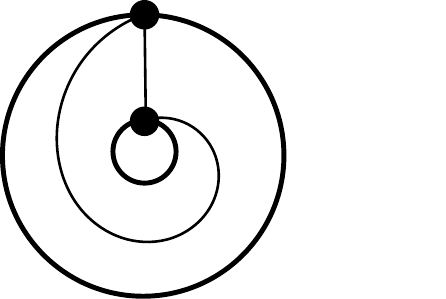}}%
    \put(0.64341783,0.28903276){\color[rgb]{0,0,0}\makebox(0,0)[lt]{\lineheight{1.25}\smash{\begin{tabular}[t]{l}$b_1$\end{tabular}}}}%
    \put(0.27287977,0.20601537){\color[rgb]{0,0,0}\makebox(0,0)[lt]{\lineheight{1.25}\smash{\begin{tabular}[t]{l}$b_2$\end{tabular}}}}%
    \put(0.47372707,0.36422079){\color[rgb]{0,0,0}\makebox(0,0)[lt]{\lineheight{1.25}\smash{\begin{tabular}[t]{l}$\gamma'_1$\end{tabular}}}}%
    \put(0.32963939,0.49871058){\color[rgb]{0,0,0}\makebox(0,0)[lt]{\lineheight{1.25}\smash{\begin{tabular}[t]{l}$\gamma_2$\end{tabular}}}}%
  \end{picture}%
\endgroup%
 \tabularnewline
		\end{tabular}\caption{Two triangulations of an annulus with one marking on each boundary component.
			The triangulations are related by the flip at $\gamma_1$.
			\label{Annulus}}
	\end{figure}
	
	Given a triangulation $\Delta$, let $[\Delta]$ be the element of
	$\?{\Sk}_{t}(\Sigma)$ corresponding
	to the link consisting precisely of the union of the arcs in $\Delta$. Let $[\Delta_{\ufv}]$
	be the element which is similarly defined but using only the non-boundary
	arcs of $\Delta$.

	For any two simple multicurves $a,b$, let $\cc(a,b)$ be the \textbf{crossing number} of
	$a$ and $b$, i.e., the smallest possible number of intersection
	points in $\SSS\setminus\MM$ between all possible pairs of transverse multicurves homotopic to $a$ and $b$ respectively, not counting intersections at marked points. As in \cite[\S 4.2]{muller2016skein}, we extend this definition to arbitrary $a,b\in\?{\Sk}_{t}(\Sigma)$ by specifying that for $a=\sum_i \alpha_i [C_i]$, $b=\sum_j \beta_j [D_j]$ denoting the decompositions of $a$ and $b$ into linear combinations of distinct simple multicurves $[C_i]$, $[D_j]$,\footnote{The decompositions are deduced from skein relations. They are unique by Lemma \ref{BangLem}.}  the crossing number $\cc(a,b)$ is defined as the maximum of $\cc(C_i,D_j)$ over all pairs $([C_i], [D_j])$ appearing.

	\begin{lem}[\cite{muller2016skein}, Lem. 4.11]
		Let $a$ be a simple arc in $\Sigma$ and let $b\in\?{\Sk}_{t}(\Sigma)$ with $\cc(a,b)\geq1$. Then $\cc(a,ab)\leq\cc(a,b)-1$. 
	\end{lem}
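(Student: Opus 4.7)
The plan is to reduce to the case that $b$ is the class of a single simple multicurve, and then apply the Kauffman skein relations to resolve all crossings between $a$ and that multicurve, using the bigon criterion to bound the resulting crossing numbers.

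First, write $b=\sum_j\beta_j[D_j]$ as a $\kk_t$-linear combination of distinct simple multicurves. Since the coefficients of $ab$ in the simple multicurve basis are obtained by expanding each $a\cdot[D_j]$ and summing, and since $\cc(a,-)$ of a linear combination is the maximum over the simple multicurves appearing, it suffices to show that every simple multicurve $[E]$ appearing in the skein expansion of $a\cdot[D_j]$ satisfies $\cc(a,E)\leq \max(\cc(a,D_j)-1,0)$. When $\cc(a,D_j)=0$, $a$ and $D_j$ admit disjoint representatives (minimal position), so $a\cdot[D_j]$ equals a power of $t$ times $[a\sqcup D_j]$, and a parallel push-off of $a$ yields $\cc(a,a\sqcup D_j)=0$. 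Hence it remains to prove: for $b=[C]$ a simple multicurve with $n\coloneqq\cc(a,C)\geq 1$, every simple multicurve $D_\epsilon$ appearing in the skein expansion of $a\cdot[C]$ satisfies $\cc(a,D_\epsilon)\leq n-1$.

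For the main case, put $a$ and $C$ in minimal position so that $a\cup C$ has exactly $n$ transverse interior crossings $p_1,\ldots,p_n$, all between $a$-strands and $C$-strands. The product $a\cdot[C]$ is then represented by the link with $a$ entirely over $C$ at each crossing. Applying the Kauffman relation simultaneously at all $n$ crossings gives
\begin{align*}
a\cdot[C] \;=\; \sum_{\epsilon\in\{+,-\}^n} t^{k_\epsilon}[L_\epsilon],
\end{align*}
where $L_\epsilon$ is the multicurve without interior crossings obtained by applying the $\epsilon_i$-resolution at each $p_i$. Reducing any contractible arcs, contractible loops, and (in the punctured setting) peripheral loops around single punctures to scalars yields $[L_\epsilon]=c_\epsilon[D_\epsilon]$ for some $c_\epsilon\in\kk_t$ and simple multicurve $D_\epsilon$.

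The remaining step is to bound $\cc(a,D_\epsilon)\leq n-1$. A natural representative of $D_\epsilon$ agrees with $a\cup C$ outside disjoint small disk neighborhoods $U_i$ of the crossings $p_i$, and inside each $U_i$ consists of two disjoint non-crossing arcs. Take a generic parallel push-off $a'$ of $a$ and count intersections with $D_\epsilon$: at each $p_i$, depending on whether the resolution is compatible with the side of the push-off, either no new transverse intersection is created near $p_i$, or two are created but they bound an embedded bigon with $a'$ inside $U_i$ that can be eliminated by an isotopy. Summing the local contributions and removing bigons via the classical bigon criterion produces a representative with at most $n-1$ intersections, proving $\cc(a,D_\epsilon)\leq n-1$. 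The main obstacle is the bigon analysis: one must verify that each candidate bigon produced by a local resolution is a genuine embedded disk whose interior is disjoint from the remainder of $a'\cup D_\epsilon$. This depends crucially on the minimal-position hypothesis for $a$ and $C$, which forbids essential components from being trapped inside a small disk near any $p_i$, so that every such local bigon may indeed be removed by an ambient isotopy.
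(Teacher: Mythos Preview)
Your reduction to a single simple multicurve $C$ and the strategy of taking a parallel push-off $a'$ are correct, but the local crossing count inside each disk $U_i$ is not. With $a$ along the $x$-axis, $C$ along the $y$-axis, and $a'$ the line $y=\epsilon$ for small $\epsilon>0$, each smoothing at $p_i$ consists of two disjoint arcs, exactly one of which meets $a'$, and it does so transversally in a single point (for the smoothing $\{W\!-\!N,\,E\!-\!S\}$ the arc $W\!-\!N$ is crossed once while $E\!-\!S$ lies in $\{y<0\}$; symmetrically for the other smoothing). Hence $a'$ meets $D_\epsilon$ in exactly $n$ points, not ``$0$ or $2$'' locally. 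Indeed, if your local picture were correct it would force $\cc(a,D_\epsilon)=0$ for every resolution $\epsilon$, which already fails for an arc crossing a simple loop twice.

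The ingredient that is genuinely missing is the use of the hypothesis that $a$ is an \emph{arc}. The initial segment of $a$ from its endpoint $P\in\MM$ to the first crossing $p_1$ lies inside $D_\epsilon$, and $a'$ also issues from $P$. Tracing $a'$ alongside this piece of $D_\epsilon$ from $P$, one finds either a half-bigon (one corner at the marked point $P$, the other at the crossing near $p_1$) or, if $D_\epsilon$ turns away from $a'$ at $p_1$, one continues along the next $a$-segment of $D_\epsilon$ toward $p_2$ and repeats; the process terminates either in an honest bigon between two consecutive crossings or in a half-bigon at the other endpoint $Q$. In every case at least one of the $n$ crossings of $a'$ with $D_\epsilon$ is inessential, giving $\cc(a,D_\epsilon)\le n-1$. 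This is precisely where Muller's argument uses that $a$ has endpoints on $\MM$; the statement can fail for loops.
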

	
	\begin{proof}
		In the unpunctured case, this is \cite[Lem. 4.11]{muller2016skein}. Essentially
		the same argument works in the presence of punctures. 
	\end{proof}
	\begin{cor}
		\label{Deltak} For any triangulation $\Delta$ of $\Sigma$ and any
		$b\in\?{\Sk}_{t}(\Sigma)$, the product
		$[\Delta_{\ufv}]^{k}\cdot b$ for $k\gg0$ is a linear combination of products
		of arcs of $\Delta$. Similarly for any $b\in \Sk_t(\Sigma)$ with $[\Delta]$ in place of $[\Delta_{\uf}]$.
	\end{cor}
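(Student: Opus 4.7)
The plan is to iterate the preceding Lemma, using that the arcs of $\Delta$ are pairwise non-crossing to ensure monotone progress in reducing crossing numbers.

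Enumerate the unfrozen arcs of $\Delta$ as $a_1, \dots, a_n$, so that (up to a power of $t$) $[\Delta_{\uf}] = [a_1] \cdots [a_n]$, and define $N(c) := \sum_{i=1}^n \cc(a_i, c)$ for $c \in \?{\Sk}_t(\Sigma)$. The key observation is that, for $i \neq j$, multiplying by $[a_j]$ cannot increase $\cc(a_i, \cdot)$: since $a_i$ and $a_j$ are disjoint in $\SSS \setminus \MM$, the superposition $a_j \cup c$ has the same interior crossings with $a_i$ as $c$ does, and the skein relations only resolve (never create) crossings. Combined with the preceding Lemma, which strictly reduces $\cc(a_i, \cdot)$ under multiplication by $[a_i]$ whenever it is positive, we conclude that each multiplication by $[\Delta_{\uf}]$ strictly decreases $N$ while $N > 0$. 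Hence $N([\Delta_{\uf}]^k \cdot b) = 0$ for $k \geq N(b)$.

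It remains to interpret elements $b'$ with $N(b') = 0$. Writing $b'$ as a $\kk_t$-linear combination of simple multicurves, each component has no interior crossings with any arc of $\Delta_{\uf}$. Since $\Delta_{\uf} \cup \partial \SSS$ decomposes $\SSS$ into the triangles of $\Delta$ (possibly self-folded), each such component is contained in the closure of a single triangle. Within such a triangle, simple arcs are homotopic to sides (which are arcs of $\Delta$, possibly frozen), and simple loops are contractible, so they do not appear as components of a simple multicurve. In particular, loops around punctures cannot arise, since every arc of $\Delta$ meeting a puncture is unfrozen and any loop around the puncture would have to cross at least one such arc. Therefore $b'$ is a $\kk_t$-linear combination of products of arcs of $\Delta$, establishing the first assertion.

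For the second assertion, write $b \in \Sk_t(\Sigma)$ in Ore form as $c \cdot d^{-1}$ with $c \in \?{\Sk}_t(\Sigma)$ and $d$ a product of boundary arcs. Since boundary arcs pairwise commute up to powers of $t$ (being non-crossing), for $k$ larger than the multiplicities appearing in $d$ we may write $[\Delta]^k \cdot b$ as a power of $t$ times $[\Delta_{\uf}]^k \cdot c'$ for some $c' \in \?{\Sk}_t(\Sigma)$, and the first assertion applies to $c'$. The main subtlety throughout is the monotone bookkeeping of crossing numbers, which is controlled by the preceding Lemma together with the non-crossing property of arcs of $\Delta$.
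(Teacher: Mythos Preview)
Your proof is correct and is the natural filling-in of the argument the paper leaves implicit (the corollary is stated without proof immediately after the lemma). The one substantive point you make explicit---that multiplying by $[a_j]$ cannot increase $\cc(a_i,\cdot)$ for $i\neq j$, since $a_i$ and $a_j$ are disjoint and skein resolutions at crossings away from $a_i$ cannot create new intersections with $a_i$---is exactly what is needed to ensure the total crossing count $N$ decreases monotonically, and it is sound.
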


Because it will be useful later, we recall that by \cite[Lem. 2.13]{FominShapiroThurston08}, there always exists a triangulation without self-folded triangles.

The following summarizes some known results on the relationship between (quantum) skein algebras and (quantum) cluster algebras in the unpunctured setting.

\begin{prop}[Skein algebras and cluster algebras---unpunctured setting]
	\label{SkAProp} Let $\Sigma$ be a triangulable marked surface without punctures. Let $\Delta$
	be an ideal triangulation of $\Sigma$ and let $\sd_{\Delta}$ be
	the associated seed as in \eqref{sdDelta}.   Let $\Lambda$
	be the corresponding skew-symmetric pairing on $M$ as in \eqref{Lambda-ei-ej},
	so $(\sd_{\Delta},\Lambda)$ is a compatible pair (Lemma \ref{lem:sdLambda-comp}).
	Then we have inclusions 
	\begin{align}
		\s{A}_{t}^{\ord}\subset\Sk_{t}(\Sigma)\subset\s{A}_{t}^{\up}\label{Sk-in-A-un}
	\end{align}
	and 
	\begin{align}
		\?{\s{A}}_{t}^{\ord}\subset\?{\Sk}_{t}(\Sigma)\subset\?{\s{A}}_{t}^{\up}\label{comp-Sk-in-A-un}
	\end{align}
	along with the corresponding classical analogs of these inclusions.
	Furthermore, these inclusions identify simple arcs bijectively with cluster
	variables, boundary arcs being identified with the frozen cluster
	variables. Triangulations correspond bijectively with clusters, and
	mutation of seeds/clusters corresponds to flips of triangulations.
	If each component of $\Sigma$ contains at least two markings (or even without this condition if $t=1$), then the inclusions in \eqref{Sk-in-A-un} are actually isomorphisms.
\end{prop}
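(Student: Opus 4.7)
The plan is to verify the four claims (inclusions, bijection between arcs and cluster variables, compatibility of flips with mutation, and the isomorphism under the two-marking hypothesis) by identifying the initial cluster with a triangulation $\Delta$ and then propagating this identification through mutations. As a preliminary, note that in Example \ref{AnnEx} the Kauffman skein relation reproduces the quantum Ptolemy exchange relation $[\gamma][\gamma'] = q^{-2}[\gamma_2]^2 + [b_1][b_2]$ associated to a quadrilateral; more generally, for any interior arc $i\in\Delta$ (not the inner arc of a self-folded triangle, which does not exist here since there are no punctures), the two diagonals of the surrounding quadrilateral satisfy a skein relation whose coefficients and boundary monomials agree (after a careful sign/$t$-power audit using \eqref{Lambdaij}) with the quantum mutation relation prescribed by $(\mu_i^{\s{A}})^{-1}$ in \eqref{mujAInverse}, using the compatible pair from Lemma \ref{lem:sdLambda-comp} with $d=4$. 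This is the content of \cite[\S 7]{muller2016skein}.

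Granting this, the inclusion $\s{A}_t^{\ord}\subset \Sk_t(\Sigma)$ is established inductively: send $A_{\sd_\Delta,i}$ to $[i]\in \Sk_t(\Sigma)$ for each arc $i\in\Delta$, and check that the image of any mutation sequence lies in $\Sk_t(\Sigma)$ by repeatedly applying the Ptolemy/skein identification above, using Ore localization at boundary arcs to divide. The same argument, restricted to never inverting frozen variables, gives $\?{\s{A}}_t^{\ord}\subset \?{\Sk}_t(\Sigma)$. For the upper bound $\Sk_t(\Sigma)\subset \s{A}_t^{\up}$, it suffices to show that the class $[C]$ of an arbitrary simple multicurve $C$ lies in every cluster $\s{A}_{\jj,t}$. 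Fix a triangulation $\Delta'=\sd_\Delta\cdot \jj$; by Corollary \ref{Deltak}, for $k\gg 0$ the product $[\Delta'_{\ufv}]^k\cdot [C]$ is a linear combination of products of arcs of $\Delta'$, hence lies in $\?{\s{A}}_{\jj,t}$, and dividing by $[\Delta'_{\ufv}]^k$ (invertible in $\s{A}_{\jj,t}$) gives the desired Laurent expansion. The classical ($t=1$) versions of all the inclusions are obtained by specialization.

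To upgrade the injection $\{\text{simple arcs}\}\to \{\text{cluster variables}\}$ to a bijection, and triangulations to clusters, I would argue that every cluster variable arises from some mutation sequence on $\sd_\Delta$, which by the Ptolemy/flip correspondence corresponds to a sequence of flips starting at $\Delta$; since any two triangulations of a triangulable unpunctured surface are connected by flips (a standard fact going back to \cite{FominShapiroThurston08}), this gives surjectivity onto simple arcs. Injectivity is the statement that distinct (non-homotopic) simple arcs give $\kk_t$-linearly independent elements of $\s{A}_t^{\up}$; this follows from the distinctness of their $g$-vectors, since an arc's $g$-vector reads off its shear/intersection data with $\Delta$.

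For the isomorphism claim, the classical statement $\s{A}^{\ord}=\Sk(\Sigma)=\s{A}^{\up}$ is \cite[Thm. 1.1]{muller2016skein}: the hypothesis that $\Sigma$ is unpunctured triangulable (and, in the quantum setting, that each component has at least two markings) guarantees that $\s{A}(\sd_\Delta)$ is locally acyclic (in fact admits a cover by acyclic clusters obtained by flipping to a triangulation with no internal arcs bounded by two arcs of the same triangle), and for locally acyclic cluster algebras the ordinary and upper cluster algebras coincide. Quantization of this argument, carried out in \cite[\S 8]{muller2016skein} under the extra ``two markings per component'' hypothesis, gives $\s{A}_t^{\ord}=\Sk_t(\Sigma)=\s{A}_t^{\up}$. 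The main obstacle is this last step: the local acyclicity machinery does not obviously quantize for components with a single marking (where the cluster algebra contains a once-punctured-torus-like subquiver relative to a boundary collapse), which is precisely why the two-markings hypothesis is imposed in the quantum case but not in the classical one.
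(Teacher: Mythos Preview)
Your approach is essentially the same as the paper's: both proofs are largely a synthesis of results from \cite{muller2016skein}, \cite{FominShapiroThurston08}, and \cite{FockGoncharov06a}, and the one argument the paper spells out is precisely your Corollary~\ref{Deltak} trick (multiply by $[\Delta'_{\ufv}]^k$ and then invert), though the paper applies it to the non-localized inclusion $\?{\Sk}_t(\Sigma)\subset\?{\s{A}}_t^{\up}$ while citing \cite[Thm.~7.15]{muller2016skein} directly for the localized version.

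There is one small but genuine gap in your justification of the final isomorphism claim. You assert that in the classical setting, unpunctured triangulable $\Sigma$ (with no marking hypothesis) is locally acyclic, and hence $\s{A}^{\ord}=\s{A}^{\up}$ by \cite{muller2016skein}. But the local acyclicity results in Muller's work require at least two markings per component (cf.\ \cite[Thms.~4.1, 10.6]{MuLaca}, as cited later in this paper); the once-marked components are not covered by that argument. The paper instead cites \cite[Thm.~1]{CLS} for the extension of the classical equality to the once-marked case, and \cite[Thm.~9.8]{muller2016skein} (not \S 8) for the quantum equality under the two-markings hypothesis. Your parenthetical about ``once-punctured-torus-like subquivers'' correctly identifies where the difficulty lies, but the resolution is the separate work \cite{CLS}, not a refinement of local acyclicity.
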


\begin{proof}
	In the classical setting, the relationship between flips and mutations
	is in \cite{FominShapiroThurston08} and \cite{FockGoncharov06a}, and similarly
	for the relationship between arcs and cluster variables, cf.
	\cite[Thm. 6.1]{fomin2018cluster}. The quantum analog of these relationships is
	due to \cite{muller2016skein}.

	The inclusions \eqref{Sk-in-A-un} are \cite[Thm. 7.15]{muller2016skein}.  The statement on the inclusions in \eqref{Sk-in-A-un} actually being
	equalties is \cite[Thm. 9.8]{muller2016skein} in the quantum setting
	with at least two markings on each component, and this is extended
	to the once-marked classical setting by \cite[Thm. 1]{CLS}.
	
	To prove \eqref{comp-Sk-in-A-un}, let $\Delta$ be a triangulation,
	and let $b\in\?{\Sk}_{t}(\Sigma)$. By Corollary \ref{Deltak} and the
	correspondence between arcs and cluster variables, $b$ can be multiplied
	by a cluster monomial from the cluster $\?{\s{A}}_{t}^{\sd_{\Delta}}$, with
	no frozen variable factors, to get a linear combination of cluster
	monomials from this cluster. It follows that $b\in\?{\s{A}}_{t}^{\sd_{\Delta}}$.
	Since $b$ and $\Delta$ were arbitrary, the inclusion $\?{\Sk}_{t}(\Sigma)\subset\?{\s{A}}_{t}^{\up}$
	follows.
\end{proof}

We note that the equality $\Sk_t(\Sigma)=\s{A}_t^{\up}$ also holds for all unpunctured surfaces (including once-marked quantum cases) as a consequence of our later results (Theorem \ref{thm:sk_unpunct_basis}), and similarly, $\?{\Sk}_t(\Sigma)=\?{\s{A}}_t^{\up}$ (Corollary  \ref{cor:q_cluster_skein_equal}).

\begin{lem}\label{lem:bar}
	The bar involutions of the skein algebras are compatible with the bar involutions of the corresponding cluster algebras.
\end{lem}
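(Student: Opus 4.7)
The plan is to verify that both involutions agree on a generating subset of $\Sk_t(\Sigma)$ and then extend by the (anti-)algebra structure shared by both maps. Fix any triangulation $\Delta$ of $\Sigma$, and recall from Proposition \ref{SkAProp} that each arc $\alpha \in \Delta$ corresponds to the initial cluster variable $A_{\Delta,\alpha} = z^{e_\alpha^*} \in \s{A}_t^{\up}$. The cluster bar involution $\tau$ fixes $z^{e_\alpha^*}$ by definition, while the skein bar involution $\sigma$ fixes $[\alpha]$ because a simple arc has no interior crossings whose heights could be swapped. Moreover, both involutions send $t \mapsto t^{-1}$. Hence $\sigma$ and $\tau|_{\Sk_t(\Sigma)}$ agree on the generators $\{[\alpha] : \alpha \in \Delta\}$ of the initial cluster subalgebra and on $t$.

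To promote this pointwise agreement to all of $\Sk_t(\Sigma)$, I would exploit the fact that the defining relations of the skein algebra are equivariant under the simultaneous operations of reversing all crossings and inverting $t$. Specifically, the Kauffman relation (crossing $= q \cdot$ smoothing$_1 + q^{-1} \cdot$ smoothing$_2$) interchanges the two terms when the crossing is reversed, which is exactly the effect of $q \mapsto q^{-1}$ (equivalently $t \mapsto t^{-1}$); the contractible-loop scalar $-(q^2+q^{-2})$ is manifestly bar-invariant; and the height-ordering factor $t^{\sum \Lambda(i,j)}$ in the superposition product matches the skew-symmetric convention of $\Lambda$ (cf.\ \eqref{Lambdaij} and Lemma \ref{lem:sdLambda-comp}), so reversing all crossings indeed corresponds to $t \mapsto t^{-1}$ at the level of the quantum torus relations. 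Consequently, writing any $x \in \Sk_t(\Sigma)$ as a $\kk_t$-linear expression in products of arcs of $\Delta$ (possible because $[\Delta_{\uf}]^k x$ lies in the cluster by Corollary \ref{Deltak}, after which we may localize at the boundary arcs), we see that $\sigma(x)$ is obtained from this same expansion by the substitution $t \mapsto t^{-1}$, which by the quantum torus definition of $\s{A}_t^{\sd_\Delta} = \kk_t[M]$ coincides with $\tau(x)$.

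The main bookkeeping obstacle will be ensuring that the superposition-product's height-ordering factors, together with the height-ordering convention at shared marked endpoints (cf.\ footnote \ref{boundary_ordering_footnote}), behave consistently under crossing reversal---that is, that reversing the over/under labels at every interior crossing replaces each superposition-product factor $t^{\Lambda(i,j)}$ by its inverse. Once this is verified (which amounts to re-examining the sign convention of \eqref{Lambdaij}), the agreement $\sigma = \tau|_{\Sk_t(\Sigma)}$ follows by induction on the number of crossings in a link representative, and the extension from $\Sk_t(\Sigma)$ to $\?{\Sk}_t(\Sigma)$ is automatic since the latter is a subalgebra of the former.
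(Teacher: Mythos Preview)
Your approach is correct and matches the paper's: both bar involutions send $t\mapsto t^{-1}$ and fix each arc $[\alpha]=z^{e_\alpha^*}$ (simple arcs have no crossings), and since both are $\kk$-algebra (anti-)automorphisms of a ring sitting inside the quantum torus $\kk_t[M]$ generated by these elements, agreement on generators suffices. The paper's proof is one line for exactly this reason; your second and third paragraphs (skein-relation equivariance, superposition bookkeeping via Corollary~\ref{Deltak}) are unnecessary once you invoke the (anti-)homomorphism property of both maps.
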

\begin{proof}
	It suffices to check this for $t$ and for the cluster variables/arcs, and the claim is clear for these elements.
\end{proof}

Extending Proposition \ref{SkAProp} to the punctured setting (with $t=1$) requires introducing tagged arcs.

\subsection{The tagged skein algebra}\label{S:tagged_sk}

In the following, we will extend arcs to tagged arcs following \cite{FominShapiroThurston08,fomin2018cluster}. We use this to extend skein algebras to tagged skein algebras as outlined in \cite[\S 8]{musiker2013bases}, and we prove some basic properties. Some results here, like Proposition \ref{prop:compound}, are not strictly needed for the rest of the paper, but they make the construction more elegant and may be of independent interest.

In the approach here, the tagged skein algebra will be understood in terms of functions on decorated Teichm\"uller space.  Alternatively, one may define the tagged skein algebra similarly to how we handled the skein algebra---using ``generalized tagged multicurves'' as formal generators and then modding out by certain relations (those from the untagged setting in \S \ref{SectionSkeinDef} plus the local digon relation from Figure \ref{fig:localDigon}).  The fact that these approaches are equivalent is Corollary \ref{cor:tag-skein}.

\subsubsection{Tagged arcs and tagged triangulations}

As we saw in \S \ref{Cluster_Section}, it is always possible
to mutate a seed with respect to any non-frozen index. On the other
hand, if $i$ is an interior arc of $\Sigma$ which is inside a self-folded triangle (not the noose), then there is no way to flip $i$. Since
flips should correspond to mutations, this motivated \cite{FominShapiroThurston08} to
define \textbf{tagged triangulations}. Tagged triangulations consist
of tagged arcs. A \textbf{tagged arc} is a simple arc $\alpha$ together
with a labelling of each end of $\alpha$ as ``plain'' or ``notched,''
subject to the following rules:

\begin{enumerate}
	\item $\alpha$ does not cut out a once-punctured monogon (i.e., $\alpha$
	cannot be a noose); 
	\item Endpoints of $\alpha$ which lie in $\partial \SSS$ are tagged plain; 
	\item If both ends of $\alpha$ lie at the same marked point, then they
	are tagged in the same way. 
\end{enumerate}
More generally, a \textbf{generalized tagged arc} is an arc $\alpha$ with each end labelled ``plain'' or ``notched'' subject only to the condition that endpoints lying in $\partial \SSS$ are tagged plain.  Generalized tagged arcs are not required to be simple, and they may violate Conditions (1) and/or (3) above.

Given an untagged arc $\alpha$ which is not a noose, we can associate
a tagged arc $\iota(\alpha)$ by simply tagging both ends as plain.
If $\alpha$ is a noose with ends at $m$ and surrounding the puncture
$p$, cutting out a once-punctured monogon $\Sigma_{\alpha}$, then
the associated tagged arc $\iota(\alpha)$ is represented by a path
inside $\Sigma_{\alpha}$ between $m$ and $p$ which is tagged plain
at $m$ and notched at $p$, cf. Figure \ref{fig:iota_noose}.

\begin{figure}[htb]
	\global\long\def\svgwidth{220pt}
\begingroup%
  \makeatletter%
  \providecommand\color[2][]{%
    \errmessage{(Inkscape) Color is used for the text in Inkscape, but the package 'color.sty' is not loaded}%
    \renewcommand\color[2][]{}%
  }%
  \providecommand\transparent[1]{%
    \errmessage{(Inkscape) Transparency is used (non-zero) for the text in Inkscape, but the package 'transparent.sty' is not loaded}%
    \renewcommand\transparent[1]{}%
  }%
  \providecommand\rotatebox[2]{#2}%
  \newcommand*\fsize{\dimexpr\f@size pt\relax}%
  \newcommand*\lineheight[1]{\fontsize{\fsize}{#1\fsize}\selectfont}%
  \ifx\svgwidth\undefined%
    \setlength{\unitlength}{158.39000039bp}%
    \ifx\svgscale\undefined%
      \relax%
    \else%
      \setlength{\unitlength}{\unitlength * \real{\svgscale}}%
    \fi%
  \else%
    \setlength{\unitlength}{\svgwidth}%
  \fi%
  \global\let\svgwidth\undefined%
  \global\let\svgscale\undefined%
  \makeatother%
  \begin{picture}(1,0.41038509)%
    \lineheight{1}%
    \setlength\tabcolsep{0pt}%
    \put(0,0){\includegraphics[width=\unitlength,page=1]{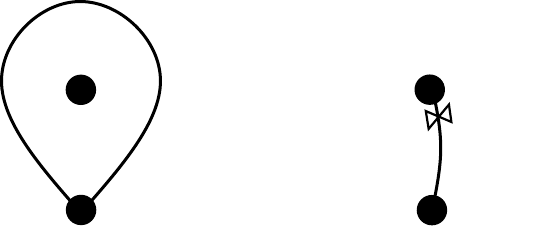}}%
    \put(0.17952932,0.00927917){\color[rgb]{0,0,0}\makebox(0,0)[lt]{\lineheight{1.25}\smash{\begin{tabular}[t]{l}$m$\end{tabular}}}}%
    \put(0.82204942,0.01218625){\color[rgb]{0,0,0}\makebox(0,0)[lt]{\lineheight{1.25}\smash{\begin{tabular}[t]{l}$m$\end{tabular}}}}%
    \put(0.81834999,0.26259413){\color[rgb]{0,0,0}\makebox(0,0)[lt]{\lineheight{1.25}\smash{\begin{tabular}[t]{l}$p$\end{tabular}}}}%
    \put(0.18246084,0.27214151){\color[rgb]{0,0,0}\makebox(0,0)[lt]{\lineheight{1.25}\smash{\begin{tabular}[t]{l}$p$\end{tabular}}}}%
    \put(0.27695252,0.34357568){\color[rgb]{0,0,0}\makebox(0,0)[lt]{\lineheight{1.25}\smash{\begin{tabular}[t]{l}$\alpha$\end{tabular}}}}%
    \put(0.81526773,0.12694267){\color[rgb]{0,0,0}\makebox(0,0)[lt]{\lineheight{1.25}\smash{\begin{tabular}[t]{l}$\iota(\alpha)$\end{tabular}}}}%
  \end{picture}%
\endgroup%
 \caption{Left: A noose $\alpha$.  Right: The corresponding tagged arc $\iota(\alpha)$ with notch at $p$ and plain tagging at $m$.  Here and throughout, notches are illustrated with an hourglass/bow tie shape. \label{fig:iota_noose}}
\end{figure}

On the other hand, given a tagged arc $\alpha$, we can forget the
tags to obtain an untagged arc $\alpha^{\circ}$. Following \cite{FominShapiroThurston08}, two tagged arcs
$\alpha_{1},\alpha_{2}$ are called \textbf{compatible} if 
\begin{itemize}
	\item $\alpha_{1}^{\circ}$ and $\alpha_{2}^{\circ}$ do not intersect in $\SSS\setminus\MM$; 
	\item If $\alpha_{1}^{\circ}=\alpha_{2}^{\circ}$ up to isotopy,
	then at least one end of $\alpha_{1}$ is tagged in the same way as
	the corresponding end of $\alpha_{2}$; 
	\item If $\alpha_{1}^{\circ}\neq\alpha_{2}^{\circ}$ up to isotopy
	but $\alpha_{1}$ and $\alpha_{2}$ have a common endpoint $o$, then
	the ends of $\alpha_{1}$ and $\alpha_{2}$ at $o$ are tagged in
	the same way. 
\end{itemize}
Two tagged arcs $\alpha_{1},\alpha_{2}$ are said to be the same if $\alpha_{1}^{\circ}$ is isotopic to $\alpha_{2}^{\circ}$ such that the ends are tagged in the same way. A \textbf{tagged triangulation} is a maximal collection of distinct pairwise-compatible tagged arcs.

Given a tagged triangulation $\Delta$ and \textit{any} non-boundary
arc $i\in\Delta$, the \textbf{flip} of $\Delta$ at $i$ is the unique
triangulation $\mu_{i}(\Delta)=(\Delta\setminus\{i\})\cup\{i'\}$
which can be obtained by replacing $i$ with a different tagged arc
$i'$. The signed adjacency matrix $B$ of $\Delta$ is defined \cite[Def. 9.6]{FominShapiroThurston08} to be that of the ideal triangulation $\Delta^{\circ}$ obtained as follows:
\begin{itemize}
	\item If all arcs of $\Delta$ have the same tagging at some $p\in \MM$, we simply forget this tagging to obtain the end of an ideal arc.
	\item If two arcs of $\Delta$ have different taggings at some $p\in \MM$, then the arc with the notch at $p$ is replaced with the corresponding noose as in Figure \ref{fig:iota_noose}.
\end{itemize}
With these signed adjacency matrices, flips again correspond to mutations as in \eqref{eprime} for $\omega=B^T$; cf. \cite[Lem. 9.7]{FominShapiroThurston08}.

We note that for unpunctured surfaces, tagged arcs and  untagged simple arcs are
equivalent (since notches are only allowed at punctures). We therefore
might not specify in the unpunctured setting whether our arcs are
tagged or not. 

\subsubsection{The tagged skein algebra as functions on decorated Teichm\"uller space}\label{subsub:tagged-skein}

In \cite{fomin2018cluster}, tagged arcs are interpreted geometrically as corresponding to functions on a decorated Teichm\"uller space $\s{T}(\Sigma)$.  Briefly, points in $\s{T}(\Sigma)$ correspond to the following data:
\begin{enumerate}
	\item a complete finite-area hyperbolic structure on $\SSS\setminus \MM$ with constant curvature $-1$, with geodesic boundary on $\partial \SSS\setminus \MM$ and cusps at $\MM$, considered up to diffeomorphisms of $\SSS$ which fix $\MM$ and are homotopic to the identity, plus
	\item a choice of horocycle around each cusp (i.e., a loop perpendicular to all the geodesics through which it passes).
\end{enumerate}
Given a horocycle $h$ of length $l$ around a cusp $p$, the conjugate horocycle $h'$ around $p$ is the one of length $l^{-1}$.  Now, given a tagged arc $\gamma$, let $l_{\gamma}:\s{T}(\Sigma)\rar \bb{R}$ be the function which, for a given hyperbolic structure and choice of horocycles, gives the signed distance between two horocycles (negative if the horocycles intersect) along a geodesic representative of $\gamma$---here, the two horocycles are those associated to the endpoints of $\gamma$, and if an end of $\gamma$ is notched, the corresponding horocycle is replaced with its conjugate.  The associated \textbf{lambda length} is \begin{align}\label{lambda-length}
	\lambda_{\gamma}:=e^{l_{\gamma}/2}:\s{T}(\Sigma)\rar \bb{R}_{>0}.
\end{align}

Let $\Delta$ be an ideal triangulation of $\Sigma$ not containing a self-folded triangle (such triangulations always exist by \cite[Lem. 2.13]{FominShapiroThurston08}).  The corresponding lambda lengths yield a homeomorphism $\s{T}(\Sigma)\rar \bb{R}_{>0}^{|\Delta|}$ \cite[Thm. 7.4]{fomin2018cluster} (building on ideas of Penner \cite{Pen87,Pen04,Pen12}). 
Furthermore, the lambda lengths for all other tagged arcs are Laurent polynomials of the lambda lengths associated to arcs in $\Delta$ (in the case of notched arcs in once-punctured closed surfaces this seems to be new but follows from our Example \ref{ex:doubly-notched-arcloop}).  In fact, \cite[Thm. 8.6]{fomin2018cluster} shows that this algebra of tagged arc lambda lengths forms an ordinary cluster algebra with tagged arcs corresponding bijectively to cluster variables, tagged triangulations corresponding bijectively to clusters, and flips corresponding bijectively with mutations (neglecting notched arcs on once-punctured closed surfaces, which we shall deal with shortly).

We can also understand loops\footnote{In \cite{FockGoncharov06a}, loops are associated to functions on a certain moduli space $\s{A}_{\Sigma}$ of decorated twisted $\SL_2$-local systems; cf. \S \ref{sec:dec_SL2_moduli} for a review of this perspective and \eqref{eq:HolA} for the functions associated to loops.  \cite[Thm. 1.7(b)]{FockGoncharov06a} identifies $\s{T}(\Sigma)$ with the positive real locus of $\s{A}_{\Sigma}$.  The interpretation of loops as functions on $\s{T}(\Sigma)$ given here follows.  The functions associated to arcs are related to lambda lengths in \cite[\S 11.3]{FockGoncharov06a}.} as functions $\s{T}(\Sigma)\rar \bb{R}_{>0}$.  The space of hyperbolic metric as in (1) above is equivalent to
\begin{align}\label{eq:Teich_hom_sp}
	\Teich(\Sigma):=\Hom'(\pi_1(\Sigma),\PSL_2(\R))/\PSL_2(\R)
\end{align}
where $\Hom'(\pi_1(\Sigma),\PSL_2(\R))$ denotes the set of discrete and faithful representations of the the fundamental group $\pi_1(\Sigma):=\pi_1(\SSS\setminus \MM)$ into $\PSL_2(\R)$ such that a loop surrounding a puncture becomes parabolic, and the action of $\PSL_2(\R)$ is by conjugation.  So given a non-contractible loop $L$ in $\Sigma$, a point $x\in \s{T}(\Sigma)$ associates to $L$ an element of $\PSL_2(\R)$ up to conjugation.  This can be represented by an element  of $\SL_2(\R)$ (up to $\SL_2$-conjugation) which we choose to have positive trace if $L$ is not contractible in $\SSS\setminus \MM$ and negative trace otherwise.  This trace is the value of the function associated to $L$ at the point $x$.

The (partially compactified) \textbf{tagged skein algebra} $\?{\Sk}^{\Box}(\Sigma)$ is now defined as the algebra generated by the loops and tagged arcs, viewed as functions $\s{T}(\Sigma)\rar \bb{R}_{>0}$.  The localized version $\Sk^{\Box}(\Sigma)$ (also called the tagged skein algebra) is generated by the tagged arcs, loops, and reciprocals of boundary arcs (viewed again as functions $\s{T}(\Sigma)\rar \bb{R}_{>0}$).

Note that the definition in \eqref{lambda-length} above makes sense for generalized tagged arcs as well (we define $\lambda_{\gamma}=0$ for all contractible generalized tagged arcs $\gamma$).  It seems more natural to include not just loops and tagged arcs (and inverses of boundary arcs) in the tagged arc skein algebras, but also all these generalized tagged arcs.  In fact, this would not add anything new:

\begin{prop}\label{prop:compound}
	$\?{\Sk}^{\Box}(\Sigma)$ contains all generalized tagged arcs.
\end{prop}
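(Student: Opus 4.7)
The plan is to proceed by induction on the combinatorial complexity of $\alpha$: first reduce to the case of a simple generalized tagged arc via a skein-type relation, then handle separately the two remaining obstructions to being a genuine tagged arc, namely violation of condition (1) (nooses) and violation of condition (3) (mismatched taggings at a common puncture).

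First, suppose $\alpha$ has a self-intersection at an interior point $x$. The classical Ptolemy identity for lambda lengths of four points on horocycles in the hyperbolic plane---equivalently, the $q=1$ Kauffman skein relation applied to the two local strands of $\alpha$ near $x$---gives $\lambda_\alpha = \lambda_{\alpha'} + \lambda_{\alpha''}$, where $\alpha'$ and $\alpha''$ are the two local smoothings of $\alpha$ at $x$. Each such smoothing is either a generalized tagged arc with strictly fewer self-intersections, or splits at $x$ as a disjoint union of such an arc and a loop. Since loops already lie in $\?{\Sk}^{\Box}(\Sigma)$ by definition and $\?{\Sk}^{\Box}(\Sigma)$ is closed under products, it suffices by induction on the number of self-intersections to prove the claim for simple $\alpha$.

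Now assume $\alpha$ is simple. If $\alpha$ is an honest tagged arc, we are done. Otherwise $\alpha$ violates condition (1) and/or (3). If $\alpha$ is a noose at a marked point $m$ cutting off a once-punctured monogon containing a puncture $p$, then the self-folded-triangle identity from decorated Teichm\"uller theory (see, e.g., \cite[Prop.\ 7.6]{FT}) gives $\lambda_\alpha = \lambda_\beta \cdot \lambda_{\beta^*}$, where $\beta$ and $\beta^*$ are the plain and notched tagged arcs from $m$ to $p$ inside the monogon; both are tagged arcs, so the product lies in $\?{\Sk}^{\Box}(\Sigma)$. If instead $\alpha$ has both ends at a common puncture $p$ with opposite taggings, then a direct horocycle-rescaling computation at $p$---which will later be formalized as the local digon relation of \S \ref{subsec:local_digon}---expresses $\lambda_\alpha$ as a rational combination of lambda lengths of genuine tagged arcs together with the lambda length of a small loop around $p$, all of which lie in $\?{\Sk}^{\Box}(\Sigma)$. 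Simultaneous violation of both (1) and (3) is handled by combining the two reductions.

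The main obstacle is the mismatched-tagging case. Deriving the required identity amounts to tracking how replacing the horocycle at $p$ with its conjugate affects the signed distance at each end of the geodesic loop representing $\alpha$, and reconciling this with the trace of the parabolic monodromy around $p$. One can either carry out this hyperbolic-geometric calculation directly, or invoke the Fomin--Thurston lambda-length machinery \cite{FT} as a black box; either way the mixed-tagging arc is expressible via the same-tagging tagged arcs and a loop around $p$, completing the reduction.
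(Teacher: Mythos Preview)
Your overall structure---reduce to simple arcs via skein relations, then handle nooses and mismatched-tagging arcs separately---matches the paper's proof. The skein-relation reduction and the noose case are essentially correct, though the paper invests more effort (its Lemmas~\ref{lem:tag-skein} and~\ref{lem:tag-skein2}) in justifying that the Ptolemy/skein relation really holds for lambda lengths of \emph{self-intersecting} generalized tagged arcs; the single-crossing Ptolemy identity for four distinct horocycles does not immediately yield this, and the paper resolves it by passing to a covering space where the relevant strands become distinct.

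The genuine gap is in the mismatched-tagging case, which you yourself flag as the main obstacle. Your proposed justifications are both problematic. First, invoking the local digon relation is circular: in the paper, the proof of that relation (Proposition~\ref{prop:loc-digon}) explicitly relies on Example~\ref{ex:different-tags}, which is precisely the mismatched-tagging computation you are trying to establish. Second, the phrase ``rational combination'' is worrying, since we need the answer to lie in $\?{\Sk}^{\Box}(\Sigma)$, not a localization; and in any case you do not actually carry out the horocycle computation. The paper's argument here is concrete and different from what you sketch: it passes to a triple cover $\wt{\Sigma}\to\Sigma$ in which a lift of $\bar{\gamma}$ sits inside a once-punctured digon with distinct endpoints, applies the ordinary digon relation there to get $\gamma\gamma'=\alpha+\beta$, and then pushes down via $\pi_*$. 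After simplifying with the skein relation and dividing by the plain arc $\bar{\gamma}'$, one obtains the clean formula $\bar{\gamma}=L_1+L_2$, a sum of two loops. This is a polynomial expression in loops, manifestly in $\?{\Sk}^{\Box}(\Sigma)$, and it is the key idea your sketch is missing.
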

This will be proved in \S \ref{subsub:digon}.  We note that many of the ideas used in the proof of Proposition \ref{prop:compound} are sketched in \S \cite[\S 8.4]{musiker2013bases}, though the main focus of loc. cit. is on the more complicated principal coefficients setup.

\subsubsection{Covering spaces and the skein relation}\label{Sk-relations}

Before we consider the skein relations for $\Sk^{\Box}(\Sigma)$, we introduce some additional techniques.

\begin{lem}\label{lem:tag-change}
	Simultaneously changing all tags at a puncture $p$ induces involutive automorphisms $\iii_p$ of $\?{\Sk}^{\Box}(\Sigma)$ and $\Sk^{\Box}(\Sigma)$.
\end{lem}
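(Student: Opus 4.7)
The strategy is to realize $\iii_p$ as pullback along an explicit involution of decorated Teichm\"uller space, then verify it acts as claimed on generators. I would first define $\sigma_p\colon \s{T}(\Sigma)\to \s{T}(\Sigma)$ to be the map that replaces the chosen horocycle $h_p$ at $p$ by its conjugate $h_p'$ while leaving the hyperbolic structure and every other horocycle fixed. Since the conjugate of a horocycle of length $l$ has length $l^{-1}$, conjugating twice returns us to length $l$, so $\sigma_p$ is an involution of $\s{T}(\Sigma)$. Its pullback $\sigma_p^{*}$ is therefore an involutive $\kk$-algebra automorphism of the ring of $\kk$-valued functions on $\s{T}(\Sigma)$, and the task is to check that it restricts to $\?{\Sk}^{\Box}(\Sigma)$ and matches the combinatorial tag-flip on generators.

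For the action on generators, I would argue as follows. For a loop $L$, the associated function is (up to sign) the trace of a $\PSL_2(\R)$-monodromy representation, which by \eqref{eq:Teich_hom_sp} depends only on the hyperbolic structure on $\SSS\setminus\MM$ and not on the choice of horocycles; hence $\sigma_p^{*}[L]=[L]$. For a tagged arc $\gamma$, the lambda length $\lambda_\gamma=e^{l_\gamma/2}$ from \eqref{lambda-length} depends on horocycles only at the endpoints of $\gamma$, with a notch at an end instructing one to use the conjugate horocycle there. Thus $\sigma_p^{*}\lambda_\gamma=\lambda_\gamma$ whenever neither end of $\gamma$ lies at $p$, while if an end of $\gamma$ lies at $p$, swapping $h_p$ with $h_p'$ at that end is indistinguishable from toggling the tag there. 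Writing $\iii_p(\gamma)$ for the tagged arc obtained from $\gamma$ by flipping every tag attached to $p$, this gives $\sigma_p^{*}\lambda_\gamma=\lambda_{\iii_p(\gamma)}$; condition (3) in the definition of a tagged arc guarantees that $\iii_p(\gamma)$ is again a tagged arc, since if both ends of $\gamma$ lie at $p$, both get flipped simultaneously and remain tagged identically.

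Because $\?{\Sk}^{\Box}(\Sigma)$ is generated as a $\kk$-algebra by loops and tagged arcs, $\sigma_p^{*}$ restricts to an involutive automorphism of $\?{\Sk}^{\Box}(\Sigma)$ that agrees with the combinatorial operation $\iii_p$ on generators; this is the desired $\iii_p$. Since $p$ is a puncture in the interior of $\SSS$ and every boundary arc has endpoints on $\partial\SSS$ away from $p$, the boundary arcs are fixed by $\iii_p$, so by the universal property of Ore localization $\iii_p$ extends uniquely to an involutive $\kk$-algebra automorphism of $\Sk^{\Box}(\Sigma)$. The only delicate point to nail down is the equivalence between the combinatorial operation of flipping tags at $p$ and the geometric operation of conjugating $h_p$, but this is essentially built into the definition of lambda length in \eqref{lambda-length}, so no real obstacle arises.
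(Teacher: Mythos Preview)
Your proof is correct and takes essentially the same approach as the paper: the paper defines $\iii_p$ as the map induced on functions by the self-map of $\s{T}(\Sigma)$ that replaces the horocycle at $p$ with its conjugate, which is exactly your $\sigma_p^{*}$. Your write-up is more detailed than the paper's one-sentence justification, carefully verifying the action on loops, tagged arcs, and boundary arcs, but the underlying idea is identical.
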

In terms of $\s{T}(\Sigma)$, this automorphism $\iii_p$ corresponds to the self-homeomorphism which replaces horocycles at $p$ with their conjugates, and $\iii_p$ is the induced map on $\?{\Sk}^{\Box}(\Sigma)$ or $\Sk^{\Box}(\Sigma)$.

\begin{rem}\label{rmk:tag-switch}
	By \cite[Prop. 7.10]{FominShapiroThurston08}, $\Sk^{\Box}(\Sigma)$ for $\Sigma$ a once-punctured closed surface admits two distinct but isomorphic cluster structures: one with cluster variables corresponding to plain arcs and one with cluster variables corresponding to notched arcs.  These cluster structures are swapped by the automorphism $\iii_p$ .
\end{rem}

\begin{rem}\label{rem:FG-Z2Z}
	If $\Sigma$ has $k$ punctures, then the automorphisms described in Remark \ref{rmk:tag-switch} yield an action of $(\bb{Z}/2\bb{Z})^k$ on $\Sk^{\Box}(\Sigma)$.  This is interpreted in \cite[\S 12.6]{FockGoncharov06a} as an action on the moduli space $\s{A}_{\SL_2,\Sigma}$ of decorated twisted $\SL_2$-local systems on $\Sigma$.  Loc. cit. uses this action to construct the tagged bracelets basis from a different perspective; cf. \S \ref{sec:dec_SL2_moduli}.  Loc. cit. also sketches an argument implying the positivity of this tagged bracelets basis with respect to the full cluster atlas (not just ideal triangulations), but we will avoid assuming this result.
\end{rem}

Let $\SSS':=\SSS\setminus \MM$, and let $\pi:\wt{\SSS}'\rar \SSS'$ be a finite covering space.  Given a point in $\s{T}(\Sigma)$ (the choice of point does not matter), we pull back the associated hyperbolic structure to get a hyperbolic structure on $\wt{\SSS}'$, and then we compactify $\wt{\SSS}'$ with cuspidal points to get a covering space $\wt{\SSS}\rar \SSS$ which is possibly ramified at the punctures.  Let $\wt{\Sigma}=(\wt{\SSS},\pi^{-1}(\MM))$.  The inclusion $\pi:\pi_1(\wt{\SSS}')\hookrightarrow \pi_1(\SSS)$ induces a pullback $\pi^*:\Teich(\Sigma)\rar \Teich(\wt{\Sigma})$, which is obviously injective when we describe the Teichm\"uller spaces as in \eqref{eq:Teich_hom_sp}.  Furthermore, a choice of horocycle $H$ for some $p\in \MM$ determines a horocycle for each point in $\pi^{-1}(p)$; namely, the corresponding component of $\pi^{-1}(H)$. We thus obtain an inclusion $\pi^*:\s{T}(\Sigma) \hookrightarrow \s{T}(\wt{\Sigma})$.

Now let $\Sk^{\Box}_{\pi}(\wt{\Sigma})$ and $\Sk^{\Box}_{\pi}(\Sigma)$ be the algebras generated by loops and generalized tagged in $\wt{\Sigma}$ and $\Sigma$, respectively (viewed as functions on $\s{T}(\wt{\Sigma})$ and $\s{T}(\Sigma)$, respectively), which are always tagged plain at puntures where $\pi$ is ramified.  Similarly define $\?{\Sk}^{\Box}_{\pi}(\wt{\Sigma})$ and $\?{\Sk}^{\Box}_{\pi}(\Sigma)$.  Then the map $\pi^*$ induces an action on functions which gives algebra homomorphisms $\pi_*:\?{\Sk}_{\pi}^{\Box}(\wt{\Sigma})\rar \?{\Sk}^{\Box}_{\pi}(\Sigma)$ and 
\begin{align}\label{eq:pi-star}
	\pi_*:\Sk^{\Box}_{\pi}(\wt{\Sigma})\rar \Sk^{\Box}_{\pi}(\Sigma).
\end{align} 
Here, the reason we have to restrict to arcs which are tagged plain at ramified points is that if $\pi$ is ramified with order $k$ at $p$, and $H$ is a horocycle centered at $p$, then the length of the corresponding component of $\pi^{-1}(H)$ is $k$ times the length of $H$---in particular, conjugacy of horocycles around ramified punctures is not preserved under pullback.

Note that for an element $[C]\in \?{\Sk}^{\Box}_{\pi}(\wt{\Sigma})$ represented by a \textbf{generalized tagged multicurve} $C$---i.e., a multicurve where arcs are generalized tagged arcs---it is clear from the construction that $\pi_*[C]=[\pi(C)]$.

The upshot is that relations in $\Sigma$ can be understood by examining relations in a covering space $\wt{\Sigma}$.  We apply this now to recover some skein relations for $\?{\Sk}^{\Box}(\Sigma)$.

\begin{lem}\label{lem:tag-skein}
When viewed as functions on $\s{T}(\Sigma)$, loops and arcs satisfy all the relations from \S \ref{SectionSkeinDef}.
\end{lem}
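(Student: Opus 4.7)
The plan is to verify each of the four skein relations from \S \ref{SectionSkeinDef} individually, noting that since the functions on $\s{T}(\Sigma)$ are real-valued we are automatically in the classical specialization ($t=1$, $q=1$).

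First, for a contractible arc $\alpha$ (both ends at a single $p\in \MM$, contractible in $\SSS\setminus(\MM\setminus p)$), the geodesic representative of $\alpha$ degenerates into the cusp at $p$, forcing the signed distance $l_\alpha \to -\infty$ and hence $\lambda_\alpha=0$. A contractible loop $L$ in $\SSS\setminus\MM$ represents the identity in $\pi_1$, so its image in $\PSL_2(\R)$ is trivial and the trace of the chosen $\SL_2$ lift is $\pm 2$; the sign convention stated below \eqref{eq:Teich_hom_sp} (negative trace for loops contractible in $\SSS\setminus\MM$) gives $-2$, matching $-(q^2+q^{-2})|_{q=1}$. For a simple loop $L$ around a single puncture $p$, the defining condition of $\Hom'(\pi_1(\Sigma),\PSL_2(\R))$ is that the image of the peripheral generator at $p$ is parabolic, hence has trace $\pm 2$; since $L$ is not contractible in $\SSS\setminus\MM$, the convention assigns trace $+2$, as required.

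The main step is the Kauffman relation, which at $q=1$ reads: a multicurve with a single transverse interior crossing equals the sum of its two smoothings. I would establish this case by case on the type of crossing. For a self-crossing of a loop or a crossing between two loops, the identity reduces to the $\SL_2$-trace identity $\Tr(A)\Tr(B)=\Tr(AB)+\Tr(AB^{-1})$ (a consequence of the Cayley--Hamilton relation $A+A^{-1}=\Tr(A)I$ in $\SL_2$): the two smoothings correspond respectively to the concatenations $AB$ and $AB^{-1}$ obtained by traversing the crossing with opposite local orientations of one strand. For two arcs meeting transversely at an interior point, I would restrict to a small neighborhood containing them as the two diagonals of an ideal quadrilateral and invoke Penner's Ptolemy identity $\lambda_{d_1}\lambda_{d_2}=\lambda_a\lambda_c+\lambda_b\lambda_d$ (see e.g.\ \cite[\S 6]{fomin2018cluster}). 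The mixed arc-loop case follows from an analogous horocyclic identity; alternatively, one can reduce to the arc-arc case by cutting the loop open at the crossing and then applying Ptolemy in a configuration where the auxiliary horocycle contributions cancel.

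The principal obstacle is the bookkeeping at crossings: one must verify that the two geometric smoothings correspond respectively to the two terms $AB$ and $AB^{-1}$ (rather than both being $AB$, say), and that all orientation and horocycle sign conventions used in the definitions of trace and lambda length are consistent across cases, particularly the mixed case where both kinds of function enter simultaneously.
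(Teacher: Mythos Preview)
Your approach differs from the paper's. You attempt direct verification: geometric limits for the three easy relations, the $\SL_2$ trace identity $\Tr(A)\Tr(B)=\Tr(AB)+\Tr(AB^{-1})$ for loop crossings, and Penner's Ptolemy identity for arc--arc crossings. The paper instead cites the literature for a single crossing of two \emph{distinct} curves (Penner for arc--arc, Fock--Goncharov \cite[Proof of Thm.~12.2]{FockGoncharov06a} for the general case including arc--loop) and then reduces arbitrary crossings to this situation by passing to a finite cover of $\SSS$ in which the relevant curves meet only once; self-intersecting curves receive a separate, more elaborate covering-space argument involving multiplication by $[\Delta_{\uf}]^k$.

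Your strategy could in principle be made to work, and for loop-only crossings the trace identity does handle everything in one stroke. But there are genuine gaps on the arc side. Ptolemy as you invoke it applies to two simple arcs meeting exactly once as diagonals of an ideal quadrilateral; when two arcs cross several times, or when an arc self-intersects, the smoothings at a single crossing are non-simple and the quadrilateral picture is unavailable. Lifting to the universal cover (where geodesic arcs meet at most once and lambda lengths are unchanged) would close this, but you do not say so. More seriously, the mixed arc--loop case is left at ``an analogous horocyclic identity'' or ``cutting the loop open,'' neither of which is spelled out; and self-crossing arcs, whose smoothings produce an arc together with a loop, are not treated at all. The paper's covering-space reduction is precisely the device that handles all of these cases uniformly; without either that or an explicit mixed identity (as in Fock--Goncharov), your argument is incomplete.
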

We note that \cite[\S 6]{musiker2013matrix} shows Lemma \ref{lem:tag-skein} up to some sign issues.
\begin{proof}
	The ($q=1$ case of the) skein relation from Figure \ref{SkeinFig} is the only difficult relation to check.  For this, the case of two distinct arcs crossing at a single point is \cite[Prop. 7.6]{fomin2018cluster} (originally due to \cite[Prop. 2.6(a)]{Pen87}). An argument showing this skein relation for distinct arcs and/or loops crossing a single point is given in \cite[Proof of Thm. 12.2]{FockGoncharov06a}.  For distinct arcs and/or loops crossing an arbitrary number of times, one may always pass to a covering space $\wt{\Sigma}$ where the curves in question cross at only one point, and then the skein relation applies here, hence it holds for $\Sigma$ as well.
	
	It remains to consider the case of a self-intersecting curve $C$ representing $[C]\in \?{\Sk}^{\Box}(\Sigma)$.  Let $[C']$ be the element of $\?{\Sk}^{\Box}(\Sigma)$ obtained by resolving all self-crossings of $C$ via the skein relation---our goal is to show that $[C]=[C']$.  Let $\Delta$ be an ideal triangulation for $\Sigma$ without self-folded triangles. Since $\?{\Sk}^{\Box}(\Sigma)$ is an integral domain, it suffices to show that $[\Delta]^k\cdot [C]=[\Delta]^k \cdot [C']$ for some positive integer $k$.  Consider the representative $\sum_i a_i [C_i]$ (for multicurves $C_i$) of $[\Delta]^k\cdot [C]$ obtained by resolving all crossings of $C$ with $k$ slightly perturbed copies of $\Delta$ via the skein relation (but not resolving self-crossings of $C$), then setting all contractible arcs to $0$, all contractible loops to $-2$, and all loops around punctures equal to $2$.  A representative $\sum_j a_j [C'_j]$ for $[\Delta]^k \cdot [C']$ is obtained from $\sum_i a_i [C_i]$ by resolving all self-crossings of the multicurves $C_i$, so the resulting multicurves $C'_j$ will have no crossings.  Since the skein relations respect homotopy of multicurves\footnote{Recall that we require fibers of homotopies to be multicurves, hence immersions, so the Reidemeister moves of type RII, RIIb, RIII are allowed, but RI is not; cf. \cite[Fig. 2]{Thurst}.  Loc. cit. uses the term ``regular isotopy'' for this.} \cite[Prop. 3.3]{Thurst}, to show the equivalence between $\sum_i a_i [C_i]$ and $\sum_j a_j [C'_j]$, it suffices to show that all crossings of the multicurves $C_i$ can alternatively be removed via homotopy.

	To see this, let us pass to an $s$-sheeted covering space $\wt{\Sigma}$ with a lift $\wt{C}$ of $C$ that no longer self-intersects.  Let $\wt{\Delta}$ be the lifted triangulation for $\wt{\Sigma}$.  As in Corollary \ref{Deltak}, $[\wt{\Delta}_{\uf}]^{\wt{k}}\cdot [\wt{C}]$ for sufficiently large $\wt{k}\in \bb{Z}_{\geq 0}$ is a linear combination of products of arcs in $\wt{\Delta}$.  That is, suppose we resolve all crossings of $\wt{C}$ with $\wt{k}$ slightly perturbed copies of $\wt{\Delta}$, then set contractible arcs, contractible loops, and loops around punctures equal to $0$, $-2$, and $2$, respectively, to obtain a representative $\sum_i a_i [\wt{C}_i]$ of $[\wt{\Delta}_{\uf}]^{\wt{k}}\cdot [\wt{C}]$.  Then each multicurve $\wt{C}_i$ consists of disjoint unions of arcs from $\wt{\Delta}$, up to   homotopy.  By construction, for $k=s\cdot \wt{k}$, the expression $\pi_*(\sum_i a_i [\wt{C}_i])$ is equal to  $\sum_i a_i[C_i]=[\Delta]^k \cdot [C]$ of the previous paragraph and satisfies $C_i=\pi(\wt{C}_i)$.  Since the multicurves $\wt{C}_i$ are   homotopic to disjoint unions of lifts of arcs from $\Delta$, the multicurves $C_i$ must be   homotopic to disjoint unions of arcs from $\Delta$.  In particular, the self-crossings of the multicurves $C_i$ can be removed via   homotopy, as desired.
\end{proof}

\begin{lem}\label{lem:tag-skein2}
	The skein relation of Figure \ref{SkeinFig} (with $q=1$) applies to crossings involving generalized tagged arcs with notches as well.
\end{lem}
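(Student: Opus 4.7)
The plan is to interpret the skein identity as an equation between functions on the decorated Teichm\"uller space $\s{T}(\Sigma)$, and to reduce the generalized tagged case to the untagged case of Lemma \ref{lem:tag-skein} via an invariance-under-horocycle-rescaling argument.

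First I would observe that the crossing being resolved lies in the interior of $\SSS$, and that the resolution only rearranges which strand-ends meet at the crossing point. Consequently, at every cusp $p \in \MM$, the three generalized tagged multicurves $C$, $C_1$, $C_2$ have identical collections of strand-ends at $p$, each carrying an identical plain/notched label. Let $n_p^+$ and $n_p^-$ denote the numbers of plain and notched ends at $p$; these counts are common to $C$, $C_1$, $C_2$.

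Next I would track how a single rescaling $h_p \mapsto c\,h_p$ ($c>0$) affects the lambda-length function of each multicurve. Any strand-end tagged plain at $p$ contributes a factor $c^{a}$ to the lambda length (for some fixed exponent $a = \pm\hf$ depending on the sign convention), while the conjugate horocycle rescales oppositely, $h_p^* \mapsto c^{-1}h_p^*$, so a notched end at $p$ contributes a factor $c^{-a}$. Since the counts $n_p^{\pm}$ agree across $C$, $C_1$, $C_2$, each of $\lambda_C$, $\lambda_{C_1}$, $\lambda_{C_2}$ is multiplied by the same overall factor $c^{a(n_p^+ - n_p^-)}$, and the validity of the desired identity $\lambda_C = \lambda_{C_1} + \lambda_{C_2}$ is preserved under this rescaling.

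Finally I would evaluate at horocycle data with $|h_p|=1$ for every $p \in \MM$. At such a point $h_p = h_p^*$, so plain and notched taggings produce identical lambda lengths for every arc, and the identity collapses to the untagged skein relation established in Lemma \ref{lem:tag-skein}. Combining this with the rescaling invariance, and with the fact that rescalings act freely and transitively along the fibres of $\s{T}(\Sigma) \to \Teich(\Sigma)$, propagates the identity to all of $\s{T}(\Sigma)$, hence to $\Sk^{\Box}(\Sigma)$.

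The main subtle point is the bookkeeping for horocycle rescalings: one must confirm that plain and notched ends scale by reciprocal powers of $c$, and that the total exponent at each cusp is genuinely identical across the three multicurves (this uses that resolution of an interior crossing does not alter endpoints or their tags). If this bookkeeping proves awkward, a heavier alternative is to combine Lemma \ref{lem:tag-change} with the covering-space argument from the proof of Lemma \ref{lem:tag-skein}, passing to a suitable ramified cover where notched cusps can be handled as ordinary ones and then descending via $\pi_*$ from \eqref{eq:pi-star}; but the rescaling route is cleaner and conceptually closer to the geometric meaning of a notched tagging as a conjugate-horocycle convention.
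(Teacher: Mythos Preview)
Your rescaling argument is correct and gives a genuinely different, cleaner proof than the paper's. The key observation---that a Kauffman resolution leaves the multiset of tagged strand-ends at each cusp untouched, so all three terms scale identically under $h_p\mapsto c\,h_p$---together with the self-conjugacy of length-$1$ horocycles reduces the tagged identity to the plain case of Lemma~\ref{lem:tag-skein} in one stroke. The bookkeeping you flag as the subtle point is indeed routine: $\lambda_\gamma \propto |h_p|^{-1/2}$ for a plain end and $\propto |h_p^*|^{-1/2}=|h_p|^{1/2}$ for a notched end, and resolution at an interior point cannot alter which ends land at which cusp or how they are tagged.

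The paper instead proceeds by a case analysis: it reduces to one- and two-component multicurves, defines the subset $\MM_C\subset\MM$ of markings where no tag conflict occurs, and then repeatedly applies the tag-changing automorphisms $\iii_p$ of Lemma~\ref{lem:tag-change} together with $\MM_C$-ramified covers and $\pi_*$ to peel each case back to the plain setting. One residual case---two nooses with conflicting tags at each other's punctures (Figure~\ref{fig:c1c2})---does not reduce this way and is handled by a forward reference to Example~\ref{ex:different-tags} and the computation \eqref{eq:alphabeta}. Your argument bypasses this entire case split and the forward reference. The paper's route does have the side benefit of exercising the covering-space and tag-change machinery that is reused for the local digon relation (Proposition~\ref{prop:loc-digon}), but as a standalone proof of this lemma yours is both shorter and more conceptual.
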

\begin{proof}
	By associativity, it suffices to consider tagged multicurves $C$ with only one or two components.  Let $\MM_C\subset \MM$ consist of the marked points which do not appear as ends of differently marked strands of $C$. I.e., we can apply covering space arguments using \eqref{eq:pi-star} for coverings which are possibly ramified at points of $\MM_C$, but not at points of $\MM\setminus \MM_C$.  Let us refer to such covering spaces as $\MM_C$-ramified coverings.
 
    We may also assume that there are no non-essential crossings (crossings which can be removed via homotopy, even including homotopies that move us outside the space of immersions) because (as previously noted) the skein relations respect the framed Reidemeister moves, plus the relation that a single twist is equivalent to $-1$.  In particular, in cases where $C$ contains a contractible component, the claim becomes trivial.

    Now suppose all ends of $C$ are at distinct marked points.  Then we can apply  the tag changing automorphisms of Lemma \ref{lem:tag-change} to reduce to the plain-tagged setting, and then the claim follows from Lemma \ref{lem:tag-skein}.

    Suppose $C$ consists of a single component.  We have reduced to the case where this component is a non-contractible generalized tagged arc with both ends at the same marked point.  By taking an $\MM_C$-ramified covering, choosing a lift of $C$ in which the crossing under consideration still exists, and using \eqref{eq:pi-star}, we reduce to the case where all ends are at distinct marked points as before, and the claim follows.

    So now suppose $C$ consists of two components, $C_1$ and $C_2$.  Since we have now shown that the skein relation is satisfied for cases with a single component, we may assume that both components of $C$ are simple.  We will say that a component $C_i$ of $C$ is contractible in $\SSS\setminus \MM_C$ if $C_i$ cuts out a disk in $\SSS$ whose interior does not contain a point of $\MM_C$ (this allows for arcs with ends in $\MM_C$ to be viewed as contractible in $\SSS\setminus \MM_C$).  If neither component of $C$ is a generalized tagged arc which is contractible in $\SSS\setminus \MM_C$, then we can use an $\MM_C$-ramified covering space again to reduce to the case from above where all ends are at distinct marked points.

    Next suppose that just one component, say $C_1$, is not a generalized tagged arc which is contractible in $\SSS\setminus \MM_C$.  By passing to an $\MM_C$-ramified covering, we may assume that $C_1$ is not an arc with both ends at the same marked point.  Now if $C_2$ \textit{is} a generalized tagged arc which is contractible in $\SSS\setminus \MM_C$, then the only point which could possibly be in $\MM\setminus \MM_C$ is the endpoint of $C_2$, so being contractible in $\SSS\setminus \MM_C$ implies $C_2$ is already contractible as an arc in $\Sigma$, hence is trivial.

    It remains to consider the case where $C_1$ and $C_2$ are both generalized tagged arcs which are contractible in $\SSS\setminus \MM_C$ but not in $\Sigma$.  In this case, $C_1$ and $C_2$ will each have differently tagged ends, and each cuts out a once-punctured monogon with the other component's endpoint as its puncture.  See Figure \ref{fig:c1c2}.
    \begin{figure}[htb]	\global\long\def\svgwidth{100pt}%
	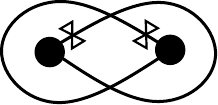
	\caption{\label{fig:c1c2}}
\end{figure}
It will follow from Example \ref{ex:different-tags} below that each component here is equivalent to $(2+L)$ where $L$ is the loop around the two punctures, so $C=(2+L)^2$.  The fact that we get the same thing after applying the skein relation to the crossings follows using the computation of \eqref{eq:alphabeta} in \S \ref{subsec:local_digon}.
\end{proof}

\subsubsection{The digon relation and its applications}\label{subsub:digon}

If no component of $\Sigma$ is a once-punctured closed surface, then the inclusion of the tagged arcs with notches into $\mr{\Sk}(\Sigma)$ (the field of fractions of $\Sk^{\Box}(\Sigma)$) can be realized combinatorially by applying the \textbf{digon relations} from \cite[Def. 8.5]{fomin2018cluster}:
\begin{align}\label{eq:digon}
	\gamma\gamma'=\alpha+\beta
\end{align}
for $\gamma,\gamma',\alpha,\beta$ as in Figure \ref{fig:Digon0}. Here, the endpoints of $\alpha$ and $\beta$ might coincide (cf. Example \ref{ex:noose}), and by Lemma \ref{lem:tag-change}, the relation is similarly imposed for the modifications obtained by simultaneously changing all the tags at any puncture.  By loc. cit., this relation is indeed satisfied by the associated lambda lengths.
\begin{figure}[htb]	\global\long\def\svgwidth{140pt}%
\begingroup%
  \makeatletter%
  \providecommand\color[2][]{%
    \errmessage{(Inkscape) Color is used for the text in Inkscape, but the package 'color.sty' is not loaded}%
    \renewcommand\color[2][]{}%
  }%
  \providecommand\transparent[1]{%
    \errmessage{(Inkscape) Transparency is used (non-zero) for the text in Inkscape, but the package 'transparent.sty' is not loaded}%
    \renewcommand\transparent[1]{}%
  }%
  \providecommand\rotatebox[2]{#2}%
  \newcommand*\fsize{\dimexpr\f@size pt\relax}%
  \newcommand*\lineheight[1]{\fontsize{\fsize}{#1\fsize}\selectfont}%
  \ifx\svgwidth\undefined%
    \setlength{\unitlength}{199.98159897bp}%
    \ifx\svgscale\undefined%
      \relax%
    \else%
      \setlength{\unitlength}{\unitlength * \real{\svgscale}}%
    \fi%
  \else%
    \setlength{\unitlength}{\svgwidth}%
  \fi%
  \global\let\svgwidth\undefined%
  \global\let\svgscale\undefined%
  \makeatother%
  \begin{picture}(1,0.4281786)%
    \lineheight{1}%
    \setlength\tabcolsep{0pt}%
    \put(0,0){\includegraphics[width=\unitlength,page=1]{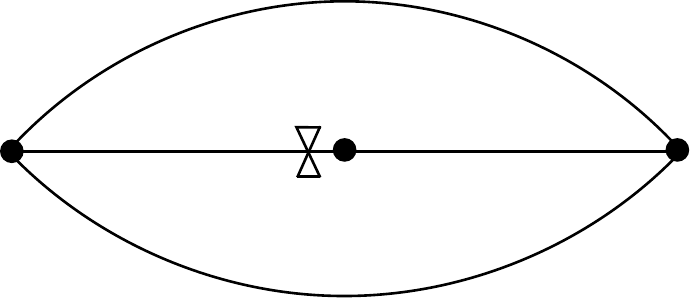}}%
    \put(0.21957613,0.23309394){\makebox(0,0)[lt]{\lineheight{1.25}\smash{\begin{tabular}[t]{l}$\gamma$\end{tabular}}}}%
    \put(0.66314311,0.23358761){\makebox(0,0)[lt]{\lineheight{1.25}\smash{\begin{tabular}[t]{l}$\gamma'$\end{tabular}}}}%
    \put(0.45934593,0.37042421){\makebox(0,0)[lt]{\lineheight{1.25}\smash{\begin{tabular}[t]{l}$\alpha$\end{tabular}}}}%
    \put(0.45271621,0.01999526){\makebox(0,0)[lt]{\lineheight{1.25}\smash{\begin{tabular}[t]{l}$\beta$\end{tabular}}}}%
  \end{picture}%
\endgroup%

	\caption{The digon relation: $\gamma\gamma'=\alpha+\beta$.
		\label{fig:Digon0}
	}
\end{figure}

We note that formulas for Laurent expansions of tagged arcs in terms of the the arcs of an arbitrary tagged triangulation (even in the principal coefficient setting) have been worked out in \cite{MusikerSchifflerWilliams09} and extended to loops in \cite{musiker2013bases}.  Also cf. \cite{wilson2020surface} for a simplified version of this construction.

\begin{eg}[\cite{fomin2018cluster}, Lem. 8.2]\label{ex:noose}
	Consider a plain-tagged generalized arc $\alpha$ cutting out a once-punctured monogon as in Figure \ref{fig:iota_noose}.  Let $\gamma$ be the plain arc from the vertex $m$ to the puncture $p$ in this monogon, and let $\gamma'=\iota(\alpha)$, i.e., $\gamma'$ is isotopic to $\gamma$ but has a notch at $p$.  Then $\gamma\gamma'=\alpha$.  Indeed, this is the special case of the digon relation where the opposite marked points coincide, making $\alpha$ a noose and $\beta$ a contractible arc (hence equivalent to $0$).  Applying $\iii_m$ yields a similar relation for the case where both ends of $\alpha$ are notched.
\end{eg}

\begin{eg}\label{ex:different-tags}
	Let $\?{\gamma}$ be a non-contractible generalized tagged arc with both ends at the same puncture $p$ such that one end of $\?{\gamma}$ is plain and the other is notched.  Let $\?{\gamma}'$ be the same arc but with both ends tagged plain.   Let $L_1,L_2$ be the two loops forming the boundary of a tubular neighborhood of $\?{\gamma}$ in $\SSS$.  See Figure \ref{ggp}.

        \begin{figure}[htb]
		\centering
		\def\svgwidth{300pt}
\begingroup%
  \makeatletter%
  \providecommand\color[2][]{%
    \errmessage{(Inkscape) Color is used for the text in Inkscape, but the package 'color.sty' is not loaded}%
    \renewcommand\color[2][]{}%
  }%
  \providecommand\transparent[1]{%
    \errmessage{(Inkscape) Transparency is used (non-zero) for the text in Inkscape, but the package 'transparent.sty' is not loaded}%
    \renewcommand\transparent[1]{}%
  }%
  \providecommand\rotatebox[2]{#2}%
  \newcommand*\fsize{\dimexpr\f@size pt\relax}%
  \newcommand*\lineheight[1]{\fontsize{\fsize}{#1\fsize}\selectfont}%
  \ifx\svgwidth\undefined%
    \setlength{\unitlength}{326.97533279bp}%
    \ifx\svgscale\undefined%
      \relax%
    \else%
      \setlength{\unitlength}{\unitlength * \real{\svgscale}}%
    \fi%
  \else%
    \setlength{\unitlength}{\svgwidth}%
  \fi%
  \global\let\svgwidth\undefined%
  \global\let\svgscale\undefined%
  \makeatother%
  \begin{picture}(1,0.28448468)%
    \lineheight{1}%
    \setlength\tabcolsep{0pt}%
    \put(0,0){\includegraphics[width=\unitlength,page=1]{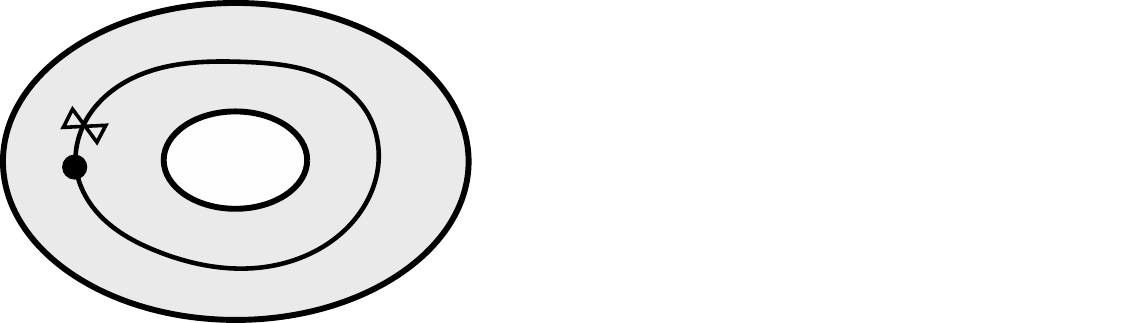}}%
    \put(0.34181124,0.13842553){\makebox(0,0)[lt]{\lineheight{1.25}\smash{\begin{tabular}[t]{l}$\?{\gamma}$\end{tabular}}}}%
    \put(0,0){\includegraphics[width=\unitlength,page=2]{ggp.pdf}}%
    \put(0.85312848,0.13833407){\makebox(0,0)[lt]{\lineheight{1.25}\smash{\begin{tabular}[t]{l}$\?{\gamma}'$\end{tabular}}}}%
    \put(0.38557429,0.2302441){\makebox(0,0)[lt]{\lineheight{1.25}\smash{\begin{tabular}[t]{l}$L_2$\end{tabular}}}}%
    \put(0.22228832,0.14764332){\makebox(0,0)[lt]{\lineheight{1.25}\smash{\begin{tabular}[t]{l}$L_1$\end{tabular}}}}%
    \put(0.89418291,0.22989818){\makebox(0,0)[lt]{\lineheight{1.25}\smash{\begin{tabular}[t]{l}$L_2$\end{tabular}}}}%
    \put(0.73319502,0.14764332){\makebox(0,0)[lt]{\lineheight{1.25}\smash{\begin{tabular}[t]{l}$L_1$\end{tabular}}}}%
  \end{picture}%
\endgroup%

	\caption{A tubular neighborhood of $\?{\gamma}$ and $\?{\gamma}'$ in $\Sigma$, bound by loops $L_1$ and $L_2$. \label{ggp}}
	\end{figure}

  We can find a triple-cover $\pi:\wt{\Sigma}\rar \Sigma$ with lifts $\gamma$ and $\gamma'$ of $\?{\gamma}$, $\?{\gamma}'$, respectively, contained in a once-punctured digon; cf. Figure \ref{abggp}.  The bounding arcs of this punctured digon are labeled $\alpha$ and $\beta$.  We apply the digon relation in $\wt{\Sigma}$ to find $\gamma\gamma'=\alpha+\beta$ as in Figure \ref{fig:Digon0}.

          \begin{figure}[htb]
		\centering
		\def\svgwidth{180pt}
\begingroup%
  \makeatletter%
  \providecommand\color[2][]{%
    \errmessage{(Inkscape) Color is used for the text in Inkscape, but the package 'color.sty' is not loaded}%
    \renewcommand\color[2][]{}%
  }%
  \providecommand\transparent[1]{%
    \errmessage{(Inkscape) Transparency is used (non-zero) for the text in Inkscape, but the package 'transparent.sty' is not loaded}%
    \renewcommand\transparent[1]{}%
  }%
  \providecommand\rotatebox[2]{#2}%
  \newcommand*\fsize{\dimexpr\f@size pt\relax}%
  \newcommand*\lineheight[1]{\fontsize{\fsize}{#1\fsize}\selectfont}%
  \ifx\svgwidth\undefined%
    \setlength{\unitlength}{228.36401838bp}%
    \ifx\svgscale\undefined%
      \relax%
    \else%
      \setlength{\unitlength}{\unitlength * \real{\svgscale}}%
    \fi%
  \else%
    \setlength{\unitlength}{\svgwidth}%
  \fi%
  \global\let\svgwidth\undefined%
  \global\let\svgscale\undefined%
  \makeatother%
  \begin{picture}(1,0.60572335)%
    \lineheight{1}%
    \setlength\tabcolsep{0pt}%
    \put(0,0){\includegraphics[width=\unitlength,page=1]{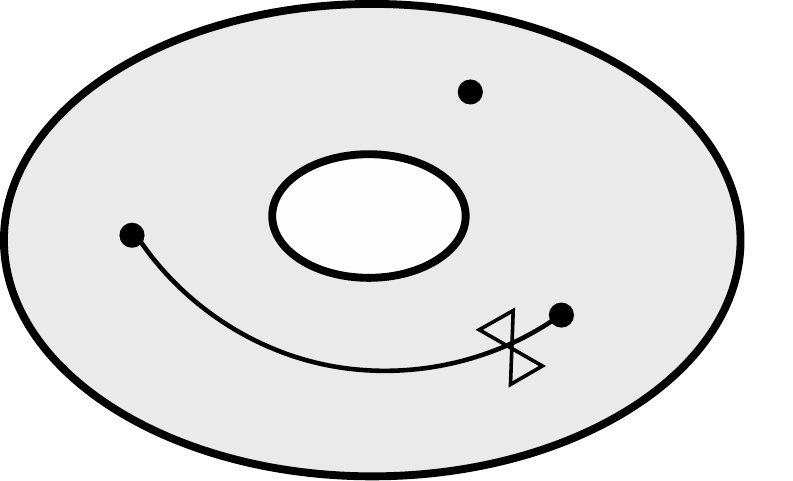}}%
    \put(0.38977024,0.10162361){\makebox(0,0)[lt]{\lineheight{1.25}\smash{\begin{tabular}[t]{l}$\gamma$\end{tabular}}}}%
    \put(0,0){\includegraphics[width=\unitlength,page=2]{abggp.pdf}}%
    \put(0.77642717,0.31178086){\makebox(0,0)[lt]{\lineheight{1.25}\smash{\begin{tabular}[t]{l}$\gamma'$\end{tabular}}}}%
    \put(0,0){\includegraphics[width=\unitlength,page=3]{abggp.pdf}}%
    \put(0.66159611,0.33158777){\makebox(0,0)[lt]{\lineheight{1.25}\smash{\begin{tabular}[t]{l}$\alpha$\end{tabular}}}}%
    \put(0,0){\includegraphics[width=\unitlength,page=4]{abggp.pdf}}%
    \put(0.80117596,0.20960212){\makebox(0,0)[lt]{\lineheight{1.25}\smash{\begin{tabular}[t]{l}$\beta$\end{tabular}}}}%
  \end{picture}%
\endgroup%

	\caption{A triple cover of the tubular neighborhood from Figure \ref{ggp}, along with lifts $\gamma$ and $\gamma'$ of $\?{\gamma}$ and $\?{\gamma}'$, respectively.  By the digon relation, we have $\gamma\gamma'=\alpha+\beta$. \label{abggp}}
	\end{figure}
  
  We now apply $\pi_*$ to get $\?{\gamma}*\?{\gamma}'=\?{\alpha}+\?{\beta}$, where $\?{\alpha}:=\pi_*(\alpha)$ and $\?{\beta}:=\pi_*(\beta)$, cf. Figure \ref{abggp}.
            \begin{figure}[htb]
		\centering
		\def\svgwidth{300pt}
\begingroup%
  \makeatletter%
  \providecommand\color[2][]{%
    \errmessage{(Inkscape) Color is used for the text in Inkscape, but the package 'color.sty' is not loaded}%
    \renewcommand\color[2][]{}%
  }%
  \providecommand\transparent[1]{%
    \errmessage{(Inkscape) Transparency is used (non-zero) for the text in Inkscape, but the package 'transparent.sty' is not loaded}%
    \renewcommand\transparent[1]{}%
  }%
  \providecommand\rotatebox[2]{#2}%
  \newcommand*\fsize{\dimexpr\f@size pt\relax}%
  \newcommand*\lineheight[1]{\fontsize{\fsize}{#1\fsize}\selectfont}%
  \ifx\svgwidth\undefined%
    \setlength{\unitlength}{329.68682714bp}%
    \ifx\svgscale\undefined%
      \relax%
    \else%
      \setlength{\unitlength}{\unitlength * \real{\svgscale}}%
    \fi%
  \else%
    \setlength{\unitlength}{\svgwidth}%
  \fi%
  \global\let\svgwidth\undefined%
  \global\let\svgscale\undefined%
  \makeatother%
  \begin{picture}(1,0.28386466)%
    \lineheight{1}%
    \setlength\tabcolsep{0pt}%
    \put(0,0){\includegraphics[width=\unitlength,page=1]{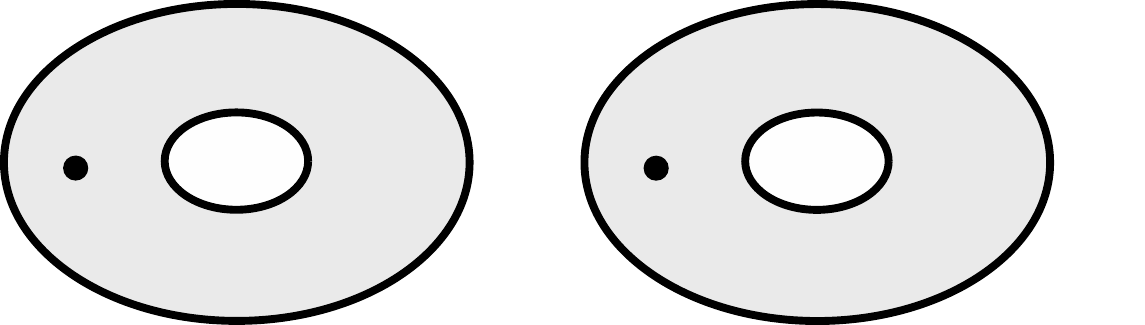}}%
    \put(0.86190837,0.16792999){\makebox(0,0)[lt]{\lineheight{1.25}\smash{\begin{tabular}[t]{l}$\?{\beta}$\end{tabular}}}}%
    \put(0,0){\includegraphics[width=\unitlength,page=2]{ab.pdf}}%
    \put(0.32973116,0.16606119){\makebox(0,0)[lt]{\lineheight{1.25}\smash{\begin{tabular}[t]{l}$\?{\alpha}$\end{tabular}}}}%
    \put(0,0){\includegraphics[width=\unitlength,page=3]{ab.pdf}}%
    \put(0.38375227,0.21742265){\makebox(0,0)[lt]{\lineheight{1.25}\smash{\begin{tabular}[t]{l}$L_2$\end{tabular}}}}%
    \put(0.89505319,0.21627371){\makebox(0,0)[lt]{\lineheight{1.25}\smash{\begin{tabular}[t]{l}$L_2$\end{tabular}}}}%
    \put(0.22183692,0.14618521){\makebox(0,0)[lt]{\lineheight{1.25}\smash{\begin{tabular}[t]{l}$L_1$\end{tabular}}}}%
    \put(0.72854195,0.14618521){\makebox(0,0)[lt]{\lineheight{1.25}\smash{\begin{tabular}[t]{l}$L_1$\end{tabular}}}}%
  \end{picture}%
\endgroup%

	\caption{$\?{\alpha}\coloneqq\pi_*(\alpha)$ and $\?{\beta}\coloneqq\pi_*(\beta)$.\label{ab}}
	\end{figure}
 By applying the skein relation and recalling that contractible arcs are equivalent to $0$, we see that $\?{\alpha}$ is equivalent to $\?{\gamma}' * L_1$, and  and $\?{\beta}$ is equivalent to $\?{\gamma}' * L_2$.  Dividing both sides by $\?{\gamma}'$ yields
	\begin{align}\label{eq:gamma-bar}
		\?{\gamma}=L_1+L_2.
	\end{align}
Note that we may have $L_1=L_2$ (this happens in the once-punctured torus).  In the case where $\?{\gamma}$ cuts out a once punctured monogon, one of the loops, say $L_2$, surrounds a single puncture and is therefore equal to $2$, yielding $\?{\gamma}=2+L_1$.
\end{eg}

\begin{proof}[Proof of Proposition \ref{prop:compound}]
	We saw in \S \ref{Sk-relations} that the skein relation of Figure \ref{SkeinFig} applies with tagged multicurves, so we can repeatedly apply this to reduce to the case of generalized tagged arcs which are simple (have no crossings).  Consider a simple generalized tagged arc $\alpha$ which violates Condition (1) but not Condition (3) from the start of \S \ref{S:tagged_sk}.  I.e., $\alpha$ cuts out a once-punctured monogon and both ends of $\alpha$ have the same tag.  Such $\alpha$ is contained in $\?{\Sk}^{\Box}(\Sigma)$ by Example \ref{ex:noose}.  Now consider the case of a simple generalized tagged arc $\?{\gamma}$ which violates Condition (3) (and possibly also Condition (1)); i.e., both ends of $\?{\gamma}$ lie at the same puncture, but they are tagged differently.  We saw in Example \ref{ex:different-tags} that such $\?{\gamma}$ can be expressed as a sum of two loops.  The claim follows.
\end{proof}

\begin{eg}\label{ex:doubly-notched-arcloop}
	Consider the same setup as in Example \ref{ex:different-tags}, but replace the curve $\?{\gamma}$ having incompatible tags with a curve $\?{\gamma}^{\diamond}$ with both ends notched.  We find lifts $\gamma^{\diamond},\gamma'$ in a triple cover again and apply the digon relation to compute $\gamma^{\diamond}*\gamma'=\alpha'+\beta'$, where this time the tagged arcs $\alpha'$ and $\beta'$ are notched at one end and plain at the other. Applying $\pi_*$ and then using the skein relations, we find that $\pi_*(\alpha)$ and $\pi_*(\beta)$ equal $L_1*\?{\gamma}$ and $L_2*\?{\gamma}$ for $\?{\gamma}$, $L_1$ and $L_2$ as in Example \ref{ex:different-tags}.  Applying \eqref{eq:gamma-bar}, we find
	\begin{align}\label{eqn:gamma-gamma-tag}
		\?{\gamma}^{\diamond}*\?{\gamma}'=(L_1+L_2)^2.
	\end{align}
	This will be applied in \S \ref{sub:bracelets-1p} to understand the doubly-notched arcs on once-punctured closed surfaces.
\end{eg}

In particular, in \eqref{eqn:gamma-gamma-tag} we find an expression for the doubly-notched arcs $\?{\gamma}^\diamond$ as Laurent polynomials of plain arcs and loops.  Moreover, we observe that they are Laurent polynomials in the arcs of any tagged triangulation $\Delta'$.  Indeed, Lemmas \ref{lem:tag-skein} and \ref{lem:tag-skein2} allow us to apply skein relations as in the proof of Lemma \ref{Deltak} to show that $\?{\gamma}^{\diamond}$ is a Laurent polynomial in the arcs of $\Delta'$ up to re-taggings, and then Examples \ref{ex:different-tags} and \ref{ex:doubly-notched-arcloop} imply that these re-taggings of arcs of $\Delta'$ can themselves be expressed as Laurent polynomials in the arcs of $\Delta'$.  Putting all this together, we have the following analog of Proposition \ref{SkAProp}:

\begin{prop}[Skein algebras and cluster algebras---punctured setting]
	\label{SkAPropPun} Let $\Sigma$ be a triangulable surface. Let $\Delta$
	be a ideal triangulation of $\Sigma$ and let $\sd_{\Delta}$ be
	the associated seed as in \eqref{sdDelta}. 
	Then 
	\begin{align}\label{eq:SkBoxA}
		{\s{A}}^{\ord}\subset {\Sk}^{\Box}(\Sigma) \subset \s{A}^{\up}
	\end{align}
	and
	\begin{align}\label{eq:SkBoxAbar}
		\?{\s{A}}^{\ord}\subset \?{\Sk}^{\Box}(\Sigma) \subset \?{\s{A}}^{\up}.
	\end{align}
	Cluster variables are in bijection with the tagged arcs, minus the notched arcs on once-punctured closed components.  Here, frozen cluster variables correspond to the boundary arcs.  Similarly, clusters are in bijection with tagged triangulations which do not include notched arcs in once-punctured closed components.  Flips of these tagged triangulations correspond to mutations.
	\label{iib}
\end{prop}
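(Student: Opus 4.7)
The plan is to reduce this proposition to a combination of the already-established pieces in the excerpt: Proposition \ref{SkAProp} (the unpunctured analog), Corollary \ref{Deltak} (clearing denominators by arcs of $\Delta$), Proposition \ref{prop:compound} (all generalized tagged arcs lie in $\Sk^{\Box}(\Sigma)$), Lemmas \ref{lem:tag-skein}--\ref{lem:tag-skein2} (skein relations extended to the tagged setting), the digon relation \eqref{eq:digon}, and the calculations in Examples \ref{ex:noose}, \ref{ex:different-tags}, \ref{ex:doubly-notched-arcloop}. The bijections of the statement (tagged arcs $\leftrightarrow$ cluster variables, tagged triangulations $\leftrightarrow$ clusters, flips $\leftrightarrow$ mutations), outside the once-punctured closed case, are precisely \cite[Thm.~8.6]{fomin2018cluster}: the lambda-length realization of $\Sk^{\Box}(\Sigma)$ in \S\ref{subsub:tagged-skein} is what makes the identification with the cluster algebra possible, with the Ptolemy/digon relations supplying the exchange relations.

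First, I would establish the inclusions $\s{A}^{\ord}\subset\Sk^{\Box}(\Sigma)$ and $\?{\s{A}}^{\ord}\subset\?{\Sk}^{\Box}(\Sigma)$. Once the tagged arcs (not notched in once-punctured closed components) are identified with cluster variables of seeds obtained from $\sd_\Delta$ by sequences of flips/mutations, this is immediate: tagged arcs and inverses of boundary arcs all lie in $\Sk^{\Box}(\Sigma)$ by definition, and the omitted class (notched arcs in once-punctured closed components) does not contribute cluster variables anyway. The bijection between tagged triangulations and clusters, and between flips and mutations, then follows from \cite[Lem.~9.7]{FominShapiroThurston08} combined with the lambda-length viewpoint.

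Next, for $\?{\Sk}^{\Box}(\Sigma)\subset \?{\s{A}}^{\up}$ and $\Sk^{\Box}(\Sigma)\subset\s{A}^{\up}$, I would proceed cluster-by-cluster. Fix any ideal or tagged triangulation $\Delta'$ and any $b\in\?{\Sk}^{\Box}(\Sigma)$. By Proposition \ref{prop:compound}, $b$ is a polynomial in loops and generalized tagged arcs, and Lemmas \ref{lem:tag-skein}--\ref{lem:tag-skein2} guarantee that the Kauffman skein relations apply even in the presence of tags and self-intersections. Applying Corollary \ref{Deltak} (with $[\Delta'_{\uf}]$) shows that $[\Delta'_{\uf}]^k\cdot b$ for $k\gg 0$ is a polynomial in the arcs of $\Delta'$ together with any re-tagged versions of those arcs that appear after resolving crossings. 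The examples in \S\ref{subsub:digon}, together with the digon relation \eqref{eq:digon}, show that every such re-tagged arc---including doubly-notched arcs in the once-punctured closed case via \eqref{eqn:gamma-gamma-tag}---can be expanded as a Laurent polynomial in the arcs of $\Delta'$ themselves. Hence $b\in\?{\s{A}}_t^{\sd_{\Delta'}}$; since $\Delta'$ was arbitrary, $b\in\?{\s{A}}^{\up}$. Localizing at boundary arcs gives the analogous statement for $\Sk^{\Box}(\Sigma)\subset\s{A}^{\up}$.

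The genuinely delicate step, and the main obstacle, is treating the once-punctured closed surface case. Here notched arcs are not cluster variables, and the correspondence between tagged triangulations and clusters breaks (cf.\ Remark \ref{rmk:tag-switch}). The key observation is that even though a doubly-notched arc $\?\gamma^\diamond$ fails to be a cluster variable, the identity \eqref{eqn:gamma-gamma-tag}, obtained via a triple cover together with the digon relation, expresses $\?\gamma^\diamond\cdot\?\gamma'=(L_1+L_2)^2$ where $\?\gamma'$ is the corresponding plain arc (a cluster variable in the ``plain'' cluster structure) and $L_1,L_2$ are loops. Combined with the fact (cf.\ \eqref{eq:gamma-bar}) that arcs with mixed taggings are already expressible as sums of loops, this produces an explicit Laurent expansion of $\?\gamma^\diamond$ in any fixed triangulation $\Delta'$, placing $\?\gamma^\diamond$ in $\s{A}^{\up}(\sd_{\Delta'})$ even though it is not a cluster monomial. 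This is exactly what allows the containments \eqref{eq:SkBoxA} and \eqref{eq:SkBoxAbar} to hold in full generality, while keeping the bijection statements restricted as in the proposition.
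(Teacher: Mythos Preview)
Your proposal is correct and follows essentially the same route as the paper. The paper likewise invokes \cite[Thm.~8.6]{fomin2018cluster} for the bijections, reduces the containment $\?{\Sk}^{\Box}(\Sigma)\subset\?{\s{A}}^{\up}$ to the untagged case via Corollary~\ref{Deltak} and the skein relations (Lemmas~\ref{lem:tag-skein}--\ref{lem:tag-skein2}), and then handles the re-tagged arcs arising in a given cluster via the digon relation and Examples~\ref{ex:different-tags}--\ref{ex:doubly-notched-arcloop}, with \eqref{eqn:gamma-gamma-tag} supplying the Laurent expansion for doubly-notched arcs in the once-punctured closed case.
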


In \eqref{eq:SkBoxAbar}, to see that $\?{\Sk}^{\Box}(\Sigma)\subset \?{\s{A}}^{\up}$, we can apply the proof of \eqref{comp-Sk-in-A-un} from Proposition \ref{SkAProp} to see that at least $\?{\Sk}(\Sigma)\subset \?{\s{A}}^{\up}$. In addition, we have seen that arcs with notches belong to $\s{A}^{\up}$, and the containment $\?{\Sk}^{\Box}(\Sigma)\subset \?{\s{A}}^{\up}$ follows by applying \eqref{eq:digon} to see that arcs with notches are in fact contained in $\?{\s{A}}^{\up}$. 

We note that if $\Sigma$ has at least two boundary
markings in each component, then we have $\s{A}^{\ord}=\s{A}^{\up}$ by \cite[Thms 4.1, 10.6]{MuLaca}. So in this case, \eqref{eq:SkBoxA} and \eqref{eq:SkBoxAbar}
consist of isomorphisms.

\begin{rem}\label{rmk:once-torus}
	In the case of a once-punctured
	torus, \cite[\S 5.3]{zhou2020cluster} has shown that $\Sk^{\Box}(\Sigma)$ is not
	equal to $\s{A}^{\up}$ (actually, Zhou considers the ``canonical algebra'' generated by the theta functions,
	but our Theorem \ref{thm:tag_sk_up_cl_alg} and Equation \eqref{eq:torus_tag_arc} imply that this agrees with $\Sk^{\Box}(\Sigma)$ up to $\Q$-scalars).  Rather, loc. cit. shows that $\Sk^{\Box}(\Sigma)$ equals to the intersection
	of $\s{A}^{\up}$ with the upper cluster algebra associated to the
	notched arcs. It is natural to ask if the analogous statements hold
	for all cases with once-punctured closed components.
\end{rem}

\subsubsection{The local digon relation}\label{subsec:local_digon}
Greg Muller has shown us a ``local'' version of the digon relation, cf. Figure \ref{fig:localDigon}.  To our knowledge, this relation has not previously appeared in the literature, so we provide a proof below.

\begin{figure}[htb]
\global\long\def\svgwidth{220pt}
\begingroup%
  \makeatletter%
  \providecommand\color[2][]{%
    \errmessage{(Inkscape) Color is used for the text in Inkscape, but the package 'color.sty' is not loaded}%
    \renewcommand\color[2][]{}%
  }%
  \providecommand\transparent[1]{%
    \errmessage{(Inkscape) Transparency is used (non-zero) for the text in Inkscape, but the package 'transparent.sty' is not loaded}%
    \renewcommand\transparent[1]{}%
  }%
  \providecommand\rotatebox[2]{#2}%
  \newcommand*\fsize{\dimexpr\f@size pt\relax}%
  \newcommand*\lineheight[1]{\fontsize{\fsize}{#1\fsize}\selectfont}%
  \ifx\svgwidth\undefined%
    \setlength{\unitlength}{295.47631849bp}%
    \ifx\svgscale\undefined%
      \relax%
    \else%
      \setlength{\unitlength}{\unitlength * \real{\svgscale}}%
    \fi%
  \else%
    \setlength{\unitlength}{\svgwidth}%
  \fi%
  \global\let\svgwidth\undefined%
  \global\let\svgscale\undefined%
  \makeatother%
  \begin{picture}(1,0.22815951)%
    \lineheight{1}%
    \setlength\tabcolsep{0pt}%
    \put(0,0){\includegraphics[width=\unitlength,page=1]{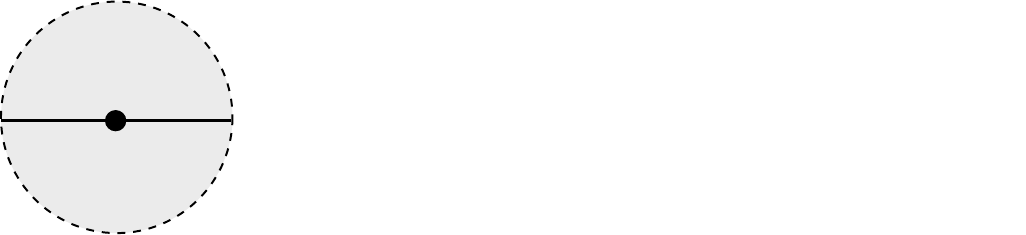}}%
    \put(0.27084134,0.09532479){\color[rgb]{0,0,0}\makebox(0,0)[lt]{\lineheight{1.25}\smash{\begin{tabular}[t]{l}$=$\end{tabular}}}}%
    \put(0.66149976,0.09030262){\color[rgb]{0,0,0}\makebox(0,0)[lt]{\lineheight{1.25}\smash{\begin{tabular}[t]{l}$+$\end{tabular}}}}%
    \put(0,0){\includegraphics[width=\unitlength,page=2]{local_digon.pdf}}%
  \end{picture}%
\endgroup%

	\caption{The local digon relation.  The relation holds even if there are additional strands ending at the puncture (inserted into each term without interacting with the already-pictured strands).\label{fig:localDigon}}
\end{figure}

\begin{prop}\label{prop:loc-digon}
    Generalized tagged arcs satisfy the local digon relation of Figure \ref{fig:localDigon}.
\end{prop}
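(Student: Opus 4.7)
The strategy is to reduce the local digon relation to the global digon relation of Equation \eqref{eq:digon} via a branched covering space argument, in the spirit of Examples \ref{ex:different-tags} and \ref{ex:doubly-notched-arcloop}. The key conceptual input is that the pictured configuration is local near the puncture $p$: the ``additional strands'' ending at $p$ do not interact with the two distinguished strands and, by associativity of the superposition product in $\?{\Sk}^{\Box}(\Sigma)$, contribute a common factor to both sides of the relation. It therefore suffices to prove the relation in a small closed disk $U \subset \SSS$ whose interior contains $p$, with one or two marked points on $\partial U$ accommodating the free ends of the pictured strands, after which one multiplies through by the generalized tagged multicurve representing the ``additional'' contribution.

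Within $U$, the two pictured strands together with their tagging data at $p$ do not yet form a standard digon configuration to which \eqref{eq:digon} applies, because one end is notched while the other is plain. To remedy this, I would construct a finite branched cover $\pi\colon \wt{\Sigma}\to \Sigma$ ramified at $p$ (as in Examples \ref{ex:different-tags} and \ref{ex:doubly-notched-arcloop}, a cover of ramification index $3$ at $p$ suffices, since $3$ is coprime to $2$ and so kills the parity obstruction coming from conjugacy of horocycles). One chooses lifts of the two pictured strands to $\wt{\Sigma}$ which together with the ramified preimage of $p$ cut out a once-punctured digon on $\wt{\Sigma}$; because the cover is constructed to resolve the tagging discrepancy, these lifts may be taken plain-tagged at the preimages of $p$, and therefore lie in $\Sk^{\Box}_{\pi}(\wt{\Sigma})$. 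Now the global digon relation \eqref{eq:digon} applies verbatim to the lifted configuration, producing an equation of the form $\gamma\gamma' = \alpha + \beta$ upstairs. Pushing the resulting identity down through the algebra homomorphism $\pi_*$ of \eqref{eq:pi-star} and simplifying the images of $\alpha,\beta$ using the ordinary skein relations of Lemmas \ref{lem:tag-skein} and \ref{lem:tag-skein2} (which turn contractible arcs and peripheral loops into the correct boundary arcs/constants) recovers exactly the two terms on the right-hand side of Figure \ref{fig:localDigon}.

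The final step is to reinsert the additional strands at $p$ in the ambient surface. Here one uses that $\pi$ is a local diffeomorphism away from $p$, so any configuration of arcs ending at $p$ lifts to $\wt{\Sigma}$, possibly passing to a further cover if some of those strands are themselves notched at $p$ (the tag-changing involution $\iii_p$ of Lemma \ref{lem:tag-change} can be used to uniformize the tags of the additional strands before lifting, and then un-applied after pushing down). Since the additional strands are disjoint from the pictured ones in a neighborhood of $p$, their lifts commute in the superposition product with both sides of the digon identity on $\wt{\Sigma}$, so multiplying through and applying $\pi_*$ once more yields the claimed relation.

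The main obstacle, as in Examples \ref{ex:different-tags} and \ref{ex:doubly-notched-arcloop}, is organizing the covering-space bookkeeping so that all arcs in sight are plain-tagged at every ramified puncture, since $\pi_*$ in \eqref{eq:pi-star} is only defined on the subalgebra $\Sk^{\Box}_{\pi}$ of such configurations. The delicate point is choosing a single cover of $U$ (or at worst of a neighborhood of $p$ in $\Sigma$) whose ramification index accommodates simultaneously the two distinguished strands and all additional strands ending at $p$; the previous examples already handle the two-strand case, and the general case follows by iterating, or by invoking the tag-changing automorphism $\iii_p$ to reduce to the two-strand case first.
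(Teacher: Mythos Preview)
There is a genuine gap in your covering-space argument, and it lies precisely at the point you flag as ``delicate.'' You propose to take a cover ramified at $p$ with ramification index $3$, claiming that coprimality with $2$ ``kills the parity obstruction'' to lifting the notched strand. But the obstruction is not a parity issue at all: as the paper explains just before \eqref{eq:pi-star}, if a horocycle $H$ at $p$ has length $l$, then its preimage at a point of ramification index $k$ has length $kl$, and conjugacy $l\mapsto 1/l$ simply does not commute with this pullback for any $k>1$. So there is no meaningful lift of a notched arc at a ramified point, and $\pi_*$ is not available for your lifted configuration. The covering arguments in Examples \ref{ex:different-tags} and \ref{ex:doubly-notched-arcloop} work because the cover is \emph{unramified} at the puncture in question: the triple cover separates the two endpoints of $\?{\gamma}$ into distinct preimages of $p$, and notches lift to notches there.

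Your reduction to a disk $U$ is also problematic: the arcs carrying the two strands are global objects in $\Sigma$, and in general both strands could belong to the same arc (with both ends at $p$), or to arcs whose other endpoints coincide, possibly with conflicting tags. These cases are not visible in a neighbourhood of $p$ and cannot be handled uniformly. The paper's proof accordingly proceeds by a case analysis on whether the endpoints $p,q,r$ coincide and on the taggings at the far ends. The unramified-cover trick (to separate coinciding endpoints) handles several cases by reducing to the ordinary digon relation \eqref{eq:digon}, but the case $p\neq q=r$ with \emph{different} tags at $q=r$ requires a separate direct computation (yielding $\alpha\beta=2+L$), and no single covering argument disposes of all cases at once.
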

\begin{proof}
    Suppose the two strands from the left-hand side of Figure \ref{fig:localDigon} are parts of generalized tagged arcs $\alpha$ and $\beta$.  Let $p$ be the puncture in the figure, and let $q,r$ be the markings corresponding to the the other endpoints of $\alpha$ and $\beta$, respectively.  By applying the skein relations, we may assume that $\alpha$ and $\beta$ have no crossings (with each other or with themselves).

    If $p,q,r$ are distinct, then the relation follows from the usual digon relation \eqref{eq:digon} from \cite{fomin2018cluster}.  Similarly, if $q=r\neq p$, and if the taggings of $\alpha$ and $\beta$ at $q=r$ are the same, then the relation holds by Example \ref{ex:noose}.

    Now suppose $\alpha=\beta$ (by which we mean that the two strands are parts of the same component, not distinct homotopic components), so $p=q=r$.  Then we are in the situation of Example \ref{ex:different-tags}, and the relation holds by \eqref{eq:gamma-bar}.

    Now suppose $p=q\neq r$.  If $\alpha$ is contractible, then the left-hand side of Figure \ref{fig:localDigon} is $0$, and the right-hand side is shown to be $0$ using the skein relations.  If $\alpha$ is not contractible, then we may again pass to a covering space $\pi:\wt{\Sigma}\rar \Sigma$ where the ends of $\alpha$ are distinct and then apply the usual digon relation of \eqref{eq:digon}, followed by $\pi_*$, to obtain the right-hand side of Figure \ref{fig:localDigon}.  The case $p=r\neq q$ is similar.

    Next, suppose again that $p\neq q=r$, but this time suppose that the taggings of $\alpha$ and $\beta$ at $q=r$ are distinct.  By applying the tag-changing automorphism $\iii_p$, we may assume both ends of $\alpha$ are plain and both ends of $\beta$ are notched.  Let $L$ be the loop that goes around both punctures.  The local digon relation in this case claims \begin{align}\label{eq:alphabeta}\alpha \beta = 2+L.\end{align}  Let $\gamma$ be the noose around $q$ based at $p$ which surrounds $\alpha$, and let $\delta$ be the arc obtained from $\alpha$ by notching the end at $q$.  Then $\alpha\delta=\gamma$ by Example \ref{ex:noose}, so 
    \begin{align*}
        \alpha \beta = \frac{\beta \gamma}{\delta}
    \end{align*}
    By the $p=q\neq r$ case of the local digon relation that we checked above, along with the skein relation, we compute $\beta \gamma = \delta(2+L)$, so $\frac{\beta \gamma}{\delta}=2+L$, as desired.

    Finally, suppose $p=q=r$ but $\alpha\neq \beta$.  If $\alpha$ and $\beta$ are both contractible, then the skein relations imply both sides are $0$.  Otherwise, we may reduce to one of the above cases by taking a covering space.
\end{proof}

\subsection{Laminations and coordinates}
\label{sec:shear_coord}
We next collect useful notions and results about laminations and corresponding coordinates for any marked surface $\Sigma=(\SSS,\MM)$ with punctures, following the conventions in \cite{fomin2018cluster,yurikusa2020density}.

\begin{defn}[Lamination \cite{fomin2018cluster,allegretti2016geometry,yurikusa2020density}] \label{def:lamination}
	By \textbf{laminates}, we mean any of the following crossingless connected curves in $\SSS\backslash\MM$,
	considered up to homotopy\footnote{Here, homotopies are in $\SSS \setminus \MM$ and are subject to the requirement that boundaries of curve components must always map to $\partial \SSS \setminus (\partial \SSS \cap \MM)$.},
	\begin{itemize}
		\item closed curves contained in $\SSS\backslash(\partial\SSS\cup\MM)$;
		\item curves whose ends belong to $\partial\SSS\backslash\MM$ or spiral
		into punctures (with clockwise or counterclockwise direction);
	\end{itemize}
	while the following curves are not allowed:
	\begin{itemize}
		\item a null-homotopic closed curve
		\item a curve whose ends belong to $\partial\SSS\backslash\MM$ which is isotopic to an interval inside $\partial\SSS\backslash\MM$
		\item a curve with two ends spiraling into the same puncture in the same
		direction without enclosing anything else.
	\end{itemize}
	A laminate is said to be \textbf{special} if it is a loop around a single puncture or an arc retractable to
	an interval on $\partial S$ containing exactly one marked point.
	It is said to be \textbf{bounded} if it has no spiraling end.  A laminate which is not a closed (boundary-less) curve may be called an \textbf{arc-laminate}.
	
	An \textbf{integer unbounded lamination} $l=\sum w_{i}[l_{i}]$ is a finite
	$\NN$-weighted (formal) sum of the homotopy classes of non-special pairwise non-intersecting laminates $l_{i}$. The collection of such laminations is denoted
	by $\mathcal{\mathcal{\cX}}_{L}(\Sigma,\Z)$.  For use in \S \ref{sec:cluster_poisson}, we define $\wt{\mathcal{\cX}}_{L}(\Sigma,\Z)$ similarly, but allowing the laminates $l_i$ to be retractable to intervals on $\partial S$ containing exactly one marked point and allowing these special laminates to have coefficients in $\bb{Z}$ rather than just $\bb{N}$.

	An \textbf{integer bounded lamination} $l=\sum w_{i}[l_{i}]$ is a finite $\Z$-weighted
	(formal) sum of the homotopy classes of pairwise non-intersecting bounded laminates $l_{i}$,
	such that the weight of non-special laminates are non-negative. The
	collection of such laminations is denoted by $\cA_{L}=\mathcal{A}_{L}(\Sigma,\Z)$. 
\end{defn}

It is natural to generalize the above definitions to define the bounded
or unbounded laminations with rational coefficients. The corresponding
sets are denoted $\cA_L(\Sigma,\Q)$ and $\cX_{L}(\Sigma,\Q)$. 

Choose an (ideal) triangulation $\Delta$. For any laminate $l$ and
any arc $\gamma\in\Delta$, we deform $l$ so that it intersects
with $\gamma$ transversely at a minimal number of points. Then let
$\cc(\gamma,l)$ denote this minimal intersecting number between
$\gamma$ and $l$.

Denote $N_\Q=N\otimes \Q$ for $N=N_{\sd_{\Delta}}=\bb{Z}^{\Delta}$ the lattice for the seed $\sd_{\Delta}$ associated to $\Delta$.
\begin{prop}[\cite{FG4} \S 3.2; \cite{allegretti2016geometry} Prop. 5.1.3]\label{prop:intersection_coordinate}
	For any given ideal triangulation $\Delta$, we have a bijection
	\begin{align}\label{pil}
		\pi:\cA_{L}(\Sigma,\Q) \simeq  N_{\Q}
	\end{align}
	such that any bounded lamination $L=\sum_i w_{i}[l_{i}]$ is sent to
	$\pi(L)=\sum_{\gamma\in \Delta}(\sum_i \frac{1}{2}w_{i}\cc(\gamma,l_{i}))e_{\gamma}$, where $\{e_{\gamma}\}_{\gamma\in \Delta}$ is the basis in the seed $\sd_\Delta$.
\end{prop}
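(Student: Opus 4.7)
The plan is to first verify that $\pi$ is well-defined and then establish bijectivity via a triangle-by-triangle analysis after cutting along $\Delta$. For well-definedness, each laminate $l$ has a canonical minimal-intersection representative (e.g., the hyperbolic geodesic in $\SSS\setminus\MM$), which realizes all the minima $\cc(\gamma,l)$ simultaneously for $\gamma\in\Delta$. The factor of $\tfrac12$ in the formula accounts for endpoints of laminates lying on boundary arcs; with this convention $\pi(L)\in N_\Q$.

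For bijectivity, I would first flip $\Delta$ to a triangulation $\Delta'$ without self-folded triangles (possible by \cite[Lem. 2.13]{FominShapiroThurston08}) and cut $\SSS$ along $\Delta'$ into ideal triangles. In each such triangle $T$ with sides $a,b,c$, the restriction of any non-special bounded laminate is a weighted disjoint union of simple arcs of three types, one for each pair of sides. Writing $x_{ab},x_{bc},x_{ac}$ for the total weights of these types and $n_a,n_b,n_c$ for the contributions to the coefficients of $e_a,e_b,e_c$ coming from inside $T$, one has
\begin{align*}
n_a=x_{ab}+x_{ac},\qquad n_b=x_{ab}+x_{bc},\qquad n_c=x_{ac}+x_{bc},
\end{align*}
with unique solution $x_{ab}=(n_a+n_b-n_c)/2$ and cyclic permutations. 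Since these weights are determined by $\pi(L)$ and automatically agree along shared interior sides of adjacent triangles, the non-special part of $L$ is uniquely reconstructed, giving injectivity.

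For surjectivity, given $\vec v\in N_\Q$, applying the above formulas in each triangle may produce some negative $x_{ab}$, reflecting a failure of the ``triangle inequality'' along the boundary. This is remedied using special laminates---arc-laminates retractable past a boundary marked point, or loops around punctures---which carry arbitrary rational weights (including negative ones) and whose $\pi$-coordinates are supported on arcs adjacent to the relevant marked point or puncture. Subtracting a suitable combination of special laminates reduces to the case where every $x_{ab}\geq 0$, yielding a bounded lamination $L\in\cA_L(\Sigma,\Q)$ with $\pi(L)=\vec v$. The main obstacle I expect is verifying that the $\pi$-images of special laminates span the required ``boundary correction'' directions and that the triangle-by-triangle reconstruction glues consistently into a global lamination; this reduces to checking that special laminates do not alter intersections with interior arcs and then translating back to $\Delta$ via the flip sequence.
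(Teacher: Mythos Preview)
The paper does not supply its own proof here; the proposition is simply cited from \cite{FG4} and \cite{allegretti2016geometry}. Your triangle-by-triangle reconstruction is essentially the standard argument in those references, so there is nothing to compare against in this paper.

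Two places where your outline needs sharpening. First, your injectivity argument only recovers the non-special part. To pin down the special part (which may carry negative weight), use the following observation: if $\hat L$ has no special-laminate components, then at every marked point $p$ one has $\min_{v\text{ at }p} x_v(\hat L)=0$, since otherwise the innermost corner-arcs around $p$ would glue up to a copy of the special laminate at $p$. Hence for a general $L$ the weight of the special laminate at $p$ equals $\min_{v\text{ at }p} x_v(L)$, which is determined by $\pi(L)$; this closes the injectivity argument.

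Second, your final sentence is off: special laminates \emph{do} alter intersections with interior arcs (a peripheral loop at a puncture $p$ meets every arc ending at $p$). What actually makes the surjectivity correction work is that adding a weight-$w$ special laminate at $p$ increases $x_v$ by exactly $w$ at every corner $v$ lying at $p$ and by $0$ at every other corner---one checks this by cases on how many corners of a triangle lie at $p$, or simply by noting that the peripheral loop contributes exactly one arc-piece per corner at $p$. With this, setting $w_p=\min_{v\text{ at }p} x_v$ for each $p$ independently gives the required decomposition, and the corrections at distinct marked points do not interact. The detour through a triangulation without self-folded triangles is optional: either handle self-folded triangles directly (the corner calculus extends), or invoke the flip-compatibility cited immediately after the proposition.
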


By \cite[Prop. 5.1.4, Thm. 5.1.5]{allegretti2016geometry}, this identification of $\s{A}_L(\Sigma,\bb{Q})$ with $N_{\bb{Q}}$ respects mutations, so the identification is independent of seed if we view $N$ as the set of tropical points of $\s{A}$.

\begin{defn}[{Shear coordinates \cite[Def. 12.2]{fomin2018cluster}\cite[\S 2.2]{yurikusa2020density}}]\label{def:shear}
	Let $l$ be a laminate and $\gamma$ an interior arc
	of the (ideal) triangulation $\Delta$.

	If $\gamma$ is not contained inside a self-folded triangle, the shear
	coordinate $b_{\gamma}^{\Delta}(l)$ of $l$ with respect to the triangulation
	$T$ and its arc $\gamma$ is defined to be the sum of contributions
	from the (minimal\footnote{If $\gamma$ connects to punctures, we can remove small neighborhoods
		of the punctures so that $C$ and $l$ have only finitely many intersections
		\cite[\S 5.1.2]{allegretti2016geometry}. Then we can deform
		$l$ outside the neighborhoods so that the intersection number becomes
		the minimal. Finally, we count the contributions from Figures \ref{fig:shear_coordinate}.}) intersections of $l$ with $\gamma$ as shown in Figure \ref{fig:shear_coordinate}. 
	
	If $\gamma$ is contained in a self-folded triangle, denote the corresponding
	noose by $\gamma'$ and the puncture surrounded by $\gamma'$ by $p$.
	Let $l^{(p)}$ denote the laminate obtained from $l$ by reversing
	the directions of its spirals at $p$ if they exist. Define the shear
	coordinate $b_{\gamma}^{\Delta}(l):=b_{\gamma'}^{\Delta}(l^{(p)})$.
\end{defn}

\begin{figure}[htb]
    \global\long\def\svgwidth{240pt}
\begingroup%
  \makeatletter%
  \providecommand\color[2][]{%
    \errmessage{(Inkscape) Color is used for the text in Inkscape, but the package 'color.sty' is not loaded}%
    \renewcommand\color[2][]{}%
  }%
  \providecommand\transparent[1]{%
    \errmessage{(Inkscape) Transparency is used (non-zero) for the text in Inkscape, but the package 'transparent.sty' is not loaded}%
    \renewcommand\transparent[1]{}%
  }%
  \providecommand\rotatebox[2]{#2}%
  \newcommand*\fsize{\dimexpr\f@size pt\relax}%
  \newcommand*\lineheight[1]{\fontsize{\fsize}{#1\fsize}\selectfont}%
  \ifx\svgwidth\undefined%
    \setlength{\unitlength}{273.16238818bp}%
    \ifx\svgscale\undefined%
      \relax%
    \else%
      \setlength{\unitlength}{\unitlength * \real{\svgscale}}%
    \fi%
  \else%
    \setlength{\unitlength}{\svgwidth}%
  \fi%
  \global\let\svgwidth\undefined%
  \global\let\svgscale\undefined%
  \makeatother%
  \begin{picture}(1,0.3512645)%
    \lineheight{1}%
    \setlength\tabcolsep{0pt}%
    \put(0,0){\includegraphics[width=\unitlength,page=1]{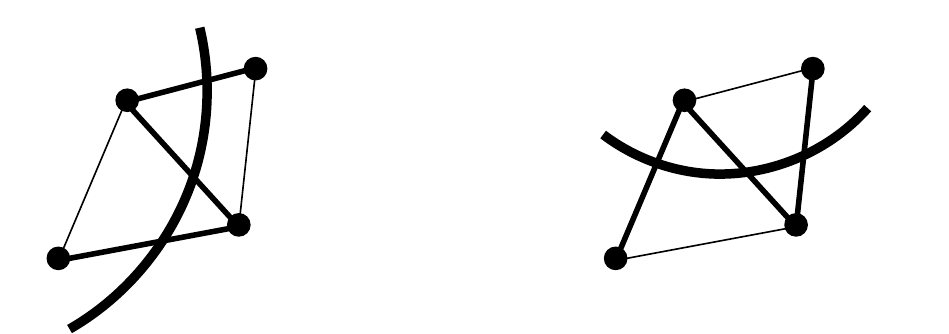}}%
    \put(0.13913272,0.17741899){\makebox(0,0)[lt]{\lineheight{1.25}\smash{\begin{tabular}[t]{l}$\gamma$\end{tabular}}}}%
    \put(0.76548567,0.20415935){\makebox(0,0)[lt]{\lineheight{1.25}\smash{\begin{tabular}[t]{l}$\gamma$\end{tabular}}}}%
    \put(-0.00364652,0.16915414){\makebox(0,0)[lt]{\lineheight{1.25}\smash{\begin{tabular}[t]{l}$+1$\end{tabular}}}}%
    \put(0.54116748,0.16399945){\makebox(0,0)[lt]{\lineheight{1.25}\smash{\begin{tabular}[t]{l}$-1$\end{tabular}}}}%
    \put(0.22301637,0.31788809){\makebox(0,0)[lt]{\lineheight{1.25}\smash{\begin{tabular}[t]{l}$l$\end{tabular}}}}%
    \put(0.89074968,0.24403858){\makebox(0,0)[lt]{\lineheight{1.25}\smash{\begin{tabular}[t]{l}$l$\end{tabular}}}}%
  \end{picture}%
\endgroup%

    		\caption{\label{fig:shear_coordinate}  Shear coordinate contributions.  We get a contribution of $+1$ to $b_{\gamma}^{\Delta}(l)$ when $\gamma$ and the adjacent arcs crossed by $l$ form an S-shape, and a contribution of $-1$ when they form a Z-shape.}
\end{figure}

One can further define the shear coordinates for tagged triangulation.
If a tagged triangulation is given by $\iota(\Delta)$ where $\Delta$
is an ideal triangulation, define $b_{\gamma}^{\iota(\Delta)}(l)=b_{\gamma}^{\Delta}(l)$.
For a general tagged triangulation $\Delta'$, there exists a unique
ideal triangulation $\Delta$ such that $\Delta'$ is obtained from
$\iota(\Delta)$ by simultaneously changing all tags at some (possibly empty) collection of punctures
$p_{1},\ldots,p_{r}$, see \cite[Rmk. 5.13]{fomin2018cluster} or \cite[Rmk. 3.11]{MusikerSchifflerWilliams09}.
Then one defines $b_{\gamma}^{\Delta'}(l)=b_{\gamma^{(p_{1}\cdots p_{r})}}^{\iota(\Delta)}(l^{(p_{1}\cdots p_{r})})$,
where $\gamma^{(p_{1}\cdots p_{r})}$ is obtained from $\gamma$ by
changing the tags at the punctures $p_{1},\ldots,p_{r}$ and $l^{(p_{1}\cdots p_{r})}$
is obtained from $l$ by reversing the direction of its spirals
at the punctures $p_{1},\ldots,p_{r}$.
\begin{prop}[{\cite[Theorems 12.3 and 13.6]{fomin2018cluster}}]\label{prop:shear_coord}
	For any given tagged triangulation $\Delta$, we have a bijection
	\begin{align*}
		b^{\Delta}:\cX_{L}(\Sigma,\Z) \simeq  \bigoplus_{\gamma\in \Delta_{\ufv}}\Z e_{\gamma}^* 
	\end{align*}
	such that $L=\sum w_{i}[l]_{i}$ is sent to $b^{\Delta}(L)=\sum_{\gamma\in \Delta_{\ufv}}(\sum_i w_{i}b_{\gamma}^{\Delta}(l_i))e_{\gamma}^*$.
\end{prop}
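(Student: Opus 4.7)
The plan is to establish the bijection by constructing an explicit inverse to $b^\Delta$ and then verifying both injectivity and surjectivity of the resulting maps. I will proceed in the following order.

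First, I would reduce the tagged setting to the case of ideal triangulations. By definition, an arbitrary tagged triangulation $\Delta$ is obtained from $\iota(\Delta^\circ)$ (for a unique ideal triangulation $\Delta^\circ$ without self-folded triangles) by simultaneously changing tags at some collection of punctures $p_1,\ldots,p_r$, and the shear coordinates for $\Delta$ are defined by transporting via this tag-change and the corresponding spiral-reversal $l\mapsto l^{(p_1\cdots p_r)}$. The involution $l\mapsto l^{(p)}$ is clearly a bijection on $\cX_L(\Sigma,\Z)$ preserving coefficients, so it suffices to prove the proposition for the ideal triangulation $\Delta^\circ$. Similarly, the noose convention $b^{\Delta^\circ}_\gamma(l) = b^{\Delta^\circ}_{\gamma'}(l^{(p)})$ for arcs $\gamma$ inside a self-folded triangle reduces the verification to arcs not inside a self-folded triangle, since one can check directly that the map $l\mapsto l^{(p)}$ on laminates whose spirals at $p$ are reversed is a bijection onto those with the opposite spiral convention at $p$.

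Next I would verify well-definedness. Each laminate $l$ can be put in minimal position with respect to $\Delta$ (in punctured cases, after deleting small horocyclic neighborhoods of punctures as in \cite{allegretti2016geometry}). The number of S- and Z-crossings at an arc $\gamma$ not inside a self-folded triangle is then finite, and standard bigon/triangle arguments show this count is invariant under homotopies that preserve minimal position. Hence $b^\Delta_\gamma(l)$ is well-defined, and $b^\Delta$ extends additively to $\Z$-linear combinations.

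For surjectivity, I would exhibit an explicit inverse. For each unfrozen arc $\gamma\in\Delta_\ufv$, construct an \emph{elementary laminate} $l_\gamma^\pm$ that hits $\gamma$ exactly once in an S-shape (resp.\ Z-shape) inside the quadrilateral containing $\gamma$, with endpoints spiraling into the two punctures opposite $\gamma$ (or terminating on the boundary of $\SSS$); these laminates cross no other arc of $\Delta$, and so $b^\Delta(l_\gamma^\pm) = \pm e_\gamma^*$. Given any $(n_\gamma)_{\gamma\in\Delta_\ufv}\in\bigoplus_\gamma\Z e_\gamma^*$, the formal sum $\sum_\gamma |n_\gamma|\,l_\gamma^{\sign(n_\gamma)}$ can be placed in general position as a disjoint union of laminates (any accidental intersections can be resolved inside each triangle by the two choices of ``exit side'' for the strands). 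This gives an integer unbounded lamination with the prescribed shear coordinates. The main subtlety here, which is the principal obstacle, is to handle punctures where many strands spiral in the same direction; one must verify that this global picture is free of forbidden configurations (no two ends spiraling into the same puncture in the same direction without enclosing anything else), which is done by grouping compatible spiralling strands into a single weighted laminate.

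Finally, for injectivity I would proceed by induction on the total complexity $\sum_\gamma|b^\Delta_\gamma(l)|$. Given laminations $l,l'$ with $b^\Delta(l)=b^\Delta(l')$, place both in minimal position with respect to $\Delta$. Inside each triangle $T$ of $\Delta$, the combinatorial pattern of arcs of $l$ (resp.\ $l'$) is determined up to isotopy by the triples of crossing counts on the three sides of $T$, and the S/Z-decomposition at each side is in turn determined by the shear coordinates of the two arcs forming the adjacent quadrilateral. Hence $l$ and $l'$ agree triangle-by-triangle, and gluing yields the homotopy $l\simeq l'$. The spiral directions at punctures are pinned down by the asymptotic shear contributions in the triangles incident to the puncture. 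This completes the proof that $b^\Delta$ is a bijection, which is exactly the content of \cite[Thms.\ 12.3, 13.6]{fomin2018cluster}.
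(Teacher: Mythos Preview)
The paper does not give its own proof of this proposition; it simply cites \cite[Thms.\ 12.3 and 13.6]{fomin2018cluster} and uses the result as a black box. So there is nothing to compare against directly, and the question becomes whether your outline stands on its own.

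Your overall architecture (reduce tagged to ideal via spiral-reversal, check well-definedness, build an inverse, reconstruct triangle-by-triangle) matches the standard argument. A minor inaccuracy: the ideal triangulation $\Delta^\circ$ underlying a tagged one need not be free of self-folded triangles, so you cannot assume that in the reduction step; the self-folded case must be handled via the noose convention you mention, not eliminated.

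The more substantial gap is in surjectivity. Your elementary laminates $l_\gamma^\pm$ do satisfy $b^\Delta(l_\gamma^\pm)=\pm e_\gamma^*$, but the passage ``the formal sum $\sum_\gamma |n_\gamma|\,l_\gamma^{\operatorname{sign}(n_\gamma)}$ can be placed in general position as a disjoint union of laminates'' is where the real work lies, and you essentially defer it. Resolving the intersections of these elementary laminates inside triangles can change the global isotopy type of the components and in particular could create forbidden pieces (null-homotopic loops, arcs retracting to boundary intervals, or two ends spiraling the same way into a puncture without enclosing anything). You acknowledge this as ``the principal obstacle'' but do not discharge it. Fomin--Thurston avoid this issue by constructing the inverse lamination directly: in each triangle they lay down arc segments whose endpoints on the three sides are dictated by the prescribed shear data, then glue across edges and check that nothing forbidden is produced. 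That direct construction is cleaner than resolve-and-hope.

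Your injectivity sketch is closer to complete, but the sentence ``the combinatorial pattern \ldots\ is determined up to isotopy by the triples of crossing counts on the three sides'' presumes you already know the intersection numbers with each arc, whereas what you are given is only the shear coordinates. For laminates that spiral into punctures these intersection numbers are infinite, so you cannot recover them naively; one has to argue via the finite data in each quadrilateral and the spiral direction at each puncture, which you gesture at but do not carry out.
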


Let $\Delta'$ denote any tagged triangulation obtained from $\Delta$ by a flip. By \cite[Thm. 4.3]{reading2014universal}, for any laminate $l$, $-b^{\Delta}(l)\in \oplus_{\gamma\in \Delta} \Z e^*_\gamma$ agrees with $-b^{\Delta'}(l)\in \oplus_{\gamma'\in \Delta'} \Z e^*_{\gamma'}$ under the natural identifications $\oplus_{\gamma\in \Delta_{\uf}} \Z e^*_\gamma=N_{\sd_{\Delta},\uf}^*\cong N_{\sd_{\Delta'},\uf}^*=\oplus_{\gamma'\in \Delta'_{\uf}} \Z e^*_{\gamma'}$, where the identification $N_{\sd_{\Delta},\uf}^*\cong N_{\sd_{\Delta'},\uf}^*$ is via the piecewise-linear tropical mutation map as in \eqref{fprime}.  Thus, we may view $-b:\s{X}_L(\Sigma,\bb{Z})\rar N_{\uf}^*$ as being well-defined independently of the choice of triangulation $\Delta$.

\begin{dfn}\label{def:el-lam}
	Let there be given a tagged arc $\gamma$. Following \cite[\S 2.3]{yurikusa2020density},
	we associate to $\gamma$ an ``\textbf{elementary laminate}''  $e(\gamma)$
	as below, see Figure \ref{fig:elementary_laminates}:
	\begin{itemize}
		\item $e(\gamma)$ is a laminate contained in a small neighborhood of $\gamma$
		\item if $\gamma$ has a boundary endpoint $x\in\MM\cap\partial S$, then
		the corresponding endpoint of $e(\gamma)$ is a boundary point located
		near $x$ in the clockwise direction (clockwise when viewed as the boundary of a disk removed from a compactification of $\SSS$---i.e., the positive direction with respect to the orientation induced by the counterclockwise orientation on $\SSS$)
		\item if $\gamma$ has an endpoint at a puncture $p$ which is tagged plain
		(resp. notched), then the corresponding end of $e(\gamma)$ spirals
		into $p$ clockwise (resp. counterclockwise)
	\end{itemize}
	For convenience, if $L$ is a closed loop, we define the corresponding laminate to be $e(L):=L$.
\end{dfn}

\begin{figure}[htb]
    \global\long\def\svgwidth{350pt}
\begingroup%
  \makeatletter%
  \providecommand\color[2][]{%
    \errmessage{(Inkscape) Color is used for the text in Inkscape, but the package 'color.sty' is not loaded}%
    \renewcommand\color[2][]{}%
  }%
  \providecommand\transparent[1]{%
    \errmessage{(Inkscape) Transparency is used (non-zero) for the text in Inkscape, but the package 'transparent.sty' is not loaded}%
    \renewcommand\transparent[1]{}%
  }%
  \providecommand\rotatebox[2]{#2}%
  \newcommand*\fsize{\dimexpr\f@size pt\relax}%
  \newcommand*\lineheight[1]{\fontsize{\fsize}{#1\fsize}\selectfont}%
  \ifx\svgwidth\undefined%
    \setlength{\unitlength}{204.24305004bp}%
    \ifx\svgscale\undefined%
      \relax%
    \else%
      \setlength{\unitlength}{\unitlength * \real{\svgscale}}%
    \fi%
  \else%
    \setlength{\unitlength}{\svgwidth}%
  \fi%
  \global\let\svgwidth\undefined%
  \global\let\svgscale\undefined%
  \makeatother%
  \begin{picture}(1,0.14178105)%
    \lineheight{1}%
    \setlength\tabcolsep{0pt}%
    \put(0,0){\includegraphics[width=\unitlength,page=1]{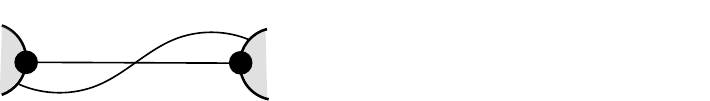}}%
    \put(0.24189753,0.02732581){\makebox(0,0)[lt]{\lineheight{1.25}\smash{\begin{tabular}[t]{l}$\gamma$\end{tabular}}}}%
    \put(0.18729407,0.09714218){\makebox(0,0)[lt]{\lineheight{1.25}\smash{\begin{tabular}[t]{l}$e(\gamma)$\end{tabular}}}}%
    \put(0,0){\includegraphics[width=\unitlength,page=2]{elementary_lamination.pdf}}%
    \put(0.80309491,0.0254711){\makebox(0,0)[lt]{\lineheight{1.25}\smash{\begin{tabular}[t]{l}$\gamma$\end{tabular}}}}%
    \put(0.76996941,0.10370816){\makebox(0,0)[lt]{\lineheight{1.25}\smash{\begin{tabular}[t]{l}$e(\gamma)$\end{tabular}}}}%
    \put(0,0){\includegraphics[width=\unitlength,page=3]{elementary_lamination.pdf}}%
  \end{picture}%
\endgroup%

	\caption{Elementary laminates associated to tagged arcs.}
	\label{fig:elementary_laminates}
\end{figure}

Recall from \S \ref{sub:g} the natural projection $\pr_{I_{\uf}}:\bb{Z}^I\rar \bb{Z}^{I_{\uf}}$ modding out by frozen vectors.

\begin{prop}[{\cite[Prop. 5.2]{reading2014universal}}]\label{prop:shear_g}
	For any interior tagged arc $\gamma$ corresponding to a cluster variable and any tagged triangulation $\Delta$,
	we have 
	\begin{align}\label{eq:shear_g}
		-b^{\Delta}(e(\gamma)) =  \pr_{I_{\ufv}}g^{\sd_{\Delta}}([\gamma]),
	\end{align}
	where $\sd_{\Delta}$ is the seed associated to $\Delta$, and $g^{\sd_{\Delta}}([\gamma])$
	is the $g$-vector associated to the cluster variable $[\gamma]$
	with respect to the initial seed $\sd_{\Delta}$.
\end{prop}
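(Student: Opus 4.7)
The plan is to deduce the equality by a base case plus an induction on flips, leveraging the fact that both sides transform via the same piecewise-linear tropical mutation map.

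\textbf{Step 1 (Base case $\gamma\in\Delta$).} When $\gamma$ is already an arc of $\Delta$, the cluster variable $[\gamma]$ is one of the initial cluster variables of $\s{A}^{\up}(\sd_\Delta)$, so by Definition \ref{def:g-vec} its $g$-vector with respect to $\sd_\Delta$ is $g^{\sd_\Delta}([\gamma])=e_\gamma^*$, giving $\pr_{I_\ufv}g^{\sd_\Delta}([\gamma])=e_\gamma^*|_{I_\ufv}$. For the left-hand side, the elementary laminate $e(\gamma)$ lies in a small neighborhood of $\gamma$, so its only possible nonzero shear coordinates are at $\gamma$ itself and at the arcs adjacent to $\gamma$ in $\Delta$. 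A direct local inspection using Figure \ref{fig:shear_coordinate} and Figure \ref{fig:elementary_laminates} shows that $e(\gamma)$ contributes shear coordinate $-1$ at $\gamma$ (a $Z$-shape configuration arising from the clockwise displacements at both endpoints) and $0$ at all other non-frozen arcs, yielding $-b^{\Delta}(e(\gamma))=e_\gamma^*|_{I_\ufv}$. This local computation must be done case by case, depending on whether each endpoint of $\gamma$ lies on the boundary or spirals into a puncture, and with the self-folded triangle convention in Definition \ref{def:shear} used when $\gamma$ is an arc inside a self-folded triangle (so the laminate and the arc are replaced by their ``conjugates''); in that case one uses Example \ref{ex:noose} to see that the tagged arc $\iota(\gamma_2)$ for a noose $\gamma_2$ still has the expected $g$-vector in the tagged seed.

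\textbf{Step 2 (Equivariance under flips).} Consider a flip $\Delta\rightsquigarrow\Delta':=\mu_k(\Delta)$ at an interior arc $k$, inducing a seed mutation $\sd_\Delta\rightsquigarrow\sd_{\Delta'}=\mu_k(\sd_\Delta)$. Both sides of the desired identity transform according to the piecewise-linear tropical mutation rule on $N_\ufv^*$ dual to \eqref{fprime}:
\begin{itemize}
\item The principal $g$-vector of a fixed cluster variable transforms by this tropical mutation map when the initial seed is mutated; this is the ``separation of additions'' formula of \cite{FominZelevinsky07} (see also \cite{DerksenWeymanZelevinsky09,gross2018canonical}).
\item The shear-coordinate vector $-b^{\Delta}(e(\gamma))$ of a fixed laminate $e(\gamma)$ transforms under flips of $\Delta$ by the same piecewise-linear tropical map; this is the content of \cite[Thm.~4.3]{reading2014universal} which was recalled just after Proposition \ref{prop:shear_coord}.
\end{itemize}
Since both sides obey the same transformation rule, the truth of \eqref{eq:shear_g} is invariant under a single flip of $\Delta$.

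\textbf{Step 3 (Conclusion via connectivity of the tagged flip graph).} Let $\gamma$ be an interior tagged arc corresponding to a cluster variable. By the bijection between tagged arcs (excluding notched arcs on once-punctured closed components) and cluster variables in Proposition \ref{SkAPropPun}, there exists a tagged triangulation $\Delta_\gamma$ with $\gamma\in\Delta_\gamma$. Because the tagged flip graph is connected (cf. \cite[Prop.~7.10]{FominShapiroThurston08}), we can find a finite sequence of flips $\Delta=\Delta^{(0)}\rightsquigarrow\Delta^{(1)}\rightsquigarrow\cdots\rightsquigarrow\Delta^{(s)}=\Delta_\gamma$. By Step 1, the identity $-b^{\Delta_\gamma}(e(\gamma))=\pr_{I_\ufv}g^{\sd_{\Delta_\gamma}}([\gamma])$ holds in the terminal seed. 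Applying Step 2 inductively to the sequence of flips (in reverse) transports this identity back to the triangulation $\Delta$, giving the desired equation $-b^\Delta(e(\gamma))=\pr_{I_\ufv}g^{\sd_\Delta}([\gamma])$.

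\textbf{Main obstacle.} The real work is in Step 1: carefully matching the sign and orientation conventions from Definition \ref{def:shear} (shear coordinates, including the ``conjugation'' rule for self-folded triangles) with those from Definition \ref{def:el-lam} (elementary laminates, with clockwise spirals for plain tags and counterclockwise for notched). The multiple endpoint types (boundary, plain puncture, notched puncture) and the self-folded-triangle edge case each require a separate small diagrammatic verification; the inductive machinery of Steps 2-3 is then essentially formal once these local computations are in place.
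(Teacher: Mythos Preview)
The paper does not supply its own proof of this proposition; it is quoted directly from \cite[Prop.~5.2]{reading2014universal}. Your inductive argument (verify the identity for $\gamma\in\Delta$, then propagate along flip sequences using that both sides obey the same tropical mutation rule) is the standard route and is essentially how this result is established in the literature.

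One point deserves explicit care in Step~2. You assert that the principal $g$-vector of a fixed cluster variable, viewed as a function of the initial seed, transforms by the \emph{same} piecewise-linear map as $-b^{\Delta}$. The shear side is governed by the tropical mutation of \eqref{fprime} (as recalled just after Proposition~\ref{prop:shear_coord}), while the $g$-vector side is governed by the recursion of \cite[Conj.~7.12]{FominZelevinsky07} as proved in \cite{DerksenWeymanZelevinsky09} and reinterpreted via $T_j^{\sd}$ in \S\ref{sec:seed-mut}. These do agree, but the identification involves matching the sign-coherence sign with the sign of the relevant shear coordinate, and the literature contains several incompatible conventions (transpose vs.\ not, $B$ vs.\ $\omega$, etc.). You should write down the explicit piecewise-linear formula on both sides in the conventions of this paper and check they coincide; once this is done, Steps~2 and~3 are indeed formal, and the local verification you describe in Step~1 (including the self-folded and punctured-endpoint cases) completes the argument.
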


We next extend the definition of shear coordinates to account for boundary arcs as well as internal arcs.  For internal arcs $\gamma$, $b_{\gamma}^{\Delta}(l)$ is defined as before.  If $\gamma$ is a boundary arc, then each endpoint of $l$ on $\gamma$ contributes $\pm \frac{1}{2}$ to $b_{\gamma}^{\Delta}(l)$, where the sign is positive (respectively, negative) if the triangle containing $\gamma$, together with the corresponding end of $l$, looks like a triangle from the $+1$ (respectively, $-1$) quadrilateral of Figure \ref{fig:shear_coordinate}; i.e., the contribution is as in Figure \ref{fig:shear2}.  Then $b_{\gamma}^{\Delta}(l)$ is the sum of these contributions from all endpoints of $l$ on $\gamma$.

\begin{figure}[htb]
\global\long\def\svgwidth{250pt}
\begingroup%
  \makeatletter%
  \providecommand\color[2][]{%
    \errmessage{(Inkscape) Color is used for the text in Inkscape, but the package 'color.sty' is not loaded}%
    \renewcommand\color[2][]{}%
  }%
  \providecommand\transparent[1]{%
    \errmessage{(Inkscape) Transparency is used (non-zero) for the text in Inkscape, but the package 'transparent.sty' is not loaded}%
    \renewcommand\transparent[1]{}%
  }%
  \providecommand\rotatebox[2]{#2}%
  \newcommand*\fsize{\dimexpr\f@size pt\relax}%
  \newcommand*\lineheight[1]{\fontsize{\fsize}{#1\fsize}\selectfont}%
  \ifx\svgwidth\undefined%
    \setlength{\unitlength}{262.66247541bp}%
    \ifx\svgscale\undefined%
      \relax%
    \else%
      \setlength{\unitlength}{\unitlength * \real{\svgscale}}%
    \fi%
  \else%
    \setlength{\unitlength}{\svgwidth}%
  \fi%
  \global\let\svgwidth\undefined%
  \global\let\svgscale\undefined%
  \makeatother%
  \begin{picture}(1,0.26363581)%
    \lineheight{1}%
    \setlength\tabcolsep{0pt}%
    \put(0,0){\includegraphics[width=\unitlength,page=1]{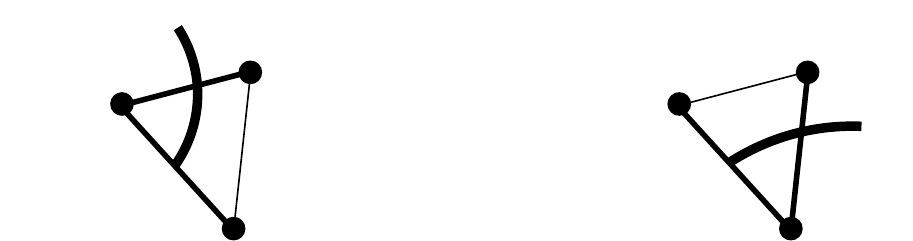}}%
    \put(0.13898378,0.07876165){\makebox(0,0)[lt]{\lineheight{1.25}\smash{\begin{tabular}[t]{l}$\gamma$\end{tabular}}}}%
    \put(0.75611098,0.07230699){\makebox(0,0)[lt]{\lineheight{1.25}\smash{\begin{tabular}[t]{l}$\gamma$\end{tabular}}}}%
    \put(-0.00379229,0.09300911){\makebox(0,0)[lt]{\lineheight{1.25}\smash{\begin{tabular}[t]{l}$+\frac{1}{2}$\end{tabular}}}}%
    \put(0.60277594,0.08764836){\makebox(0,0)[lt]{\lineheight{1.25}\smash{\begin{tabular}[t]{l}$-\frac{1}{2}$\end{tabular}}}}%
    \put(0.21194383,0.22892518){\makebox(0,0)[lt]{\lineheight{1.25}\smash{\begin{tabular}[t]{l}$l$\end{tabular}}}}%
    \put(0.90922519,0.13662292){\makebox(0,0)[lt]{\lineheight{1.25}\smash{\begin{tabular}[t]{l}$l$\end{tabular}}}}%
  \end{picture}%
\endgroup%

	\caption{The contribution of an end of the laminate $l$ to the shear coordinate $b_{\gamma}^{\Delta}(l)$ for a boundary edge $\gamma$. \label{fig:shear2}}
\end{figure}

We now define
\begin{align*}
	\wt{b}^{\Delta}:\s{A}_L(\Sigma,\bb{Z})\rar \bigoplus_{\gamma\in \Delta} \frac{1}{2}\bb{Z}e_{\gamma}^* = \frac{1}{2} M_{\sd_{\Delta}}
\end{align*}
via 
\begin{align}\label{eq:extended_shear_coord}
	\wt{b}^{\Delta}(\sum_i w_{i}[l]_{i})=\sum_{\gamma\in \Delta}(\sum_i w_{i}b_{\gamma}^{\Delta}(l_i))e_{\gamma}^*.
\end{align}
We may also consider the extension to rational coefficients, $\wt{b}^{\Delta}:\s{A}_L(\Sigma,\bb{Q})\rar \bigoplus_{\gamma\in \Delta} \bb{Q}e_{\gamma}^* = M_{\sd_{\Delta},\bb{Q}}$.

Recall that the bilinear form $\omega$ associated to $\sd_{\Delta}$ gives a linear map $\omega_1$ from $N_{\sd_{\Delta}}=\oplus_{\gamma\in \Delta} \Z e_\gamma$ to $M_{\sd_{\Delta}}=\oplus_{\gamma\in \Delta} \Z e^*_\gamma$. 
\begin{lem}\label{lem:intersection_shear_loop}
	For any ideal triangulation $\Delta$ without self-folded triangles and any $l\in \s{A}_L(\Sigma,\bb{Z})$, we have
	\begin{align}\label{eq:intersection_shear_loop}
		\omega_1(\pi(l))=\wt{b}^{\Delta}(l) 
	\end{align}
\end{lem}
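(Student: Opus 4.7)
Both sides being $\bb{Z}$-linear in $l$, we reduce to the case of a single bounded laminate $l$ with weight $1$, placed in minimal position with respect to $\Delta$. Fix $\gamma \in \Delta$; we verify that the $e_{\gamma}^{*}$-component of each side agrees, exploiting that in minimal position the intersection $l \cap T$ in each triangle $T$ is a finite disjoint union of arc-segments, each having its two endpoints on distinct sides of $T$ (``sides'' including boundary arcs).

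Since $\Delta$ has no self-folded triangles, the definition of the signed adjacency matrix in \S \ref{sub:ClSk} reads $B(e_{\gamma}, e_{\gamma'}) = \sum_{T \ni \gamma, \gamma'} \sigma_{T}(\gamma, \gamma')$, where $\sigma_{T}(\gamma, \gamma') = +1$ when $\gamma'$ is immediately clockwise of $\gamma$ in $T$ and $-1$ when counterclockwise. Using $\omega(e_{\gamma'}, e_{\gamma}) = B(e_{\gamma}, e_{\gamma'})$, together with the fact that $\cc(\gamma', l)$ equals the number of segment endpoints of $l \cap T$ lying on $\gamma'$ for any triangle $T$ adjacent to $\gamma'$ (each interior crossing of $l$ with $\gamma'$ contributing one endpoint in each of the two adjacent triangles, and each boundary endpoint contributing to the unique adjacent triangle), we obtain
\begin{align*}
\langle e_{\gamma},\, \omega_{1}(\pi(l)) \rangle
  \;=\; \tfrac{1}{2}\sum_{T \ni \gamma} \sum_{\substack{\gamma' \in T \\ \gamma' \neq \gamma}} \cc(\gamma', l)\,\sigma_{T}(\gamma, \gamma')
  \;=\; \tfrac{1}{2}\sum_{T \ni \gamma} \sum_{s \text{ in } l \cap T}\; \sum_{\substack{\gamma' \neq \gamma \\ \gamma' \text{ carries an endpoint of } s}} \sigma_{T}(\gamma, \gamma').
\end{align*}
A segment $s$ whose two endpoints both lie on non-$\gamma$ sides $\gamma'_{1}, \gamma'_{2}$ of $T$ contributes $\sigma_{T}(\gamma, \gamma'_{1}) + \sigma_{T}(\gamma, \gamma'_{2}) = 0$, since one of $\gamma'_{1}, \gamma'_{2}$ is clockwise and the other counterclockwise of $\gamma$ in $T$; such $s$ does not cross $\gamma$ and contributes $0$ to $\wt{b}^{\Delta}_{\gamma}(l)$ as well.

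The remaining terms come from segments $s$ with one endpoint on $\gamma$ and the other on some side $\gamma' \neq \gamma$ of $T$, each contributing $\tfrac{1}{2}\sigma_{T}(\gamma, \gamma')$. For $\gamma$ a boundary arc, $T$ is unique, each endpoint of $l$ on $\gamma$ yields exactly one such segment, and direct inspection matches $\tfrac{1}{2}\sigma_{T}(\gamma, \gamma')$ with the $\pm\tfrac{1}{2}$-convention of Figure \ref{fig:shear2}. For $\gamma$ an interior arc, every crossing of $l$ with $\gamma$ produces a pair of such segments, one in each adjacent triangle $T, T'$, with other endpoints on sides $\gamma'_{T}, \gamma'_{T'}$; the combined contribution $\tfrac{1}{2}(\sigma_{T}(\gamma, \gamma'_{T}) + \sigma_{T'}(\gamma, \gamma'_{T'}))$ lies in $\{-1, 0, +1\}$ and, after a case check within the quadrilateral $T \cup T'$, is seen to agree with the $+1$ S-shape, $-1$ Z-shape, and $0$ vertex-hugging contributions of Figure \ref{fig:shear_coordinate}. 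The main obstacle is exactly this final orientation bookkeeping, matching the clockwise/counterclockwise convention of the signed adjacency matrix with the S/Z convention for shear coordinates; but this is a finite check that can be pinned down once and for all in a convenient explicit model (e.g.\ the $A_{2}$-quadrilateral of Example \ref{A2ex}), after which the general statement follows by locality.
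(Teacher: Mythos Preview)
Your proposal is correct and takes essentially the same approach as the paper: both verify the equality componentwise by a local computation around each arc $\gamma$, matching the triangle-by-triangle definition of the signed adjacency matrix against the quadrilateral/triangle picture defining shear coordinates. The paper's proof is simply a terser version of yours---it observes directly that intersections of $l$ with the outer edges of the quadrilateral (resp.\ the single triangle, for a boundary arc) contribute via $\omega_1$ exactly the shear-coordinate contributions on $\gamma$---while you make the same computation explicit by decomposing $l$ into its arc-segments within each triangle.
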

\begin{proof}
	Any internal arc $\gamma$ is a diagonal of some quadrilateral as in Figure \ref{fig:shear_coordinate}. The intersection points at the edges of the quadrilateral contribute to the coordinate on the diagonal $\gamma$ via $\omega_1$. We see that the contributions always sum to twice of the shear coordinate on $\gamma$, as desired.
	
	Similarly, any boundary arc $\gamma$ is contained in exactly one triangle $T$, and the contributions of each intersection with $T\setminus \{\gamma\}$ to $\omega_1(\pi(l))$ and $b^{\Delta}(l)$ are again easily seen coincide.
\end{proof}

It is straightforward to check that, if $\ell$ only consists of loops, then $\wt{b}^{\Delta}(l)$ has vanishing frozen coordinates, i.e, we can write
\begin{align}\label{eq:loop_g}
\omega_1(\pi(l))=b^{\Delta}(l).
\end{align}

\subsection{Dehn twists}
\label{sec:Dehn_twists}
Given an oriented closed laminate $l_{c}$, we denote
by $\tw_{l_{c}}$ the Dehn twist of $\SSS$ along $l_{c}$; see Figure
\ref{fig:Dehn_twist}.

\begin{figure}[htb]
\global\long\def\svgwidth{260pt}
\begingroup%
  \makeatletter%
  \providecommand\color[2][]{%
    \errmessage{(Inkscape) Color is used for the text in Inkscape, but the package 'color.sty' is not loaded}%
    \renewcommand\color[2][]{}%
  }%
  \providecommand\transparent[1]{%
    \errmessage{(Inkscape) Transparency is used (non-zero) for the text in Inkscape, but the package 'transparent.sty' is not loaded}%
    \renewcommand\transparent[1]{}%
  }%
  \providecommand\rotatebox[2]{#2}%
  \newcommand*\fsize{\dimexpr\f@size pt\relax}%
  \newcommand*\lineheight[1]{\fontsize{\fsize}{#1\fsize}\selectfont}%
  \ifx\svgwidth\undefined%
    \setlength{\unitlength}{556.23611006bp}%
    \ifx\svgscale\undefined%
      \relax%
    \else%
      \setlength{\unitlength}{\unitlength * \real{\svgscale}}%
    \fi%
  \else%
    \setlength{\unitlength}{\svgwidth}%
  \fi%
  \global\let\svgwidth\undefined%
  \global\let\svgscale\undefined%
  \makeatother%
  \begin{picture}(1,0.21814278)%
    \lineheight{1}%
    \setlength\tabcolsep{0pt}%
    \put(0,0){\includegraphics[width=\unitlength,page=1]{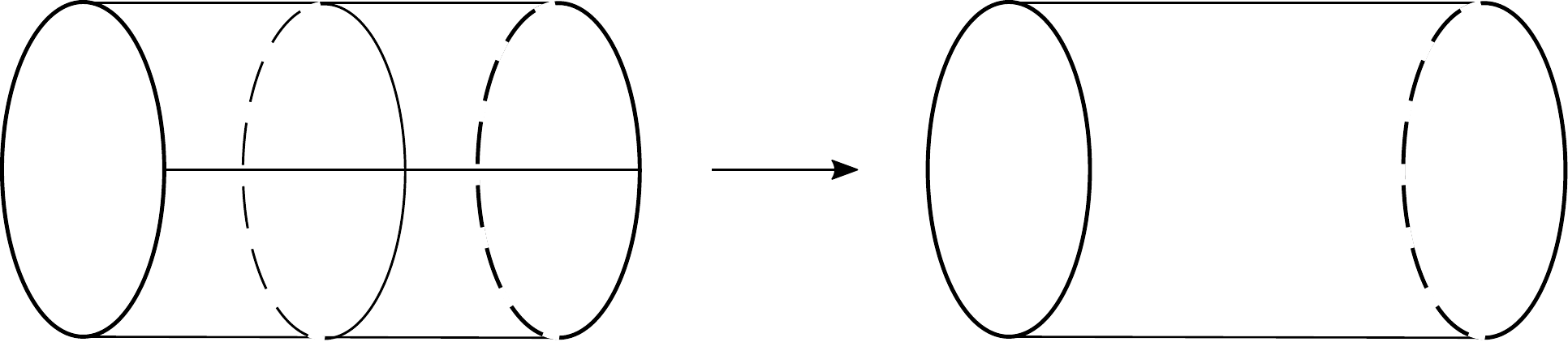}}%
    \put(0.45962588,0.1255904){\makebox(0,0)[lt]{\lineheight{1.25}\smash{\begin{tabular}[t]{l}$\tw_{l_c}$\end{tabular}}}}%
    \put(0.24915059,0.17852543){\makebox(0,0)[lt]{\lineheight{1.25}\smash{\begin{tabular}[t]{l}$l_c$\end{tabular}}}}%
    \put(0,0){\includegraphics[width=\unitlength,page=2]{Dehn.pdf}}%
  \end{picture}%
\endgroup%

	\caption{Dehn twist. \label{fig:Dehn_twist}}
\end{figure}

\begin{thm}[{\cite[Thm. 2.10]{yurikusa2020density}}]\label{Thm:Yurikusa_Dehn}
	For any given interior arc $\gamma\in\Delta$ (for $\Delta$ either a tagged or ideal triangulation), any non-special laminate $l$, and any closed laminate $l_c$, there exists
	$m'\in\NN$ such that, for any $m\geq m'$, we have
	
	\begin{align}\label{eq:tw}
		b_{\gamma}^{\Delta}(\tw_{l_{c}}^{m}(l))=b_{\gamma}^{\Delta}(\tw_{l_{c}}^{m'}(l))+(m-m')\#(l\cap l_{c})b_{\gamma}^{\Delta}(l_{c}).
	\end{align}
\end{thm}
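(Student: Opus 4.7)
The plan is to analyze the Dehn twist $\tw_{l_c}$ locally, in a tubular neighborhood of $l_c$, and show that after sufficiently many twists every additional twist contributes a predictable increment to the shear coordinate, namely one ``copy'' of $l_c$ per intersection of $l$ with $l_c$.

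First, I would reduce to the case where $\Delta$ contains no self-folded triangle. If $\gamma\in\Delta$ lies inside a self-folded triangle with noose $\gamma'$ enclosing a puncture $p$, then by Definition \ref{def:shear} we have $b_\gamma^\Delta(l) = b_{\gamma'}^\Delta(l^{(p)})$. Since the Dehn twist along a closed laminate $l_c$ commutes with the reversal-of-spirals operation at $p$ (as $l_c$ does not spiral into $p$), and since $\#(l\cap l_c) = \#(l^{(p)}\cap l_c)$ and $b_{\gamma'}^\Delta(l_c)$ is defined independently, the identity for $\gamma'$ applied to $l^{(p)}$ yields the identity for $\gamma$ applied to $l$.

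Next, I would put $l$, $l_c$, and the arcs of $\Delta$ simultaneously into minimal-intersection position and choose a thin annular tubular neighborhood $A\cong S^1\times[0,1]$ of $l_c$ such that: (i) $A$ contains no marked point, (ii) each arc of $\Delta$ meets $A$ in a disjoint union of arcs each crossing the core circle $l_c=S^1\times\{1/2\}$ at most once, and (iii) the intersections $l\cap l_c$ all lie on the core and $l\cap A$ consists of $\#(l\cap l_c)$ transverse strands. A standard model for $\tw_{l_c}^m(l)$ is then: agree with $l$ outside $A$, and inside $A$ replace each of the $\#(l\cap l_c)$ strands by the strand obtained by winding $m$ times around the core before exiting. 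Outside $A$, $\tw_{l_c}^m(l) = l$, so intersections with $\gamma\setminus A$ contribute a constant amount to $b_\gamma^\Delta(\tw_{l_c}^m(l))$ independent of $m$. The entire variation in $m$ therefore comes from intersections of $\tw_{l_c}^m(l)$ with $\gamma\cap A$.

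The key step is to compare $\tw_{l_c}^{m+1}(l)$ with $\tw_{l_c}^m(l)$ for large $m$. For each of the $\#(l\cap l_c)$ winding strands inside $A$, passing from $m$ to $m+1$ twists inserts one additional full wind around the core; once $m$ is large enough that the ``middle'' portion of each winding strand consists of many nearly-parallel copies of $l_c$ while the two ``end configurations'' where the strand joins the rest of $l$ have stabilized, each extra wind contributes to $b_\gamma^\Delta$ exactly the same S/Z pattern (counted along all its crossings with the arcs of $\Delta$ near $A$) as a single copy of $l_c$ would contribute, namely $b_\gamma^\Delta(l_c)$. Summing over the $\#(l\cap l_c)$ strands gives the claimed increment $\#(l\cap l_c)\cdot b_\gamma^\Delta(l_c)$ per twist for $m\ge m'$, and iterating yields the stated formula.

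The main obstacle is making the ``end configurations stabilize'' step rigorous: for small $m$ the winding strand may interact with the boundary of $A$ and with nearby features of $\Delta$ in a non-generic way, producing transient contributions to $b_\gamma^\Delta$ which depend on $m$. The choice of $m'$ is precisely the threshold beyond which the strand near each of the two ends of $A$ looks, up to translation along $S^1$, identical for all $m\ge m'$; existence of such an $m'$ is a finite, combinatorial statement about how the finitely many arcs of $\Delta$ meet $A$, and a careful local analysis inside $A$ suffices to establish it.
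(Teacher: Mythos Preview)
The paper does not supply its own proof of this statement; it is quoted verbatim from \cite[Thm.~2.10]{yurikusa2020density} and used as a black box. So there is no proof in the paper to compare against, and your task is really to reconstruct Yurikusa's argument or give an alternative.

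Your sketch captures the right geometric idea: localize to a thin annular neighborhood $A$ of $l_c$, note that each additional twist inserts one full wind per transverse strand of $l$, and argue that once the two end-configurations of each strand have stabilized, every further wind contributes to $b_\gamma^\Delta$ exactly the S/Z pattern of one parallel copy of $l_c$. The reduction for arcs inside self-folded triangles via Definition~\ref{def:shear} is correct, and the observation that the Dehn twist commutes with spiral-reversal at a puncture (since $l_c$ is closed) handles that case cleanly. You should also note the further reduction from tagged to ideal triangulations via the tag-changing operation $(p_1\cdots p_r)$ described just after Definition~\ref{def:shear}, which is routine for the same reason.

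The one substantive gap is exactly the step you flag. The shear coordinate is defined via \emph{minimal} intersection with $\gamma$, so you must verify that your explicit twisted representative inside $A$ is already in minimal position with the arcs of $\Delta$ (or control the discrepancy). This is not automatic: near $\partial A$ the transition pieces of the strands can, for small $m$, form bigons with arcs of $\Delta$, and removing these bigons can alter which edges of the quadrilateral the strand hits before and after $\gamma$, hence the S/Z sign. What needs to be checked is that for $m$ large the bigon criterion is satisfied between $\tw_{l_c}^m(l)$ and each arc of $\Delta$ meeting $A$, and that at every crossing of a ``middle'' wind with $\gamma$ the adjacent quadrilateral edges are crossed in the same pattern as by $l_c$. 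Both are finite combinatorial checks once one enumerates how the (finitely many) arcs of $\Delta\cap A$ sit in $A$, but this is the actual content of the theorem and your sketch stops short of it.
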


	\begin{rem}\label{rmk:tw}
		Note that Theorem \ref{Thm:Yurikusa_Dehn} is easily extended to arbitrary integer unbounded laminations $l\in \s{X}_L(\Sigma,\bb{Z})$ by applying the result to each component separately. 
	\end{rem}

For any given tagged triangulation $\Delta=(\gamma)_{\gamma\in\Delta}$,
let $e(\Delta)$ denote the collection $(e(\gamma))_{\gamma\in\Delta_{\uf}}$. The corresponding shear coordinates with respect to any given
tagged triangulation $\Delta'$ span a cone 
\begin{align*}
	C(b^{\Delta'}(e(\Delta)))  :=  \sum_{\gamma\in\Delta_{\uf}}\R_{\geq0}b^{\Delta'}(e(\gamma))\subset \bigoplus_{\gamma\in \Delta_{\uf}} \bb{R}e_{\gamma}^*= (N_{\uf,\bb{R}})^*.
\end{align*}

Let $L=\sum_{i=1}^{r}n_{i}[l_{i}]+\sum_{j=1}^{s}w_{j}[c_{j}]$ denote
an integer unbounded lamination where $l_{i}$ are pairwise non-homotopic
closed laminates and $c_{j}$ non-closed laminates. Let $\Delta^{L}$
denote a tagged triangulation such that $e(\Delta^{L})$ contains
all $c_{j}$ appearing. Denote $N_{i}=\sum_{\gamma\in\Delta^{L}_{\uf}}\#(l_{i}\cap\gamma)$.
Define $\tw_{L}=\prod\tw_{l_{i}}^{\frac{N_{1}\cdots N_{r}}{N_{i}}n_{i}}.$

\begin{prop}[{\cite[Proposition 2.14]{yurikusa2020density}}]\label{prop:closure_laminate_cone}
	We have
	\begin{align*}
		b^{\Delta}(L)  \in  \overline{\cup_{m\geq0}C(b^{\Delta}(e(\tw_L^{m}(\Delta^{L})))}.
	\end{align*}
\end{prop}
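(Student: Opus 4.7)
The plan is to exhibit an explicit sequence $W_m \in C(b^\Delta(e(\tw_L^m(\Delta^L))))$ converging to $b^\Delta(L)$, exploiting the eventually-linear asymptotics of Theorem~\ref{Thm:Yurikusa_Dehn}.

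First, I will establish the asymptotic formula
\[
b^\Delta(e(\tw_L^m(\gamma))) \;=\; u_\gamma + m\, v_\gamma, \qquad v_\gamma \;:=\; \sum_{i=1}^{r} \tfrac{N_1\cdots N_r}{N_i}\, n_i\, \#(e(\gamma)\cap l_i)\, b^\Delta(l_i),
\]
valid for every $\gamma \in \Delta^L_{\uf}$ and all $m$ sufficiently large, with some vector $u_\gamma$ independent of $m$. Since the $l_i$ are pairwise disjoint as components of $L$, the Dehn twists $\tw_{l_i}$ commute and have disjoint supports, and intersection numbers $\#(\cdot \cap l_i)$ are preserved under twists along any $l_{i'}$ with $i'\neq i$. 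Iterating Theorem~\ref{Thm:Yurikusa_Dehn} one factor of $\tw_{l_i}^{mN_1\cdots N_r n_i/N_i}$ at a time then yields the claimed expansion.

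Two combinatorial identities will drive the construction. (i) Summing $v_\gamma$ over all $\gamma\in\Delta^L_{\uf}$ and using $\sum_\gamma \#(e(\gamma)\cap l_i) = N_i$ gives
\[
\sum_{\gamma\in\Delta^L_{\uf}} v_\gamma \;=\; N_1\cdots N_r \sum_{i=1}^{r} n_i\, b^\Delta(l_i),
\]
which is precisely the motivation behind the exponents defining $\tw_L$. (ii) Because the components $c_j$ and $l_i$ of $L$ are pairwise non-intersecting, each $\tw_{l_i}$ fixes $c_j=e(\gamma_j)$ up to homotopy; hence $v_{\gamma_j}=0$ and $b^\Delta(e(\tw_L^m(\gamma_j)))=b^\Delta(c_j)$ for every $m$.

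Assembling these, I will set
\[
W_m \;:=\; \sum_{j=1}^{s} w_j\, b^\Delta(e(\tw_L^m(\gamma_j))) \;+\; \tfrac{1}{m\, N_1\cdots N_r} \sum_{\gamma\in\Delta^L_{\uf}} b^\Delta(e(\tw_L^m(\gamma))).
\]
All coefficients are non-negative, so $W_m$ lies in $C(b^\Delta(e(\tw_L^m(\Delta^L))))$. Substituting the expansion and the two identities collapses $W_m$ to $\sum_j w_j b^\Delta(c_j) + \sum_i n_i b^\Delta(l_i) + \tfrac{1}{m N_1\cdots N_r}\sum_\gamma u_\gamma$, which converges to $b^\Delta(L)$ as $m\to\infty$, proving the claim. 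The main obstacle is the first step: iterating Theorem~\ref{Thm:Yurikusa_Dehn} cleanly requires verifying commutativity of the Dehn twists on disjoint supports and the invariance of the intersection numbers under the off-diagonal twists, while also accounting for the degenerate cases $r=0$ (where $\tw_L=\id$ and $b^\Delta(L)$ already lies in $C(b^\Delta(e(\Delta^L)))$) and $s=0$ (where the first sum in $W_m$ is vacuous).
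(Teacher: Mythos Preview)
The paper does not give its own proof of this statement; it simply cites it as \cite[Prop.~2.14]{yurikusa2020density}. Your proposal is a faithful reconstruction of Yurikusa's argument: use the eventual linearity of shear coordinates under Dehn twists (Theorem~\ref{Thm:Yurikusa_Dehn}) to produce an explicit sequence of points in the cones converging to $b^\Delta(L)$. The construction of $W_m$, the two identities (i) and (ii), and the limiting computation are all correct. In particular, $\#(e(\gamma)\cap l_i)=\#(\gamma\cap l_i)$ holds because $l_i$ is a closed loop and the modification $\gamma\mapsto e(\gamma)$ only occurs near marked points, so your identification with $N_i$ is valid; and $e$ commutes with $\tw_L$ since Dehn twists are supported away from marked points.

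The one point that deserves a bit more care is the iteration of Theorem~\ref{Thm:Yurikusa_Dehn} to a product of commuting Dehn twists: the threshold $m'$ in the single-twist statement depends on the laminate being twisted, so a naive induction on $r$ runs into the problem that the intermediate laminate changes with $m$. You correctly flag this as the main obstacle. The resolution is geometric: since the $l_i$ have disjoint annular supports, twisting along $l_j$ for $j\neq i$ does not alter the intersection pattern of the laminate with the triangulation inside the annulus around $l_i$, so the threshold for $l_i$-twisting is in fact uniform. This is exactly how the paper itself uses Theorem~\ref{Thm:Yurikusa_Dehn} for products of twists in the proof of Lemma~\ref{lem-sigma-beta} (asserting a uniform $K_0$ without further comment), so your level of detail matches the paper's own standard.
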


\section{The tagged bracelets basis}
\label{section:bracelet_skein}

In \S \ref{sec:weighted_curve}-\S\ref{sec:bracelet_band}, we review the known construction of the bracelets in classical skein algebras $\Sk(\Sigma)$ associated to (weighted) simple multicurves as in \cite{musiker2013bases}.  We simultaneously review the construction of quantum bracelets bases for quantum skein algebras $\Sk_t(\Sigma)$ of unpunctured surfaces, as defined in \cite{Thurst}. Then in \S \ref{Sec:tag-brac} we extend the classical bracelets in the punctured cases by allowing contributions from tagged arcs (as suggested in \cite[\S 8]{musiker2013bases}).

\subsection{Weighted simple multicurves and bangles}

\label{sec:weighted_curve}

Recall from \S \ref{SectionSkeinDef} that a multicurve in $\Sigma$ is an immersion $\phi:C\rar\SSS$ of a closed
unoriented $1$-manifold $C$ such that the boundary of $C$ maps
to $\MM$, but no interior points of $C$ map to $\partial\SSS$ or
$\MM$.  Also recall that a curve is a connected multicurve, and that a multicurve is called simple if it has no interior crossings and no contractible components. For convenience, we will say that a set of simple multicurves is \textbf{non-intersecting} if they are pairwise non-intersecting in $\SSS\setminus \MM$ (even if they intersect in $\MM$). Let $\SMulti(\Sigma)$ denote the set of simple multicurves
in $\Sigma$ considered up to homotopy, no connected component of which
is a \textbf{peripheral} loop, i.e., a loop around a single puncture. Note that, for any simple multicurve $C$, there exists finitely many pairwise non-homotopic and non-intersecting simple curves $C_{1},\ldots,C_{k}$ such that $C$ is homotopic to the union over $i=1,\ldots,k$ of $w_i$ copies of $C_i$ for some integers $w_i\geq 1$. We denote
\begin{align*}
	C=\bigcup_{i=1}^{k} w_{i}C_{i}.
\end{align*}
The curves $C_1,\ldots,C_k$ are called the components of $C$. Note that they may share common endpoints. The empty set $\emptyset$ is also viewed as a simple multicurve.

We call $C$ \textbf{internal} if its components do not include boundary arcs. Let $\SMulti^\circ(\Sigma)\subset \SMulti(\Sigma)$ denote the subset of elements which are internal.

An element of $\SMulti(\Sigma)$ can thus be equivalently defined as a set of pairwise non-homotopic, non-intersecting, non-peripheral simple curves $C_1,\ldots,C_k$, together with associated weights $w_1,\ldots,w_k\in \bb{Z}_{\geq 1}$. Similarly, a \textbf{weighted simple multicurve} is defined to be a collection of pairwise non-intersecting, non-homotopic, non-peripheral simple curves $C_1,\ldots,C_k$ as above with integer weights $w_1,\ldots,w_k$, but now we allow any $w_i\in \bb{Z}$ whenever $C_i$ is a boundary arc (for other curves we still require $w_i\geq 0$; in any case, we identify a weight-$0$ curve with $\emptyset$).  If $C=\bigcup_{i=1}^{k} w_{i}C_{i}$ and $C'=\bigcup_{i=1}^{k} w_{i}'C_{i}$ have pairwise non-intersecting components, their union is defined to be the weighted simple multicurve 
\[
C\cup C'=\bigcup_{i=1}^{k} (w_{i}+w_i')C_{i}.
\]
In particular, $C\cup \emptyset = C$.  The set of weighted simple multicurves on $\Sigma$ is denoted $\wSMulti(\Sigma)$. Note that the set of internal weighted simple multicurves coincides with $\SMulti^{\circ}(\Sigma)$.

Recall that any given link
$L$ represents an element of $\Sk_{t}(\Sigma)$ (where for the classical cases we take $t=1$), which is denoted by $[L]$, or by a slight abuse of notation, simply by $L$.  In particular, since simple multicurves have no interior crossings, they are canonically identified with links, and so any simple multicurve $C=\bigcup_{i=1}^k w_iC_i$ represents an element $[C]\in \Sk_{t}(\Sigma)$.  Furthermore,
\begin{align}\label{eq:bangle}
	[C]= t^{-\alpha} \prod_{i=1}^k [C_i]^{w_i}
\end{align}
where $\alpha$ is chosen so that $[C]$ is bar-invariant --- explicitly,
\begin{align*}
	[C]= t^{-\alpha} [C_1]^{w_1}\cdots [C_k]^{w_k}
\end{align*}
for 
\begin{align}\label{factor-alpha}
	\alpha=\sum w_i w_j \Lambda(C_i,C_j)
\end{align}
where the sum is over all pairs of arc components $C_i,C_j$ of $C$ with $i<j$, and $\Lambda$ is as in \eqref{Lambdaij}. Using \eqref{eq:bangle}, we extend the definition of $[C]\in \Sk_t(\Sigma)$ to any \textit{weighted} simple multicurve $C=\bigcup w_i C_i$ by allowing negative $w_i$ for boundary arcs.

\begin{defn}[Bangles]
	For any weighted simple multicurve $C=\bigcup_{i=1}^{k} w_{i}C_{i}$, the skein algebra element $[C]\in \Sk_{t}(\Sigma)$ is called a bangle element represented by $C$, or simply a \textbf{bangle}. Denote $\BangE{C}=[C]$.
	We call $\BangE{C}$ internal if $C$ is internal.
	
	Let $\Bang_{t}(\Sigma)$, $\?{\Bang}_{t}(\Sigma)$ and $\Bang_{t}^\circ(\Sigma)$   denote the set of bangles, the set of bangles with non-negative weights and the set of internal bangles, respectively.
\end{defn}

When we leave the $t$-subscript off the above notation, it is
understood that $t=1$.
\begin{lem}[\cite{Thurst,muller2016skein}]
	\label{BangLem} 
	Recall that $t$ is understood to equal $1$ if $\Sigma$ has punctures.  With this understanding, the set $\?{\Bang}_{t}(\Sigma)$ forms
	a $\kk_{t}$-module basis for $\?{\Sk}_{t}(\Sigma)$. Similarly, the set $\Bang_{t}(\Sigma)$ forms a $\kk_{t}$-module basis for $\Sk_{t}(\Sigma)$.
\end{lem}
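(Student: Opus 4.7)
The plan is to establish spanning and linear independence separately, with most of the work going into the latter.

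For spanning, I will take an arbitrary element $[L]\in \?{\Sk}_t(\Sigma)$ represented by a link $L$ and give an algorithm that reduces $[L]$ to a $\kk_t$-linear combination of bangles with non-negative weights. Since $L$ has only finitely many interior crossings, I apply the Kauffman skein relation (Figure \ref{SkeinFig}) at one crossing to replace $[L]$ by a sum of two terms $q[L_+]+q^{-1}[L_-]$ with strictly fewer interior crossings. Iterating, I reach a linear combination of homotopy classes of transverse multicurves without interior crossings. Now I remove contractible arcs (setting them to $0$), contractible loops (replacing them by $-(q^2+q^{-2})$), and in the punctured classical setting ($t=1$) loops around single punctures (replacing them by $2$). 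Homotopy classes of the remaining multicurves are simple multicurves, and grouping parallel components with multiplicities gives a weighted simple multicurve with non-negative weights. By \eqref{eq:bangle}--\eqref{factor-alpha}, the resulting element differs from the corresponding bangle only by a power of $t$. This shows $\?{\Bang}_t(\Sigma)$ spans $\?{\Sk}_t(\Sigma)$, and inverting boundary arcs then shows $\Bang_t(\Sigma)$ spans $\Sk_t(\Sigma)$.

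For linear independence, the main tool is the embedding into the cluster algebra from Propositions \ref{SkAProp} and \ref{SkAPropPun}. Fix an ideal triangulation $\Delta$ (in the punctured setting, choose one without self-folded triangles, which exists by \cite[Lem.\ 2.13]{FominShapiroThurston08}). For each weighted simple multicurve $C=\bigcup_i w_iC_i$, I will assign a ``leading exponent'' $\ell(C)\in M_{\sd_\Delta}$ and show that the Laurent expansion of $\BangE{C}$ in the cluster $\?{\s{A}}_t^{\sd_\Delta}$ (or its localization) has $z^{\ell(C)}$ as its unique maximal monomial in a suitable term order, with coefficient a unit in $\kk_t$. Concretely, $\ell(C)$ will be read off from intersection numbers with the arcs of $\Delta$: for a single curve component $C_i$, the exponent $\ell(C_i)$ is determined by the extended $g$-vector of the corresponding cluster variable (for arcs) or by the shear/intersection data of \S \ref{sec:shear_coord} together with the Chebyshev-type recursion that will govern loops, and $\ell(C) = \sum_i w_i\, \ell(C_i)$ by the $t$-commuting decomposition \eqref{eq:bangle}.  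Injectivity of $C\mapsto \ell(C)$ on $\wSMulti(\Sigma)$ then forces linear independence: if $\sum_C a_C \BangE{C}=0$, then looking at the $\preceq$-maximal exponent appearing gives a nontrivial cancellation of leading terms, contradicting distinctness of the $\ell(C)$'s.

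The hard part is verifying that $\ell$ is genuinely injective on simple multicurves and that the leading coefficient does not vanish -- this is geometric and requires some care, because distinct curve components of $C$ can share endpoints and because homotopic-but-distinct collections of arcs in $\Delta$ can contribute to the same monomial after skein resolution. I would handle this by using the combinatorial expansion of arcs/loops in terms of perfect matchings of snake/band graphs as in \cite{MusikerSchifflerWilliams09, musiker2013bases} (for the classical and unpunctured quantum cases); the maximal matching contributes the distinguished leading monomial $z^{\ell(C_i)}$ and the superposition product combines these without cancellation at the leading term thanks to the non-intersection of the $C_i$'s. The punctured-with-$t=1$ case requires the additional relation that peripheral loops equal $2$, which is why we exclude peripheral loops from $\SMulti(\Sigma)$: this guarantees that our candidate basis does not a priori contain elements that would collapse under the extra punctured relation.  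Finally, localizing at the boundary arcs transports independence from $\?{\Bang}_t(\Sigma)\subset \?{\Sk}_t(\Sigma)$ to $\Bang_t(\Sigma)\subset \Sk_t(\Sigma)$ since the boundary-arc monomials act invertibly and independently on the exponents in $M_{\sd_\Delta}$.
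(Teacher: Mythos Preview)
Your spanning argument is correct and is essentially the standard one.  However, your linear-independence argument is circular in the logical structure of this paper.  You invoke Propositions~\ref{SkAProp} and~\ref{SkAPropPun} to embed $\?{\Sk}_t(\Sigma)$ into a quantum torus, but those propositions come \emph{after} Lemma~\ref{BangLem} and depend on it: the extended crossing number $\cc(a,b)$ used in Corollary~\ref{Deltak} (which feeds into Proposition~\ref{SkAProp}) is defined via the bangle decomposition, and the footnote there explicitly cites Lemma~\ref{BangLem} for uniqueness.  More broadly, the standard route to showing that the localization map $\?{\Sk}_t(\Sigma)\to\kk_t[M]$ is injective (which you implicitly need in order to deduce independence in $\?{\Sk}_t(\Sigma)$ from independence of Laurent expansions) goes through the fact that arcs are non-zero-divisors, and in \cite{muller2016skein} that fact is itself proved using the bangle basis.

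The paper's own proof avoids all of this by simply citing the source results: \cite[Prop.~4.10]{Thurst} for the classical case and \cite[Lem.~4.1, Prop.~5.3]{muller2016skein} for the quantum unpunctured case.  In those references, linear independence is established directly by a confluence (diamond-lemma) argument on the skein relations themselves---one shows that different orders of resolving crossings yield the same simple-multicurve representative, so simple multicurves are normal forms and hence linearly independent.  This is purely combinatorial and does not require any cluster-algebra input.  Your leading-exponent idea is an interesting alternative in principle, but making it non-circular would require first establishing a well-defined homomorphism $\?{\Sk}_t(\Sigma)\to\kk_t[M]$ by directly verifying the skein relations in the target, which is essentially what \cite{muller2016skein} does anyway en route to the bangle result.
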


\begin{proof}
	The claims for the classical skein algebra $\?{\Sk}(\Sigma)$ are \cite[Prop. 4.10]{Thurst}. For
	the quantum statements for unpunctured surfaces, \cite[Lem. 4.1]{muller2016skein}
	says that $\?{\Bang}_{t}(\Sigma)$ is a $\kk_{t}$-module basis for
	$\?{\Sk}_{t}(\Sigma)$, and then \cite[Prop. 5.3]{muller2016skein} says that
	$\Bang_{t}(\Sigma)$ is a $\kk_{t}$-module basis for $\Sk_{t}(\Sigma)$. Finally,
	the claim for $\Sk(\Sigma)$ follow from that for $\?{\Sk}(\Sigma)$
	via the same argument used to prove \cite[Prop. 5.3]{muller2016skein}. 
\end{proof}

\subsection{Bracelets}\label{sec:bracelet_band}

\begin{defn}[\cite{musiker2013bases}]
	Let $L$ be a non-contractible non-peripheral simple loop in $\Sigma$. The bracelet curve $\Brac_w L$ is the homotopy class of the closed loop obtained by concatenating $L$ with itself exactly $w$ times {as in the right-hand part of Figure \ref{fig:Annulus-5}}. Note that $\Brac_w L$ has $w-1$ self-crossings.
	
	Let $\BracE{w L}$ denote the element $[\Brac_w L]$ in the classical skein algebra $\Sk(\Sigma)$. It is called a bracelet element or a \textbf{bracelet}. \end{defn}

\begin{figure}[htb]
	\begin{tabular}{ccc}
		\global\long\def\svgwidth{100pt}%
		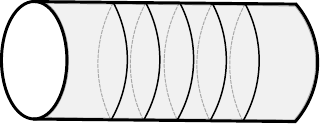 \quad  & \quad
		\global\long\def\svgwidth{100pt}%
\begingroup%
  \makeatletter%
  \providecommand\color[2][]{%
    \errmessage{(Inkscape) Color is used for the text in Inkscape, but the package 'color.sty' is not loaded}%
    \renewcommand\color[2][]{}%
  }%
  \providecommand\transparent[1]{%
    \errmessage{(Inkscape) Transparency is used (non-zero) for the text in Inkscape, but the package 'transparent.sty' is not loaded}%
    \renewcommand\transparent[1]{}%
  }%
  \providecommand\rotatebox[2]{#2}%
  \newcommand*\fsize{\dimexpr\f@size pt\relax}%
  \newcommand*\lineheight[1]{\fontsize{\fsize}{#1\fsize}\selectfont}%
  \ifx\svgwidth\undefined%
    \setlength{\unitlength}{91.91201287bp}%
    \ifx\svgscale\undefined%
      \relax%
    \else%
      \setlength{\unitlength}{\unitlength * \real{\svgscale}}%
    \fi%
  \else%
    \setlength{\unitlength}{\svgwidth}%
  \fi%
  \global\let\svgwidth\undefined%
  \global\let\svgscale\undefined%
  \makeatother%
  \begin{picture}(1,0.38569714)%
    \lineheight{1}%
    \setlength\tabcolsep{0pt}%
    \put(0,0){\includegraphics[width=\unitlength,page=1]{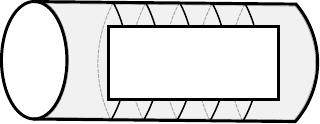}}%
    \put(0.34433093,0.16334658){\makebox(0,0)[lt]{\lineheight{1.25}\smash{\begin{tabular}[t]{l}$\frac{1}{5!} \sum_{\sigma \in \mathfrak{S}_5} \sigma$\end{tabular}}}}%
  \end{picture}%
\endgroup%
 \quad  & \quad
		\global\long\def\svgwidth{100pt}%
		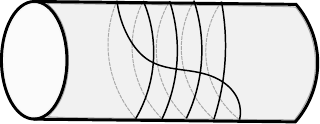 \tabularnewline
	\end{tabular}\caption{A weight-$5$ loop viewed as a bangle (left), a band (middle), and a bracelet (right).
		\label{fig:Annulus-5}}
\end{figure}

Let $\mathfrak{S}_w$ denote the $w$-elements permutation group and $|\mathfrak{S}_w|=w!$ its cardinality. \cite{Thurst} observed that one can associate the trivial $w$-elements permutation to $wL$ and a certain non-trivial permutation to $\Brac(wL)$, with additional multicurves $L_{\sigma}$ being associated to the other permutations $\sigma$ by replacing $w$ parallel strands in $wL$ with strands which cross as indicated by $\sigma$. \cite{Thurst} further considered the formal average of all multicurves $L_{\sigma}$ corresponding to any $w$-elements permutation:
\begin{align*}
	\Band_w L=\sum_{\sigma\in \f{S}_w} \frac{1}{|\mathfrak{S}_w|} L_{\sigma},
\end{align*}
see Figure \ref{fig:Annulus-5}. It represents the following band element.

\begin{defn}[\cite{Thurst}]
	Define $\BandE{wL}$ to be the element of the classical skein algebra $\Sk(\Sigma)$ representing $\Band_w L$.  That is,
	\[
	\BandE{wL}=\sum_{\sigma\in \f{S}_w} \frac{1}{|\mathfrak{S}_w|} [L_{\sigma}].
	\]
	It is called a band element or a \textbf{band}.
\end{defn}

To extend the bracelets and bands to the quantum setting, we first need to define the Chebyshev polynomials.   The \textbf{Chebyshev polynomials of the first kind} are the polynomials
$T_{k}(z)$, $k\in \bb{Z}_{\geq 0}$, determined by: 
\begin{align}\label{Cheb1}
	T_{0}(z) & =2\nonumber \\
	T_{1}(z) & =z\\
	T_{k+1}(z) & =zT_{k}(z)-T_{k-1}(z).\nonumber 
\end{align}
These satisfy 
\begin{align*}
	T_{k}(z)T_{\ell}(z)=T_{k+\ell}(z)+T_{|k-\ell|}(z)
\end{align*}
and can be characterized via 
\begin{align}\label{Tkex}
	T_{k}(z+z^{-1})=z^k+z^{-k}
\end{align}
(this is checked via a straightforward induction).
We note that this last property implies that if $A\in\SL_{2}$, then
$$\Tr(A^{k})=T_{k}(\Tr(A)).$$

\begin{lem}\label{Lem:Tk}
	Let $R$ be any ring.  Let $\{f_k\}_{k\in \bb{Z}_{\geq 0}}$ be a sequence of elements in $R$ with $f_0=1$.  Suppose that for each $s\in \bb{Z}_{\geq 0}$, we have $f_1^s=\sum_{k=0}^s \nu_{s,k}f_k$ where $\nu_{s,k}$ is the coefficient of $z^{k}$ in the Laurent expansion of $(z+z^{-1})^s\in \bb{Z}[z^{\pm 1}]$.  Then $f_{s}=T_s(f_1)$ for all $s\geq 1$.
\end{lem}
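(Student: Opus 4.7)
The plan is to induct on $s$, using \eqref{Tkex} to convert the hypothesis on $f_1^s$ into a statement that directly involves the Chebyshev polynomials. The central observation I would invoke is that the map $\bb{Z}[y] \rar \bb{Z}[z^{\pm 1}]$ sending $y \mapsto z+z^{-1}$ is injective (its image is a polynomial subring), so any identity in $\bb{Z}[z^{\pm 1}]$ that is symmetric in $z \leftrightarrow z^{-1}$ lifts uniquely to an identity in $\bb{Z}[y]$.

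Applying this to $(z+z^{-1})^s$, I would expand
\[
y^s \;=\; \sum_{k=-s}^{s} \nu_{s,k}\, z^k \;=\; \nu_{s,0} + \sum_{k=1}^{s} \nu_{s,k}(z^k+z^{-k}),
\]
where the second equality uses the symmetry $\nu_{s,k}=\nu_{s,-k}$ and where $y=z+z^{-1}$. By \eqref{Tkex}, $z^k+z^{-k}=T_k(y)$ for all $k\geq 0$ (including $k=0$, since $T_0=2$), which yields the universal polynomial identity
\[
y^s \;=\; \nu_{s,0} + \sum_{k=1}^{s} \nu_{s,k}\, T_k(y) \qquad \text{in } \bb{Z}[y].
\]

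With this identity in hand, I would proceed by strong induction on $s$. The base cases $s=1,2$ are immediate from $\nu_{1,1}=1$, $\nu_{2,0}=2$, $\nu_{2,2}=1$, and the Chebyshev recursion. For the inductive step, assume $f_k=T_k(f_1)$ for all $1 \leq k < s$. Substituting $y=f_1$ in the polynomial identity yields
\[
f_1^s \;=\; \nu_{s,0} + \sum_{k=1}^{s-1} \nu_{s,k}\, T_k(f_1) + T_s(f_1),
\]
where I have used $\nu_{s,s}=1$ to isolate the top term. Comparing with the given relation $f_1^s = \sum_{k=0}^s \nu_{s,k} f_k = \nu_{s,0}\, f_0 + \sum_{k=1}^{s-1}\nu_{s,k} f_k + f_s$ (again using $\nu_{s,s}=1$ and $f_0=1$), the $\nu_{s,0}$ terms cancel, and the inductive hypothesis makes the remaining sums cancel as well, leaving $f_s=T_s(f_1)$. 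No step here looks like a real obstacle; the only subtle point is ensuring the constant term is handled correctly, which is automatic once one recognizes that $\nu_{s,0}$ and $T_0=2$ come with matching factors on both sides of the comparison.
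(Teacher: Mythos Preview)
Your proof is correct and takes essentially the same approach as the paper's: both argue by strong induction on $s$, both rely on the polynomial identity $y^s = \nu_{s,0} + \sum_{k=1}^{s}\nu_{s,k}T_k(y)$ in $\bb{Z}[y]$, and both compare this against the hypothesis after applying the inductive hypothesis to the $f_k$ with $1\leq k<s$. The only cosmetic difference is that you derive the polynomial identity directly from the symmetry $\nu_{s,k}=\nu_{s,-k}$ together with \eqref{Tkex} and the injectivity of $y\mapsto z+z^{-1}$, whereas the paper first solves for $f_s=h(f_1)$ with $h\in\bb{Z}[x]$ and then identifies $h=T_s$ by checking the special case $R=\bb{Z}[z^{\pm1}]$, $f_k=z^k+z^{-k}$; this is the same identity verified from the other direction.
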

\begin{proof}
	We prove this by induction.  The case $s=1$ is trivial.  Now suppose the claim holds for $s-1$.  By the inductive hypothesis,
	\begin{align*}
		f_1^{s} =\sum_{k=0}^{s} \nu_{s,k}f_k= \nu_{s,0}+\left(\sum_{k=1}^{s-1} \nu_{s,k}T_k(f_1)\right)+f_s.
	\end{align*}
	Solving for $f_s$, we find $f_s=h(f_1)$ where $h$ is the polynomial $h(x)=x^s-\nu_{s,0}-\sum_{k=1}^{s-1} \nu_{s,k}T_k(x)\in \bb{Z}[x]$.  By \eqref{Tkex}, the claim holds if $R=\bb{Z}[z^{\pm 1}]$ and $f_k=z^k+z^{-k}$ for $k\geq 1$, so this polynomial $h(x)$ must indeed be $T_s(x)$.
\end{proof}

The \textbf{Chebyshev polynomials of the second kind} are the polynomials
$U_{k}(z)$ defined via the same recursion as in the definition of
$T_{k}(z)$, except that the initial value $T_{0}(z)=2$ is replaced
with $U_{0}(z)=1$.  For $k\in\bb{Z}_{\geq 0}$, $U_{k}$
satisfies 
\begin{align*}
	U_{k}(z+z^{-1})=z^{k}+z^{(k-2)}+\ldots+z^{-(k-2)}+z^{-k}.
\end{align*}

\begin{lem}[{\cite{musiker2013bases,Thurst}}]\label{lem:chebyshev_loops}
	In the classical skein algebra $\Sk(\Sigma)$, for $k\in \bb{Z}_{\geq 1}$, we have
	\begin{align*}
		\BracE{k L}&=T_k([L])\\
		\BandE{k L}&=U_k([L]).
	\end{align*}
\end{lem}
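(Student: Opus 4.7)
My plan is to prove both identities by induction on $k$, reducing everything to the defining recursions $T_{k+1}(z) = zT_k(z) - T_{k-1}(z)$ and $U_{k+1}(z)=zU_k(z)-U_{k-1}(z)$ from \eqref{Cheb1}. The base cases are direct: $\BracE{0L} = [\emptyset]\cdot 2 = 2 = T_0([L])$ (after noting that the Chebyshev convention built into \eqref{Cheb1} forces $\BracE{0L}$ to mean $2$), $\BracE{1L} = [L] = T_1([L])$, and similarly $\BandE{0L} = 1 = U_0([L])$, $\BandE{1L}=[L]=U_1([L])$. The entire proof then reduces to establishing the correct three-term recurrences in $\Sk(\Sigma)$.

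For the bracelet identity, I would prove the recursion
\[
\BracE{kL}\cdot [L] = \BracE{(k+1)L} + \BracE{(k-1)L}
\]
by representing the two sides in $\?{\Sk}(\Sigma)$ and picking one single transverse intersection between a representative of $[L]$ and the $k$-fold bracelet $\Brac_k L$; such a representative exists since $L$ is non-contractible and non-peripheral. Applying the classical ($q=1$) Kauffman skein relation at this crossing produces two resolutions: one splices the extra strand of $L$ into the bracelet to yield $\Brac_{k+1}L$, and the other splits the bracelet, after removing a contractible arc/loop using the relations from \S \ref{SectionSkeinDef}, into $\Brac_{k-1}L$. This is a local calculation inside a tubular neighborhood of $L$, so it is independent of the global topology of $\Sigma$.

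A cleaner alternative (really the conceptual content of the argument above) is to invoke the trace interpretation of $[L]$ developed in \S \ref{subsub:tagged-skein}: loops $L$ correspond to the functions $\rho \mapsto \Tr(\rho(L))$ on $\Teich(\Sigma) \subset \Hom'(\pi_1(\SSS\setminus\MM), \PSL_2(\bb{R}))/\PSL_2(\bb{R})$, lifted to $\SL_2$ with the sign conventions recorded there. Under this identification the classical skein relations are precisely the Fricke trace identities for $\SL_2$, and $\Brac_kL$ realizes the monodromy $\rho(L)^k$, hence $\BracE{kL}$ corresponds to the function $\rho \mapsto \Tr(\rho(L)^k) = T_k(\Tr(\rho(L)))$ by \eqref{Tkex}. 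Because Teichm\"uller space is Zariski-dense in the relevant character variety and the trace-coordinate map embeds $\Sk(\Sigma)$ into a ring of functions on it, the equality $\BracE{kL} = T_k([L])$ follows.

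For the band identity, the analogous recursion
\[
\BandE{kL}\cdot [L] = \BandE{(k+1)L} + \BandE{(k-1)L}
\]
is what must be checked. Representation-theoretically, this is the statement that $\Tr(\Sym^k \rho(L))$ obeys the $U_k$ recursion, which again follows from the trace identity $U_k(z+z^{-1}) = z^k+z^{k-2}+\cdots+z^{-k}$. Combinatorially, one verifies that the permutation symmetrizer in the definition of $\Band_k L$ is exactly the Jones--Wenzl-type idempotent in the Temperley--Lieb category at $q=1$, so multiplying by an extra parallel strand of $L$ and using the classical Kauffman relation produces the two terms on the right, with the permutation averages matching up by a standard Jones--Wenzl absorption argument. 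The main technical obstacle in the whole proof is the bookkeeping in this final averaging step: one must verify that after resolving the new crossing, the two contributions reassemble as genuine $(k+1)$- and $(k-1)$-fold symmetrized bands (rather than partially symmetrized expressions). This is handled by observing that the symmetrizer already present in $\BandE{kL}$ propagates through multiplication by $[L]$, so the results of the two resolutions are automatically invariant under all permutations of the respective strand collections.
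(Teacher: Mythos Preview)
The paper does not prove this lemma; it simply cites \cite{musiker2013bases,Thurst}. So I will assess your argument on its own terms.

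Your direct skein computation for the bracelet recursion contains a topological error. A simple closed curve $L$ and the bracelet $\Brac_kL$ represent homology classes $[L]$ and $k[L]$ in $H_1(\SSS;\bb{Z}/2)$, so their mod-$2$ intersection number is $k\,[L]\cdot[L]=0$; consequently no representatives can meet transversally in a single point. In fact $L$ and $\Brac_kL$ can always be made disjoint, so the product $[L]\cdot\BracE{kL}$ is simply $[L\sqcup\Brac_kL]$ with no crossing available to resolve. The correct skein derivation runs in the opposite direction: resolve one of the $k$ self-crossings of $\Brac_{k+1}L$. One smoothing splits off a parallel copy of $L$ from a $k$-bracelet, giving $[L]\cdot\BracE{kL}$; the other smoothing yields $\Brac_{k-1}L$ carrying an extra kink, and the Reidemeister-I consequence of the skein relations (at $q=1$ a kink contributes a factor $-1$) turns this into $-\BracE{(k-1)L}$. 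Hence $\BracE{(k+1)L}=[L]\,\BracE{kL}-\BracE{(k-1)L}$, matching \eqref{Cheb1}. Your phrase ``after removing a contractible arc/loop'' suggests you sensed this mechanism, but the sign it produces is essential and is missing from your account.

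Your alternative trace argument is sound and is the one implicit in the cited references: with the identification of $[L]$ with $\rho\mapsto\Tr\rho(L)$ from \S\ref{subsub:tagged-skein} (or \S\ref{sec:dec_SL2_moduli}), the concatenation $\Brac_kL$ maps to $\rho\mapsto\Tr(\rho(L)^k)$, and the $\SL_2$ identity $\Tr(A^k)=T_k(\Tr A)$ recorded after \eqref{Tkex} gives the result. One should note, though, that invoking this requires the injectivity of the skein algebra into the ring of trace functions, which is an extra (standard) input; the purely skein-theoretic argument above avoids this. Your treatment of the band identity via $\Tr(\Sym^kA)=U_k(\Tr A)$ and the symmetrizer interpretation is correct in outline; the same three-term recursion then follows once the base cases $\BandE{0L}=1$, $\BandE{1L}=[L]$ are in hand, and the ``absorption'' step you describe is the right bookkeeping.
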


In view of Lemma \ref{lem:chebyshev_loops} and following \cite{musiker2013bases,Thurst}, for any weighted simple multicurve $C=\bigcup_{i=1}^k w_i C_i$, we define the bracelet element $\langle C\rangle_{\Brac}$ in the classical skein algebra $\Sk(\Sigma)$ and, for unpunctured $\Sigma$, in the quantum skein algebra $\Sk_t(\Sigma)$, via
\begin{align}\label{Eq:BracDef}
	\langle C\rangle_{\Brac}:=t^{-\alpha}\prod_{i=1}^k \langle w_iC_i\rangle_{\Brac},
\end{align}
where $\alpha$ is chosen so that $\langle C\rangle_{\Brac}$ is bar-invariant (i.e., as in \eqref{factor-alpha}), and the factors are given as follows:
\begin{itemize}
	\item for $C_i$ a simple non-peripheral loop,
	\begin{align}\label{wLBrac}
		\langle w_iC_i\rangle_{\Brac}:=T_{w_i}([C_i]);
	\end{align}
	\item for $C_i$ a simple arc, $\langle w_i C_i\rangle_{\Brac}:=[C_i]^{w_i}$. 
\end{itemize}

Denote the subsets of bracelet elements in $\Sk_{t}(\Sigma)$ by
\begin{align*}
	\Brac_t(\Sigma)&:=\left\{ \langle C\rangle_{\Brac}\in\Sk_{t}(\Sigma)| C\in\wSMulti(\Sigma)\right\}\\
	\?{\Brac}_t(\Sigma)&:=\left\{ \langle C\rangle_{\Brac}\in\Sk_{t}(\Sigma)| C\in\SMulti(\Sigma)\right\}\\
	\Brac^\circ_t(\Sigma)&:=\left\{ \langle C\rangle_{\Brac}\in\Sk_{t}(\Sigma)| C\in\SMulti^\circ(\Sigma)\right\}
\end{align*}

One defines the band element $\langle C\rangle_{\Band}$ and the sets $\Band_t(\Sigma)$, $\?{\Band}_t(\Sigma)$, and $\Band_t^{\circ}(\Sigma)$ in
the same way as $\Brac_{t}(\Sigma)$, $\?{\Brac}_{t}(\Sigma)$, and $\Brac^{\circ}_{t}(\Sigma)$, respectively, but using $U_{w}$ in place
of $T_{w}$.  Note that, for any given $C\in \wSMulti$, $\BangE{C}$, $\BracE{C}$, $\BandE{C}$ are all bar-invariant. In particular, they use the same factor $t^\alpha$.

We have the following analog of Lemma \ref{BangLem}. 
\begin{lem}[\cite{Thurst,muller2016skein}]
	\label{Lem:bracelet_basis} 
	Recall that $t$ is understood to equal $1$ if $\Sigma$ has punctures.  With this understanding, each of the sets
	$\?{\Brac}_{t}(\Sigma)$ and $\?{\Band}_{t}(\Sigma)$ forms
	a $\kk_{t}$-module basis for $\?{\Sk}_{t}(\Sigma)$. Similarly, each
	of the sets $\Brac_{t}(\Sigma)$ and $\Band_{t}(\Sigma)$ forms a $\kk_{t}$-module basis for $\Sk_{t}(\Sigma)$. 
\end{lem}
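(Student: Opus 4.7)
The plan is to deduce the bracelet and band basis statements from the bangle basis statement (Lemma \ref{BangLem}) by establishing a unitriangular change of basis from bangles to bracelets, and similarly from bangles to bands. I will treat the quantum case (so $\Sigma$ is unpunctured or $t=1$) uniformly; the classical and quantum arguments are the same once the Chebyshev recursions are in place.

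The key algebraic input is that both Chebyshev polynomials are monic of the correct parity: $T_w(z) = z^w + \sum_{0\leq k<w,\,k\equiv w\,(\text{mod }2)} a_{w,k}\, z^k$ with $a_{w,k}\in\mathbb{Z}$, and similarly $U_w(z) = z^w + \sum_{0\leq k<w,\,k\equiv w\,(\text{mod }2)} b_{w,k}\, z^k$. Applied to a simple loop $L$, this gives $\BracE{wL} = [L]^w + \sum_{k<w} a_{w,k}[L]^k$ and the analogous formula for $\BandE{wL}$. Now let $C=\bigcup_{i} w_i C_i \in \wSMulti(\Sigma)$ with components $C_i$ of either type. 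Since arc components contribute identically to $\BangE{C}$, $\BracE{C}$, and $\BandE{C}$, and since the bar-invariance prefactor $t^{-\alpha}$ from \eqref{factor-alpha} depends only on the pairs of arc components (loop endpoints do not enter $\Lambda$), this prefactor is identical for $\BracE{C}$, $\BandE{C}$, and for $\BangE{C'}$ whenever $C'$ is obtained from $C$ by reducing the weights on loop components. Expanding each Chebyshev factor therefore yields
\[
\BracE{C} \;=\; \BangE{C} \;+\; \sum_{C'\prec C} c_{C,C'}\, \BangE{C'},
\]
where $C'\prec C$ means that $C'$ is obtained from $C$ by replacing each loop component $w_i C_i$ with $k_i C_i$ for some $0\leq k_i <w_i$ with $k_i\equiv w_i\pmod 2$ (arc components, including boundary arcs and their possibly negative weights, left intact), and $c_{C,C'}\in\mathbb{Z}\subset\kk_t$. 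The analogous expansion holds for $\BandE{C}$ with different coefficients.

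The partial order $\prec$ has finite down-sets (loop weights are bounded below by $0$), so for any total refinement the matrix transforming bangles into bracelets (respectively bands) is upper unitriangular with entries in $\kk_t$ and is therefore invertible over $\kk_t$. Combined with Lemma \ref{BangLem}, this shows that $\?{\Brac}_t(\Sigma)$ and $\?{\Band}_t(\Sigma)$ are $\kk_t$-bases for $\?{\Sk}_t(\Sigma)$, and $\Brac_t(\Sigma)$ and $\Band_t(\Sigma)$ are $\kk_t$-bases for $\Sk_t(\Sigma)$; the passage from the compactified to the localized setting is transparent because the triangular relation is compatible with allowing negative weights on boundary-arc components.

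The main potential obstacle is the careful bookkeeping of the $t$-power prefactor: one must check that the prefactor $t^{-\alpha}$ in the definitions of $\BangE{C}$, $\BracE{C}$, $\BandE{C}$ truly depends only on the arc components with their (unchanged) weights, and that non-intersecting loop components commute freely in the skein algebra so that no extra $t$-factors are produced when collecting terms. Both points reduce to the observation that $\Lambda(i,j)$ of \eqref{Lambdaij} is nonzero only for pairs of arcs sharing a marked endpoint, so loops contribute nothing to the exponent in the superposition product—making the triangular expansion displayed above clean and defined over $\kk_t$ rather than its fraction field.
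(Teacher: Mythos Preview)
Your proposal is correct and follows essentially the same approach as the paper, which simply says the claims ``follow easily from the bangles case (Lemma~\ref{BangLem}) and the definitions of $T_w$ and $U_w$.'' You have spelled out the unitriangular change-of-basis argument that the paper leaves implicit; the only quibble is that in your description of $C'\prec C$ you should allow some loop weights to remain equal to $w_i$ (with at least one strictly smaller), since the product expansion produces all tuples $(k_i)$ with $k_i\leq w_i$, not just those with every $k_i<w_i$.
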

\begin{proof}
	The claims for the bracelets and bands follow easily from
	the bangles case (Lemma \ref{BangLem}) and the definitions of $T_{w}$ and $U_{w}$.
\end{proof}

\subsection{Tagged bracelets}\label{Sec:tag-brac}

Let there be given a triangulable marked surface $\Sigma$.
As we saw in Lemmas \ref{BangLem} and \ref{Lem:bracelet_basis}, we have various bases for the classical skein algebra $\?{\Sk}(\Sigma)$ and $\Sk(\Sigma)$.
In the tagged setting $\Sk^{\Box}(\Sigma)$, we modify the constructions from \S \ref{sec:weighted_curve}-\S \ref{sec:bracelet_band} to include tagged arcs. The resulting set of ``tagged bracelets'' is essentially the set denoted $\s{B}$ in \cite[\S 8]{musiker2013bases} except that we are working with boundary coefficients but not principal coefficients (principal coefficients will be considered later).

First, we define a \textbf{tagged simple multicurve} $C$ to be a union of non-intersecting tagged arcs and simple non-peripheral loops, considered up to homotopy, such that the tagged arcs appearing in $C$ are compatible (as in the definition of tagged triangulations in \S \ref{S:tagged_sk}). As before, we can denote
\[
C=\bigcup_i w_i C_i,
\]
such that exactly $w_i$-many tagged arcs or simple loops in $C$ are isotopic to $C_i$. The components $C_i$ of $C$ are said to be \textbf{compatible}.

As before, $C$ is called \textbf{internal} if its components include no boundary arcs. By allowing $w_i\in \Z$ for boundary arcs $C_i$ in $C$, we obtain the notion of a \textbf{weighted tagged simple multicurve}. Let $\SMultiTag$ denote the set of tagged simple multicurves and $\wSMultiTag$ the set of weighted tagged simple multicurves.

We naturally extend the construction of previous bases to tagged cases.  The bangle element $\langle C\rangle_{\Bang}$ represented by a weighted tagged simple multicurve $C\in\wSMultiTag$ is defined as the corresponding element $[C]$ in $\Sk^{\Box}(\Sigma)$, which is defined as
\[
\langle C \rangle_{\Bang}=[C]=\prod_i [C_i]^{w_i}
\]
where $C_i$ are the components of $C$ with weights $w_i$.

Similarly, we define the bracelet element $\langle C\rangle_{\Brac}$ such that
\[
\langle C \rangle_{\Brac}:= \prod_i \langle w_i C_i\rangle_{\Brac}
\]
where the factors are defined as follows:
\begin{itemize}
	\item for $L$ a simple loop with weight $w>0$, $\langle wL \rangle _{\Brac}:=T_{w}([L])$;
	\item for $\gamma$ a tagged arc with weight $w$, $\langle w\gamma \rangle _{\Brac}:=[\gamma]^w$.
\end{itemize}
By replacing $T_w(\ )$ by $U_w(\ )$, we obtain the definition of the band element $\langle C\rangle_{\Band}$ represented by $C$.

Now 
\[
\Brac^{\Box}(\Sigma)
\]
is defined to be the set of all tagged bracelets, and 
\[
\?{\Brac}^{\Box}(\Sigma)
\]
is defined to be the subset of tagged bracelets for which all weights are non-negative (analogously to
in the definition of $\?{\Brac}(\Sigma)$).

A proof that $\Brac^{\Box}(\Sigma)$ forms a basis for $\Sk^{\Box}(\Sigma)$
does not seem to exist in the literature yet, although a sketch of
possible approaches to proving this appeared in \cite[\S 8]{musiker2013bases}.
We shall prove this in Theorem \ref{thm:tag_sk_up_cl_alg} by identifying the tagged bracelets with the theta bases.

\begin{lem}
	We have $\Brac(\Sigma)\subset\Brac^{\Box}(\Sigma)$ and $\?{\Brac}(\Sigma)\subset\?{\Brac}^{\Box}(\Sigma)$.
\end{lem}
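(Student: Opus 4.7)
The plan is to construct, for each $C\in\wSMulti(\Sigma)$, a weighted tagged simple multicurve $\tau(C)\in\wSMultiTag$ with $\BracE{C}=\BracE{\tau(C)}$ in $\Sk^{\Box}(\Sigma)$, and such that $\tau(\SMulti(\Sigma))\subset \SMultiTag$; both inclusions then follow. For unpunctured $\Sigma$ there are no nooses, arcs and tagged arcs coincide (all plain-tagged in an essentially unique way), and $\tau$ can be taken to be the identity. So I will assume $\Sigma$ has punctures and work at $t=1$. Writing $C=\bigcup_i w_iC_i$, I will define $\tau(C)$ componentwise: keep every loop and every non-noose simple arc component unchanged (plain-tagging the arcs); and for each noose component $C_i=\alpha$ of weight $w_i$, based at a marked point $m$ and surrounding a puncture $p$, replace $\alpha$ with the pair $\{\gamma,\iota(\alpha)\}$, each of weight $w_i$, where $\gamma$ is the plain-tagged arc from $m$ to $p$ inside the monogon cut out by $\alpha$ and $\iota(\alpha)$ is the same underlying arc but notched at $p$.

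The first step is to check that $\tau(C)$ is a genuine weighted tagged simple multicurve. The pair $\{\gamma,\iota(\alpha)\}$ shares both endpoints and has the same underlying arc, but both are plain-tagged at $m$, so they are compatible in the sense of \S \ref{S:tagged_sk}. Compatibility with every other component is automatic, because the noose $\alpha$ cuts out a once-punctured monogon whose only marked point other than $m$ is $p$; hence any other component of $C$ disjoint from $\alpha$ cannot touch $p$, so no tagging conflict can arise at $p$.

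The second and key step is the bracelet comparison. The contributions of loops and non-noose arc components agree on both sides of $\BracE{C}=\BracE{\tau(C)}$ by construction. For each noose component $\alpha$ of weight $w$, Example \ref{ex:noose} (the degenerate case of the digon relation \eqref{eq:digon} in which the opposite marked points coincide, so that $\beta$ becomes contractible and vanishes) gives $[\alpha]=[\gamma]\cdot[\iota(\alpha)]$ in $\Sk^{\Box}(\Sigma)$, whence $[\alpha]^w=[\gamma]^w[\iota(\alpha)]^w$. This is exactly the contribution of the pair $\{\gamma,\iota(\alpha)\}$ with weights $w$ to $\BracE{\tau(C)}$. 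Multiplying over all components yields $\BracE{C}=\BracE{\tau(C)}$, giving $\Brac(\Sigma)\subset\Brac^{\Box}(\Sigma)$; since $\tau$ preserves non-negativity of weights, $\?{\Brac}(\Sigma)\subset\?{\Brac}^{\Box}(\Sigma)$ follows by the same argument. I do not anticipate any substantive obstacle; the only thing to watch is the compatibility verification for $\tau(C)$, which is routine once one records the isolation property of the noose.
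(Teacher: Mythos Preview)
Your approach is essentially the same as the paper's, which simply says the non-noose cases are obvious and the noose cases follow from Example~\ref{ex:noose}. There is, however, a small inaccuracy in your compatibility check: it is not true that ``any other component of $C$ disjoint from $\alpha$ cannot touch $p$.'' The plain arc $\gamma$ from $m$ to $p$ inside the monogon may itself already be a component of $C$ (it is simple, non-homotopic to $\alpha$, and disjoint from $\alpha$ in $\SSS\setminus\MM$). In that case your $\tau(C)$ should carry $\gamma$ with combined weight $w_\gamma + w_\alpha$ together with $\iota(\alpha)$, and one must verify that $\gamma$ and $\iota(\alpha)$ are compatible. They are, by the second compatibility clause (same underlying arc, same plain tag at $m$)---which is exactly the check you already did for the pair coming from $\alpha$ alone. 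No genuinely different component can reach $p$, since any arc in the once-punctured monogon is homotopic to either $\gamma$ or $\alpha$, so the rest of your argument goes through.
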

\begin{proof}
	The containments are obvious for bracelets which do not include any nooses (arcs bounding once-punctured monogons).  One easily extends to cases with nooses using Example \ref{ex:noose}.
\end{proof}

The following will be useful in the proof of Corollary \ref{cor:tag_sk_up_cl_alg}.
\begin{lem}\label{lem:Brac-spans}
$\?{\Brac}^{\Box}(\Sigma)$ spans $\?{\Sk}^{\Box}(\Sigma)$.
\end{lem}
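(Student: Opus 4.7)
The plan is to show that an arbitrary product of generators of $\?{\Sk}^{\Box}(\Sigma)$ can be rewritten as a $\kk$-linear combination of elements of $\?{\Brac}^{\Box}(\Sigma)$. Since $\?{\Sk}^{\Box}(\Sigma)$ is generated as a $\kk$-algebra by loops and tagged arcs, this will suffice. The strategy proceeds in two phases: first reduce to a $\kk$-linear combination of tagged bangles $\langle C\rangle_{\Bang}$ with $C\in\SMultiTag$ and all weights non-negative, and then expand each such bangle in the bracelets basis.

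For the first phase, I would start with the formal multicurve representing the given product and iteratively simplify it. Interior crossings are resolved via the Kauffman skein relation, which applies to tagged curves by Lemmas \ref{lem:tag-skein} and \ref{lem:tag-skein2}; contractible arcs are set to $0$, contractible loops to $-2$, and loops around a single puncture to $2$; nooses are exchanged for (possibly notched) tagged arcs via Example \ref{ex:noose}; and any pair of components meeting at a common puncture with incompatible tags is resolved via the local digon relation (Proposition \ref{prop:loc-digon}) or Example \ref{ex:different-tags}, producing simpler configurations whose arcs have compatible tags. The main obstacle will be showing termination, since a local digon reduction could conceivably recreate a similar configuration elsewhere. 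This ought to be handled by inducting on a carefully chosen complexity measure --- for instance, lexicographically in (number of interior crossings, number of incompatibly-tagged common endpoints, number of nooses) --- which strictly decreases at every reduction step. After finitely many reductions, what remains is a $\kk$-linear combination of tagged bangles $\langle C\rangle_{\Bang}$ with non-negative weights.

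For the second phase, I would expand each bangle in the bracelets basis. Since the components of $C = \bigcup_i w_i C_i$ are pairwise compatible and non-intersecting, $\langle C\rangle_{\Bang} = \prod_i [C_i]^{w_i}$. For each arc component $C_i$, $[C_i]^{w_i} = \langle w_i C_i\rangle_{\Brac}$ by definition. For each loop component $C_i$, inverting the Chebyshev recursion \eqref{Cheb1} (the upper-triangular change of basis underlying Lemma \ref{Lem:Tk}) expresses $[C_i]^{w_i}$ as a $\kk$-linear combination of $T_k([C_i]) = \langle kC_i\rangle_{\Brac}$ for $0 \leq k \leq w_i$. Because compatibility of components is preserved when loop weights are reduced (and weights remain non-negative), expanding the product yields $\langle C\rangle_{\Bang}$ as a $\kk$-linear combination of elements of $\?{\Brac}^{\Box}(\Sigma)$, completing the argument.
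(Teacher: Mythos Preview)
Your two-phase strategy matches the paper's proof: reduce to tagged bangles with non-negative weights, then expand bangles into bracelets via the Chebyshev change of basis. The second phase is correct and essentially identical to the paper's final step.

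The gap is in your termination argument for the first phase. With interior crossings as the leading coordinate, your lexicographic measure does not strictly decrease: the local digon relation resolves one tag-incompatibility, but the resulting arcs $\alpha,\beta$ may cross other components of the multicurve, so the crossing count can go up. Your separate noose step is also problematic for this measure, since replacing a noose by $\gamma\gamma'$ via Example~\ref{ex:noose} increases the number of arcs and introduces a pair of opposite tags at the enclosed puncture.

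The paper's termination argument is simpler and bypasses both issues. Each application of the local digon relation replaces two arcs $\gamma_1,\gamma_2$ by a single arc $\alpha$ or $\beta$, strictly decreasing the number of arc components in every resulting term; the skein relation is applied away from endpoints and hence preserves the number of arc components (equivalently, the number of endpoints). Thus each cycle of (one digon, then resolve all new crossings via skein) strictly decreases the arc count, forcing termination. No separate noose step is needed during the iteration: any noose surviving at the end is already the bracelet $\langle\gamma\cup\gamma'\rangle_{\Brac}\in\?{\Brac}^{\Box}(\Sigma)$ by Example~\ref{ex:noose}, with $\gamma,\gamma'$ a compatible pair.
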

\begin{proof}
Let $[C]$ be a product of loops and tagged arcs in $\?{\Sk}^{\Box}(\Sigma)$.  Thanks to Lemmas \ref{lem:tag-skein} and \ref{lem:tag-skein2}, any crossings between these loops and tagged arcs can be resolved using the skein relation (Figure \ref{SkeinFig} with $q=1$), so we may assume there are no crossings.

Now suppose some pair of tagged arcs $\gamma_1,\gamma_2$ appearing in $C$ are not compatible.  Since we have assumed there are no crossings, it must be the case that the taggings of $\gamma_1$ and $\gamma_2$ disagree at some common endpoint.  We can thus apply the local digon relation (Figure \ref{fig:localDigon}) to write $\gamma_1*\gamma_2$ as a sum of two other arcs $\alpha$ and $\beta$ (possibly using the trick of first working on a covering space and then projecting).  This may result in new crossings, and we again resolve these with the skein relations.  The total number of arc-factors in each term is now strictly less than before.  For any term which still has incompatible arcs, these steps (digon relation followed by skein relations) can be repeated.  Since the total number of arcs decreases with each iteration, this process must terminate.

We thus express $[C]$ as a linear combination of elements $[C_i]$ with each $C_i$ a tagged simple multicurve.  Since bangles can be expressed as linear combinations of (tagged) bracelets, we see that $[C]$ can be expressed as a linear combination of tagged bracelets.    Since no step in this process can produce arcs with negative weights, the tagged bracelets appearing here all lie in $\?{\Brac}^{\Box}(\Sigma)$, as desired.
\end{proof}

\section{Scattering diagrams and theta functions}\label{Section_Scat}
In this section we review the construction of the theta bases for cluster algebras, due to \cite{gross2018canonical} in the classical setting and \cite{mandel2021scattering,davison2019strong} in the quantum setting.

\subsection{The quantum torus Lie algebra}\label{Sec:q-Lie}

Suppose we have either a compatible pair $(\sd,\Lambda)$, or just a seed $\sd$ for which the Injectivity Assumption possibly fails.  In this latter case, we replace $\sd$ with $\sd^{\prin}$ as in \S \ref{prinsub} so that Assumption \ref{inj-assumption} holds, and then we pick a $\Lambda$ compatible with this new seed.

Recall our notation regarding quantum torus algebras in \S \ref{qtoralg}.  Let $\tilde{\f{g}}^t$ be the Lie algebra over $\kk_t=\kk[t^{\pm 1/D}]$  given by 
\begin{align*}
	\tilde{\f{g}}^t\coloneqq \bigoplus_{m\in M^+} \kk_t\cdot z^m
\end{align*}
with bracket $$[z^a,z^b]=(t^{\Lambda(a,b)}-t^{-\Lambda(a,b)}) z^{a+b}.$$  That is, $\tilde{\f{g}}^t$ is a Lie subalgebra over $\kk_t$ of the quantum torus algebra $\kk_t[M]$ with its commutator bracket --- specifically, it is the Lie subalgebra spanned over $\kk_t$ by $z^m$ with $m\in M^+$.  Then the \textbf{quantum torus Lie algebra} $\f{g}^t$ is defined to be the Lie subalgebra of $\tilde{\f{g}}^t\otimes_{\kk_t} \kk(t)$ generated over $\kk_t$ by the elements of the form 
\begin{align}\label{hatz}
	\hat{z}^m\coloneqq \frac{z^m}{t^{d|m|}-t^{-d|m|}}
\end{align} 
for $m\in M^+$, $d$ as in \eqref{Lambda-B}. Here, $|m|$ denotes the index of $m$ in $M^+$, i.e., the largest integer $k$ for which $\frac{1}{k}m \in M^+$.  We call an element $m$ \textbf{primitive} if $|m|=1$. The following Lemma implies that these elements $\hat{z}^m$ in fact generate $\f{g}^t$ as a $\kk_t$-module (not just as a $\kk_t$ Lie subalgebra).

\begin{lem}\label{lem:str_const_laurent}
	For any $a,b\in M^+$, we have $\frac{(t^{d|a+b|}-t^{-d|a+b|})(t^{\Lambda(a,b)}-t^{-\Lambda(a,b)})}{(t^{d|a|}-t^{-d|a|})(t^{d|b|}-t^{-d|b|})}\in \kk_t$.
\end{lem}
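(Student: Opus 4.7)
The plan is to reduce the claim to an elementary divisibility statement about factors of the form $u^n - u^{-n}$ and dispatch it by cyclotomic bookkeeping. Set $\alpha = |a|$, $\beta = |b|$, $\gamma = |a+b|$, and write $a = \alpha a_0$, $b = \beta b_0$ with $a_0, b_0 \in M^+$ primitive. First I would record three integer divisibilities. Bilinearity gives $\Lambda(a,b) = \alpha\beta\,\Lambda(a_0, b_0)$; since $M^+$ is the $\bb{Z}_{\geq 0}$-span of $\{\omega_1(e_i) : i \in I\setminus F\}$, the compatibility relation \eqref{Lambda-v1v2} forces $\Lambda(a_0, b_0) \in d\bb{Z}$, so writing $\Lambda(a,b) = d\ell$ yields $\ell \in \alpha\beta\bb{Z}$ --- in particular $\alpha \mid \ell$, $\beta \mid \ell$, and thus $\mathrm{lcm}(\alpha,\beta) \mid \ell$. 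Separately, $(a+b)/\gcd(\alpha,\beta) \in M^+$, and since any positive integer $k$ with $(a+b)/k \in M^+$ must divide $\gamma$ (both lie on the primitive ray through $a+b$), this gives $\gcd(\alpha,\beta) \mid \gamma$.

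Substituting $u = t^d$, the goal becomes to show
\[
\frac{(u^\gamma - u^{-\gamma})(u^\ell - u^{-\ell})}{(u^\alpha - u^{-\alpha})(u^\beta - u^{-\beta})} \in \bb{Z}[u^{\pm 1}] \subset \kk_t.
\]
The central step is the intermediate divisibility
\[
(u^\alpha - u^{-\alpha})(u^\beta - u^{-\beta}) \,\Big|\, \bigl(u^{\gcd(\alpha,\beta)} - u^{-\gcd(\alpha,\beta)}\bigr)\bigl(u^{\mathrm{lcm}(\alpha,\beta)} - u^{-\mathrm{lcm}(\alpha,\beta)}\bigr)
\]
in $\bb{Z}[u^{\pm 1}]$. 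I would establish this via the factorization $u^n - u^{-n} = u^{-n}\prod_{d'\mid 2n}\Phi_{d'}(u)$ and compare cyclotomic multiplicities $d'$ by $d'$: using $\gcd(2\alpha, 2\beta) = 2\gcd(\alpha,\beta)$ and $\mathrm{lcm}(2\alpha, 2\beta) = 2\mathrm{lcm}(\alpha,\beta)$, one checks in a short case analysis that $[d'\mid 2\gcd(\alpha,\beta)] + [d'\mid 2\mathrm{lcm}(\alpha,\beta)] \geq [d'\mid 2\alpha] + [d'\mid 2\beta]$ for every $d' \geq 1$.

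To conclude, the divisibilities $\gcd(\alpha,\beta) \mid \gamma$ and $\mathrm{lcm}(\alpha,\beta) \mid \ell$ give $(u^{\gcd(\alpha,\beta)} - u^{-\gcd(\alpha,\beta)}) \mid (u^\gamma - u^{-\gamma})$ and $(u^{\mathrm{lcm}(\alpha,\beta)} - u^{-\mathrm{lcm}(\alpha,\beta)}) \mid (u^\ell - u^{-\ell})$ via the standard identity $n \mid m \Rightarrow (u^n - u^{-n}) \mid (u^m - u^{-m})$. Multiplying these with the intermediate divisibility above shows the denominator divides the numerator, finishing the proof. The sign of $\ell$ is irrelevant since $u^\ell - u^{-\ell}$ is anti-symmetric in $\ell$. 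I expect no real obstacle: everything reduces to the cyclotomic multiplicity case check, which is a short combinatorial exercise, and the structural input is entirely captured by the three divisibilities recorded in the first paragraph.
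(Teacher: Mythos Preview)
Your proof is correct and follows essentially the same route as the paper's: both extract the key arithmetic facts $\gcd(\alpha,\beta)\mid\gamma$ and $\alpha\beta\mid \Lambda(a,b)/d$, substitute to a single-variable Laurent polynomial ring, and verify divisibility by cyclotomic/root-of-unity bookkeeping. The only cosmetic differences are that the paper substitutes $q=t^{2d}$ and argues directly with roots of $q^n-1$, while you substitute $u=t^d$ and factor the argument through an intermediate $(u^{\gcd}-u^{-\gcd})(u^{\mathrm{lcm}}-u^{-\mathrm{lcm}})$ divisibility; the underlying case analysis is identical.
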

\begin{proof}
	Notice that $\frac{1}{d}\Lambda$ is $\bb{Z}$-valued on $M^+$ by \eqref{Lambda-v1v2}. Let $\alpha=|a|,\beta=|b|,\gamma=|a+b|$.  So  $a=\alpha a_0$, $b=\beta b_0$ for some primitive elements $a_0,b_0\in M^+$.  Let $\delta$ denote the greatest common divisor of $\alpha,\beta$. Then $\delta$ divides $\gamma$. Denote $\lambda=\frac{1}{d}\Lambda(a_0,b_0)$. The fraction in question can be rewritten as
	\begin{align*}
		\frac{(t^{d\gamma}-t^{-d\gamma})(t^{d\alpha \beta \lambda}-t^{-d\alpha \beta \lambda})}{(t^{d\alpha}-t^{-d\alpha})(t^{d\beta}-t^{-d\beta})}
	\end{align*}
	Denote $q=t^{2d}$. It suffices to show that the following fraction belongs to $\kk[q]$:
	\begin{align*}
		\frac{(q^{\gamma}-1)(q^{\alpha \beta \lambda}-1)}{(q^{\alpha}-1)(q^{\beta}-1)}.
	\end{align*}
	The roots of the denominator are of the following types:
	\begin{enumerate}
		\item an $\alpha$-th root of unity which is not a $\beta$-th root of unity, or a $\beta$-th root of unity which is not a $\alpha$-th root of unity.  Such roots have multiplicity $1$;
		\item a $\delta$-th root of unity.  Such roots have multiplicity $2$.
	\end{enumerate}
	The roots of the first kind are also roots of the factor $(q^{\alpha \beta \lambda}-1)$, and those of the second kind are roots of both factors in the nominator. It follows that the fraction indeed belongs to $\kk[q]$.
\end{proof}

The Lie algebra $\f{g}^t$ is equipped with the obvious $M^+$-grading, i.e., $\f{g}^t=\bigoplus_{m\in M^+} \f{g}_m^t$ where
\begin{align*}
	\f{g}^t_m:=\kk_t\cdot \hat{z}^m.
\end{align*}
Here, being $M^+$-graded means that for all $a,b\in M^+$, we have $[\f{g}^t_a,\f{g}^t_b]\subset \f{g}^t_{a+b}$.

Note that $\f{g}^t$ and $\Lambda$ satisfy the following compatibility condition:
\begin{align}\label{skewCondition}
	\mbox{if~}\Lambda(a,b)=0, \mbox{~then~} [\f{g}^t_{a},\f{g}^t_{b}]=0.    
\end{align}

The adjoint action of the commutator algebra of $\kk_t[M]$ induces an action of the Lie subalgebra $\tilde{\f{g}}^t$ on $\kk_t[M]$, and this induces an action of the Lie algebra $\f{g}^t$ on $\kk_t[M]$ by $M$-graded $\kt$-algebra derivations:
\begin{align*}
	\ad_{\hat{z}^a}(z^b) = \hat{z}^a z^b-z^b \hat{z}^a=\frac{t^{\Lambda(a,b)}-t^{-\Lambda(a,b)}}{t^{d|a|}-t^{-d|a|}} z^{a+b} \in \kk_t[M].
\end{align*}

For $k\in \bb{Z}_{\geq 1}$, let $(\f{g}^t)^{\geq k}\subset \f{g}^t$ denote the Lie algebra ideal spanned by the summands $\f{g}^t_m$ with $m\in kM^+$.  We define nilpotent Lie algebras \begin{align}\label{eq:gk}
	\f{g}^t_k\coloneqq \f{g}^t/(\f{g}^t)^{\geq (k+1)}    
\end{align}
and the completion
\begin{align*}
	\hat{\f{g}}^t\coloneqq \varprojlim_k \f{g}^t_k.
\end{align*}

By the Baker--Campbell--Hausdorff formula, we can apply $\exp$ to $\f{g}^t_k$ and $\hat{\f{g}}^t$ to obtain pro-unipotent algebraic groups $G^t_k$ and $\hat{G}^t=\varprojlim_k G_k^t$, respectively.  Note that $\hat{G}^t$ comes with a projection to $G_k^t$ for each $k\in \bb{Z}_{\geq 1}$.  The (adjoint) action of $\f{g}^t$ on $\kk_t[M]$ by $M$-graded $\kt$-derivations induces an action of $\hat{\f{g}}^t$ on $\kk_t\llb M\rrb$ by topologically $M$-graded $\kt$-derivations, hence an (adjoint) action of $\hat{G}^t$ on $\kk_t\llb M\rrb$ by $\kt$-algebra automorphisms: for $g=\exp(b)\in \hat{G}^t$ and $a\in \kk_t\llb M\rrb$, the action is $\Ad_g(a)=gag^{-1}=\exp([b,\cdot])(a)$.

Everything above makes sense in the classical $t=1$ limit as well, thus yielding Lie algebras $\f{g}$, $\f{g}_k$, and $\hat{\f{g}}$, as well as pro-unipotent algebraic groups $G_k$ and $\hat{G}$.  As a $\kk$-module, $\f{g}$ is equal to $\kk[M^+]$.  The bracket on $\f{g}$ is given by
\begin{align}\label{classical-bracket}
	[z^a,z^b]=\Lambda(a,b)z^{a+b},
\end{align}
i.e., $[z^{\omega_1(n)},z^{\omega_1(n')}]=\omega(n,n')z^{\omega_1(n+n')}$.  It is straightforward to check that $t\mapsto 1$ and $\hat{z}^m\mapsto \frac{z^m}{d|m|}$ determines a Lie algebra homomorphism $\lim_{t\rar 1}:\f{g}^t \rar \f{g}$, cf. \cite[\S 2.2.4]{davison2019strong}.  This induces homomorphisms, also denoted $\lim_{t\rar 1}$, on each of the other related pairs of Lie algebras or algebraic groups, e.g., $\hat{\f{g}}^t\rar\hat{\f{g}}$, $\hat{G}^t\rar \hat{G}$, etc. We also have $\lim_{t\rar 1}: \kk_t[M]\rar \kk[M]$ via $t\mapsto 1$, $z^m\mapsto z^m$, and similarly for $\kk_t\llb M\rrb\rar \kk\llb M\rrb$.  This determines actions of $\hat{\f{g}}$ and $\hat{G}$ on $\kk\llb M\rrb$ which intertwine the actions of $\hat{\f{g}}_t$ and $\hat{G}_t$ on $\kk_t\llb M\rrb$.  In particular, $\hat{\f{g}}$ acts on $\kk\llb M\rrb$ via 
\begin{align}\label{eq:classical_Lie_mod}
\ad_{z^{\omega_1(n)}}(z^b)=\Lambda(\omega_1(n),b)z^{\omega_1(n)+b}=-d\langle n,b\rangle z^{\omega_1(n)+b}.
\end{align}

Given $m\in M^+$, let
\begin{align}\label{g-parallel}
	(\f{g}_m^t)^{\parallel}\coloneqq \prod_{k\in \bb{Z}_{>0}} \f{g}_{km}^t\subset \hat{\f{g}}^t,
\end{align}
and let $(G_m^t)^{\parallel} = \exp(\f{g}_m^t)^{\parallel}$.  Note that $(G_m^t)^{\parallel}$ is Abelian.  We similarly define the classical limits $\f{g}_m^{\parallel}$ and $G_m^{\parallel}$.

\subsection{Quantum dilogarithms}\label{qdilog}
Given $v\in M^+$, we shall consider the quantum dilogarithm\footnote{When defining $\f{g}^t$, it is important that we take $\kk$ containing $\bb{Q}$ so that $-\Li(-z^v;t^d)$ lies in $\hat{\f{g}}^t$.  So if $\kk$ does not contain $\bb{Q}$, then in the construction of $\hat{\f{g}}^t$, one should replace $\kk_t$ with $\QQ_t$.  By \eqref{AdPsi}, the relevant action on $\kk_t\llb M\rrb$ will still be well-defined.}
\begin{align}\label{Li}
	-\Li(-z^v;t^d)\coloneqq \sum_{k=1}^{\infty} \frac{(-1)^{k-1}}{k(t^{dk}-t^{-dk})} z^{kv}\in (\f{g}_v^t)^{\parallel} \subset \hat{\f{g}}^t,
\end{align}
as well as the quantum exponential
\begin{align*}
	\Psi_{t^d}(z^v)\coloneqq \exp(-\Li(-z^v;t^d))\in (G_v^t)^{\parallel} \subset \hat{G}^t.
\end{align*}
It is well-known (cf. \cite[Lem. 8]{Kir}) that
\begin{align}\label{Psit}
	\Psi_{t^d}(z^v)=\prod_{k=1}^{\infty} \frac{1}{1+t^{d(2k-1)}z^{v}}.
\end{align}
We note that $(\Psi_{t^d}(z^v))^{\pm 1}$ can also be expressed as plethystic exponentials $\mathbb{E}(\mp z^v)$, see \cite[\S 2.4.2]{davison2019strong}.

Let $p\in M$, and let $\xi\coloneqq \sign \Lambda(v,p)$.  It follows from \eqref{Psit} that the Adjoint action of $(\Psi_{t^d}(z^v))^{\xi}$ on $z^p\in \kk_t[M]$ is given by
\begin{align}\label{AdPsi}
	\Ad_{\Psi_{t^d}(z^v)^{\xi}}(z^p)= z^p \prod_{k=1}^{|\Lambda(v,p)|/d} (1+t^{\xi d(2k-1)}z^{v}).
\end{align}

Using the notation of \eqref{qbinom}, one may show (cf. \cite[\S 2.4.1]{davison2019strong}) that \eqref{AdPsi} can be rewritten as
\begin{align}\label{AdPsiBinom}
	\Ad_{\Psi_{t^d}(z^v)^{\xi}}(z^p) = \sum_{k=0}^{|\Lambda(v,p)|/d} \binom{|\Lambda(v,p)|/d}{k}_{t^d} z^{kv+p}.
\end{align}
Recall that $\lim_{t\rar 1}$ sends $\hat{z}^m\in \f{g}^t$ to $\frac{z^m}{d|m|}\in\f{g}$. Applying $\lim_{t\rar 1}$ yields analogs of all the above in the classical limit.  Let
\begin{align*}
	-\frac{1}{d}\Li(-z^v):=\lim_{t\rar 1}(-\Li(-z^v;t^d)) = \sum_{k=1}^{\infty} \frac{(-1)^{k-1}}{dk^2} z^{kv} \in \f{g}_v^{\parallel} \subset \hat{\f{g}}
\end{align*}
and
\begin{align}\label{eq:classical_dilogarithm}
	\Psi(z^v)^{1/d}:=\lim_{t\rar 1} \Psi_{t^d}(z^v)=\exp\left(-\frac{1}{d}\Li(-z^v)\right)\in G_v^{\parallel}\subset \hat{G}.
\end{align}
In this classical limit, it is clear from either \eqref{AdPsi} or \eqref{AdPsiBinom} that
\begin{align}\label{AdPsi1}
	\Ad_{\Psi(z^v)^{1/d}}(z^p)=z^p(1+z^v)^{\Lambda(v,p)/d}.
\end{align}

We note that the quantum mutations $(\mu_j^{\s{A}})^{-1}$ as in \eqref{mujAInverse} may be expressed as
\begin{align}\label{mu-dilog}
	(\mu_j^{\s{A}})^{-1}(z^p) = \Ad_{\Psi_{t^d}(z^{\omega_1(e_j)})}^{-1}(z^p).
\end{align}
Similarly, in the classical limit we have
\begin{align}\label{eq:mu-class}
	(\mu_j^{\s{A}})^{-1}(z^p)|_{t=1}=\Ad^{-1}_{\Psi(z^{\omega_1(e_j)})^{1/d}}(z^p)=z^p(1+z^{\omega_1(e_j)})^{-\langle e_j,p\rangle}.
\end{align}
To prove \eqref{mu-dilog}, it suffices to check that the two descriptions agree on the cluster variables $A_{\sd_j,i}=z^{e_{\sd_j,i}^*}$.  Note that $\Lambda(\omega_1(e_j),e^*_{\sd_j,_i})=d\delta_{ij}$, so we indeed have $\Ad_{\Psi_{t^d}(z^{\omega_1(e_j)})}^{-1}(z^{e_{\sd_j,i}^*})=z^{e_{\sd_j,i}^*}$ for $i\neq j$, and by applying \eqref{AdPsiBinom}, we see \begin{align*}
	\Ad_{\Psi_{t^d}(z^{\omega_1(e_j)})}^{-1}(z^{e_{\sd_j,j}^*})=z^{e_{\sd_j,j}^*}+z^{e_{\sd_j,j}^*+\omega_1(e_j)}
\end{align*}
as in \eqref{mujAInverse}.

\subsection{Scattering diagrams}

We next review scattering diagrams, which are a key tool in defining the theta functions.  We will work in the quantum setting, but the classical setting is essentially the same, e.g., replacing each $\f{g}^t$ with $\f{g}$. Let $r$ denote the rank of $M$.

Given $v\in M$, let 
\begin{align}\label{eq:Lambda_orthogonal}
	v^{\Lambda\perp}:=\{m\in M_{\bb{R}}|\Lambda(v,m)=0\}.
\end{align}
Recall the notation $\Lambda_1:M_{\bb{R}}\rar N_{\bb{R}}$, $\Lambda_1(v):=\Lambda(v,\cdot)$.  Let $$\ker(\Lambda):=\ker(\Lambda_1)\subset M_{\bb{R}}.$$
A \textbf{wall} in $M_{\bb{R}}$ over $\f{g}^t$ is a pair $(\f{d},f_{\f{d}})$, where
\begin{itemize}
	\item $f_{\f{d}}\in (G_{v_{\f{d}}}^t)^{\parallel}$ for some primitive $v_{\f{d}}\in M^+\setminus \ker(\Lambda)$, and
	\item $\f{d}$ is a closed, convex, $(r-1)$-dimensional, rational-polyhedral cone in $v_{\f{d}}^{\Lambda\perp}\subset M_{\bb{R}}$.
\end{itemize} 
By a closed, convex, rational-polyhedral cone in $M_{\bb{R}}$, we mean an intersection of finitely many sets of the form $\{m\in M_{\bb{R}}|\langle n,m\rangle \geq 0\}$ for various $n \in N$.  We suppress $v_{\f{d}}$ in the notation for the wall since this primitive vector is uniquely determined by $f_{\f{d}}$ as long as $f_{\f{d}}\neq 1$. We call $-v_{\f{d}}$ the \textbf{direction} of the wall.  A wall is called \textbf{incoming} if $v_{\f{d}} \in \f{d}$, and it is called \textbf{outgoing} otherwise.

A \textbf{scattering diagram} $\f{D}$ in $M_{\bb{R}}$ over $\f{g}^t$ 
is a set of walls in $M_{\bb{R}}$ over $\f{g}^t$ such that for each $k >0$, there are only finitely many $(\f{d},f_{\f{d}})\in \f{D}$ with $f_{\f{d}}$ not projecting to $1$ in $G^t_k$.  Given a scattering diagram $\f{D}$ over $\f{g}_t$, for each $k\in \bb{Z}_{>0}$, let $\f{D}_k\subset \f{D}$ denote the finite scattering diagram over $\f{g}$ consisting of the $(\f{d},f_{\f{d}})\in \f{D}$ for which $f_{\f{d}}$ is nontrivial in the projection to $G_k$.  We may also view $\f{D}_k$ as a scattering diagram over $\f{g}_k$ by taking the projections of the elements $f_{\f{d}}$.

\subsubsection{Path-ordered products and consistency}
\label{pop_sec}
For convenience, we will often denote a wall $(\f{d},f_{\f{d}})$ by just $\f{d}$.  Denote $$\Supp(\f{D})\coloneqq  \bigcup_{\f{d}\in \f{D}} \f{d}\subset M_{\bb{R}},$$ and \begin{align}\label{joints}
	\Joints(\f{D})\coloneqq  \left(\bigcup_{\f{d}\in \f{D}} \partial \f{d} \right)\cup \left( \bigcup_{\substack{\f{d}_1,\f{d}_2\in \f{D}\\
			\dim \f{d}_1\cap \f{d}_2 = r-2}} \f{d}_1\cap \f{d}_2 \right)\subset M_{\bb{R}}. 
\end{align}

Consider a smooth immersion $\gamma\colon [0,1]\rar M_{\bb{R}}\setminus \Joints(\f{D})$ with endpoints not in $\Supp(\f{D})$ which is transverse to each wall of $\f{D}$ it crosses.  Let $(\f{d}_i,f_{\f{d}_i})$, $i=1,\ldots, s$, denote the walls of $\f{D}_k$ crossed by $\gamma$, and say they are crossed at times $0<\tau_1\leq \ldots \leq \tau_s<1$, respectively. Define 
\begin{align}\label{WallCross}
	\theta_{\f{d}_i}\coloneqq f_{\f{d}_i}^{\sign \Lambda(-\gamma'(\tau_i),v_{\f{d}_i})} \in G_k^t.
\end{align}
Notice that $\sign \Lambda(-\gamma'(\tau_i),v_{\f{d}_i})=\sign \langle -\gamma'(\tau_i), n_{\f{d}_i}\rangle$ for $v_{\f{d}_i}=\omega_1(n_{\f{d}_i})$ by \eqref{Lambda-B}. 
Let 
\begin{align}\label{WallCross2}
	\theta_{\gamma,\f{D}}^k\coloneqq \theta_{\f{d}_s} \cdots \theta_{\f{d}_1}\in G_k^t,
\end{align} and define the path-ordered product:
\begin{align}\label{pathprod}
	\theta_{\gamma,\f{D}}\coloneqq  \varprojlim_k \theta_{\gamma,\f{D}}^k \in \hat{G}^t.
\end{align}
We note that if $\tau_i=\tau_{i+1}$, then the fact that each $G_v^{\parallel}$ is abelian implies that the ambiguity in the ordering of the walls does not affect $\theta_{\gamma,\f{D}}$.

Two scattering diagrams $\f{D}$ and $\f{D}'$ are \textbf{equivalent} if $\theta_{\gamma,\f{D}} = \theta_{\gamma,\f{D}'}$ for each smooth immersion $\gamma$ as above (assuming transversality with the walls of both $\f{D}$ and $\f{D}'$).  A scattering diagram $\f{D}$ is \textbf{consistent} if each $\theta_{\gamma,\f{D}}$ depends only on the endpoints of $\gamma$, or equivalently, if $\theta_{\gamma,\f{D}}=\id$ whenever $\gamma$ is a closed path.

The following fundamental result on scattering diagrams is due to \cite{KS} in the rank $2$ classical setting, \cite{GS11} in the arbitrary rank classical setting, and \cite{WCS} in the general setting.  See \cite[Thm. 2.13]{davison2019strong} for the version used here or \cite[Thm. 1.12]{gross2018canonical} for the classical analog.

\begin{thm}\label{KSGS}
	Let $\f{D}_{\In}$ be a finite scattering diagram in $M_{\bb{R}}$ over $\f{g}^t$ (or $\f{g}$) whose only walls are incoming.  Then there is a unique-up-to-equivalence scattering diagram $\f{D}$, also denoted $\scat(\f{D}_{\In})$, such that $\f{D}$ is consistent, $\f{D} \supset \f{D}_{\In}$, and $\f{D}\setminus \f{D}_{\In}$ consists only of outgoing walls.
\end{thm}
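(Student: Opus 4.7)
The plan is to construct $\scat(\f{D}_{\In})$ by induction on the order of the nilpotent quotient $\f{g}^t_k$ defined in \eqref{eq:gk}, adding outgoing walls one order at a time to restore consistency. Throughout, I would work with the class of scattering diagrams $\f{D}$ whose order-$k$ truncations $\f{D}_k$ are finite for each $k$, and think of ``equivalent'' as meaning equality modulo $(\f{g}^t)^{\geq k+1}$ for every $k$.

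The base case is simply $\f{D}^{(0)} = \f{D}_{\In}$, which is automatically consistent at order $0$ because $\f{g}^t_0 = 0$. For the inductive step, suppose I have constructed a scattering diagram $\f{D}^{(k)}$ that contains $\f{D}_{\In}$, whose non-incoming walls are outgoing, and which is consistent in $G^t_k$. I want to add outgoing walls, all supported in sets of the form $v^{\Lambda\perp}$ with $v \in (k+1)M^+ \setminus kM^+$, whose wall functions lie in $\exp(\f{g}^t_{k+1})$ modulo $(\f{g}^t)^{\geq k+2}$, so that the enlarged diagram $\f{D}^{(k+1)}$ is consistent in $G^t_{k+1}$. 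The key local computation is at each joint $j \in \Joints(\f{D}^{(k)})$: the path-ordered product $\theta_{\gamma_j,\f{D}^{(k)}}$ around a small loop $\gamma_j$ encircling $j$ is trivial in $G^t_k$ by the inductive hypothesis, so in $G^t_{k+1}$ it lies in the kernel of the projection $G^t_{k+1} \onto G^t_k$, i.e.\ in $\exp(\f{g}^t_{k+1}/(\f{g}^t)^{\geq k+2})$. This ``error term'' is an element of an abelian group; expanding it in the $M^+$-grading, each homogeneous component of weight $v$ prescribes a new outgoing wall along $v^{\Lambda\perp} \cap (\text{cone generated by }j)$ needed to cancel it. Summing the contributions from all joints produces finitely many new walls, and these are outgoing by construction since their supports do not contain the corresponding directions $v_\f{d}$ (an incoming wall at order $k+1$ would have had to come from $\f{D}_{\In}$, which is already present).

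To argue that the enlarged diagram is \emph{globally} consistent at order $k+1$, I would use the standard Hartogs-type argument: the failure of consistency assigns to each codimension-$2$ stratum an element of an abelian group, and any small generic loop in $M_{\bb{R}}$ can be decomposed as a product of small loops around joints via a homotopy inside the complement of the codimension-$3$ skeleton (where $\theta_{\gamma,\f{D}}$ is locally constant by the abelianness of $(G^t_v)^{\parallel}$ and by \eqref{skewCondition}). Thus vanishing joint-by-joint yields global consistency in $G^t_{k+1}$. Uniqueness at each order follows by the same obstruction analysis: two choices of $\f{D}^{(k+1)}$ would differ by a scattering diagram all of whose walls are outgoing and whose path-ordered products vanish, forcing all wall-functions to be trivial by examining each $v^{\Lambda\perp}$ separately and using that a nontrivial outgoing wall would create a nontrivial wall-crossing along a path from the positive to the negative side.

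The main obstacle is the joint-by-joint step: one must check that the ``error'' element at a joint really can be written as a sum of wall-function contributions supported on outgoing hyperplanes meeting the joint, and that these hyperplanes are among those allowed (namely $v^{\Lambda\perp}$ with $v \in M^+ \setminus \ker(\Lambda)$). This is where the compatibility condition \eqref{skewCondition} is crucial: it ensures that the components of the error in directions $v \in \ker(\Lambda)$ are automatically zero, so no walls in ``forbidden'' hyperplanes are ever required. Passing to the inverse limit $\hat{G}^t = \varprojlim G^t_k$ then assembles the $\f{D}^{(k)}$ into the desired $\f{D} = \scat(\f{D}_{\In})$, and uniqueness-at-each-order lifts to uniqueness up to equivalence.
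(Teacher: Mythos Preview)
The paper does not prove this theorem; it is stated as a foundational result and attributed to \cite{KS}, \cite{GS11}, and \cite{WCS}, with pointers to \cite[Thm.~2.13]{davison2019strong} and \cite[Thm.~1.12]{gross2018canonical} for the precise versions used. Your sketch follows the standard order-by-order inductive construction found in those references (notably \cite[Appendix~C]{gross2018canonical}), so there is no meaningful ``paper's own proof'' to compare against beyond noting that your approach matches the literature.

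A couple of minor points on the sketch itself. First, your justification for why the new walls are outgoing (``an incoming wall at order $k+1$ would have had to come from $\f{D}_{\In}$'') is circular; the correct reason is geometric: the new wall emanating from a joint $j$ with direction $-v$ has support contained in the half-hyperplane $j + \bb{R}_{\geq 0}(-v)$, which by construction does not contain $v$. Second, your remark about $\ker(\Lambda)$ is slightly off: the issue is not that error terms in degrees $v \in \ker(\Lambda)$ must vanish, but rather that such a $v$ would have $v^{\Lambda\perp} = M_{\bb{R}}$ (not a hyperplane), and the associated wall-crossing $f^{\sign \Lambda(-\gamma',v)} = f^0$ is automatically trivial, so no wall is needed. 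These are cosmetic issues; the overall structure of the argument is sound.
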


Given a compatible pair $(\sd,\Lambda)$, let 
\begin{align}\label{DIn}
	\f{D}_{\In}^{\s{A}_t} = \{(e_i^{\perp},\Psi_{t^d}(z^{\omega_1(e_i)})) |i \in I\setminus F\}
\end{align}
for $d$ as in \eqref{Lambda-B}.  We will work with the associated consistent scattering diagram as in Theorem \ref{KSGS}: \begin{equation}\label{DAt}
	\f{D}^{\s{A}_t}:=\scat(\f{D}_{\In}^{\s{A}_t}).
\end{equation}

\begin{example}\label{A2DatEx}
	Recall the compatible pair $(\sd,\Lambda)$ associated to the $A_2$-quiver $1\leftarrow 2$ as in Examples \ref{A2ex} and \ref{A2ex2}.  That is, in terms of the basis $E$ and the dual basis $E^*:=\{e_1^*,e_2^*\}$, we have
	\begin{align*}
		\omega =\left(\begin{matrix}
			0 & 1 \\
			-1 & 0 
		\end{matrix}\right) \quad \mbox{and} \quad \Lambda= \left(\begin{matrix}
			0 & d \\
			-d & 0 
		\end{matrix}\right).
	\end{align*}
	The associated scattering diagram $\f{D}^{\s{A}_t}$ is illustrated in Figure \ref{a2q} using the basis $E^*$.  Note that one outgoing wall (the diagonal wall) has been added to $\f{D}^{\s{A}_t}_{\In}$ to make the scattering diagram consistent.
\end{example}
\begin{figure}[htb]
	\centering
	\begin{tikzpicture}
		\draw
		(-2,0) -- (2,0) node[right] {$(e_2^{\perp},\Psi_{t^d}(z^{(-1,0)}))$}
		(0,-2) -- (0,2) node[left] {$(e_1^{\perp},\Psi_{t^d}(z^{(0,1)}))$}
		(0,0) -- (2,-2) node[right] {$(\bb{R}_{\geq 0}(1,-1),\Psi_{t^d}(z^{(-1,1)}))$};
	\end{tikzpicture}
	\caption{The scattering diagram $\f{D}^{\s{A}_t}$ associated to the $A_2$-quiver.
		\label{a2q}}
\end{figure}
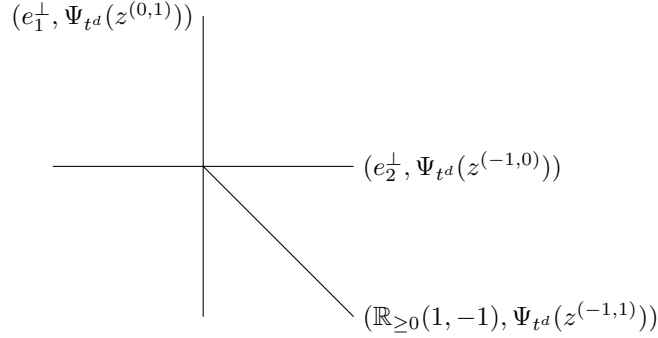

\begin{example}\label{DatEx}
	Recall the compatible pair $(\sd,\Lambda)$ from Examples \ref{QuivEx}  and \ref{AnComp}, i.e., the compatible pair associated to an annulus as in Example \ref{AnnEx}.  That is, we have
	\begin{align*}
		B =\left(\begin{matrix}
			0 & -2 & 1 & 1 \\
			2 & 0 & -1 & -1 \\
			-1 & 1 & 0 & 0 \\
			-1 & 1 & 0 & 0
		\end{matrix}\right) \quad \mbox{and} \quad \Lambda= \left(\begin{matrix}
			0 & 2 & 0 & 0 \\
			-2 & 0 & 0 & 0 \\
			0 & 0 & 0 & 0 \\
			0 & 0 & 0 & 0
		\end{matrix}\right).
	\end{align*}
	Also, $I\setminus F = \{1,2\}$ and $d=4$.  In terms of the dual basis to $E$, we have
	\begin{align}\label{v1v2}
		v_1:=\omega_1(e_1)=(0,2,-1,-1) \quad \mbox{and} \quad v_2:=\omega_1(e_2)=(-2,0,1,1).
	\end{align}
	
	The associated scattering diagram $\f{D}^{\s{A}_t}$, intersected with $M_{\uf,\bb{R}}=\bb{R}\langle v_1,v_2\rangle$, is illustrated in Figure \ref{Kr2q}.  Note that infinitely many outgoing wall have been added to $\f{D}^{\s{A}_t}_{\In}$ to make the scattering diagram consistent.  Specifically, there are new outgoing walls of the form
	\begin{align*}
		((e_1^{\perp}\cap e_2^{\perp})+\bb{R}_{\geq 0}u,\Psi_{t^4}(z^{-u}))
	\end{align*}
	for each $u$ of the form $(k,-(k+1))$ or $(k+1,-k)$ with $k\in \bb{Z}_{\geq 1}$.  There is also one additional outgoing wall:
	\begin{align*}
		((e_1^{\perp}\cap e_2^{\perp})+\bb{R}_{\geq 0}(1,-1),\EE(-(t^4+t^{-4})z^{(-1,1)}))
	\end{align*}
	where $\EE(-(t^4+t^{-4})z^{(-1,1)})$ is a ``plethystic exponential''  (see \cite[\S 2.4.2]{davison2019strong} for some background on plethystic exponentials, but beware that since we have $d=4$, our $t^4$ should be treated as $t^1$ of loc. cit.).  For example, as in \cite[Eq. 30]{davison2019strong}, the plethystic exponential here may be given in terms of quantum exponentials by
	\begin{align*}
		\EE(-(t^4+t^{-4})z^{(-1,1)}) = \prod_{r=0}^{\infty} \Psi_{(t^4)^{2^r}}((t^4)^{2^r}z^{2^r(-1,1)})\Psi_{(t^{-4})^{2^r}}((t^{-4})^{2^r}z^{2^r(-1,1)}).
	\end{align*}
	See \cite[Ex. 7.10]{davison2019strong} for more on this and higher Kronecker quivers.
\end{example}
\begin{figure}[htb]
	\centering
	\begin{tikzpicture}
		\draw
		(-2,0) -- (2,0) node[right] {$(e_2^{\perp},\Psi_{t^4}(z^{v_2}))$}
		(0,-2) -- (0,2) node[left] {$(e_1^{\perp},\Psi_{t^4}(z^{v_1}))$}
		(0,0) -- (2,-2) 
		(0,0) -- (1,-2) 
		(0,0) -- (1.333,-2)
		(0,0) -- (1.5,-2)
		(0,0) -- (1.6,-2)
		(0,0) -- (1.667,-2)
		(0,0) -- (1.714,-2)
		(0,0) -- (1.75,-2)
		(0,0) -- (1.778,-2)
		(0,0) -- (1.8,-2)
		(0,0) -- (1.818,-2)
		(0,0) -- (1.833,-2)
		(0,0) -- (1.846,-2)
		(0,0) -- (1.857,-2)
		(0,0) -- (1.867,-2)
		(0,0) -- (1.875,-2)
		(0,0) -- (1.882,-2)
		(0,0) -- (1.889,-2)
		(0,0) -- (1.895,-2)
		(0,0) -- (1.9,-2)
		(0,0) -- (1.9048,-2)
		(0,0) -- (1.9091,-2)
		(0,0) -- (1.913,-2)
		(0,0) -- (2,-1) 
		(0,0) -- (2,-1.333)
		(0,0) -- (2,-1.5)
		(0,0) -- (2,-1.6)
		(0,0) -- (2,-1.667)
		(0,0) -- (2,-1.714)
		(0,0) -- (2,-1.75)
		(0,0) -- (2,-1.778)
		(0,0) -- (2,-1.8)
		(0,0) -- (2,-1.818)
		(0,0) -- (2,-1.833)
		(0,0) -- (2,-1.846)
		(0,0) -- (2,-1.857)
		(0,0) -- (2,-1.867)
		(0,0) -- (2,-1.875)
		(0,0) -- (2,-1.882)
		(0,0) -- (2,-1.889)
		(0,0) -- (2,-1.895)
		(0,0) -- (2,-1.9)
		(0,0) -- (2,-1.9048)
		(0,0) -- (2,-1.9091)
		(0,0) -- (2,-1.913);
	\end{tikzpicture}
	\caption{The scattering diagram $\f{D}^{\s{A}_t}$ intersected with $M_{\uf,\bb{R}}$, oriented so that $v_1$ points up and $v_2$ points to the left.
		\label{Kr2q}}
\end{figure}
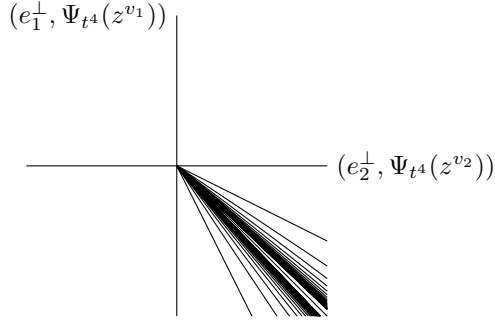

\subsection{Broken lines and theta functions}\label{sec:broken_lines}

Fix a consistent scattering diagram $\f{D}$ in $M_{\bb{R}}$ over $\f{g}^t$. Let $p\in M$, and let $\sQ$ be a generic\footnote{By ``generic'' we mean that $\sQ$ is not contained in $n^{\perp}$ for any $n\in N$.\label{foot-gen}} point in $M_{\bb{R}}$.  A \textbf{broken line} $\gamma$ with respect to $\f{D}$ and with ends $(p,\sQ)$ is the data of a continuous map $\gamma\colon (-\infty,0]\rar
M_{\bb{R}}\setminus \Joints(\f{D})$, values $-\infty < \tau_0 \leq \tau_1 \leq \ldots \leq \tau_{\ell} = 0$, and for each $i=0,\ldots,\ell$, an associated monomial $c_iz^{v_i} \in  \kk_t[M]$, with $c_i\in \kk[t^{\pm 1}]$ and $v_i\in M$, such that:

\begin{enumerate}[label=(\roman*), noitemsep]
	\item $\gamma(0)=\sQ$.
	\item For $i=1\ldots, \ell$, $\gamma'(\tau)=-v_i$ for all $\tau\in (\tau_{i-1},\tau_{i})$.  Similarly, $\gamma'(\tau)=-v_0$ for all $\tau\in (-\infty,\tau_0)$. 
	\item $c_0=1$ and $v_0=p$.
	\item For $i=0,\ldots,\ell-1$, $\gamma(\tau_i)\in \Supp(\f{D})$.  Let 
	\begin{align}\label{fi}
		f_i\coloneqq \prod_{\substack{(\f{d},f_{\f{d}})\in \f{D} \\ \f{d}\ni \gamma(\tau_i)}} f_{\f{d}}^{\sign\Lambda(v_{\f{d}},v_i)} \in \hat{G}^t.
	\end{align}
	I.e., $f_i$ is the $\epsilon\rar 0$ limit of the element $\theta_{\gamma|_{(\tau_i-\epsilon,\tau_i+\epsilon)},\f{D}}$ defined in \eqref{pathprod} (using a smoothing of $\gamma$). Then $c_{i+1}z^{v_{i+1}}$ is a homogeneous (with respect to the topological $M$-grading) term of $\Ad_{f_i}(c_iz^{v_i})$ other than the leading term $c_iz^{v_i}$ (i.e., $\gamma$ bends non-trivially at the time $\tau_i$).
\end{enumerate}

For each $p\in M$ and each generic $\sQ\in M_{\bb{R}}$, one defines an element $\vartheta_{p,\sQ}\in \kk_t\llb M\rrb$ via:
\begin{align}\label{vartheta-dfn}
	\vartheta_{p,\sQ}\coloneqq \sum_{\Ends(\gamma)=(p,\sQ)} c_{\gamma}z^{v_{\gamma}} \in \kk_t\llb M\rrb.
\end{align}
Here, the sum is over all broken lines $\gamma$ with ends $(p,\sQ)$, and $c_{\gamma}z^{v_{\gamma}}$ denotes the monomial attached to the final straight segment of $\gamma$, called the (final) Laurent monomial of $\gamma$. We call $v_{\gamma}$ the final exponent of $\gamma$.  In particular, we always have $$\vartheta_{0,\sQ}=1.$$

Broken lines and theta functions in the $t=1$ setting are defined in the analogous way.

\begin{example}\label{A2-Broken}
	Consider $\f{D}^{\s{A}_t}$ as in Example \ref{A2DatEx}.  Figure \ref{a2broken} shows the three broken lines contributing to $\vartheta_{p,\sQ}$ for $p=-v_1$ and $\sQ$ as pictured.  One computes that
	$\vartheta_{-v_1,\sQ}= z^{v_2}+z^{v_2-v_1}+z^{-v_1}$; the three terms here correspond to the three broken lines, going from left to right.
\end{example}

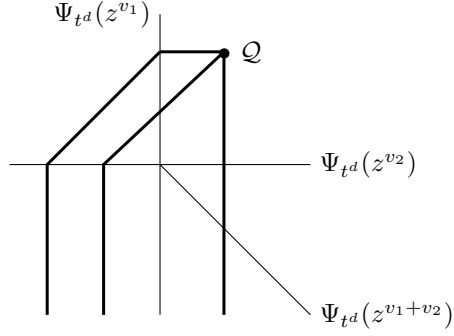
\begin{figure}[htb]
	\centering
	\begin{tikzpicture}
		\draw
		(-2,0) -- (2,0) node[right] {$\Psi_{t^d}(z^{v_2})$}
		(0,-2) -- (0,2) node[left] {$\Psi_{t^d}(z^{v_1})$}
		(0,0) -- (2,-2) node[right] {$\Psi_{t^d}(z^{v_1+v_2})$};
		\draw (1.06,1.5) node 
		{$\bullet$ $\sQ$};
		\draw[line width=.4mm] (-1.5,-2) -- (-1.5,0) -- (0,1.5);
		\draw[line width=.4mm] (0,1.5) -- (.85,1.5);
		\draw[line width=.4mm] (-.75,-2) -- (-.75,0) -- (.85,1.5);
		\draw[line width=.4mm] (.85,-2) -- (.85,1.5);
	\end{tikzpicture}
	\caption{The three broken lines with ends $(-v_1,\sQ)$ for the scattering diagram $\f{D}^{\s{A}_t}$ from Figure \ref{a2q}.
		\label{a2broken}}
\end{figure}

The following tells us that the theta functions are well-defined and form a topological basis in the sense of \cite[\S 2.2.2]{davison2019strong}.
\begin{prop}[\cite{davison2019strong}, Prop. 3.1]\label{TopBasis}
	For fixed generic $\sQ \in M_{\bb{R}}$ and any $p\in M$, \eqref{vartheta-dfn} gives a well-defined element $\vartheta_{p,\sQ} \in \kk_t\llb M\rrb$ of the form 
	\begin{align}\label{pointed}
		\vartheta_{p,\sQ}=z^p\left(1+\sum_{v\in M^+} a_vz^v\right).
	\end{align}
	Furthermore, the theta functions $\Theta_{\sQ}\coloneqq \{\vartheta_{p,\sQ}\colon p\in M\}$ form a topological $\kt$-module basis
	for $\s{A}_{\sQ}\coloneqq \kk_t\llb M \rrb$.  Hence, $\Theta_{\sQ}$ also forms a topological basis for the subalgebra $\s{A}^{\Theta}_{\sQ}\subset \s{A}_{\sQ}$ generated by $\Theta_{\sQ}$.
\end{prop}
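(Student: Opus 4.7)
My plan is to mirror the strategy used for the classical case in \cite{gross2018canonical} and carry it through in the quantum setting, relying on the topological $M^{+}$-grading that controls everything.

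First I would verify that $\vartheta_{p,\sQ}$ is a well-defined element of $\kt\llb M\rrb$, i.e.\ that for each $k\in\bb{Z}_{\geq 1}$ only finitely many broken lines $\gamma$ with $\Ends(\gamma)=(p,\sQ)$ have final exponent $v_{\gamma}\in p+(M^{\oplus}\setminus kM^{+})\cup\{p\}$.  The key observation is that if $\gamma$ bends at time $\tau_i$ across a wall with direction $-v_{\f d}$, then by (iv) and the description of the adjoint action in \eqref{AdPsiBinom}, the new exponent satisfies $v_{i+1}=v_i+\ell\,v_{\f d}$ for some $\ell\geq 1$, with $v_{\f d}\in M^{+}$.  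Consequently $v_{\gamma}-p\in M^{\oplus}$, and the $M^{+}$-degree of $v_{\gamma}-p$ is bounded below by the number of bends (times $1$).  Since $\f{D}_k$ has only finitely many walls (with $f_{\f d}\not\equiv 1\bmod (\f g^t)^{\geq k+1}$), the collection of broken lines with at most $k$ bends in these walls is determined by finitely many combinatorial choices (which wall to cross, which monomial in the finite $\Ad_{f_i}$-expansion to pick), so only finitely many broken lines have $v_{\gamma}-p\notin kM^{+}$.  Hence \eqref{vartheta-dfn} converges in $\kt\llb M\rrb$.

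Next I would establish the pointed form \eqref{pointed}.  The unique broken line with no bends has $\ell=0$, $c_0=1$, $v_0=p$, and contributes the term $z^p$.  Any other broken line has at least one bend, and the discussion above shows its final exponent lies in $p+M^{+}$.  Collecting terms gives $\vartheta_{p,\sQ}=z^p(1+\sum_{v\in M^{+}}a_v z^v)$ as claimed.

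The topological-basis assertion then follows by the standard triangularity argument.  Order $M$ by the partial order $m'\preceq m\iff m-m'\in M^{\oplus}$.  Because $\kt\llb M\rrb$ is topologically free on the monomials $\{z^p\}_{p\in M}$ and because the transition matrix from $\{z^p\}$ to $\{\vartheta_{p,\sQ}\}$ is upper-unitriangular with respect to $\preceq$ (with each row finite modulo $kM^{+}$ by Step 1), it is invertible in the completed setting: any $f=\sum_{p}b_p z^p\in\kt\llb M\rrb$ can be rewritten uniquely as a convergent sum $\sum_p b_p\vartheta_{p,\sQ}$ by inductively subtracting off leading terms.  Convergence is controlled because to determine the coefficients in $f$ modulo $kM^{+}$-degree, only finitely many $\vartheta_{p,\sQ}$ contribute.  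Thus $\Theta_{\sQ}$ is a topological $\kt$-module basis for $\s{A}_{\sQ}$.  The final sentence is immediate: a topological basis for $\s{A}_{\sQ}$ restricts to a topological basis for any topologically closed $\kt$-submodule it spans, in particular the subalgebra $\s{A}^{\Theta}_{\sQ}$ it generates.

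The main obstacle will be Step 1: one must verify that the naive ``finitely many combinatorial choices'' argument genuinely gives only finitely many broken lines with $v_{\gamma}\in p+M^{\oplus}\setminus kM^{+}$.  This requires care because broken lines can bend repeatedly against the same wall, or bend along walls whose direction contributes $0$ to the $M^{+}$-degree after accounting for certain cancellations; ruling these out uses both the primitivity of $v_{\f d}$ and the fact that the bending adds a strictly positive multiple of $v_{\f d}\in M^{+}$, together with the genericity of $\sQ$ to exclude degenerate incidences.
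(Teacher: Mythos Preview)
The paper does not supply its own proof of this proposition; it is imported directly from \cite[Prop.~3.1]{davison2019strong} (and \cite[\S 6]{gross2018canonical} for the classical analog).  Your outline follows exactly the approach of those references: finiteness modulo $kM^{+}$ via the grading, pointedness from the unique straight broken line, and the topological-basis claim via upper-unitriangularity with respect to $\preceq_{\sd}$.

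One remark on your write-up: the justification of the final sentence is too glib.  It is not true in general that a topological basis for $\s{A}_{\sQ}$ ``restricts to a topological basis for any topologically closed $\kt$-submodule it spans.''  What makes the claim work here is that $\s{A}^{\Theta}_{\sQ}$ is the \emph{subalgebra} generated by $\Theta_{\sQ}$, so every element is a polynomial in theta functions; one then needs to know that products $\vartheta_{p_1,\sQ}\cdots\vartheta_{p_s,\sQ}$ again lie in the topological span of $\Theta_{\sQ}$.  This is immediate once you observe (as in Proposition~\ref{StructureConstants}) that the product expands as a convergent sum of theta functions, but you should say so rather than invoke a false general principle.

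Your own concern about Step~1 is well placed but resolvable.  The clean way to phrase it: if $v_{\gamma}-p\notin kM^{+}$ then every bend of $\gamma$ must occur on a wall of $\f{D}_k$ and must pick up a monomial of $M^{+}$-degree $<k$; since $\f{D}_k$ is finite and each bend increases the $M^{+}$-degree of the attached exponent by at least~$1$, there are fewer than $k$ bends, each chosen from finitely many walls with finitely many allowed monomials, and the bend locations are then determined by tracing back from~$\sQ$.  Genericity of $\sQ$ ensures the traced-back line actually meets the prescribed walls transversally and away from joints.
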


The theta functions satisfy the following compatibility condition, due to \cite{carl2022tropical} in the classical limit and \cite[Thm. 2.14]{mandel2021scattering} in the quantum setup.  It says that $\vartheta_{p,\sQ}$ for different values of $\sQ$ are related by the action of path-ordered products.

\begin{lem}\label{CPS}
	Let $\f{D}$ be a consistent scattering diagram in $M_{\bb{R}}$ over $\f{g}^t$ or $\f{g}$.  Fix two generic points $\sQ_1,\sQ_2\in M_{\bb{R}}$.  Let $\gamma$ be a smooth path in $M_{\bb{R}}\setminus \Joints(\f{D})$ from $\sQ_1$ to $\sQ_2$, transverse to each wall of $\f{D}$ it crosses.  Then for any $p\in M$,
	\begin{align*}
		\vartheta_{p,\sQ_2}=\Ad_{\theta_{\gamma,\f{D}}} (\vartheta_{p,\sQ_1}).
	\end{align*}
\end{lem}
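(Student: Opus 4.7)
My plan is to reduce the statement to the case of a single wall crossing and then establish a bijection between broken lines on the two sides. The main work will be combinatorial bookkeeping; the content of the lemma is essentially that broken line bendings are dual to path-ordered wall-crossing automorphisms.

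First, I would work modulo $(\f{g}^t)^{\geq(k+1)}$ for each fixed $k$ and take the inverse limit at the end. In this truncation only finitely many walls of $\f{D}$ have nontrivial image in $G^t_k$ and only finitely many broken lines have final monomials of weight below the cutoff, so both sides of the desired equality are polynomials in $z^p \cdot \kk_t[M^+/kM^+]$. By consistency of $\f{D}$, $\theta_{\gamma,\f{D}}$ depends only on the endpoints of $\gamma$, so after a generic perturbation I may assume that $\gamma$ crosses the walls of $\f{D}_k$ transversely at distinct times and in particular may be written as a concatenation of sub-paths each crossing a single wall. Since $\Ad$ is multiplicative in $\theta_{\gamma,\f{D}}$ and the theta functions at intermediate basepoints form well-defined elements by Proposition \ref{TopBasis}, it suffices to prove the result in the case where $\gamma$ crosses exactly one wall $\f{d}$ at time $\tau_0$.

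In that single-wall case, set $\epsilon = \sign\Lambda(-\gamma'(\tau_0),v_\f{d})$ so that $\theta_{\gamma,\f{D}} = f_\f{d}^{\epsilon}$. I would establish a bijection between broken lines $\gamma''$ contributing to $\vartheta_{p,\sQ_2}$ and pairs $(\beta,cz^v \mapsto c'z^{v'})$ where $\beta$ is a broken line contributing to $\vartheta_{p,\sQ_1}$ with final monomial $cz^v$ and $c'z^{v'}$ is a homogeneous term of $\Ad_{f_\f{d}^{\epsilon}}(cz^v)$. The construction: given $\beta$, extend its final straight segment beyond $\sQ_1$ in the direction $-v$; if the extension reaches the chamber of $\sQ_2$ without crossing $\f{d}$, the bending data of $\beta$ together with endpoint $\sQ_2$ defines a broken line with the same final monomial $cz^v$, matching the leading term of $\Ad_{f_\f{d}^{\epsilon}}(cz^v)$. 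Otherwise, the extension crosses $\f{d}$, and each non-leading term $c'z^{v'}$ of $\Ad_{f_\f{d}^{\epsilon}}(cz^v)$ determines a new last straight segment from the crossing point to $\sQ_2$, yielding a broken line with $\gamma''$ whose last bending is at $\f{d}$ and whose preceding data is that of $\beta$. Conversely, every broken line $\gamma''$ ending at $\sQ_2$ either does not bend at the crossing point of $\f{d}$ (and yields $\beta$ by truncating to $\sQ_1$) or has its last bending at that point (and yields $\beta$ by undoing that bending).

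The hard part will be the sign and multiplicity bookkeeping: I need to verify that the wall-crossing automorphism applied at the last bending of $\gamma''$, namely $f_\f{d}^{\sign\Lambda(v_\f{d},v)}$ per \eqref{fi}, matches $f_\f{d}^{\epsilon}$ from the path-ordered product. Because the extension of $\beta$'s final segment in direction $-v$ crosses $\f{d}$ in the same crossing direction as $\gamma$ itself (up to a reversal that is consistent on both sides), the identity $\sign\Lambda(v_\f{d},v) = \sign\Lambda(-\gamma'(\tau_0),v_\f{d})$ holds; a careful check of the orientation conventions in \eqref{fi} and \eqref{WallCross} confirms this. I also need to check that broken lines with multiple bendings at $\f{d}$ match with multiple terms of the expansion of $\Ad_{f_\f{d}^\epsilon}$, which follows from iterating the above and using that $f_\f{d} \in (G^t_{v_\f{d}})^{\parallel}$ is abelian. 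Summing the resulting identity $c'z^{v'}$-contribution by $c'z^{v'}$-contribution over all broken lines to $\sQ_1$ produces $\vartheta_{p,\sQ_2} = \Ad_{f_\f{d}^{\epsilon}}(\vartheta_{p,\sQ_1})$, and taking limits over $k$ and composing across the sub-path decomposition finishes the proof.
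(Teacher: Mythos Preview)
The paper does not supply its own proof of this lemma; it records the statement and cites \cite{carl2022tropical} for the classical case and \cite[Thm.~2.14]{mandel2021scattering} for the quantum case. Your outline is exactly the Carl--Pumperla--Siebert strategy from those references: truncate to order $k$, use consistency to reduce to a single wall crossing, and then match broken lines on the two sides.

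That said, your single-wall bijection as written does not go through. Your dichotomy ``extension in direction $-v$ reaches the chamber of $\sQ_2$ without crossing $\f{d}$'' versus ``extension crosses $\f{d}$'' is not exhaustive. Since $\sQ_1$ and $\sQ_2$ lie on opposite sides of $\f{d}$, the first alternative is vacuous, while the second only applies when $-v$ points toward the $\sQ_2$-side; you never treat broken lines $\beta$ whose final direction $-v$ points into the $\sQ_1$-side (equivalently, whose final segment already crossed $\f{d}$ coming \emph{from} the $\sQ_2$-side, possibly bending there). Your inverse map has the same defect: ``truncating $\gamma''$ to $\sQ_1$'' does not generally produce a broken line ending at $\sQ_1$, since the direction $-v'$ need not point toward $\sQ_1$. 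The actual CPS argument handles this by continuously \emph{deforming} broken lines as the endpoint moves from $\sQ_1$ to $\sQ_2$: combinatorial types with a last bend at $\f{d}$ near the crossing are gained on one side and lost on the other, and one must account for both gains and losses to see that the net effect is precisely $\Ad_{\theta_{\gamma,\f{D}}}$.

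Relatedly, your sign identity $\sign\Lambda(v_{\f{d}},v)=\sign\Lambda(-\gamma'(\tau_0),v_{\f{d}})$ is off: if $-v$ and $\gamma'(\tau_0)$ both point toward the $\sQ_2$-side (your premise), then by skew-symmetry $\sign\Lambda(v_{\f{d}},v)=-\sign\Lambda(v_{\f{d}},\gamma'(\tau_0))=-\sign\Lambda(-\gamma'(\tau_0),v_{\f{d}})$. (Be warned that sign conventions in this area are delicate and the paper's \eqref{mu-dilog} and \eqref{eq:mu-class} appear to carry their own sign slip, so any sign you write down needs independent verification against a concrete rank-one example.)
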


In particular, if $\sQ_1,\sQ_2$ lie in the same chamber of $\f{D}$, then $\vartheta_{p,\sQ_1}=\vartheta_{p,\sQ_2}$.

\begin{example}\label{Ann-broken}
	Consider $\f{D}^{\s{A}_t}$ as in Example \ref{DatEx}. Up to any finite order $\ell$, there are two chambers of $\f{D}^{\s{A}_t}_{\ell}$ which contain the limiting wall, i.e., the diagonal wall whose direction is $-\omega_1(e_1+e_2)=2e_1^*-2e_2^*$. Let $p$ denote the corresponding primitive vector $\frac{1}{2}(-v_1-v_2)=e_1^*-e_2^*.$
	
	Figure \ref{Kr2q-broken} shows a broken line contributing $z^{-kp}$ to $\vartheta_{kp,\sQ}$ for $\sQ$ near the limiting wall and $k\in \bb{Z}_{\geq 1}$.  Up to any finite order $\ell$, for $\sQ$ sufficiently close to the limiting wall, the only two broken lines contributing to $\vartheta_{kp,\sQ}$ will be the straight broken line, which contributes $z^{kp}$, and a broken line which wraps around the joint as in the figure, contributing $z^{-kp}$ --- indeed, the broken line contributing $z^{-kp}$ always bends as much as possible towards the joint (or not at all on walls where the only allowed bends are away from the joint), so we see that any broken line with initial monomial $z^{kp}$ which crosses some walls of $\f{D}^{\s{A}_t}_{\ell}$ but has different bends will not be able to wrap all the way around the origin to end at $\sQ$. Thus, up to any fixed finite order $\ell$ and for $\sQ$ sufficiently close to the limiting wall, we have
	\begin{align}\label{ThetaKr}
		\vartheta_{kp,\sQ} \equiv z^{kp}+z^{-kp} \quad \mbox{(in the module $z^{kp}\cdot \kk[M/(\ell M^+)]$.)}
	\end{align}
	
	Consider the Chebyshev polynomials of the first kind $T_k$ as in \eqref{Cheb1}, and the characterization of $T_k$ in \eqref{Tkex}.  It follows from \eqref{ThetaKr} that 
	\begin{align*}
		\vartheta_{kp,\sQ} \equiv T_k(\vartheta_{p,\sQ}) \quad \mbox{(in the module-projection $z^{kp}\kk[M/(\ell M^+)]$)}
	\end{align*} 
	for $\sQ$ sufficiently close to the limiting wall. By Proposition \ref{TopBasis}, we have an expansion $$\vartheta_{kp,\sQ}-T_k(\vartheta_{p,\sQ})=\sum_{m\in kp+\ell M^+} c_m\vartheta_{m,\sQ},$$ and by Lemma \ref{CPS}, this holds for all $\sQ$.  Since $\ell$ is arbitrary, we must in fact have
	\begin{align}\label{ChebAnn} 
		\vartheta_{kp,\sQ} = T_k(\vartheta_{p,\sQ}).
	\end{align} 
\end{example}

\begin{figure}[htb]
	\centering
	\begin{tikzpicture}
		\draw
		(-2,0) -- (2,0) node[right] {$(e_2^{\perp},\Psi_{t^4}(z^{v_2}))$}
		(0,-2) -- (0,2) node[left] {$(e_1^{\perp},\Psi_{t^4}(z^{v_1}))$}
		(0,0) -- (2,-2) 
		(0,0) -- (1,-2) 
		(0,0) -- (1.333,-2)
		(0,0) -- (2,-1) 
		(0,0) -- (2,-1.333);
		\draw (1.74,-1.35) node 
		{$\bullet$};
		\draw (1.9,-1.55) node 
		{$\sQ$};
		\draw[line width=.5mm] (1.6,-2) -- (-.4,0) -- (0,.4) -- (1.75,-1.35);
	\end{tikzpicture}
	\caption{A broken line (bold) with ends $(kp,\sQ)$ contributing a final Laurent monomial $z^{-kp}$.
		\label{Kr2q-broken}}
\end{figure}
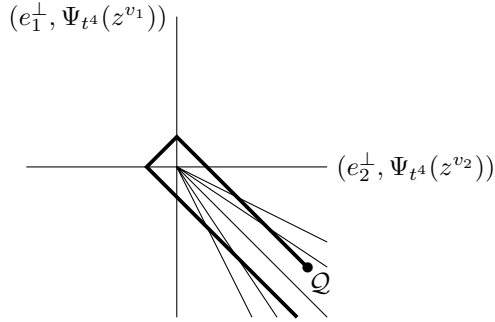

\subsection{Structure constants}\label{StrC}

For any generic $\sQ_1,\sQ_2\in M_{\bb{R}}$, Lemma \ref{CPS} gives us a canonical isomorphism $\s{A}^{\Theta}_{\sQ_1}\risom \s{A}^{\Theta}_{\sQ_2}$ taking $\vartheta_{p,\sQ_1}$ to $\vartheta_{p,\sQ_2}$ for each $p$ (see Proposition \ref{TopBasis} for the definition of $\s{A}^{\Theta}_{\sQ}$).  It is therefore reasonable to define an algebra $\s{A}_t^{\can}:=\s{A}^{\Theta}_{\sQ}$ (the \textbf{canonical cluster algebra}) for an arbitrary fixed choice of generic $\sQ$, denoting the corresponding theta functions $\vartheta_{p,\sQ}$ simply as $\vartheta_p$.

Equivalently, consider the $\kk_t$-module $\hat{\s{A}}^{\can}_t=\hat{\bigoplus}_{p\in M} \kk_t \cdot \vartheta_{p}$, where the hat indicates that we take a completion, allowing all formal sums $\sum_{p\in M} a_p\vartheta_p$ such that, for each $k\in \bb{Z}_{>0}$, only finitely many $a_p$ with $p\in M\setminus kM^+$ are nonzero.  As modules, $\hat{\s{A}}_t^{\can}\cong \s{A}_{\sQ}$ for each generic $\sQ$ via identifying $\vartheta_p$ with $\vartheta_{p,\sQ}$.  We use this identification to give $\hat{\s{A}}_t^{\can}$ the structure of an associative algebra, and then $\s{A}_t^{\can}$ is the sub $\kk_t$-algebra generated by the theta functions $\vartheta_p$, $p\in M$.

Alternatively, the algebra structure on $\hat{\s{A}}_t^{\can}$ can be defined by specifying the structure constants.  As in \S \ref{PosSub}, given $p_1,\ldots,p_s,p\in M$, the structure constant $\alpha(p_1,\ldots,p_s;p)$ is the element of $\kk_t$ such that
\begin{align*}
	\vartheta_{p_1}\cdots \vartheta_{p_s} = \sum_{p\in (\sum_{i=1}^s p_i)+M^{\oplus}} \alpha(p_1,\ldots,p_s;p) \vartheta_{p}.
\end{align*}
The following proposition --- \cite[Prop. 2.15]{mandel2021scattering} in the quantum setting or \cite[Prop 6.4(3)]{gross2018canonical} for the classical version --- expresses the structure constants in terms of broken lines:
\begin{prop}\label{StructureConstants}
	For any $p_1,\ldots,p_s,p\in M$, the corresponding structure constant is given by 
	\begin{align}\label{alpha}
		\alpha(p_1,\ldots,p_s;p)z^p= \sum_{\substack{\gamma_1,\ldots,\gamma_s \\
				\Ends(\gamma_i)=(p_i,\sQ), \medspace i=1,\ldots,s \\
				v_{\gamma_1}+\ldots+v_{\gamma_s} = p \\
		}}
		(c_{\gamma_1}z^{v_{\gamma_1}}) \cdots (c_{\gamma_s}z^{v_{\gamma_s}}) \in \kt\cdot z^p,
	\end{align}
	where $v_{\gamma_i}$ denotes the exponent attached to the last straight segment of $\gamma_i$, and $\sQ$ is any generic point of $M_{\bb{R}}$ which is \textbf{sufficiently close} to $p$ --- by sufficiently close, we mean that $\sQ$ and $p$ share a chamber of the finite scattering diagram $\f{D}_{\ell}$ for some sufficiently large $\ell\in \bb{Z}_{\geq 1}$ (an explicit condition for what is considered ``sufficiently large'' is given in Lemma \ref{lem:fQ}).
\end{prop}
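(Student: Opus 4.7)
Plan of proof: The idea is to evaluate the product $\vartheta_{p_1,\sQ}\cdots\vartheta_{p_s,\sQ}\in\kk_t\llb M\rrb$ in two ways and match the coefficient of $z^p$. First, expand each factor as $\vartheta_{p_i,\sQ}=\sum_{\gamma_i}c_{\gamma_i}z^{v_{\gamma_i}}$ via \eqref{vartheta-dfn}. Multiplying in the quantum torus algebra using $z^az^b=t^{\Lambda(a,b)}z^{a+b}$, the product becomes a formal sum over tuples $(\gamma_1,\ldots,\gamma_s)$ of broken lines with the prescribed endpoints. Collecting those tuples with $v_{\gamma_1}+\cdots+v_{\gamma_s}=p$ shows that the coefficient of $z^p$ is exactly the right-hand side of \eqref{alpha}.

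On the other hand, by Proposition \ref{TopBasis} we have the theta-basis expansion
\[
\vartheta_{p_1,\sQ}\cdots\vartheta_{p_s,\sQ}=\sum_{p'\in M}\alpha(p_1,\ldots,p_s;p')\,\vartheta_{p',\sQ}.
\]
By \eqref{pointed} each $\vartheta_{p',\sQ}=z^{p'}(1+\sum_{v\in M^+}a_v^{p'}z^v)$, so extracting the coefficient of $z^p$ from this expansion yields
\[
\alpha(p_1,\ldots,p_s;p)\;+\sum_{\substack{p'\neq p\\p-p'\in M^+}}\alpha(p_1,\ldots,p_s;p')\,[z^p]\vartheta_{p',\sQ}.
\]
Matching this with the previous computation will give \eqref{alpha} provided each summand in the correction term vanishes for $\sQ$ sufficiently close to $p$.

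This vanishing is the main obstacle. Choose $\ell$ large enough --- the explicit bound is supplied by Lemma \ref{lem:fQ} --- that every $p'$ with $\alpha(p_1,\ldots,p_s;p')\neq 0$ and $p-p'\in M^+$ is detected by the truncation $\f{D}_\ell$, and take $\sQ$ in the same chamber $C$ of $\f{D}_\ell$ as $p$. Suppose $\gamma$ is a broken line with ends $(p',\sQ)$ and final exponent $v_\gamma=p$. Its final segment has velocity $-p$ and ends at $\sQ=\gamma(0)$, so reading backward in time this segment traces out the points $\sQ+tp$ for $0\leq t\leq|\tau_{\ell-1}|$, meeting the preceding bend (if any) at $\sQ+|\tau_{\ell-1}|\,p$. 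Since $C$ is a convex rational-polyhedral cone centered at the origin with $\sQ\in\mathrm{int}(C)$ and $p\in\overline{C}$, writing $\sQ+tp=(1+t)\bigl(\tfrac{1}{1+t}\sQ+\tfrac{t}{1+t}p\bigr)$ exhibits it as a positive multiple of a proper convex combination of an interior point and a closure point, hence $\sQ+tp\in\mathrm{int}(C)$ for all $t\geq 0$. Thus the ray meets no wall of $\f{D}_\ell$, so $\gamma$ has no bends within $\f{D}_\ell$ and is the straight broken line. But then $p'=v_\gamma=p$, contradicting $p'\neq p$. Therefore $[z^p]\vartheta_{p',\sQ}=0$ for all relevant $p'$, and equating the two expressions for the coefficient of $z^p$ yields \eqref{alpha}.
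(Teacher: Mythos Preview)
Your approach is essentially the paper's: the proposition is deduced from Lemma~\ref{lem:fQ} by taking $f=\vartheta_{p_1}\cdots\vartheta_{p_s}$ and reading off the $z^p$-coefficient. Your two expansions of the product and your ray argument are correct. However, there is a gap in the final step.

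You correctly show that the last straight segment $\{\sQ+tp:t\geq 0\}$ of a broken line with ends $(p',\sQ)$ and final exponent $p$ stays in $\mathrm{int}(C)$, so the last bend (if any) is not on a wall of $\f{D}_\ell$. But this alone does not imply the broken line is straight: the last bend could be on a wall of $\f{D}\setminus\f{D}_\ell$, and such walls do generically intersect $\mathrm{int}(C)$. The sentence ``so $\gamma$ has no bends within $\f{D}_\ell$ and is the straight broken line'' is a non sequitur as written.

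What is missing is the use of the actual bound on $\ell$ that you cite from Lemma~\ref{lem:fQ}, namely that $p\notin p'+(\ell+1)M^+$ for all relevant $p'$. Any bend on a wall of $\f{D}\setminus\f{D}_\ell$ adds an element of $(\ell+1)M^+$ to the exponent, so a broken line with such a bend would have $p-p'\in (\ell+1)M^+$, contradicting the choice of $\ell$. Once you insert this observation, your ray argument correctly rules out bends on $\f{D}_\ell$ as well, and the broken line is forced to be straight. This is exactly the reasoning in the paper's proof of Lemma~\ref{lem:fQ}; you have recovered most of it but omitted the step that actually uses the size of $\ell$.
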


    \begin{cor}\label{cor:alpha-sum}
    In the notation of Proposition \ref{StructureConstants}, if $p=p_1+\ldots+p_s$, then $\alpha(p_1,\ldots,p_s;p)=t^k$ for $k=\sum_{i<j} \Lambda(p_i,p_j)$.
    \end{cor}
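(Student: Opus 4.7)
The plan is to apply Proposition~\ref{StructureConstants} directly. When $p=p_1+\cdots+p_s$, the structure constant $\alpha(p_1,\ldots,p_s;p)$ is determined by the sum $\sum(c_{\gamma_1}z^{v_{\gamma_1}})\cdots(c_{\gamma_s}z^{v_{\gamma_s}})$ over tuples of broken lines $\gamma_i$ with ends $(p_i,\sQ)$ for $\sQ$ sufficiently close to $p$, subject to the exponent-sum condition $v_{\gamma_1}+\cdots+v_{\gamma_s}=p$. The key observation to exploit is that only the tuple consisting of $s$ straight broken lines can satisfy this condition, so the entire sum collapses to a single term.

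First, I would recall that the pointed form \eqref{pointed} of $\vartheta_{p_i,\sQ}$—equivalently, the fact that each bend of a broken line adds an element of $M^{+}$ to the final exponent—implies any broken line $\gamma_i$ with ends $(p_i,\sQ)$ satisfies $v_{\gamma_i}\in p_i+M^{\oplus}$, with equality $v_{\gamma_i}=p_i$ iff $\gamma_i$ has no bends. Requiring $v_{\gamma_1}+\cdots+v_{\gamma_s}=p=\sum p_i$ forces $\sum_i(v_{\gamma_i}-p_i)=0$ with each summand in $M^{\oplus}$. Since $M^{\oplus}$ sits inside a strongly convex cone (this is just the Injectivity Assumption, which is in force throughout the scattering diagram setup of \S\ref{Sec:q-Lie}, possibly after passing to $\sd^{\prin}$), each summand must vanish. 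Hence each $\gamma_i$ is the straight broken line with $c_{\gamma_i}=1$ and $v_{\gamma_i}=p_i$.

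Substituting into the formula \eqref{alpha} of Proposition~\ref{StructureConstants}, we obtain
\begin{align*}
\alpha(p_1,\ldots,p_s;p)\,z^p = z^{p_1}z^{p_2}\cdots z^{p_s},
\end{align*}
and iterating the quantum torus relation $z^uz^v=t^{\Lambda(u,v)}z^{u+v}$ evaluates the right-hand side to $t^{\sum_{i<j}\Lambda(p_i,p_j)}z^p=t^kz^p$. Since $z^p$ is a nonzero element of the quantum torus, we may cancel it to conclude $\alpha(p_1,\ldots,p_s;p)=t^k$.

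There is no serious obstacle here; the argument is essentially a sanity check that the ``top term'' of the product of theta functions matches the corresponding product in the quantum torus. The only subtlety worth flagging is the implicit appeal to the Injectivity Assumption, which is what guarantees the strong convexity of $M^{\oplus}$ needed to force each $v_{\gamma_i}-p_i$ to vanish—but as noted, this assumption is always available in the scattering diagram setup (either directly or via principal coefficients).
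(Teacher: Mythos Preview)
Your proof is correct and follows essentially the same approach as the paper: both argue that only the tuple of straight broken lines can satisfy $\sum v_{\gamma_i}=p$, then evaluate $z^{p_1}\cdots z^{p_s}=t^kz^p$ in the quantum torus. The paper is slightly terser (noting that any bend forces the sum into $p+M^+$), while you spell out the strong convexity step more explicitly, but the content is the same.
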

    \begin{proof}
        For any $\sQ$, the only $s$-tuple of broken lines contributing to \eqref{alpha} in this case is the one for which each broken line is straight---indeed, any bends would result in $\sum v_{\gamma_i}$ lying in $p+M^+$.  Thus, \eqref{alpha} becomes $\alpha(p_1,\ldots,p_s;p)z^p=z^{p_1}\cdots z^{p_s}$, the right-hand side of which is equal to $t^kz^p$ for $k$ as in the statement of the proposition.
    \end{proof}

Proposition \ref{StructureConstants} is actually an immediate consequence of the following lemma (whose proof is essentially contained in the proof of \cite[Prop. 6.4(3)]{gross2018canonical}):
\begin{lem}\label{lem:fQ}
	Let $f=\sum_{p\in M} \alpha_{f,p}\vartheta_p \in \s{A}_t^{\can}$.  For each generic $\sQ\in M_{\bb{R}}$, let $\iota_{\sQ}$ denote the identification $\s{A}_t^{\can}\risom \s{A}^{\Theta}_{\sQ}\subset\kk_t\llb M\rrb$.  Let $a_{f,\sQ,m}$ denote the coefficients of the Laurent expansion of $\iota_{\sQ}(f)$, i.e.,
	\begin{align*}
		\iota_{\sQ}(f)=\sum_{m\in M} a_{f,\sQ,m} z^m.
	\end{align*}
	Then, for any $p$, we have $\alpha_{f,p}=a_{f,\sQ,p}$ for all generic $\sQ$ sufficiently close to $p$ --- by sufficiently close, we mean that $\sQ$ and $p$ share a chamber of $\f{D}_{\ell}$ for some $\ell$ which is large enough to ensure that $p\notin p'+(\ell+1) M^+$ whenever $\alpha_{f,p'}\neq 0$.
\end{lem}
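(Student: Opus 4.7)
The plan is to show that for $\sQ$ as specified in the hypothesis, $[z^p]\vartheta_{p',\sQ} = \delta_{p,p'}$ for every $p'$ with $\alpha_{f,p'}\neq 0$, whence by linearity $a_{f,\sQ,p} = \sum_{p'}\alpha_{f,p'}[z^p]\vartheta_{p',\sQ} = \alpha_{f,p}$. First, the case $p'=p$ is immediate from Proposition \ref{TopBasis}: the pointed form $\vartheta_{p,\sQ}=z^p(1+\sum_{v\in M^+}a_vz^v)$ gives $[z^p]\vartheta_{p,\sQ}=1$. For $p'\neq p$ with $\alpha_{f,p'}\neq 0$, the pointed form also shows that $[z^p]\vartheta_{p',\sQ}=0$ unless $p-p'\in M^+$, so I only need to treat the case $p-p'\in M^+$. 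In that case, the hypothesis forces $p-p'\notin(\ell+1)M^+$, and I would show via a broken line argument that $[z^p]\vartheta_{p',\sQ}=0$.

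Recall that $[z^p]\vartheta_{p',\sQ}$ is the sum of contributions from broken lines with ends $(p',\sQ)$ and final exponent $p$. Suppose for contradiction that such a broken line $\gamma$ exists. Since $p'\neq p$, the line has $k\geq 1$ bends; its last straight segment goes in direction $-p$ and ends at $\sQ$, so the last bending point is $\gamma(\tau_{k-1})=\sQ+sp$ for some $s>0$. The chamber $C$ of $\f{D}_\ell$ containing $\sQ$ and $p$ is an open convex cone with apex at the origin (a standard feature of scattering diagrams arising from $\f{D}_{\In}^{\s{A}_t}$, whose initial walls are the hyperplanes $e_i^\perp$); since convex cones are closed under addition and nonnegative scaling, $\sQ+sp\in C$ for every $s\geq 0$. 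Hence $\gamma(\tau_{k-1})\notin\Supp(\f{D}_\ell)$, forcing the wall $\f{d}$ containing $\gamma(\tau_{k-1})$ to lie in $\f{D}\setminus\f{D}_\ell$.

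By the very definition of $\f{D}_\ell$, the wall function $f_{\f{d}}$ is trivial in $G_\ell^t$, so its non-identity components lie in $\f{g}^t_m$ for $m\in(\ell+1)M^+$; consequently, the exponent change at the last bend lies in $(\ell+1)M^+$. The remaining $k-1$ bending contributions each lie in $M^+\subset M^\oplus$, and since $(\ell+1)M^++M^\oplus\subseteq(\ell+1)M^+$ (any sum of an element of $(\ell+1)M^+$ with an element of $M^\oplus$ can be rewritten as a sum of $\ell+1$ elements of $M^+$ by absorbing the tail into the last summand), the total bending contribution $p-p'$ lies in $(\ell+1)M^+$, contradicting the hypothesis. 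Therefore no such broken line exists and $[z^p]\vartheta_{p',\sQ}=0$.

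The main obstacle is the geometric claim that $\sQ+sp\in C$ for all $s\geq 0$; this relies on the fact that chambers of the finite scattering diagram $\f{D}_\ell$ are convex cones through the origin, a feature that holds because the incoming walls $e_i^\perp$ are full hyperplanes through the origin and the finitely many outgoing walls added to obtain $\f{D}_\ell$ are rational polyhedral cones contained in hyperplanes through the origin. Once that geometric input is granted, the rest of the proof is a bookkeeping check balancing the order-$\ell$ truncation of $\f{D}$ against the cumulative bending contributions of broken lines.
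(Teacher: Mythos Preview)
Your proof is correct and follows essentially the same approach as the paper's: both arguments show that for $p'\neq p$ with $\alpha_{f,p'}\neq 0$, no broken line with ends $(p',\sQ)$ can have final exponent $p$, using the same two ingredients---the chamber condition forces the final ray $\{\sQ+sp:s\geq 0\}$ to avoid all walls of $\f{D}_\ell$, and the order-$\ell$ hypothesis forces any bend on a wall of $\f{D}\setminus\f{D}_\ell$ to push the total exponent shift into $(\ell+1)M^+$. The paper merely applies them in the opposite order (first restricting to $\f{D}_\ell$, then invoking the chamber condition), so the content is identical.

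One remark on your final paragraph: the fact that walls are rational polyhedral cones through the origin shows that chambers of $\f{D}_\ell$ are \emph{cones}, but not that they are \emph{convex}---a finite union of half-hyperplanes can easily have non-convex complementary components. For cluster scattering diagrams the convexity does hold (up to equivalence, the walls of $\f{D}_\ell$ support a complete fan, cf.\ \cite[Thm.~3.1]{reading2020scattering}, which the present paper cites in \S\ref{Disj-union-section}), but the reason you give does not establish it. The paper's own proof asserts the corresponding step (``the only broken line with respect to $\f{D}_\ell$ ending at $\sQ$ with final exponent $p$ is the straight one'') without further justification, so your argument is at the same level of rigor; it would just be cleaner to cite Reading directly rather than give an insufficient heuristic.
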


\begin{proof}
	Let
	\begin{equation*}
		M_{f}:=\{p\in M|\alpha_{f,p}\neq 0\}.
	\end{equation*}
	For any $p\in M$, the convergence of $f$ with respect to the topological structure on $\kk_t\llb M\rrb$ ensures that
	\begin{align}\label{eq:large_l}
		p\notin M_f+(\ell+1)M^+
	\end{align} 
	for sufficiently large $\ell$ (cf. \cite[\S 2.2.2]{davison2019strong}). Indeed, let $\supp_M \iota_{\s{Q}}(f)$ denote the set of Laurent degrees appearing in $\iota_{\sQ}(f)$. Let $\max (\supp_M \iota_{\s{Q}}(f)) $ and $\max M_f$ denote the subsets of $\prec_\sd$-maximal elements (for $\prec_\sd$ as in Definition \ref{def:dom}) in $\supp_M \iota_{\s{Q}}(f)$ and $M_f$ respectively. Note that $\max (\supp_M \iota_{\s{Q}}(f))$ and $\max M_f$ are finite by the convergence assumption. Moreover, they coincide, see \cite[Def.--Lem. 4.1.1]{qin2019bases}. By \cite[Lem. 3.1.2]{qin2017triangular}, we can always choose sufficiently large $\ell$ such that 	\begin{align}\label{eq:large_l_from_max_deg}
		p\notin \max M_f+(\ell+1)M^+=\max (\supp_M \iota_{\s{Q}}(f))+(\ell+1)M^+.
	\end{align} 
 Note that \eqref{eq:large_l} and \eqref{eq:large_l_from_max_deg} are just re-phrasings of the notion of ``sufficiently large'' from the statement of the lemma.

Fix $p\in M$ and sufficiently large $\ell$ as above.  Now for any generic $\sQ$ and any $p'\in M_f$,  \eqref{eq:large_l} ensures that any broken line with ends $(p',\sQ)$ and final exponent $p$ cannot bend on any walls in $\f{D}\setminus \f{D}_{\ell}$.  It follows that the coefficient of $z^p$ in $\vartheta_{p',\sQ}$ must be the contribution from the broken lines defined with respect to $\f{D}_{\ell}$. In addition, if we choose $\sQ$ sharing a chamber of $\f{D}_{\ell}$ with $p$, then the only broken line with respect to $\f{D}_{\ell}$ ending at $\sQ$ and with the final exponent $p$ is the straight broken line with ends $(p,\sQ)$. Therefore, among all $\vartheta_{p',\sQ}$ appearing in the theta basis expansion of $f$, only $\vartheta_{p,\sQ}$ has a nonzero $z^p$-coefficient, and the $z^p$-coefficient of $\vartheta_{p,\sQ}$ is $1$.  The equality $\alpha_{f,p}=a_{f,\sQ,p}$ follows.
\end{proof}

The following result will be used in the proof of Lemma \ref{ChebyLem}.
\begin{lem}\label{lem:const_coeff}
Suppose that $\iota_{\sQ}(\vartheta_p)=z^p$ whenever $\sQ$ is sufficiently close to $p$. Then, for any $p'\neq 0$, the coefficient $\alpha (p,p';p)$ vanishes.
\end{lem}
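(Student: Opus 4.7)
The plan is to apply the broken-line formula for structure constants (Proposition \ref{StructureConstants}) and exploit the genericity of the base point $\sQ$ together with the hypothesis to force the sum to vanish.

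First, I would fix a generic $\sQ$ sufficiently close to $p$ so as to simultaneously satisfy Proposition \ref{StructureConstants} for computing $\alpha(p,p';p)$ and the hypothesis $\iota_{\sQ}(\vartheta_p) = z^p$ (this is possible by taking the larger of the two corresponding values of $\ell$ in Lemma \ref{lem:fQ}). Regrouping the broken-line sum by $v = v_{\gamma_1}$ produces
\begin{align*}
\alpha(p,p';p)\, z^p = \sum_{v \in M} \bigl([z^v]\,\iota_{\sQ}(\vartheta_p)\bigr)\bigl([z^{p-v}]\,\iota_{\sQ}(\vartheta_{p'})\bigr)\, z^v z^{p-v},
\end{align*}
where $[z^v]$ denotes extraction of the $z^v$-coefficient. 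The hypothesis gives $[z^v]\,\iota_{\sQ}(\vartheta_p) = \delta_{v,p}$, so (after noting $t^{\Lambda(p,0)} = 1$) the right-hand side collapses to $\bigl([z^0]\,\iota_{\sQ}(\vartheta_{p'})\bigr) z^p$, and hence $\alpha(p,p';p) = [z^0]\,\iota_{\sQ}(\vartheta_{p'})$.

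The key remaining step, and the main obstacle, is to argue that $[z^0]\,\iota_{\sQ}(\vartheta_{p'}) = 0$ when $p' \neq 0$. Equivalently, I would show that no broken line $\gamma_2$ with ends $(p',\sQ)$ can have final exponent $v_{\gamma_2} = 0$: if $\gamma_2$ has no bends then $v_{\gamma_2} = p' \neq 0$, a contradiction; if $\gamma_2$ bends at least once, then its final segment has velocity $-v_{\gamma_2} = 0$ and is therefore stationary, forcing $\sQ = \gamma_2(\tau_{\ell-1}) \in \Supp(\f{D})$. This contradicts the genericity of $\sQ$, since every wall is contained in a hyperplane $\Lambda_1(v_{\f{d}})^{\perp} = n^{\perp}$ for some nonzero $n \in N$ (using $v_{\f{d}} \notin \ker(\Lambda)$ and rescaling to clear denominators). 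Thus $\alpha(p,p';p) = 0$.
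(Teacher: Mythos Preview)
Your proof is correct and follows essentially the same route as the paper's. The paper computes $\iota_{\sQ}(\vartheta_p\vartheta_{p'})=z^p\iota_{\sQ}(\vartheta_{p'})$ directly and invokes Lemma~\ref{lem:fQ}, while you go through the equivalent Proposition~\ref{StructureConstants}; the paper then simply asserts that $\iota_{\sQ}(\vartheta_{p'})$ has no constant term ``by the broken line construction,'' whereas you spell out exactly why a broken line with $p'\neq 0$ cannot have final exponent~$0$.
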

\begin{proof}
By the assumption, for $\sQ$ sufficiently close to $p$, the Laurent expansion $\iota_{\sQ}(\vartheta_{p} \vartheta_{p'})$ equals $z^p\iota_{\sQ}(\vartheta_{p'})$.  By the broken line construction, the Laurent expansion $\iota_{\sQ}(\vartheta_{p'})$ never has a constant term (for any generic $\sQ$ and $p'\neq 0$), so the product $z^p\iota_{\sQ}(\vartheta_p)$ has no $z^p$-term.  The claim now follows from Lemma \ref{lem:fQ}.
\end{proof}

The following important property of broken lines is an immediate consequence of \cite[Thm. 1.13]{gross2018canonical} in the classical setting and \cite[Thm. 2.15]{davison2019strong} in the quantum setting.
\begin{lem}[Positivity of broken lines]\label{BLpos}
Up to equivalence of the scattering diagram, all broken lines in $M_{\bb{R}}$ with respect to $\f{D}^{\s{A}}$ or $\f{D}^{\s{A}_t}$ have positive integer coefficients (i.e., coefficients in $\bb{Z}_{>0}$ or $\bb{Z}_{>0}[t^{\pm 1}]$, respectively) for every attached monomial.
\end{lem}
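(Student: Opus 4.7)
The plan is to reduce the statement to the positivity of the scattering diagram itself, which is the content of the cited theorems \cite[Thm. 1.13]{gross2018canonical} in the classical setting and \cite[Thm. 2.15]{davison2019strong} in the quantum setting. Specifically, these theorems say that the consistent scattering diagram $\mathfrak{D}^{\mathcal{A}}$ (resp. $\mathfrak{D}^{\mathcal{A}_t}$) admits an equivalent representative --- again denoted $\mathfrak{D}^{\mathcal{A}}$ (resp. $\mathfrak{D}^{\mathcal{A}_t}$) --- all of whose wall elements $f_\mathfrak{d}$ lie in $(G^t_{v_\mathfrak{d}})^\parallel$ with the property that for every $p \in M$ with $\Lambda(v_\mathfrak{d},p) \neq 0$, the expansion of $\mathrm{Ad}_{f_\mathfrak{d}^{\pm 1}}(z^p)$ in the monomial basis of $\kk_t\llb M\rrb$ has all coefficients in $\mathbb{Z}_{\geq 0}[t^{\pm 1}]$ (resp. $\mathbb{Z}_{\geq 0}$ in the classical case). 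This is the precise positivity statement for wall-crossing automorphisms.

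Once we fix such a positive representative, the claim about broken lines follows directly from the recursive definition in \S\ref{sec:broken_lines}. Recall that a broken line $\gamma$ with ends $(p,\mathfrak{Q})$ comes with monomials $c_0 z^{v_0}, c_1z^{v_1}, \ldots, c_\ell z^{v_\ell}$, where $c_0 = 1$ and each subsequent $c_{i+1}z^{v_{i+1}}$ is a homogeneous summand (other than the leading term) of $\mathrm{Ad}_{f_i}(c_iz^{v_i})$, with $f_i$ as in \eqref{fi} a product of wall functions (or their inverses) parallel to the direction $v_{\mathfrak{d}}$. I would proceed by induction on $i$: the base case $c_0 = 1$ is positive, and in the inductive step, the positivity of $\mathrm{Ad}_{f_i}$ on $z^{v_i}$ together with $c_i$ being positive guarantees that every monomial of $\mathrm{Ad}_{f_i}(c_iz^{v_i})$ has positive coefficient. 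In particular, the specific summand chosen to define $c_{i+1}z^{v_{i+1}}$ has positive coefficient. Applying this to the final segment $(c_\gamma, v_\gamma) = (c_\ell, v_\ell)$ gives the lemma.

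The only subtlety is ensuring that the ``positive representative'' one fixes is genuinely equivalent to $\mathfrak{D}^{\mathcal{A}}$ or $\mathfrak{D}^{\mathcal{A}_t}$ in the sense of \S\ref{pop_sec}, so that theta functions and broken line structure constants computed from it agree with those from the canonical consistent scattering diagram. This is exactly what the cited theorems provide, and is the essential nontrivial content being invoked; the step from positive walls to positive broken lines is formal.

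Thus, there is no real obstacle in our part of the argument --- the work is entirely packaged into the positivity theorems of Gross--Hacking--Keel--Kontsevich and Davison--Mandel. Our proof amounts to the remark that broken line coefficients are built from wall coefficients by iterated adjoint action, hence inherit non-negativity whenever the wall coefficients are non-negative.
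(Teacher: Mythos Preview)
Your proposal is correct and takes essentially the same approach as the paper: both cite \cite[Thm.~1.13]{gross2018canonical} and \cite[Thm.~2.15]{davison2019strong} for the positivity of the scattering diagram, with the passage to broken lines being immediate. The paper simply states the lemma as an ``immediate consequence'' of those theorems without spelling out the inductive argument on segments that you provide.
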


\subsection{The cluster complex}

Recall that a compatible pair $(\sd,\Lambda)$ determines a scattering diagram $\f{D}^{\s{A}_t}:=\scat(\f{D}_{\In}^{\s{A}_t})$ where $\f{D}_{\In}^{\s{A}_t}$ is given as in \eqref{DIn} by
\begin{align}\label{DAin}
\f{D}_{\In}^{\s{A}_t} = \{(e_i^{\perp},\Psi_{t^d}(z^{\omega_1(e_i)})) |i \in I\setminus F\}
\end{align}
for $d$ as in \eqref{Lambda-B}.  Also recall the cone 
\begin{align}\label{CS+}
C_{\jj}^+:=\{m \in M_{\bb{R}}|\langle e_{\sd_{\jj},i},m\rangle \geq 0 \mbox{ for all } i\in I\setminus F\} \subset M_{\bb{R}}
\end{align}
as in \eqref{Cjplus}. We may also denote $C_{\jj}^+$ by $C_{\sd_{\jj}}^+$.

The following proposition is a simplified version of \cite[Prop. 4.9]{davison2019strong}.  The classical version is essentially \cite[Constr. 1.30, Thm. 4.4]{gross2018canonical}.
\begin{prop}[Chamber structure of $\f{D}^{\s{A}_t}$]\label{Chambers}
Two cones $C_{\jj}^+$ and $C_{\kkk}^+$ as above coincide if and only if they correspond to the same cluster of $\s{A}_t^{\up}$.  The distinct cones $C_{\jj}^+=:\s{C}_{\sd_{\jj}}$ form the chambers (i.e., the top-dimensional cones) of a fan $\s{C}$ in $M_{\bb{R}}$, called the \textbf{cluster complex}, such that
\begin{itemize}
	\item $\s{C}$ is a sub cone-complex of the cone-complex induced by the supports of the walls of $\f{D}^{\s{A}_t}$.
	\item Each chamber of $\s{C}$ has exactly $\#(I\setminus F)$ facets.  For two seeds $\sd_{\jj}$ and $\sd_{\kkk}$ mutation equivalent to $\sd$, the corresponding chambers $\s{C}_{\sd_{\jj}}$ and $\s{C}_{\sd_{\kkk}}$ in $\s{C}$ intersect along a facet $\f{d}$ if and 
	only if the clusters $\s{A}_t^{\jj}$ and $\s{A}_t^{\kkk}$ are related by a mutation $\mu^{\s{A}}_j$ for some $j\in I\setminus F$, 
	i.e., if $\mu_{\sd_{\jj},j}^{\s{A}}(\s{A}_t^{\jj} )=\s{A}_t^{\kkk}$.  Furthermore, this mutation $\mu_{\sd_{\jj},j}^{\s{A}}$ agrees with the wall-crossing automorphism associated to crossing from $\s{C}_{\sd_{\jj}}$ to $\s{C}_{\sd_{\kkk}}$. 
 \end{itemize}
\end{prop}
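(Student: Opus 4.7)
The plan follows \cite[Prop. 4.9]{davison2019strong}, whose classical counterpart is \cite[Constr. 1.30, Thm. 4.4]{gross2018canonical}. First I would verify the base case $\jj=()$: the cone $C_{\sd}^+ = \sum_{i\in I\setminus F}\bb{R}_{\geq 0}\,e_i^* + \sum_{i\in F}\bb{R}\,e_i^*$ is cut out by the initial walls $\{e_i^{\perp}\}_{i\in I\setminus F}$ of $\f{D}^{\s{A}_t}_{\In}$. Since any wall produced by $\scat$ but not already present is, by Theorem \ref{KSGS}, outgoing (its support does not contain its direction), no such wall can enter the interior of $C_{\sd}^+$, so $C_{\sd}^+$ is a chamber of $\f{D}^{\s{A}_t}$ with facets exactly in $\{e_i^{\perp}\}_{i\in I\setminus F}$.

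Next I would induct on the length of $\jj$. The key tool is the piecewise-linear tropical mutation $T_j^{\sd_{\jj}}$ from \eqref{eq:diagram_mutation}: since $\omega_1(e_{\sd_{\jj},j}) \in e_{\sd_{\jj},j}^{\perp}$, the map $T_j^{\sd_{\jj}}$ fixes the hyperplane $e_{\sd_{\jj},j}^{\perp}$ pointwise, negates the ray $\bb{R}_{\geq 0}\,e_{\sd_{\jj},j}^*$ into $\bb{R}_{\geq 0}\,e_{\sd_{\jj\cdot(j)},j}^*$, and sends $e_{\sd_{\jj},i}^*$ to $e_{\sd_{\jj\cdot(j)},i}^*$ for $i\neq j$ (modulo terms in $e_{\sd_{\jj},j}^{\perp}$). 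This implies $C_{\jj}^+$ and $C_{\jj\cdot(j)}^+$ lie on opposite sides of $e_{\sd_{\jj},j}^{\perp}$ and share the common facet $C_{\jj}^+\cap e_{\sd_{\jj},j}^{\perp}$. Applying the inductive hypothesis to $\sd_{\jj}$ (using that the scattering diagram computed from $\sd_{\jj}$ is equivalent to $\f{D}^{\s{A}_t}$), the wall containing this facet is a subset of $(e_{\sd_{\jj},j}^{\perp},\Psi_{t^d}(z^{\omega_1(e_{\sd_{\jj},j})}))$. By \eqref{mu-dilog}, the path-ordered automorphism across this wall is $\Ad_{\Psi_{t^d}(z^{\omega_1(e_{\sd_{\jj},j})})}^{\pm 1}$, which coincides with $(\mu_{\sd_{\jj},j}^{\s{A}})^{\pm 1}$ on cluster variables, establishing the wall-crossing compatibility.

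The main obstacle is the two global consequences: (i) distinct clusters yield distinct chambers, so $C_{\jj}^+=C_{\kkk}^+$ iff the clusters coincide, and (ii) the chambers fit together into a genuine subfan of the wall-support complex, with no wall cutting through the interior of any $C_{\jj}^+$. Both reduce to sign-coherence and linear independence of the $g$-vectors $\{e_{\sd_{\jj},i}^*\}_{i\in I\setminus F}$ within a single cluster---established by \cite{DerksenWeymanZelevinsky09,Tran09} via representation theory and re-derived scattering-theoretically in \cite{gross2018canonical}. Given these, (i) follows because the extreme rays of $C_{\jj}^+$ modulo frozen lineality recover the $g$-vectors of the cluster, and (ii) follows from the outgoing-wall property in Theorem \ref{KSGS} together with the inductive propagation of chambers across mutations.
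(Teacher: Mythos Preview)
The paper does not supply its own proof; the proposition is simply cited from \cite[Prop.~4.9]{davison2019strong} (quantum case) and \cite[Constr.~1.30, Thm.~4.4]{gross2018canonical} (classical case). Your sketch follows exactly those references---positive chamber as base case, inductive propagation via mutation-invariance of the scattering diagram under the piecewise-linear maps $T_j$, identification of wall-crossing with mutation through \eqref{mu-dilog}, and sign-coherence for the global fan structure---so there is no paper argument to compare against.

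One point in your base case deserves sharpening. The reason no wall meets the interior of $C_{\sd}^+$ is not the incoming/outgoing distinction. Every wall of $\f{D}^{\s{A}_t}$, initial or not, has support contained in some hyperplane $v_{\f{d}}^{\Lambda\perp}$ with $v_{\f{d}}=\omega_1(n)$ for $n\in N^+$; by compatibility \eqref{Lambda-B} this hyperplane equals $n^{\perp}$, and any $m$ in the interior of $C_{\sd}^+$ satisfies $\langle e_i,m\rangle>0$ for all $i\in I\setminus F$, hence $\langle n,m\rangle>0$. So no wall of either kind meets the interior. Your appeal to ``outgoing'' alone does not yield this: an outgoing wall merely fails to contain the single point $v_{\f{d}}\in M^+$, which says nothing directly about disjointness from the interior of $C_{\sd}^+$. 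Beyond this, the mutation-invariance of the scattering diagram you invoke in the inductive step is itself the substantial input (\cite[Thm.~1.24]{gross2018canonical}; quantum version \cite[App.~A]{davison2019strong}) and carries most of the weight of the argument.
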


The following is essentially \cite[Cor. 4.13]{davison2019strong}, or \cite[Prop. 7.1]{gross2018canonical} in the classical setting.
\begin{cor}\label{UpTheta}
Consider $p\in M$ and generic $\sQ,\sQ_{\jj}$ in $\s{C}_{\sd}$ and $\s{C}_{\sd_{\jj}}$, respectively.  Suppose $\vartheta_{p,\sQ}\in \s{A}_t^{\sd}=\kk_t[M]$, i.e., $\vartheta_{p,\sQ}$ is a Laurent polynomial, not just a formal Laurent series.  Then $\vartheta_{p,\sQ_{\jj}}$ is also a Laurent polynomial.  Thus, $\vartheta_{p}$ determines an element of $\s{A}_t^{\up}$.  Furthermore, if $p\in \s{C}_{\sd_{\jj}}\cap M$, then $\vartheta_{p,\sQ_{\jj}}=z^p$, and so $\vartheta_{p}\in \s{A}_t^{\up}$ is a quantum cluster monomial.  Conversely, all quantum cluster monomials are quantum theta functions.  In particular, $\s{A}_t^{\ord}\subset \s{A}_t^{\can} \cap \s{A}_t^{\up}$.
\end{cor}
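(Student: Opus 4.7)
The plan is to transport $\vartheta_p$ between chambers via Lemma \ref{CPS}, identify the transport with a composition of cluster mutations via Proposition \ref{Chambers}, and then exploit the explicit quantum-dilogarithm description of these mutations to control Laurent polynomiality.

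First I would choose a smooth path $\gamma\colon[0,1]\to M_{\bb{R}}$ from $\sQ$ to $\sQ_\jj$ that avoids $\Joints(\f{D}^{\s{A}_t})$, is transverse to the walls, and crosses only facets of the cluster complex $\s{C}$ (such a path exists since adjacent cluster chambers are joined by single mutations). Lemma \ref{CPS} gives
\begin{align*}
\vartheta_{p,\sQ_\jj}=\Ad_{\theta_{\gamma,\f{D}^{\s{A}_t}}}(\vartheta_{p,\sQ}),
\end{align*}
and Proposition \ref{Chambers} identifies each facet-crossing automorphism with the corresponding elementary cluster mutation $\mu_{j_k}^{\s{A}}$, so the composite equals $\mu_\jj^{\s{A}}$. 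Thus $\vartheta_{p,\sQ_\jj}=\mu_\jj^{\s{A}}(\vartheta_{p,\sQ})$. Now each elementary mutation is realized as the adjoint action of a quantum dilogarithm by \eqref{mu-dilog}, and formula \eqref{AdPsiBinom} shows that this adjoint action sends each monomial $z^m$ to a \emph{finite} $\kk_t$-linear combination of monomials. Hence it preserves the Laurent polynomial subring $\kk_t[M]$, and inductively $\vartheta_{p,\sQ_\jj}\in\kk_t[M]$ is a Laurent polynomial. Since $\jj$ is arbitrary, $\vartheta_p$ determines an element of $\s{A}_t^{\up}$.

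For the cluster-monomial claim, suppose $p\in\s{C}_{\sd_\jj}\cap M$ and take $\sQ_\jj$ generic in $\s{C}_{\sd_\jj}$. By Proposition \ref{Chambers}, the interior of $\s{C}_{\sd_\jj}$ contains no walls. I would argue that the only broken line with ends $(p,\sQ_\jj)$ is the straight one: the ray from $\infty\cdot p$ towards $\sQ_\jj$ stays inside the convex cone $\s{C}_{\sd_\jj}$ and never meets a wall; conversely any bend would force an exit through the boundary, and combined with the directionality of outgoing walls (whose directions $-v_{\f{d}}$ point \emph{away} from the chamber they emanate from) plus the positivity of Lemma \ref{BLpos}, no such broken line can wind back to $\sQ_\jj$. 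Thus $\vartheta_{p,\sQ_\jj}=z^p$, so by the first paragraph $\vartheta_{p,\sQ}=(\mu_\jj^{\s{A}})^{-1}(z^p)$, which is a quantum cluster monomial (Definition \ref{def:q-cluster-var}). Conversely, every cluster monomial arises this way, and inverses of frozen variables $z^{-e_i^*}$ with $i\in F$ are theta functions since $-e_i^*\in\s{C}_{\sd}\cap M$ by \eqref{CS+} (as $\langle e_j,-e_i^*\rangle=0$ for $j\in I\setminus F$). Combined with the Laurent phenomenon, this yields $\s{A}_t^{\ord}\subset\s{A}_t^{\can}\cap\s{A}_t^{\up}$.

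The main obstacle I anticipate is the ``no-bending'' argument in the second paragraph. Rigorously showing that no broken line can exit $\s{C}_{\sd_\jj}$ and return to $\sQ_\jj$ requires a careful analysis of how broken-line directions evolve under wall crossings, combined with positivity of broken lines to exclude exotic trajectories that loop around joints of the scattering diagram. A secondary but less severe point is checking that the wall-crossing automorphism across a facet of $\s{C}$, which a priori could involve multiple walls of $\f{D}^{\s{A}_t}$ supported on that facet, indeed reduces to a single elementary mutation---this is precisely the content of the second bullet of Proposition \ref{Chambers}.
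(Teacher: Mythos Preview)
Your first paragraph contains a genuine gap. You claim that \eqref{AdPsiBinom} shows the mutation $\mu_j^{\s{A}}=\Ad_{\Psi_{t^d}(z^{\omega_1(e_j)})}$ sends each monomial to a finite sum, but \eqref{AdPsiBinom} describes $\Ad_{\Psi^{\xi}}(z^m)$ where $\xi=\sign\Lambda(v,m)$ \emph{depends on} $m$. The wall-crossing along your path $\gamma$ applies $\Ad_{\Psi^{\epsilon}}$ for a \emph{fixed} sign $\epsilon$ (determined by the crossing direction, cf.\ \eqref{WallCross}). For monomials $z^m$ with $\sign\Lambda(\omega_1(e_j),m)=-\epsilon$, the action is $z^m\prod_k(1+t^{\bullet}z^{\omega_1(e_j)})^{-1}$, an infinite formal series. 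So mutations do \emph{not} preserve $\kk_t[M]$, and your argument breaks down.

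The missing ingredient is positivity (Lemma~\ref{BLpos}). One correct route: by your identification $\vartheta_{p,\sQ_\jj}=\mu_\jj^{\s{A}}(\vartheta_{p,\sQ})$, there exists a finite product $P$ of factors $(1+t^{\bullet}z^{\omega_1(e_{j_k})})$ (the common denominator arising from the mutations) such that $P\cdot\vartheta_{p,\sQ_\jj}$ is a Laurent polynomial. Both $P$ and $\vartheta_{p,\sQ_\jj}$ have coefficients in $\bb{Z}_{\geq 0}[t^{\pm 1}]$ (the latter by Lemma~\ref{BLpos}), so in the quantum torus there is no cancellation: for any $m_0\in\supp(P)$ one has $m_0+\supp(\vartheta_{p,\sQ_\jj})\subset\supp(P\cdot\vartheta_{p,\sQ_\jj})$, forcing $\supp(\vartheta_{p,\sQ_\jj})$ to be finite. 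This is essentially how the cited references proceed. Your second paragraph is fine in outline; the ``no bent broken line returns to $\sQ_\jj$'' step is standard and is typically handled either by this same positivity argument or by mutation invariance of the scattering diagram to reduce to the initial positive chamber.
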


The theta functions $\vartheta_p$ which lie in $\s{A}_t^{\up}$ as in Corollary \ref{UpTheta} generate and form a $\kk_t$-module basis for what one calls the \textbf{middle cluster algebra} $\s{A}_t^{\midd}$.  The $g$-vectors\footnote{We refer to $p\in M$ as the \textbf{$g$-vector} of $\vartheta_p$.  If $\vartheta_p$ is a cluster variable $A_{\jj,i}$, then, under the identification $M=\bb{Z}^I$, $p$ agrees with the $i$-th extended $g$-vector of $\sd_{\jj}$ with respect to $\sd$ as in Definition \ref{def:g-vec}.} of these theta function are the integer points $$\Theta^{\midd}=\Theta^{\midd}_{\bb{R}}\cap M$$ of some convex cone $\Theta^{\midd}_{\bb{R}}$, cf. \cite[Thm. 0.3(2-4)]{gross2018canonical} and \cite[Thm. 1.2(2)]{davison2019strong}.  One says that the \textbf{full Fock-Goncharov conjecture} holds (for $\s{A}_t$) if 
\begin{align}\label{fFG-eq}
\s{A}_t^{\midd}=\s{A}_t^{\up}=\s{A}_t^{\can}
\end{align} (so in particular, $\Theta^{\midd}=M$).  Similarly in the classical setting if the Injectivity Assumption holds.

Similarly, one says that the full Fock-Goncharov conjecture holds for $\s{A}_t^{\prin}$ if it holds for the (quantum) cluster algebra associated to $\sd^{\prin}$.  A similar definition applying to $\s{X}_t$ will be given in \S \ref{XupTheta}. The following is part of \cite[Prop. 0.14]{gross2018canonical} in the classical setting, and then the quantum analog follows from \cite[Thm. 1.2(5)]{davison2019strong}.

\begin{prop}\label{FGc}
If the cluster complex $\s{C}$ is \textbf{big}, meaning that it is not contained in a closed half-space, then the full Fock-Goncharov conjecture holds for $\s{A}_t^{\prin}$ and $\s{X}_t$.  If the Injectivity Assumption holds in addition to $\s{C}$ not being contained in a half-space, then the full Fock-Goncharov conjecture holds for $\s{A}_t$ as well.
\end{prop}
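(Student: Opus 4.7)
The plan is to reduce to the cited results \cite[Prop. 0.14]{gross2018canonical} in the classical case and \cite[Thm. 1.2(5)]{davison2019strong} in the quantum case. First, observe that $\sd^{\prin}$ always satisfies the Injectivity Assumption (see \S\ref{prinsub}), so the statement for $\s{A}_t^{\prin}$ is a special case of the one for $\s{A}_t$, and it suffices to treat $\s{A}_t$ under the Injectivity Assumption (plus separately $\s{X}_t$, which follows by a Langlands-dual version of the argument or by projection from $\sd^{\prin}$).

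The core step is to show that for every $p \in M$, the theta function $\vartheta_p$ defines an element of $\s{A}_t^{\up}$. By Corollary~\ref{UpTheta}, it suffices to exhibit a single chamber $C_{\jj}^+$ of the cluster complex such that $\iota_{\sQ_{\jj}}(\vartheta_p)$ is a Laurent polynomial (not merely a formal series). Here I would use the bigness hypothesis: since $\s{C}$ is not contained in a closed half-space, for any $p \in M$ and any positive integer $k$ one can find a mutation sequence $\jj$ such that the chamber $C_{\jj}^+$ sits sufficiently ``opposite to $p$ modulo $k M^+$'' that no broken line of order $\leq k$ with initial exponent $p$ terminating in $C_{\jj}^+$ can bend on any wall outside the finite subdiagram $\f{D}_k^{\s{A}_t}$. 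By Lemma~\ref{lem:fQ} this forces $\iota_{\sQ_{\jj}}(\vartheta_p) \in \kk_t[M]$, giving $\vartheta_p \in \s{A}_t^{\up}$.

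With this in hand, $\Theta^{\midd} = M$, hence $\s{A}_t^{\midd} = \s{A}_t^{\can}$. The inclusion $\s{A}_t^{\midd} \subseteq \s{A}_t^{\up}$ is automatic from Corollary~\ref{UpTheta}. For the reverse containment, by Proposition~\ref{TopBasis} every element $f \in \s{A}_t^{\up} \subseteq \s{A}_{\sQ}$ expands uniquely as a $\kk_t$-linear combination of theta functions; since $f$ is a genuine Laurent polynomial in the initial cluster, the support of this expansion is contained in a finite union of translates of $-M^{\oplus}$ (by the ``pointed'' form of theta functions in \eqref{pointed}), and each $\vartheta_p$ appearing lies in $\s{A}_t^{\midd}$ by the preceding paragraph. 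Hence $f \in \s{A}_t^{\midd}$, yielding $\s{A}_t^{\up} = \s{A}_t^{\midd} = \s{A}_t^{\can}$.

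The main obstacle is the quantitative broken-line estimate: translating the geometric bigness of $\s{C}$ into the combinatorial statement that chambers are cofinal among translations by elements of $M^+$, which ensures that broken-line contributions terminating in some chosen chamber $C_{\jj}^+$ are of bounded length. In \cite{gross2018canonical} this is packaged via the scattering atlas and the cone $\Theta^{\midd}_{\bb{R}}$; the quantum upgrade in \cite{davison2019strong} runs the same combinatorics but leverages universal bar-positivity of $\f{D}^{\s{A}_t}$ to guarantee that no broken-line cancellations enlarge the support and destroy polynomiality.
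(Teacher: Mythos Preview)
The paper does not prove this proposition; it simply records it as a citation of \cite[Prop.~0.14]{gross2018canonical} and \cite[Thm.~1.2(5)]{davison2019strong}. So strictly speaking there is nothing to compare, and your opening sentence---reduce to those cited results---is already the full content of what the paper does.

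That said, your attempt to sketch the internal mechanism has real gaps. The central step, ``for any $p$ and any $k$ find a chamber $C_{\jj}^+$ sufficiently opposite to $p$ modulo $kM^+$ so that no broken line of order $\leq k$ can bend outside $\f{D}_k$,'' is not made precise and, more seriously, produces a chamber depending on $k$. To conclude that $\vartheta_{p,\sQ_{\jj}}$ is a Laurent polynomial you need a \emph{single} $\jj$ that works for all $k$ simultaneously; otherwise you only know finiteness modulo each $kM^+$, which is automatic. The actual GHKK argument does not proceed by directly locating such a chamber: it passes through the ``Enough Global Monomials'' condition and an optimized-seed argument, showing that bigness of $\s{C}$ forces certain piecewise-linear functions to be linear, which in turn bounds the broken-line contributions globally. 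Your invocation of Lemma~\ref{lem:fQ} is also misplaced---that lemma reads off theta-expansion coefficients from Laurent coefficients near a point; it does not establish polynomiality.

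For the reverse inclusion $\s{A}_t^{\up}\subset\s{A}_t^{\midd}$, the claim that the theta-expansion of a Laurent polynomial $f$ has support in finitely many translates of $-M^{\oplus}$ does not follow just from pointedness of theta functions; one needs to know the expansion is finite, and this is exactly part of what the full Fock--Goncharov conjecture asserts. In GHKK this direction also uses the EGM machinery. So while your overall architecture (show $\Theta^{\midd}=M$, then deduce the equalities) is the right shape, the combinatorial core is not something one can improvise from the bigness hypothesis alone.
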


Note that the cluster complex $\s{C}$ can be defined even without knowledge of scattering diagrams, and in fact without making the Injectivity Assumption.  It is simply the fan whose maximal cones are the cones $C_{\jj}^+$ as in \eqref{CS+} for tuples $\jj\in I\setminus F$.

\subsection{Theta bases for $\s{X}_t^{\up}$}\label{XupTheta}

Theta functions in $\s{X}^{\up}$ and $\s{X}_t^{\up}$ can be similarly constructed using a scattering diagram $\f{D}^{\s{X}_t}$ and broken lines in $N_{\bb{R}}$ with exponents in $N$.  See \cite[\S 4.3]{davison2019strong} for this construction. Very briefly, the idea is to define $\f{g}^t$ using $\kk_t[N]$ in place of $\kk_t[M]$, then construct the scattering diagram $\f{D}^{\s{X}_t}$ in $N_{\bb{R}}$ over $\f{g}^t$ as $\scat(\f{D}^{\s{X}_t}_{\In})$ for 
\begin{align}\label{DXin}
\f{D}^{\s{X}_t}_{\In}=\{(e_i^{\omega\perp},\Psi_t(z^{e_i}))|i\in I\setminus F\}
\end{align}
(if $e_i\in \ker(\omega)$ for some $i\in I\setminus F$ then the wall for this $i$ is left out). Rather than reviewing this in more detail, we will use \cite[Lem 4.6]{davison2019strong} to recover these theta functions from the theta functions for $\s{A}_t^{\prin}$ --- this is also the approach used for the classical setting in \cite{gross2018canonical}.

Consider the map 
\begin{align}\label{xi}
\xi:N &\rar M\oplus N \\
n &\mapsto (\omega_1(n),n).\nonumber
\end{align}
Recall that $M^{\prin}=M\oplus N$. Notice that $\xi$ is the restriction of $\omega_1^{\prin}$ to $N$ (identified with $N\oplus 0$) in \eqref{pi1prin}, so $(M^{\prin})^\oplus \subset \xi(N)$. If we construct $\s{A}_t^{\prin,\up}$ using $\Lambda^{\prin}$ from \eqref{LambdaPrin} as our compatible form, then the map $\xi$ induces an injection $\xi: \kk_t\llb N \rrb \hookrightarrow \kk_t\llb M^{\prin}\rrb$.  A choice of seed determines inclusions of $\s{X}_t^{\up}$ and $\s{A}_t^{\prin,\up}$ into $\kk_t\llb N\rrb$ and $\kk_t\llb M^{\prin}\rrb$, respectively, and $\xi$ restricts to a map $\xi:\s{X}_t^{\up}\hookrightarrow \s{A}_t^{\prin,\up}$ (cf. \cite[Lem. 4.1]{davison2019strong}) with image the degree $0$ part of $\s{A}_t^{\prin,\up}$ for the grading 
determined by 
\begin{align*}
\text{degree}(z^{(m,n)}):=m-\omega_1(n).
\end{align*}
\begin{defn}[\cite{davison2019strong}, Lem. 4.6]\label{X-Aprin}
Under the above identification $\xi$ of $\kk_t\llb N\rrb$ with a subalgebra of $\kk_t\llb M^{\prin}\rrb$ for compatible form $\Lambda^{\prin}$ as in \eqref{LambdaPrin}, the element $\vartheta_{p,\sQ}\in \kk_t\llb N\rrb\supset \s{X}_t^{\up}$ for $p\in N$ and generic $\sQ\in N_{\bb{R}}$ is defined so that\footnote{Technically, $\xi(\sQ)$ is not generic in the sense of Footnote \ref{foot-gen}, but one may use a more general notion of ``generic'' as in \cite[Footnote 5]{davison2019strong}.}
\begin{align}\label{eq:construct_X_theta}
	\xi(\vartheta_{p,\sQ})=\vartheta_{\xi(p),\xi(\sQ)}\in \s{A}_t^{\prin,\up}
\end{align}
\end{defn}  
 Note that Lemma \ref{BLpos} applies to ensure positivity of all broken lines contributing to the $\s{X}$-type theta functions $\vartheta_{p,\sQ}$, $p\in N$.

For $p\in N$, one can consider not only the local coordinate expansions $\vartheta_{\xi(p),\xi(\sQ)}\in \kk_t\llb \xi (N)\rrb$, but also $\vartheta_{\xi(p),\sQ'}\in \kk_t\llb \xi (N)\rrb$ for any generic $\sQ'\in M^{\prin}_{\bb{R}}=M_{\bb{R}}\oplus N_{\bb{R}}$.  Let $\s{C}^{\prin}$ be the cluster complex in $M_{\bb{R}}^{\prin}$.  Then for $\sQ'$ a generic point in the cone of $\s{C}^{\prin}$ associated to a seed $\sd^{\prin}_{\jj}$, $\vartheta_{\xi(p),\sQ'}$ is the Laurent (series) expansion of $\vartheta_{\xi(p)}$ in $\xi(\s{X}^{\sd_{\jj}}_t)$ (or in the appropriate completion) up to an  automorphism $\psi_{\jj}^{\prin}$ defined by a certain linear map acting on the exponents (cf. \cite[\S 4.7.2]{davison2019strong} for more details). 
Correspondingly (applying $\xi^{-1}$), we obtain the Laurent (series) expansions of $\vartheta_p$ in the cluster $\s{X}_t^{\sd_{\jj}}=\kk_t[N]$ or its completion $\kk_t\llb N\rrb$.

Let $\Theta_{\s{X}}^{\midd}$ denote the set of $p\in M$ such that $\vartheta_{\xi(p),\sQ}\in \xi(\kk_t[N])$ (as opposed to $\xi(\kk_t\llb N\rrb)$ for some (equivalently, all) generic $\sQ\in \s{C}^{\prin}$, cf. \cite[\S 4.6]{davison2019strong}. The theta functions $\{\vartheta_p|p\in \Theta_{\s{X}}^{\midd}\}$ generate and form a $\kk_t$-basis for a subalgebra $\s{X}_t^{\midd}\subset \s{X}_t^{\can}\cap \s{X}_t^{\up}$. One says that the \textbf{full Fock-Goncharov Conjecture} holds for $\s{X}_t$ if $\s{X}_t^{\midd}=\s{X}_t^{\up}=\s{X}_t^{\can}$.

\subsection{Failure of the Injectivity Assumption}\label{FailInj}

So far we have assumed that $\sd$ satisfies Assumption \ref{inj-assumption}, but in the classical setting this is sometimes not the case. When the Injectivity Assumption fails, one can still construct theta functions using the procedure from \cite[Constr. 7.11]{gross2018canonical}, which we shall now recall in an equivalent form.

First, apply the constructions of scattering diagrams, broken lines, and theta functions to the compatible pair $(\sd^{\prin},\Lambda^{\prin})$ defined in \S \ref{prinsub}. Recall the map $$\rho:M^{\prin} \rar M, \qquad (m,n)\mapsto m,$$ and the induced maps on algebras as in \S \ref{ClAlg}. One checks from \eqref{LambdaPrin} or from the compatibility condition that 
\begin{align}\label{LambdaN}
\Lambda^{\prin}((0,n),(m',n'))=0
\end{align}
for all $(m',n')\in \omega_1^{\prin}((N^{\oplus},0))$, hence whenever $(m',n')$ is the direction of a scattering wall. It follows that all scattering walls of $\f{D}^{\s{A}^{\prin}}$ are closed under addition by elements of $(0,N_{\bb{R}})$, and all wall-crossing automorphisms act trivially on $z^{(0,n)}$ for each $n\in N$.  From this one can show that for each $m\in M$, $n_1,n_2\in N$, and each pair of generic points $\sQ,\sQ'\in M^{\prin}_{\bb{R}}$ with $\sQ-\sQ'\in (0,N_{\bb{R}})$, there is a bijection between broken lines with ends $((m,n_1),\sQ)$ and broken lines with ends $((m,n_2),\sQ')$, and the final monomials of corresponding broken lines agree after applying $\rho$.

Consider the cone $\Theta^{\prin,\midd}$ of $(m,n)\in M^{\prin}$ for which $\vartheta_{(m,n),\sQ}$ is a finite Laurent polynomial for all generic $\s{Q}$ in\footnote{We will often write that a point is in $\s{C}^{\prin}$ or $\s{C}$ to mean that it is in the support of $\s{C}^{\prin}$ or $\s{C}$, respectively.} the cluster complex $\s{C}^{\prin}$ of $(\sd^{\prin},\Lambda^{\prin})$.  One sees from the above discussion that $\Theta^{\prin,\midd}$ is closed under addition by $(0,N)$.  Define
\begin{align*}
\Theta^{\midd}:=\rho(\Theta^{\prin,\midd}).
\end{align*}
Now for any $m\in \Theta^{\midd}$ and any generic $\sQ\in M_{\bb{R}}$, let $(m,n)\in \rho^{-1}(m)$ and let $\wt{\sQ}\in \rho^{-1}(\sQ)$, and suppose $\vartheta_{(m,n),\wt{\sQ}}$ is a Laurent polynomial (as opposed to a formal Laurent series). By construction, this is always the case for $\sQ\in \rho(\s{C}^{\prin})$. Then we can define
\begin{align}\label{theta-rho}
\vartheta_{m,\sQ}:=\rho(\vartheta_{(m,n),\wt{\sQ}}),
\end{align}
and this definition is independent of the choices of $n\in N$ and $\wt{\sQ}\in \rho^{-1}(\sQ)$. Let 
\begin{align}\label{eq:Xi}
\wt{\Xi}&:=\{\text{generic~} \wt{\sQ}\in M^{\prin}_{\bb{R}}|\vartheta_{(m,n),\wt{\sQ}}~ \text{is a Laurent polynomial for all } (m,n)\in \Theta^{\prin,\midd}\}, \nonumber\\
\Xi&:=\rho(\wt{\Xi}).
\end{align}
That is, $\Xi$ is the set of points $\sQ\in M_{\bb{R}}$ such that $\vartheta_{m,\sQ}$ is well-defined for all $m\in \Theta^{\midd}$.  Note that $\?{\Xi}\supset \?{\s{C}}$ by construction.

The elements $\vartheta_{m,\sQ}$ for different $\sQ\in \Xi$ are related by a sort of path-ordered product.  For each $\sQ\in \Xi$, let $\kk(M)_{\sQ}$ denote the subring of $\kk(M)$ generated over $\kk$ by $\{\vartheta_{p,\sQ}|p\in \Theta^{\midd}\}$.  Given $f\in \kk(M)_{\sQ}$ a rational function of elements $\vartheta_{m,\sQ}$, we let $\wt{f}\in \kk(M^{\prin})$ denote the corresponding rational function of the lifts $\vartheta_{(m,0),\wt{\sQ}}$ for $\wt{\sQ}\in \rho^{-1}(\sQ)$.  Now for any $\sQ,\sQ'\in \Xi$, we define 
\begin{align*}
\theta_{\sQ,\sQ'}:\kk(M)_{\sQ}&\rar \kk(M)_{\sQ'}\\
f&\mapsto \rho(\Ad_{\theta_{\gamma,\f{D}^{\s{A}^{\prin}}}}(\wt{f})).
\end{align*}
where, $\gamma$ is a path from a lift of $\sQ$ to a lift of $\sQ'$.
\begin{lem}\label{lem:thetaQQ}
The maps $\theta_{\sQ,\sQ'}$ are well-defined isomorphisms satisfying $\theta_{\sQ,\sQ'}(\vartheta_{m,\sQ})=\vartheta_{m,\sQ'}$ for all $m\in \Theta^{\midd}$, $\sQ,\sQ'\in \Xi$.
\end{lem}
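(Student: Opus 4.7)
The plan is to define $\theta_{\sQ,\sQ'}$ as the descent, via $\rho$, of the wall-crossing automorphism on the principal coefficient side. The key structural input is already contained in the paragraphs preceding the lemma: walls of $\f{D}^{\s{A}^{\prin}}$ are $(0,N_{\bb{R}})$-invariant, and their associated wall-crossing automorphisms fix the monomials $z^{(0,n)}$ for all $n\in N$. From this I would first observe that for any path $\gamma$ in $M^{\prin}_{\bb{R}}\setminus \Joints(\f{D}^{\s{A}^{\prin}})$, the automorphism $\Ad_{\theta_{\gamma,\f{D}^{\s{A}^{\prin}}}}$ of (an appropriate completion of) $\kk(M^{\prin})$ preserves the ideal $\ker(\rho)=\langle z^{(0,n)}-1:n\in N\rangle$, since it fixes each generator of this ideal.

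Next, I would verify the identity $\theta_{\sQ,\sQ'}(\vartheta_{m,\sQ})=\vartheta_{m,\sQ'}$ on generators. Pick lifts $\wt{\sQ},\wt{\sQ}'$ and a path $\gamma$ from $\wt{\sQ}$ to $\wt{\sQ}'$. By definition, $\wt{\vartheta_{m,\sQ}}=\vartheta_{(m,0),\wt{\sQ}}$, and Lemma \ref{CPS} applied to the consistent scattering diagram $\f{D}^{\s{A}^{\prin}}$ gives $\Ad_{\theta_{\gamma,\f{D}^{\s{A}^{\prin}}}}(\vartheta_{(m,0),\wt{\sQ}})=\vartheta_{(m,0),\wt{\sQ}'}$. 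Applying $\rho$ then recovers $\vartheta_{m,\sQ'}$ by the definition preceding the lemma.

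The main obstacle is well-definedness: the lift $\wt{f}$ depends on the presentation of $f$ as a rational expression in the $\vartheta_{m,\sQ}$ and on the choice of $\wt{\sQ}$. Here I would argue as follows. Suppose $P(\vartheta_{m_1,\sQ},\ldots,\vartheta_{m_r,\sQ})=0$ for some rational expression $P$; lifting the identity gives $P(\vartheta_{(m_1,0),\wt{\sQ}},\ldots,\vartheta_{(m_r,0),\wt{\sQ}})\in \ker(\rho)$. Since $\Ad_{\theta_{\gamma,\f{D}^{\s{A}^{\prin}}}}$ is a ring automorphism that preserves $\ker(\rho)$ by the first step, applying it and then $\rho$ yields $P(\vartheta_{m_1,\sQ'},\ldots,\vartheta_{m_r,\sQ'})=0$. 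Hence sending $\vartheta_{m,\sQ}\mapsto \vartheta_{m,\sQ'}$ extends uniquely to a ring homomorphism $\kk(M)_{\sQ}\to \kk(M)_{\sQ'}$ that coincides with $f\mapsto \rho(\Ad_{\theta_{\gamma,\f{D}^{\s{A}^{\prin}}}}(\wt{f}))$ for any choice of $\wt{f}$. Independence of $\gamma$ is immediate from consistency of $\f{D}^{\s{A}^{\prin}}$ (Theorem \ref{KSGS}); independence of the lifts $\wt{\sQ},\wt{\sQ}'$ follows from the $(0,N_{\bb{R}})$-invariance of the chambers of $\f{D}^{\s{A}^{\prin}}$, which ensures different lifts of the same basepoint sit in the same chamber, so path-ordered products depend only on the lift of the endpoints modulo a trivial factor that is absorbed by $\ker(\rho)$.

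Finally, for the isomorphism claim, I would note that the construction with the reverse path produces $\theta_{\sQ',\sQ}$, and on generators the composition $\theta_{\sQ',\sQ}\circ \theta_{\sQ,\sQ'}$ sends $\vartheta_{m,\sQ}$ to $\vartheta_{m,\sQ}$; since both maps are ring homomorphisms determined by their action on these generators, $\theta_{\sQ',\sQ}$ is an inverse. This yields the desired isomorphism with the stated action on theta functions.
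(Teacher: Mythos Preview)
Your proof is correct and follows essentially the same approach as the paper: both use Lemma~\ref{CPS} on the principal side to transport theta functions, invoke consistency for path-independence, and use the $(0,N_{\bb{R}})$-invariance encoded in \eqref{LambdaN} to handle independence of lifts and of the presentation of $\wt{f}$. Your argument via preservation of $\ker(\rho)$ makes the well-definedness step slightly more explicit than the paper's terse ``it follows from \eqref{LambdaN},'' but the underlying mechanism is identical.
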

\begin{proof}
The independence of the choice of lifts $\wt{\sQ},\wt{\sQ}'$ follows from \eqref{LambdaN} and the surrounding discussion.  The independence from the choice of $\gamma$ holds because $\f{D}^{\s{A},\prin}$ is consistent.  We note that there may be multiple ways to express $f$ as a rational function of theta functions, hence multiple choices of lift $\wt{f}$, but it follows from \eqref{LambdaN} that these different lifts all result in the same $\theta_{\sQ,\sQ'}(f)$.  The conditions for being a homomrphism are now easily checked, and then invertibility follows from noting that $\theta_{\sQ',\sQ}=\theta_{\sQ,\sQ'}^{-1}$.  Finally, we know from Lemma \ref{CPS} that $\Ad_{\theta_{\gamma,\f{D}^{\s{A}^{\prin}}}}(\vartheta_{p,\wt{\sQ}})=\vartheta_{p,\wt{\sQ'}}$ for all $p\in \Theta^{\prin,\midd}$, and applying $\rho$ yields $\vartheta_{p,\sQ'}\in\kk(M)_{\sQ'}$, as desired.
\end{proof}

For any $m_1,\ldots,m_s\in \Theta^{\midd}$, one sees from the above discussion that the structure constant $\alpha(m_1,\ldots,m_s;m)$ defined  by
\begin{align}\label{eq:str_const_A_merged}
\vartheta_{m_1,\sQ}\cdots \vartheta_{m_s,\sQ} = \sum_{m\in M} \alpha(m_1,\ldots,m_s;m) \vartheta_{m,\sQ}
\end{align}
are independent of $\sQ\in \Xi$. Indeed, we see that these structure constants are given by
\begin{align}\label{eq:str-const-A}
\alpha(m_1,\ldots,m_s;m) = \sum_{\wt{m}\in \rho^{-1}(m)} \rho(\alpha(\wt{m}_1,\ldots,\wt{m}_s;\wt{m})),
\end{align}
where the sum is over all $\wt{m}\in \rho^{-1}(m)$, and for each $i=1,\ldots,s$, $\wt{m}_i$ is any fixed choice of lift of $m_i$ by $\rho$.  Here, the constants $\alpha(\wt{m}_1,\ldots,\wt{m}_s;\wt{m})$ can be computed as in Proposition \ref{StructureConstants}. In particular, \eqref{eq:str_const_A_merged} is a finite sum since the theta functions that appear are positive Laurent polynomials and the structure constants are positive. 

These structure constants canonically determine an algebra structure on 
\begin{align*}
\s{A}^{\midd}:=\bigoplus_{m\in \Theta^{\midd}} \kk \cdot \vartheta_m.
\end{align*}
There is a morphism
\begin{align}\label{nu}
\nu:\s{A}^{\midd}&\rar \s{A}^{\up}\subset \kk[M] \nonumber\\
\sum_{m\in \Theta^{\midd}} a_m \vartheta_{m} &\mapsto \sum_{m\in \Theta^{\midd}} a_m \vartheta_{m,\sQ}
\end{align}
where $\sQ$ is any generic point in $C_{\sd}^+=\rho(C_{\sd^{\prin}}^+)$.  \cite{gross2018canonical} conjectures that $\nu$ is always injective, i.e., that $\{\vartheta_{m,\sQ}|m\in \Theta^{\midd}\}$ is linearly independent. By \cite[Thm 0.3(7)]{gross2018canonical}, $\nu$ is at least injective if some seed mutation equivalent to $\sd$ has strongly convex $M^{\oplus}$.

On the other hand, even when the Injectivity Assumption possibly fails, we can define the cluster complex $\s{C}$ in $M_{\bb{R}}$ by applying $\rho$ to the cones of the principal coefficients cluster complex $\s{C}^{\prin}\subset M_{\bb{R}}^{\prin}$.  Then for $\sQ\in C_{\sd}^+$ and $m\in \s{C}\cap M$, $\vartheta_{m,\sQ} \in \s{A}^{\up}$ is equal to the cluster monomial with extended $g$-vector $m$.  We therefore always have
\begin{align*}
\s{A}^{\ord}\subset \nu(\s{A}^{\midd})\subset \s{A}^{\up}.
\end{align*}

We will write $\?{\s{C}}$ to mean the closure of the support of $\s{C}$.

\begin{prop}\label{indep}
If there is a linear relation
\begin{align*}
	\sum_{m\in \Theta^{\midd}} a_m \nu(\vartheta_m) = 0
\end{align*} in $\s{A}^{\up}$, then $a_m=0$ for all $m\in \?{\Xi}\cap \Theta^{\midd}$.  In particular, the set $\{\nu(\vartheta_m)|m\in \?{\Xi}\cap \Theta^{\midd}\}$ is linearly independent.
\end{prop}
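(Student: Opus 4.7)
The plan is to adapt the argument establishing linear independence of theta functions indexed by points of the cluster complex (cf. \cite[Thm. 7.20]{gross2018canonical}) by replacing ``generic point in a cluster chamber close to $m_0$'' with ``generic point in $\Xi$ close to $m_0$.'' The key observation is that $\Xi \subset M_{\bb{R}}$ is open (it is the $\rho$-image of a union of open chambers of $\f{D}^{\s{A}^{\prin}}$, which are $(0,N_{\bb{R}})$-invariant by \eqref{LambdaN}), so any $m_0 \in \?{\Xi}$ lies in the closure of a chamber of $\Xi$, and the usual pointedness/broken-line argument still applies at a generic $\sQ$ in that chamber.

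Concretely, suppose $f := \sum_{m \in \Theta^{\midd}} a_m \nu(\vartheta_m) = 0$ in $\s{A}^{\up}$; this sum is finite by the definition of $\s{A}^{\midd}$. Fix $m_0 \in \?{\Xi} \cap \Theta^{\midd}$, and choose $\ell \in \bb{Z}_{\geq 1}$ large enough that $m_0 \notin m + (\ell+1)M^+$ for every $m \neq m_0$ with $a_m \neq 0$, as in Lemma \ref{lem:fQ}. The walls of $\f{D}_{\ell}^{\s{A}^{\prin}}$ being $(0,N_{\bb{R}})$-invariant, $\rho$ induces a well-defined chamber decomposition of $M_{\bb{R}}$. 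Since $m_0 \in \?{\Xi}$ and this decomposition is a finite subdivision of a neighborhood of $m_0$, I may choose a generic $\sQ \in \Xi$ so close to $m_0$ that $\sQ$ lies in a chamber $C_{\ell} \subset M_{\bb{R}}$ of this decomposition whose closure contains $m_0$.

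By Lemma \ref{lem:thetaQQ}, applied to $\theta_{\sQ_0, \sQ}$ for any generic $\sQ_0 \in C_{\sd}^+$, the $\kk$-linear map $\theta_{\sQ_0,\sQ}$ sends $\nu(\vartheta_m) = \vartheta_{m,\sQ_0}$ to $\vartheta_{m,\sQ}$, so the relation $f = 0$ in $\s{A}^{\up}$ yields
\[
\sum_{m \in \Theta^{\midd}} a_m \vartheta_{m, \sQ} = 0 \qquad \text{in } \kk[M].
\]
Now I extract the coefficient of $z^{m_0}$. By the pointedness statement of Proposition \ref{TopBasis} (applied in principal coefficients and pushed down by $\rho$), the term $a_{m_0}\vartheta_{m_0, \sQ}$ contributes exactly $a_{m_0}$ to this coefficient. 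For every $m \neq m_0$ with $a_m \neq 0$, the choice of $\ell$ together with the fact that $\sQ$ shares a chamber of $\f{D}_{\ell}^{\s{A}^{\prin}}$ with every lift of $m_0$ forces, by the broken-line argument from the proof of Lemma \ref{lem:fQ} carried out in principal coefficients and summed over the fibers of $\rho$, that the contribution of $\vartheta_{m, \sQ}$ to the $z^{m_0}$-coefficient vanishes. Hence $a_{m_0} = 0$.

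The main obstacle I anticipate is making the broken-line vanishing step precise in the non-injective setting. Concretely, if one picks any lift $(m, n_m)$ of $m$, then the $z^{m_0}$-coefficient of $\vartheta_{m, \sQ} = \rho(\vartheta_{(m,n_m), \wt{\sQ}})$ equals a sum over $n' \in N$ of the $z^{(m_0, n')}$-coefficients of $\vartheta_{(m,n_m), \wt{\sQ}}$, and one must verify that each of these coefficients vanishes for the chosen $\ell$ and $\wt{\sQ}$. This should follow because any contributing broken line must have exponent change $(m_0 - m, n' - n_m) \in (M^{\prin})^+$ of total degree bounded by $\ell$ (else it would have to bend on walls outside $\f{D}_{\ell}^{\s{A}^{\prin}}$, which is excluded by our choice of $\ell$), combined with the fact that $\wt{\sQ}$ lies in a chamber of $\f{D}_{\ell}^{\s{A}^{\prin}}$ whose closure contains $(m_0, n_0)$ for any lift $n_0$; a secondary, more routine task is to verify the openness of $\Xi$ and the existence of the chamber $C_{\ell}$, which follows from the preceding discussion of the structure of $\Xi$.
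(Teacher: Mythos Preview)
Your overall strategy matches the paper's: transport the relation to a generic $\sQ\in\Xi$ close to $m_0$ via Lemma~\ref{lem:thetaQQ}, then extract the $z^{m_0}$-coefficient. However, your implementation has a genuine gap at the step ``choose $\ell$ large enough that $m_0 \notin m + (\ell+1)M^+$ for every $m \neq m_0$ with $a_m \neq 0$.'' This step \emph{cannot} be carried out when $M^{\oplus}$ fails to be strongly convex, which is exactly the regime \S\ref{FailInj} is meant to address. Concretely, if $\ker(\omega_1|_{N_{\uf}})\cap N^+\neq\emptyset$ (as for once-punctured closed surfaces, where $n_0=\sum_i e_i\in\ker\omega_1$ by Lemma~\ref{lem:n0}), then $m_0-m\in\omega_1(N^{\oplus})$ implies $m_0-m\in\bigcap_{\ell}\ell M^+$, so no such $\ell$ exists. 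Your attempted fix in the final paragraph is circular: you say a contributing broken line has exponent change of total degree $\leq\ell$ ``which is excluded by our choice of $\ell$,'' but the choice of $\ell$ was made downstairs and provides no bound on the $N$-component $n'-n_m$ upstairs; in fact the set $\{n'\in N^{\oplus}\mid\omega_1(n')=m_0-m\}$ is infinite precisely when the Injectivity Assumption fails.

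The paper resolves this via Lemma~\ref{pmQ}, whose proof uses a different mechanism to obtain finiteness. Rather than bounding $\ell$ a priori, one fixes an arbitrary $\sQ\in\Xi$ and uses the hypotheses $m\in\Theta^{\midd}$, $\sQ\in\Xi$ to conclude $\vartheta_{m,\sQ}$ is a \emph{finite} Laurent polynomial, so there are only finitely many broken lines $\gamma_1,\ldots,\gamma_s$ upstairs with final exponent in $\rho^{-1}(m_0)$. One then translates $\sQ$ along the direction $m_0$ past the endpoints of all the final straight segments $\rho(L_i)$, arguing that no broken line ending at a lift of $\sQ'=\sQ+Km_0$ can have final exponent projecting to $m_0$ (else extending it would give a forbidden broken line ending at a lift of $\sQ$). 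A rescaling $\sQ'\sim m_0+\frac{1}{K+1}v$ then shows this holds for $\sQ$ arbitrarily close to $m_0$. This geometric translation-and-rescaling argument is the missing ingredient; the finiteness comes from the membership conditions $m\in\Theta^{\midd}$, $\sQ\in\Xi$ rather than from any convexity of $M^{\oplus}$.
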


This is a generalization of \cite[Thm 7.20]{gross2018canonical}, which is the analogous result with $\s{C}$ in place of the larger set $\?{\Xi}\supset \?{\s{C}}$.  In particular, loc. cit. showed linear independence of cluster monomials (previously proven in \cite{CKLP}).  The key to the proof of Proposition \ref{indep} is the following lemma:
\begin{lem}\label{pmQ}
Let $p\in \Theta^{\midd}$ and $m\in \Theta^{\midd}\setminus \{p\}$.  Then for $\sQ\in \Xi$ sufficiently close to $p$, the $z^p$-coefficient of $\vartheta_{m,\sQ}$ is $0$.  On the other hand, the $z^p$-coefficient of $\vartheta_{p,\sQ}$ is nonzero for all $\sQ\in \Xi$ and $1$ for all $\sQ\in \Xi$ sufficiently close to $p$.
\end{lem}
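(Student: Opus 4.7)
The plan is to reduce to the principal coefficient setup, where the Injectivity Assumption holds automatically and Lemma~\ref{lem:fQ} applies directly. Lifting $p$ to $(p,0)\in\Theta^{\prin,\midd}$ (which lies in $\Theta^{\prin,\midd}$ because that cone is closed under addition by $(0,N)$) and $\sQ$ to a lift $\wt{\sQ}\in\wt{\Xi}$ close to $(p,0)$, the definition $\vartheta_{m,\sQ}=\rho(\vartheta_{(m,0),\wt{\sQ}})$ expresses the $z^p$-coefficient of $\vartheta_{m,\sQ}$ as the \emph{finite} sum $\sum_{n''} a_{(p,n'')}$, where $a_{(p,n'')}$ denotes the Laurent coefficient of $z^{(p,n'')}$ in the Laurent polynomial $\vartheta_{(m,0),\wt{\sQ}}\in\kk[M^{\prin}]$. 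This sum is finite because $\vartheta_{(m,0),\wt{\sQ}}$ is a genuine Laurent polynomial, and the only $n''$ that can contribute are those in $N^{\oplus}$ with $\omega_1(n'')=p-m$.

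The key geometric input---which I expect to be the main technical subtlety---is that every wall of $\f{D}^{\s{A}^{\prin}}$ is supported on a ``horizontal'' hyperplane, stable under translation by $(0,N_{\bb{R}})$. To see this, note that a primitive direction of such a wall has the form $v=(\omega_1(n'),n')$ with $n'\in N^{\oplus}$ primitive, since $(M^{\prin})^{\oplus}=\{(\omega_1(n'),n'):n'\in N^{\oplus}\}$ by the computation $\omega_1^{\prin}(e_i,0)=(\omega_1(e_i),e_i)$ of \eqref{pi1prin}. A direct calculation from \eqref{LambdaPrin} then yields $\Lambda^{\prin}((\omega_1(n'),n'),(x,y))=-\langle n',x\rangle$, so $v^{\Lambda^{\prin}\perp}=n'^{\perp}\times N_{\bb{R}}$. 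Combined with the translation-invariance of wall supports noted in the paragraph preceding Lemma~\ref{lem:thetaQQ}, this forces every chamber of every $\f{D}^{\s{A}^{\prin}}_{\ell}$ to be a cylinder $U\times N_{\bb{R}}$. Consequently, if $\wt{\sQ}$ shares a chamber of $\f{D}^{\s{A}^{\prin}}_{\ell}$ with $(p,0)$, then $\wt{\sQ}$ shares the same chamber with $(p,n'')$ for every $n''\in N$.

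With the cylindrical structure established, choose $\ell$ large enough that $(p,n'')\notin(m,0)+(\ell+1)(M^{\prin})^{\oplus}$ for each of the finitely many $n''$ contributing to the sum---this holds once $\ell+1$ exceeds the maximum divisibility in $N^{\oplus}$ of these $n''$. Then for $\wt{\sQ}$ sufficiently close to $(p,0)$, it lies in the common chamber of $(p,0)$ and each relevant $(p,n'')$, so Lemma~\ref{lem:fQ} applied to $f=\vartheta_{(m,0)}$ at the exponent $v=(p,n'')$ yields $a_{(p,n'')}=\delta_{(p,n''),(m,0)}$. Summing, the $z^p$-coefficient of $\vartheta_{m,\sQ}$ equals $\delta_{m,p}$, proving both the vanishing for $m\neq p$ and the value $1$ for $m=p$. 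The remaining ``nonzero for all $\sQ\in\Xi$'' assertion follows from the pointed form \eqref{pointed}, which forces the coefficient of $z^{(p,0)}$ in $\vartheta_{(p,0),\wt{\sQ}}$ to be $1$ for every generic $\wt{\sQ}$, together with the positivity of broken lines (Lemma~\ref{BLpos}), which ensures every other contribution $a_{(p,n'')}$ in the sum is non-negative. Once the horizontal/cylindrical chamber structure is in hand, the remaining arguments are essentially bookkeeping with Lemma~\ref{lem:fQ}.
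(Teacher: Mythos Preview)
Your reduction to principal coefficients and the cylindrical chamber structure are both correct and useful. The gap is in the step where you ``choose $\ell$ large enough'' based on the finitely many contributing $n''$ and then require $\wt{\sQ}$ to be sufficiently close to $(p,0)$ relative to this $\ell$. The set of contributing $n''$ was determined by the Laurent polynomial $\vartheta_{(m,0),\wt{\sQ}}$ for a \emph{particular} $\wt{\sQ}$; once you replace $\wt{\sQ}$ by a closer point, that Laurent polynomial changes and its set of contributing $n''$ can change with it. So the choice of $\ell$ and the choice of $\wt{\sQ}$ chase each other. This circularity is exactly the difficulty the lemma is designed to overcome: when $\ker(\omega_1|_{N_{\uf}})\neq 0$, the set $\{n''\in N^{\oplus}:\omega_1(n'')=p-m\}$ is infinite, and you have no a priori uniform bound (over $\wt{\sQ}\in\wt{\Xi}$) on which of these $n''$ contribute. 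Lemma~\ref{lem:fQ} alone does not supply such a bound.

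The paper breaks this loop by a different, geometric argument. One fixes a single $\wt{\sQ}$ (with $\sQ=p+v$), lists the finitely many broken lines with ends $(\wt{m},\wt{\sQ})$ and final exponent in $\rho^{-1}(p)$, and observes that each projected final segment is a bounded interval $\{\sQ+tp:t\in[0,K_i]\}$. Choosing generic $K>\max_i K_i$ and setting $\sQ'=\sQ+Kp$, any broken line with ends $(\wt{m},\wt{\sQ}')$ and final exponent in $\rho^{-1}(p)$ would, after extending its last segment, yield one of the already-enumerated broken lines for $\wt{\sQ}$, forcing $\sQ'\in\rho(L_i)$ for some $i$ --- a contradiction. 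Rescaling, $\sQ'$ corresponds to $p+\frac{v}{K+1}$, which is as close to $p$ as desired. The key point your approach lacks is this monotonicity: moving $\sQ$ \emph{in the direction of $p$} cannot create new broken lines with final exponent projecting to $p$, so the finite enumeration made at the first $\wt{\sQ}$ controls all closer ones. Your ``nonzero for all $\sQ\in\Xi$'' argument via pointedness and positivity is fine.
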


Under the Injectivity Assumption, or more generally whenever the cone $M^{\oplus}_{\bb{R}}:=\bb{R}_{\geq 0}\langle \omega_1(e_i)|i\in I\setminus F\rangle$ is convex, Lemma \ref{pmQ} is known and used crucially in \cite{gross2018canonical} and elsewhere, e.g., in the proof of \cite[Prop 6.4(3)]{gross2018canonical} (the classical case of Proposition \ref{StructureConstants}).  In that setting, no containment in $\Theta^{\midd}$ or $\?{\Xi}$ is assumed because the convexity of $M^{\oplus}$ already gives the finiteness needed for the proof.  We note that for punctured surfaces with non-empty boundary, convexity of $M_{\bb{R}}^\oplus$ was shown by \cite[Theorem 6.8]{geiss2020generic}.

\begin{myproof}[Proof of Lemma \ref{pmQ}]
Let $\sQ\in \Xi$. Assume $p\in \?{\Xi}$, and write $\sQ$ as $\sQ=p+v$ for some $v\in M_{\bb{R}}$. Since $m\in \Theta^{\midd}$ and $\sQ\in \Xi$, $\vartheta_{m,\sQ}$ has only finitely many terms.  In particular, there are only finitely many broken lines $\gamma_1,\ldots,\gamma_s$ in $M_{\bb{R}}^{\prin}$ with ends $(\wt{m},\wt{\sQ})$ --- using tildes to denote fixed choices of lifts --- and final attached exponent in $\rho^{-1}(p)$. Since $m\neq p$, these broken lines must all bend somewhere before attaining this final attached exponent.

Let us first prove that the $z^p$-coefficient of $\vartheta_{m,\sQ}$ is $0$ for $\sQ$ sufficiently near $p$. If $p=0$, then the final attached exponent of any broken line contributing to the $z^p$-coefficient (for any $\sQ$) would lie in $\rho^{-1}(0)=0\oplus N$ and would therefore commute with all wall-crossings by \eqref{LambdaN}.  But such a broken line could not have bent, so the $z^0$-coefficient must in fact vanish.

Assume $p\neq 0$ from now on. Then for each of these broken lines $\gamma_i$, $i=1,\ldots,s$, the image under $\rho$ of the final straight segment $L_i$ will be a line segment $\rho(L_i)=\{\sQ+tp|t\in [0,K_i]\}$ for some finite $K_i\in \bb{R}_{>0}$.  Note that these images under $\rho$ are independent of the choice of lift $\wt{\sQ}$ of $\sQ$, because broken lines with ends $(\wt{m},\wt{\sQ}_1)$ and $(\wt{m},\wt{\sQ}_2)$ for $\wt{\sQ}_1$ and $\wt{\sQ}_2$ denoting two lifts of $\sQ$ are related by translation by $\wt{\sQ}_2-\wt{\sQ}_1\in (0,N_{\bb{R}})$ and therefore have the same projection under $\rho$.  Fix a generic $K\in \bb{R}$ with $K>\max_i K_i$.  Let $\sQ'=\sQ+Kp=(K+1)p+v$.  By construction, $\sQ'$ is not contained in any $\rho(L_i)$ for any $i=1,\ldots,s$. Thus, there cannot be any broken lines with ends $(\wt{m},\wt{\sQ'})$ and final exponent in $\rho^{-1}(p)$ because the last straight segment of such a broken line could be extended to end at a lift of $\sQ$, but the image under $\rho$ of the final straight segment will contain $\sQ'$ and therefore cannot be one of the $L_1,\ldots,L_s$ considered above.  Since all walls are cones, we can rescale $\wt{\sQ'}$ to find $\vartheta_{{m},{(K+1)p+v}} = \vartheta_{{m},{p+\frac{1}{K+1}v}}$.  Thus, the $z^p$-coefficient of $\vartheta_{{m},{p+\frac{1}{K+1}v}}$ is $0$ for large $K$, as claimed.

For the statement regarding the $z^p$-coefficient of $\vartheta_{p,\sQ}$, note that there is always a straight broken line with ends $(\wt{p},\wt{\sQ})$ contributing $z^{\wt{p}}$ to $\vartheta_{\wt{p},\wt{\sQ}}$. By Lemma \ref{BLpos} (the positivity of broken lines), other broken lines will not cancel with this term, so the $z^p$-coefficient of $\vartheta_{p,\sQ}$ will be nonzero.  The fact that this coefficient is $1$ when $\sQ$ is sufficiently close to $p$ follows from essentially the same arguments used in the previous paragraph.
\end{myproof}

\begin{proof}[Proof of Proposition \ref{indep}]
Suppose we have a relation 
\begin{align}\label{depend}
	\sum_{m\in \Theta^{\midd}} a_m \nu(\vartheta_{m}) = 0.  
\end{align}
For $\sQ\in \Xi$, let 
\begin{align*}
	f_{\sQ}:=\sum_{m\in \Theta^{\midd}} a_m \vartheta_{m,\sQ}.
\end{align*}
By Lemma \ref{lem:thetaQQ}, the assumption that $f_{\sQ}=0$ for $\sQ\in C_{\sd}^+$ implies that $f_{\sQ}=0$ for all $\sQ\in \Xi$.  Let $p\in \?{\Xi}\cap \Theta^{\midd}$. By Lemma \ref{pmQ}, if $\sQ\in \Xi$ is sufficiently close to $p$ (note that $p\in \?{\Xi}$ ensures that such $\sQ$ exists), then the $z^p$-coefficient of $f_{\sQ}$ (which is $0$ since $f_{\sQ}=0$) must equal $a_p$.  Hence, $a_p=0$, as desired.
\end{proof}

When the Injectivity Assumption fails, the \textbf{full Fock-Goncharov conjecture} is modified slightly to consist of the following conditions:
\begin{itemize}
\item $\nu:\s{A}^{\midd}\rar \s{A}^{\up}$ is injective;
\item $\Theta^{\midd}=M$;
\item $\nu(\s{A}^{\midd})=\s{A}^{\up}$.
\end{itemize}
By \cite[Prop. 0.14]{gross2018canonical}, Proposition \ref{FGc} can be generalized as follows:
\begin{prop}\label{FGc2}
If $M^{\oplus}$ is strongly convex for some seed mutation equivalent to $\sd$, and if the cluster complex $\s{C}$ is not contained in a half-space, then the full Fock-Goncharov conjecture holds.
\end{prop}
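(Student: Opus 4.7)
The approach is to reduce to the principal-coefficients seed $\sd^{\prin}$, for which the Injectivity Assumption automatically holds, and then transport the conclusions back along the projection $\rho: M^{\prin} = M \oplus N \rightarrow M$.

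Since $\s{C}$ is big by hypothesis, Proposition \ref{FGc} applies to give the full Fock-Goncharov conjecture for $\s{A}^{\prin}$: one has $\Theta^{\prin,\midd} = M^{\prin}$, $\nu^{\prin}$ is injective, and $\nu^{\prin}(\s{A}^{\prin,\midd}) = \s{A}^{\prin,\up}$. Since $\Theta^{\midd} := \rho(\Theta^{\prin,\midd})$ and $\rho$ is surjective, one immediately obtains $\Theta^{\midd} = M$. Injectivity of $\nu:\s{A}^{\midd} \to \s{A}^{\up}$ follows directly from strong convexity of $M^{\oplus}$ via \cite[Thm. 0.3(7)]{gross2018canonical}, as recalled in the paragraph preceding the proposition.

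The core content is thus the surjectivity $\nu(\s{A}^{\midd}) = \s{A}^{\up}$. The plan is: given $f \in \s{A}^{\up}$, construct a lift $\tilde f \in \s{A}^{\prin,\up}$ with $\rho(\tilde f) = f$; then apply the full Fock-Goncharov conjecture already established for $\s{A}^{\prin}$ to expand $\tilde f = \sum_{p \in M^{\prin}} a_p \vartheta^{\prin}_p$ as a finite sum; finally apply $\rho$, using the identity $\rho(\vartheta^{\prin}_{(m,n)}) = \nu(\vartheta_m)$, which follows from \eqref{theta-rho} since $(m,n) \in \Theta^{\prin,\midd} = M^{\prin}$ for all $n$, and regroup terms by the value of $\rho(p)$ to obtain a finite expression $f = \sum_{m \in M} \bigl( \sum_{n} a_{(m,n)}\bigr) \nu(\vartheta_m)$, showing $f \in \nu(\s{A}^{\midd})$.

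The principal obstacle is constructing the lift $\tilde f \in \s{A}^{\prin,\up}$. The natural candidate is the naive lift $\tilde f = \sum a_m z^{(m,0)}$ for $f = \sum a_m z^m$ in the initial cluster; showing this lies in $\s{A}^{\prin,\up}$ requires verifying its Laurent polynomiality in every principal-coefficient cluster, which can be established inductively by comparing mutations of $\sd$ and $\sd^{\prin}$: at the principal level, the $F$-polynomial mechanism of \cite{FominZelevinsky07} ensures that principal mutation specializes to ordinary mutation under $\rho$, so Laurent polynomiality propagates compatibly through the mutation graph. The overall scheme follows the proof of \cite[Prop. 0.14]{gross2018canonical}.
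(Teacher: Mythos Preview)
The paper does not prove this proposition; it simply cites \cite[Prop.~0.14]{gross2018canonical}. Your outline for $\Theta^{\midd} = M$ and for the injectivity of $\nu$ is correct and matches the discussion surrounding the statement.

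Your surjectivity argument, however, has a genuine gap. The naive lift $\tilde f = \sum a_m z^{(m,0)}$ of $f = \sum a_m z^m \in \s{A}^{\up}$ is \emph{not} in general an element of $\s{A}^{\prin,\up}$. Your justification---that ``principal mutation specializes to ordinary mutation under $\rho$''---runs in the wrong direction: it shows that $\rho$ carries $\s{A}^{\prin,\up}$ into $\s{A}^{\up}$, not that naive lifts of $\s{A}^{\up}$-elements land in $\s{A}^{\prin,\up}$. For a concrete failure, take the $A_2$ seed of Example~\ref{A2ex} with principal coefficients. The cluster variable $f = A_1' = (1+A_2)/A_1 \in \s{A}^{\up}$ has naive lift $\tilde f = (1+A_2)/A_1$; but with principal coefficients the exchange relation reads $A_1 A_1'' = 1 + y_1 A_2$ (where $y_1$ is the principal-coefficient frozen variable attached to vertex~$1$), so in the mutated cluster $\tilde f = (1+A_2)A_1''/(1+y_1 A_2)$, which is not a Laurent polynomial. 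A correct lift of $f$ here is $A_1''$ itself, not the naive one; in general, producing such a lift is no easier than proving surjectivity directly.

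The argument in \cite{gross2018canonical} uses the strong convexity of $M^{\oplus}$ in an essential way for surjectivity, not merely for injectivity: convexity gives a dominance order $\prec_{\sd}$ allowing one to identify the finitely many maximal Laurent degrees of an arbitrary $f \in \s{A}^{\up}$ and peel them off as theta-function contributions. The remark following Proposition~\ref{dense-FG} confirms that the authors do not know how to establish $\nu(\s{A}^{\midd}) = \s{A}^{\up}$ without convexity, so this hypothesis is doing real work that your lifting scheme does not capture.
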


Proposition \ref{indep} gives us the following without having to worry about convexity.
\begin{prop}\label{dense-FG}
If $\s{C}$ is dense in $M_{\bb{R}}$, then $\nu:\s{A}^{\midd}\rar \s{A}^{\up}$ is injective and $\Theta^{\midd}=M$. 
\end{prop}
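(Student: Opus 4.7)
The plan splits into two independent parts, one for each conclusion in the statement.

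For the injectivity of $\nu$, the plan is to read off the result from Proposition~\ref{indep}. Density of $\s{C}$ in $M_{\bb{R}}$ means $\?{\s{C}} = M_{\bb{R}}$. Since $\?{\Xi} \supset \?{\s{C}}$ by construction (as noted immediately after \eqref{eq:Xi}), this forces $\?{\Xi} = M_{\bb{R}}$, and hence $\?{\Xi} \cap \Theta^{\midd} = \Theta^{\midd}$. Given any relation $\sum_{m\in\Theta^{\midd}} a_m\nu(\vartheta_m) = 0$ in $\s{A}^{\up}$, Proposition~\ref{indep} then forces $a_m = 0$ for every $m \in \?{\Xi} \cap \Theta^{\midd} = \Theta^{\midd}$, giving injectivity.

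For the equality $\Theta^{\midd} = M$, the plan is to reduce to the principal-coefficients setup, where the Injectivity Assumption automatically holds. Density of $\s{C}$ in $M_{\bb{R}}$ is strictly stronger than the bigness of $\s{C}$: any closed half-space $H\subsetneq M_{\bb{R}}$ is closed and proper, so $\s{C}\subseteq H$ would force $\?{\s{C}}\subseteq H \neq M_{\bb{R}}$, contradicting density. Hence Proposition~\ref{FGc} applies and yields the full Fock--Goncharov conjecture for $\s{A}^{\prin}$; in particular $\Theta^{\prin,\midd} = M^{\prin}$. Since $\Theta^{\midd}$ is defined as $\rho(\Theta^{\prin,\midd})$ and $\rho\colon M^{\prin}\onto M$ is surjective, it will follow that $\Theta^{\midd} = \rho(M^{\prin}) = M$.

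I do not expect a substantive obstacle here; both steps are short chains of references to machinery already developed in the section. The one conceptual point worth emphasizing is that Proposition~\ref{FGc} delivers the full Fock--Goncharov conjecture for $\s{A}^{\prin}$ purely from bigness of $\s{C}$, with no Injectivity Assumption needed on $\sd$ itself (since the Injectivity Assumption holds automatically for $\sd^{\prin}$). This is what lets us sidestep any strong convexity hypothesis on $M^{\oplus}$ of the sort appearing in Proposition~\ref{FGc2}, which was precisely the motivation for stating Proposition~\ref{dense-FG} separately.
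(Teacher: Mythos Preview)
Your proposal is correct and follows essentially the same route as the paper: injectivity via Proposition~\ref{indep} (using $\?{\Xi}\supset\?{\s{C}}=M_{\bb{R}}$), and $\Theta^{\midd}=M$ by passing to principal coefficients where bigness of the cluster complex yields the full Fock--Goncharov conjecture. The only cosmetic difference is that the paper cites Proposition~\ref{FGc2} applied to $\sd^{\prin}$ (noting that $\s{C}^{\prin}=\rho^{-1}(\s{C})$ is not contained in a half-space), whereas you cite Proposition~\ref{FGc} directly; these are equivalent here.
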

We expect $\nu(\s{A}^{\midd})=\s{A}^{\up}$ to hold in these cases as well, but without the convexity of $M^{\oplus}$ we are not sure how to prove this.
\begin{proof}
The injectivity of $\nu$ in such cases is immediate by Proposition \ref{indep}, so it remains to prove that $\Theta^{\midd}=M$.  The cluster complex for $\sd^{\prin}$ is $\rho^{-1}(\s{C})$, and this cannot be contained in a half-space since $\s{C}$ is dense.  So Proposition \ref{FGc2} implies that $\Theta^{\prin,\midd}=M^{\prin}$, hence $\Theta^{\midd}=\rho(\Theta^{\prin,\midd})=M$, as desired. 
\end{proof}

This density of $\s{C}$ holds for most cluster algebras from marked surfaces:

\begin{prop}[\cite{yurikusa2020density}, Thm. 1.2]\label{gdense}
Let $\Sigma$ be a connected triangulable surface.  If $\Sigma$ contains a single puncture and no other markings, then the closure of the cluster complex is a half-space in $M_{\bb{R}}$ (and the complementary half-space is the closure of the notched-arc cluster complex).  Otherwise the cluster complex is dense in $M_{\bb{R}}$.
\end{prop}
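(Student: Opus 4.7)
My approach would combine the $g$-vector / shear-coordinate dictionary (Proposition~\ref{prop:shear_g}) with Yurikusa's Dehn-twist density result (Proposition~\ref{prop:closure_laminate_cone}). The statement has two parts: density in the generic case, and the half-space characterization for once-punctured closed surfaces, so I would handle them in parallel.

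\textbf{Step 1 (Reformulating the cluster complex).} By Proposition~\ref{prop:shear_g}, if $\Delta'$ is any tagged triangulation reachable from $\Delta$ by flips, then the principal $g$-vectors of the cluster at $\Delta'$ are exactly $-b^{\Delta}(e(\Delta'))$. Consequently, the cluster complex $\mathcal{C}$, after projecting to $M_{\uf,\mathbb{R}}$, is the fan $\bigcup_{\Delta'} -C(b^{\Delta}(e(\Delta')))$, the union ranging over all tagged triangulations obtainable from $\Delta$ by mutations. In particular, $\mathcal{C}$ contains every ``shear cone'' of an elementary laminate of a flip-reachable tagged triangulation.

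\textbf{Step 2 (Every lamination is captured).} For any integer unbounded lamination $L$ whose non-closed components extend to a flip-reachable tagged triangulation $\Delta^{L}$, Proposition~\ref{prop:closure_laminate_cone} places $b^{\Delta}(L)$ in the closure $\overline{\bigcup_{m\ge 0} C(b^{\Delta}(e(\tw_{L}^{m}(\Delta^{L}))))}$. Each $\tw_{L}^{m}(\Delta^{L})$ is again a tagged triangulation reachable from $\Delta$ by flips (using the fact that Dehn twists can be realized as finite flip sequences, together with the fact that each $\Delta^{L}$ itself is reachable), so by Step~1 the entire union lies in $-\mathcal{C}$. Hence $-b^{\Delta}(L) \in \overline{\mathcal{C}}$.

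\textbf{Step 3 (Case analysis).} By Proposition~\ref{prop:shear_coord}, the map $b^{\Delta}$ is a bijection from $\mathcal{X}_{L}(\Sigma,\mathbb{Z})$ onto the integer lattice of $M_{\uf}^{*}$, so every lattice point is the shear coordinate of some lamination. If $\Sigma$ has either nonempty boundary or at least two punctures, one can show that every integer unbounded lamination $L$ extends to \emph{some} flip-reachable tagged triangulation $\Delta^{L}$ --- this uses that between any two tagged triangulations of such a surface, there is a sequence of flips (the tagged flip graph is connected). Step~2 then shows that every integer point of $M_{\uf,\mathbb{R}}$ lies in $\overline{\mathcal{C}}$, and since integer points are dense in $M_{\uf,\mathbb{R}}$, so is $\mathcal{C}$. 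For $\Sigma$ a once-punctured closed surface, the compatibility condition forces every tagged triangulation to have a single uniform tag (plain or notched) at the unique puncture, so the flip graph splits into two components. Starting from an all-plain $\Delta$, every reachable $\Delta'$ is all-plain, and by Definition~\ref{def:el-lam} every elementary laminate $e(\gamma)$ spirals in the same direction at the puncture. This constrains all shear coordinates of elementary laminates (hence, after Step~2, all points of $\overline{\mathcal{C}}$) to lie in the half-space defined by the linear functional that detects net clockwise spiraling at the puncture. Conversely, any lamination with only clockwise spirals at the puncture extends to some all-plain tagged triangulation, so by Step~2 the closure fills that entire half-space. Applying the tag-switching automorphism $\mathbf{i}_{p}$ of Lemma~\ref{lem:tag-change} exchanges the two half-spaces and identifies the complementary half-space with the closure of the notched-arc cluster complex.

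\textbf{Main obstacle.} The technical heart is showing that for non-once-punctured-closed surfaces, every lamination in $\mathcal{X}_{L}(\Sigma,\mathbb{Z})$ extends to some tagged triangulation reachable from the fixed initial $\Delta$ by flips; equivalently, that flips act transitively on the set of tagged triangulations whose plain/notched pattern is compatible with at least one flip orbit. This is where the hypothesis ``not a once-punctured closed surface'' enters essentially: with a boundary marking or a second puncture available, one can freely exchange plain and notched tags via a sequence of flips passing through a quasi-triangulation with incompatible tags, while a once-punctured closed surface provides no such ``escape route'' and the flip graph disconnects.
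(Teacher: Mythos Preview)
The paper does not supply its own proof of this proposition: it simply cites Yurikusa's theorem as a black box, adding only the remark that the result extends to the setting with frozen variables because each cone $\sigma$ of the cluster complex becomes $\sigma \oplus M_{F,\mathbb{R}}$. Your proposal is effectively a reconstruction of Yurikusa's argument using the ingredients the paper has already imported from his work (Theorem~\ref{Thm:Yurikusa_Dehn} and Proposition~\ref{prop:closure_laminate_cone}), so at the level of strategy you are aligned with the underlying source rather than with anything the present paper does.

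Your sketch is broadly correct as an outline of Yurikusa's method, but there are a couple of places where it is thinner than a full proof would require. First, you work entirely in $M_{\uf,\mathbb{R}}$ and never return to $M_{\mathbb{R}}$; the paper's one-line remark about $\sigma \oplus M_{F,\mathbb{R}}$ is exactly what closes that gap. Second, in the once-punctured closed case your identification of the bounding hyperplane is left vague (``the linear functional that detects net clockwise spiraling''); the paper later pins this down as $n_0^{\perp}$ with $n_0=\sum_{i\in I_{\uf}} e_i$ in Lemma~\ref{lem:n0}. Finally, your surjectivity claim --- that the shear coordinates of laminations spiraling only clockwise at the puncture fill an entire half-lattice --- is asserted rather than argued; Yurikusa handles this by a more careful accounting of which laminations correspond to which half-space, and that step deserves at least a citation or a sentence of justification.
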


We note that \cite{yurikusa2020density} does not have boundary arcs / frozen vectors, but the results are easily extended to our setup because adding frozen vectors just replaces each cone $\sigma$ in the cluster complex with $\sigma \oplus M_F$ where $M_F$ is the $\bb{R}$-span of the frozen vectors.

We also note that Proposition \ref{gdense} is easily extended to disconnected surfaces --- if $\Sigma$ is a disjoint union of connected triangulable surfaces, then the corresponding cluster complex is the product of the cluster complexes for the components (this follows easily using, e.g.,  the constructions of \S \ref{sec:union}).

\begin{thm}\label{fFG-surfaces}
Let $\Sigma$ be a triangulable marked surface.  If no component of $\Sigma$ is closed (i.e., with empty boundary), then the full Fock-Goncharov conjecture holds.  If some components of $\Sigma$ are possibly closed, but no component is a once-punctured closed surface, then the full Fock-Goncharov conjecture holds for $\s{A}_t^{\prin}$ and $\s{X}_t$. In any case (for any triangulable $\Sigma$), it is at least true that $\nu$ is injective, $\s{A}^{\midd}=\s{A}^{\can}$,  and $\?{\Xi}=M_{\bb{R}}$.
\end{thm}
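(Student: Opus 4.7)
The plan is to reduce the three claims to the tools provided by Propositions \ref{FGc}, \ref{gdense}, and \ref{indep}, treating connected components of $\Sigma$ one at a time. Since all relevant data (the cluster complex, the scattering diagram, $\Theta^{\midd}$, and $\Xi$) factors as a product over connected components of $\Sigma$, it suffices to work with a single connected triangulable marked surface.

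For the first claim, let $\Sigma$ be connected with non-empty boundary. Then $\Sigma$ is not once-punctured closed, so Proposition \ref{gdense} gives that $\s{C}$ is dense in $M_{\bb{R}}$, in particular not contained in any closed half-space. The Injectivity Assumption follows by an argument parallel to Lemma \ref{lem:sdLambda-comp}: each interior arc lies in two triangles, and the boundary arcs provide enough frozen coordinates to yield a basis for $M_{\uf}$ inside $\omega_1(N_\uf)$. Applying Proposition \ref{FGc} then gives the full Fock-Goncharov conjecture for $\s{A}_t$, $\s{A}_t^{\prin}$, and $\s{X}_t$.

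For the second claim, suppose $\Sigma$ is connected, closed, and has at least two punctures. Proposition \ref{gdense} again gives density of $\s{C}$, so Proposition \ref{FGc} yields the full Fock-Goncharov conjecture for $\s{A}_t^{\prin}$ and $\s{X}_t$; the Injectivity Assumption may fail in the absence of boundary arcs, so no claim is made for $\s{A}_t$ in this case.

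The third statement remains to be proved for once-punctured closed surfaces, where $\s{C}$ is only a half-space in $M_{\bb{R}}$. The key tool is the tag-switching automorphism $\iii_p$ of Lemma \ref{lem:tag-change}, which produces an alternative ``notched'' cluster structure whose cluster complex $\s{C}^{\mathrm{notched}}$ fills the complementary half-space. Notched tagged arcs admit Laurent expansions in every plain tagged triangulation (from \cite{MusikerSchifflerWilliams09}, or via the covering-space and digon-relation arguments of Examples \ref{ex:noose} and \ref{ex:different-tags}); identifying these Laurent polynomials with the plain theta functions $\vartheta_m$ whose $g$-vector $m$ lies in the notched half-space gives $\Theta^{\midd}=M$, hence $\s{A}^{\midd}=\s{A}^{\can}$. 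The interiors of $\s{C}$ and of $\s{C}^{\mathrm{notched}}$ both lie in $\Xi$, and their union is dense in $M_{\bb{R}}$, so $\?{\Xi}=M_{\bb{R}}$; Proposition \ref{indep} then gives injectivity of $\nu$. The main obstacle is precisely this identification of plain theta functions in the notched half-space with notched cluster monomials---equivalently, showing that the plain scattering diagram, when read in the notched half-space, produces the expected cluster-monomial theta functions under $\iii_p$. I expect this to require the covering-space and folding techniques developed in Appendices \S \ref{GenFoldApp} and \S \ref{sec:closed_surface}, which relate the scattering diagram of a once-punctured closed surface to the (dense) cluster complex of a multiply-punctured covering space, where the second claim already applies.
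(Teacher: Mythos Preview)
Your treatment of the second and third claims is essentially the same as the paper's: density of $\s{C}$ plus Proposition \ref{FGc} for the multi-punctured closed case, and a forward reference to the bracelets-equal-theta results (via the appendices) together with the tag-change automorphism $\iii_p$ for the once-punctured closed case. That part is fine.

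The first claim, however, has a genuine gap. You assert that the Injectivity Assumption holds for any triangulable $\Sigma$ with non-empty boundary, justified by ``an argument parallel to Lemma \ref{lem:sdLambda-comp}.'' This is false once punctures are present. For the once-punctured triangle, with interior arcs $a_1,a_2,a_3$ from the three boundary vertices to the puncture, one checks directly from the signed adjacency matrix that $\omega_1(e_{a_1})+\omega_1(e_{a_2})+\omega_1(e_{a_3})=0$, so $\omega_1|_{N_{\uf}}$ has nontrivial kernel. More generally, for each puncture $p$ the vector $\sum_{\gamma\ni p} e_\gamma$ (sum over arcs incident to $p$) lies in $\ker(\omega_1|_{N_{\uf}})$. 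Lemma \ref{lem:sdLambda-comp} is stated only for unpunctured surfaces precisely because its conclusion fails otherwise; hence Proposition \ref{FGc} is unavailable.

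The paper's route avoids this entirely: it invokes \cite[Thm.~6.8]{geiss2020generic} to get that $M^{\oplus}$ is strongly convex for punctured surfaces with non-empty boundary, and then applies Proposition \ref{FGc2} (the version of the full Fock--Goncharov criterion that needs only convexity of $M^{\oplus}$, not injectivity). You should replace your injectivity argument with this convexity input.
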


In the once-punctured closed surface cases, the proofs of the above claims will depend on Theorem \ref{thm:punctured_bracelet_theta} (the result relating tagged bracelets to theta functions).  We therefore take care to not utilize these cases while proving Theorem \ref{thm:punctured_bracelet_theta}.

\begin{proof}[Proof of Theorem \ref{fFG-surfaces}]
If every component of $\Sigma$ has non-empty boundary, then $M^{\oplus}$ is convex by \cite[Theorem 6.8]{geiss2020generic}, and $\s{C}$ is not contained in a half space (Proposition \ref{gdense}), so the full Fock-Goncharov conjecture follows from Proposition \ref{FGc2}.   

Now suppose that $\Sigma$ possibly does have some closed components. We assume for simplicity that $\Sigma$ is connected (the general cases easily follow).  If $\Sigma$ has at least two punctures, then $\s{C}$ is dense in $M_{\bb{R}}$ by Proposition \ref{gdense}, so the full Fock-Goncharov conjecture for $\s{A}_t^{\prin}$ and $\s{X}_t$ follows from Proposition \ref{FGc}, while the claims for $\s{A}$ follow from Proposition \ref{dense-FG}.

Suppose $\Sigma$ has only one puncture. We will see in Theorem \ref{thm:punctured_bracelet_theta} that the theta functions coincide with tagged bracelets (up to scaling in the case of the once-punctured torus), and since they are elements of $\Sk^{\Box}(\Sigma)$, the tagged bracelets belong to $\s{A}^{\up}$ by Proposition \ref{SkAPropPun}.  The equality $\Theta^{\midd}=M$ follows.

We next show that $\?{\Xi}=M_{\bb{R}}$.  We always have $\?{\Xi}\supset \?{\s{C}}$, so the claim is immediate when $\?{\s{C}}=M_{\bb{R}}$.  This leaves only the once-punctured closed surface setting.  Here, we have seen that all bracelets (hence all theta functions by our upcoming results) are Laurent polynomials in each cluster. As before, we can apply the automorphism of Remark \ref{rmk:tag-switch} which changes all tags at the puncture, and this immediately yields that the bracelets are Laurent polynomials for the notched triangulation clusters as well.

The injectivity of $\nu$ for once-punctured closed surfaces now follows from Proposition \ref{indep}. 
\end{proof}

We will use Theorem \ref{fFG-surfaces} to identify $\s{A}^{\can}$ with its image $\nu(\s{A}^{\can})\subset \s{A}^{\up}$.  In particular, we will write $\nu(\vartheta_p)$ as simply $\vartheta_p$.

We note that the full Fock-Goncharov conjecture is also known for $\s{A}^{\prin}$ and $\s{X}$ in the once-punctured torus case by \cite{zhou2020cluster}.  On the other hand (as we previously noted in Remark \ref{rmk:once-torus}), \cite{zhou2020cluster} also showed that $\s{A}^{\can}$ is a \textit{proper} subalgebra of $\s{A}^{\up}$ in the case of the once-punctured torus, so the full Fock-Goncharov conjecture fails for $\s{A}$ in this case. Interestingly, if one defines $\s{A}^{\up,\Box}$ to be the intersection of $\s{A}^{\up}$ with the clusters associated to the notched arc cluster structure, then \cite{zhou2020cluster} finds that one does obtain $\s{A}^{\can}=\s{A}^{\up,\Box}$ in this case.  One naturally expects the same to be true for all other once-punctured positive-genus surfaces.

We do not currently know how to show this, but we will see in Theorem \ref{thm:tag_sk_up_cl_alg}(i) that we do always have $\s{A}^{\can}=\Sk^{\Box}(\Sigma)$ (with the caveat that in the once-punctured torus cases, one must either work over $\kk\supset \bb{Q}$ or insert an extra wall in the scattering diagram).  The argument is essentially that $\Sk^{\Box}(\Sigma)$ is generated by tagged bracelets, and tagged bracelets will turn out to be theta functions.

\subsection{Compatibility of theta bases for $\s{A}$ and $\s{X}$}

Recall the map $\omega_1:N\rar M$ and the induced map $\omega_1:\kk[N]\rar \kk[M]$.
It is well-known (cf. \cite[Prop. 2.2]{FG1}) that $\omega_1$ is compatible with mutations and thus extends to a map $\omega_1:\s{X}^{\up}\rar \s{A}^{\up}$.  Note that, under the Injectivity Assumption, $\omega_1$ also induces a map $\omega_1:\wh{\s{X}} \rar \wh{\s{A}}$ for $\wh{\s{X}}:=\kk\llb N\rrb$ and $\wh{\s{A}}:=\kk\llb M\rrb$ (still commuting with mutations). 

\begin{prop}\label{prop:omega-theta}
For each $n\in \Theta_{\s{X}}^{\midd}$, consider $\vartheta_n\in \s{X}^{\up}$.  Then $\vartheta_{\omega_1(n)}\in \s{A}^{\up}$, and we have \begin{align}\label{omega-theta}
	\omega_1(\vartheta_n)=\vartheta_{\omega_1(n)}.
\end{align}
Now suppose that the Injectivity Assumption holds so that $\vartheta_m$ is well-defined in $\wh{\s{A}}$ for all $m\in M$.  Then \eqref{omega-theta} applies for all $n\in N$.

In the quantum setting, if $\Lambda$ satisfies the compatibility condition \eqref{Lambda-B} for all $i\in I$ (as opposed to just for $i\in I\setminus F$), then there is a $\kk$-algebra homomorphism $\omega_1:\wh{\s{X}}_t\rar \wh{\s{A}}_t$ for $\wh{\s{X}}_t:=\kk_t\llb N\rrb$, $\wh{\s{A}}_t:=\kk_t\llb M\rrb$, determined by $z^n\mapsto z^{\omega_1(n)}$ and $t\mapsto t^d$. This morphism commutes with mutations to induce $\omega_1:\s{X}_t^{\up}\rar \s{A}_t^{\up}$.  Furthermore, this $\omega_1$ induces $\omega_1:\s{X}_t^{\can}\rar \s{A}_t^{\can}$ with $\omega_1(\vartheta_n)=\vartheta_{\omega_1(n)}$. 

\end{prop}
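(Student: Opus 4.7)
The plan is to exploit the factorization $\omega_1 = \rho \circ \xi$, where $\xi: N \hookrightarrow M^{\prin}$ is the embedding $n \mapsto (\omega_1(n), n)$ from \eqref{xi} and $\rho: M^{\prin} \to M$ is the projection from \eqref{rhodef}. On the (quantum or classical) torus algebras and their completions, $\xi$ sends $z^n \mapsto z^{\xi(n)}$ while $\rho$ sends $z^{(m,n')} \mapsto z^m$, so their composition matches $\omega_1$ as given in \eqref{omega1}. Thus it suffices to chase the definitions of $\vartheta_n$ and $\vartheta_{\omega_1(n)}$ through this factorization.

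In detail, for $n \in \Theta_{\s{X}}^{\midd}$, Definition~\ref{X-Aprin} characterizes the $\s{X}$-theta function by $\xi(\vartheta_n) = \vartheta_{\xi(n)}$ inside $\s{A}_t^{\prin,\up}$, which in particular forces $\xi(n) \in \Theta^{\prin,\midd}$ and hence $\omega_1(n) = \rho(\xi(n)) \in \Theta^{\midd}$. On the $\s{A}$-side, \eqref{theta-rho} gives $\vartheta_{\omega_1(n)} = \rho(\vartheta_{(\omega_1(n), n)}) = \rho(\vartheta_{\xi(n)})$ (independent of the choice of lift, by the discussion following \eqref{LambdaN}). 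Combining these yields
\begin{align*}
\omega_1(\vartheta_n) = \rho(\xi(\vartheta_n)) = \rho(\vartheta_{\xi(n)}) = \vartheta_{\omega_1(n)},
\end{align*}
and $\vartheta_{\omega_1(n)} \in \s{A}^{\up}$ follows because $\omega_1$ sends $\s{X}^{\up}$ into $\s{A}^{\up}$. Under the Injectivity Assumption, the same reasoning applied in the completions $\wh{\s{X}} = \kk\llb N \rrb$ and $\wh{\s{A}} = \kk\llb M \rrb$ produces \eqref{omega-theta} for every $n \in N$, not just those in $\Theta_{\s{X}}^{\midd}$.

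For the quantum statement, the first task is to verify that $\omega_1: \wh{\s{X}}_t \to \wh{\s{A}}_t$ defined by $z^n \mapsto z^{\omega_1(n)}$ and $t^{1/D} \mapsto t^{d/D}$ is a well-defined $\kk$-algebra homomorphism. This amounts to checking $\Lambda(\omega_1(n), \omega_1(n')) = d\omega(n, n')$ for \emph{all} $n, n' \in N$, which (cf.\ \eqref{Lambda-v1v2}) is precisely the hypothesis that \eqref{Lambda-B} holds for every $i \in I$. The second task is to show that this $\omega_1$ intertwines the quantum mutations $\mu_j^{\s{X}}$ and $\mu_j^{\s{A}}$; this can be checked on generators by comparing \eqref{Xmut} with \eqref{mujAInverse}, or more conceptually by applying $\omega_1$ to the dilogarithm formula \eqref{mu-dilog} and using that $\omega_1$ sends $\Psi_t(z^{e_j})$ to $\Psi_{t^d}(z^{\omega_1(e_j)})$, which is clear from the series \eqref{Li}. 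With $\omega_1: \s{X}_t^{\up} \to \s{A}_t^{\up}$ thus established, the $\xi$–$\rho$ argument from the classical case carries over verbatim, since both Definition~\ref{X-Aprin} and \eqref{theta-rho} are already quantum constructions.

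The main technical subtlety is that $\rho$ really does intertwine the broken-line constructions of $\vartheta_{\xi(n)}$ and $\vartheta_{\omega_1(n)}$, but this is already built into Section~\ref{FailInj}: walls of $\f{D}^{\s{A}^{\prin}}$ are invariant under translation by $(0, N_{\bb{R}})$, and their wall-crossing automorphisms act trivially on $z^{(0,n)}$ by \eqref{LambdaN}, so $\rho$ sends broken lines in $M^{\prin}_{\bb{R}}$ to broken lines in $M_{\bb{R}}$ with the expected final monomials. Granted this compatibility (which requires no new argument here), the claim reduces to the direct chain of definitions outlined above.
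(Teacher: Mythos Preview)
Your proof is correct and follows essentially the same approach as the paper: both use the factorization $\omega_1 = \rho \circ \xi$ together with Definition~\ref{X-Aprin} and \eqref{theta-rho} for the classical case, and both identify the key quantum observation that $\omega_1$ sends $\Psi_t(z^{e_j})$ to $\Psi_{t^d}(z^{\omega_1(e_j)})$. The paper's quantum argument is terser (it cites \cite[Thm.~1.2(8)]{davison2019strong} for $d=1$ and extends), while you spell out more of the verification; one small caution is that your claim that the $\xi$--$\rho$ argument ``carries over verbatim'' in the quantum setting glosses over the fact that Definition~\ref{X-Aprin} quantizes $\s{A}^{\prin}$ via $\Lambda^{\prin}$ whereas $\rho$ is an algebra map only for the quantization via $\rho^*\Lambda$, so the cleaner route in the quantum case is the direct scattering-diagram comparison you already sketched rather than literally re-running $\rho\circ\xi$.
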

\begin{proof}
The statements in the classical setting are an immediate consequence of Definition \ref{X-Aprin}, Equation \eqref{theta-rho}, and the easy observation that $\omega_1=\rho\circ \xi$.  The statement in the quantum setting with $d=1$ is \cite[Thm. 1.2(8)]{davison2019strong}. The argument for general $d$ is essentially the same, with the need for $t\mapsto t^d$ evident in \eqref{Lambda-v1v2}.  The key observation is that this $\omega_1$ maps the scattering functions $\Psi_t(z^{e_i})$ of \eqref{DXin} to the scattering functions $\Psi_{t^d}(z^{\omega_1(e_i)})$ of \eqref{DAin}.
\end{proof}

\subsection{Positivity and the scattering atlas}\label{sec:positivity_atlas}

Recall the algebra $\s{A}_t^{\can}$ generated by the theta functions as in \S \ref{StrC}, and recall that each generic $\sQ\in M_{\bb{R}}$ determines an inclusion $\iota_{\s{Q}}:\s{A}_t^{\can} \hookrightarrow \kk_t\llb M\rrb$, $\vartheta_p\mapsto \vartheta_{p,\sQ}$ as in Lemma \ref{lem:fQ}.  Given any $f\in \s{A}_t^{\can}$, one says that $f$ is \textbf{universally positive with respect to the scattering atlas}, or \textbf{theta positive}\footnote{We thank Greg Muller for suggesting this shortened terminology.} for short, if for every generic $\sQ\in M_{\bb{R}}$, all nonzero coefficients of 
\begin{align}\label{fmQ}
\iota_{\sQ}(f)=\sum_{m\in M} a_{f,m,\sQ} z^m
\end{align} are positive elements.

One says that nonzero $f\in \s{A}^{\can}_t$ is \textbf{atomic with respect to the scattering atlas}, or {\bf theta atomic} for short, if it is theta positive and cannot be decomposed as a sum of two other nonzero theta positive elements.

If the Injectivity Assumption does not necessarily hold, we can still define a similar notion as follows.  For $f\in \s{A}^{\midd}$, we can define $a_{f,m,\sQ}$ as in \eqref{fmQ} for $\sQ\in \Xi$.  We say that $f$ is \textbf{$\Xi$-positive} if $a_{f,m,\sQ}\in \bb{Z}_{\geq 0}$ for all $m\in M$ and all $\sQ\in \Xi$.  Then we say nonzero $f\in \s{A}^{\midd}$ is {\bf $\Xi$-atomic} if $f$ cannot be decomposed as a sum of two other nonzero $\Xi$-positive elements.

\begin{prop}\label{AtomicProp}
Suppose the Injectivity Assumption holds.  Then all theta functions are universally positive with respect to the scattering atlas, and the basis of $\s{A}_t^{\can}$ consisting of the theta functions is strongly positive.  Furthermore, the theta functions are precisely the atomic elements of $\s{A}_t^{\can}$ with 
respect to the scattering atlas.

Suppose now that the Injectivity Assumption does not necessarily necessarily hold.  The theta functions $\{\vartheta_m\}_{m\in \Theta^{\midd}}$ are still $\Xi$-positive and a strongly positive basis.  Furthermore, if $\?{\Xi}\supset \Theta^{\midd}$,  then the theta functions $\{\vartheta_m\}_{m\in \Theta^{\midd}}$ are the $\Xi$-atomic elements\footnote{We note that \cite[Examples 2]{ManAtomic} discussed theta atomicity in the case of the Markov quiver (the cluster algebra associated to the once-punctured torus).  The arguments there do not fully address the failure of the Injectivity Assumption in that case, so our Lemma \ref{pmQ} and the subsequent proof of Proposition \ref{AtomicProp} fill a minor gap in loc. cit.} of $\s{A}^{\midd}$.
\end{prop}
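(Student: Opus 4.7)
The plan is to establish the three properties---universal positivity, strong positivity, and atomicity---in that order, leveraging Lemma \ref{BLpos} (positivity of broken lines) and Lemma \ref{pmQ} (the separating property of theta function Laurent expansions near a fixed $g$-vector).

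First I would verify positivity. For any admissible $p$ and any generic $\sQ$ in the appropriate domain, the Laurent expansion $\iota_{\sQ}(\vartheta_p) = \sum_{\gamma} c_{\gamma} z^{v_{\gamma}}$ is by definition a sum over broken lines with ends $(p,\sQ)$. By Lemma \ref{BLpos} each $c_{\gamma}$ has non-negative coefficients, so $\iota_{\sQ}(\vartheta_p)$ is itself positive. This gives universal positivity in the Injectivity case and $\Xi$-positivity otherwise. Strong positivity follows similarly: under the Injectivity Assumption, Proposition \ref{StructureConstants} expresses each structure constant $\alpha(p_1,\ldots,p_s;p)$ as a sum of products of broken-line coefficients, and these are positive by Lemma \ref{BLpos}; without Injectivity, Equation \eqref{eq:str-const-A} writes the structure constants as sums of the principal-coefficient ones, which are again positive by the same reasoning.

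For atomicity, let $f$ be theta positive (respectively $\Xi$-positive) and write $f = \sum_{p} a_p \vartheta_p$ using the basis property (Proposition \ref{TopBasis} in the Injectivity case; Proposition \ref{indep} otherwise, which is applicable since $\?{\Xi} \supset \Theta^{\midd}$ by hypothesis). I claim $a_p \geq 0$ for every $p$. Fix such a $p$; the hypothesis $\?{\Xi} \supset \Theta^{\midd}$ (automatic under the Injectivity Assumption by taking $\Xi = M_{\bb{R}}$) lets us choose $\sQ \in \Xi$ sufficiently close to $p$ in the sense of Lemma \ref{pmQ}. That lemma then forces the $z^p$-coefficient of $\iota_{\sQ}(\vartheta_m)$ to vanish for $m \neq p$ and to equal $1$ for $m = p$, so that the $z^p$-coefficient of $\iota_{\sQ}(f)$ equals $a_p$. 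Positivity of $f$ gives $a_p \geq 0$, as desired. (Under Injectivity one can alternatively appeal to Lemma \ref{lem:fQ} directly.)

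It remains to deduce that the atomic elements are precisely the positive scalar multiples of the $\vartheta_p$. If $\vartheta_p = g + h$ with $g,h$ positive and nonzero, writing $g = \sum b_m \vartheta_m$ and $h = \sum c_m \vartheta_m$ with $b_m, c_m \geq 0$ by the previous step, linear independence of the theta functions forces $b_m = c_m = 0$ for $m \neq p$ and $b_p + c_p = 1$, so both $g$ and $h$ are non-negative multiples of $\vartheta_p$. This identifies $\vartheta_p$ as an extremal ray. Conversely, any positive $f$ whose non-negative theta expansion involves two or more distinct $\vartheta_p$ admits the nontrivial decomposition splitting the sum into two nonzero positive pieces, so it is not atomic. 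The main obstacle is ensuring Lemma \ref{pmQ} can be invoked at every relevant $p$, and this is exactly what the hypothesis $\?{\Xi} \supset \Theta^{\midd}$ provides in the general setting.
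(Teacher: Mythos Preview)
Your proof is correct and follows essentially the same approach as the paper: positivity and strong positivity from Lemma \ref{BLpos} combined with Proposition \ref{StructureConstants} and \eqref{eq:str-const-A}, then atomicity via Lemma \ref{pmQ} (or Lemma \ref{lem:fQ}) to read off the theta-expansion coefficients as Laurent coefficients near the $g$-vector. One small terminological slip: the atomic elements are exactly the $\vartheta_p$, not their positive scalar multiples (e.g.\ $2\vartheta_p=\vartheta_p+\vartheta_p$ is not atomic); your decomposition argument with $b_p+c_p=1$ and $b_p,c_p\in\bb{Z}_{\geq 0}$ already forces one summand to vanish, so just drop the word ``multiples'' from your goal statement.
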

\begin{proof}
As observed in \cite{gross2018canonical,davison2019strong}, the theta positivity is immediate from Lemma \ref{BLpos}.  Similarly, the strong positivity is immediate from Lemma \ref{BLpos} and Proposition \ref{StructureConstants}.  Under the Injectivity Assumption, the atomicity is proven in \cite{ManAtomic} in the classical setting and \cite[Thm. 3.16]{davison2019strong} in the quantum setting. In the possible absence of the Injectivity Assumption, the $\Xi$-positivity again follows from Lemma \ref{BLpos} (and the definition \eqref{theta-rho}), and the strong positivity similarly follows from Lemma \ref{BLpos} combined with \eqref{eq:str-const-A}.

The atomicity arguments from the cases where the Injectivity Assumption holds can be extended to cases where the Injectivity Assumption possibly fails by using Lemma \ref{pmQ}, assuming that $\?{\Xi}\supset \Theta^{\midd}$.  In these cases, let $f\in \s{A}^{\midd}$ be an arbitrary $\Xi$-positive element.  We wish to show that the expansion $f=\sum_{m\in \Theta^{\midd}} a_m\vartheta_m$ has non-negative coefficients.  By Lemma \ref{pmQ}, if $\sQ\in \Xi$ is sufficiently close to a given $m$, then $a_m=a_{f,m,\sQ}$ where $a_{f,m,\sQ}$ is defined as in \eqref{fmQ} (the assumption $\?{\Xi}\supset \Theta^{\midd}$ ensures such $\sQ$ exist for all $m\in \Theta^{\midd}$).  Since $a_{f,m,\sQ}$ is non-negative by the definition of theta positivity, the claim follows.
\end{proof}

By Proposition \ref{Chambers}, universal positivity with respect to the scattering atlas implies universal positivity with respect to the cluster atlas.\footnote{\cite{FG1} conjectured that the elements which are atomic with respect to the cluster atlas form a basis, but the set of such elements was shown to be linearly dependent in \cite{LLZ} for all rank $2$ cluster algebras which are not of finite or affine type.  It follows that universal positivity with respect to the scattering atlas is generally a strictly stronger condition.}  Of course, the two notions (as well as $\Xi$-positivity) agree whenever $\s{C}$ is dense in $M_{\bb{R}}$.

Consider a marked surface $\Sigma$ with an ideal triangulation $\Delta$. We say that an element $z\in \s{A}^{\up}(\sd_{\Delta})$ is universally positive with respect to the \textbf{ideal triangulation atlas} if, for any ideal triangulation $\Delta'$, $z$ is a positive Laurent polynomials in the elements $[\gamma]$ for arcs $\gamma\in \Delta'$. Atomicity with respect to the ideal triangulation atlas is then defined naturally. We similarly define positivity and atomicity with respect to the {\bf tagged triangulation atlas}. By Theorem \ref{gdense}, if $\Sigma$ is not a once-punctured closed surface, then the scattering atlas, cluster atlas, and tagged triangulation atlas are all equivalent.  We will see in \S \ref{sec:closed_surface} that the tagged triangulation and scattering atlases are equivalent for once-punctured closed surfaces as well, except in the case of the once-punctured torus.

Theorem \ref{fFG-surfaces} and Propositions \ref{gdense} and \ref{AtomicProp} (plus \S \ref{sec:closed_surface}) thus imply the following:

\begin{cor}\label{cor:surface-theta-atomic}
Let $\Sigma$ be any triangulable marked surface with triangulation $\Delta$.  The theta functions are precisely the $\Xi$-atomic elements of $\s{A}^{\midd}(\sd_{\Delta})$.  If no component of $\Sigma$ is a once-punctured closed surface, then $\Xi$-positivity, positivity with respect to the scattering atlas, positivity with respect to the cluster atlas, and positivity with respect to the tagged triangulation atlas are all equivalent.  For general triangulable $\Sigma$, $\Xi$-positivity is equivalent to positivity with respect to the scattering atlas, and this is equivalent to positivity with respect to the tagged triangulation atlas except in the case of a once-punctured torus.
\end{cor}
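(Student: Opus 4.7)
My plan is to derive the three assertions from Proposition \ref{AtomicProp}, Proposition \ref{gdense}, Theorem \ref{fFG-surfaces}, and the forthcoming scattering-diagram analysis of once-punctured closed surfaces in \S \ref{sec:closed_surface}.

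For the atomicity claim, Theorem \ref{fFG-surfaces} tells us $\?{\Xi}=M_{\bb{R}}$ for every triangulable $\Sigma$, so the hypothesis $\?{\Xi}\supset \Theta^{\midd}$ of Proposition \ref{AtomicProp} is automatically satisfied, and that proposition identifies the $\Xi$-atomic elements of $\s{A}^{\midd}(\sd_{\Delta})$ with the theta functions.

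Next, suppose no component of $\Sigma$ is a once-punctured closed surface, so by Proposition \ref{gdense} the cluster complex $\s{C}$ is dense in $M_{\bb{R}}$. By Proposition \ref{Chambers} each cluster chamber is already a chamber of $\f{D}^{\s{A}_t}$, so any putative non-cluster scattering chamber would be a nonempty open subset of $M_{\bb{R}}\setminus \Supp(\f{D}^{\s{A}_t})$ disjoint from the dense union of cluster chambers---impossible. Thus every scattering chamber is a cluster chamber, making scattering atlas positivity identical to cluster atlas positivity. The inclusion $\s{C}\subset \Xi$, immediate from comparing the definition \eqref{eq:Xi} with the construction of $\Theta^{\midd}$, places a point of $\Xi$ in every chamber, so $\Xi$-positivity agrees with these two notions as well. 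Finally Proposition \ref{SkAPropPun} gives a bijection between clusters and tagged triangulations in this setting (the exception about notched arcs on once-punctured closed components being vacuous), so the cluster and tagged triangulation atlases coincide.

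For general triangulable $\Sigma$ the same chamber-intersection argument yields the equivalence of $\Xi$-positivity with scattering atlas positivity: density of $\Xi$ (from $\?{\Xi}=M_{\bb{R}}$) ensures that every open scattering chamber meets $\Xi$, where for once-punctured closed surfaces I would invoke that both the plain and the notched cluster complexes lie in $\Xi$, as established in the proof of Theorem \ref{fFG-surfaces}, and together fill $M_{\bb{R}}$ by Proposition \ref{gdense}. The remaining equivalence with the tagged triangulation atlas when $\Sigma$ is not a once-punctured torus will be imported from \S \ref{sec:closed_surface}; for the once-punctured torus the extra wall discussed after Theorem \ref{thm:punctured_bracelet_theta} strictly refines the tagged triangulation atlas and obstructs the equivalence.

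The main obstacle I anticipate is precisely this last point---verifying that for once-punctured closed surfaces of genus at least two no ``extra'' scattering chambers appear beyond those associated to plain or notched tagged triangulations. I expect this to fall out of the covering-space and folding machinery developed in \S \ref{GenFoldApp}, applied to the unramified covers studied in \S \ref{sec:closed_surface}: these realize the scattering diagram of such a surface as a slice of the scattering diagram of a multi-punctured cover, to which Proposition \ref{gdense} applies and forces scattering chambers to coincide with cluster chambers upstairs, with the lone exception (the once-punctured torus) arising from a deck group too small to absorb the additional wall.
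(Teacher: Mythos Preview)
Your proposal is correct and follows essentially the same route as the paper, which derives the corollary from Theorem~\ref{fFG-surfaces}, Propositions~\ref{gdense} and~\ref{AtomicProp}, and the results of \S\ref{sec:closed_surface}. One small sharpening: your chamber argument is cleanest when phrased at each finite order $k$ (every chamber of $\f{D}_k$ contains a cluster chamber since $\s{C}$ is dense, so the local expansion at any generic $\sQ$ agrees to order $k$ with some cluster chart), which sidesteps the possibility of generic points lying outside every chamber of the full infinite diagram.
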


One may similarly define universal positivity with respect to the scattering atlas, or theta positivity, for elements of $\s{X}^{\can}_t$, using generic $\sQ\in N_{\bb{R}}$ and the scattering diagram $\f{D}^{\s{X}_t}$ as in \cite[\S 4.3]{davison2019strong}.  Furthermore, \cite{davison2019strong} defines $f\in \s{X}^{\can}_t\subset \s{A}^{\prin,\can}_t$ (cf. \S \ref{XupTheta}) to be \textbf{universally positive with respect to the principal coefficients scattering atlas}, or \textbf{principally positive} for short, if the nonzero coefficients of \begin{align*}
f_{\sQ}=\sum_{n\in \xi(N)\subset M^{\prin}} a_{f,n,\sQ} z^n
\end{align*} are positive for all $n\in \xi(N)$ (for $\xi$ as in \eqref{xi}) and all generic $\sQ\in M_{\bb{R}}^{\prin}$.  The nonzero principally positive elements which cannot be decomposed into sums of other nonzero principally positive elements are said to be \textbf{atomic with respect to the principal coefficients scattering atlas}.  By \cite[Thm. 1.1]{davison2019strong}, the theta functions in $\s{X}_t^{\can}$ are precisely the atomic elements with respect to either the scattering atlas or the principal coefficients scattering atlas, so these two notions of positivity and atomicity (i.e., with respect to the scattering atlas or the principal coefficients scattering atlas) are in fact equivalent.

The advantage of the principal coefficients scattering atlas perspective is that this atlas includes all of the $\s{X}$-space clusters, so this atlas contains the cluster atlas considered by Fock and Goncharov \cite{FockGoncharov06a,FG1}.  Proposition \ref{gdense} lifts to the principal coefficients setting (because walls of $\f{D}^{\s{A}^{\prin}}$ are parallel to $\ker(\rho)=(0,N_{\bb{R}})$), so we immediately obtain the following: 

\begin{prop}\label{prop:X-pos}
In general, the theta functions $\{\vartheta_n\}_{n\in N}\subset \s{X}_t^{\can}$ are precisely the elements which are atomic with respect to scattering atlas, or equivalently, the elements which are atomic with respect to the principal coefficients scattering atlas.

Now let $\Sigma$ be a triangulable marked surface, and consider the associated $\s{X}_t^{\can}$.  If no component of $\Sigma$ is a once-punctured closed surface, then positivity with respect to the principal coefficients scattering atlas (equivalently, with respect to the scattering atlas) is equivalent to positivity with respect to the $\s{X}$-space cluster atlas. 
\end{prop}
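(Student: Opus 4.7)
The first part will follow by direct citation of \cite[Thm.~1.1]{davison2019strong}, which characterizes the theta functions of $\s{X}_t^{\can}$ as exactly the atomic elements with respect to the scattering atlas, and further shows that this notion of atomicity coincides with atomicity with respect to the principal coefficients scattering atlas. No additional argument is needed here.

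For the second part, I will establish the equivalence of the two positivity notions by proving containment of atlases in both directions. One direction is immediate from the discussion preceding the proposition: every $\s{X}$-space cluster chamber corresponds (via the embedding $\xi$) to a cluster chamber $\rho^{-1}(\s{C}_{\sd_{\jj}})$ of $\s{C}^{\prin}\subset M^{\prin}_{\bb{R}}$, so the $\s{X}$-space cluster atlas is contained in the principal coefficients scattering atlas. Positivity with respect to the latter therefore implies positivity with respect to the former.

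The converse direction is the substance of the argument, and the plan is to leverage Proposition \ref{gdense}. Under the hypothesis that no component of $\Sigma$ is a once-punctured closed surface, $\s{C}$ is dense in $M_{\bb{R}}$. Because all walls of $\f{D}^{\s{A}^{\prin}}$ are parallel to $\ker\rho=(0,N_{\bb{R}})$ (cf.~\eqref{LambdaN} and the surrounding discussion in \S\ref{FailInj}), one has $\s{C}^{\prin}=\rho^{-1}(\s{C})$, so density lifts: $\s{C}^{\prin}$ is dense in $M^{\prin}_{\bb{R}}$. Since $\supp(\s{C}^{\prin})$ is a union of closed cones forming a cone complex, density in fact forces $\supp(\s{C}^{\prin})=M^{\prin}_{\bb{R}}$ (any open complementary region would contradict density). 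Therefore every generic $\sQ\in M^{\prin}_{\bb{R}}$ lies in some cluster chamber $\rho^{-1}(\s{C}_{\sd_{\jj}})$, and the Laurent expansion of any $f\in\s{X}_t^{\can}$ at $\sQ$ coincides with its expansion in the $\s{X}$-cluster determined by $\sd_{\jj}$. Positivity thus transfers from the $\s{X}$-space cluster atlas to arbitrary generic $\sQ$.

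The subtle step I expect to be the main obstacle is the passage from ``cluster chambers are dense'' to ``every generic point lies in a cluster chamber.'' This requires knowing that the complement of $\supp(\s{C}^{\prin})$ cannot contain any chamber of the scattering diagram (which would be an open set disjoint from all cluster chambers, violating density). This should follow from the structural fact recorded in Proposition \ref{Chambers} that $\s{C}^{\prin}$ is a sub cone-complex of the cone-complex induced by the walls of $\f{D}^{\s{A}^{\prin}}$, combined with the density input --- so the complement lies entirely within the union of scattering walls, which generic points avoid by definition.
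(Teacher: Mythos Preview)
Your approach matches the paper's (which is just the one-line observation preceding the proposition that Proposition \ref{gdense} lifts to principal coefficients because walls of $\f{D}^{\s{A}^{\prin}}$ are parallel to $\ker\rho$). One caveat: the claim in your middle paragraph that density forces $\supp(\s{C}^{\prin})=M^{\prin}_{\bb{R}}$ is false---an infinite union of closed cones need not be closed, and indeed in the Kronecker example (Example \ref{DatEx}) the limiting ray lies outside every cluster chamber---but your final paragraph correctly identifies and resolves this: density forces every chamber of the scattering diagram to coincide with a cluster chamber (any non-cluster chamber would have open interior disjoint from $\s{C}^{\prin}$), so the complement of $\s{C}^{\prin}$ lies entirely in $\supp(\f{D}^{\s{A}^{\prin}})$, which generic points avoid.
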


We now focus on the classical setting.  The set $\Theta^{\midd}$ is equal to $\Theta^{\midd}_{\bb{R}}\cap M$ for some convex cone $\Theta^{\midd}_{\bb{R}}\subset M_{\bb{R}}$.  Let $\Theta_{\bb{R}} \subset \Theta^{\midd}_{\bb{R}}$ be a subcone, and denote $\Theta:=\Theta_{\bb{R}}\cap M$ and
\begin{align*}
\s{A}^{\Theta}:=\bigoplus_{m\in \Theta} \kk\cdot \vartheta_m \subset \s{A}^{\midd}.
\end{align*}
Suppose $\s{A}^{\Theta}$ is closed under multiplication, i.e., forms a subalgebra of $\s{A}^{\midd}$.  This holds, for example, if $\Theta_{\bb{R}}=\Theta^{\midd}_{\bb{R}}$.  It also holds if $\Theta_{\bb{R}}$ is convex and either
\begin{itemize}
\item $\Theta_{\bb{R}}$ closed under addition by elements of $M^+$, or
\item $\Theta_{\bb{R}}\setminus \{0\}$ is closed under addition by elements of $M^+$.    
\end{itemize}
If $M^{\oplus}$ is not strongly convex, we also require $\Theta_{\bb{R}}\subset \?{\Xi}$ for $\Xi$ as in \eqref{eq:Xi}.

We say that an element $f:=\sum_{m\in \Theta} a_m \vartheta_m \in \s{A}^{\Theta}$ is \textbf{universally positive with respect to the $\Theta_{\bb{R}}$-atlas} if 
\begin{align*}
f_{\sQ}:=\sum_{m\in \Theta} a_m \vartheta_{m,\sQ} = \sum_{m\in M} a_{m,\sQ} z^m
\end{align*}
has non-negative integer coefficients $a_{m,\sQ}\in \bb{Z}_{\geq 0}$ for each $m\in M$ and for all generic $\sQ\in \Theta_{\bb{R}}$.  Here, if $M^{\oplus}$ is strongly convex, then $f_{\sQ}\in \kk\llb M\rrb$.  Otherwise, the assumption that $\Theta_{\bb{R}}\subset \?{\Xi}$ ensures that $f_{\sQ}$ is defined as an element of $\kk[M]$.  One now defines the notion of \textbf{atomic with respect to the $\Theta_{\bb{R}}$-atlas}, or $\Theta_{\bb{R}}$-atomic for short, in the obvious way.

\begin{prop}\label{Theta-pos}
The set $\{\vartheta_m\}_{m\in \Theta}$ consists precisely of the $\Theta_{\bb{R}}$-atomic elements of $\s{A}^{\Theta}$.
\end{prop}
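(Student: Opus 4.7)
The argument will mirror the proof of Proposition~\ref{AtomicProp}, restricting the test-points $\sQ$ to generic points of $\Theta_{\bb{R}}$. The main thing to establish is that every $\Theta_{\bb{R}}$-positive element of $\s{A}^{\Theta}$ has non-negative integer coefficients in its theta-basis expansion. Once this is granted, both directions of the claim follow by purely formal manipulations, exactly as in the proof of Proposition~\ref{AtomicProp}.

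First I would show that each $\vartheta_m$ with $m \in \Theta$ is $\Theta_{\bb{R}}$-positive. For any generic $\sQ \in \Theta_{\bb{R}}$ the expansion $\iota_{\sQ}(\vartheta_m) = \rho(\vartheta_{(m,0),\wt{\sQ}})$ coming from \eqref{theta-rho} is a sum of broken-line contributions in $M^{\prin}_{\bb{R}}$, each of which has coefficient in $\bb{Z}_{\geq 0}$ by Lemma~\ref{BLpos}, and applying $\rho$ preserves non-negativity. Next, given any $\Theta_{\bb{R}}$-positive $f = \sum_{m \in \Theta} a_m \vartheta_m \in \s{A}^{\Theta}$ (a finite sum by the definition of $\s{A}^{\Theta}$), fix $p \in \Theta$; I apply Lemma~\ref{pmQ} to the finite set $\supp(f) \cup \{p\} \subset \Theta^{\midd}$ to obtain a generic $\sQ \in \Theta_{\bb{R}} \cap \Xi$ so close to $p$ that the $z^p$-coefficient of $\vartheta_{m,\sQ}$ equals $\delta_{m,p}$ for every $m$ in this finite set. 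Then the $z^p$-coefficient of $f_{\sQ}$ is exactly $a_p$, and the $\Theta_{\bb{R}}$-positivity of $f$ forces $a_p \in \bb{Z}_{\geq 0}$.

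To finish, suppose $\vartheta_p = f_1 + f_2$ with each $f_i$ nonzero and $\Theta_{\bb{R}}$-positive. Writing $f_i = \sum_m a_{i,m}\vartheta_m$ and applying the preceding paragraph, we have $a_{i,m} \in \bb{Z}_{\geq 0}$ with $a_{1,m} + a_{2,m} = \delta_{m,p}$, forcing $a_{i,m} = 0$ for $m \neq p$ and then forcing one of $a_{1,p}, a_{2,p}$ to be zero---contradiction. Conversely, if $f$ is $\Theta_{\bb{R}}$-atomic, write $f = \sum_m a_m \vartheta_m$ with $a_m \in \bb{Z}_{\geq 0}$; if $\supp(f)$ has at least two elements $m_1 \neq m_2$, then $f = a_{m_1}\vartheta_{m_1} + (f - a_{m_1}\vartheta_{m_1})$ is a decomposition into two nonzero $\Theta_{\bb{R}}$-positive elements, and similarly if $f = c\vartheta_{m_0}$ with $c \geq 2$ then $f = \vartheta_{m_0} + (c-1)\vartheta_{m_0}$ is such a decomposition. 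Both scenarios contradict atomicity, so $f = \vartheta_{m_0}$ for some $m_0 \in \Theta$.

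The only mildly subtle point---the main obstacle, and quite a minor one---is ensuring the existence of generic $\sQ \in \Theta_{\bb{R}} \cap \Xi$ arbitrarily close to each $p \in \Theta$. In the strongly convex case $\Xi$ is the complement of a locally finite wall-arrangement and every generic $\sQ$ lies in $\Xi$, so this is automatic; otherwise it follows from the standing hypothesis $\Theta_{\bb{R}} \subset \overline{\Xi}$ combined with the local finiteness of $\f{D}^{\s{A}^{\prin}}$.
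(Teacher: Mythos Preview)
Your proposal is correct and follows essentially the same approach as the paper: the key step in both is that for a $\Theta_{\bb{R}}$-positive $f=\sum_m a_m\vartheta_m$, choosing $\sQ$ sufficiently close to a given $p\in\Theta$ (via Lemma~\ref{pmQ} in the non-convex case, or Lemma~\ref{lem:fQ} in the strongly convex case) makes the $z^p$-coefficient of $f_{\sQ}$ equal to $a_p$, forcing $a_p\geq 0$. The paper's proof is terser---it stops at ``so the atomic elements must be the theta functions''---while you spell out both directions of the atomicity equivalence and explicitly flag the need for $\sQ\in\Theta_{\bb{R}}\cap\Xi$ near $p$, but the substance is the same.
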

The argument is essentially the same as in the atomicity proofs of \cite{ManAtomic} and \cite{davison2019strong}.
\begin{proof}
Let $f=\sum_{m\in \Theta} a_m \vartheta_m$.  Under the hypotheses, the $z^m$-coefficient of $f_{\sQ}$ is $a_m$ whenever $\sQ$ is sufficiently close to $m$ --- in the case of convex $M^{\oplus}_{\bb{R}}$ this is shown in \cite{gross2018canonical}, and for non-strongly convex $M^{\oplus}$ with $\Theta\subset \?{\Xi}$ it holds via the same argument used in the proof of Lemma \ref{pmQ}.  Now, if $f$ is universally positive with respect to the $\Theta_{\bb{R}}$-atlas, it follows that $a_m$ is non-negative for each $m$.   So the atomic elements must be the theta functions, as desired.
\end{proof}

Now consider a triangulable marked surface $\Sigma$ with ideal triangulation $\Delta$, and consider the untagged bracelets basis for the untagged skein algebra $\Sk(\Sigma)\subset \Sk^{\Box}(\Sigma)\subset \s{A}^{\up}(\sd_{\Delta})$ as in \S \ref{sec:bracelet_band}.  As we have noted, we will later show (cf. \S \ref{sec:theta_punctured}) that these bracelets are theta functions. We can take $\Theta$ to be the $g$-vectors of these untagged bracelets.  This $\Theta$ equals $\Theta_{\bb{R}}\cap M$ for some cone $\Theta_{\bb{R}}$, and $\s{A}^{\Theta}$ is indeed closed under multiplication since it equals $\Sk(\Sigma)$.  Since $\Theta^{\midd}=M$ and $\?{\Xi}=M_{\bb{R}}$ (Theorem \ref{fFG-surfaces}), we have $\Theta\subset \Theta^{\midd}_{\bb{R}}\cap \?{\Xi}$.  Furthermore, the clusters associated to generic points $\sQ\in \Theta_{\bb{R}}$ are precisely the charts of the ideal triangulation atlas---indeed, for each arc $\gamma$ in an ideal triangulation, $[\gamma]$ is a cluster variable in the corresponding cluster, unless $\gamma$ is a noose bounding an arc $\gamma'$, in which case $\frac{[\gamma]}{[\gamma']}$ is the corresponding cluster variable (cf. Example \ref{ex:noose}).  Proposition \ref{Theta-pos} thus yields the following:

\begin{lem}\label{lem:untag_skein_atom_basis}
Assume we know that the untagged bracelets basis for $\Sk(\Sigma)$ consists of theta functions.  Then it consists precisely of the atomic elements of $\Sk(\Sigma)$ with respect to the ideal triangulation atlas.
\end{lem}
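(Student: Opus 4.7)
The plan is to deduce the lemma from Proposition \ref{Theta-pos} applied to a well-chosen cone. First I would let $\Theta$ be the set of $g$-vectors of the untagged bracelets and let $\Theta_{\bb{R}} \subset M_{\bb{R}}$ be its $\bb{R}_{\geq 0}$-span, so that $\Theta = \Theta_{\bb{R}} \cap M$ as observed in the discussion preceding the lemma. Under the standing hypothesis that each $\BracE{C}$ equals $\vartheta_{g(\BracE{C})}$, Lemma \ref{Lem:bracelet_basis} implies that the subspace $\s{A}^{\Theta} := \bigoplus_{m \in \Theta} \kk \cdot \vartheta_m$ coincides with $\Sk(\Sigma)$; in particular it is closed under multiplication, which is one of the required inputs of Proposition \ref{Theta-pos}.

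Next I would verify the remaining hypotheses. Theorem \ref{fFG-surfaces} yields $\Theta^{\midd} = M$ and $\?{\Xi} = M_{\bb{R}}$ for any triangulable $\Sigma$, so the containment $\Theta_{\bb{R}} \subset \Theta^{\midd}_{\bb{R}} \cap \?{\Xi}$ is automatic, and all the finiteness and Laurent-polynomiality needed to evaluate $\iota_{\sQ}$ on $\s{A}^{\Theta}$ at arbitrary generic $\sQ \in \Theta_{\bb{R}}$ is in place.

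The main step is to match the $\Theta_{\bb{R}}$-atlas with the ideal triangulation atlas. For a generic $\sQ \in \Theta_{\bb{R}}$, the relevant Laurent expansion is the one in the cluster associated to the chamber of the cluster complex (intersected with $\Theta_{\bb{R}}$) containing $\sQ$. By Proposition \ref{SkAPropPun}, every such chamber arises from an ideal triangulation $\Delta'$ whose cluster variables are the arcs of $\Delta'$, after the noose-to-notched substitution of Example \ref{ex:noose}. Since $[\gamma] = [\gamma'] \cdot ([\gamma]/[\gamma'])$ for a noose $\gamma$ bounding the enclosed arc $\gamma'$, positive Laurent polynomials in the cluster variables coincide with positive Laurent polynomials in the untagged arcs of $\Delta'$, so the two atlases induce the same notion of positivity on $\Sk(\Sigma)$. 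Conversely, every ideal triangulation produces a chamber meeting $\Theta_{\bb{R}}$ in its interior, so the two atlases cover the same set of charts.

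Applying Proposition \ref{Theta-pos} then identifies the atomic elements of $\s{A}^{\Theta} = \Sk(\Sigma)$ with respect to the $\Theta_{\bb{R}}$-atlas, hence with respect to the ideal triangulation atlas, as precisely the theta functions $\{\vartheta_m\}_{m \in \Theta}$, i.e., the untagged bracelets. The only subtle point in the argument is the atlas comparison in the presence of self-folded triangles, but Example \ref{ex:noose} handles this cleanly; no other genuine obstacle arises.
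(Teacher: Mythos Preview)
Your proposal is correct and follows essentially the same approach as the paper: both apply Proposition \ref{Theta-pos} with $\Theta$ the set of $g$-vectors of untagged bracelets, use Theorem \ref{fFG-surfaces} for the hypotheses $\Theta^{\midd}=M$ and $\?{\Xi}=M_{\bb{R}}$, and identify the $\Theta_{\bb{R}}$-atlas with the ideal triangulation atlas via the noose relation of Example \ref{ex:noose}. The paper's argument is in fact exactly the paragraph preceding the lemma, so you have reconstructed it faithfully.
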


\subsection{Positivity of bracelets}\label{sec:positivity_skein_bracelets}
The following positivity properties for bracelets are immediate consequences of results in the literature.

\begin{thm}[\cite{Thurst}, Thm. 1]
\label{Thurst-Brac-Bas} The bases $\Brac(\Sigma)$ and $\?{\Brac}(\Sigma)$
for $\Sk(\Sigma)$ and $\?{\Sk}(\Sigma)$, respectively,
are strongly positive. 
\end{thm}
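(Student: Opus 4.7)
The plan is to follow the original approach of Thurston \cite{Thurst}: identify the classical skein algebra $\Sk(\Sigma)$ with an algebra of trace functions on the $\SL_2$-character variety of $\Sigma$ (or, when $\Sigma$ has punctures, with functions on the decorated Teichm\"uller space as recalled in \S\ref{S:tagged_sk}). Under this identification, a loop $L$ corresponds to $\hol\mapsto \Tr(\hol(L))$. Since the Cayley--Hamilton identity $A^2=\Tr(A)A-I$ for $A\in\SL_2$ gives exactly the defining recursion of the Chebyshev polynomials of the first kind, we obtain $T_w(\Tr A)=\Tr(A^w)$, so the bracelet $\BracE{wL}$ is realized as $\hol\mapsto \Tr(\hol(L)^w)$. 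Similarly, an arc component of a weighted simple multicurve is interpreted as an appropriate matrix coefficient (a lambda length, in the Fock--Penner picture) of $\hol(\gamma)$.

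Strong positivity then reduces to positivity of expansions of products of trace functions and matrix coefficients in the bracelet basis. The fundamental input is the $\SL_2$ trace identity $\Tr(A)\Tr(B)=\Tr(AB)+\Tr(AB^{-1})$, which is equivalent to the Kauffman skein relation at $q=1$ and which extends by iteration to $\Tr(A^m)\Tr(B^n)=\Tr(A^mB^n)+\Tr(A^mB^{-n})$. Applied one crossing at a time to a product $\BracE{C_1}\BracE{C_2}$, this expresses the product as a positive sum over all choices of smoothings at the crossings of $C_1\cup C_2$, each contribution being the trace of a word in $\pi_1(\Sigma)$ with coefficient $+1$.

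The main obstacle is showing that each such resulting word, viewed as an immersed loop in $\Sigma$, can be decomposed as a non-negative integer combination of bracelets. The key mechanism I would use is induction on some suitable complexity measure (e.g.\ the number of essential self-intersections of the resulting curves), combined with the observation that whenever a smoothing produces a multiply-wrapped loop $\Brac_w L$, the extra self-crossings generated during the resolution are exactly absorbed by the Chebyshev recursion $T_{w+1}(x)=xT_w(x)-T_{w-1}(x)$ so that the subtraction in $-T_{w-1}$ cancels against matching positive contributions elsewhere in the resolution tree. Establishing this sign-matching via a careful bookkeeping is the main content of Thurston's argument in \cite[Thm.~1]{Thurst}; for punctured surfaces, the same analysis carries over with decorated Teichm\"uller theory taking the place of the $\SL_2$-character variety and with lambda-length relations (such as the digon relation of \S\ref{subsub:digon}) used to handle endpoints of arcs.
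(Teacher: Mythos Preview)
The paper's proof consists of nothing more than citing \cite[Thm.~1]{Thurst} for the $\?{\Brac}(\Sigma)$ case and observing that the extension to $\Brac(\Sigma)$ is trivial (boundary arcs have no crossings with anything, so multiplying by possibly negative powers of them does not affect positivity). Your proposal instead attempts to outline Thurston's actual argument; this is a different exercise from what the paper does.

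Your outline captures the right ingredients---the identification of $\BracE{wL}$ with $\Tr(\rho(L)^w)$ via the Chebyshev identity $T_w(\Tr A)=\Tr(A^w)$ for $A\in\SL_2$, and the skein relation as the trace identity $\Tr(A)\Tr(B)=\Tr(AB)+\Tr(AB^{-1})$. But the sketch is not a proof: the claim that ``each contribution [is] the trace of a word in $\pi_1(\Sigma)$ with coefficient $+1$'' is not literally true, since resolutions can produce contractible loops (which contribute $-2$) and, in the punctured case, peripheral loops. The delicate sign-cancellation that makes everything come out non-negative is exactly the content of Thurston's argument, and you yourself defer it back to \cite{Thurst} (``careful bookkeeping is the main content of Thurston's argument''). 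Your description of how the Chebyshev subtraction is absorbed is speculative rather than an argument. In effect, both your proposal and the paper's proof amount to citing Thurston; you have added motivation but not an independent proof.
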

\begin{proof}
The $\?{\Brac}(\Sigma)$-case is \cite[Thm. 1]{Thurst}.
Extending to $\Brac(\Sigma)$ is easy because the boundary curves
have no crossings with other curves (up to homotopy) and so multiplication
by possibly negative powers of these elements is easily understood. 
\end{proof}

\begin{lem}\label{ClassicalPosNoPunct}
In the classical setting, if $\Sigma$ has no punctures, then the bracelets are universally positive with respect to the scattering atlas.
\end{lem}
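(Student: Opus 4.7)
The plan is to chain together three already-established facts: (i) the classical strong positivity of the bracelets basis, (ii) the standard implication that strong positivity plus containment of all cluster monomials gives cluster-atlas positivity, and (iii) the equivalence of cluster-atlas and scattering-atlas positivity available in the unpunctured setting.

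First I would verify that bracelets actually lie in $\s{A}^{\can}$, so that the expansions $\iota_{\sQ}(\BracE{C})$ are defined for every generic $\sQ \in M_{\bb{R}}$. If $\Sigma$ is triangulable and unpunctured, every connected component must have non-empty boundary (a closed unmarked component is not triangulable). Hence Theorem \ref{fFG-surfaces} applies, yielding the full Fock--Goncharov conjecture and in particular $\s{A}^{\midd} = \s{A}^{\up} = \s{A}^{\can}$. Combined with the inclusion $\Sk(\Sigma) \subset \s{A}^{\up}$ from Proposition \ref{SkAProp}, this places every $\BracE{C}$ inside $\s{A}^{\can}$.

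Next I would invoke Theorem \ref{Thurst-Brac-Bas}, which gives that the bracelets basis $\Brac(\Sigma)$ is strongly positive. Under the bijection between arcs and cluster variables from Proposition \ref{SkAProp}, any cluster monomial of $\s{A}^{\up}$ has the form $\prod_{\gamma \in \Delta} [\gamma]^{w_\gamma}$ for a triangulation $\Delta$, with $w_\gamma \in \bb{Z}_{\geq 0}$ for interior arcs and $w_\gamma \in \bb{Z}$ for boundary arcs; this is exactly the bangle, and hence the bracelet (no loop components occur), of the weighted simple multicurve $\bigcup_\gamma w_\gamma \gamma$. Thus $\Brac(\Sigma)$ contains every cluster monomial, and Lemma \ref{PosImplications} implies that every bracelet is universally positive with respect to the cluster atlas.

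Finally, because $\Sigma$ has no punctures at all, no component is a once-punctured closed surface, so Corollary \ref{cor:surface-theta-atomic} identifies universal positivity with respect to the cluster atlas with universal positivity with respect to the scattering atlas. This yields the desired conclusion. I do not expect any single step to be an obstacle here: the lemma really is a direct synthesis of Thurston's classical strong positivity and the density of the cluster complex for unpunctured surfaces; the scattering-atlas upgrade is built into the machinery of Corollary \ref{cor:surface-theta-atomic}.
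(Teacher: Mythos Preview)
Your argument is correct and follows essentially the same route as the paper: strong positivity of the bracelets basis (Theorem \ref{Thurst-Brac-Bas}) plus containment of cluster monomials gives cluster-atlas positivity via Lemma \ref{PosImplications}, and density of the cluster complex upgrades this to scattering-atlas positivity. The only cosmetic differences are that the paper cites Proposition \ref{gdense} directly rather than the packaged Corollary \ref{cor:surface-theta-atomic}, and it omits your preliminary paragraph verifying $\BracE{C}\in\s{A}^{\can}$ (which is implicit once one knows the atlases agree).
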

\begin{proof}
For unpunctured $\Sigma$, the bracelets include all the cluster monomials (they are monomials in the arcs of ideal triangulations; cf. Proposition \ref{SkAProp}), and Theorem \ref{Thurst-Brac-Bas} says that the classical bracelets basis is strongly positive.  So Lemma \ref{PosImplications} implies that the bracelets are universally positive with respect to the cluster atlas.  The cluster and scattering atlases agree by Proposition \ref{gdense}, so the claim follows.
\end{proof}

\begin{lem}\label{Lem:pos-1-p}
Let $\Sigma$ be a once-punctured closed surface.  In the classical setting, the untagged bracelets for $\Sk(\Sigma)$ are universally positive with respect to the cluster atlas.
\end{lem}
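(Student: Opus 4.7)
The plan is to invoke Lemma~\ref{PosImplications}, which says that a strongly positive basis containing all cluster monomials is universally positive with respect to the cluster atlas. Strong positivity of $\Brac(\Sigma)$ is already supplied by Thurston's theorem, Theorem~\ref{Thurst-Brac-Bas}, so the entire argument reduces to verifying that $\Brac(\Sigma)$ contains every cluster monomial of the cluster structure on $\Sk(\Sigma)\subset\s{A}^{\up}(\sd_\Delta)$ coming from an ideal triangulation $\Delta$ of $\Sigma$.

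To check this, I would appeal to Proposition~\ref{SkAPropPun}. Because $\Sigma$ is a once-punctured closed surface, that proposition identifies the cluster variables with the plain (untagged) arcs and the clusters with the plain tagged triangulations. Any cluster monomial is therefore a product $\prod_{\gamma\in\Delta'}[\gamma]^{w_\gamma}$ indexed by the arcs of some plain triangulation $\Delta'$, with $w_\gamma\in\Z_{\geq 0}$. By the definition of $\BracE{\,\cdot\,}$ in \S\ref{sec:bracelet_band}, the bracelet element attached to a tagged simple multicurve whose components are all arcs is precisely this product; so the cluster monomial in question coincides with $\BracE{\bigcup_{\gamma\in\Delta'}w_\gamma\gamma}\in\Brac(\Sigma)$, and Lemma~\ref{PosImplications} yields the claim.

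The deep input here is Thurston's strong positivity theorem, and there is no real obstacle beyond the combinatorial identification above. The one subtlety worth flagging is that, unlike in the unpunctured setting of Lemma~\ref{ClassicalPosNoPunct}, the cluster complex for a once-punctured closed surface only fills out a half-space of $M_{\bb{R}}$ by Proposition~\ref{gdense}; consequently this argument produces positivity with respect to the cluster atlas but cannot be promoted, in the manner of Lemma~\ref{ClassicalPosNoPunct}, to universal positivity with respect to the scattering atlas. Obtaining that stronger statement will instead require identifying the bracelets with theta functions, which is the content of the forthcoming Theorem~\ref{thm:punctured_bracelet_theta}.
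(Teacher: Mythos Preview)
Your proposal is correct and follows essentially the same approach as the paper: invoke Thurston's strong positivity (Theorem~\ref{Thurst-Brac-Bas}), observe via Proposition~\ref{SkAPropPun} that the plain bracelets contain all cluster monomials for a once-punctured closed surface, and apply Lemma~\ref{PosImplications}. Your closing remark about why the argument cannot be upgraded to the scattering atlas, unlike in Lemma~\ref{ClassicalPosNoPunct}, is a useful clarification that the paper leaves implicit.
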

\begin{proof}
For once-punctured closed surfaces, the plain bracelets again include all cluster monomials (cf. Proposition \ref{SkAPropPun}), so the argument follows as in the proof of Lemma \ref{ClassicalPosNoPunct}.
\end{proof}

\subsection{Bar-invariance of quantum theta functions}\label{sec:bar}
We note one more property of quantum theta functions which will be useful to us.  Let $f$ be an element of $\kk_t$, or of a quantum torus algebra like $\kk_t[N]$ or $\kk_t[M]$, or of a completion of such an algebra like $\kk_t\llb N\rrb$ or $\kk_t\llb M\rrb$.  Then $f$ is said to be \textbf{bar-invariant} if it is invariant under the involution $t\mapsto t^{-1}$.  
Scattering automorphisms preserve bar-invariance by \cite[Thm. 2.15 and Lem. 3.4]{davison2019strong}, so for any $f\in \s{A}_t^{\can}$ or $\s{X}_t^{\can}$, $\iota_{\sQ}(f)$ (defined like in Lemma \ref{lem:fQ}) is bar-invariant for one generic $\sQ$ if and only if it is bar-invariant for all generic $\sQ$.  In this case, we say that $f$ is bar-invariant.
\begin{lem}[\cite{davison2019strong}, Thm. 1.1]\label{lem:theta-bar-inv}
Quantum theta functions are bar-invariant.
\end{lem}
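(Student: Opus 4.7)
The plan is to trace bar-invariance through the broken line construction of $\vartheta_{p,\sQ}$, reducing the claim to bar-invariance of the wall-crossing automorphisms of $\f{D}^{\s{A}_t}$. Define the bar involution on $\kk_t\llb M\rrb$ by $t\mapsto t^{-1}$, $z^m\mapsto z^m$, and say an automorphism $\theta$ of $\kk_t\llb M\rrb$ is \emph{bar-equivariant} if it commutes with this involution. Since each $z^m$ is tautologically bar-invariant, the set of bar-invariant elements of $\kk_t\llb M\rrb$ is precisely the $\kk$-span (topologically) of the monomials $z^m$, and $\theta$ is bar-equivariant if and only if $\theta(z^m)=\sum a_{\theta,m,v}z^{m+v}$ has each coefficient $a_{\theta,m,v}\in\kk_t$ bar-invariant. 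In particular, bar-equivariant automorphisms preserve the condition that a monomial expansion has bar-invariant coefficients.

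First I would check bar-equivariance for each wall-crossing attached to an initial wall of $\f{D}_{\In}^{\s{A}_t}$. By \eqref{AdPsiBinom}, the adjoint action of $\Psi_{t^d}(z^{\omega_1(e_i)})^{\pm 1}$ on $z^p$ expands as a finite sum with quantum binomial coefficients $\binom{\cdot}{\cdot}_{t^d}$, and these are manifestly invariant under $t\mapsto t^{-1}$; hence each initial wall-crossing is bar-equivariant. Next, I would upgrade this to all walls of $\f{D}^{\s{A}_t}$ using the uniqueness part of Theorem \ref{KSGS}: applying the bar involution to every wall-function of $\f{D}^{\s{A}_t}$ (which at the group level is an involution that preserves the nilpotent decomposition because each $f_{\f{d}}\in (G_{v_{\f{d}}}^t)^{\parallel}$ is built from bar-invariant data at the initial walls) produces another consistent scattering diagram containing $\f{D}_{\In}^{\s{A}_t}$ whose non-initial walls are outgoing, and therefore equivalent to $\f{D}^{\s{A}_t}$. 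Thus every wall-crossing automorphism $\Ad_{f_i}$ appearing in \eqref{fi} is bar-equivariant.

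Now a straightforward induction on the number of bends shows that, for any broken line $\gamma$ with ends $(p,\sQ)$, the attached coefficient $c_\gamma\in\kk_t$ is bar-invariant. Indeed, $c_0 z^{v_0}=z^p$ is bar-invariant, and if $c_{i-1}z^{v_{i-1}}$ has bar-invariant coefficient, then by bar-equivariance of $\Ad_{f_i}$ every monomial term of $\Ad_{f_i}(c_{i-1}z^{v_{i-1}})$ (including $c_iz^{v_i}$) has bar-invariant coefficient. Summing over broken lines as in \eqref{vartheta-dfn} gives the desired bar-invariance of $\vartheta_{p,\sQ}\in\kk_t\llb M\rrb$. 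When the Injectivity Assumption fails, the theta function is defined via the principal coefficients construction \eqref{theta-rho}; since $\rho$ commutes with the bar involution and the principal coefficients case does satisfy the Injectivity Assumption, bar-invariance descends.

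The main obstacle is the second step: justifying that the full consistent completion $\f{D}^{\s{A}_t}=\scat(\f{D}_{\In}^{\s{A}_t})$ can be chosen so that every wall function is bar-symmetric. The cleanest route is the uniqueness argument sketched above, where one must check that applying the bar involution wall-by-wall preserves the structural requirements (outgoing non-initial walls, membership in the subgroups $(G_{v_{\f{d}}}^t)^{\parallel}$) needed to invoke Theorem \ref{KSGS}; both follow because bar preserves the $M^+$-grading of $\f{g}^t$ and acts by $-1$ on the generators $\hat z^m$, hence maps each $(G_{v_{\f{d}}}^t)^{\parallel}$ to itself.
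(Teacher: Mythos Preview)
Your overall strategy is correct and matches what the paper cites from \cite{davison2019strong}: show that every wall-crossing of $\f{D}^{\s{A}_t}$ preserves bar-invariance, then propagate this through the broken-line construction. The check for initial walls via \eqref{AdPsiBinom}, the induction on bends, and the reduction to principal coefficients are all fine.

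The gap is in your uniqueness step. The bar involution on $\kk_t\llb M\rrb$ is a $\kk$-algebra \emph{anti}-automorphism (despite the paper's wording): one has $\overline{z^uz^v}=t^{-\Lambda(u,v)}z^{u+v}=\overline{z^v}\,\overline{z^u}$. Consequently $\overline{\hat z^m}=-\hat z^m$, and since $-\Li(-z^v;t^d)=\sum_k\frac{(-1)^{k-1}}{k}\hat z^{kv}$ has rational coefficients in the $\hat z$-basis, one computes $\overline{\Psi_{t^d}(z^v)}=\Psi_{t^d}(z^v)^{-1}$. So ``applying bar to every wall function'' yields a diagram whose initial walls carry $\Psi^{-1}$, not $\Psi$, and therefore does \emph{not} contain $\f{D}_{\In}^{\s{A}_t}$; the uniqueness clause of Theorem~\ref{KSGS} does not apply as you wrote it. The fix is to replace each wall function by $\overline{f_{\f{d}}}^{\,-1}$ rather than $\overline{f_{\f{d}}}$: this now fixes the initial walls, and consistency survives because applying bar (which reverses the order) and then inverting turns $\theta_{\f{d}_s}\cdots\theta_{\f{d}_1}=1$ into $(\overline{f_{\f{d}_s}}^{\,-1})^{\epsilon_s}\cdots(\overline{f_{\f{d}_1}}^{\,-1})^{\epsilon_1}=1$. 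Uniqueness then gives $\overline{f_{\f{d}}}=f_{\f{d}}^{-1}$ up to equivalence, whence $\overline{\Ad_{f_{\f{d}}}(z^m)}=\overline{f_{\f{d}}^{-1}}\,z^m\,\overline{f_{\f{d}}}=f_{\f{d}}\,z^m\,f_{\f{d}}^{-1}=\Ad_{f_{\f{d}}}(z^m)$, which is exactly the bar-equivariance you need for the broken-line induction.
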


\begin{lem}\label{lem:1theta-implies-qtheta}
Left $f$ be a nonzero, bar-invariant, theta positive element of $\s{A}_t^{\can}$ or $\s{X}_t^{\can}$.  Then $f$ is a quantum theta function if and only if $\lim_{t\rar 1}(f)$ is a classical theta function in $\s{A}^{\can}$ or $\s{X}^{\can}$, respectively.
\end{lem}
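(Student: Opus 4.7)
The forward direction is built into the construction: the scattering diagram $\f{D}^{\s{A}_t}$ (resp.\ $\f{D}^{\s{X}_t}$) specializes under $t\mapsto 1$ to its classical analog, the broken lines limit to classical broken lines, and so $\lim_{t\rar 1}(\vartheta_m^q)=\vartheta_m^{\mathrm{cl}}$ for every $m$ (the $\s{X}$-case is obtained from the $\s{A}^{\prin}$-case via Definition \ref{X-Aprin}). Hence if $f=\vartheta_p^q$ then $\lim_{t\rar 1}(f)=\vartheta_p^{\mathrm{cl}}$. For the converse, suppose $\lim_{t\rar 1}(f)=\vartheta_p^{\mathrm{cl}}$ and expand $f=\sum_m a_m\vartheta_m^q$ in the quantum theta basis (Proposition \ref{TopBasis}), with $a_m\in\kk_t$; the expansion is unique and topologically convergent. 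Since each $\vartheta_m^q$ is bar-invariant (Lemma \ref{lem:theta-bar-inv}) and $f$ is bar-invariant by hypothesis, uniqueness forces every $a_m$ to be bar-invariant.

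The plan is to show that, for each $m$, the coefficient $a_m$ is a bar-invariant element of $\bb{Z}_{\geq 0}[t^{\pm 1/D}]$ specializing to $\delta_{m,p}$ at $t=1$. Non-negativity follows from theta positivity together with Lemma \ref{lem:fQ}: choose any generic $\sQ$ sufficiently close to $m$ in the sense of that lemma, so that $a_m$ equals the $z^m$-coefficient $a_{f,\sQ,m}$ of $\iota_\sQ(f)$; theta positivity of $f$ then puts $a_m=a_{f,\sQ,m}$ in $\bb{Z}_{\geq 0}[t^{\pm 1/D}]$. The specialization statement is obtained by applying $\lim_{t\rar 1}$ termwise to the expansion of $f$, yielding
\begin{equation*}
\vartheta_p^{\mathrm{cl}}=\lim_{t\rar 1}(f)=\sum_m \lim_{t\rar 1}(a_m)\,\vartheta_m^{\mathrm{cl}},
\end{equation*}
and the classical theta functions form a (topological) basis of $\s{A}^{\can}$ (resp.\ $\s{X}^{\can}$), so uniqueness gives $\lim_{t\rar 1}(a_m)=\delta_{m,p}$.

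Finally, write $a_m=\sum_k c_k\, t^{k/D}$ with $c_k\in\bb{Z}_{\geq 0}$ (non-negativity) and $c_{-k}=c_k$ (bar-invariance). If $m\neq p$, then $\sum_k c_k=0$ with $c_k\geq 0$ forces $a_m=0$. If $m=p$, then $\sum_k c_k=1$ with $c_{-k}=c_k\geq 0$ forces $c_0=1$ and $c_k=0$ for $k\neq 0$, so $a_p=1$. Hence $f=\vartheta_p^q$. The only technical point worth verifying is that $\lim_{t\rar 1}$ commutes with the (possibly infinite) theta basis expansion; this is not an obstacle, since the expansion converges with respect to the $M^+$-adic (resp.\ $N^+$-adic) topology on $\kk_t\llb M\rrb$ (resp.\ $\kk_t\llb N\rrb$), and the specialization map is continuous for this topology because it respects the grading on each graded piece.
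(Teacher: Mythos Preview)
Your proof is correct and takes essentially the same approach as the paper's. The paper's argument is terser: it asserts directly that bar-invariance and theta positivity force each $a_m\in\bb{Z}_{\geq 0}[t^{\pm 1}]$ to be bar-invariant (implicitly invoking the atomicity of Proposition~\ref{AtomicProp}), whereas you unpack the non-negativity step via Lemma~\ref{lem:fQ}, which is precisely the mechanism behind atomicity; the remaining bar-invariance plus specialization argument is identical.
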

\begin{proof}
The bar-invariance and theta positivity imply that $f=\sum_p a_p \vartheta_p$ for some bar-invariant elements $a_p\in \bb{Z}_{\geq 0} [t^{\pm 1}]$.  If this gives a theta function $\vartheta_{p_0}|_{t=1}$ in the classical limit, then $a_{p_0}|_{t=1}=1$, and $a_p|_{t=1}=0$ for $p\neq p_0$.  The only possibility is that $a_{p_0}=1$ and $a_{p}=0$ for $p\neq p_0$, so $f=\vartheta_{p_0}$ as claimed.
\end{proof}

\section{Operations on seeds}\label{sec:seed_change}
We have seen that the classical skein algebras $\Sk(\Sigma)$ and quantum skein algebras $\Sk_t(\Sigma)$ (defined for unpunctured $\Sigma$) are contained in an associated (quantum) upper cluster algebras with coefficients (i.e., frozen variables) associated to the boundary arcs. From the viewpoint of cluster algebras, it is natural to consider cluster algebras defined by seeds with other coefficients. In this section, we discuss some ways to manipulate the coefficients of a cluster algebra, and we explore some general properties of these manipulations.  This will allow us, for example, to quantize skein algebras for punctured surfaces by first extending to principal coefficients.

Additionally, we will see that operation of gluing two boundary arcs of a marked surface corresponds to the operation of identifying two frozen indices, followed by unfreezing the new index.  We will discuss what happens to theta functions when seeds are changed in this way.

\subsection{Dominance order and pointedness}
\label{sec:pointedness}
Let there be given any seed $\sd=(N,E,I,F,\omega)$. Recall the notation $$N_{\sd}^{\oplus} := N^{\oplus}:= \bigoplus_{k\in I_{\uf}} \bb{Z}_{\geq 0} e_k$$
and
$$M_{\sd}^\oplus := M^\oplus := \omega_1(N^{\oplus}).$$
For the purpose of the following notions (Definitions \ref{defn:pointedness}-\ref{defn:supp}), we assume that $M_{\sd}^\oplus$ is strongly convex. This assumption is satisfied, for example, when the Injectivity Assumption holds for $\sd$. Then we can define the dominance order on $M$ as the following:
\begin{defn}[Dominance order \cite{qin2017triangular}]\label{def:dom}
For any $m,m'\in M$, we say $m'$ is \textbf{dominated} by $m$, denoted by $m'\preceq_{\sd} m$, if $m'\in m+M_{\sd}^{\oplus}$.
\end{defn}

We also consider the \textbf{quantum seed} $\sd=(N,E,I,F,\omega,\Lambda)$ --- writing the compatible form $\Lambda$ as part of the seed data --- whose compatible pair determines the positive number $d$ given by \eqref{Lambda-B}. Recall that the Injectivity Assumption is automatically satisfied by a quantum seed.

Recall the completions 
\begin{eqnarray*}
\kk_{t}\llb N\rrb & = & \kk_{t}[N]\otimes_{\kk_{t}[N^{\oplus}]}\kk_{t}\llb N^{\oplus}\rrb\\
\kk_{t}\llb M\rrb & = & \kk_{t}[M]\otimes_{\kk_{t}[M^{\oplus}]}\kk_{t}\llb M^{\oplus}\rrb
\end{eqnarray*}
as in \S \ref{qtoralg}.
We sometimes denote $ \kk_{t}\llb M\rrb$ by $\kk_{t}\llb M_\sd\rrb$ to emphasize its dependence on $\sd$. Recall that, for working with the classical case, it suffices to evaluate $t=1$.

\begin{defn}[Degree and pointed elements {\cite{qin2017triangular}}]\label{defn:pointedness}
Assume that a given element $Z\in\kk_{t}\llb M_\sd\rrb$ takes the form
\begin{align}\label{eq:pointed_func}
	Z = \sum_{n\in N^\oplus}c_{n}(t)z^{m+\omega_1(n)}
\end{align}
for some $m\in M$ and some coefficients $c_n(t)\in \kk_{t}$. If $c_0\neq 0$, then $Z$ is said to have \textbf{degree} $m$, denoted by $\deg Z=m$, and its $F$-function is defined as the formal Laurent series $F_{Z}:=\sum c_n(t) z^{\omega_1 (n)}\in \kk_t\llb M^{\oplus}\rrb$.

If further $c_0=1$, then $Z$ is said to be \textbf{$m$-pointed}.   A set $\{Z_{m}|m\in M\}$ is said to be \textbf{$M$-pointed} if its elements
$Z_{m}$ are $m$-pointed.
\end{defn}

For an $m$-pointed element $Z$, we may refer to its degree $m$ as the \textbf{$g$-vector} of $Z$ and denote $g(Z)=m$ (this generalizes the extended $g$-vectors of \S \ref{sub:g}). By Proposition \ref{TopBasis}, a theta function $\vartheta_{m,\sQ}$ is $m$-pointed, i.e., has $g$-vector $m$, and this is independent of the choice of $\sQ$.

Note that $Z$ has degree $m$ if and only if $m$ is the unique $\prec_{\sd}$-maximal degree for the monomial terms of $Z$.

\begin{defn}[Support]\label{defn:supp}
Assume that the Injectivity Assumption holds. The \textbf{support} of $Z\in\kk_{t}\llb M_\sd\rrb$ (in $I_{\ufv}$) is defined as
\begin{align}\label{eq:supp}
	\supp Z=\supp_{I_{\ufv}} Z=\{i\in I_{\ufv} | \exists n=(n_i)\in \Z^{I_\ufv},\ c_n(t)\neq 0,\ n_i>0\}.
\end{align}
\end{defn}

\subsection{Similar seeds}
\label{sec:similarity}

In order to change the coefficients of a given seed, we recall the notion of similarity following \cite{Qin12,qin2017triangular}. 

Let there be given two seeds $\sd=(N,E,F,I,\omega)$ and $\sd'=(N',E',F',I',\omega')$ (or qauntum seeds $\sd$, $\sd'$ with the Lambda matrices $\Lambda$ and $\Lambda'$, which determine positive numbers $d$ and $d'$ respectively via \eqref{Lambda-B}). If there is a bijection $\sigma$ from $I_{\uf}=I\setminus F$ to $I'_{\uf}=I'\setminus F'$, for any vector $p=(p_i)_{i\in I_\ufv}$ with coordinates $p_i\in \R$, define $\sigma p:=(\sigma p_{i'})_{i'\in I'_\ufv}$ such that $\sigma p_{\sigma i}=p_i$. I.e., we may view $\sigma$ as identifying $N^{\oplus}$ with $N'^{\oplus}$ and $M^{\oplus}$ with $M'^{\oplus}$.

\begin{defn}[Similar seeds {\cite{qin2017triangular}}]\label{def:similar_seed}
The seed $\sd'$ is said to be \textbf{similar} to $\sd$ up to an identification $\sigma:I_\ufv\simeq I'_\ufv$, if they share the
same principal $B$-matrix up to $\sigma$, i.e. $b_{i,j}(\sd)=b_{\sigma i, \sigma j}(\sd')$ for $i,j\in I_{\ufv}$. 
\end{defn}

Let $\s{A}^{\bullet}(\sd)$ and $\s{A}^{\bullet}(\sd')$ denote the (quantum) cluster algebras associated to the (quantum) seeds $\sd$ and $\sd'$ respectively, where $\bullet$ stands for $\ord$, $\midd$, $\can$ or $\up$. 
\begin{defn}[Similar cluster algebras]\label{def:sim_alg}
If $\sd$ and $\sd'$ are similar, then  $\s{A}^{\bullet}(\sd)$ and $\s{A}^{\bullet}(\sd')$ are said to be \textbf{similar} or of the same \textbf{type}.
\end{defn}

From now on, assume that $\sd$ and $\sd'$ are similar up to a permutation $\sigma$ and, moreover, $M^\oplus$ and ${M'}^\oplus$ are strongly convex. We work over a ring $\kk_t=\kk[t^{\pm \frac{1}{D}}]$ such that $d|D$ and $d'|D$, see \S \ref{qtoralg}.  Recall the notation $\pr_{I_{\uf}}:\bb{Z}^I\rar \bb{Z}^{I_{\uf}}$ as in \S \ref{sub:g}.

\begin{defn}[Similar pointed elements]\label{def:similar_element}
An $m'$-pointed element $Z'\in\kk_{t}\llb M'_{\sd'}\rrb$ is said to be similar to the $m$-pointed
element $Z\in\kk_{t}\llb M_\sd \rrb$ in \eqref{eq:pointed_func} if $\pr_{I'_{\uf}}m'=\sigma(\pr_{I_{\uf}}m)$ and $Z'$ takes the form\footnote{The decomposition coefficients $c_n$ for $Z$ in \eqref{eq:pointed_func} might not be unique when $M^\oplus$ is strongly convex but the Injectivity Assumption fails. Nevertheless, $Z'$ is uniquely determined by $Z$ and $m'$.} 
\begin{eqnarray*}
	Z' = \sum_{n}c_{n}(t^{\frac{d'}{d}})z^{m'+\omega'_1(\sigma n)}.
\end{eqnarray*}
\end{defn}
Note that the similar element $Z'$ has the same $F$-function as that of $Z$ (after replacing $t$ by $t^{\frac{d'}{d}}$). $Z'$ is thus determined by $Z$ up to a frozen factor---more precisely, up to a shift of all exponents of $Z'$ by a vector in the span of $\{(e_i')^*|i\in F'\}$.

Let $\f{D}$ and $\f{D}'$ denote the scattering diagrams associated to the initial seeds $\sd$ and $\sd'$, respectively, as in \eqref{DIn} and \eqref{DAt}. Then they provide the following similar elements:
\begin{lem}\label{lem:similar_theta_function}
If $\vartheta_{m}$ is an $m$-pointed theta function for $\f{D}$, $m\in M$, then
the similar pointed elements in $\kk_t\llb M'\rrb$ are the theta functions $\vartheta_{m'}$ for $\f{D}'$, where $\pr_{I'_{\uf}}m'=\sigma(\pr_{I_{\uf}}m)$.
\end{lem}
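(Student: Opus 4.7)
The plan is to show that the scattering diagrams $\f{D}$ and $\f{D}'$ coincide under the natural identification $\omega_1(e_i) \leftrightarrow \omega'_1(\sigma(e_i))$ (together with $t\leftrightarrow t^{d'/d}$), and to transport broken lines accordingly. The cleanest setup is to first lift both sides to principal coefficients, since for principal coefficients the unfrozen parts of $\sd^{\prin}$ and $\sd'^{\prin}$ are canonically identified via $\sigma$, and similarity is a condition purely on the principal $B$-matrix. After proving the statement on principal coefficients, I would project back through $\rho$ and $\rho'$ to recover theta functions for $\sd$ and $\sd'$.

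The key technical step is a matching of quantum torus Lie algebras. Using the compatibility \eqref{Lambda-v1v2}, $\Lambda(\omega_1(e_i),\omega_1(e_j))=d\,\omega(e_i,e_j)$ and $\Lambda'(\omega'_1(e_{\sigma i}),\omega'_1(e_{\sigma j}))=d'\,\omega'(e_{\sigma i},e_{\sigma j})$. Since $\omega(e_i,e_j)=\omega'(e_{\sigma i},e_{\sigma j})$ for $i,j\in I_{\uf}$ (this is the similarity hypothesis in the form $B_{ji}=B'_{\sigma j,\sigma i}$), the rescaling $t\mapsto t^{d'/d}$ intertwines the Lie brackets on the subalgebras generated by $\{\hat z^{\omega_1(n)}\}_{n\in N^+}$ and $\{\hat z^{\omega'_1(\sigma n)}\}_{n\in N^+}$. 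Therefore the initial walls of \eqref{DIn} are matched by $\sigma$ up to the rescaling, and by the uniqueness in Theorem \ref{KSGS} so are the full consistent scattering diagrams.

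Given this identification, I would verify that the broken line construction is preserved. Each broken line for $\f{D}^{\s{A}^{\prin}_t}$ ending at a generic basepoint $\sQ$ sufficiently close to $m$ transports to a broken line for $\f{D}^{\s{A}'^{\prin}_t}$ ending at the corresponding point, with attached monomials related by $c_n(t)z^{m+\omega^{\prin}_1(n)} \leftrightarrow c_n(t^{d'/d})z^{m'+(\omega^{\prin})'_1(\sigma n)}$. Summing over broken lines and then pushing down by $\rho$ (respectively $\rho'$) yields similar pointed elements in the sense of Definition \ref{def:similar_element}. Since $\rho$ is equivariant on the unfrozen directions, the condition $\pr_{I'_{\uf}} m' = \sigma(\pr_{I_{\uf}} m)$ is exactly what emerges.

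The main obstacle is the bookkeeping at the frozen indices: $F_1\subset I_{\sd^{\prin}}$ and $F'_1\subset I_{\sd'^{\prin}}$ are genuinely different and also not related by any extension of $\sigma$, so the two principal-coefficients seeds are not literally isomorphic. This is harmless for broken lines because all wall directions lie in $\omega_1^{\prin}(N^{\oplus},0)$ (which is canonically identified on both sides) and, by \eqref{LambdaN}, the wall-crossing automorphisms fix the frozen $z^{(0,n)}$ directions. Thus the comparison descends cleanly to $\sd$ and $\sd'$ after applying $\rho$ and $\rho'$, up to the usual frozen-factor ambiguity built into Definition \ref{def:similar_element}.
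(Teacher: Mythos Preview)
Your high-level intuition is right (the principal $B$-matrix governs the Lie algebra of scattering functions, and frozen directions are invisible to wall-crossing), but the execution has a real gap, and the detour through principal coefficients does not close it.  The ambient spaces $(M\oplus N)_{\bb R}$ and $(M'\oplus N')_{\bb R}$ for $\sd^{\prin}$ and $\sd'^{\prin}$ are still genuinely different, because passing to principal coefficients adds $I_2$-frozen indices without touching the original frozen sets $F_1\neq F'_1$.  When you say a broken line in $\f{D}^{\s{A}_t^{\prin}}$ ``transports to a broken line for $\f{D}^{\s{A}_t'^{\prin}}$ ending at the corresponding point,'' no map between these spaces has been specified, so ``the corresponding point'' is undefined.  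Your appeal to \eqref{LambdaN} only gives $(0,N)$-invariance of the walls (which is what makes $\rho$ behave well); it says nothing about the $(M_{F_1},0)$-directions, which is where the $F_1$/$F'_1$ mismatch lives.  An analogous orthogonality does hold for $(M_{F_1},0)$, but you would then need to pass to a quotient by the full frozen sublattice on each side to get a common ambient space---and at that point the principal coefficients have contributed nothing.  There is also some slippage between paragraphs: your Lie-algebra matching via $t\mapsto t^{d'/d}$ is argued for $\sd,\sd'$ with their given $\Lambda,\Lambda'$, not for $\sd^{\prin},\sd'^{\prin}$ with $\Lambda^{\prin}$ (where $d=1$), so the two strands of your argument do not line up.

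The paper's route avoids all of this by going up rather than down.  It first proves directly (from $\Lambda_2(\omega_1(N_{\uf}))\subset N_{\uf}$) that walls of any $\f{D}^{\sd}$ are invariant under $M_{F,\bb R}$-translation and wall-crossings act trivially on $\kk_t[M_F]$, giving a monomial-shifting bijection between broken lines for $\vartheta_{m,\sQ}$ and $\vartheta_{m+u,\sQ'}$ whenever $u\in M_F$ and $\sQ-\sQ'\in M_{F,\bb R}$.  It then constructs a single auxiliary quantum seed $\wt{\sd}$ on $N\oplus N_{F'}$ with frozen set $F\sqcup F'$ and compatible form $\wt\Lambda$ (with multiplier $d+d'$), so that $\sd$ and $\sd'$ both arise from $\wt\sd$ via projections $\pr_I,\pr_{I'}$ sending $t\mapsto t^{d/(d+d')}$, $t\mapsto t^{d'/(d+d')}$.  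Applying the translation-bijection to $\wt\sd$ and then projecting yields the claim in one stroke.  If you want to salvage your approach, the cleanest fix is to drop principal coefficients entirely and instead project both $\f{D}^{\sd}$ and $\f{D}^{\sd'}$ to the common quotient $\bb R^{I_{\uf}}\cong\bb R^{I'_{\uf}}$; this is the dual of the paper's lift and uses exactly the same $M_F$-invariance lemma.
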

\begin{proof}
Since the injectivity assumption holds, we may fix a compatible $\Lambda$ and $\Lambda'$ if these have not been fixed already.   Recall $M_F=N_{\uf}^{\perp}\subset M$; i.e., $M_F$ denotes the span of $\{e_i^*|i\in F\}$.  In general, the exponents on scattering functions of $\f{D}$ lie in $M^+=\omega_1(N^+)\subset \omega_1(N_{\uf})$, and by \eqref{Lambda-B}, $\Lambda_2(\omega_1(N_{\uf}))\subset N_{\uf}$.  It follows that all walls of $\f{D}$ are closed under addition by $M_{F,\bb{R}}$, and that all scattering automorphisms act trivially on $\kk_t[M_F]$. This further implies that, for any $u\in M_F$ and any generic $\sQ,\sQ'\in M_{\bb{R}}$ with $\sQ-\sQ'\in M_{F,\bb{R}}$, there is a bijection between broken lines contributing to $\vartheta_{m+u,\sQ'}$ and broken lines contributing to $\vartheta_{m,\sQ}$ with attached monomials on the former being obtained from the corresponding monomials on the latter by adding $u$ to the exponents.  Hence, $\vartheta_{m+u,\sQ'}$ is obtained from $\vartheta_{m,\sQ}$ by adding $u$ to every exponent.

Relabeling the basis vectors of $\sd'$ if necessary, we can assume that the permutation $\sigma$ is trivial.  Now consider the quantum seed $$\wt{\sd}:=(\wt{N}\coloneqq N\oplus N_{F'},I\sqcup F',\wt{E}=\{\wt{e}_i\}_{i\in I\sqcup F'},F\sqcup F',\wt{\omega},\wt{\Lambda}).$$  Here, $\wt{\omega}(\wt{e}_i,\wt{e}_j)$ is defined to equal $\omega(e_i,e_j)$ if $i,j$ are both in $I$, $\omega'(e_i,e_j)$ if $i,j$ are both in $I'=I_{\uf}\sqcup F'$, and $0$ if one index is in $F$ while the other is in $F'$.  Similarly, we extend $\Lambda$ and $\Lambda'$ to $\wt{M}$ via $\Lambda(m,e_j^*)\coloneqq 0$ for $j\in F'$ and $\Lambda'(m,e_j^*)\coloneqq 0$ for $j\in F$.  Then the sum $\wt{\Lambda}$ of these extended forms is a compatible bilinear form for $\wt{s}$ such that $\wt{\Lambda}_2(\wt{\omega}_1 (e_k))=(d+d') e_k$, $k\in I_{\uf}$.
  
The arguments from the first paragraph of the proof apply to this seed to show that, for $\sQ,\sQ'$ generic and differing by a point in $\wt{M}_{F\sqcup F',\bb{R}}$, and for any $u\in M_{F\sqcup F'}$, $\vartheta_{m+u,\sQ'}$ can be obtained from $\vartheta_{m,\sQ}$ by adding $u$ to every exponent.  Let $\pr_I$ and $\pr_{I'}$ denote the natural projections of $\wt{M}$ onto $M$ and $M'$, respectively.  These induce morphisms of the quantum torus Lie algebras $\pr_I:\f{g}^t(\wt{\sd})\rar \f{g}^t(\sd)$ and $\pr_{I'}:\f{g}^t(\wt{\sd})\rar \f{g}^t(\sd')$, as well as morphisms of $\kk$-modules $\pr_I : \kk_t\llb \wt{M}\rrb\rar \kk_t\llb M\rrb$ and $\pr_{I'}:\kk_t\llb \wt{M}\rrb\rar \kk_t\llb M'\rrb$ which intertwine the respective Lie algebra actions, via applying $\pr_I$ or $\pr_{I'}$ to the exponents while setting $\pr_I:t\mapsto t^{\frac{d}{d+d'}}$ and $\pr_{I'}:t\mapsto t^{\frac{d'}{d+d'}}$.   Note that $$\pr_I(\vartheta_{\wt{m},\sQ})=\vartheta_{\pr_I(\wt{m}),\pr_I(\sQ)}\quad \text{and} \quad \pr_{I'}(\vartheta_{\wt{m},\sQ})=\vartheta_{\pr_{I'}(\wt{m}),\pr_{I'}(\sQ)}$$
    for all $\wt{m}\in \wt{M}$ and generic $\sQ\in \wt{M}_{\bb{R}}$.  We thus see that, for any $\wt{m}\in \wt{M}$ and generic $\sQ\in \wt{M}_{\bb{R}}$, $\vartheta_{m,\pr_I(\sQ)}$ for $m=\pr_I(\wt{m})$ is similar to every element of the form $\vartheta_{\pr_{I'}(\wt{m}+u),\pr_{I'}(\sQ')}$ for $u\in \wt{M}_{F\sqcup F'}$ and $\sQ'-\sQ\in M_{\bb{R}}$.  These elements $\pr_{I'}(\wt{m}+u)$ are precisely the $m'\in M'$ with $\pr_{I'_{\uf}} m' = \sigma(\pr_{I_{\uf}} m)$, so the claim follows.
\end{proof}

Now suppose that a classical seed $\sd$ satisfies the condition that $M^\oplus$ is strongly convex, but consider a similar seed $\sd'$ which might not satisfy this convexity condition.  We can still adapt Definition \ref{def:similar_element} for $\sd'$. More precisely, for any $m$-pointed Laurent polynomial $Z=\sum_{n\in N^{\oplus}} c_n z^{m+\omega_1(n)}\in \kk[M_\sd]$, $c_n\in \kk$, and any $m'$ such that $\pr_{I'_{\uf}}m'=\sigma(\pr_{I_{\uf}}m)$, we can construct a Laurent polynomial $Z'\in\kk[M_{\sd'}]$ by:
\begin{align}\label{eq:projection}
Z':=& \sum_{n\in N^{\oplus}}c_{n}z^{m'+\omega'_1(\sigma n)}\\
=&\sum_{p\in {M'}^{\oplus}}\left(\sum_{\sigma n\in {\omega'_1}^{-1}(p)} c_n\right)z^{m'+p}.\nonumber
\end{align}
E.g., the restriction map $\rho$ on theta functions (see \eqref{theta-rho}) is the special case of this construction with $M^{\prin}$ in place of $M$ and with $M$ in place of $M'$.

We see that the $t=1$ version of Lemma \ref{lem:similar_theta_function} holds in this setting as well by essentially the same arguments, but with the added assumption that $m\in \Theta^{\midd}$.

\begin{lem}\label{lem:similar_theta_function_2}
If $\vartheta_{m}$ is an $m$-pointed theta function for $\f{D}$, $m\in \Theta^{\midd}\subset M$, then
the similar elements in $\kk[M']$ as in \eqref{eq:projection} are the theta functions $\vartheta_{m'}$ for $\f{D}'$ where $\pr_{I'_{\uf}}m'=\sigma(\pr_{I_{\uf}}m)$.
\end{lem}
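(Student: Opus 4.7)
The plan is to reduce to Lemma \ref{lem:similar_theta_function} (applied with $t=1$) by routing through the principal-coefficients seed $\sd'^{\prin}$, which always satisfies the Injectivity Assumption. The key observation is that $\sd'^{\prin}$ is similar to $\sd$ via the same bijection $\sigma\colon I_\ufv \simeq I'_\ufv = (I'^{\prin})_\ufv$: passing to principal coefficients only adds new frozen indices, so the principal $B$-matrix is unchanged. Thus the compatible form $\Lambda'^{\prin}$ of \eqref{LambdaPrin} makes $(\sd'^{\prin},\Lambda'^{\prin})$ into a quantum seed to which Lemma \ref{lem:similar_theta_function} applies.

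Fix a lift $\tilde m' = (m',0) \in M'^{\prin} = M' \oplus N'$ of our chosen $m' \in M'$, so that $\pr_{I'^{\prin}_\ufv}\tilde m' = \sigma(\pr_{I_\ufv} m)$. Writing $\vartheta_m = \sum_{n\in N^\oplus} c_n z^{m+\omega_1(n)}$ and using \eqref{pi1prin} to compute $\omega'^{\prin}_1(\sigma n,0) = (\omega'_1(\sigma n),\sigma n)$, Lemma \ref{lem:similar_theta_function} produces the similar theta function
\[
\vartheta_{\tilde m'} \;=\; \sum_{n\in N^{\oplus}} c_n \, z^{\tilde m' + (\omega'_1(\sigma n),\sigma n)} \;\in\; \kk\llb M'^{\prin}\rrb
\]
for the scattering diagram $\f{D}^{\sd'^{\prin}}$. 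Because similar pointed elements share the same $F$-function, $\vartheta_{\tilde m'}$ is a (finite) Laurent polynomial if and only if $\vartheta_m$ is; the hypothesis $m \in \Theta^{\midd}$ therefore gives $\tilde m' \in \Theta^{\midd,\prin}$, which in turn places $m' = \rho'(\tilde m')$ in $\Theta^{\midd}(\sd')$ and entitles us to use the definition \eqref{theta-rho} of theta functions for $\sd'$.

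Applying the restriction $\rho'\colon M'^{\prin} \to M'$, which sends $(u,v)\mapsto u$, we obtain
\[
\vartheta_{m'} \;=\; \rho'(\vartheta_{\tilde m'}) \;=\; \sum_{n\in N^{\oplus}} c_n \, z^{m' + \omega'_1(\sigma n)},
\]
which is precisely the element $Z'$ constructed in \eqref{eq:projection}. Finally, since any lift $(m',n_0)$ with $n_0 \in N'$ is mapped by $\rho'$ to the same element of $\kk[M']$ (because $\rho'$ forgets the second factor), the resulting $\vartheta_{m'}$ is independent of the auxiliary choice and is determined by $m'$ alone. The only subtlety worth double-checking is the identification of $\omega'^{\prin}_1$ on $(\sigma n, 0)$ and of the grading under $\rho'$, but both are immediate from the definitions in \S \ref{prinsub}; no additional convergence input is needed beyond the finiteness forced by $m \in \Theta^{\midd}$.
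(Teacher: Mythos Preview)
Your argument is correct and makes precise what the paper leaves implicit: the paper's entire proof is the remark preceding the lemma that ``the $t=1$ version of Lemma~\ref{lem:similar_theta_function} holds in this setting as well by essentially the same arguments, but with the added assumption that $m\in\Theta^{\midd}$.'' Rather than rerunning the auxiliary-seed construction from the proof of Lemma~\ref{lem:similar_theta_function}, you invoke that lemma as a black box for the pair $(\sd,\sd'^{\prin})$ and then apply the defining projection $\rho'$ of \S\ref{FailInj}; this is a clean factorization and makes transparent why the hypothesis $m\in\Theta^{\midd}$ enters (namely, to place $\tilde m'$ in $\Theta^{\prin,\midd}$ so that \eqref{theta-rho} applies).

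One minor caveat worth flagging: your appeal to Lemma~\ref{lem:similar_theta_function} for the pair $(\sd,\sd'^{\prin})$ tacitly uses that $\sd$ itself satisfies the Injectivity Assumption, since the proof of that lemma begins by fixing a compatible $\Lambda$. The text preceding Lemma~\ref{lem:similar_theta_function_2} literally only asks that $M^{\oplus}$ be strongly convex. In the paper's applications $\sd$ always does satisfy Injectivity (indeed, the scattering diagram $\f{D}$ is already presumed to exist), so this is harmless; and in any event the same trick applied once more---routing $\sd$ through $\sd^{\prin}$ as well and using Lemma~\ref{lem:similar_theta_function} for $(\sd^{\prin},\sd'^{\prin})$---closes the gap immediately.
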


\subsection{Gluing frozen vertices}\label{sec:gluing_frozen_vertices}
Let $\sd=(N,I,F,E,\omega)$ be a (classical) seed. 

Let $b_1,b_2$ denote two frozen vertices of the quiver associated to $\sd$ (cf. Remark \ref{QS}). We can glue $b_1,b_2$ into a single frozen vertex, which we denote by $b$. If this gluing results in any loops at $b$ (i.e., arrows from $b$ to $b$), we simply remove these loops from the quiver (this happens if there were arrows between $b_1$ and $b_2$, but this will not be the case in any examples we care about).

Correspondingly, we can construct the seed $\?{\sd}=(\?{N},\?{I},\?{F},\?{E},\?{\omega})$ by gluing $\sd$ as follows. Replace $N$ with $\?{N}:=N/\bb{Z}\langle e_{b_1}-e_{b_2}\rangle$, replace $b_1$ and $b_2$ in $I$ and $F$ with $b$, take $e_b\in E$ to be the projection of either $e_{b_i}$ ($i=1,2$) to $\?{N}$, and replace $\omega$ with $\?{\omega}$ given as follows: if $i,j\neq b$, then $\?{\omega}(e_i,e_j)=\omega(e_i,e_j)$, and
\begin{align*}
\?{\omega}(e_i,e_b)=\omega(e_i,e_{b_1}+e_{b_2}).
\end{align*}
The dual lattice $\?{M}$ is the sublattice of $M$ spanned by $\{e^*_i\}_{i\in I\setminus \{b_1,b_2\}} \cup \{e_b^*:=e^*_{b_1}+e^*_{b_2}\}$.

In general, for any equivalence relation on $F$, we can glue the vertices $i$ in each equivalent class $[i]$ by iterating the above gluing process. The resulting seed $\?{\sd}$ satisfies
\begin{align}\label{eq:gluing_omega}
	\?{\omega}(e_{[i]},e_{[j]})=\sum_{i\in [i],j\in [j]}\omega(e_i,e_j),
\end{align}
where we denote $[k]=k$ for $k\in I\setminus F$. In particular, the resulting seed does not depend on the order of our gluing process. Notice that the natural embedding from $\?{M}$ to $M$ sends $e_{[i]}^*$ to $\sum_{i\in [i]}e_i^*$.

As we shall see in \S \ref{Sec:glue}, the gluing operation appears naturally on surfaces.

We define a linear map 
\begin{align}\label{piM}
	\pi_M:M\mapsto \?{M},\quad m\mapsto \?{m}    
\end{align}
by specifying that $\?{e}^*_i=e^*_{[i]}$.  
Elements $\pi_M(m)$ will typically be denoted as just $\?{m}$. If $\sd$ and $\?{\sd}$ are quantum seeds, let $d$ and $\?{d}$ denote the numbers determined by their respective compatible pairs. We let $\pi_{M}$ denote the induced $\kk$-linear map from $\kk_t\llb M \rrb$ to $\kk_t\llb \?{M}\rrb$ such that $\pi_M(t)=t^{\?{d}/{d}}$. We do NOT require that $\pi_M$ respects multiplication.

Note that $\sd$ and $\?{\sd}$ are similar seeds. We have the following observation:
\begin{align}\label{pi_cluster_poisson}
	\pi_M(\omega_1(n))=\?{\omega}_1(\?{n}).
\end{align}

Similarly, consider the principal coefficients seeds $\sd^{\prin}$ and $\?{\sd}^{\prin}=(\?{\sd})^{\prin}$ as in \S \ref{prinsub}. We glue $\sd^{\prin}$ into a seed $\?{\sd^{\prin}}$ by gluing two elements of $I_2$ whenever the corresponding elements in $I_1$ are glued. Let $\pi_{M^{\prin}}$ denote the induced $\kk$-linear map from $\kk\llb M^{\prin} \rrb$ to $\kk\llb M^{\?{\sd^{\prin}}} \rrb$.  Note that $\?{\sd^{\prin}}$ and $\?{\sd}^{\prin}$ agree except for the fact that $\omega^{\?{\sd}^{\prin}}(e_{[j]},e_{[j']})=1$ when $j\in F_1$ and $j'$ is the corresponding element of $I_2$, whereas $\omega^{\?{\sd^{\prin}}}(e_{[j]},e_{[j']})$ equals the cardinality of $[j]$).  We thus naturally identify $\kk\llb M^{\?{\sd^{\prin}}} \rrb$ with $\kk\llb M^{\?{\sd}^{\prin}} \rrb$. Then, for any $n\in \Z^{I_{\uf}}$, we have
\begin{align}\label{eq:pi_principal}
	\pi_{M^{\prin}}(\omega_1 ^{\sd^{\prin}}(n) )=\omega^{\?{\sd}^{\prin}}_1(\?{n}).
\end{align}

We deduce that for any $m$-pointed element $Z$ in $\kk\llb M^{\prin} \rrb$ (or in $\kk_t\llb M^{\prin} \rrb$), $\pi_{M^{\prin}}(Z)$ is similar to $Z$. In particular, using Lemma \ref{lem:similar_theta_function}, 
\begin{align}\label{eq:piM-prin}
	\text{$\pi_{M^{\prin}}$ sends theta function for $\sd^{\prin}$ to theta functions for $\?{\sd}^{\prin}$.}
\end{align} Consequently, using \eqref{theta-rho}, for all $m\in M$ and generic $\sQ\in C_{\sd}^+$ and $\?{\sQ}\in C_{\?{\sd}}^+$,
\begin{align}\label{pi-theta}
	\pi_M(\vartheta_{m,\sQ}) = \vartheta_{\?{m},\?{\sQ}}
\end{align}
in $\s{A}^{\can}(\?{\sd})$ (or in $\s{A}_{t}^{\can}(\?{\sd})$ if $\sd$ and $\?{\sd}$ are quantum seeds).

\begin{rem}
	We note that \eqref{pi-theta} can alternatively be derived as a consequence of \cite[Lem. 2.11]{davison2019strong}.  Indeed, loc. cit. applies to relate the scattering diagrams for $\sd$ and $\?{\sd}$, and then it is straightforward to apply the construction of broken lines and theta functions to reach the desired equality.
\end{rem}

\subsection{Union of seeds}\label{sec:union}

Given two seeds $\sd_i=(N_i,I_i,E_i,F_i,\omega^{(i)})$, $i=1,2$, define their union to be the seed 
\begin{align}\label{eq:seed-union}
	\sd=(N_1\oplus N_2,I_1\sqcup I_2, E_1\sqcup E_2, F_1\sqcup F_2,\omega^{(1)}\oplus \omega^{(2)}).    
\end{align}
Similarly, for quantum seeds $\sd_1,\sd_2$ with the same multiplier $d$, we define their union with the additional matrix $\Lambda(\sd)=\Lambda_1\oplus \Lambda_2$. Let $\s{A}_{t}^{\can}(\sd_i)$, $\s{A}_{t}^{\can}(\sd)$ denote the corresponding (quantum) canonical cluster algebras, and similarly the corresponding ordinary, middle, and upper (quantum) cluster algebras. For any $g_i\in M_i$, $i=1,2$, let $\vartheta_{g_i}$ and $\vartheta_{g_1+g_2}$ denote the corresponding theta function in $\s{A}_{t}^{\can}(\sd_i)$ and $\s{A}_{t}^{\can}(\sd)$ respectively.

Note that we can naturally view $\kk_t[ M_i]$ as a subalgebra of $\kk_t[M_1\oplus M_2 ]$, $i=1,2$. If $\sd_i$, $i=1,2$, satisfies that $M_i^\oplus$ is strongly convex, then we can define $\kk_t\llb M_i\rrb$, which can be viewed as subalgebras of $\kk_t\llb M_1\oplus M_2 \rrb$.
\begin{lem}\label{lem:theta-union-product}
	(1) If $\sd_1$ and $\sd_2$ satisfy the Injectivity Assumption, then $\vartheta_{g_1}\vartheta_{g_2}=\vartheta_{g_1+g_2}$ (in both the quantum and classical settings).
	
	(2) Even in the possible absence of the Injectivity Assumption, if $\vartheta_{g_i}\in \s{A}^{\midd}(\sd_i)$, $i=1,2$, then $\vartheta_{g_1}\vartheta_{g_2}=\vartheta_{g_1+g_2}$ in $\s{A}^{\midd}(\sd)$.
\end{lem}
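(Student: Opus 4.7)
The plan is to prove part (1) directly by showing that the scattering diagram for the union seed splits into two independent pieces, and then to deduce part (2) by reducing to part (1) via principal coefficients.

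The central observation for part (1) is that $\f{D}^{\s{A}_t}_{\sd}$ is equivalent to the union of the natural extensions of $\f{D}^{\s{A}_t}_{\sd_1}$ and $\f{D}^{\s{A}_t}_{\sd_2}$. Indeed, under the Injectivity Assumption for each $\sd_i$, one checks that the Injectivity Assumption also holds for $\sd$ (with compatible form $\Lambda = \Lambda_1 \oplus \Lambda_2$ in the quantum case). Since $\omega$ and $\Lambda$ are block-diagonal with respect to $M = M_1 \oplus M_2$, each incoming wall in \eqref{DIn} is of the form $(e_i^{\perp,(k)} \oplus M_{j,\bb{R}},\Psi_{t^d}(z^{\omega_1(e_i)}))$ with $i \in I_k \setminus F_k$ and $\omega_1(e_i) \in M_k^+$, where $\{j,k\} = \{1,2\}$. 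Because $\Lambda(M_1, M_2) = 0$, the compatibility condition \eqref{skewCondition} implies that the scattering functions built from $M_1^+$-directions commute with those from $M_2^+$-directions, so no new walls are needed for consistency. We may thus choose a representative of $\f{D}^{\s{A}_t}_{\sd}$ whose walls are all of the form $\f{d}_1 \oplus M_{2,\bb{R}}$ (from $\f{D}^{\s{A}_t}_{\sd_1}$) or $M_{1,\bb{R}} \oplus \f{d}_2$ (from $\f{D}^{\s{A}_t}_{\sd_2}$).

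Given this, for generic $\sQ = \sQ_1 + \sQ_2 \in M_{\bb{R}}$, I will establish a bijection between broken lines $\gamma$ with ends $(g_1 + g_2, \sQ)$ in $M_{\bb{R}}$ and pairs of broken lines $(\gamma^{(1)}, \gamma^{(2)})$ with $\gamma^{(i)}$ having ends $(g_i, \sQ_i)$ in $M_{i,\bb{R}}$ with respect to $\f{D}^{\s{A}_t}_{\sd_i}$.  By induction on bends, the attached exponent on each straight segment of $\gamma$ lies in $(g_1 + M_1^{\oplus}) \oplus (g_2 + M_2^{\oplus})$, and a bend on a wall of type $\sd_i$ changes only the $M_i$-summand. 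Decomposing $\gamma = \gamma^{(1)} + \gamma^{(2)}$ and the attached exponents accordingly gives two broken lines in the factors; conversely, given a pair $(\gamma^{(1)}, \gamma^{(2)})$ (which for generic $\sQ$ have generically distinct bend times), their sum defines a broken line in $M_{\bb{R}}$. Attached coefficients multiply, and since $\Lambda(v^{(1)}, v^{(2)}) = 0$ there is no quantum twist. Summing over broken lines yields $\vartheta_{g_1+g_2,\sQ} = \vartheta_{g_1,\sQ_1} \cdot \vartheta_{g_2,\sQ_2}$ in $\kk_t\llb M\rrb$. Applying $\iota_{\sQ}$ and invoking the topological $\kk_t$-basis property of theta functions (Proposition \ref{TopBasis}) gives $\vartheta_{g_1}\vartheta_{g_2} = \vartheta_{g_1+g_2}$ in $\s{A}_t^{\can}(\sd)$.

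For part (2), I will reduce to part (1) by passing to principal coefficients. A direct check from the construction in \S \ref{prinsub} shows that $\sd^{\prin}$ equals the union $\sd_1^{\prin} \sqcup \sd_2^{\prin}$ (the lattices, index sets, bases, and forms all split). Since each $\sd_i^{\prin}$ satisfies the Injectivity Assumption, part (1) applies: for any lifts $g_i^{\prin} \in M_i^{\prin}$ of $g_i$ with $\rho(g_i^{\prin}) = g_i$, we have $\vartheta_{g_1^{\prin}}\vartheta_{g_2^{\prin}} = \vartheta_{g_1^{\prin}+g_2^{\prin}}$ in $\s{A}^{\can}(\sd^{\prin})$. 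The hypothesis $\vartheta_{g_i} \in \s{A}^{\midd}(\sd_i)$ means $g_i \in \Theta^{\midd}(\sd_i)$, so we may choose $g_i^{\prin} \in \Theta^{\prin,\midd}(\sd_i)$, ensuring $\vartheta_{g_i^{\prin}}$ is a Laurent polynomial on which $\rho$ acts. Applying the algebra homomorphism $\rho$ and using \eqref{theta-rho} then gives
\[
\vartheta_{g_1}\vartheta_{g_2} = \rho(\vartheta_{g_1^{\prin}})\rho(\vartheta_{g_2^{\prin}}) = \rho(\vartheta_{g_1^{\prin}}\vartheta_{g_2^{\prin}}) = \rho(\vartheta_{g_1^{\prin}+g_2^{\prin}}) = \vartheta_{g_1+g_2}
\]
in $\s{A}^{\midd}(\sd)$, as desired.

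The proof is essentially mechanical once one has the scattering-diagram decomposition, so the main obstacle is really verifying cleanly the bijection of broken lines at the level of positions, bend-times, and attached monomials. Genericity of $\sQ$ and the fact that $\sd_i$-walls are lifted along the complementary $M_j$-factor must be used to avoid spurious crossings where $\gamma^{(i)}$ would cross a wall without $\gamma$ bending. A secondary technical point is the identification $\sd^{\prin} = \sd_1^{\prin} \sqcup \sd_2^{\prin}$ for part (2), which is a straightforward unwinding of the definitions but should be noted.
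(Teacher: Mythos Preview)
Your proposal is correct and follows essentially the same approach as the paper: for (1), both you and the paper observe that $\f{D}^{\s{A}_t}(\sd)$ splits as the direct sum $\f{D}^{\s{A}_t}(\sd_1)\oplus\f{D}^{\s{A}_t}(\sd_2)$ (using that the two sub Lie algebras commute), and then exhibit the bijection between broken lines for $\vartheta_{g_1+g_2,\sQ}$ and pairs of broken lines in the factors; for (2), both reduce to (1) by passing to $\sd^{\prin}=\sd_1^{\prin}\sqcup\sd_2^{\prin}$ and applying the projection $\rho$ as in \eqref{theta-rho}. Your write-up is somewhat more explicit about the broken-line bijection and the genericity issues, but the argument is the same.
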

\begin{proof}
	(1) We observe that the scattering diagram for $\sd$ is given by the direct sum of the scattering diagrams for $\sd_1$ and $\sd_2$, i.e.
	\begin{align*}
		\f{D}(\sd) &= \f{D}(\sd_1)\oplus \f{D}(\sd_2)\\
		&:= \{(\f{d}\oplus M_{2,\bb{R}},g) | (\f{d},g)\in\f{D}(\sd_1) \} \cup \{(M_{1,\bb{R}}\oplus \f{d},g)  | (\f{d},g)\in \f{D}(\sd_2)\}.
	\end{align*}
	For the initial scattering diagram this follows immediately from the definition \eqref{DIn}, and then the equality for the corresponding consistent scattering diagrams comes from the consistency of $\f{D}(\sd_1)$ and $\f{D}(\sd_2)$, plus the fact that the two sub Lie algebras associated to $\sd_1$ and $\sd_2$ commute with each other.

	Now fix $p=(p_1,p_2)\in M_1\oplus M_2$ and generic $\sQ=(\sQ_1,\sQ_2)\in M_{\bb{R}}=M_{1,\bb{R}}\oplus M_{2,\bb{R}}$.  Then the broken lines contributing to $\vartheta_{p,\sQ}$ are given as follows: the supports are $\ell:(-\infty,0]\rar M_{\bb{R}}$ with $\ell=(\ell_1,\ell_2)$, where $\ell_1$ and $\ell_2$ are the supports of broken lines for $\vartheta_{p_i,\sQ_1}$ and $\vartheta_{p_2,\sQ_2}$, respectively.  The attached monomials are the products of the pairs of attached monomials for the corresponding segments of $\ell_1$ and $\ell_2$.  It follows that $\vartheta_{p,\sQ}=\vartheta_{p_1,\sQ_1}\vartheta_{p_2,\sQ_2}$, as desired.
	
	(2) We first apply statement (1) to the principal coefficient seed $\sd_i^{\prin}$. Then (2) follows from applying the projections as in \eqref{theta-rho}. 
\end{proof}

\begin{rem}\label{rem:surface-union}
	If a triangulable marked surface $\Sigma$ contains multiple connected components, then the seed associated to $\Sigma$ is the union (in the above sense) of the seeds associated to the components of $\Sigma$.  Thus, when investigating theta functions for cluster algebras from marked surfaces, Lemma \ref{lem:theta-union-product} allows us to easily reduce to cases where the surface is connected.
\end{rem}

\subsection{Unfreezing}\label{sec:unfreezing}

Let $\sd'$ denote a (quantum) seed, and $\sd$ the (quantum) seed obtained from $\sd'$ by freezing some $b\in I'_{\uf}$ (so $b\in F$ but not $F'$). The seeds $\sd$ and $\sd'$ are NOT similar in our previous sense, but the unfreezing operation will be important when we treat surfaces later.

Note that we have a natural identification $M_{\sd}=M_{\sd'}$ inducing 
\begin{align}\label{eq:unfreezing_identification}
	\f{i}:\kk_t[M_{\sd}]=\kk_t[ M_{\sd'}]
\end{align}
and the natural inclusion $\f{i}:M_{\sd}^\oplus \hookrightarrow M_{\sd'}^\oplus$. Correspondingly, if $M_{\sd}^\oplus$ and $M_{\sd'}^\oplus$ are strongly convex, we have the natural inclusion 
\begin{align}\label{eq:unfreezing_inclusion}
	\f{i}:\kk_t\llb M_{\sd}\rrb\hookrightarrow\kk_t\llb M_{\sd'}\rrb.
\end{align}
In this case, for any $m$-pointed element $Z\in \kk_t\llb M_{\sd}\rrb$, $\f{i}(Z)$ remains $m$-pointed with the same $F$-function.

We assume for the rest of this subsection that $\sd$ and $\sd'$ satisfy the Injectivity Assumption, or at least that $M_{\sd'}^{\oplus}$ (hence $M_{\sd}^{\oplus}$) is strongly convex.  Let $\f{D}$ and $\f{D}'$ denote the scattering diagrams associated to $\sd$ and $\sd'$ respectively. For $m\in M_{\sd}$, let $\vartheta_m$ and $\vartheta'_{m}$ denote the corresponding $m$-pointed theta functions in $\f{D}$ and $\f{D}'$ respectively.
We will present sufficient conditions for $\f{i}(\vartheta_m)=\vartheta_{m}$.

Since $\f{i}(\vartheta_m)$ is an $m$-pointed element in $\kk_t\llb M_{\sd'}\rrb$, we have a unique decomposition (convergent in $\kk_t\llb M_{\sd'}\rrb$, see \cite[Rmk. 2.5]{davison2019strong},\cite[\S 4]{qin2019bases}):
\begin{align}\label{eq:decompose_unfreeze}
	\f{i}(\vartheta_m)=\sum_{p\in m+M_{\sd'}^\oplus} c_{p}\vartheta'_{p}.
\end{align}
with $c_{m}=1$.

\begin{lem}\label{lem:positive_unfreeze_theta}
	Assume that ${M'}^{\oplus}$ is strongly convex. If the coefficients $c_{p}$ in \eqref{eq:decompose_unfreeze} are non-negative in the classical ($t=1$) limit, then $\f{i}(\vartheta_m)=\vartheta'_{m}$.
\end{lem}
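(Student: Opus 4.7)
The plan is to reduce the statement to the classical setting and then upgrade via Lemma \ref{lem:1theta-implies-qtheta}. The key inputs will be the positivity of broken lines (Lemma \ref{BLpos}), the bar-invariance of quantum theta functions (Lemma \ref{lem:theta-bar-inv}), and a direct comparison of broken lines in $\f{D}$ and $\f{D}'$.

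First I will verify that $\f{i}(\vartheta_m)\in \s{A}_t^{\can}(\sd')$ is bar-invariant and theta positive with respect to $\sd'$. Bar-invariance is immediate from Lemma \ref{lem:theta-bar-inv}, since $\f{i}$ is an inclusion of quantum torus algebras intertwining the bar involutions. For theta positivity I will pick a generic basepoint $\sQ_0$ in the common initial chamber $C_{\sd'}^+\subset C_{\sd}^+$; there $\f{i}(\vartheta_m)|_{\sQ_0}$ coincides with $\vartheta_{m,\sQ_0}$, which has non-negative coefficients by Lemma \ref{BLpos}. Laurent expansions at other generic points of $M_{\bb{R}}$ are obtained via path-ordered products in $\f{D}'$, and each wall-crossing $\Ad_{\Psi_{t^d}(z^v)^{\pm 1}}$ preserves non-negativity of coefficients by \eqref{AdPsi}.

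The heart of the argument is the following classical comparison. I will choose representatives of $\f{D}$ and $\f{D}'$ with $\f{D}\subset \f{D}'$, obtained by running the Kontsevich--Soibelman procedure (Theorem \ref{KSGS}) on $\f{D}_{\In}\cup\{(e_b^\perp,\Psi_{t^d}(z^{\omega_1(e_b)}))\}$. Any broken line in $\f{D}$ ending at $\sQ_0$ with initial direction $-m$ remains a broken line in $\f{D}'$ with the same attached monomials, because its bending points lie in $\Supp(\f{D})\subset \Supp(\f{D}')$ and, after a generic perturbation, it avoids $\Joints(\f{D}')\setminus \Joints(\f{D})$ (which has codimension at least two in $M_{\bb{R}}$). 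On top of these, $\f{D}'$ admits additional broken lines that bend at walls of $\f{D}'\setminus \f{D}$, contributing non-negatively by Lemma \ref{BLpos}. Consequently
\[
  \vartheta'_{m,\sQ_0}-\vartheta_{m,\sQ_0}\geq 0
\]
coefficient-wise in the classical limit.

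On the other hand, specializing \eqref{eq:decompose_unfreeze} at $t=1$ and subtracting $\vartheta'_m$ yields
\[
  \vartheta_{m,\sQ_0}-\vartheta'_{m,\sQ_0}=\sum_{p\neq m}c_p|_{t=1}\,\vartheta'_{p,\sQ_0},
\]
whose right-hand side is coefficient-wise non-negative by the hypothesis on $c_p|_{t=1}$ together with Lemma \ref{BLpos} applied in $\f{D}'$. Combining the two coefficient-wise inequalities forces both sides to vanish, and since the theta functions $\{\vartheta'_p\}$ are linearly independent in the completion (they are $p$-pointed with distinct leading exponents by Proposition \ref{TopBasis}), I obtain $c_p|_{t=1}=0$ for every $p\neq m$. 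Thus $\f{i}(\vartheta_m)|_{t=1}=\vartheta'_m|_{t=1}$, and Lemma \ref{lem:1theta-implies-qtheta} upgrades this to the quantum equality $\f{i}(\vartheta_m)=\vartheta'_m$. The main subtlety will be the rigorous matching of broken lines between the two scattering diagrams with identical attached monomials, which is where the choice of compatible representatives and the codimension argument for the new joints enter; everything else is a straightforward bookkeeping with known positivity statements.
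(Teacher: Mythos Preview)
Your proof is correct and follows essentially the same core argument as the paper: compare broken lines in $\f{D}\subset\f{D}'$ to get $\vartheta'_{m,\sQ_0}-\vartheta_{m,\sQ_0}\geq 0$ coefficient-wise, combine with the hypothesis to force $c_p|_{t=1}=0$ for $p\neq m$, then upgrade to the quantum setting. The only difference is the upgrade mechanism: the paper observes directly that $\vartheta'_m-\f{i}(\vartheta_m)$ has coefficients in $\bb{Z}_{\geq 0}[t^{\pm 1}]$ (already in the quantum setting, by the same broken-line comparison) and vanishes at $t=1$, hence is zero---avoiding any appeal to Lemma~\ref{lem:1theta-implies-qtheta} or the need to verify theta positivity of $\f{i}(\vartheta_m)$ at \emph{all} basepoints.

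One minor imprecision: in your theta-positivity argument you write ``each wall-crossing $\Ad_{\Psi_{t^d}(z^v)^{\pm 1}}$ preserves non-negativity of coefficients by \eqref{AdPsi},'' but general walls of $\f{D}'$ (the outgoing ones) do not have scattering functions of this simple form. The claim that wall-crossings preserve positivity is still true, but it requires the full positivity result for the consistent scattering diagram (cited in Lemma~\ref{BLpos}, i.e.\ \cite[Thm.~2.15]{davison2019strong}) rather than just \eqref{AdPsi}. With that correction, your detour through Lemma~\ref{lem:1theta-implies-qtheta} is valid, though the paper's direct route is shorter.
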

\begin{proof}
	Let us compute the theta functions using broken lines. In terms of scattering diagrams, unfreezing a frozen vertex results in a new initial wall, hence more broken lines, all of which still have positive coefficients. So the Laurent expansion of $$\vartheta'_m-\f{i}(\vartheta_m)=\sum_{p\in m+M_{\sd'}^+} (-c_{p}\vartheta'_{p})$$ must have non-negative coefficients. Given the Laurent positivity of the theta functions $\vartheta'_{p}|_{t=1}$ and the assumption that the coefficients $c_{p}|_{t=1}$ are non-negative, the only possibility is that $c_p=0$ for all $p\in m+M_{\sd'}^+$.
\end{proof}

The following result is a special case of the application of the freezing operators in \cite{qin2022freezing} which suffices for our purpose. Recall that the seed $\sd$ is obtained from $\sd'$ by freezing the vertex $b$.  We use the notation $\supp$ as in \eqref{eq:supp}.
\begin{lem}\label{lem:same_support_theta_func}
	Assume that $s'$ satisfies the Injectivity Assumption. If for some $m\in M$, we have $b\notin\supp \vartheta'_m$, then $\f{i}(\vartheta_m)=\vartheta'_m$.
\end{lem}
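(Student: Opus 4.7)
The plan is to compare broken lines contributing to $\vartheta_m$ in $\f{D}$ with those contributing to $\vartheta'_m$ in $\f{D}'$ and show that, under the hypothesis $b \notin \supp \vartheta'_m$, the two sets of broken lines coincide.  First I would observe that $M_\sd = M_{\sd'} = M$ and that $\f{D}'_{\In}$ is obtained from $\f{D}_{\In}$ by adjoining the single wall $(e_b^\perp, \Psi_{t^d}(z^{\omega_1(e_b)}))$.  Since the Kontsevich--Soibelman--Gross--Siebert construction (Theorem \ref{KSGS}) builds $\f{D}'$ from $\f{D}'_{\In}$ by iteratively adjoining outgoing walls arising from bracket interactions among existing walls, every wall of $\f{D}$ appears in $\f{D}'$, and every additional wall in $\f{D}' \setminus \f{D}$ has scattering function in $(G^t_v)^{\parallel}$ for some primitive $v \in M^+_{\sd'}$ whose preimage $\omega_1^{-1}(v)$ has strictly positive $e_b$-coordinate; indeed, such walls arise only through brackets involving at least one factor coming from the $b$-wall or its descendants.

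Next I would analyze broken lines.  Fix a generic basepoint $\sQ\in M_{\bb{R}}$.  Any broken line $\gamma$ in $\f{D}$ with initial exponent $m$ is automatically a broken line in $\f{D}'$ with the same data, and its final exponent lies in $m + M^\oplus_\sd$, hence has vanishing $e_b$-coordinate in the $\omega_1$-decomposition (well-defined since $\sd'$ satisfies the Injectivity Assumption).  Conversely, suppose $\gamma$ is a broken line in $\f{D}'$ starting from $m$ that bends at least once on a wall in $\f{D}' \setminus \f{D}$.  By the definition of bending (each non-trivial bend at a wall with direction $v_{\f{d}}$ replaces the current exponent $v_i$ by $v_i + k v_{\f{d}}$ for some $k\geq 1$) and by positivity of broken lines (Lemma \ref{BLpos}) ensuring the relevant coefficients are non-zero, the running exponent picks up a positive multiple of the direction of the first such new wall, which has positive $e_b$-coordinate; all subsequent bends add further elements of $M^+_{\sd'}$, so this positive contribution cannot be cancelled.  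Hence the final exponent has strictly positive $e_b$-coordinate.

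Applying the hypothesis $b \notin \supp \vartheta'_m$, no broken line contributing to $\vartheta'_m$ in $\f{D}'$ can bend on a wall of $\f{D}' \setminus \f{D}$.  The resulting bijection between broken lines contributing to $\vartheta_m$ (in $\f{D}$) and to $\vartheta'_m$ (in $\f{D}'$), with matching attached monomials, yields the equality $\f{i}(\vartheta_m) = \vartheta'_m$.  The main obstacle will be the structural claim about the directions of walls in $\f{D}' \setminus \f{D}$: while intuitively clear from the iterative bracket structure, a rigorous verification requires tracking the $M^+_{\sd'}$-grading of the Lie algebra $\f{g}^t(\sd')$ through the KSGS construction and checking that every newly generated wall has direction with positive $e_b$-component.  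Alternatively, the entire lemma may be deduced as a special case of the freezing operator formalism of \cite{qin2022freezing}, as indicated in the statement.
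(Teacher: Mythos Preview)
Your proposal is correct and follows essentially the same approach as the paper's proof: both argue that every wall in $\f{D}'\setminus\f{D}$ has direction $\omega_1(n)$ with $n_b>0$ (the paper cites \cite[Appx.~C]{gross2018canonical} for this), so a broken line bending on any such wall acquires a final exponent with positive $e_b$-component, contradicting $b\notin\supp\vartheta'_m$. One small remark: your appeal to Lemma~\ref{BLpos} is superfluous---the decomposition into broken lines that do or do not touch walls of $\f{D}'\setminus\f{D}$ already separates the exponents by $e_b$-component, so the hypothesis forces the contribution from the latter to vanish regardless of any positivity, and the nontriviality of each bend is already part of the definition of a broken line.
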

\begin{proof}
	In terms of scattering diagrams, unfreezing a frozen vertex $b$ results a new incoming wall with normal direction $e_b$ and, correspondingly, more walls whose normal directions $n$ satisfying $n_b\neq 0$ (see the construction of consistent scattering diagrams in \cite[Appx. C]{gross2018canonical}). Therefore, $b\notin \supp  \vartheta'_{m}$ implies that the broken lines appearing in $\vartheta'_{m}$ only bend at those walls already appearing in $\f{D}$. Consequently, we get $\vartheta'_{m}=\f{i}(\vartheta_m)$.
\end{proof}

\begin{prop}\label{prop:cluster-commute-theta}
    Let $\sd'$ be a quantum seed.  Consider a quantum theta function $\vartheta_p$ and a cluster monomial $\vartheta_m$ (so $m\in \s{C}\cap M$) for $\sd'$ such that $\vartheta_p$ and $\vartheta_m$ $t$-commute.\footnote{Two elements $a$ and $b$ are said to $t$-commute if $ab=t^kba$ for some number $k$.} Then $t^{\alpha} \vartheta_p\vartheta_m =\vartheta_{p+m}$ where $\alpha=-\Lambda'(p,m)$ (to make the product bar-invariant).
\end{prop}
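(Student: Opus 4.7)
My plan is to work in a cluster chart where $\vartheta_m$ becomes a Laurent monomial. Since $\vartheta_m$ is a cluster monomial, by Corollary \ref{UpTheta} there exists a seed $\sd_{\jj}$ mutation-equivalent to $\sd'$ with $m\in \s{C}_{\sd_{\jj}}\cap M$; for a generic base point $\sQ\in \s{C}_{\sd_{\jj}}$ we then have $\iota_{\sQ}(\vartheta_m)=z^m$. Writing $\iota_{\sQ}(\vartheta_p)=\sum_v c_v z^v$, the hypothesis $\vartheta_p\vartheta_m=t^{2\Lambda'(p,m)}\vartheta_m\vartheta_p$ translates, via $\iota_{\sQ}$, to the identity $\iota_{\sQ}(\vartheta_p)\cdot z^m = t^{2\Lambda'(p,m)}\, z^m\cdot \iota_{\sQ}(\vartheta_p)$, and comparing coefficients of each monomial forces $\Lambda'(v,m)=\Lambda'(p,m)$ for every $v$ with $c_v\neq 0$. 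Consequently
\begin{align*}
\iota_{\sQ}(\vartheta_p\vartheta_m) \;=\; \iota_{\sQ}(\vartheta_p)\cdot z^m \;=\; \sum_v c_v t^{\Lambda'(v,m)} z^{v+m} \;=\; t^{\Lambda'(p,m)}\sum_v c_v z^{v+m} \;=\; t^{-\alpha}\sum_v c_v z^{v+m}.
\end{align*}

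The remaining step is to identify $\sum_v c_v z^{v+m}$ with $\iota_{\sQ}(\vartheta_{p+m})$: once this is in hand, injectivity of $\iota_{\sQ}$ immediately yields $\vartheta_p\vartheta_m = t^{-\alpha}\vartheta_{p+m}$, i.e., $t^{\alpha}\vartheta_p\vartheta_m=\vartheta_{p+m}$. My plan for this identification is to exhibit a weight-preserving bijection between broken lines with ends $(p,\sQ)$ and broken lines with ends $(p+m,\sQ)$, sending a broken line with attached monomials $c_iz^{v_i}$ to one with the same sequence of bending walls and attached monomials $c_iz^{v_i+m}$. The wall-crossing formula \eqref{AdPsi} shows that this shift preserves the coefficient contributed at each bend whenever the traversed wall direction $v_{\f{d}}$ satisfies $\Lambda'(v_{\f{d}},m)=0$, since in that case $\Ad_{f_{\f{d}}}(z^m)=z^m$ and the bending factors for $z^{v_i}$ and $z^{v_i+m}$ agree term by term.

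The main obstacle is justifying the bijection in general, since walls on the boundary of $\s{C}_{\sd_{\jj}}$ typically satisfy $\Lambda'(v_{\f{d}},m)\neq 0$ when $m$ is in the interior of the chamber. To navigate this, I would propagate the $t$-commutation constraint back along each broken line: because $v_{i+1}-v_i$ is a positive multiple of $v_{\f{d}_i}$ at each bend, and every final exponent $v_{\ell}$ satisfies $\Lambda'(v_{\ell},m)=\Lambda'(p,m)$, a telescoping sum produces $\sum_i n_i\Lambda'(v_{\f{d}_i},m)=0$ on each broken line contributing to $\iota_{\sQ}(\vartheta_p)$; combining this with the sign structure of wall directions that can occur in broken lines ending in $\s{C}_{\sd_{\jj}}$ should force $\Lambda'(v_{\f{d}_i},m)=0$ at every such traversed wall, making the bijection immediate. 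An alternative route I would try in parallel is to lift to principal coefficients (where the Injectivity Assumption holds robustly), prove the analogous identity there using Lemma \ref{lem:similar_theta_function}, and descend via the projection $\rho$; the potential advantage is that in the principal-coefficient scattering diagram walls are parallel to $\ker(\rho)$, which may tame the analysis of the shift.
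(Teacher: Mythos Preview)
Your strategy is essentially the same as the paper's, but two steps are left incomplete, and the paper's proof shows how to close both.

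\textbf{Gap 1: the sign argument.} Your telescoping correctly gives $\sum_i n_i\Lambda'(v_{\f{d}_i},m)=0$ along each broken line of $\vartheta_p$, but the claim that the individual terms must then vanish requires all $\Lambda'(v_{\f{d}_i},m)$ to have the same sign. As you set things up, $\sQ$ lies in an arbitrary chamber $\s{C}_{\sd_{\jj}}$ while wall directions live in $M^+_{\sd'}$, and there is no reason for $\Lambda'(\cdot,m)$ to be sign-definite on $M^+_{\sd'}$. The fix is to \emph{change the base seed} so that $m$ lies in the positive chamber (the paper reduces by induction to $m=e_i^*$). Then every wall direction is $\omega_1(n)$ with $n\in N^+$, and the compatibility condition gives $\Lambda'(\omega_1(n),m)=-d\langle n,m\rangle\le 0$, so the telescoping sum forces each term to vanish. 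Equivalently: your constraint $\Lambda'(v,m)=\Lambda'(p,m)$ for $v=p+\omega_1(n)$ becomes $\langle n,m\rangle=0$, and since $n\in N^{\oplus}$ and $m\in C^+$, this says $n_j=0$ whenever $m_j>0$; i.e., $j\notin\supp\vartheta_p$ for each such $j$. This is exactly the paper's observation.

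\textbf{Gap 2: surjectivity of the shift.} Once you know every broken line of $\vartheta_p$ bends only at walls with $\Lambda'(v_{\f{d}},m)=0$, the shift by $m$ gives an \emph{injection} into the broken lines of $\vartheta_{p+m}$, but you have not argued the reverse. A priori $\vartheta_{p+m}$ could have broken lines bending at walls with $\Lambda'(v_{\f{d}},m)\neq 0$, and these would not be in the image. The paper fills this by a positivity argument: by strong positivity and Corollary~\ref{cor:alpha-sum}, $\vartheta_{p+m}$ appears with coefficient $1$ in the theta expansion of $t^{\alpha}\vartheta_p\vartheta_m$; since $\iota_{\sQ}(t^{\alpha}\vartheta_p\vartheta_m)=\sum_v c_v z^{v+m}$ is a positive Laurent polynomial with $j\notin\supp$ for every $j$ with $m_j>0$, the positive summand $\iota_{\sQ}(\vartheta_{p+m})$ must satisfy the same support constraint. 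Now both $\vartheta_p$ and $\vartheta_{p+m}$ can be computed in the scattering diagram with those indices frozen (Lemma~\ref{lem:same_support_theta_func}), where multiplication by $z^m=z^{e_i^*}$ is the trivial shift from the first paragraph of the proof of Lemma~\ref{lem:similar_theta_function}. That is exactly the paper's route. Your principal-coefficients alternative does not address either gap: the obstruction is not about injectivity of $\omega_1$ but about controlling which walls broken lines of $\vartheta_{p+m}$ may cross.
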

\begin{proof}
	One inductively reduces to the case where $\vartheta_m$ is a cluster variable (or possibly the inverse of a frozen cluster variable).  By changing our base seed $\sd'$, we may assume then that $m=e_i^*$ for some $i\in I$ (or possibly $-e_i^*$ if $i\in F$).  The case $i\in F$ is a simple general fact, cf. the first paragraph from the proof of Lemma \ref{lem:similar_theta_function}.  So we may assume $i\in I\setminus F$.
	
	Let $\sd$ be the seed obtained by freezing the vertex $i$.
	The $t$-commutativity implies that $i\notin \supp \vartheta_{p,\sQ}$ (for $\sQ$ in the positive chamber).  So, by the positivity of theta functions, since  $\vartheta_{p+e_i^*}$ is a summand of $t^{\alpha}\vartheta_p\vartheta_{e_i^*}$, $\vartheta_{p+e_i^*,\sQ}$ must also have support outside of $i$.  So the broken lines of $\vartheta_{p,\sQ}$ and $\vartheta_{p+e_i^
	*,\sQ}$ only bend at the walls that are present in $\f{D}(\sd)$, and as a result are the same for $\sd'$ as for $\sd$.  We thus reduce to checking the equality for $\sd$, where $i$ is frozen, and as noted above, the equality in this case follows as in the first paragraph from the proof of Lemma \ref{lem:similar_theta_function}.
\end{proof}

\section{Quantum tagged bracelets with coefficients}\label{sec:bracelet_cluster_alg}

Here we apply the results of \S \ref{sec:seed_change} to motivate definitions of bracelets bases for cluster algebras from surfaces with arbitrary coefficients.  Even when considering punctured surfaces, coefficients can be chosen so that the Injectivity Assumption is satisfied.  This allows for a definition and characterization of $g$-vectors.  Furthermore, these cluster algebras satisfying the Injectivity Assumption can then be quantized. In \S \ref{sec:q_tag_bracelet}, we construct quantum bracelets bases for these quantized cluster algebras with coefficients.\footnote{For the quantum bracelet elements associated to notched arcs in once-punctured closed surfaces, we resort to defining these bracelets to simply be the corresponding quantum theta functions.  This is partially justified in Remark \ref{rem:quantization_tagged_arc} using the DT-transformation, which will be discussed in \S \ref{sec:DT}. A more natural definition is still desirable.} For this construction, we rely heavily on our knowledge about theta functions and bracelets in later sections.

\subsection{Surface type cluster algebras}

Take any tagged triangulation $\Delta$ of $\Sigma$ and let $\sd_\Delta$ denote the corresponding seed ($\sd_\Delta$ is said to have boundary coefficients). Recall that $\Sk(\Sigma)$ (or $\Sk_t(\Sigma)$ for unpunctured $\Sigma$) are contained in the (quantum) upper cluster algebras $\s{A}^{\up}$ (or $\s{A}_t^{\up}$).

Recall that two seeds are similar if they share the same principal $B$-matrix up to relabelling vertices, see Definition \ref{def:similar_seed}.
\begin{defn}[Surface type cluster algebra]\label{def:surface_type}
	If $\sd$ is a (quantum) seed similar to $\sd_\Delta$, we say the corresponding (quantum) cluster algebra $\s{A}^{\bullet}(\sd)$ (or $\s{A}^{\bullet}_t(\sd)$) is of type $\Sigma$, where $\bullet$ stands for $\ord$, $\midd$, $\can$, or $\up$.
\end{defn}

Recall that, by taking the similar elements as in Definition \ref{def:similar_element}, a basis for a cluster algebra might provide a basis for a similar cluster algebra, see Lemma \ref{lem:similar_theta_function}. Correspondingly, we are interested in the algebra elements of a cluster algebra of type $\Sigma$ which are similar to the bracelet elements in $\Sk(\Sigma)\subset \s{A}^{\up}(\sd_\Delta)$ (or $\Sk_t(\Delta)\subset \s{A}_t^{\up}(\sd_\Delta)$).

\begin{defn}[Bracelet elements for similar seeds]\label{def:bracelets_cluster_elem}
	Assume that a (quantum) seed $\sd$ satisfies the Injectivity Assumption, and we have defined a bracelet element $\beta$ in $\kk \llb M_{\sd} \rrb$ (or in $\kk_t \llb M_{\sd} \rrb$). If $\sd'$ is a (quantum) seed similar to $\sd$ which satisfies the Injectivity Assumption, then the elements in $\kk \llb M_{\sd'} \rrb$ (or $\kk_t \llb M_{\sd'} \rrb$) similar to $\beta$ in the sense of Definition \ref{def:similar_element} will be called bracelet elements. If $\sd'$ is a classical seed similar to the classical seed $\sd$ but which does not satisfies the Injectivity Assumption, then the elements in $\kk[M_{\sd'}]$ constructed from $\beta$ via \eqref{eq:projection} will be called bracelet elements.
\end{defn}
We recall that the bracelet elements constructed from $\beta$ in Definition \ref{def:bracelets_cluster_elem} are uniquely determined up to a frozen factor. Moreover, if $\beta$ is contained in $\s{A}^{\up}(\sd)$ (or $\s{A}^{\up}_t(\sd)$), then the similar elements are also contained in $\s{A}^{\up}(\sd')$ (or $\s{A}^{\up}_t(\sd')$), see \cite[Lem. 4.2.2(iii)]{qin2017triangular}.

Recall that for unpunctured $\Sigma$, the seed $\sd_\Delta$ satisfies the Injectivity Assumption, and the bracelet elements in $\Sk(\Sigma)$ and $\Sk_t(\Sigma)$ have been defined in \S \ref{sec:bracelet_band}. Correspondingly, we obtain the definition of bracelet elements for all (quantum) cluster algebras of type $\Sigma$.

Let $\Sigma$ denote a marked surface, possibly with punctures, and $\Delta$ any tagged triangulation. Then the Injectivity Assumption for the corresponding seed $\sd_{\Delta}$ might fail. In particular, one can not make $\sd_{\Delta}$ into a quantum seed by adding a compatible $\Lambda$. In view of Definition \ref{def:bracelets_cluster_elem}, to construct bracelet elements for cluster algebras of type $\Sigma$, it suffices to construct the bracelet elements for one seed $\sd$ similar to $\sd_{\Delta}$ such that $\sd$ satisfies the Injectivity Assumption.

We will see in \S \ref{subsub:MSW} that our tagged bracelets with coefficients agree with those of \cite{musiker2013bases}.

\subsection{Classical bracelets with coefficients}\label{sec:class-brac-coeff}
	(Tagged) bracelets for $\s{A}^{\up}(\sd_\Delta)$ have been defined in \S \ref{Sec:tag-brac}. In the following, we will extend the construction to $\s{A}^{\up}(\sd)$ for any seed $\sd$ similar to $\sd_\Delta$: we will define $\BracE{C}^\sd$ for any weighted tagged simple multicurve $C$. The superscript $\sd$ is often omitted when the context is clear.

\subsubsection{Classical bracelets for loops with coefficients}\label{subsub:class-loops}

Let $L=\bigcup_i w_i L_i$ denote a simple curve in $\Sigma$ whose components $L_i$ are simple non-peripheral loops (up to frozen variable factors, the approach here also applies when the components $L_i$ are allowed to include arcs which do not end at punctures).  In \S \ref{sec:mod_PGL2}, we will see that such $L$ (viewed as a lamination $e(L)$) determines\footnote{Actually, $\bb{I}(wL)$ is only defined if $\pi(wL)\in N$.  However, \eqref{eq:classical_trace} allows us to extend the definition to $wL$ with $2\pi(wL)\in N$ by taking the elements $X_{i_k}^{\pm 1/2}$ of loc. cit. to correspond to $z^{\pm e_i/2}\in \frac{1}{2}N$.  Alternatively, specifying that $\bb{I}(2wL)$ should equal $T_2(\bb{I}(wL))$ for $T_2$ as in \eqref{Cheb1}, and specifying that $\bb{I}(wL)$ should be positive, suffices to uniquely determine $\bb{I}(wL)$ from $\bb{I}(2wL)$.} an element $\bb{I}(L)\in \s{X}(\sd_{\Delta})$.  By applying $p^*=\omega_1^{\sd_\Delta}$, we recover an element $\bb{I}^{\vee}(L)\in \s{A}^{\up}(\sd_{\Delta})$ which equals the bracelet $\langle L\rangle_{\Brac}$ defined in \S \ref{sec:bracelet_band}; cf. \S \ref{sec:mod_PGL2} and \S \ref{sec:dec_SL2_moduli}, particularly Lemma \ref{PullbackLem}.

Similarly, for the seed $\sd_{\Delta}^{\prin}$ of the principal coefficients, we define the bracelet element 
\begin{align}\label{Lprin-def}
\langle L\rangle_{\Brac}^{\prin} := \xi(\bb{I}(L))
\end{align}
where $\xi=\omega_1^{\sd_\Delta^{\prin}}$ is the map acting on exponents via $N\rar N\oplus M$, $n\mapsto (\omega_1(n),n)$, cf. \eqref{xi}.

Since $\sd_{\Delta}^{\prin}$ satisfies the Injectivity Assumption, $\langle L\rangle_{\Brac}^{\prin}$ determines a bracelet element $\langle L\rangle_{\Brac}^{\sd}$, up to frozen factors, for arbitrary seed $\sd$ similar to $\sd_{\Delta}$, see Definition \ref{def:bracelets_cluster_elem}. Correspondingly, we define the following particular bracelet element which is similar to $\langle L\rangle_{\Brac}^{\prin}$, equal to $\langle L\rangle_{\Brac}^{\prin}$ in the case $\sd=\sd_{\Delta}^{\prin}$, and equal to $\langle L\rangle_{\Brac}$ in the case $\sd=\sd_{\Delta}$:
	$$\langle L\rangle_{\Brac}^{\sd}:= \omega_1^{\sd}(\bb{I}(L)).$$

	Later, we will see that $\langle L\rangle_{\Brac}^{\prin} $ is a theta function (Lemma \ref{lem:loop_puncture_bracelet}) and $\bb{I}(L)$ is a theta function (Theorem \ref{thm:ThetaX}). They are 
	related to $\BracE{L}$ as in Definition \ref{X-Aprin} and \eqref{theta-rho}. Then $\langle L\rangle_{\Brac}^{\sd}$ is a theta function by Lemma \ref{lem:similar_theta_function} or \ref{lem:similar_theta_function_2}.

\subsubsection{Bracelets for arcs}\label{subsub:brace-arcs}
A geometric characterization of cluster variables for surface type cluster algebras with coefficients is given in \cite[Thm. 15.6]{fomin2018cluster}.  Here, a cluster algebra with coefficients $\s{A}^{\up}(\sd)$
is associated to a triangulable surface $\Sigma$ equipped with a choice of multi-lamination $\textbf{L}$, i.e., a finite multiset of integer unbounded laminations.  Each tagged arc $\gamma$ is then represented by a ``laminated lambda length'' $x_{\bf L}(\gamma)=:\BracE{\gamma}$, a function on a ``laminated Teichm\"uller space.''  These laminated lambda lengths, other than those associated to notched arcs on once-punctured closed surfaces, yield the set of cluster variables for the cluster algebra.  In particular, taking ${\bf L}$ to be the set of elementary laminations ${\bf L}=\{e(\gamma)|\gamma\in \Delta_{\uf}\}$ yields the case of principal coefficients, cf. \cite[Prop. 17.3]{fomin2018cluster}.

We do not review the details of the \cite{fomin2018cluster}-construction here since it is rather involved and not needed for our purposes---knowing that the laminated lambda lengths are cluster variables already suffices to imply that they are theta functions.  For the case of notched arcs on laminated once-punctured closed surfaces, we can understand the associated laminated lambda lengths by working with a $k$-sheeted covering space $\wt{\Sigma}$ of $\Sigma$, $k\in \bb{Z}_{\geq 2}$.  Since $\wt{\Sigma}$ has $k\geq 2$ punctures, we know that the laminated lambda lengths for $\wt{\Sigma}$ are cluster variables, hence theta functions.  We apply this in \S \ref{sub:bracelets-1p} to show that, except for once-punctured tori, notched arcs correspond to theta functions for the once-punctured closed $\Sigma$.  

Now, for any weighted tagged arc $w\gamma$ in $\Sigma$, we denote the corresponding bracelet element in the cluster algebra with coefficients $\s{A}^{\up}(\sd)$ by $\BracE{w\gamma}=(\BracE{\gamma})^w$.  

\subsubsection{Arbitrary bracelets}\label{sec:classical_general_bracelets}

For any weighted simple multicurve $C\in\wSMulti(\Sigma)$, we have $C=\bigcup_i w_iC_i$ as a union of non-homotopic weighted components $C_i$, each of which is a weighted loop or weighted tagged arc.  The corresponding bracelet elements of $\s{A}^{\up}(\sd)$ is $\prod_i \BracE{w_iC_i}$.  In particular, if $C=\bigcup_i w_i\gamma_i$ is a collection of weighted tagged arcs $w_i\gamma_i$ with each $\gamma_i$ corresponding to a cluster variable, then $\BracE{C}$ is a cluster monomial.

\subsection{$g$-vectors}\label{sec:g-vector}

Let $C$ denote any weighted tagged simple multicurve $\bigcup_i w_i C_i$. When a seed $\sd$ similar to $\sd_{\Delta}$ satisfies the Injectivity Assumption, by our construction, $\BracE{C}$ has a unique $\prec_{\sd'}$-maximal degree term with respect to any chosen seed $\sd'$ of $\s{A}^{\up}(\sd)$ (it is pointed when $C_i$ are not doubly-notched arcs in a once-punctured torus, see  \S \ref{sub:bracelets-1p}). Its degree will be called the (extended) \textbf{$g$-vector} and denoted by $g(C)$. Let $\Delta'$ denote the tagged triangulation corresponding to $\sd'$ (in particular, $\sd'$ is similar to $\sd_{\Delta'}$). Let $e(C_i)$ denote the elementary laminate of $C_i$, see \S \ref{sec:shear_coord}. Then we have
	\begin{align}\label{eq:g_vector_shear}
		\pr_{I_\ufv}g(C_i)=-b^{\Delta'}(e(C_i)).
	\end{align}
For loops, the equality follows from \eqref{eq:intersection_shear_loop}, \eqref{eq:ell-pi-pointed}, and the construction of $\BracE{C_i}$; for tagged arcs, it follows from Proposition \ref{prop:shear_g}.

Assume $\Delta$ has no self-folded triangles. For any closed non-peripehral loop $L$, $\bb{I}(L)$ is pointed at $-\pi(L)$; cf. \eqref{eq:ell-pi-pointed}. So by \eqref{eq:intersection_shear_loop} and Lemma \ref{PullbackLem}, $\BracE{L}$ is pointed at 
\begin{align}\label{gLoop}
    g(L) = -\omega_1^{\sd}(\pi(L)).
\end{align} 
Since $2\pi(L)\in N^+$, we obtain
\begin{align}\label{eq:loop_deg_monoid}
	-2g(L)\in M^+.
\end{align}

	We sometimes consider a seed $\sd$ for which the Injectivity Assumption might fail, for example, $\sd=\sd_\Delta$. For any $C$, let $g^{\sd^{\prin}}(C)$ denote the $g$-vector of $C$ for the seed $\sd^{\prin}$. Then we define the $g$-vector of $C$ as the projection $\rho (g^{\sd^{\prin}}(C))$ for $\rho$ as in \eqref{rhodef}. Notice that \eqref{eq:g_vector_shear} still holds (it can be deduced from $\sd^{\prin}$ by removing the framing vertices).  In the case where $\sd=\sd_{\Delta}$, \eqref{eq:g_vector_shear} recovers \eqref{eq:shear_g}.

\subsection{Quantum tagged bracelets}\label{sec:q_tag_bracelet}

We now extend the constructions of \S \ref{sec:class-brac-coeff} to the quantum setting. Let $\sd$ denote a quantum seed which is similar to $\sd_\Delta$ at the classical level.

\subsubsection{Quantum bracelets for loops}\label{sec:qbracelet_loop}

	Let $L=\bigcup_i w_i L_i$ denote a collection of weighted non-peripheral loops. 
	When $\Sigma$ has no punctures, the bracelet elements $\BracE{L}$ in the quantum skein algebra $\Sk_t(\Sigma)\subset \s{A}^{\up}_t(\sd_\Delta)$ has been defined in \S \ref{sec:bracelet_band}. We will see that it is a quantum theta function in Theorem \ref{thm:bracelet-theta-no-punct}.
	
	For general $\Sigma$ and quantum seed $\sd$, we apply the same construction used in \S \ref{subsub:class-loops} but with the quantum element $\bb{I}_t(L)$ as in \S \ref{sec:X-q-can} in place of $\bb{I}(L)$. The resulting element $\omega_1^{\sd}(\bb{I}_t(L))$ will turn out to be a theta function by \S \ref{sec:ThetaX}. We define the quantum bracelet element 
	\begin{align}\label{eq:q_loop_bracelet}
	\BracE{L}:=\omega_1^{\sd}(\bb{I}_t(L)).
	\end{align}
	When $\Sigma$ has no punctures and $\sd=\sd_\Delta$, this coincides with the above bracelet defined in the skein algebra (\S \ref{sec:bracelet_band}), see Lemmas \ref{PullbackLem} and \ref{lem:unpunctured_Poisson_theta}.

\subsubsection{Quantum bracelets for tagged arcs}\label{subsub:qtag}

Suppose $w_i\gamma_i$ is a weighted tagged arc in $\Sigma$, other than a notched arc in a once-punctured closed surface.  Then in the classical setting we noted that $\BracE{w_i\gamma_i}$ is a cluster monomial.  We define $\BracE{w_i\gamma_i}$ in the quantum setting to simply be the corresponding quantum cluster monomial.

More generally, let $C=\bigcup_i w_i\gamma_i$ be a collection of pairwise-compatible weighted tagged arcs $\gamma_i\in \Delta'$ for some tagged triangulation $\Delta'$, with no $\gamma_i$ being a notched arc in a once-punctured closed surface.  We know that $\BracE{C}$ is a cluster monomial in the classical setting, so for the quantum setting we simply take $\BracE{C}$ to be the corresponding quantum cluster monomial.

	Next, suppose $\Sigma$ is a once-punctured closed surface other than a torus. Let $C=\bigcup_i w_i\gamma_i$ be a collection of pairwise-compatible doubly-notched arcs for some tagged triangulation. We will see that $\BracE{C}$ is a theta function in the classical setting (Theorem \ref{thm:1p-bracelet}). In the quantum setting, we define $$\BracE{C}\coloneqq \DT(\BracE{C^{\diamond}})$$ where $C^{\diamond}$ is obtained from $C$ by switching all taggings from notched to plain, and $\DT$ is the (quantum) DT-transformation as in \S \ref{sec:DT}. Proposition \ref{DT-monomial} implies that the resulting  $\BracE{C}$ is a quantum theta function, and Corollary \ref{prop:DT} will ensure that the classical limit really is $\BracE{C}|_{t=1}$.
	
	\begin{rem}\label{rem:quantization_tagged_arc}
		It is desirable to find a more natural and fundamental definition of quantum tagged arcs which also applies to the once-puncture torus. To the best of authors' knowledge, there exists no definition of quantum tagged arcs in literature. A generalization of the perfect matching formula in \cite{musiker2013matrix,huang2022expansion} might provide one alternative definition.
	\end{rem}

\subsubsection{Arbitrary quantum bracelets}\label{sec:general_quantum_bracelet}

Let $C=C_1\cup C_2\in \wSMulti(\Sigma)$ where $C_1=\bigcup_i w_iL_i$ and $C_2=\bigcup_j w_j\gamma_j$, with each $L_i$ being a distinct homotopy-class of loop and each $\gamma_j$ being a distinct class of tagged arc, none of which are notched arcs in once-punctured closed tori. Then we define
	$$\BracE{C}:=\BracE{C_1} \BracE{C_2}.$$
This product is bar-invariant by the Lemma \ref{lem:commute_arc_loop}, which says that $\BracE{C_1}$ and $\BracE{C_2}$ commute.

\begin{rem}

	We do not currently have a characterization of quantum bracelets which included contributions of notched arcs in once-punctured closed surfaces.  The covering space trick used in \S \ref{subsub:brace-arcs} is not expected to work in the quantum setting because the folding construction of \S \ref{GenFoldApp} does not carry over to the quantum setting.

	We also do not currently include a description of the classical bracelet elements with coefficients associated to a notched arc in a once-punctured torus.  However, for $\gamma_i$ a weight-one notched arc in a once-punctured torus, the corresponding classical theta function in the coefficient-free setting is known (cf. \cite[\S 5.3]{zhou2020cluster} and \eqref{eq:theta12}), and all coefficients equal $1$.  So, given the corresponding classical theta function with, say, principal coefficients, bar-invariance is sufficient to uniquely determine the corresponding quantum theta function.

\end{rem}

\subsection{Surface cutting}\label{sec:cut}

In this subsection, we prove a useful lemma which will allows us to understand loops in punctured surfaces $\Sigma$ by cutting along arcs to produce an unpunctured surface $\Sigma'$ where the loops may be easier to understand.  We then apply this lemma to give an alternative characterization of quantum bracelets associated to loops in punctured surfaces.

Fix a marked surface $\Sigma=(\SSS,\MM)$. Let
$\gamma$ denote any internal arc. We obtain
a new surface $\SSS'$ by cutting $\SSS$ along $\gamma$ so that
the two copies $\gamma^{1},\gamma^{2}$ of $\gamma$ become part of
its boundary. The set $\MM'$ of the marked points for $\SSS'$ is
defined by $$\MM':=\left(\MM\setminus \{\text{endpoints of }\gamma\}\right)\cup \{\text{endpoints of $\gamma^1$ and $\gamma^2$}\}.$$

Let $\Delta$ denote any ideal triangulation containing $\gamma$.
Then $\Delta':=(\Delta\backslash\gamma)\cup\{\gamma^{1},\gamma^{2}\}$
becomes a triangulation for $\Sigma':=(\SSS',\MM')$. Consider the
corresponding seeds $\sd_{\Delta}$ and $\sd_{\Delta'}$. We see that, if $\gamma$ is not an edge in a self-folded triangle in $\Delta$, then $\sd_{\Delta}$ is obtained from $\sd_{\Delta'}$ by gluing the frozen elements $\gamma^1,\gamma^2$ as in \S \ref{sec:gluing_frozen_vertices}, and then unfreezing the resulting frozen element $\gamma'$ as in \S \ref{sec:unfreezing}.

Let $\eta$ denote any arc or closed loop in $\SSS$ such that $\eta$ does
not intersect $\gamma$. By examining the Kauffman skein relation (Figure \ref{SkeinFig}), we see that the Laurent expansion of $[\eta]$ in the skein algebra $\Sk(\Sigma)$ (or the quantum skein algebra $\Sk_t(\Sigma)$ for unpunctured $\Sigma$) with respect to the triangulation $\Delta$ is the same as the Laurent
expansion of $[\eta]$ in the skein algebra $\Sk(\Sigma')$ (or $\Sk_t(\Sigma')$) with
respect to the triangulation $\Delta'$, after we identify $[\gamma^1],[\gamma^2]$ with $[\gamma]$.

\begin{lem}\label{lem:triangulation}
	Let $\Sigma=(\SSS,\MM)$ denote a triangulable punctured surface, and let $C=\bigcup_{i=1}^r L_i$ denote a disjoint union of non-isotopic, non-contractible (in $\SSS\setminus \MM$), non-peripheral simple loops $L_i$.   There exist non-isotopic interior arcs $\gamma_1,\ldots,\gamma_s$, such that 
	\begin{itemize}
		\item $\gamma_i$ does not intersect $C$;
		\item we can extend $\{\gamma_1,\ldots,\gamma_s\}$ into an ideal triangulation $\Delta$ which does not have self-folded triangles;
		\item the marked surface $\Sigma'$ obtained from $\Sigma$ by cutting along the arcs $\gamma_1,\ldots,\gamma_s$ is triangulable and unpunctured.
	\end{itemize}
\end{lem}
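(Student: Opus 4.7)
The plan is to build the arcs $\gamma_i$ one puncture at a time, working inside the complement $\SSS\setminus C$, and to extend to a triangulation $\Delta$ by cutting along them and triangulating the resulting unpunctured surface.  For each puncture $p\in\MM$, let $U_p$ denote the connected component of $\SSS\setminus C$ containing $p$.  If $U_p$ contains another marked point $q$ of $\Sigma$, take $\gamma_p$ to be a simple arc from $p$ to $q$ inside $U_p$.  Otherwise $p$ is alone in $U_p$, and the key observation is that $U_p$ cannot be a once-punctured disk: its boundary cannot be a single loop of $C$ (that loop would then be peripheral, contrary to hypothesis), nor a single component of $\partial\SSS$ carrying no marking (contradicting triangulability of $\Sigma$).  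Hence $U_p$ has enough topology to admit a simple non-null-homotopic self-loop $\gamma_p$ at $p$; since $\gamma_p$ does not meet $\partial U_p$, any disk it would bound in $\SSS$ must lie inside $U_p$ and hence contains no other marked point, so $\gamma_p$ is non-contractible in $\SSS\setminus(\MM\setminus\{p\})$ and is in particular not a noose.  Standard isotopy then makes the family $\{\gamma_p\}$ pairwise non-isotopic and non-crossing; label it $\gamma_1,\ldots,\gamma_s$.

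Let $\Sigma'$ be the marked surface obtained by cutting $\Sigma$ along $\gamma_1,\ldots,\gamma_s$.  Every puncture of $\Sigma$ is thereby opened into boundary markings, so $\Sigma'$ is unpunctured; augment $\{\gamma_i\}$ with additional non-$C$-crossing arcs if needed to prevent any component of $\Sigma'$ from being too degenerate to be triangulable.  Choose any ideal triangulation $\Delta'$ of $\Sigma'$, automatically free of self-folded triangles since $\Sigma'$ has no punctures, and glue along the cut arcs to obtain an ideal triangulation $\Delta$ of $\Sigma$ extending $\{\gamma_1,\ldots,\gamma_s\}$.  To verify $\Delta$ has no self-folded triangles, suppose for contradiction that $\alpha\in\Delta$ were a noose around some puncture $p$, carving out a once-punctured monogon $R$ with vertex $m\neq p$.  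Since $\gamma_p\in\Delta$ has an endpoint at $p\in R$ and cannot cross $\alpha$, the arc $\gamma_p$ lies entirely in $R$; its other endpoint, a marked point of $R$, is either $p$ or $m$.  The self-loop case is excluded because $R\setminus\{m\}$ is contractible, so any self-loop at $p$ inside $R$ is null-homotopic in $\SSS\setminus(\MM\setminus\{p\})$, contradicting the non-contractibility of $\gamma_p$ in Case (b).  The case where the other endpoint is $m$ is ruled out by choosing $\Delta'$ to exclude the arc $\alpha$ (which in $\Sigma'$ is an interior arc from one boundary marking $m_1$ to another $m_2$, both arising from $m$ under the cut); this is a matter of flipping the relevant diagonal of the corresponding quadrilateral region of $\Sigma'$.

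The principal obstacle is making this exclusion work globally: one must arrange $\Delta'$ so that no arc of $\Delta'$ glues to a noose in $\Sigma$.  I expect this to be resolvable by a finite flip argument on $\Delta'$, since the arcs of $\Delta'$ are finite in number and each local obstruction can be cleared by a single flip without reintroducing others at already-handled locations.  A secondary subtle input, used in Case (b) of the first step, is the topological existence of a non-null-homotopic simple self-loop at $p$ inside $U_p$; this rests on $U_p$ failing to be a once-punctured disk, an observation that relies crucially on both the non-peripherality of the loops in $C$ and the triangulability of $\Sigma$.
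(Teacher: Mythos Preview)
Your approach differs from the paper's, and the gap you flag is real. The paper proceeds by induction on the number of punctures: when $C=\emptyset$ it simply takes $\{\gamma_i\}$ to be all interior arcs of any triangulation without self-folded triangles; when $C\ne\emptyset$, it fixes a puncture $p$, picks some $L_i\in C$ bordering the component of $\SSS\setminus C$ containing $p$, and takes a single self-loop $\alpha$ at $p$ that travels out to $L_i$, wraps once parallel to it, and returns. Cutting along $\alpha$ kills one puncture, and the inductive hypothesis finishes. The point is that $\alpha$ is always a self-loop based at a single puncture, so after the cut the two copies $\alpha^{(1)},\alpha^{(2)}$ are each the unique boundary arc on its own new boundary circle, each carrying exactly one marked point. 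No triangle of the cut surface can then have both copies among its sides (their endpoint sets $\{p^{(1)}\}$ and $\{p^{(2)}\}$ are disjoint), so gluing back never creates a self-folded triangle. Your ``principal obstacle'' thus does not arise in the paper's construction.

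In your Case~(a), by contrast, $\gamma_p$ runs from $p$ to a distinct marked point; after cutting, the two copies of $\gamma_p$ share the opened-up endpoint $p'$, and a triangle of $\Delta'$ may contain both copies as sides---exactly the situation you then try to flip away. The finite-flip repair you sketch is plausible but not actually carried out, and making it work across all punctures simultaneously (while also justifying the vague ``augment with additional arcs if needed'' step for triangulability of $\Sigma'$) is the substantive work your argument is missing. The cleanest fix is to adopt the paper's inductive framing, which handles one puncture at a time and makes both the disjointness of the $\gamma_i$'s and the absence of self-folded triangles automatic.
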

\begin{proof}
	We do this by induction on the number of punctures in $\Sigma$.  For the base case with $0$ punctures, we may take $\{\gamma_1,\ldots,\gamma_s\}$ to be the empty set and apply \cite[Lem. 2.13]{FominShapiroThurston08} to say that an ideal triangulation without self-folded triangles exists.
	
	Now suppose $\Sigma$ does contain some punctures.  It clearly suffices to deal with the case where $\Sigma$ is connected, since otherwise we can just deal with each connected component separately.  We further assume that $C$ is non-empty since otherwise we may take $\{\gamma_1,\ldots,\gamma_s\}$ to be the arcs of any ideal triangulation without self-folded triangles.

	With these assumptions, fix any puncture $p$.  Consider an arc $\alpha$ obtained by starting at $p$, traveling to a neighborhood of some component $L_i$ of $C$, going around parallel to $L_i$, and then returning to $p$ (cf. Figure \ref{DLoop} for an illustration of a similar construction).  Note that $L_i$ being non-peripheral and non-contractible ensures that $\alpha$ does not bound an unpunctured or once-puntured monogon.  Also, any component cut out by $\alpha$ contains a boundary component with a single marked point (a copy of $\alpha$ with a copy of the marked point $p$), so we do not obtain an unmarked component, an unmarked boundary component, a sphere, or a bigon.  Thus, the marked surface $\Sigma_{\alpha}$ obtained via cutting along $\alpha$ is triangulable.
	
	The new surface $\Sigma_{\alpha}$ contains one less puncture than $\Sigma$.  By the inductive assumption, $\Sigma_{\alpha}$ admits arcs $\gamma_1,\ldots,\gamma_s$ satisfying the desired conditions.  It is clear that these arcs, together with $\alpha$, satisfy the desired conditions for $\Sigma$.
\end{proof}

Let $L=\bigcup w_i L_i$ denoted a weighted simple curve in $\Sigma$ whose components $L_i$ are simple non-peripheral loops. We shall now use a cut-and-paste approach to construct the quantum bracelet element $\langle L\rangle_{\Brac}$ in a quantum cluster algebra of type $\Sigma$---specifically, in the corresponding quantum cluster algebra with principal coefficients.

For $C=\bigcup_i L_i$, choose a collection of interior arcs $\gamma_1,\ldots,\gamma_s$ and triangulation $\Delta$ as in Lemma \ref{lem:triangulation}.  Cutting along these arcs $\gamma_i$ yields an unpunctured surface $\Sigma'$ with triangulation $\Delta'$ obtained by replacing each $\gamma_i\in \Delta$ with the two copies $\gamma_i^{(1)}$ and $\gamma_i^{(2)}$ of $\gamma_i$ that appear on the boundary of $\Sigma'$.  We know how to associate to $L$ a bracelet element $\langle L\rangle_{\Sigma'}$ of $\s{A}_t^{\up}(\sd_{\Delta'})$ constructed using $\Lambda$ as in \eqref{Lambda-ei-ej} with $d=4$.  Let $\s{A}_t^{\up}(\sd^{\prin}_{\Delta'})$ be the corresponding upper quantum cluster algebra with principal coefficients, constructed using $\Lambda^{\prin}$ as in \eqref{LambdaPrin} (with multiplier $d'=1$).  Note that the seed $\sd^{\prin}_{\Delta'}$ is similar to $\sd_{\Delta'}$.  We take $\langle L\rangle'$ to be the unique $m'$-pointed element of $\s{A}_t^{\up}(\sd^{\prin}_{\Delta'})$ similar to $\langle L\rangle_{\Sigma'}\in\s{A}_t^{\up}(\sd_{\Delta'})$ (as in Definition \ref{def:similar_element}) with $m'=\pr_{I_\ufv}m'$.

 Now note that $\sd_{\Delta}^{\prin}$ is related to $\sd_{\Delta'}^{\prin}$ via applying frozen vertex-gluing of \S \ref{sec:gluing_frozen_vertices} to each pair $\gamma_i^{(1)},\gamma_i^{(2)}$ to obtain frozen vertices $\?{\gamma}_i$, gluing each corresponding pair of principal coefficient vertices to obtain $\?{\gamma}_{i'}$,  removing all but one of the arrows between $\?{\gamma}_{i'}$ and $\?{\gamma}_i$ as in the discussion following \eqref{pi_cluster_poisson}, and then unfreezing each $\?{\gamma}_i$ as in \S \ref{sec:unfreezing}. We now define the following quantum bracelet element in $\s{A}^{\up}_t(\sd_{\Delta}^{\prin})$: 
\begin{align}\label{eq:LBrac}
	\BracE{L}:=\f{i}\circ \pi_{M^{\prin}}(\langle L\rangle')
\end{align}
for $\pi_{M^{\prin}}$ as in \S \ref{sec:gluing_frozen_vertices} and $\f{i}$ as in \S \ref{sec:unfreezing}.

The fact that this construction yields the same elements as that of \S \ref{sec:qbracelet_loop} (and thus is independent of the choices of arcs $\gamma_1,\ldots,\gamma_s$) follows, e.g., from Lemma \ref{lem:Xglue}.  This lemma says that $\bb{I}_t$ respects the gluing of boundary arcs used here; it is a consequence of the State Sum Property of the quantum trace map $\Tr_q$ used in the definition of $\bb{I}_t$.

\subsubsection{Relation to the bracelets of Musiker-Shiffler-Williams}\label{subsub:MSW}

We note that our (tagged) bracelets basis agrees with the set $\s{B}$ defined in \cite{musiker2013bases} for a surface-type cluster algebra with full-rank exchange matrix.  Indeed, the elements associated to tagged arcs in either set are cluster variables (with the exception of notched arcs in once-punctured closed surfaces, but here the corresponding elements in either setting can be obtained by applying the digon relation in a covering space). 
 
 Now it remains to compare the elements corresponding to loops.  For these we can use the surface cutting technique of this subsection to reduce to unpunctured surfaces.  In fact, as we will see in \S \ref{sec:annular_loops}-\S \ref{sec:non-annular-loop}, we can reduce to (1) the case of a twice-marked annulus or (2) the case of an unpunctured surface with a single boundary marking (so a single boundary component).  These cases satisfy the Injectivity Assumption even without the boundary-arc coefficients: 
\begin{enumerate}
\item The twice-marked annulus corresponds to the Kronecker quiver; cf. Example \ref{AnnEx}.
\item The compatibility condition \eqref{Lambda-B} implies that $\Lambda(i,\cdot)$ is nonzero for each non-boundary arc $i$.  In case (2), letting $\alpha_L$ denote the boundary arc, we note that $\Lambda(i,\alpha_L)=0$ for all interior arcs $i$.  So in fact $\Lambda$ as in \eqref{Lambdaij} is non-degenerate in this setting even without the boundary-arc coefficient.  Compatibility then implies $\omega|_{N_{\uf}}$ is non-degenerate as well.
\end{enumerate}
Now, it is already known that $\s{B}$ coincides with the bases of \cite{FockGoncharov06a} (hence our bracelets bases) in the coefficient-free setting, cf. \cite[the comments preceeding Thm. 1.1]{musiker2013bases}.  To pass to other full-rank coefficient systems, we just need that the elements of $\s{B}$ for different such coefficient systems are similar pointed elements.  In the principal coefficients setting, this follows from \cite[Thm. 5.1]{musiker2013bases}.  For other coefficient systems, it is essentially by design (based on the separation formula of \cite{FominZelevinsky07}) that this holds for $\s{B}$ as constructed in \cite[\S 7]{musiker2013bases}.

	\section{Bracelets are theta functions: unpunctured cases}\label{S:BracTheta}

	In this section, unless otherwise specified, we assume that $\Sigma$ is an unpunctured triangulable marked surface (although some arguments do work equally well in the punctured setting). Hence, the quantum skein algebra $\Sk_t(\Sigma)$ can be defined, and the Injectivity Assumption holds for the corresponding seeds with boundary coefficients.  Let $\Delta$ be an ideal triangulation of $\Sigma$.  Recall as in \eqref{Eq:BracDef} that, for any weighted simple multicurve $C=\bigcup w_i C_i$, we have the corresponding bracelet $\langle C \rangle_{\Brac}$ in the quantum skein algebra $\Sk_t(\Sigma)\subset \s{A}_t^{\up}(\sd_{\Delta})$, which we denote by  $\langle C \rangle$ for simplicity. The aim of this section is to show that $\langle C \rangle$ is a theta function for $\s{A}_t^{\up}(\sd_{\Delta})$, cf. Theorem \ref{thm:bracelet-theta-no-punct}.

	\subsection{The gluing lemma for surfaces}\label{Sec:glue}
	
	In this subsection we prove a key lemma which will allow us to prove the correspondence between bracelets and theta functions for $\Sigma'$ by first cutting $\Sigma'$ along arcs and then proving the correspondence on the cut surface $\Sigma$.
	
	Let $\Sigma=(\SSS,\MM)$ be a triangulable marked surface (possibly disconnected).  Let $b_1,b_2$ be the closures of two distinct components of $\partial \SSS\setminus (\partial\SSS\cap \MM)$ --- we also denote the corresponding skein algebra elements (associated to the corresponding boundary arcs) by $b_1,b_2$.  Let $\Sigma'=(\SSS',\MM')$ be the marked surface obtained by gluing $b_1$ to $b_2$ (consistently with the orientation of $\SSS$), identifying the markings at the ends of $b_1$ with the corresponding markings\footnote{If, say, $b_1$ has both ends at the same marking, then this gluing will identify the markings at the ends of $b_2$ with each other, even if they were previously distinct.} at the ends of $b_2$. For simplicity, we assume that $\Sigma$ and $\Sigma'$ are both unpunctured.
	
	Let us choose a triangulation $\Delta'$ for $\Sigma'$ which contains the common image $b$ of $b_1$ and $b_2$ in $\Sigma'$. This $\Delta'$ induces a triangulation $\Delta$ on $\Sigma$. Consider the (quantum) seeds $\sd=\sd_{\Delta}$, $\sd'=\sd_{\Delta'}$ as in \eqref{sdDelta},\eqref{Lambda-ei-ej}. 
	Let $\?{\sd}$ denote the seed obtained from $\sd$ by gluing $b_1,b_2$ (viewed as vertices of the quiver associated to $\sd$) into a single frozen vertex $b$ as in \S \ref{sec:gluing_frozen_vertices}.  Note then that $\sd'$ can be obtained from $\?{\sd}$ by unfreezing $b$ as in \S \ref{sec:unfreezing}.  Recall that we have the $\kk$-linear map $\pi_{M_{\sd}}:\kk_t[M_{\sd}]\rightarrow \kk_t[M_{\?{\sd}}]$ for the gluing process sending $z^{e^*_{b_1}}$ and $z^{e^*_{b_2}}$ to $z^{e^*_b}$ in \eqref{piM}---in fact, $\pi_{M_{\sd}}$ is $\kk_t$-linear in this case since $\?{d}=d$ (in general $t\mapsto t^{\frac{\?{d}}{{d}}}$).  Also recall the natural identification $\f{i}:\kk_t[M_{\?{\sd}}]\simeq \kk_t[M_{\sd'}]$ for the unfreezing process as in \eqref{eq:unfreezing_identification}.\footnote{The composition $\f{i}\circ\pi_{M_{\sd}}$ is a $\kk_t$-linear map, but it does not generally preserve the multiplication of quantum torus algebras.}

	Let $C$ denote a weighted simple multicurve in $\Sigma$ such that the sum of the weights of $b_1$ and $b_2$ is non-negative. Note that gluing identifies $C$ with a simple multicurve $C'$ in $\Sigma'$ which does not intersect the interior of $b$. We have the corresponding skein algebra elements $[C]\in \Sk_t(\Sigma)$ and $[C']\in \Sk_t(\Sigma')$ respectively. Identifying $[C]$ and $[C']$ with their Laurent expansions using the triangulations $\Delta$ and $\Delta'$, respectively, we have the following result:
	\begin{lem}\label{lem:gluing_Skein_element}
		We have $\f{i}\circ\pi_{M_{\sd}}([C])=[C']$.
	\end{lem}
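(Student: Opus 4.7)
The plan is to compute both $[C] \in \Sk_t(\Sigma)$ and $[C'] \in \Sk_t(\Sigma')$ as Laurent polynomials in their respective cluster algebras, using $\Delta$ and $\Delta'$, and then show that the outputs agree under $\f{i} \circ \pi_{M_\sd}$. Following the proof of Corollary \ref{Deltak}, the Laurent expansion of $[C]$ can be produced algorithmically: multiply $[C]$ by a sufficiently large power $[\Delta_{\uf}]^k$, use the Kauffman skein relation to resolve each resulting crossing, eliminate contractible arcs and loops via the remaining skein relations, then divide back by $[\Delta_{\uf}]^k$ inside the localization. The analogous procedure applied to $[C']$ with $[\Delta'_{\uf}]^k$ yields its Laurent expansion. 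I would run both procedures in parallel and compare them term-by-term.

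The key geometric observation is that no component of $C$ crosses the interior of $b_1$ or $b_2$: its components are internal loops, internal arcs, and possibly the weighted boundary arcs $w_1 b_1 \cup w_2 b_2$. Hence every crossing produced when multiplying by a copy of $[\Delta_{\uf}]$, and every site of a Kauffman resolution, lies strictly in the interior of $\SSS$ away from $b_1 \cup b_2$. Under the gluing $\Sigma \to \Sigma'$, this region is canonically identified with the interior of $\SSS'$ away from $b$, so the trees of resolutions on the two sides are in canonical bijection. The resulting multicurves on the $\Sigma$-side, written as Laurent monomials in the arcs of $\Delta$, are carried under this bijection to the resulting multicurves on the $\Sigma'$-side with arcs $b_1, b_2$ replaced by $b$; at the level of exponents in $M_\sd$ and $M_{\sd'}$, this is precisely the map $\pi_M$ (followed by $\f{i}$).

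It remains to show that the scalar $t$-coefficients on the two sides match, which is where I expect the real work to be. These coefficients come from (a) superposition-product factors $t^{\Lambda(i,j)}$ at shared endpoints of arcs, and (b) factors of $q^{\pm 1} = t^{\mp 2}$ from the Kauffman resolutions. Source (b) is preserved since interior crossings are unchanged. For source (a), the key compatibility is
\[
\Lambda_{\sd'}(e^*_i, e^*_b) = \Lambda_{\sd}(e^*_i, e^*_{b_1}) + \Lambda_{\sd}(e^*_i, e^*_{b_2}),
\]
which is exactly the identity \eqref{eq:gluing_omega} built into the definition of the glued seed $\?{\sd}$ and hence of $\pi_M$. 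Because the paper already declares in \S \ref{Sec:glue} that $\sd_{\Delta'}$ is obtained from $\sd_\Delta$ by gluing $b_1,b_2$ and then unfreezing, the matrices $\Lambda_{\?{\sd}}$ and $\Lambda_{\sd'}$ agree under $\f{i}$; the remaining verification is geometric, and I would read it directly off \eqref{Lambdaij} by noting that at each marked point of $\Sigma'$ the circular order of arc-ends is the concatenation of the circular orders at the (possibly identified) marked points of $\Sigma$, so signed angular contributions add.

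The main obstacle is the sub-case where $b_1$ and $b_2$ share one or both endpoints in $\Sigma$, since then shared marked points on $\partial\SSS$ also carry ends of interior arcs and new clockwise/counterclockwise relations appear at those points in $\Sigma'$. I would handle this case by case, and for the superposition $[b_1]^{w_1}[b_2]^{w_2}$ appearing in $[C]$ (with possibly $w_1 \neq w_2$) I would first separate this boundary-arc monomial factor out of $[C]$ using bar-invariance and the antisymmetry of $\Lambda$, check the lemma for the remaining boundary-arc-free factor, and then recombine using $\pi_M(w_1 e^*_{b_1} + w_2 e^*_{b_2}) = (w_1+w_2) e^*_b$, matching the weight of $b$ in $C'$. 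The hypothesis $w_1 + w_2 \geq 0$ is exactly what is needed for $C'$ to be a genuine weighted simple multicurve in $\Sigma'$, where $b$ is now internal.
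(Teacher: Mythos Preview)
Your overall strategy matches the paper's: multiply by a cluster monomial $[\gamma_0]=[\Delta_{\uf}]^k$, resolve skein crossings, and observe that the resolution trees on $\Sigma$ and $\Sigma'$ are in canonical bijection since no crossing meets $b_1\cup b_2$. This correctly shows that $[\gamma_0][C]=\sum_i c_i[\gamma_i]$ and $[\gamma'_0][C']=\sum_i c_i t^{h_i}[\gamma'_i]$ with the \emph{same} skein coefficients $c_i$ and with $m'_i=\f{i}\circ\pi_{M_\sd}(m_i)$.

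The gap is in your treatment of the $t$-powers. You claim that
\[
\Lambda_{\sd'}(e_i^*,e_b^*)=\Lambda_\sd(e_i^*,e_{b_1}^*)+\Lambda_\sd(e_i^*,e_{b_2}^*)
\]
follows from \eqref{eq:gluing_omega} and that $\Lambda_{\?{\sd}}$ and $\Lambda_{\sd'}$ agree under $\f{i}$. But \eqref{eq:gluing_omega} is a statement about $\omega$, not about $\Lambda$; the paper never glues $\Lambda$, and in fact explicitly footnotes in \S\ref{Sec:glue} that $\f{i}\circ\pi_{M_\sd}$ ``does not generally preserve the multiplication of quantum torus algebras.'' The failure is concrete: if an arc $i$ has an end at an endpoint of $b_1$ and an arc $j$ has an end at the corresponding endpoint of $b_2$, then in $\Sigma$ these ends lie at distinct marked points and contribute nothing to $\Lambda_\sd(i,j)$, whereas in $\Sigma'$ they share the glued marked point and contribute $\pm 1$ to $\Lambda_{\sd'}(i,j)$. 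So $\Lambda$ changes not only in the entries involving $b$, and your proposed bookkeeping does not account for these new cross-terms. This is exactly the origin of the extra factors $t^{h_i}$ the paper records.

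The paper sidesteps this entirely with a bar-invariance argument. Writing $[C]=\sum_i c_i t^{k_i} z^{m_i-m_0}$ and $[C']=\sum_i c_i t^{h_i+k'_i} z^{m'_i-m'_0}$, one observes that both are bar-invariant and the monomials $z^{m_i-m_0}$ are distinct, hence each coefficient $c_i t^{k_i}$ and $c_i t^{h_i+k'_i}$ is individually bar-invariant. Since $c_i(t)/c_i(t^{-1})$ determines $t^{2k_i}$ and $t^{2(h_i+k'_i)}$ simultaneously, one gets $k_i=h_i+k'_i$ without ever comparing $\Lambda_\sd$ to $\Lambda_{\sd'}$. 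You should replace your $\Lambda$-tracking with this trick.
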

	\begin{proof}
		Let $C$ be a weighted simple multicurve. By Corollary \ref{Deltak}, we know that there exists a finite collection of quantum cluster monomials $[\gamma_i]$ represented by some simple multicurves $\gamma_i$, $0\leq i\leq r$, such that the components of each $\gamma_i$ are arcs in $\Delta$ and such that we have the following decomposition in $\Sk_t(\Sigma)$:
		\begin{align*}
			[\gamma_0][C]=\sum_{i=1}^r c_i [\gamma_i]
		\end{align*}
		with coefficients $c_i\in \kk_t$ which we may assume are all nonzero.  This decomposition is computed using the skein relations which resolve the intersections between $C$ and $\gamma_0$. 
		
		Let $\gamma_i'$ denote the simple multicurves in $\Sigma'$ corresponding to $\gamma_i$, $i=0,\ldots,r$, under the gluing process. Then the components of $\gamma_i'$ belong to $\Delta'$. We observe that $[\gamma_i]$ and $[\gamma'_i]$ are initial cluster monomials $z^{m_i}\in \kk_t[M_{\sd}]$ and $z^{m'_i}\in \kk_t[M_{\sd'}]$ respectively, such that $m'_i=\f{i}\circ \pi_{M_{\sd}}(m_i)$. Moreover, using the skein relations which resolve the intersection between $C'$ and $\gamma_0'$, and noticing that passing from $\Sigma$ to $\Sigma'$ preserves the homotopy equivalence between curves, we still have the following decomposition in $\Sk_t(\Sigma')$:
		\begin{align*}
			[\gamma'_0][C']=\sum_{i=1}^r c_i t^{h_i} [\gamma'_i],
		\end{align*}
		where extra factors $t^{h_i}$ appear because the $\Lambda$-pairing for calculating products of arcs may change. 

		The Laurent expansion of $[C]$ in $\kk_t[M_{\sd}]$ is given by
		\begin{align*}
			[C]&=\sum_{i=1}^r c_i (z^{-m_0}*z^{m_i})\\
			&=\sum_{i=1}^r c_i t^{k_i} z^{m_i-m_0}
		\end{align*}
		for some $k_i\in \bb{Z}$. Similarly, the Laurent expansion of $[C']$ in $\kk_t[M_{\sd'}]$ is given by
		\begin{align*}
			[C']&=\sum_{i=1}^r c_i t^{h_i}(z^{-m'_0}*z^{m'_i})\\
			&=\sum_{i=1}^r c_i t^{h_i+k_i'} z^{m'_i-m'_0}
		\end{align*}
		for some $k_i'\in \bb{Z}$, $i=1,\ldots,r$. 
		
		Since $[C]$ and $[C']$ are bar-invariant, the coefficients $c_i t^{k_i}$ and $c_i t^{h_i+k_i'}$ are bar-invariant, which implies that $t^{k_i}=t^{h_i+k_i'}$ for each $i$. Consequently, we obtain $[C']=\f{i} \circ \pi_{M_{\sd}}([C])$, as claimed.
	\end{proof}

	Consider the bracelet elements $\langle C\rangle_{}\in \Sk_t(\Sigma)$ and  $\langle C'\rangle_{}\in \Sk_t(\Sigma')$ parametrized by $C$ and $C'$ respectively. Recall our assumption that $\Sigma,\Sigma'$ are unpunctured. We have the following result.

	\begin{lem}[The Gluing Lemma]\label{glue}
	       If the bracelet $\langle C\rangle_{}\in \Sk_t(\Sigma)$ is a theta function in $\s{A}_{t}^{\up}(\sd_{\Delta})$, then $\langle C'\rangle_{}$ is a theta function in $\s{A}_{t}^{\up}(\sd_{\Delta'})$ as well (similarly in the classical $t=1$ setting).
	\end{lem}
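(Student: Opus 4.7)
The strategy is to factor the gluing of the boundary arcs $b_1,b_2$ into two seed-level operations: first, identifying $b_1$ and $b_2$ into a single frozen vertex $b$ of $\?{\sd}$, as in \S\ref{sec:gluing_frozen_vertices}; then unfreezing $b$ to obtain $\sd'$, as in \S\ref{sec:unfreezing}. For the first move I will invoke \eqref{pi-theta}, which asserts that $\pi_{M_\sd}$ sends the theta function $\vartheta_{m,\sQ}$ of $\sd=\sd_{\Delta}$ to the theta function $\vartheta_{\?{m},\?{\sQ}}$ of $\?{\sd}$, where $\?{m}=\pi_M(m)$. The second move is more delicate: $\f{i}$ does not automatically send theta functions to theta functions, and the argument will hinge on the positivity criterion of Lemma \ref{lem:positive_unfreeze_theta}.

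To begin, I will extend Lemma \ref{lem:gluing_Skein_element} from bangles to bracelets, proving that $\f{i}\circ\pi_{M_\sd}(\langle C\rangle) = \langle C'\rangle$ as Laurent polynomials in $\kk_t[M_{\sd'}]$. The plan here is to mimic the proof of Lemma \ref{lem:gluing_Skein_element}: resolve the self-crossings of the links representing $\langle C\rangle$ and $\langle C'\rangle$ via the Kauffman skein relations in parallel, observe that the resulting combinatorial expansions into simple multicurves transport correctly under $\f{i}\circ\pi_{M_\sd}$ by Lemma \ref{lem:gluing_Skein_element} itself, and use bar-invariance of both sides to pin down the $t$-power normalizations. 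With this in hand, the hypothesis $\langle C\rangle = \vartheta_m$ together with \eqref{pi-theta} yields $\langle C'\rangle = \f{i}(\vartheta_{\?{m}})$. Expanding in the theta basis for $\sd'$ as in \eqref{eq:decompose_unfreeze} gives $\langle C'\rangle = \sum_{p\in m+M_{\sd'}^\oplus} c_p\,\vartheta'_p$ with $c_m=1$, and it remains to show $c_p=0$ for $p\neq m$.

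The endgame is Lemma \ref{lem:positive_unfreeze_theta}, which reduces this to verifying $c_p|_{t=1}\geq 0$ for all $p$. Setting $t=1$, the classical bracelet $\langle C'\rangle|_{t=1}\in\Sk(\Sigma')$ is universally positive with respect to the scattering atlas by Lemma \ref{ClassicalPosNoPunct}, since $\Sigma'$ is unpunctured. For each $p$ with $c_p|_{t=1}\neq 0$, choosing a generic $\sQ$ sufficiently close to $p$ and applying Lemma \ref{pmQ} identifies the $z^p$-coefficient of the Laurent expansion of $\langle C'\rangle|_{t=1}$ at $\sQ$ with $c_p|_{t=1}$, which must therefore be non-negative. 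The classical $t=1$ case of Lemma \ref{glue} will follow by exactly the same argument with quantum ingredients replaced by classical ones. The main obstacle I anticipate is the bracelet-transport identity $\f{i}\circ\pi_{M_\sd}(\langle C\rangle) = \langle C'\rangle$: bracelets involve Chebyshev polynomials of loops, and may also have boundary arcs $b_1$ or $b_2$ appearing as components with positive weight, so the bar-invariance argument will require some care to extend cleanly from the bangle setting.
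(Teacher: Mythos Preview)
Your proposal is correct and follows essentially the same route as the paper: factor the gluing as $\pi_{M_{\sd}}$ (use \eqref{pi-theta}) followed by unfreezing $\f{i}$ (use Lemma \ref{lem:positive_unfreeze_theta}), with the hypothesis of the latter supplied by the classical universal positivity of bracelets on the unpunctured surface $\Sigma'$ (Lemma \ref{ClassicalPosNoPunct}).

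The only notable difference is in the bracelet-transport identity $\f{i}\circ\pi_{M_{\sd}}(\langle C\rangle)=\langle C'\rangle$. Rather than re-running the crossing-resolution and bar-invariance argument of Lemma \ref{lem:gluing_Skein_element} for the (non-simple) links representing bracelets, the paper observes that $\langle C\rangle$ is, by construction, a $\kk_t$-linear combination of bangles (expand each $T_{w_i}([C_i])$ for loop components, take products with the arc powers), and since $\f{i}\circ\pi_{M_{\sd}}$ is $\kk_t$-linear here (because $d=\?{d}$), Lemma \ref{lem:gluing_Skein_element} applied term-by-term to the bangle expansion gives the identity immediately. This bypasses the concern you flagged about Chebyshev polynomials and about $b_1,b_2$ appearing as components. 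Your plan would work as well, but the paper's route is shorter.
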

	
	\begin{proof}
		First, Lemma  \ref{lem:gluing_Skein_element} implies the equality $\f{i} \circ \pi_{M_{\sd}}( \E{C})=\E{C'_i}$. More precisely, denote $C=\bigcup_i w_i C_i$ and, correspondingly, $C'=\bigcup_i w_i C'_i$. Then $\f{i} \circ \pi_{M_{\sd}}$ sends $\E{C_i}=[C_i]$ to $\E{C'_i}=[C'_i]$ by Lemma \ref{lem:gluing_Skein_element}. Recall that $\E{w_iC_i}$ equals $\E{C_i}^{w_i}$ if $C_i$ is an arc and $T_{w_i}(\E{C_i})$ if $C_i$ is a loop. Moreover, $\E{C}$ is a product of the factors $\E{w_iC_i}$. Similarly for $\E{C'}$. Thus, $\langle C\rangle$ is a $\kk_t$-linear combination of weighted multicurves, and $\langle C'\rangle$ is the corresponding $\kk_t$-linear combination of the images of these weighted multicurves under $\f{i}\circ \pi_{M_{\sd}}$. The equality now follows from the $\kk_t$-linearity of $\f{i}\circ \pi_{M_{\sd}}$.

		Recall that the gluing process $\pi_{M_{\sd}}$ sends theta function to theta functions by \eqref{pi-theta}, so $\pi_{M_{\sd}}(\langle C\rangle)$ is a theta function.  Furthermore, since $\f{i}\circ\pi_{M_{\sd}}(\langle C\rangle) = \langle C'\rangle$ is theta positive in the $t=1$ setting by Lemma \ref{ClassicalPosNoPunct},  Lemma \ref{lem:positive_unfreeze_theta} ensures that $\f{i}$ applied to $\pi_{M_{\sd}}(\langle C\rangle)$ yields a theta function; i.e., $\langle C'\rangle$ is a theta function, as desired.

	\end{proof}

	By the definition of (quantum) tagged bracelet elements, we have the following useful observation as a special case of Lemma \ref{lem:theta-union-product} (also cf. Remark \ref{rem:surface-union}).
	\begin{lem}\label{union-product}
		Consider two marked surfaces (possibly with punctures) $\Sigma_1$ and $\Sigma_2$. For $i=1,2$, choose a weighted tagged simple multicurve $C_i$ on $\Sigma_i$, endow $\Sigma_i$ with an ideal triangulation $\Delta_i$, and let $\sd_i$ denote a (quantum) seed similar to $\sd_{\Delta_i}$. Let $\sd$ be the union of $\sd_1$ and $\sd_2$ as in \eqref{eq:seed-union} or the quantum seed analog.  Let $\Sigma=\Sigma_1\sqcup \Sigma_2$ with triangulation $\Delta=\Delta_1\sqcup \Delta_2$, so $\sd$ is similar to $\sd_{\Delta}$. If the (quantum) tagged bracelet elements $\E{L_i}$ are theta functions in $\s{A}^{\up}_t(\sd_i)$ respectively, then $\E{L_1\sqcup L_2}\in \s{A}^{\up}_t(\sd)$ is a theta function as well.
	\end{lem}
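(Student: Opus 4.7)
The plan is to decompose both the bracelet $\langle L_1\sqcup L_2\rangle$ and the expected theta function as products of contributions from the two surfaces, then invoke Lemma \ref{lem:theta-union-product}. The key point is that because $\Sigma = \Sigma_1\sqcup \Sigma_2$ and the components of $L_1\sqcup L_2$ lying on $\Sigma_1$ share no marked points with those on $\Sigma_2$, all the relevant seed data (the lattices $N_\sd = N_{\sd_1}\oplus N_{\sd_2}$, $M_\sd = M_{\sd_1}\oplus M_{\sd_2}$, the forms $\omega_\sd=\omega_{\sd_1}\oplus\omega_{\sd_2}$, and in the quantum case $\Lambda_\sd=\Lambda_{\sd_1}\oplus\Lambda_{\sd_2}$) splits as a direct sum, and consequently the scattering diagram $\f{D}^{\s{A}_t}(\sd)$ is the direct sum $\f{D}^{\s{A}_t}(\sd_1)\oplus\f{D}^{\s{A}_t}(\sd_2)$ (as used in the proof of Lemma \ref{lem:theta-union-product}).

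First, I would unpack the bracelet definitions from \S \ref{sec:general_quantum_bracelet} (and \S \ref{sec:classical_general_bracelets} classically) to verify the multiplicative splitting
\[
\langle L_1\sqcup L_2\rangle = \langle L_1\rangle\cdot \langle L_2\rangle
\]
inside $\s{A}_t^{\up}(\sd)$. Under the natural inclusions $\kk_t[M_{\sd_i}]\hookrightarrow \kk_t[M_\sd]$, monomials from the two factors $t$-commute (their $\Lambda$-pairing vanishes since $\Lambda_\sd$ is block-diagonal), so the bar-invariance correction $\alpha$ in \eqref{factor-alpha}, as well as any $t$-power arising from the superposition product across the two surfaces, is trivial. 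Hence no rescaling is needed when combining $\langle L_1\rangle$ and $\langle L_2\rangle$ into $\langle L_1\sqcup L_2\rangle$. Moreover the $g$-vector satisfies $g(L_1\sqcup L_2)=g(L_1)+g(L_2)$, where each $g(L_i)\in M_{\sd_i}\subset M_\sd$; this follows from \eqref{eq:g_vector_shear} together with the obvious decomposition of elementary laminates and shear coordinates across the disjoint components, or alternatively from the pointedness of each bracelet factor.

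Second, I would invoke Lemma \ref{lem:theta-union-product}. In the quantum setting each $\sd_i$ (being a quantum seed) automatically satisfies the Injectivity Assumption, so part (1) gives
\[
\vartheta_{g(L_1)}\cdot \vartheta_{g(L_2)} = \vartheta_{g(L_1)+g(L_2)}
\]
in $\s{A}_t^{\can}(\sd)$. In the classical case the Injectivity Assumption may fail for some $\sd_i$ (e.g.\ for punctured components), but the hypothesis that $\langle L_i\rangle$ is a theta function places $\vartheta_{g(L_i)}\in \s{A}^{\midd}(\sd_i)$, and then part (2) of Lemma \ref{lem:theta-union-product} yields the same product identity in $\s{A}^{\midd}(\sd)$. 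Combining with the first step produces
\[
\langle L_1\sqcup L_2\rangle = \vartheta_{g(L_1)}\cdot\vartheta_{g(L_2)} = \vartheta_{g(L_1)+g(L_2)},
\]
which is exactly the desired theta function in $\s{A}_t^{\up}(\sd)$.

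There is no real obstacle here; the only care needed is bookkeeping: verifying the multiplicativity of the bracelet across a disjoint union (which reduces to checking vanishing of cross intersection numbers and $\Lambda$-pairings), and confirming that the $g$-vectors add under this splitting. Both points are immediate from the constructions in \S \ref{section:bracelet_skein} and \S \ref{sec:g-vector}, after which Lemma \ref{lem:theta-union-product} supplies the rest.
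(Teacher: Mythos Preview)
Your proposal is correct and takes essentially the same approach as the paper: the paper simply states that the result is a special case of Lemma \ref{lem:theta-union-product} together with the definition of (quantum) tagged bracelet elements, and you have spelled out precisely this argument, verifying the multiplicative splitting $\langle L_1\sqcup L_2\rangle=\langle L_1\rangle\cdot\langle L_2\rangle$ and then invoking Lemma \ref{lem:theta-union-product}.
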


	\subsection{Annular loops in unpunctured surfaces}
	\label{sec:annular_loops}

	Recall that, for any given seed satisfying the injectivity assumption, we use $g(C)$ to denote the degree ($g$-vector) of a tagged bracelet element $\langle C\rangle$; cf. Definition \ref{defn:pointedness} and \S \ref{sec:g-vector}.
	
	\begin{lem}[Loops in the twice-marked annulus]\label{AnnLoop}
		Suppose $\Sigma$ is an annulus with one marking on each boundary component as in Example \ref{AnnEx}.  Let $L$ be the simple loop in $\Sigma$.  Then $\langle kL\rangle = \vartheta_{kg(L)}$ for each $k \in \bb{Z}_{\geq 0}$.
	\end{lem}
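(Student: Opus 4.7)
My plan is to reduce the assertion for all $k$ to the base case $k=1$ via Chebyshev recursions on both sides, and then to establish the base case $[L]=\vartheta_p$ by direct computation in the Kronecker-type scattering diagram of Example~\ref{DatEx}.

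First I compute $g(L)$. Since $L$ is disjoint from the boundary arcs and has crossing number one with each of $\gamma_1$ and $\gamma_2$, Proposition~\ref{prop:intersection_coordinate} gives $\pi(L)=\tfrac12(e_1+e_2)$, and \eqref{gLoop} then yields
\begin{align*}
g(L)=-\omega_1^{\sd_\Delta}(\pi(L))=-\tfrac12(v_1+v_2)=e_1^*-e_2^*,
\end{align*}
which is precisely the primitive vector $p$ on the limiting wall identified in Example~\ref{Ann-broken}; in particular $g(kL)=kp$. Next, two Chebyshev recursions drive the reduction. On the skein side, definition~\eqref{wLBrac} gives $\langle kL\rangle=T_k([L])$. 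On the theta side, equation~\eqref{ChebAnn} of Example~\ref{Ann-broken}, proved via the explicit broken-line count near the limiting wall, establishes $\vartheta_{kp}=T_k(\vartheta_p)$ in $\s{A}_t^{\can}(\sd_\Delta)$. Consequently it suffices to verify the $k=1$ identity $[L]=\vartheta_p$ and then apply $T_k$ to both sides to conclude $\langle kL\rangle=\vartheta_{kg(L)}$.

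For the base case, both $[L]$ and $\vartheta_p$ lie in $\s{A}_t^{\up}(\sd_\Delta)$ and are bar-invariant (Lemmas~\ref{lem:bar_involution_Sk} and~\ref{lem:theta-bar-inv}) and $p$-pointed; in the classical limit both are theta positive (Lemma~\ref{ClassicalPosNoPunct} and Proposition~\ref{AtomicProp}). To close the gap I will compare Laurent expansions in the initial cluster by explicit computation. On the skein side, resolving the two transverse crossings of $L$ with $\gamma_1,\gamma_2$ via the Kauffman relations of Figure~\ref{SkeinFig} writes $[L]$ as an explicit Laurent polynomial in $z^{e_1^*},z^{e_2^*},z^{e_3^*},z^{e_4^*}$. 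On the scattering side, Example~\ref{Ann-broken} shows that for generic $\sQ$ in a chamber adjacent to the limiting wall only the straight line and the wrap-around of Figure~\ref{Kr2q-broken} contribute, giving $\vartheta_{p,\sQ}\equiv z^p+z^{-p}$ to all finite orders in $v_1+v_2$; applying Lemma~\ref{CPS} and the wall-crossing automorphisms associated to the countable fan of walls of Figure~\ref{Kr2q} transports this expression back to the initial chamber and produces a Laurent polynomial which is then matched against the skein-side expression.

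The main obstacle is this last explicit comparison. Although the scattering diagram contains infinitely many walls accumulating on the limiting wall, only finitely many contribute to any fixed bounded-degree Laurent monomial under the wall-crossing transport, so the identification is reduced to a finite (if intricate) calculation; bar-invariance together with $p$-pointedness substantially cut down the bookkeeping required, and the resulting match establishes $[L]=\vartheta_p$ and hence the lemma.
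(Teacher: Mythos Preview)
Your reduction to $k=1$ via the Chebyshev recursion is correct and matches the paper exactly: both sides use \eqref{wLBrac} for bracelets and \eqref{ChebAnn} for theta functions, and your computation $g(L)=e_1^*-e_2^*=p$ is right.

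The gap is in your treatment of the base case. The expression $\vartheta_{p,\sQ'}\equiv z^p+z^{-p}$ coming from Example~\ref{Ann-broken} is valid only \emph{modulo} $p+\ell M^+$, with $\ell$ depending on how close $\sQ'$ is to the limiting wall; for any specific chamber containing $\sQ'$, the exact Laurent expansion has additional terms. Transporting an approximate expression back to the initial chamber via wall-crossings does not yield the exact $\vartheta_{p,\sQ}$, and matching against the skein-side computation of $[L]$ requires exact equality. Your observation that only finitely many walls contribute to any bounded-degree term is correct but does not close this gap: you would still need the exact expression at $\sQ'$ before transporting, and Example~\ref{Ann-broken} does not supply it.

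The paper handles $k=1$ more directly and avoids the infinite fan entirely. It enumerates broken lines for $\sQ$ in the \emph{positive} chamber --- there are exactly three (Figure~\ref{fig:theta-limiting-wall}), giving
\[
\vartheta_{p,\sQ}=z^{(1,-1,0,0)}+z^{(-1,1,0,0)}+z^{(-1,-1,1,1)}.
\]
Rather than matching $[L]$ and $\vartheta_p$ as raw Laurent polynomials, the paper multiplies by the cluster variable $[\alpha_2]=\vartheta_{e_2^*}$: the skein relation gives $[\alpha_2][L]=q[\alpha_1]+q^{-1}[\alpha_3]$, and since $\alpha_1,\alpha_2,\alpha_3$ are all cluster variables (hence known theta functions), one verifies $\vartheta_{e_2^*}\vartheta_p=q\vartheta_{e_1^*}+q^{-1}\vartheta_{g(\alpha_3)}$ by substituting the three-term expansion and checking the resulting Laurent identity. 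This is a short finite computation in the initial chamber, with no transport required.
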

	\begin{proof}
		It suffices to prove this in the quantum setting.  Let $\alpha_2$ be any arc in $\Sigma$. The two tringulations containing $\alpha_2$ are pictured in Figure \ref{annuli_fig}, with the other arcs in these triangulations labelled $\alpha_1$ and $\alpha_{3}$.  By the skein relation (Figure \ref{SkeinFig}), we see $[\alpha_2]  [L] = q[\alpha_1]+q^{-1}[\alpha_{3}]$.  Recall the notation $$v_1:=\omega_1(e_1)=(0,2,-1,-1) \quad \mbox{and} \quad v_2:=\omega_1(e_2)=(-2,0,1,1)$$ as in \eqref{v1v2}.  In the seed associated to the triangulation $\Delta_{\ufv}=\{\alpha_1,\alpha_2\}$, we have the following $g$-vectors (using \eqref{fprime} to find $g(\alpha_3)$ and \eqref{gLoop} for $g(L)$):
		\begin{align*}
			g(\alpha_1)=(1,0,0,0), \quad g(\alpha_2)=(0,1,0,0), \quad  g(\alpha_{3})=-(1,0,0,0)+v_2 = (-1,0,1,1)
		\end{align*}
		and
		\begin{align*}
		    g(L)=(1,-1,0,0).
		\end{align*}
		So to show that $L=\vartheta_{g(L)}$, it suffices to show that \begin{align}\label{eq:theta-product}
		\vartheta_{(0,1,0,0)}\vartheta_{(1,-1,0,0)}=q\vartheta_{(1,0,0,0)}+q^{-1}\vartheta_{(-1,0,1,1)}.    
		\end{align}
		This can be checked by computing broken lines as in Example \ref{Ann-broken} and Figure \ref{Kr2q-broken}.  Indeed, for $\sQ$ in the positive chamber, $\vartheta_{(1,0,0,0),\sQ}$ and $\vartheta_{(0,1,0,0),\sQ}$ are the monomials $z^{(1,0,0,0)}$ and $\vartheta_{(0,1,0,0),\sQ}$, respectively, while $\vartheta_{(-1,0,1,1),\sQ}$ is the mutated cluster variable $z^{(-1,0,1,1)}+z^{(-1,2,0,0)}$. Noting that $(1,-1,0,0)=\frac{1}{2}(v_1+v_2)$, consideration of broken lines yields \begin{align*}
		    \vartheta_{(1,-1,0,0),\sQ} = z^{(1,-1,0,0)}+z^{(-1,1,0,0)}+z^{(-1,-1,1,1)};
		\end{align*}
		the broken lines contributing these three terms are illustrated in Figure \ref{fig:theta-limiting-wall}.
		The equality \eqref{eq:theta-product} now follows (referring to Example \ref{AnComp} for the $\Lambda$-matrix).
		
		Finally, the claim for higher $k$ follows from \eqref{ChebAnn} and the definition of the bracelet $\langle kL\rangle$  in \eqref{wLBrac}.
		
		\begin{figure}[htb]
	\centering
	\begin{tikzpicture}
		\draw
		(-2,0) -- (2,0) node[right] {$(e_2^{\perp},\Psi_{t^4}(z^{v_2}))$}
		(0,-2) -- (0,2) node[left] {$(e_1^{\perp},\Psi_{t^4}(z^{v_1}))$}
		(0,0) -- (2,-2) 
		(0,0) -- (1,-2) 
		(0,0) -- (1.333,-2)
		(0,0) -- (1.5,-2)
		(0,0) -- (1.6,-2)
		(0,0) -- (1.667,-2)
		(0,0) -- (1.714,-2)
		(0,0) -- (1.75,-2)
		(0,0) -- (1.778,-2)
		(0,0) -- (1.8,-2)
		(0,0) -- (1.818,-2)
		(0,0) -- (1.833,-2)
		(0,0) -- (1.846,-2)
		(0,0) -- (1.857,-2)
		(0,0) -- (1.867,-2)
		(0,0) -- (1.875,-2)
		(0,0) -- (2,-1) 
		(0,0) -- (2,-1.333)
		(0,0) -- (2,-1.5)
		(0,0) -- (2,-1.6)
		(0,0) -- (2,-1.667)
		(0,0) -- (2,-1.714)
		(0,0) -- (2,-1.75)
		(0,0) -- (2,-1.778)
		(0,0) -- (2,-1.8)
		(0,0) -- (2,-1.818)
		(0,0) -- (2,-1.833)
		(0,0) -- (2,-1.846)
		(0,0) -- (2,-1.857)
		(0,0) -- (2,-1.867)
		(0,0) -- (2,-1.875);
		\draw (1,0.6) node 
		{$\bullet$};
		\draw (1.3,.8) node 
		{$\sQ$};
		\draw[line width=.5mm] (0.4,-2) -- (-1.6,0) -- (0,1.6) -- (1,0.6);
		\draw[line width=.5mm] (2,-0.4) -- (1,0.6);
		\draw[line width=.5mm] (2,-1) -- (1,0) -- (1,0.6);
	\end{tikzpicture}
	\caption{
		\label{fig:theta-limiting-wall}}
\end{figure}
	\end{proof}
	\begin{figure}
		\centering
		\def\svgwidth{450pt}
		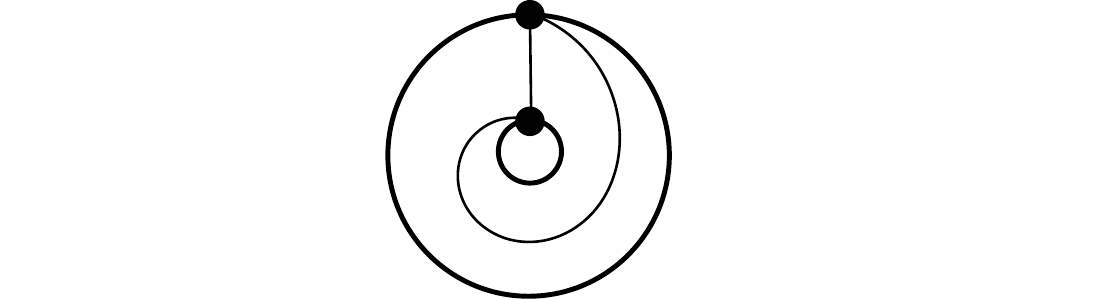
		\caption{$\alpha_2  L = q\alpha_1 + q^{-1}\alpha_{3}$.}
		\label{annuli_fig}
	\end{figure}
	
	We note that \cite[Thm. 2.8]{SZ} has previously shown for the classical cluster algebra of the Kronecker quiver (as in Example \ref{QuivEx} but without the frozen vertices; i.e., the twice-marked annulus without boundary coefficients) that there is an atomic basis satisfying the Chebyshev recursion.
	
	\begin{lem}\label{ArcLoop1Lem}
		If $L$ is a simple loop in an unpunctured surface $\Sigma$, and if there exists an arc $\gamma$ which intersects $L$ exactly one time, then $\langle kL\rangle\in\Sk_t(\Sigma)$ is equal to $\vartheta_{g(kL)}$ for each $k\in \bb{Z}_{\geq 0}$.
	\end{lem}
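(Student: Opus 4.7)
The plan is to reduce to the twice-marked annulus case (Lemma \ref{AnnLoop}) by cutting $\Sigma$ along internal arcs so that $L$ ends up isolated inside an annular component, and then reassembling via the Gluing Lemma. Specifically, I would use the arc $\gamma$ meeting $L$ once to construct a regular neighborhood $A'$ of $L\cup\gamma$, extended to contain small pieces of $\partial\SSS$ near the endpoints of $\gamma$: since $\chi(L\cup\gamma)=0$ and $\Sigma$ is oriented, $A'$ is topologically an annulus. One then cuts $\Sigma$ along the components of $\partial A'\setminus\partial\SSS$. These cut arcs are internal arcs of $\Sigma$ with endpoints at markings, and after cutting one obtains an unpunctured marked surface $\widetilde\Sigma = A\sqcup\Sigma''$, where $A$ is a twice-marked annulus containing the lift $L_A$ of $L$ as its unique simple loop, and $\Sigma''$ is the remaining (possibly disconnected) unpunctured surface.

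On $\widetilde\Sigma$ the result is immediate. By Lemma \ref{AnnLoop}, $\langle kL_A\rangle = \vartheta_{g(kL_A)}$ in $\s{A}_t^{\up}(\sd_A)$, and the unit $1\in\s{A}_t^{\up}(\sd_{\Sigma''})$ is $\vartheta_0$. Applying Lemma \ref{union-product} to the disjoint union $A\sqcup\Sigma''$, the element $\langle kL_A\rangle\cdot 1$ is a theta function in $\s{A}_t^{\up}(\sd_{\widetilde\Sigma})$. To return from $\widetilde\Sigma$ to $\Sigma$, I invert the cut by regluing the paired boundary arcs one pair at a time; each such regluing is precisely the operation covered by the Gluing Lemma \ref{glue}, whose hypotheses hold throughout because all intermediate surfaces remain unpunctured. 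This propagates the theta-function property from $\s{A}_t^{\up}(\sd_{\widetilde\Sigma})$ up to $\s{A}_t^{\up}(\sd_\Sigma)$, giving $\langle kL\rangle = \vartheta_{g(kL)}$. The $k=0$ case is the trivial identity $\langle 0\cdot L\rangle = 1 = \vartheta_0$.

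The main obstacle I anticipate is the topological construction of the twice-marked annulus $A$, specifically ensuring that (i) the arcs along which we cut have both endpoints at genuine markings of $\Sigma$, as required by the Gluing Lemma, and (ii) the resulting annulus has exactly one marking on each of its two boundary components so that Lemma \ref{AnnLoop} applies directly. This requires a careful case analysis depending on whether the two endpoints of $\gamma$ coincide and whether they lie on a common boundary component of $\SSS$. In less favorable configurations one may need to slightly enlarge the neighborhood of $L\cup\gamma$, or to add an auxiliary arc running parallel to $\gamma$, in order to distribute the markings correctly between the two boundary components of $A$. Once this topological step is secured, the algebraic combination of Lemmas \ref{AnnLoop}, \ref{union-product}, and \ref{glue} finishes the argument.
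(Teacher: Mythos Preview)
Your approach is essentially the same as the paper's. The paper dispatches your anticipated topological obstacle without any case analysis by taking a tubular neighborhood of $\gamma$ \emph{pinched at its endpoints} and unioning it with a tubular neighborhood of $L$; this is automatically an annulus with one marking on each boundary component, whose two boundary arcs extend to a triangulation of $\Sigma$. One caution: your justification $\chi(L\cup\gamma)=0$ is only correct when the two endpoints of $\gamma$ are distinct in $\Sigma$ --- if they coincide (which the paper explicitly allows), the image has $\chi=-1$ and an \emph{unpinched} regular neighborhood is a pair of pants, so the pinching is precisely what makes the construction uniform.
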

	\begin{proof}
		Let $U_L\subset \SSS$ be a small tubular neighborhood of $L$, and let $U_{\gamma}\subset \SSS$ be a small tubular neighborhood of $\gamma$ which is pinched at the endpoints.  Then $\Sigma_L:=U_L\cup U_{\gamma}$ is an annulus $\A$ containing $L$ and $\gamma$ and with one marked point on each boundary component, cf. Figure \ref{AnnCut} (in general, the two marked points might come from the same point in $\Sigma$).  Let $b_1,b_2$ be the two boundary components of $U_L\cup U_{\gamma}$.  The arcs $\gamma,b_1,b_2$ form part of a triangulation of $\Sigma$.  By Lemma \ref{AnnLoop}, $\langle kL\rangle=\vartheta_{g(kL)}$ for each $k\in \bb{Z}_{\geq 0}$ if we work in the cluster algebra associated to the marked annulus $\A$. This equality $\langle kL\rangle = \vartheta_{g(kL)}$ extends to the full surface $\Sigma$ by Lemma \ref{glue}.
		
		\begin{figure}[htb]
			\def\svgwidth{400pt}
			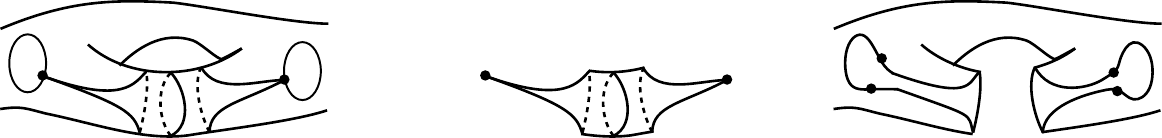
			\caption{Excising an annulus containing a bracelet.  The first component on the right-hand side is the annulus $\A$. \label{AnnCut}}
		\end{figure}
	\end{proof}

	In view of Lemmas \ref{AnnLoop} and \ref{ArcLoop1Lem}, we call a simple loop $L$ an \textbf{annular loop} if there exists an arc $\gamma$ in $\Sigma$ which intersects $L$ exactly once. 
		In this case, for any chosen marked point $r$ in the same connected component of $\SSS$ as $L$, we can choose $\gamma$ so that $r$ is at least one of its endpoints.

	\subsection{Non-annular loops in unpunctured surfaces}\label{sec:non-annular-loop}
	
		Unfortunately, not all non-contractible loops are annular. A non-contractible loop which is not annular will be called a \textbf{non-annular loop}. In view of Lemma \ref{union-product}, let us assume $\Sigma$ is connected.  Our goal in this subsection is to prove that if $L$ is a non-annular loop, then the bracelets $\langle kL\rangle$ are theta functions.  Since the argument is somewhat complicated, we summarize it here.
	
	We begin in \S \ref{subsub:cut} by cutting $\Sigma$ down to a simpler surface $\Sigma_L$ with only one boundary component and one marked point, and with $L$ homotopic to the boundary (Figure \ref{DLoop}).
	
	Then in \S \ref{subsub:nonannular1} we show that $[L]$ is a theta function for $\Sigma_L$.  To do this, we first show that $[L]$ is universally positive (in the quantum setting), hence a sum of $s\geq 1$ theta functions (a priori with powers of $t$ for coefficients).  We also find that $[L]$ times an interior arc $[\gamma]$ is a sum of three theta functions (Figure \ref{ArcLoop2}), so strong positivity of theta functions implies $s\leq 3$. More careful inspection rules out $s=2,3$, implying that $[L]$ really is a theta function.
	
	In \S \ref{subsub:mcg} we review the mapping class group $\Gamma_{\Sigma}$ and apply results of \cite{gross2018canonical,davison2019strong} to show that $\Gamma_{\Sigma}$ acts equivariantly on theta functions; cf. Lemmas \ref{GammaTheta} and \ref{cor:tau-m}.  This is applied in \S \ref{subsub:Dehn} to show that $[L]^s$ is a linear combination of theta functions $\vartheta_{kg(L)}$ for $k\in \bb{Z}_{\geq 0}$ (Lemma \ref{kg}), essentially because $\langle kL\rangle$ are the only bracelets (up to multiplication by the boundary arc) on which the mapping class group $\Gamma_{\Sigma_L}$ acts trivially (or even with finite orbit).
	
	We give a partial description of $[L]^k$ times an interior arc $[\gamma]$ in Lemma \ref{lem:const_gammaLk} using the skein relations and induction.  By combining this with Lemmas \ref{kg} and \ref{Zp}, we deduce that, for $\sQ$ sufficiently near $g(L)$, we have $\vartheta_{g(L),\sQ}=z^{g(L)}+z^{-g(L)}+[\text{higher order terms]}$.  Finally, in Lemma \ref{ChebyLem}, we use the structure constants formula (Proposition \ref{StructureConstants}) and the characterization of the Chebyshev polynomial $T_k$ in Lemma \ref{Lem:Tk} to conclude that $\vartheta_{kg(L)}=T_k(\vartheta_{g(L)})$, hence $\langle kL\rangle = \vartheta_{kg(L)}$.

	\subsubsection{Cutting out a simpler surface $\Sigma_L$}\label{subsub:cut}
	
	\begin{lem}\label{non-annular-L}
		Let $L$ be a non-annular loop in a connected marked surface $\Sigma$.  Let $r\in \MM$ be a marked point. Then there is an arc $\alpha_L$ disjoint from $L$ such that:
		\begin{itemize}
			\item both ends of $\alpha_L$ are at $r$;
			\item The connected component $\SSS_L^{\circ}$ of $\SSS\setminus \alpha_L$ which contains $L$ has positive genus and contains no marked points in its closure except for $r$;
			\item $L$ is homotopic to the image of $\alpha_L$ in $\Sigma$.
		\end{itemize}
	\end{lem}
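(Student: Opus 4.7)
The overall strategy is to show that a non-annular loop $L$ must separate $\SSS$, that one side of this separation is a compact surface of positive genus carrying no marked points and with $L$ as its only boundary, and then to realize $\alpha_L$ as the boundary of a thickening of that side together with a ``handle'' arc reaching $r$.

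First I would argue that $L$ is separating. Suppose not; then its class in $H_1(\SSS,\partial\SSS;\bb{Z}/2\bb{Z})$ is nontrivial, and Poincar\'e--Lefschetz duality produces a simple closed curve $C$ in the interior of $\SSS$ meeting $L$ transversely in exactly one point. Pick any marked point $r'\in\MM$ and a simple arc $\beta$ from $r'$ to a point $a\in C$ that is disjoint from $L$ and from $\MM\setminus\{r'\}$; in particular $a\notin L$. Taking two parallel copies $\beta_1,\beta_2$ of $\beta$ terminating at points $a_1,a_2\in C$ flanking $a$ on opposite sides, and letting $C'$ be the arc of $C$ from $a_1$ to $a_2$ that does \emph{not} pass through $a$, the concatenation $\beta_1\cdot C'\cdot\beta_2^{-1}$ is a simple arc from $r'$ to $r'$ meeting $L$ in exactly the unique point of $C\cap L$---contradicting non-annularity of $L$.

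Let $\SSS_1,\SSS_2$ be the closures of the two components of $\SSS\setminus L$. If both contained marked points $r_i\in\SSS_i\cap\MM$, then fixing a common point $q\in L$ and simple arcs $\eta_i\subset\SSS_i$ from $r_i$ to $q$ (possible by connectedness of each $\SSS_i$), the concatenation $\eta_1\cdot\eta_2^{-1}$ would be a simple arc in $\Sigma$ crossing $L$ exactly once, again contradicting non-annularity. Hence one side, say $\SSS_2$, contains no marked points, and then triangulability forbids any component of $\partial\SSS$ from lying in $\SSS_2$, so $\partial\SSS_2=L$. Since $L$ is non-contractible, $\SSS_2$ is not a disk, and as a compact connected orientable surface with a single boundary component it therefore has positive genus. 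In particular the given $r$ lies in $\SSS_1$.

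To build $\alpha_L$, choose a simple arc $\delta\subset\SSS_1$ from $r$ to a point $p\in L$ meeting $\partial\SSS$ only at $r$ and avoiding $\MM\setminus\{r\}$. Take a regular neighborhood $N$ of $\SSS_2\cup\delta$ in $\SSS$ sufficiently thin that $N\cap\partial\SSS$ is a small arc around $r$ containing no other marked point, and set $\SSS_L^{\circ}:=N$; then $\partial N\cap\mathrm{int}(\SSS)$ is a single embedded arc $\alpha_L$ with both endpoints at $r$ and disjoint from $L$. Since $\SSS_L^{\circ}$ deformation retracts to $\SSS_2$ by collapsing the thickening of $\delta$, it has positive genus and carries only $r$ as a marked point; and $\alpha_L$ is freely isotopic to $L$ because the region between them is a contractible thickening of $\delta$. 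The main technical obstacle is the surgery in the first step: the parallel copies $\beta_1,\beta_2$ must be positioned carefully on either side of $\beta\cap C$ so that the concatenation is a genuinely simple arc at $r'$ and along $C$, and so that the ``long way'' $C'$ really does contain the unique crossing $C\cap L$.
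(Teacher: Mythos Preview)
Your proof is correct and follows essentially the same approach as the paper's. Both argue that $L$ must separate $\SSS$ with one side carrying no marked points, then build $\alpha_L$ as the boundary of a neighborhood of that side together with a connecting arc to $r$. The only difference is cosmetic: for the separating step, the paper observes directly that if $\SSS\setminus L$ were connected one could join $r$ to both sides of $L$ by disjoint paths and concatenate, whereas you invoke a dual curve $C$ with $|C\cap L|=1$ and surger it into an arc---this is a slightly longer route to the same conclusion. Your construction of $\alpha_L$ via a regular neighborhood of $\SSS_2\cup\delta$ is exactly the paper's ``follow $\alpha$, wrap around $\partial\overline{U_L}$, follow $\alpha$ back'' in different language.
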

	This situation is illustrated in Figure \ref{DLoop}.
	\begin{figure}[htb]
		\def\svgwidth{420pt}
\begingroup%
  \makeatletter%
  \providecommand\color[2][]{%
    \errmessage{(Inkscape) Color is used for the text in Inkscape, but the package 'color.sty' is not loaded}%
    \renewcommand\color[2][]{}%
  }%
  \providecommand\transparent[1]{%
    \errmessage{(Inkscape) Transparency is used (non-zero) for the text in Inkscape, but the package 'transparent.sty' is not loaded}%
    \renewcommand\transparent[1]{}%
  }%
  \providecommand\rotatebox[2]{#2}%
  \newcommand*\fsize{\dimexpr\f@size pt\relax}%
  \newcommand*\lineheight[1]{\fontsize{\fsize}{#1\fsize}\selectfont}%
  \ifx\svgwidth\undefined%
    \setlength{\unitlength}{580.76444408bp}%
    \ifx\svgscale\undefined%
      \relax%
    \else%
      \setlength{\unitlength}{\unitlength * \real{\svgscale}}%
    \fi%
  \else%
    \setlength{\unitlength}{\svgwidth}%
  \fi%
  \global\let\svgwidth\undefined%
  \global\let\svgscale\undefined%
  \makeatother%
  \begin{picture}(1,0.1606265)%
    \lineheight{1}%
    \setlength\tabcolsep{0pt}%
    \put(0,0){\includegraphics[width=\unitlength,page=1]{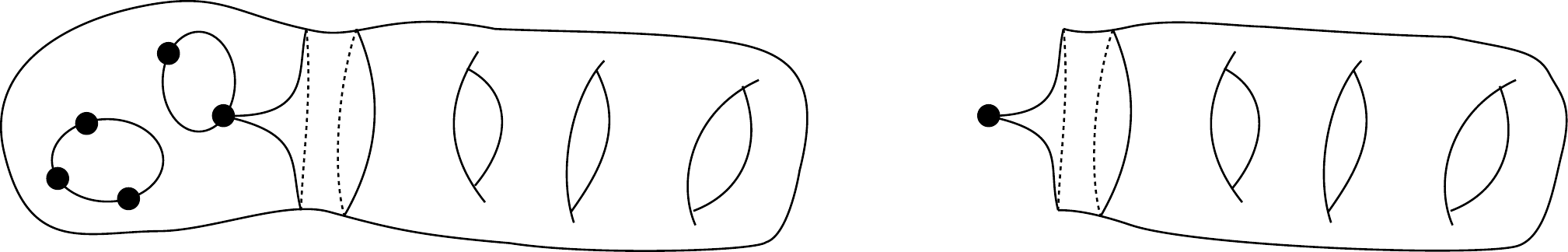}}%
    \put(0.54783482,0.06514988){\color[rgb]{0,0,0}\makebox(0,0)[lt]{\lineheight{1.25}\smash{\begin{tabular}[t]{l}{\Huge $\rightsquigarrow$}\end{tabular}}}}%
    \put(0.23914196,0.06318772){\color[rgb]{0,0,0}\makebox(0,0)[lt]{\lineheight{1.25}\smash{\begin{tabular}[t]{l}{\small $L$}\end{tabular}}}}%
    \put(0.1539058,0.05376664){\color[rgb]{0,0,0}\makebox(0,0)[lt]{\lineheight{1.25}\smash{\begin{tabular}[t]{l}{\small $\alpha_L$}\end{tabular}}}}%
    \put(0.72439843,0.06115969){\color[rgb]{0,0,0}\makebox(0,0)[lt]{\lineheight{1.25}\smash{\begin{tabular}[t]{l}{\small $L$}\end{tabular}}}}%
    \put(0.63916224,0.05173861){\color[rgb]{0,0,0}\makebox(0,0)[lt]{\lineheight{1.25}\smash{\begin{tabular}[t]{l}{\small $\alpha_L$}\end{tabular}}}}%
  \end{picture}%
\endgroup%

		\caption{If there is not an arc intersecting $L$ exactly once, then we can cut along an arc $\alpha_L$ to simplify the picture.\label{DLoop}}
	\end{figure}
	\begin{proof}
		Consider the compactification of $\SSS\setminus L$ obtained by adding two copies $L_1$ and $L_2$ of $L$ to the boundary.  If $\SSS\setminus L$ were connected, then there would be paths from $r$ to $L_i$ for each $i=1,2$, and these could be glued in $\SSS$ to construct an arc intersecting $L$ exactly once.  Similarly, if $\SSS\setminus L$ is disconnected but each component contains marked points, then one could find an arc between the two components (when glued to form $\SSS$) which intersects $L$ exactly once.
		
		So a non-annular loop $L$ must disconnect $\SSS$, and one of the resulting connected components must have no marked points.  Let $U_L$ be a small tubular open neighborhood of $L$, and let $\alpha$ be a path from $r$ to $\partial \overline{U_L}$ which does not intersect $U_L$.  We then construct $\alpha_L$ by following $\alpha$ from $r$ to $\partial \overline{U_L}$, wrapping around this component of $\partial \overline{U_L}$, and then following along $\alpha$ back to $r$. The fact that this $\alpha_L$ satisfies the desired conditions is clear from the construction.
	\end{proof}
	We shall continue to use the notation $\alpha_L$ as in Lemma \ref{non-annular-L}. We let $\SSS_L$ denote the closure of $\SSS_L^{\circ}$ in $\SSS$, and let $\Sigma_L=(\SSS_L,\{r\})$. Note that $\Sigma_L$ is a unpunctured triangulable marked surface.

	\subsubsection{A single non-annular loop with weight $1$}\label{subsub:nonannular1}
	\begin{lem}\label{non-annular-theta}
		$[L]\in\Sk_t(\Sigma_L)$ is a theta function.
	\end{lem}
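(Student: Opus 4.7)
My plan follows the outline sketched in \S\ref{sec:non-annular-loop} (after the statement of Lemma \ref{non-annular-L}). The goal is to identify the bracelet $[L]\in\Sk_t(\Sigma_L)$ with a single theta function, and the strategy is to sandwich $[L]$ between two pieces of information: first a universal positivity statement that presents $[L]$ as a non-negative combination of theta functions, and then a skein-relation computation that bounds this combination from above.

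The first step is to show that $[L]$ is universally positive with respect to the cluster atlas on $\Sk_t(\Sigma_L)\subset\s{A}_t^{\up}(\sd_{\Delta})$. In the classical limit this is Lemma \ref{ClassicalPosNoPunct}; in the quantum setting I would invoke the quantum universal positivity of bracelets for unpunctured surfaces (the lemma referenced as Lemma \ref{lem:q-univ-pos} in the introductory outline). Since $\Sigma_L$ is unpunctured and connected, Proposition \ref{gdense} ensures that the cluster complex is dense in $M_{\bb{R}}$, so cluster-positivity and theta-positivity coincide. Combined with the atomicity of theta functions (Proposition \ref{AtomicProp}), this decomposes $[L]=\sum_{i=1}^{s}c_{i}\vartheta_{p_i}$ with $s\ge 1$ and bar-invariant $c_i\in\bb{Z}_{\ge 0}[t^{\pm 1}]$. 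Because $[L]$ is pointed at $g(L)$ (see \eqref{gLoop}) and bar-invariant (via \eqref{Eq:BracDef} and Lemma \ref{lem:bar}), one $p_i$ equals $g(L)$ with coefficient $c_i=1$, and all other $p_i$ lie in $g(L)+M^{+}$.

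The second step is to pick an interior arc $\gamma$ in $\Sigma_L$ whose two ends are at $r$ and which crosses $L$ transversally at exactly two points, chosen so that after pushing $L$ very close to the boundary arc $\alpha_L$ the arc $\gamma$ enters and exits the thin annulus between $L$ and $\alpha_L$ exactly once (this is the situation depicted in the referenced Figure \ref{ArcLoop2}). Applying the Kauffman skein relation at each of the two crossings produces four simple multicurves, and contractible-arc and framed-Reidemeister moves reduce this to exactly three simple multicurves each of which is either a cluster monomial or an annular-loop bracelet already known to be a theta function (annular loops via Lemma \ref{ArcLoop1Lem}, cluster monomials via Corollary \ref{UpTheta}). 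Thus $[L]\cdot[\gamma]=\vartheta_{q_1}+\vartheta_{q_2}+\vartheta_{q_3}$ up to explicit powers of $t$, with the three exponents $q_1,q_2,q_3$ computable from the skein picture.

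The third step is to multiply the decomposition from the first step by $[\gamma]=\vartheta_{g(\gamma)}$ and apply Proposition \ref{cluster-commute-theta} (products of cluster monomials with theta functions are still theta functions, up to a power of $t$) to obtain
\[
[L]\cdot[\gamma]\;=\;t^{\alpha_0}\vartheta_{g(L)+g(\gamma)}+\sum_{i\ge 2}t^{\alpha_i}c_i\,\vartheta_{p_i+g(\gamma)}.
\]
Since theta functions are a $\kk_t$-basis and all coefficients here are bar-invariant and non-negative, comparison with the three-term decomposition forces the number of distinct $p_i+g(\gamma)$ (and hence of distinct $p_i$) to be at most $3$, giving $s\le 3$.

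The final and hardest step is to rule out $s=2,3$. The plan is to identify the three exponents $q_1,q_2,q_3$ explicitly and show that only one of them equals $g(L)+g(\gamma)$, while the other two cannot be written as $p_i+g(\gamma)$ for any $p_i\in g(L)+M^{+}$ with $p_i\ne g(L)$ that would be compatible with the known Laurent expansion of $[L]$ in the initial cluster. Concretely, in the cluster attached to a triangulation containing $\alpha_L$, the Laurent expansion of $[L]$ can be computed directly (a single loop homotopic to the boundary arc has a short, explicit expansion), and its subleading terms can be matched against the possible contributions from extra $\vartheta_{p_i}$; the atomicity of theta functions then prevents any extra summand. I expect this last bookkeeping step to be the main obstacle: the universal-positivity bound together with the skein computation only pins down the cardinality $s$, and excluding $s=2,3$ requires a careful comparison of leading and subleading Laurent data—essentially using that $[L]$ is too ``small'' in the dominance order to accommodate any additional theta summand compatible with both bar-invariance and the three-term product with $[\gamma]$.
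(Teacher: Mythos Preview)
Your overall strategy matches the paper's, but there are two concrete gaps.

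First, Step 3 misapplies Proposition \ref{prop:cluster-commute-theta}: that proposition requires the cluster monomial and the theta function to $t$-commute, which fails here since $\gamma$ crosses $L$ (and would typically cross the arcs/loops underlying the other $\vartheta_{p_i}$ as well). Without $t$-commutativity, $[\gamma]\cdot\vartheta_{p_i}$ is in general a genuine positive linear combination of several theta functions, not a single $\vartheta_{p_i+g(\gamma)}$. So your count ``$s\le 3$'' does not follow from the argument you give. The paper instead uses a different fact (Lemma \ref{lem:prod_2_not_cv}): for $\pr_{I_{\uf}}(p_i)\ne 0$, the product $[\gamma]\vartheta_{p_i}$ can never be a \emph{single} cluster variable. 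Since two of the three terms in $[\gamma][L]$ are cluster variables and none of the $p_i$ can be $0$ (else $[\gamma]$ would appear as a summand), this forces $s\le 2$, not $3$.

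Second, in Step 2 you assert that the third skein term is an annular-loop bracelet, but in fact it is $[\alpha_L][L_0]$ with $L_0$ a simple loop not isotopic to $L$, and for genus $\ge 2$ this $L_0$ can itself be non-annular. The paper handles this by an induction on the genus of $\Sigma_L$: Lemma \ref{lem:q-univ-pos} (quantum universal positivity of $[L]$) and the theta-function status of $[\alpha_L][L_0]$ both rely on the inductive hypothesis that the present lemma holds for lower genus. Your proposal treats Lemma \ref{lem:q-univ-pos} as an available black box and misses this inductive structure. Finally, the paper's exclusion of $s=2$ is not the Laurent-degree bookkeeping you describe; it uses Lemma \ref{lem:leading_term_not_loop} together with the fact that $\ker(\Lambda)=\bb{Z}g(\alpha_L)$ in $\Sigma_L$ to derive a contradiction.
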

	We will prove Lemma \ref{non-annular-theta} by induction on the genus of $\Sigma_L$.  We therefore assume for the rest of \S \ref{subsub:nonannular1} that either $\Sigma_L$ has genus $1$, or that Lemma \ref{non-annular-theta} holds whenever $\Sigma_L$ is of lower positive genus (i.e., the induction hypothesis holds).
	\begin{lem}[Quantum universal positivity for simple loops]\label{lem:q-univ-pos}
		For any simple loop $L'\in \Sigma_L$, $[L']\in \Sk_t(\Sigma_L)$ is universally positive with respect to the scattering atlas.
	\end{lem}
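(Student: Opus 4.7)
The plan is to deduce quantum universal positivity of $[L']\in \Sk_t(\Sigma_L)$ from the established quantum universal positivity of the associated element on the cluster Poisson side, via the morphism $\omega_1^{\sd}\colon\s{X}_t^{\up}\to\s{A}_t^{\up}$. By the construction of the quantum bracelet for loops (cf.\ \S \ref{sec:qbracelet_loop} and equation \eqref{eq:q_loop_bracelet}), one has $[L']=\BracE{L'}=\omega_1^{\sd}(\bb{I}_t(L'))$, where $\bb{I}_t(L')\in \s{X}_t^{\up}$ is the quantum canonical coordinate on the cluster Poisson variety attached to the loop $L'$. By the universal positivity theorem for these quantum canonical coordinates due to Cho \cite{cho2020laurent}, the Laurent expansions of $\bb{I}_t(L')$ in every $\s{X}$-cluster have coefficients in $\bb{Z}_{\geq 0}[t^{\pm 1/D}]$.

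To transfer this positivity to the $\s{A}$-side, I would use that $\omega_1^{\sd}$ commutes with mutation (Proposition \ref{prop:omega-theta}), so it sends every $\s{X}$-cluster onto the corresponding $\s{A}$-cluster via the exponent-rescaling $n\mapsto \omega_1(n)$ and the parameter-rescaling $t\mapsto t^d$. This operation preserves non-negativity of coefficients: since the coefficients of $\bb{I}_t(L')$ in each $\s{X}$-cluster are already non-negative, identifying $z^{n_1}$ with $z^{n_2}$ under $\omega_1(n_1)=\omega_1(n_2)$ can only add coefficients together, never cancel them. Consequently, the Laurent expansions of $[L']$ in every $\s{A}$-cluster also have coefficients in $\bb{Z}_{\geq 0}[t^{\pm 1/D}]$.

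Finally, since $\Sigma_L$ is unpunctured, the cluster complex of $\s{A}_t^{\up}(\sd_{\Delta_L})$ is dense in $M_{\bb{R}}$ by Proposition \ref{gdense}, and by Proposition \ref{Chambers} the scattering-diagram chambers coincide with the cluster chambers, so the cluster atlas and the scattering atlas agree. Universal positivity of $[L']$ with respect to the scattering atlas therefore follows. The main subtlety to verify is precisely the identification $[L']=\omega_1^{\sd}(\bb{I}_t(L'))$ invoked at the first step, which is built into the definition of the quantum bracelet for loops via \eqref{eq:q_loop_bracelet}; once this identification is in place, the remainder of the argument is a formal consequence of Cho's theorem, Proposition \ref{prop:omega-theta}, and the coincidence of atlases in the unpunctured setting.
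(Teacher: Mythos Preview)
Your argument hinges on the identification $[L']=\omega_1^{\sd}(\bb{I}_t(L'))$, and you rightly flag this as the crux. Unfortunately, within the paper's logical structure this identification is circular at the present stage. Equation \eqref{eq:q_loop_bracelet} is a \emph{definition} of $\BracE{L'}$ for general quantum seeds $\sd$, but for unpunctured $\Sigma$ with $\sd=\sd_\Delta$ the bracelet $[L']$ was already defined directly in the skein algebra in \S\ref{sec:bracelet_band}. The claim that these two definitions agree (the sentence in \S\ref{sec:qbracelet_loop} immediately following \eqref{eq:q_loop_bracelet}) is justified there via Lemmas \ref{PullbackLem} and \ref{lem:unpunctured_Poisson_theta}. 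The proof of Lemma \ref{lem:unpunctured_Poisson_theta} passes through Lemma \ref{lem:no-pun-classical}, which uses that bracelets are theta functions (Theorem \ref{thm:bracelet-theta-no-punct}); that theorem in turn relies on Lemma \ref{non-annular-theta}, whose proof invokes the present Lemma \ref{lem:q-univ-pos}. So you cannot appeal to the identification here without supplying an independent argument for it. Note that in the classical limit the identification \emph{is} non-circular (Lemma \ref{PullbackLem} follows directly from the monodromy-trace definitions), but the passage to the quantum setting in the paper goes through theta functions.

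The paper avoids this by staying on the $\s{A}$-side throughout: annular loops are theta functions by Lemma \ref{ArcLoop1Lem}; non-annular $L'$ not homotopic to $L$ are handled by the induction hypothesis on genus (Lemma \ref{non-annular-theta} for $\Sigma_{L'}$) together with the Gluing Lemma; and $L$ itself is dealt with via the skein computation \eqref{gammaL}, in which $[\gamma][L]$ is exhibited as a sum of universally positive terms, forcing $[L]$ to be positive in every cluster. If you could establish $[L']=\omega_1(\bb{I}_t(L'))$ directly---for instance by unwinding L\^e's construction of the quantum trace against Muller's skein algebra---your route via Cho's theorem would work and would be considerably shorter than the paper's argument, but that is genuine additional content not provided here.
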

	\begin{proof}
		The positivity for the annular loops is known by Lemma \ref{ArcLoop1Lem}. Non-annular loops $L'$ in $\Sigma_L$ which are not homotopic to $L$ exist only if the genus of $\Sigma_L$ is larger than $1$---in this case we can construct $\Sigma_{L'}$ inside $\Sigma_L$.  Then the induction assumption for Lemma \ref{non-annular-theta} implies that  $[ L']$ is a theta function for $\Sigma_{L'}$, hence a theta function for $\Sigma_L$ using the gluing lemma (Lemma \ref{glue}).  So it only remains to show that $L$ is universally positive.
				
		Note that for every isotopy class of non-boundary arc in $\Sigma_L$, the minimal number of intersections of a representative of the isotopy class with $L$ is exactly two.  Let $\gamma$ be any such non-boundary arc.  Then $[\gamma]  [L]$ is a sum of three terms as in Figure \ref{ArcLoop2} (the fourth term coming from the skein relations is equivalent to $0$ because it contains a contractible arc; the $q$-coefficients are suppressed in the figure).  In the quantum setting, we have
		\begin{align}\label{gammaL}
			[\gamma]  [L]=q^2[\tau_L^{1/2}(\gamma)]+q^{-2}[\tau_L^{-1/2}(\gamma)] + [\alpha_L][L_0].
		\end{align}
		The first two terms on the right-hand side in \eqref{gammaL} are arcs (the motivation for the notation $\tau_L^{\pm 1/2}(\gamma)$ will become clear when we discuss Dehn twists in \S \ref{subsub:Dehn}), which correspond to (quantum) cluster variables, and these are universally positive by \cite{DavPos}.  The third term is the product of the frozen variable $\alpha_L$ and a simple loop $L_0$ non-homotopic to $L$, which are both universally positive.
		
		Hence, $[\gamma] [L]$ is universally positive for any interior arc $\gamma$. For any triangulation $\Delta$, we may choose $\gamma$ to be an interior arc in $\Delta$, so then $[\gamma]$ is a cluster variable in $\sd_\Delta$ and the positivity of $[L]$ in $s_{\Delta}$ follows from that of $[\gamma] [L]$. Since this works for any triangulation $\Delta$, it follows that $[L]$ is universally positive with respect to the cluster atlas (which equals the scattering atlas by Proposition \ref{gdense}) as desired.
	\end{proof}

	\begin{figure}[htb]
		\def\svgwidth{425pt}
\begingroup%
  \makeatletter%
  \providecommand\color[2][]{%
    \errmessage{(Inkscape) Color is used for the text in Inkscape, but the package 'color.sty' is not loaded}%
    \renewcommand\color[2][]{}%
  }%
  \providecommand\transparent[1]{%
    \errmessage{(Inkscape) Transparency is used (non-zero) for the text in Inkscape, but the package 'transparent.sty' is not loaded}%
    \renewcommand\transparent[1]{}%
  }%
  \providecommand\rotatebox[2]{#2}%
  \newcommand*\fsize{\dimexpr\f@size pt\relax}%
  \newcommand*\lineheight[1]{\fontsize{\fsize}{#1\fsize}\selectfont}%
  \ifx\svgwidth\undefined%
    \setlength{\unitlength}{328.67264104bp}%
    \ifx\svgscale\undefined%
      \relax%
    \else%
      \setlength{\unitlength}{\unitlength * \real{\svgscale}}%
    \fi%
  \else%
    \setlength{\unitlength}{\svgwidth}%
  \fi%
  \global\let\svgwidth\undefined%
  \global\let\svgscale\undefined%
  \makeatother%
  \begin{picture}(1,0.08739651)%
    \lineheight{1}%
    \setlength\tabcolsep{0pt}%
    \put(0,0){\includegraphics[width=\unitlength,page=1]{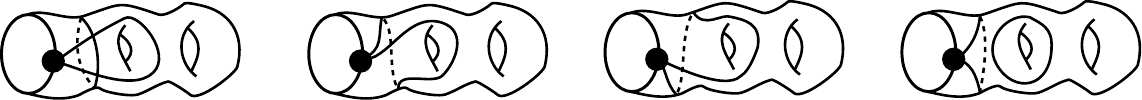}}%
    \put(0.22695868,0.04872567){\color[rgb]{0,0,0}\makebox(0,0)[lt]{\begin{minipage}{0.09920897\unitlength}\raggedright =\end{minipage}}}%
    \put(0.49144362,0.03553479){\color[rgb]{0,0,0}\makebox(0,0)[lt]{\lineheight{1.25}\smash{\begin{tabular}[t]{l}+\end{tabular}}}}%
    \put(0.75201442,0.03584666){\color[rgb]{0,0,0}\makebox(0,0)[lt]{\lineheight{1.25}\smash{\begin{tabular}[t]{l}+\end{tabular}}}}%
  \end{picture}%
\endgroup%

		\caption{An arc times a loop resulting in a sum of two arcs plus the product of an arc and a disjoint loop.
			\label{ArcLoop2}}
	\end{figure}

		\begin{lem}\label{lem:leading_term_not_loop}
			In \eqref{gammaL}, we have $g(\gamma)+g(L)\neq g(\alpha_L)+g(L_0)$. 
		\end{lem}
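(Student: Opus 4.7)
The plan is to use the shear coordinate formula \eqref{eq:g_vector_shear} to translate the $g$-vector inequality into a statement about laminations. Projecting onto $I_\ufv$,
\[
\pr_{I_\ufv}(g(\gamma)+g(L)) = -b^{\Delta}(e(\gamma)) - b^{\Delta}(L), \qquad \pr_{I_\ufv}(g(\alpha_L)+g(L_0)) = -b^{\Delta}(L_0),
\]
where the second equation uses $\pr_{I_\ufv} g(\alpha_L)=0$ since $\alpha_L$ is a frozen boundary arc. So it suffices to show $b^{\Delta}(e(\gamma)) + b^{\Delta}(L) \neq b^{\Delta}(L_0)$ as elements of $N_\ufv^*$.

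Since $[\gamma][L]$ is pointed with $g$-vector $g(\gamma)+g(L)$, and since \eqref{gammaL} expresses this as a sum of three bangle elements, the dominant bangle on the RHS must have $g$-vector equal to $g(\gamma)+g(L)$. The three bangles correspond to the distinct weighted simple multicurves $\tau_L^{1/2}(\gamma)$, $\tau_L^{-1/2}(\gamma)$, $\alpha_L \cup L_0$, whose elementary laminations $e(\tau_L^{1/2}(\gamma))$, $e(\tau_L^{-1/2}(\gamma))$, $L_0$ are distinct isotopy classes in $\s{X}_L(\Sigma_L,\bb{Z})$; the first two are arc-laminates (associated to distinct arcs), while the third is a closed loop. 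By the bijectivity of shear coordinates (Proposition \ref{prop:shear_coord}), their shear coordinates, and hence their projected $g$-vectors $\pr_{I_\ufv} g$, are pairwise distinct. In particular, exactly one of the three bangle $g$-vectors can agree with $\pr_{I_\ufv}(g(\gamma)+g(L))$.

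It remains to identify this dominant bangle as one of $\tau_L^{\pm 1/2}(\gamma)$, not $\alpha_L \cup L_0$. I would establish this by verifying the shear-coordinate identity $b^{\Delta}(e(\tau_L^{1/2}(\gamma))) = b^{\Delta}(e(\gamma)) + b^{\Delta}(L)$, corresponding geometrically to the fact that $\tau_L^{1/2}(\gamma)$ is obtained from $\gamma$ by the parallel smoothing at both intersection points, which splices in one full copy of $L$. The hard part will be this local shear-coordinate computation: while Theorem \ref{Thm:Yurikusa_Dehn} provides the analogous formula for an integer Dehn twist (yielding a shift of $\#(e(\gamma)\cap L)\cdot b^{\Delta}(L) = 2 b^{\Delta}(L)$), the half-twist arising from the skein resolution gives only $b^{\Delta}(L)$ and must be verified by direct inspection of the smoothed curve near each of the two crossings, tracking S- and Z-shape contributions via Figure \ref{fig:shear_coordinate}. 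Once this identity is in hand, $\pr_{I_\ufv} g(\tau_L^{1/2}(\gamma))$ agrees with $\pr_{I_\ufv}(g(\gamma)+g(L))$, and the pairwise distinctness from the previous paragraph forces $\pr_{I_\ufv}(g(\alpha_L)+g(L_0))$ to be different, proving the lemma.
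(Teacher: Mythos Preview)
Your argument has a genuine gap at step 3. Steps 1--2 reduce the lemma to showing that the dominant $g$-vector among the three terms on the right of \eqref{gammaL} is one of $g(\tau_L^{\pm 1/2}(\gamma))$ rather than $g(\alpha_L)+g(L_0)$, but your justification for this --- the shear-coordinate identity $b^{\Delta}(e(\tau_L^{1/2}(\gamma))) = b^{\Delta}(e(\gamma)) + b^{\Delta}(L)$ --- is left as an unverified local computation which you yourself call ``the hard part.'' Without it the proof is incomplete.

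The paper's argument is entirely different and avoids identifying the leading term altogether. It uses the action of the Dehn twist $\tau_L$: since $L$, $\alpha_L$, and $L_0$ are each disjoint from (or equal to) $L$, the elements $[L]$, $[\alpha_L]$, $[L_0]$ are fixed by $\tau_L$, whereas $[\gamma]$ has infinite orbit because $\gamma$ crosses $L$. Under the induced action on $g$-vectors (cf.\ \S\ref{subsub:mcg}), the orbit of $g(\gamma)+g(L)$ is therefore infinite while $g(\alpha_L)+g(L_0)$ is fixed, so the two cannot coincide. This is a one-line argument with no coordinate computation.

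If you prefer to salvage your route rather than switch to the paper's, here is a quick substitute for step 3. Apply the bar involution to \eqref{gammaL} and subtract: since $[\alpha_L][L_0]$ is bar-invariant it drops out, leaving
\[
[\gamma][L]-[L][\gamma] \;=\; (q^2-q^{-2})\bigl([\tau_L^{1/2}(\gamma)]-[\tau_L^{-1/2}(\gamma)]\bigr).
\]
The left side has unique maximal degree $g(\gamma)+g(L)$ with coefficient $t^{\Lambda(g(\gamma),g(L))}-t^{-\Lambda(g(\gamma),g(L))}$, which is nonzero because $\cc(\gamma,L)=2$ forces $\Lambda(g(\gamma),g(L))\neq 0$. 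Hence $g(\gamma)+g(L)$ equals one of $g(\tau_L^{\pm 1/2}(\gamma))$, and your step 2 finishes.
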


		\begin{proof}
			Consider $\bb{Z}$ acting on $\Sk_t(\Sigma)$ via Dehn twisting by $L$.  The elements $[L]$, $[\alpha_L]$, and $[L_0]$ are all invariant under this action, but the orbit of $[\gamma]$ is infinite.  Thus, under the induced action on $g$-vectors (see \S \ref{subsub:mcg}), the orbit of $g(\gamma)+g(L)$ is infinite while $g(\alpha_L)+g(L_0)$ is fixed, so these two sums cannot be equal.
		\end{proof}
		
		We will use the following general fact about theta functions:
		\begin{lem}\label{lem:prod_2_not_cv}
			If $\vartheta_{g_1}$ and $\vartheta_{g_2}$ are theta functions such that $\pr_{I_\ufv}(g_1),\pr_{I_\ufv}(g_2)\neq 0$, then $\vartheta_{g_1}\vartheta_{g_2}$ is not a cluster variable (and in the quantum setting it is not a power of $t$ times a cluster variable).
		\end{lem}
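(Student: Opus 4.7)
The strategy is proof by contradiction: suppose $\vartheta_{g_1}\vartheta_{g_2} = t^k A$ for some cluster variable $A = \vartheta_{g_A}$ and some $k \in \bb{Z}$. By Corollary \ref{cor:alpha-sum}, we have $\alpha(g_1, g_2; g_1 + g_2) = t^{\Lambda(g_1, g_2)}$, and the assumption that the product is supported on a single theta function forces $g_A = g_1 + g_2$ and $k = \Lambda(g_1, g_2)$. Fix a cluster $\sd_\jj$ containing $A$, so $A = A_{\jj, k_0}$ for some $k_0 \in I$ and $g_A \in C^+_\jj$, and pick a generic $\sQ_\jj \in C^+_\jj$.

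By Corollary \ref{UpTheta}, $\iota_{\sQ_\jj}(\vartheta_{g_A}) = z^{g_A}$ is a single monomial. By positivity of broken lines (Lemma \ref{BLpos}), each $\iota_{\sQ_\jj}(\vartheta_{g_i})$ has coefficients in $\bb{Z}_{\geq 0}[t^{\pm 1/D}]$ on every monomial. Since no cancellation can occur, the product equaling a single positive monomial forces each factor to be a single monomial $c_i(t) z^{m_i}$ with $c_i(t) \in \bb{Z}_{\geq 0}[t^{\pm 1/D}]$. The straight broken line from $g_i$ to $\sQ_\jj$ contributes $z^{g_i}$ with coefficient $1$, so $m_i = g_i$ and the constant term of $c_i(t)$ is at least $1$. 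Combined with the relation $c_1(t) c_2(t) = t^{k - \Lambda(g_1, g_2)} = 1$, this forces $c_i(t) = 1$. Hence $\iota_{\sQ_\jj}(\vartheta_{g_i}) = z^{g_i}$.

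The next step is to show $g_i \in C^+_\jj$. Positivity of broken lines also precludes cancellation between the two factors, so each $\vartheta_{g_i}$ must itself be a finite Laurent polynomial (otherwise the product would have infinitely many terms), and thus $\vartheta_{g_i} \in \s{A}^{\up}_t$. Writing $g_i = \sum_k n^i_k e^*_{\sd_\jj, k}$ in the $\sd_\jj$-basis, the identification $\iota_{\sQ_\jj}(\vartheta_{g_i}) = z^{g_i}$ expresses $\vartheta_{g_i}$ as the Laurent monomial $\prod_k A_{\sd_\jj, k}^{n^i_k}$ in the cluster $\sd_\jj$. If some $n^i_{k_1} < 0$ with $k_1 \in I_\ufv$, then in the mutated cluster $\mu_{k_1}(\sd_\jj)$ the expansion of $\vartheta_{g_i}$ acquires a rational denominator $1 + z^{\omega_1(e_{\sd_\jj, k_1})}$, so $\vartheta_{g_i}$ fails to be Laurent there, contradicting $\vartheta_{g_i} \in \s{A}^{\up}_t$. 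Hence $n^i_k \geq 0$ for all $k \in I_\ufv$, so $g_i \in C^+_\jj \cap M$. This step, effectively a converse of Corollary \ref{UpTheta}, is the main technical point, but it is a routine consequence of the mutation formula.

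Finally, with $g_1, g_2 \in C^+_\jj$ and $g_1 + g_2 = g_A = e^*_{\sd_\jj, k_0}$, the $\sd_\jj$-coordinates satisfy $n^1_k + n^2_k = \delta_{k, k_0}$ for $k \in I_\ufv$. For $k \neq k_0$ the non-negativity forces $n^1_k = n^2_k = 0$; when $k_0 \in I_\ufv$, the non-negative integers $n^1_{k_0}$ and $n^2_{k_0}$ sum to $1$, so one of them vanishes. Either way at least one $g_i$ lies in $M_F = \bb{Z}\{e^*_k : k \in F\}$, yielding $\pr_{I_\ufv}(g_i) = 0$ and contradicting the hypothesis; the case $k_0 \in F$ forces $\pr_{I_\ufv}(g_i) = 0$ for both $i$ by the same reasoning.
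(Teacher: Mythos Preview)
Your proof is correct and reaches the same conclusion as the paper, but via a genuinely different route in the key step.

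Both arguments reduce to the assertion that if $\iota_{\sQ_\jj}(\vartheta_{g_i}) = z^{g_i}$ for generic $\sQ_\jj$ in a cluster chamber $C^+_\jj$, then $g_i \in C^+_\jj$. The paper proves the contrapositive geometrically: if $g_i \notin C^+_\jj$, the straight broken line from infinity in direction $g_i$ must cross a facet of $C^+_\jj$, and since bends are by tangent vectors of walls, one can construct a second broken line that bends nontrivially there, so $\vartheta_{g_i,\sQ_\jj}$ has at least two terms. You instead prove it algebraically: from $\iota_{\sQ_\jj}(\vartheta_{g_i}) = z^{g_i}$ you deduce $\vartheta_{g_i} \in \s{A}^{\up}_t$ (via Corollary~\ref{UpTheta}, applied with $\sd_\jj$ as initial seed), and then observe that a Laurent monomial with a negative unfrozen exponent $n^i_{k_1}$ acquires a binomial denominator under $\mu_{k_1}$, violating the Laurent phenomenon.

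Your approach avoids the explicit broken-line construction and instead leverages the Laurent phenomenon directly, which some readers may find more transparent. The paper's approach stays entirely within the broken-line picture and is slightly more self-contained. Both implicitly use that $\omega_1(e_{\sd_\jj,k_1}) \neq 0$ (so the mutation binomial has two distinct terms, respectively the bend is nontrivial), which holds under the Injectivity Assumption in force in this section.
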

		\begin{proof}
			By positivity, it suffices to prove this in the classical setting.  If $g_1$ and $g_2$ share a chamber of $\s{C}$, then $\vartheta_{g_1}\vartheta_{g_2}=\vartheta_{g_1+g_2}$ is the cluster monomial $z^{g_1+g_2}$, but this cannot be a cluster variable by our assumption on $g_1,g_2$. Now suppose that $g_1$ and $g_2$ do not share a chamber of $\s{C}$.  Let $C$ be an arbitrary chamber of $\s{C}$ and let $\sQ\in C$ be generic.  Then at least one $g_i$ is not in $C$, in which case there are at least two broken lines contributing to $\vartheta_{g_i,\sQ}$.  One is the straight broken line with ends $(g_i,\sQ)$. Since the traveling direction of a broken line is bent by tangent vectors of the walls, see \S \ref{sec:broken_lines}, there is also a broken line with initial end $g_i$ which is straight until crossing the boundary of $C$, where it bends non-trivially. So then (using universal positivity) $\vartheta_{g_1,\sQ}\vartheta_{g_2,\sQ}$ has at least two terms, hence is not a cluster variable (or even a cluster monomial) in the cluster associated to $C$.  Since $C$ was an arbitrary chamber of $\s{C}$, this proves the claim.
		\end{proof}

		Now, we prove Lemma \ref{non-annular-theta} under the assumption that either $\Sigma_L$ has genus $1$ or the induction hypothesis holds.  That is, we will show that $[L]\in \Sk_t(\Sigma_L)$ is a quantum theta function. Our arguments  are based on the decomposition of $[\gamma] [L]$ as in Figure \ref{ArcLoop2} together with positivity.
		
		\begin{proof}[Proof of Lemma \ref{non-annular-theta}]
			As in the proof of Lemma \ref{lem:q-univ-pos}, we may select an arc $\gamma$ which intersects $L$ twice, and then the product $[\gamma][L] $ in the quantum skein algebra is given as in \eqref{gammaL}.  The first two terms here are cluster variables (times $q^{\pm 2}$), and the third term is also a theta function by Lemma \ref{ArcLoop1Lem} (because the product of a theta function with a frozen variable is still a theta function by Lemma \ref{lem:similar_theta_function})\footnote{Generally, a theta function times a frozen variable may actually be a power of $t$ times a theta function (which would still be fine for our purposes here).  However, $g$-vectors of boundary arcs always lie in $\ker(\Lambda)$, so this power of $t$ does not appear in the present setting.}.

			Since $[L]$ is universally positive and the theta functions are atomic, $L$ must be a positive linear combination of theta functions, i.e., a sum of theta functions with coefficients in $\bb{Z}_{\geq 0}[t^{\pm 1}]$.  So we can write
			\[
			[ L] =\sum_{i=1}^s t^{a_i} \vartheta_{p_i}
			\]
			for some collection $p_1,\ldots,p_s\in M$ (possibly with repetition) and $a_1,\ldots,a_s\in \bb{Z}$ with $s\geq 1$.  By strong positivity, $[\gamma] $ times any $\vartheta_{p_i}$ is again a positive linear combination of theta functions.  By Lemma \ref{lem:prod_2_not_cv}, a theta function $\vartheta_{p_i}$ times the cluster variable $[\gamma]$ will never be just a single cluster variable times a power of $t$ unless\footnote{Lemma \ref{lem:prod_2_not_cv} would allow $p_i\neq 0$ if at least $\pr_{I_{\uf}}(p_i)=0$, but in this case $\vartheta_{p_i}$ is a power of a frozen cluster variable while $[\gamma]$ is a non-frozen cluster variable, so the product $[\gamma]\vartheta_{p_i}$ would be a cluster monomial but not a cluster variable.} $p_i=0$, i.e., unless $\vartheta_{p_i}=1$.  But no $p_i$ here can equal $0$ since $\gamma$ is not a term in $[\gamma] [L]$, so the fact that $[\gamma][ L]$ is a sum of three theta functions (times powers of $t$) including two cluster variables implies that $s$ equals $1$ or $2$.  
			
			The case $s=1$ yields the claim (by bar-invariance, $a_1=0$ in this case), so it suffices to rule out the possibility of $s=2$.

			If $s=2$, then we can write 
			\[
			[ L]=\vartheta_{g(L)}+\vartheta_p
			\]
			for some $p\in M$ --- there must be a term $\vartheta_{g(L)}$ by the pointedness of $[L]$, 
			while the bar-invariance of bracelets and theta functions (cf. Lemma \ref{lem:theta-bar-inv}) implies that the coefficient of $\vartheta_p$ is also $1$. As previously noted, we know that $p\neq 0$ since $[\gamma]$ is not a term in $[\gamma][L]$.
			
			Note that, since neither $[\gamma]\vartheta_{g(L)}$ nor $[\gamma]\vartheta_p$ is a cluster variable (by Lemma \ref{lem:prod_2_not_cv} again), one of these products must equal the first two terms (the sum of the cluster variables) in \eqref{gammaL}, and the other one must be the remaining theta function $[\alpha_L][L_0]$, which we denote by $\vartheta_u$. So $[\gamma]\vartheta_v=\vartheta_u$ with $v$ being either $g(L)$ or $p$. Lemma \ref{lem:leading_term_not_loop} implies that $v\neq g(L)$, so $v=p$. Since the leading monomial in $[\gamma]\vartheta_p=\vartheta_u$ has coefficient $t^{\Lambda(g(\gamma),p)}=1$, we have $\Lambda(g(\gamma),p)=0$.

			Notice that we have $\Lambda(g(\alpha_L),\ )=0$. By the above discussion, $\Lambda(g(\gamma),p)=0$ for any non-boundary arc $\gamma$. Hence, $\Lambda(\ ,p)=0$. Since $\Lambda$ is compatible with the $B$-matrix and there is only one frozen variable, we deduce that $\ker(\Lambda_2)$ has dimension $1$ and equals $\bb{Z} g(\alpha_L)$, so $p\in \bb{Z}g(\alpha_L)$.  But then $\vartheta_p$ would be a power of a frozen cluster variable, so $[\gamma]\vartheta_v$ would be a cluster monomial, not the term $\vartheta_u=[\alpha_L][L_0]$.  This contradiction implies $s=1$, i.e., $[L]=\vartheta_{g(L)}$, as desired.
		\end{proof}
		
		\subsubsection{The mapping class group}\label{subsub:mcg}
		
		We briefly recall how mapping class groups acts on cluster algebras arising from (possibly punctured) marked surfaces.  Here, the mapping class group $\Gamma_{\Sigma}$ of $\Sigma$ is the group of isotopy classes of orientation-preserving diffeomorphisms $\SSS\rar \SSS$ which take $\MM$ to $\MM$.
  
		Recall that cluster mutations give isomorphisms between the skew-fields of fractions of the clusters associated to $\sd$ and $\sd'$, where $\sd$ and $\sd'$ are any two seeds related by mutations. Correspondingly, any rational function in the cluster variables of $\sd$ is sent to a rational function in the cluster variables of $\sd'$. 
		
		Note that every element of $\Gamma_{\Sigma}$ takes a triangulation $\Delta$ to another triangulation $\Delta'$. Correspondingly, we obtain a new seed $\sd'=\sd_{\Delta'}$ from $\sd=\sd_{\Delta}$. Note that the seeds $\sd_{\Delta}$ and $\sd_{\Delta'}$ are isomorphic, so we can also view this action as an automorphism of the cluster algebra by identifying the cluster variables of $\sd_{\Delta}$ with those of $\sd_{\Delta'}$ (i.e., the mapping class group is contained in the cluster modular group of \cite{FG1}).
	
		Thus, for $\tau\in \Gamma_{\Sigma}$, we have $\tau':\s{A}_t^{\can}(\sd_{\Delta})\risom \s{A}_t^{\can}(\sd_{\Delta'})$, $\vartheta_{m}\mapsto \vartheta_{\tau'(m)}$, where $\tau':M_{\sd_{\Delta}}\rar M_{\sd_{\Delta'}}$ is defined to be the linear map determined by $e_{\sd_{\Delta},i}^*\mapsto e_{\sd_{\Delta'},i}^*$ for all $i\in I$.  On the other hand, let $\jj$ be a sequence of elements in $I\setminus F$ such that $(\sd_{\Delta})_{\jj}=\sd_{\Delta'}$ in the notation of \S \ref{sec:seed-mut}. By \cite[Cor. A.4]{davison2019strong} (or \cite[Thm. 1.24]{gross2018canonical} in the classical setting), there is an isomorphism $\psi_{\jj}:\s{A}_t^{\can}(\sd_{\Delta}) \risom \s{A}_t^{\can}(\sd_{\Delta'})$ given by $\vartheta_p \mapsto \vartheta_{T_{\jj}(p)}$ for $T_{\jj}$ as in \eqref{Tjj}.

		Thus, the composition $\psi_{\jj}^{-1}\circ \tau':\vartheta_m\mapsto \vartheta_{T_{\jj}^{-1}\circ \tau'(m)}$ is an automorphism of $\s{A}_{\sd_{\Delta}}^{\can}$.  By \cite[Prop. 4.9(2)]{davison2019strong}, $\vartheta_{T_{\jj}^{-1}\circ \tau'(e_{\gamma}*)} = \vartheta_{g(\tau(\gamma))}$ for each $\gamma\in \Delta$.  So the corresponding automorphism of $\Sk_t(\Sigma)$ maps $[\gamma]$ to $[\tau(\gamma)]$ for each $\gamma\in \Delta$.  Hence, $\psi_{\jj}^{-1}\circ \tau'$ coincides with the natural action of $\tau$ on $\Sk_t(\Sigma)$.  We therefore denote $\tau=\psi_{\jj}^{-1}\circ \tau'$.  We also let $\tau$ denote the piecewise-linear automorphism of $M$ given by $T_{\jj}^{-1}\circ \tau'$.  To summarize, we have the following:
		
		\begin{lem}\label{GammaTheta}
			The mapping class group $\Gamma_{\Sigma}$ acts on $M$ via piecewise-linear automorphisms.  Furthermore, for each $\tau\in \Gamma_{\Sigma}$, the action of $\tau$ on $\Sk_t(\Sigma)$ maps theta functions to theta functions via $\vartheta_{m}\mapsto \vartheta_{\tau(m)}$.
		\end{lem}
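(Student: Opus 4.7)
The proof essentially follows the discussion in the paragraphs immediately preceding the lemma statement, so my plan is to organize that discussion into a coherent argument. Given $\tau \in \Gamma_\Sigma$, I would begin by fixing an ideal triangulation $\Delta$ of $\Sigma$ and letting $\Delta' = \tau(\Delta)$. This induces an obvious isomorphism $\tau' \colon \s{A}_t^{\can}(\sd_\Delta) \risom \s{A}_t^{\can}(\sd_{\Delta'})$ sending $\vartheta_m \mapsto \vartheta_{\tau'(m)}$, where $\tau'\colon M_{\sd_\Delta} \to M_{\sd_{\Delta'}}$ is the linear map determined by $e_{\sd_\Delta,i}^* \mapsto e_{\sd_{\Delta'},i}^*$ for every arc $i$.

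Next I would choose a mutation sequence $\jj$ with $(\sd_\Delta)_{\jj} = \sd_{\Delta'}$, which exists because flips of triangulations correspond to mutations (Proposition \ref{SkAProp}). The cited results \cite[Cor. A.4]{davison2019strong} (quantum) and \cite[Thm. 1.24]{gross2018canonical} (classical) give a canonical isomorphism $\psi_{\jj} \colon \s{A}_t^{\can}(\sd_\Delta) \risom \s{A}_t^{\can}(\sd_{\Delta'})$ sending $\vartheta_p \mapsto \vartheta_{T_{\jj}(p)}$ for $T_{\jj}$ the piecewise-linear tropical mutation map of \eqref{Tjj}. Composing, I define $\tau := \psi_{\jj}^{-1} \circ \tau'$, which is an automorphism of $\s{A}_t^{\can}(\sd_\Delta)$ sending $\vartheta_m \mapsto \vartheta_{T_{\jj}^{-1}\circ\tau'(m)}$, and I set the piecewise-linear action of $\tau$ on $M$ to be $T_{\jj}^{-1}\circ\tau'$.

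The key remaining step is to verify that this abstract automorphism agrees with the obvious geometric action of $\tau$ on $\Sk_t(\Sigma)$ (which sends the class of a link $[C]$ to $[\tau(C)]$). For this, it suffices to check agreement on the cluster variables $[\gamma]$, $\gamma \in \Delta$, since these generate a field containing the cluster algebra. Here \cite[Prop. 4.9(2)]{davison2019strong} tells us that $\vartheta_{T_{\jj}^{-1}\circ\tau'(e_\gamma^*)} = \vartheta_{g(\tau(\gamma))} = [\tau(\gamma)]$ (the last equality because cluster variables are theta functions with the correct $g$-vector, by Corollary \ref{UpTheta}). So the geometric action on $\Sk_t(\Sigma)$ coincides with $\psi_{\jj}^{-1}\circ\tau'$, and in particular sends theta functions to theta functions by the prescribed $g$-vector rule.

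Finally I would verify independence of the auxiliary choice of $\jj$: any two sequences $\jj,\jj'$ with $(\sd_\Delta)_{\jj}=(\sd_\Delta)_{\jj'}=\sd_{\Delta'}$ give the same composition $\psi_{\jj}^{-1}\circ\tau' = \psi_{\jj'}^{-1}\circ\tau'$ because both produce the same geometric action on $\Sk_t(\Sigma)$ and the action of $\tau$ on the cluster variables determines the map uniquely. The main subtle point --- and the one I would spend most care on --- is justifying the application of \cite[Prop. 4.9(2)]{davison2019strong} to identify the tropical mutation image of the initial extended $g$-vector of $\gamma$ with the $g$-vector of the cluster variable $[\tau(\gamma)]$ computed in $\sd_\Delta$; everything else is formal bookkeeping with the already-established compatibility of flips, mutations, and tropical mutations.
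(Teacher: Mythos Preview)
Your proposal is correct and follows essentially the same approach as the paper: the lemma is stated as a summary of the discussion immediately preceding it, and you have simply organized that discussion (the maps $\tau'$, $\psi_{\jj}$, their composition, and the verification on the initial cluster variables via \cite[Prop.~4.9(2)]{davison2019strong}) into a self-contained proof. The independence-of-$\jj$ check you add is a minor clarification the paper leaves implicit.
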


	\begin{lem}\label{cor:tau-m}
	    If $\tau(\vartheta_m)=\vartheta_{m'}$, then $m'=\tau(m)$.  If $\tau \in \Gamma_{\Sigma}$ fixes $\vartheta_{m_1},\ldots,\vartheta_{m_s}$, then for any collection $k_1,\ldots,k_s\in \bb{Z}_{\geq 0}$, $\tau$ must permute the theta functions $\vartheta_p$ appearing in the expansion $f\coloneqq \prod_{i=1}^s \vartheta_{m_i}^{k_i} = \sum_p c_p \vartheta_p$.  Thus, $\tau$ acts with finite orbit on each index $p$ of these theta functions, in particular for $p=\sum_{i=1}^s k_i m_i$.
	\end{lem}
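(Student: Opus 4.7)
The plan is to reduce everything to Lemma~\ref{GammaTheta} plus the linear independence of theta functions. For the first assertion, Lemma~\ref{GammaTheta} already tells us that $\tau(\vartheta_m) = \vartheta_{\tau(m)}$. Combined with the hypothesis $\tau(\vartheta_m) = \vartheta_{m'}$, this gives $\vartheta_{\tau(m)} = \vartheta_{m'}$. Since theta functions are pointed with pairwise distinct $g$-vectors (Proposition~\ref{TopBasis}) and the set $\{\vartheta_p : p \in \Theta^{\midd}\}$ is linearly independent in $\s{A}^{\up}$ (by Theorem~\ref{fFG-surfaces} in the settings of interest, or more generally by Proposition~\ref{indep} applied to the subset where the theta functions are honest Laurent polynomials), we conclude $\tau(m) = m'$.

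For the second assertion, since $\tau$ fixes each $\vartheta_{m_i}$, it fixes the product $f = \prod_{i=1}^s \vartheta_{m_i}^{k_i}$. Expanding $f = \sum_{p} c_p \vartheta_p$ and applying $\tau$, Lemma~\ref{GammaTheta} yields
\begin{align*}
f = \tau(f) = \sum_{p} c_p \tau(\vartheta_p) = \sum_{p} c_p \vartheta_{\tau(p)}.
\end{align*}
Comparing coefficients via linear independence gives $c_{\tau(p)} = c_p$ for all $p$, so $\tau$ permutes the set $S := \{p : c_p \neq 0\}$. The orbits of $\tau$ on $S$ are contained in $S$, so if $S$ is finite then all orbits are finite. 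Finiteness of $S$ is the only mildly delicate point: it holds because the product $f$ lies in $\s{A}_t^{\midd}$ (which is closed under multiplication and has the theta functions as a $\kk_t$-module basis by definition), so its theta expansion is automatically finite. In the surface contexts where this lemma is applied, $\s{A}_t^{\midd} = \s{A}_t^{\up}$ by Theorem~\ref{fFG-surfaces}, so this causes no issue.

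It remains to verify that $p_0 := \sum_i k_i m_i$ belongs to $S$. For this, apply Corollary~\ref{cor:alpha-sum} to the ordered $s$-tuple obtained by listing each $m_i$ with multiplicity $k_i$: the structure constant at the sum index $p_0$ equals $t^k$ for some integer $k$, hence is a unit and in particular nonzero. Thus $c_{p_0} \neq 0$, so $p_0 \in S$ and has finite $\tau$-orbit. The only conceptual obstacle in the plan is the finiteness of the theta expansion of $f$, which is a general property of elements of $\s{A}_t^{\midd}$ rather than a new difficulty; everything else is a direct consequence of Lemma~\ref{GammaTheta}, linear independence, and the pointedness of theta functions.
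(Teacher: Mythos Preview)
Your proof is correct and follows essentially the same approach as the paper: both use Lemma~\ref{GammaTheta} for the first claim together with linear independence (the paper cites Proposition~\ref{indep}), then apply $\tau$ to the expansion of $f$ and compare coefficients, and finally invoke Corollary~\ref{cor:alpha-sum} for the particular index $p_0=\sum_i k_i m_i$. Your treatment is slightly more explicit about the finiteness of the support $S$ (via membership in $\s{A}_t^{\midd}$), a point the paper simply asserts.
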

	\begin{proof}
	    The first statement follows from the fact that $\tau(\vartheta_m)=\vartheta_{\tau(m)}$ in Lemma \ref{GammaTheta} and the fact that, if $\vartheta_m=\vartheta_{m'}$, then $m=m'$ (a consequence of Lemma \ref{indep}).
	    
	    For the second statement of the lemma, since $\tau$ fixes each $\vartheta_{m_i}$, it must fix $f$, so we see (using the linear independence of theta functions from Lemma \ref{indep} again) that $\tau$ must indeed permute the theta functions appearing in the summation.  Combining this with the first statement of the lemma yields that the elements $p\in M$ with $c_p\neq 0$ must be permuted.  Since there are only finitely many such $p$, the orbits of these elements are finite.  Finally, the fact that $a_p\neq 0$ for $p=\sum_{i=1}^s k_i m_i$ is a consequence of Corollary \ref{cor:alpha-sum}.
	\end{proof}

        Note that $\tau$ sends the cluster variable $[\gamma]$ for $\gamma\in \Delta$ to $[\tau(\gamma)]$.  Since all other elements of $\Sk_t(\Sigma)$ can be expressed a Laurent polynomials in the cluster variables for $\Delta$, we obtain the following:
        \begin{lem}
        For any weighted simple multicurve $C$, the action of $\tau$ on $\Sk_t(\Sigma)$ maps $\E{C}$ to $\E{\tau(C)}$.
        \end{lem}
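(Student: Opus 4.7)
My plan is to reduce the claim for an arbitrary weighted simple multicurve $C$ to the case of generators of $\Sk_t(\Sigma)$. The first step is to upgrade the identification $\tau([\gamma])=[\tau(\gamma)]$, observed just above the lemma for triangulation arcs $\gamma\in\Delta$, to every link $C'$ in $\Sigma$. The key observation is that two $\kk_t$-algebra automorphisms of $\Sk_t(\Sigma)$ which agree on a generating set must coincide. By Corollary \ref{Deltak}, the classes $[\gamma]$ with $\gamma\in\Delta$, together with inverses of boundary arcs, generate $\Sk_t(\Sigma)$ as a $\kk_t$-algebra. Now the geometric action $[L]\mapsto[\tau(L)]$ is itself a well-defined $\kk_t$-algebra automorphism of $\Sk_t(\Sigma)$, because the orientation-preserving diffeomorphism $\tau$ preserves homotopy classes of links, the Kauffman skein relations (Figure \ref{SkeinFig}), and the superposition product. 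Since this geometric action and the cluster-theoretic action $\psi_{\jj}^{-1}\circ\tau'$ coincide on the above generating set, they agree as algebra maps, giving $\tau([C'])=[\tau(C')]$ for every link $C'$.

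With this in hand, the bracelet case follows formally from the definition $\E{C}=t^{-\alpha}\prod_i\E{w_iC_i}$, where $\E{w_iC_i}$ equals $[C_i]^{w_i}$ for an arc component and $T_{w_i}([C_i])$ for a loop component, and $\alpha=\sum_{i<j}w_iw_j\Lambda(C_i,C_j)$ (summed over arc pairs) is the bar-invariance twist from \eqref{factor-alpha}. Applying the $\kk_t$-algebra automorphism $\tau$ term by term, and using the first step, yields
\[
\tau(\E{C})=t^{-\alpha}\prod_i\E{w_i\,\tau(C_i)},
\]
since $\tau$ commutes with multiplication, with the integer-coefficient Chebyshev polynomials $T_{w_i}$, and with the scalar $t^{-\alpha}$. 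It remains to compare this with $\E{\tau(C)}=t^{-\alpha'}\prod_i\E{w_i\,\tau(C_i)}$, i.e.\ to check $\alpha=\alpha'$. Inspecting the formula \eqref{Lambdaij} for $\Lambda(C_i,C_j)$, the contribution at each shared endpoint depends only on the cyclic (clockwise versus counterclockwise) ordering of the ends of $C_i$ and $C_j$ at that marked point. Since $\tau$ is orientation-preserving, this cyclic order is preserved, so $\Lambda(\tau(C_i),\tau(C_j))=\Lambda(C_i,C_j)$ and hence $\alpha'=\alpha$.

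The main conceptual input is identifying the cluster-theoretic and the geometric actions of $\tau$ on all of $\Sk_t(\Sigma)$, not just on the triangulation arcs where it was established above; once this identification is in place, the passage to bracelets is purely bookkeeping involving polynomials in generators and the invariance of $\Lambda$ under orientation-preserving diffeomorphisms. I do not expect a serious obstacle: both actions are $\kk_t$-algebra automorphisms of an algebra generated by elements on which they coincide, which forces them to agree globally.
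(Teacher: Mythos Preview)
Your proof is correct and follows essentially the same approach as the paper: both argue that the cluster-theoretic automorphism $\tau$ agrees with the geometric action $[L]\mapsto[\tau(L)]$ on the generating set $\{[\gamma]:\gamma\in\Delta\}$ (together with inverses of boundary arcs), hence everywhere, and then deduce the bracelet statement formally. The paper's version is terser, simply invoking that everything in $\Sk_t(\Sigma)$ is a Laurent polynomial in the arcs of $\Delta$; your explicit verification that the geometric action is a $\kk_t$-algebra automorphism and your direct check that $\alpha=\alpha'$ via the orientation-invariance of $\Lambda$ are useful clarifications but not a different method.
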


    Recall from \S \ref{sec:g-vector} that for unpunctured $\Sigma$, or more generally, for a seed $\sd$ of surface type satisfying the Injectivity Assumption, every bracelet element $\beta$ is pointed.  We write $\beta_m$ for the $m$-pointed bracelet element. 
    \begin{lem}\label{lem:infinite-orbit}
        For unpunctured $\Sigma$, or more generally, for $\sd$ of surface type with $M_{\sd}^{\oplus}$ strongly convex, if a bracelet element $\beta_g$ has infinite orbit under the action of some $\tau\in \Gamma_{\Sigma}$, then $g$ also has infinite orbit under the action of $\tau$.
    \end{lem}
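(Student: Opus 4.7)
The plan is to prove the contrapositive: if the $\tau$-orbit of $g$ in $M$ is finite, then the $\tau$-orbit of $\beta_g$ is finite. Write $\beta_g = \E{C}$ for some weighted simple multicurve $C$. I would first establish the key identity
\[
\tau(\beta_g) = \beta_{\tau(g)},
\]
where on the left $\tau$ denotes the action on $\s{A}_t^{\up}(\sd)$ (equivalently on $\Sk_t(\Sigma)$) and on the right $\tau$ is the piecewise-linear automorphism of $M$ from Lemma \ref{GammaTheta}. Given this, iterating gives $\tau^k(\beta_g) = \beta_{\tau^k(g)}$, and because bracelet elements are pointed the map $\beta_m \mapsto m$ is injective, so the orbits of $g$ and $\beta_g$ have the same cardinality. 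The contrapositive then follows.

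To prove the key identity I would proceed in two steps. First, by the lemma stated immediately before Lemma \ref{lem:infinite-orbit}, $\tau(\E{C}) = \E{\tau(C)}$: the bracelet element $\E{C}$ is built from the components of $C$ using skein multiplication and Chebyshev polynomials in loops, and the cluster-algebra action of $\tau$ simply realizes the underlying diffeomorphism on curves, so it commutes with these operations. In particular $\tau(\beta_g)$ is again a bracelet $\beta_{g'}$ where $g' = g(\tau(C))$, and it remains to identify $g' = \tau(g)$.

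Second, to show $g(\tau(C)) = \tau(g(C))$, I would use \eqref{eq:g_vector_shear}, which identifies $\pr_{I_\ufv} g(C_i)$ with $-b^{\Delta}(e(C_i))$ for each component, and the fact that $\tau$ carries elementary laminates to elementary laminates ($\tau(e(\gamma)) = e(\tau(\gamma))$, likewise for loops). The action of $\tau$ on cluster variables $[\gamma]$ for $\gamma \in \Delta$ is exactly $[\gamma] \mapsto [\tau(\gamma)]$, whose $g$-vector is the image of $g(\gamma)$ under the piecewise-linear $\tau$ on $M$ described in Lemma \ref{GammaTheta}; equivalently, the negative-shear-coordinate description of the cluster-complex fan is $\tau$-equivariant. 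Combining these gives the equality of principal parts $\pr_{I_\ufv} g(\tau(C)) = \tau(\pr_{I_\ufv} g(C))$, and the frozen coordinates match because $\tau$ simply permutes boundary arcs. Hence $g(\tau(C)) = \tau(g(C))$, completing the identity $\tau(\beta_g) = \beta_{\tau(g)}$ and the proof.

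The only potentially delicate point is making precise the equivariance of $g$-vectors under the piecewise-linear $\tau$ across both the non-frozen (principal) and frozen parts of $M$; but this is already encoded in the construction of the $\tau$-action in Lemma \ref{GammaTheta} combined with \eqref{eq:g_vector_shear}, so the argument reduces to a routine unwinding of definitions and poses no real obstacle.
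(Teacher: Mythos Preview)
Your approach is valid but genuinely different from the paper's. You aim to prove the stronger identity $\tau(\beta_g)=\beta_{\tau(g)}$ directly, which reduces the lemma to the injectivity of $m\mapsto \beta_m$. The paper never establishes (or uses) this identity. Instead it expands the theta function $\vartheta_g$ in the bracelet basis as $\vartheta_g=\sum_m c_m\beta_m$ with $c_g=1$ (by pointedness), and observes that if the $\tau$-orbit of $g$ were finite, then $\{\tau^k(\vartheta_g)\}_k$ would be finite (by Lemma~\ref{GammaTheta}), so only finitely many bracelets could appear among all these expansions; but since $\tau$ permutes bracelets and $\tau^k(\beta_g)$ always appears, infinitely many distinct bracelets would occur---a contradiction. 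The paper's argument thus leverages the already-proved theta-function equivariance and bypasses any direct claim about how bracket $g$-vectors transform.

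Your route is fine, but the step you flag as ``routine'' is the substantive one. The identity $g(\tau(C))=\tau(g(C))$ is not purely formal: you need that the algebra automorphism $\psi_{\jj}^{-1}\circ\tau'$ takes pointed elements to pointed elements with $g$-vector transforming via the piecewise-linear map $T_{\jj}^{-1}\circ\tau'$. This is a standard fact (mutations preserve pointedness and act tropically on $g$-vectors, e.g.\ \cite{qin2017triangular}), and your shear-coordinate argument is one way to see the principal part, but matching the tropical mutation on shear coordinates in $N_{\uf}^*$ with the maps $T_j$ on $M$ needs a bit more care than you indicate. The paper's expansion argument sidesteps this by never needing to identify which bracelet $\tau(\beta_g)$ is---only that it is some bracelet---whereas your approach buys you the explicit permutation $g\mapsto\tau(g)$ on bracket $g$-vectors as a byproduct.
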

    \begin{proof}
        By the pointedness of the bracelets, if we expand $\vartheta_g$ in the bracelets basis\footnote{In the surface type setting with strongly convex $M_{\sd}^{\oplus}$, the bracelets still form a basis over $\kk[M_F]$, so this does not affect the proof except that the coefficients $c_m$ may like in $\kk[M_F]$.} as $\vartheta_g=\sum_m c_m \beta_m$, then $c_g=1$.  Let ${\bf B}\ni \beta_g$ be the set of all bracelets which contribute non-trivially to the bracelets basis expansion of at least one element of the form $\tau^k(\vartheta_g)$, $k\in \bb{Z}$.  If the orbit of the $\tau$-action on $\vartheta_g$ were finite, then ${\bf B}$ would be finite, but ${\bf B}$ must be infinite since it contains $\tau^k(\beta_g)$ for all $k\in \bb{Z}$.  So the orbit of $\vartheta_g$ must be infinite, hence the orbit of $g$ must be infinite as well.
    \end{proof}

		\subsubsection{Dehn twists and non-annular loops}\label{subsub:Dehn}
		
		Given any loop $L$ with a fixed choice of orientation, let $\tau_L\in \Gamma_{\Sigma}$ denote the corresponding Dehn twist of $\Sigma$ (denoted $\tw_L$ in \S \ref{sec:Dehn_twists}).  As in \S \ref{subsub:mcg}, $\tau_L$ acts on the associated algebras and lattices.  We now return to the setting of the surface $\Sigma_L$ as in \S \ref{subsub:cut}--\S\ref{subsub:nonannular1}.

		Let $\gamma\in \Sk_t(\Sigma_L)$ be any non-boundary arc of $\Sigma_L$, so $\gamma$ intersects $L$ twice (after applying an isotopy to minimize the number of intersections). Recall that, as pictured in Figure \ref{ArcLoop2}, we have \eqref{gammaL}: 
		\begin{align*}
			[\gamma] [L]=q^2[\tau_L^{1/2}(\gamma)]+q^{-2}[\tau_L^{-1/2}(\gamma)] + [\alpha_L][L_0]
		\end{align*}
		for a pair of arcs $\tau_L^{\pm 1/2}(L)$ and some non-contractible simple loop $L_0$ non-isotopic to $L$ and disjoint from $L$ --- here, the notation $\tau_L^{\pm 1/2}(L)$ for the two arcs can be viewed formally, but it is chosen because, as we shall see next, applying similar skein relations to compute $[\tau_L^{\pm 1/2}(\gamma)][L]$ yields the positive and negative Dehn twists of $\gamma$ by $L$ (we assume the orientation of $L$ is chosen so that the signs work out as indicated). Correspondingly, let us call $\tau_L^{1/2}(L)$ and $\tau_L^{-1/2}(L)$ the positive arc and negative arc, respectively in the decomposition of $[\gamma] [L]$.
		
		Using the skein relations again, one computes the decomposition $[\tau_L^{\pm 1/2}(\gamma)][L]$ to find:
		\begin{align*}
			[\tau_L^{1/2}(\gamma)][L] &= q^2[\tau_L(\gamma)] + q^{-2}[\gamma ]+ [\alpha_L][L_{\hf}]\\    
			[\tau_L^{-1/2}(\gamma)][L] &= q^2[\gamma] + q^{-2}[\tau_L^{-1}(\gamma)]+ [\alpha_L][L_{-\hf}]
		\end{align*}
		for other non-peripheral simple loops $L_{\pm\hf}$ non-isotopic to $L$ and disjoint from $L$. Therefore, by repeatedly multiplying $[\gamma]$ by $L$, one finds that
		\begin{align}\label{eq:gammaL2}
			[\gamma]\cdot[L]^2 = 2[\gamma ]+ q^4[\tau_L(\gamma) ]+ q^{-4}[\tau_L^{-1}(\gamma)]+ q^2[\alpha_L][L_{\hf}] + [\alpha_L ][L][L_0]+q^{-2}[\alpha_L][L_{-\hf}].
		\end{align}

		Similarly, we can recursively define arcs $\tau_L^{d}(\gamma)$, $d\in\hf\Z$, via the relation
		\begin{align}\label{eq:gammaLd}
			[\tau_L^{d}(\gamma)][L]& = q^2[\tau^{d+\hf}_L(\gamma)] + q^{-2}[\tau_L^{d-\hf}\gamma ]+ [\alpha_L][L_{d}]
		\end{align}
		for some non-peripheral simple loop $L_{d}$ non-isotopic to $L$ and disjoint from $L$.\footnote{We observe that $L_{d}=L_{d+1}$ for any $d\in \frac{1}{2}\Z$. This observation will not be used.} As before, we call the first two terms in the decomposition the positive arc and the negative arc respectively. We note that, when $d\in \bb{Z}$, $\tau_L^{d}(\gamma)$ is indeed equal to the $d$-th power of the Dehn twist.
		
		The following result partially generalizes \eqref{eq:gammaL2}.
		\begin{lem}\label{lem:const_gammaLk}
			For any $k\in\Z_{>0}$, we have a decomposition into bangles:
			\begin{align*}
				[\gamma] [L]^{k} = \sum_{\substack{d\in \frac{1}{2}\bb{Z} \\ |d|\leq \frac{k}{2}}}c_d[\tau_L^{d}(\gamma)]+\sum_{\substack{r\in \bb{Z}_{\geq 0}, s\in \frac{1}{2}\bb{Z} \\ 0\leq r\leq k-1 \\ |s|\leq \frac{k-1}{2}}}l_{(r,s)}[\alpha_L][L]^r[L_{s}].
			\end{align*}
			where the coefficients $c_d$ and $l_{(r,s)}$ are Laurent polynomials in $q$ with non-negative integer coefficients. Moreover, we have $c_0=\binom{k}{k/2}$ if $2$ divides $k$ and $c_0=0$ otherwise.
		\end{lem}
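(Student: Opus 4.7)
The plan is to prove this by induction on $k$, with base case $k=1$ being the relation \eqref{gammaL} itself, which gives $c_{1/2}=q^2$, $c_{-1/2}=q^{-2}$, $l_{(0,0)}=1$, and all other coefficients zero (consistent with the statement, since $k=1$ is odd so $c_0=0$). Note also that the $k=2$ formula \eqref{eq:gammaL2} provides an explicit check with $c_0=2=\binom{2}{1}$.

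For the inductive step, I would multiply the claimed decomposition of $[\gamma][L]^k$ on the right by $[L]$. Using \eqref{eq:gammaLd}, each arc term $c_d^{(k)}[\tau_L^d(\gamma)]\cdot[L]$ expands as $c_d^{(k)} q^2 [\tau_L^{d+1/2}(\gamma)] + c_d^{(k)} q^{-2} [\tau_L^{d-1/2}(\gamma)] + c_d^{(k)}[\alpha_L][L_d]$, while each bangle term $l_{(r,s)}^{(k)}[\alpha_L][L]^r[L_s]\cdot [L]$ becomes $l_{(r,s)}^{(k)}[\alpha_L][L]^{r+1}[L_s]$ (since $L$ is disjoint from both $\alpha_L$ and $L_s$, no new intersections arise and all relevant $\Lambda$-values vanish). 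Collecting terms yields the recursions
\begin{align*}
c_d^{(k+1)} &= c_{d-1/2}^{(k)}\,q^2 + c_{d+1/2}^{(k)}\,q^{-2},\\
l_{(0,s)}^{(k+1)} &= c_s^{(k)},\qquad l_{(r,s)}^{(k+1)} = l_{(r-1,s)}^{(k)} \text{ for } r\geq 1.
\end{align*}
Non-negativity of Laurent-polynomial coefficients is preserved trivially, and the ranges $|d|\leq k/2$, $|s|\leq (k-1)/2$, $0\leq r\leq k-1$ follow by tracking the index shifts in the recursion starting from the base case.

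To pin down $c_0^{(k)}$, I would establish the closed form $c_d^{(k)} = \binom{k}{(k-2d)/2}\,q^{4d}$ whenever $(k-2d)/2 \in \bb{Z}_{\geq 0}$ and $c_d^{(k)}=0$ otherwise, again by induction. The verification reduces to Pascal's identity: $\binom{k}{(k-2d+1)/2} + \binom{k}{(k-2d-1)/2} = \binom{k+1}{(k+1-2d)/2}$, which matches the recursion above after factoring out $q^{4d}$. Specializing to $d=0$ gives $c_0^{(k)}=\binom{k}{k/2}$ for even $k$ and $c_0^{(k)}=0$ for odd $k$, as claimed.

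The proof is conceptually straightforward; the main obstacle is simply the combinatorial bookkeeping—ensuring the ranges of the indices $(d,r,s)$ remain within the stated bounds after each inductive step, and that the Pascal's-identity manipulation is applied with the correct shifted arguments. Because all new contributions from \eqref{eq:gammaLd} have manifestly positive integer Laurent coefficients, no cancellation ever occurs, which is what ultimately allows the explicit binomial formula to emerge cleanly.
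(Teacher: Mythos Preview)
Your proof is correct and follows essentially the same approach as the paper: both argue by induction on $k$ using the recursion \eqref{eq:gammaLd}, with the paper phrasing the computation of $c_0$ as counting walks that choose the positive arc exactly $k/2$ times, while you equivalently establish the full closed form $c_d^{(k)}=\binom{k}{(k-2d)/2}q^{4d}$ via Pascal's identity. Your explicit recursions for $l_{(r,s)}$ and the observation that multiplying by $[L]$ introduces no $t$-factors (since $L$ is a loop) make the bookkeeping slightly more detailed than the paper's sketch, but the underlying argument is the same.
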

		\begin{proof}
			The first claim follows from a straightforward induction argument using \eqref{eq:gammaLd}.

			Note that the coefficients $c_d$ are the combined contributions of the coefficients from iterations of the first two terms appearing in the decomposition \eqref{eq:gammaLd}. In particular, we can calculate the coefficient $c_0$ as follows: when multiplying $[\gamma]$ by $[L]$ repeatedly $k$ times, we choose either the positive arc or the negative arc in \eqref{eq:gammaLd} at each step ($k$ steps in total). In order to finally obtain the arc $[\gamma]$, the number of times where we choose the positive arc must equal the number of times where we choose the negative arc, and so $c_0$ counts the number of ways to choose the positive arc exactly $\frac{k}{2}$ times.  The claim about the value of $c_0$ follows.
		\end{proof}

		\begin{lem}\label{lem:product_loops_decompose}
			Let $L_1,\ldots,L_r$ denote a collection of disjoint simple loops (possibly isotopic) in $\Sigma_L$. When decomposing the product $[\alpha_{L}]\prod_i [L_i]$ into a finite linear combination of theta functions, no theta functions $\vartheta_g$ appearing can correspond to an internal arc.
		\end{lem}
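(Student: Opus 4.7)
The plan is to argue by contradiction using Dehn-twist equivariance of the theta basis (Lemmas \ref{GammaTheta} and \ref{cor:tau-m}). Suppose that some summand $\vartheta_g$ in the theta-basis expansion of $[\alpha_L]\prod_i[L_i]$ satisfies $\vartheta_g = [\gamma']$ for an internal (non-boundary) arc $\gamma'$. We wish to derive a contradiction by exhibiting a mapping class group element that fixes the product but must act with infinite orbit on $g(\gamma')$.

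In the situation in which this lemma is actually used (cf.\ the decomposition produced by Lemma \ref{lem:const_gammaLk}), each loop $L_s$ arises by resolving crossings of some $\tau_L^d(\gamma)$ with $L$ via the skein relation, and is therefore automatically disjoint from $L$; we restrict to this setting. Then $\tau_L$ fixes $[\alpha_L]$ (since $\alpha_L$ lies on the boundary and is disjoint from $L$) and each $[L_i]$ (since $L_i$ is disjoint from $L$), hence $\tau_L([\alpha_L]\prod_i[L_i]) = [\alpha_L]\prod_i[L_i]$. By Lemma \ref{cor:tau-m} the set of $g$-vectors appearing in the expansion is permuted by $\tau_L$, and since it is finite, every $g$ in it (in particular our $g=g(\gamma')$) has finite $\tau_L$-orbit.

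The key geometric input is that every internal arc $\gamma'$ in $\Sigma_L$ necessarily crosses $L$. Recall from the construction in Lemma \ref{non-annular-L} that $\Sigma_L$ has a unique marked point $r$, a unique boundary arc $\alpha_L$ (both of whose ends lie at $r$), and the loop $L$ is homotopic to $\alpha_L$. Cutting $\Sigma_L$ along $L$ thus produces an annular collar (containing $r$ and $\alpha_L$) together with a higher-genus piece having exactly one boundary circle and no marked points in its interior. An arc from $r$ to $r$ that avoids $L$ must remain in the collar, and the only non-contractible such arc is isotopic to $\alpha_L$ itself. Since $\gamma'$ is internal, $\gamma'\neq \alpha_L$ and hence $\gamma'$ must cross $L$.

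A standard fact in surface topology then implies that the number of intersections $i(\tau_L^n(\gamma'),\gamma')$ grows linearly in $n$, so the cluster variables $[\tau_L^n(\gamma')]$ are pairwise distinct, and so too are their (pointed) $g$-vectors $g(\tau_L^n(\gamma'))$. This contradicts the finite-orbit conclusion of the previous paragraph, completing the argument. The main technical point is the geometric claim that internal arcs in $\Sigma_L$ must cross $L$; once that is established, the Dehn-twist equivariance of the theta basis converts it directly into the desired statement about theta-basis expansions. (If one wished to extend the lemma to loops $L_i$ that may themselves cross $L$, one would replace $\tau_L$ by a Dehn twist along a suitable auxiliary curve disjoint from $\alpha_L$ and all $L_i$, but this refinement is not needed for the applications in Lemmas \ref{lem:const_gammaLk}.)
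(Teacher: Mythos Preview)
Your argument is correct and follows the same approach as the paper: the product is $\tau_L$-invariant, internal arcs have infinite $\tau_L$-orbit, and Lemma~\ref{cor:tau-m} gives the contradiction.

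One remark: your explicit restriction to ``loops $L_i$ disjoint from $L$'' is unnecessary, and the parenthetical at the end about extending to loops that cross $L$ is moot. Since $L$ is homotopic to the boundary of $\Sigma_L$ (this is built into the construction of $\Sigma_L$ in Lemma~\ref{non-annular-L}), \emph{every} simple closed curve in $\Sigma_L$ can be isotoped into the complement of a collar of $\partial\Sigma_L$ and hence made disjoint from $L$. Thus $\tau_L$ automatically fixes $[L_i]$ for any simple loop $L_i$ in $\Sigma_L$, and the lemma holds as stated without restriction. The paper's proof uses this implicitly when asserting that $\tau_L$ acts trivially on the product; your geometric argument for why internal arcs must cross $L$ (cutting along $L$ leaves only the collar containing $r$) is a correct justification of the fact the paper had already recorded earlier in \S\ref{subsub:nonannular1}.
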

		\begin{proof}
			Every internal arc has infinite orbit under the action of $\tau_L$, but $\tau_L$ acts trivially on the product $[\alpha_{L}]\prod_i [L_i]$. The claim thus follows from Lemma \ref{cor:tau-m}.
		\end{proof}

		\begin{lem}\label{kg}
			For each $s\in \bb{Z}_{\geq 0}$, $[L]^s$ is a finite linear combination of theta functions of the form $\vartheta_{kg(L)}$ for $k\in \bb{Z}_{\geq 0}$.
		\end{lem}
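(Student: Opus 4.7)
The plan is to start from Lemma \ref{non-annular-theta}, which gives $[L]=\vartheta_{g(L)}$, and use the $\tau_L$-invariance of $[L]$ together with the theta positivity and the bracelets basis expansion to constrain which theta functions can appear in $[L]^s$.

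First, since $[L]=\vartheta_{g(L)}$, we have $[L]^s=\vartheta_{g(L)}^s$. Strong positivity of theta functions (Proposition \ref{AtomicProp}, via Lemma \ref{BLpos}) together with bar-invariance gives a finite expansion
\[
[L]^s=\sum_{p}c_p\,\vartheta_p, \qquad c_p\in\mathbb{Z}_{\ge 0}[t^{\pm 1}],
\]
where the sum is finite because $[L]^s$ lies in the Laurent polynomial algebra $\Sk_t(\Sigma_L)\subset\s{A}_t^{\up}(\sd_\Delta)$. Since $\tau_L$ fixes the homotopy class of $L$ and hence $[L]$ and $[L]^s$, Lemma \ref{cor:tau-m} implies that $\tau_L$ permutes the set $\{p:c_p\neq 0\}$, so this set is a finite union of (necessarily finite) $\tau_L$-orbits.

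Next, I would obtain a second description of $[L]^s$ via the bracelets basis. As $s$ parallel copies of $L$ have no mutual intersections, $[L]^s=[sL]_{\Bang}$; combined with the Chebyshev-style change-of-basis $x^s=\sum_k a_{s,k}T_k(x)$ and the definition $\langle kL\rangle_{\Brac}=T_k([L])$, this yields
\[
[L]^s=\sum_{k=0}^{s}a_{s,k}\,\langle kL\rangle_{\Brac}.
\]
Each $\langle kL\rangle$ is bar-invariant, $kg(L)$-pointed, theta positive (by Lemma \ref{ClassicalPosNoPunct} in the classical setting and Lemma \ref{lem:q-univ-pos} in the quantum setting), and fixed by $\tau_L$. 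Consequently each $\langle kL\rangle$ admits a finite non-negative expansion into theta functions $\vartheta_p$ whose indices satisfy $p\in kg(L)+M^\oplus$ and have finite $\tau_L$-orbit. Combining the two descriptions reduces Lemma \ref{kg} to showing that every such $p$ must be of the form $jg(L)$ for some $j\in\mathbb{Z}_{\ge 0}$.

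The hard step, and the main obstacle, is ruling out $\tau_L$-invariant $p\in kg(L)+M^\oplus$ other than multiples of $g(L)$. I would argue geometrically: via \eqref{eq:g_vector_shear} and Theorem \ref{Thm:Yurikusa_Dehn}, the action of $\tau_L$ on the $g$-vector of a bracelet $\langle C\rangle$ translates by a positive multiple of $g(L)$ whose coefficient is proportional to $\sum_i w_i\#(C_i\cap L)$, so finite $\tau_L$-orbit forces each component of $C$ to be disjoint from $L$ (or to equal $L$). In $\Sigma_L$, where $L$ is homotopic to the unique boundary arc $\alpha_L$ by Lemma \ref{non-annular-L}, such a multicurve consists of copies of $L$, copies of $\alpha_L$, and curves in the complementary sub-surface $\Sigma_L\setminus U_L$. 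A careful bookkeeping of the frozen $g(\alpha_L)$-direction and of the $g$-vectors arising from the sub-surface (facilitated by the Gluing Lemma \ref{glue}, which lets us excise the sub-surface and reduce to a simpler piece) should show that the only such $p$ lying in the cone $kg(L)+M^\oplus$ are the multiples $jg(L)$ with $0\le j\le k$, completing the proof.
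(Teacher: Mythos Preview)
Your overall framework is right, and the first two paragraphs are fine. The gap is in the ``hard step'': you only use the single Dehn twist $\tau_L$, and then hope that the extra constraint $p\in kg(L)+M^{\oplus}$ will eliminate all other $\tau_L$-fixed possibilities. This does not work as stated. In $\Sigma_L$ there are many bracelets disjoint from $L$ (any loop in the genus-$g$ interior, for example), and all of them are $\tau_L$-fixed; their $g$-vectors fill out a large piece of $M$, and there is no clear reason why the cone condition $p\in kg(L)+M^{\oplus}$ alone would exclude them. The ``careful bookkeeping'' you gesture at has no obvious way to succeed, and the Gluing Lemma (which goes from a cut surface back to a glued one) does not directly help you cut away a sub-surface of $\Sigma_L$.

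The paper's fix is simple but decisive: replace $\tau_L$ by the \emph{entire} mapping class group $\Gamma_{\Sigma_L}$. Since $[L]$ (and $\alpha_L$) are fixed by all of $\Gamma_{\Sigma_L}$, Lemma~\ref{cor:tau-m} forces every $p$ with $c_p\neq 0$ to have finite $\Gamma_{\Sigma_L}$-orbit. But on $\Sigma_L$ the only curves with finite orbit under the full mapping class group are $L$ and the boundary arc $\alpha_L$; any other curve is moved to infinitely many isotopy classes by some Dehn twist. Combined with Lemma~\ref{lem:infinite-orbit}, this forces $p\in\bb{Z}g(\alpha_L)+\bb{Z}_{\ge0}g(L)$. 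To kill the $g(\alpha_L)$-contribution, the paper does not do ``bookkeeping'' but instead observes: $g(L)\in M_{\uf,\bb{Q}}$ by \eqref{eq:loop_deg_monoid}, and broken-line bends lie in $M_{\uf}$, so every $p$ appearing in $\vartheta_{g(L)}^s$ lies in $M_{\uf,\bb{Q}}$; meanwhile $g(\alpha_L)\in\ker(\Lambda)$ (clear from \eqref{Lambdaij}), which is disjoint from $M_{\uf,\bb{Q}}$ since $\Lambda$ is nondegenerate on $M_{\uf}$. Hence the $\alpha_L$-coefficient must vanish, and $p\in\bb{Z}_{\ge0}g(L)$.
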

		\begin{proof}
			Note that $L$ and the boundary arc $\alpha_L$ are the only arcs or loops in $\SSS$ which are invariant under the action of the mapping class group of $\Sigma$.  All others have infinite orbits under some Dehn twists.  So by Lemma \ref{lem:infinite-orbit}, the only elements in $M$ which have finite orbit under the mapping class group action are those of the form $ag(\alpha_L)+bg(L)$ for $a\in \bb{Z}$ and $b\in \bb{Z}_{\geq 0}$. Hence, the theta functions invariant under the action of the mapping class group are of the form $\vartheta_{ag(\alpha_L)+bg(L)}$ for $a\in \bb{Z}$ and $b\in \bb{Z}_{\geq 0}$, while all other theta functions have infinite orbits under this action. It therefore follows from Lemma \ref{cor:tau-m} that $\vartheta_{g(L)}^s$ is a linear combination of theta functions of the form $\vartheta_{ag(\alpha_L)+bg(L)}$ for $a\in \bb{Z}$ and $b\in \bb{Z}_{\geq 0}$.
			
			On the other hand, by \eqref{eq:loop_deg_monoid}, $g(L)\in M_{\uf,\bb{Q}}$. So, since bends of broken lines are in $M_{\uf}$, Proposition \ref{StructureConstants} implies that theta functions contributing to $\vartheta_{g(L)}^s$ must all have $g$-vector in $M_{\uf,\bb{Q}}$.  But $g(\alpha_L)$ cannot be in $M_{\uf,\bb{Q}}$---indeed, $\Lambda$ is non-degenerate on $M_{\uf}$, and one can easily see from the definition of $\Lambda$ (cf. \eqref{Lambdaij} and \eqref{Lambda-ei-ej}) that $g(\alpha_L) \in \ker(\Lambda)$.  The claim follows.
		\end{proof}

		The following is a general lemma on theta functions which satisfy the conclusion of Lemma \ref{kg}:

		\begin{lem}\label{Zp}
			Let $\vartheta_p$ be a theta function such that, for all $s\in \bb{Z}_{\geq 0}$, $\vartheta_p^s$ is a linear combination of theta functions of the form $\vartheta_{kp}$ with $k\in \bb{Z}_{\geq 0}$.  Fix $\ell\in \bb{Z}_{\geq 1}$.  Then for any generic $\sQ$ sufficiently close to $p$, the only broken lines in $M_{\bb{R}}$ with ends $(p,\sQ)$ 
			and final exponent in $M\setminus (p+\ell M^+)$ are the straight broken line and broken lines with final exponents of the form $-bp$ for $b\in \bb{Z}_{\geq 1}$.
		\end{lem}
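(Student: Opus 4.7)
The plan is to combine the positivity of broken lines (Lemma \ref{BLpos}) with the hypothesis that $\vartheta_p^s$ lies in the $\kk_t$-span of $\{\vartheta_{kp} : k \in \bb{Z}_{\geq 0}\}$, together with the pointedness of theta functions, to constrain the support of $\iota_{\sQ}(\vartheta_p)$ modulo high-order terms. First I would take $\sQ$ generic and in the same chamber of $\f{D}_{\ell'}$ as $p$, for some $\ell' \geq \ell$ large enough that any broken line with ends $(p,\sQ)$ and final exponent in $M \setminus (p + \ell M^+)$ has total bending order less than $\ell$ and therefore interacts only with walls of $\f{D}_{\ell'}$; this exists because a broken line with $n$ bends (counted with multiplicity) produces a total bend lying in $nM^+$. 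By Lemma \ref{BLpos}, each such broken line contributes a strictly positive term to the Laurent expansion $\iota_{\sQ}(\vartheta_p) = \sum_m a_m z^m$, with no cancellation against contributions from other broken lines.

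Next, for each $s \geq 1$ the hypothesis $\vartheta_p^s = \sum_{k \geq 0} b_k^{(s)}\vartheta_{kp}$ (a finite sum) combined with the $kp$-pointedness of each $\vartheta_{kp,\sQ}$ (Proposition \ref{TopBasis}) forces the support of $\iota_{\sQ}(\vartheta_p^s)$ to lie in $\bigcup_{k : b_k^{(s)} \neq 0}(kp + M^{\oplus})$, a union of finitely many shifted cones. On the other hand, expanding $\iota_{\sQ}(\vartheta_p)^s$ directly and using positivity (no cancellations), for each $m \in \supp(\iota_{\sQ}(\vartheta_p))$ and each $j \in \{1,\ldots,s\}$ the monomial $z^{(s-j)p + jm}$ appears in $\iota_{\sQ}(\vartheta_p^s)$ with nonzero coefficient (coming from the $s$-fold product choosing $z^m$ exactly $j$ times). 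Matching these two descriptions gives $(s-j)p + jm \in k p + M^{\oplus}$ for some $k = k(s,j) \geq 0$, and varying $s$ yields the system $jm \in \bb{Z}p + M^{\oplus}$ for every $j \geq 1$. Combined with pointedness $m \in p + M^{\oplus}$, one extracts $m = np + u$ for some $n \in \bb{Z}$ and $u \in M^{\oplus}$.

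The principal obstacle I expect is the final step of showing $u = 0$ and then restricting $n$ to $\{1\} \cup \bb{Z}_{\leq -1}$. For the vanishing $u = 0$: supposing $u \in M^+$ is nonzero, one tracks the $j$-fold contributions $jm = jnp + ju$ against the finite family $\bigcup_{k \in K_s}(kp + M^{\oplus})$ where $K_s = \{k : b_k^{(s)} \neq 0\}$; the dominance-pointedness requirement on $\iota_{\sQ}(\vartheta_p^s)$ (whose unique $\preceq$-maximal exponent is $sp$) together with the boundedness of $K_s$ cannot simultaneously accommodate the terms of arbitrarily high order in $M^+$ generated by $u$, producing a contradiction. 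Once $u = 0$ is established, pointedness $np \in p + M^{\oplus}$ together with strong convexity of $M^{\oplus}$ (which follows from the Injectivity Assumption, or is arranged by passing to $\sd^{\prin}$) rules out positive multiples $n \geq 2$: such $n$ would require $(n-1)p \in M^{\oplus}$, and combining with higher powers would force a nonzero element of $M^{\oplus} \cap (-M^{\oplus})$. Finally, $m = 0$ is excluded by the definition of a broken line, which requires nonzero velocity on its final straight segment; the only remaining possibilities are $n = 1$ (the straight broken line with $m = p$) and $n \leq -1$ (broken lines with $m = -bp$ for $b = -n \geq 1$), completing the argument.
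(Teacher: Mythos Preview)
Your approach has a genuine gap in the central step. The constraint you extract is that the Laurent support of $\iota_\sQ(\vartheta_p^s)$ lies in $\bigcup_{k\in K_s}(kp + M^\oplus)$, but this is far too weak to force $m\in\bb{Z}p$. Indeed, writing $m = p + v$ with $v\in M^\oplus$ (which you already know from pointedness), the term $(s-j)p + jm = sp + jv$ lies in $sp + M^\oplus$, so the constraint is satisfied with $k = s$ for \emph{every} $j$ and every $v\in M^\oplus$. In the motivating application one even has $-2g(L)\in M^+$ (cf.\ \eqref{eq:loop_deg_monoid}), so the cones $kp + M^\oplus$ are nested and their union places no restriction at all on $m$. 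Your sketched contradiction (``boundedness of $K_s$ cannot accommodate arbitrarily high order in $M^+$'') does not materialize: each $\vartheta_{kp,\sQ}$ already has support extending arbitrarily deep into $kp + M^+$.

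What you are missing is the passage from ``Laurent coefficient nonzero'' to ``theta-expansion coefficient nonzero,'' which requires Lemma~\ref{lem:fQ} with $\sQ$ close to the \emph{target exponent}, not to $p$. The paper's trick is to look at the exponent $m + cp$ for large $c$: since $\f{D}_{\ell_m}$ is finite and its chambers are cones, $m + cp$ eventually lands in the chamber of $\f{D}_{\ell_m}$ containing $p$, so one may choose $\sQ$ simultaneously close to $p$ (for describing broken lines with ends $(p,\sQ)$) and close to $m+cp$ (for applying Lemma~\ref{lem:fQ}). The tuple of $c$ straight broken lines plus one with final exponent $m$ then gives a positive contribution to the $\vartheta_{m+cp}$-coefficient of $\vartheta_p^{c+1}$, forcing $m + cp \in \bb{Z}_{\geq 0}p$. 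Your argument for ruling out $m = bp$ with $b\geq 2$ is also incomplete; the paper handles this directly: a broken line with final exponent $bp$ ($b>0$) has final segment parallel to $p$, and since $\sQ$ and $p$ share a convex cone chamber of $\f{D}_\ell$, that segment cannot meet a wall of $\f{D}_\ell$, so the line is straight and $b=1$.
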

		
		\begin{proof}	
			\textit{Ruling out exponents $bp$ with $b\geq 0$:} Let us consider $bp\in M$ with $b\in \bb{Q}_{\geq 0}$. For the given $\ell\in \bb{Z}_{\geq 1}$ and any $\sQ$ sharing a chamber of $\f{D}_{\ell}$ with $p$, the only broken line with respect to $\f{D}_{\ell}$ ending at $\sQ$ with the final exponent $bp$ is the straight broken line with initial direction $bp$. This broken line contributes to $\vartheta_p$ if and only if $b=1$.

			So now it suffices to show that the broken lines with ends $(p,\sQ)$ for generic $\sQ$ sufficiently close to $p$ have final exponent in $\bb{Z}p$.
			
			\textit{Ruling out exponents in $M\setminus \bb{Z}p$:} Consider any $m\in M$.  Choose $\ell_m\in \bb{Z}_{\geq 1}$ such that 
			\begin{align}\label{eq:mnotin}
				m\notin p+\ell_m M^+.
			\end{align}  
			Then any broken line $\gamma_{p,\sQ,m}$ with ends $p$ and $\sQ$ and final exponent $m$ will only bend at walls of $\f{D}_{\ell_m}$.  Note that, for sufficiently large $c\in \bb{Z}_{\geq 1}$, $m+cp$ will share a chamber of $\f{D}_{\ell_m}$ with $p$.  Further observe that $m+cp\notin (c+1)p+\ell_m M^+$ (by adding $cp$ to both sides of \eqref{eq:mnotin}), 
			i.e., $\ell_m$ is sufficiently large for applying Lemma \ref{lem:fQ} to the coefficient $a_{\vartheta_p^{c+1},\sQ,m+cp}$ (in the notation of loc. cit.; see \eqref{eq:large_l_from_max_deg}).  So take $\sQ$ sharing a chamber of $\f{D}_{\ell_m}$ with $m+cp$, and suppose there exists a broken line $\gamma_{p,\sQ,m}$ with ends $(p,\sQ)$ and final exponent $m$.  Then Proposition \ref{StructureConstants} implies that the $\vartheta_{m+cp}$-coefficient of the theta function decomposition of $\vartheta_p^{c+1}$ is nonzero because it will receive a positive contribution from any $(c+1)$-tuple of broken lines consisting of $\gamma_{p,\sQ,m}$ together with $c$ copies of the straight broken line with ends $(p,\sQ)$.  So by assumption, $m+cp$ must be in $(\bb{Z}_{\geq 0})p$, hence $m\in \bb{Z}p$.

			\textit{Existence of sufficiently close $\sQ$:} Given $\ell$ as in the statement of the lemma, the set $$M_{p,\ell}:=(p+M^{\oplus})\setminus (p+\ell M^+)$$ is finite.  So we can take ``sufficiently close'' in the statement of the lemma to mean sharing a chamber of $\f{D}_{\ell'}$ with $p$ for $\ell'=\max \left(\{\ell_m|m\in M_{p,\ell}\}\cup \{\ell\}\right)$.  The claim follows.
			\end{proof}

		We are at last ready to prove that bracelets for weighted non-annular loops are theta functions.
		
		\begin{lem}\label{ChebyLem}
			Let $L$ be a simple non-annular loop in the unpunctured triangulable surface $\Sigma$.  Then $\langle kL\rangle =\vartheta_{kg(L)}\in \Sk_t(\Sigma)$ for each $k\in \Z_{\geq 1}$.
		\end{lem}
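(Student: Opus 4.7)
Set $p := g(L)$. By Lemma \ref{non-annular-theta} combined with the Gluing Lemma (Lemma \ref{glue}), we already know $[L] = \vartheta_p$ in $\Sk_t(\Sigma)$. Since by definition $\langle kL\rangle = T_k([L]) = T_k(\vartheta_p)$ (see \eqref{wLBrac} and \eqref{Cheb1}), the lemma reduces to the identity $\vartheta_{kp} = T_k(\vartheta_p)$ for all $k\geq 1$. By Lemma \ref{Lem:Tk} (with $f_k := \vartheta_{kp}$ and $f_1 = \vartheta_p$), this is in turn equivalent to
\[
\vartheta_p^s \;=\; \sum_{k\geq 0} \nu_{s,k}\,\vartheta_{kp}\qquad (s\geq 0),
\]
where $\nu_{s,k}$ is the coefficient of $z^k$ in $(z+z^{-1})^s\in\bb{Z}[z^{\pm 1}]$. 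Lemma \ref{kg} already provides an expansion $\vartheta_p^s = \sum_{k\geq 0} a_{s,k}\,\vartheta_{kp}$; our task is to identify $a_{s,k} = \nu_{s,k}$.

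Following the pattern of Example \ref{Ann-broken}, the plan is to construct chambers of the scattering diagram accumulating on the ray $\bb{R}_{\geq 0}\, p$ and analyze the Laurent expansion of $\vartheta_p$ there. Starting from an ideal triangulation $\Delta$ of $\Sigma$ containing $\alpha_L$, consider the iterated Dehn twists $\Delta_m := \tau_L^m(\Delta)$. By Theorem \ref{Thm:Yurikusa_Dehn} applied to the elementary laminates $e(\gamma)$ for $\gamma\in\Delta_m$, together with Proposition \ref{prop:closure_laminate_cone}, the chambers $C_{\Delta_m}$ accumulate on $\bb{R}_{\geq 0}\,p$. Thus for any fixed truncation order $\ell$ we can choose $m$ large enough and a generic $\sQ_m\in C_{\Delta_m}$ satisfying the hypothesis of Lemma \ref{lem:fQ} for each index $kp$ appearing in the (finite) sum for $\vartheta_p^s$.

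The heart of the argument is to prove that at such a base-point
\[
\vartheta_{p,\sQ_m} \;\equiv\; z^p + z^{-p} \pmod{z^p\cdot\kk_t[M/(\ell M^+)]^+}. \tag{$\dagger$}
\]
The support condition (only the exponents $\{p\}\cup\{-bp : b\geq 1\}$ appear below order $\ell$) is exactly Lemma \ref{Zp}, whose hypothesis on $\vartheta_p^s$ is furnished by Lemma \ref{kg}. To pin down the coefficient of $z^{-p}$, note that $\tau_L$ fixes $[L] = \vartheta_p$ and acts equivariantly on theta functions (Lemma \ref{GammaTheta}), so the coefficient $c_1$ of $z^{-p}$ is a bar-invariant (Lemma \ref{lem:theta-bar-inv}), non-negative (Lemma \ref{BLpos}) element of $\bb{Z}_{\geq 0}[t^{\pm 1}]$. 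To force $c_1 = 1$, we combine Lemma \ref{lem:const_gammaLk} (which computes $[\gamma][L]^k$ in the bangle basis, giving $c_0 = \binom{k}{k/2}$ for $k$ even) with the structure-constants formula (Proposition \ref{StructureConstants}) applied to $\vartheta_{g(\gamma)}\cdot\vartheta_p^k$ at $\sQ_m$: any coefficient $c_1\in\{[2]_t, [3]_t,\ldots\}$ larger than $1$ would produce an over-count in $c_0$, while $c_1=0$ would contradict the presence of the $[\alpha_L][L_0]$ term in \eqref{gammaL} (whose $g$-vector lies outside the cluster-monomial cone by Lemma \ref{lem:prod_2_not_cv}).

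Once $(\dagger)$ is established, using $\Lambda(p,p)=0$ (so no quantum corrections appear in products of $z^{\pm p}$) we expand
\[
T_k(\vartheta_{p,\sQ_m}) \;\equiv\; T_k(z^p+z^{-p}) \;=\; z^{kp}+z^{-kp} \pmod{z^{kp}\cdot\kk_t[M/(\ell M^+)]^+},
\]
and similarly $(\vartheta_{p,\sQ_m})^s\equiv\sum_{j=0}^s\binom{s}{j}z^{(s-2j)p}$ modulo the same ideal. Since $T_k(\vartheta_p)\in\Sk_t(\Sigma)\subset\s{A}_t^{\up}$ by Proposition \ref{SkAProp}, Corollary \ref{UpTheta} lets us expand $T_k(\vartheta_p)-\vartheta_{kp}$ in the theta basis, and the above congruence forces all coefficients to lie in $kp+\ell M^+$. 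Letting $\ell\to\infty$ (by taking $m$ larger) and transporting the resulting vanishing to all base-points via Lemma \ref{CPS}, we obtain $T_k(\vartheta_p)=\vartheta_{kp}$, completing the proof. The principal difficulty is the quantum identification $c_1=1$ in $(\dagger)$: in the annulus case this was read off directly from the explicit limiting wall (Example \ref{Ann-broken}), whereas here one must combine the skein-theoretic identity \eqref{gammaL}, Lemma \ref{lem:const_gammaLk}, and positivity to exclude other bar-invariant non-negative integer possibilities.
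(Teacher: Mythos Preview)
Your overall strategy matches the paper's, but the argument for $(\dagger)$ has genuine gaps. Your justification for ruling out $c_1=0$ via the $[\alpha_L][L_0]$ term in \eqref{gammaL} is not valid: that term carries no direct information about the $z^{-p}$-coefficient of $\vartheta_{p,\sQ}$. The paper instead computes the $\vartheta_0$-coefficient of $[L]^2$ in two ways. On one hand, Proposition \ref{StructureConstants} gives it as $2c_1$ (every $\sQ$ is sufficiently close to $0$). On the other hand, it equals the $\vartheta_{g(\gamma)}$-coefficient of $[\gamma][L]^2$ via Lemma \ref{lem:const_coeff}, and this is read off from \eqref{eq:gammaL2} as exactly $2$---but only after Lemma \ref{lem:product_loops_decompose} guarantees that the loop terms $[\alpha_L][L_d]$ contribute no arc theta functions. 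You omit both of these lemmas, yet they are precisely what bridges the bangle-basis computation of Lemma \ref{lem:const_gammaLk} to the theta-basis structure constants. You also give no argument for $c_b=0$ when $b\geq 2$, which is equally necessary for $(\dagger)$; the paper handles this by the same mechanism applied to $[L]^{b+1}$: any nonzero $c_b$ would make the $\vartheta_0$-coefficient exceed $\nu_{0,b+1}$, contradicting Lemmas \ref{lem:const_gammaLk} and \ref{lem:product_loops_decompose}.

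Your closing step is also off: comparing $T_k(\vartheta_p)$ directly to $\vartheta_{kp}$ at $\sQ_m$ would require independent knowledge of $\vartheta_{kp,\sQ_m}$, which you have not established. The paper stays with the route you announced at the outset: once $(\dagger)$ holds, expand $(\vartheta_{p,\sQ})^k\equiv (z^p+z^{-p})^k$, use Proposition \ref{StructureConstants} (noting that $\sQ$ close to $p$ is automatically close to every $ap$ on the same ray) to read off $\nu'_{a,k}=\nu_{a,k}$, and then invoke Lemma \ref{Lem:Tk}. The explicit Dehn-twist construction of the chambers $C_{\Delta_m}$ is not needed here; the abstract ``$\sQ$ sufficiently close to $p$'' of Lemma \ref{Zp} already suffices.
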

		\begin{proof}
			By the gluing lemma (Lemma \ref{glue}), it suffices to prove the claim for $\Sigma_L$.  Furthermore, we already know from Lemma \ref{non-annular-theta} that $\langle L\rangle =\vartheta_{g(L)}$, so it remains to show that $\vartheta_{kg(L)}=T_k(\vartheta_{g(L)})$ in $\Sk_t(\Sigma_L)$ for $T_k$ as in \eqref{Cheb1} and for any given $k\geq 2$.   Let us denote $p\coloneqq g(L)$.
			
			As in Lemma \ref{Lem:Tk}, define integers $\nu_{a,k}\in \bb{Z}_{\geq 0}$ for $a=0,\ldots,k$ by specifying that $\nu_{a,k}$ is the coefficient of $z^{kp}$ in the Laurent expansion of $(z^p+z^{-p})^k$. By Lemma \ref{kg}, $(\vartheta_p)^k=\sum_{0\leq a\leq k} \nu'_{a,k} \vartheta_{ap}$ for some coefficients $\nu'_{a,k}\in \bb{Z}_{\geq 0}[t^{\pm 1}]$. By Lemma \ref{Lem:Tk}, it suffices to show that $\nu'_{a,k}=\nu_{a,k}$ for all $a$.
			
			(i) Let us study $\vartheta_p$.  Fix $\ell\in \bb{Z}_{\geq 1}$ large enough so that $0\notin kp+\ell M^+$. By Lemma \ref{Zp}, for $\sQ$ sufficiently close to $p$, the only broken lines with ends $(p,\sQ)$ and final exponent in $M\setminus (p+\ell M^+)$ will have the final exponent $p$ or $-bp$ for $b\in \Z_{\geq 1}$. Note that the $z^p$-coefficient for $\vartheta_p$ is $1$. Denote the $z^{-bp}$-coefficient of this $\vartheta_{p,\sQ}$ by $c_b$.
			
			We prove that $c_1=1$ and $c_b$ vanishes for $b>1$. Note that $\sQ$ is always sufficiently close to $0$ for the application of Proposition \ref{StructureConstants} (i.e. $\sQ$ and $0$ always share the same chamber).  In particular, it follows that the $\vartheta_0$-coefficient (i.e., the constant coefficient) of $\vartheta_{p}^2$ is equal to $2c_1$.  So we begin by showing that this $\vartheta_0$-coefficient of $[L]^2$ equals $2$.

			We see from \eqref{eq:gammaL2} that the $\vartheta_{g(\gamma)}$-coefficient of $[\gamma] [L]^2$ is equal to $2$ (using Lemma \ref{lem:product_loops_decompose} to ensure that the $\vartheta_{g(\gamma)}$-coefficient of the term $[\alpha_L][L][L_0]$ equals $0$).  Thus, we indeed find that the $\vartheta_0$-coefficient of $[L]^2$ is equal to $2$ (using Lemma \ref{lem:const_coeff} and noticing that $[\gamma]=\vartheta_{g(\gamma)}$ is a cluster variable). So $c_1=1$, as claimed. 
			
			It remains to show $c_b=0$ for $b\geq 2$. If some such $c_b$ were nonzero (hence positive), then Proposition \ref{StructureConstants} implies that the $\vartheta_0$-coefficient $\nu'_{0,b+1}$ of $\vartheta_p^{b+1}$ would be larger than the constant term $\nu_{0,b+1}$ of $(z^p+z^{-p})^{b+1}$. But Lemma \ref{lem:const_gammaLk} and Lemma \ref{lem:product_loops_decompose} imply that the $\vartheta_{g(\gamma)}$-coefficient of $[\gamma][L]^{b+1}$ must be $\nu_{0,b+1}$, so the constant term of $[L]^{b+1}$ must equal $\nu_{0,b+1}$ as well. This contradiction shows that $c_b=0$ for $b\geq 2$.

			(ii) Now let us study $(\vartheta_p)^k$. By (i), for $\sQ$ sufficiently close to the ray through $p$, $\vartheta_{p,\sQ}=z^p+z^{-p}+\ldots$ where the remaining terms have exponents in $M\setminus (p+\ell M^+)$.  Recall that $\ell$ was chosen to be large enough that $0\notin kp+\ell M^+$, so $ap\notin kp+\ell M^+$ for any $a\geq 0$ because $p\in -M^{\oplus}_{\bb{Q}}$ by \eqref{eq:loop_deg_monoid}.  Thus, any term in $\vartheta_{p,\sQ}^k$ involving a contribution from a term other than $z^p$ or $z^{-p}$ must have exponent in $kp+\ell M^+\not\ni ap$, hence cannot contribute to the structure constants $\nu'_{a,k}$.  Thus, applying Proposition \ref{StructureConstants}, $\nu'_{a,k}$ is indeed equal to the coefficient $\nu_{a,k}$ of $z^{ap}$ in the expansion of $(z^p+z^{-p})^k$.
		\end{proof}
		
		\subsection{Bracelets in unpunctured surfaces}\label{Disj-union-section}
		
		Our aim in this subsection is to prove the following:
		
		\begin{thm}\label{thm:bracelet-theta-no-punct}
			The bracelets basis and theta basis coincide for all unpunctured surfaces $\Sigma$ (in both the quantum and classical settings).
		\end{thm}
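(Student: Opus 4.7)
The plan is a two-stage reduction followed by a broken-line multiplicativity argument.

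First, I would decompose $C=C_\gamma\cup C_\ell$ into arc and loop components. In the unpunctured setting, arcs are cluster variables by Proposition \ref{SkAProp}, so $\BracE{C_\gamma}=\prod_{\gamma}[\gamma]^{w_\gamma}$ is a cluster monomial and hence equal to $\vartheta_{g(C_\gamma)}$ by Corollary \ref{UpTheta}. Since the curves underlying $C_\gamma$ are disjoint from those underlying $C_\ell$, the corresponding bracelets $t$-commute in $\Sk_t(\Sigma)$, so Proposition \ref{prop:cluster-commute-theta} (applied iteratively to absorb each arc factor of $\BracE{C_\gamma}$ one at a time into the remaining theta function) reduces the theorem to proving $\BracE{C_\ell}=\vartheta_{g(C_\ell)}$ for the all-loops case $C_\ell=\bigcup_iw_iL_i$ with pairwise disjoint simple non-contractible non-peripheral loops $L_i$.

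Second, by Lemmas \ref{ArcLoop1Lem} and \ref{ChebyLem}, each single-loop factor satisfies $\BracE{w_iL_i}=\vartheta_{g(w_iL_i)}$. Disjointness of the $L_i$ makes the bracelets commute in $\Sk_t(\Sigma)$, so $\BracE{C_\ell}=\prod_i\BracE{w_iL_i}=\prod_i\vartheta_{g(w_iL_i)}$ is manifestly bar-invariant, and the remaining task is the multiplicativity identity
\[
\prod_i\vartheta_{g(w_iL_i)}=\vartheta_{g(C_\ell)}.
\]
I would follow the Dehn-twist strategy from the proof of Lemma \ref{ChebyLem}. Pairwise disjointness of the $L_i$ makes the Dehn twists $\tau_{L_i}$ commute and ensures that the elementary laminates $e(L_i)=L_i$ fit into a common initial triangulation $\Delta^L$. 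Applying Proposition \ref{prop:closure_laminate_cone} to the lamination $L=\sum_iw_iL_i$ then produces, for each truncation order $\ell\in\bb{Z}_{\geq 1}$, a triangulation $\Delta^{(n)}=\prod_i\tau_{L_i}^{m_i}(\Delta^L)$ with $m_i\gg 0$ whose chamber $C_{\Delta^{(n)}}$ has closure simultaneously approximating each ray $\bb{R}_{\geq 0}\pr_{I_\ufv}g(w_iL_i)$ to order $\ell$; the additivity of shear coordinates over disjoint components together with \eqref{eq:g_vector_shear}--\eqref{eq:loop_g} makes this possible.

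Third, fix $\ell$, take $n$ large, and choose $\sQ\in C_{\Delta^{(n)}}$ generic and close to each $g(w_iL_i)$ modulo order $\ell$. Following the local analysis of Example \ref{Ann-broken} and Lemma \ref{Zp}, I would classify all broken lines with initial exponent $g(w_iL_i)$ ending at $\sQ$ whose final exponent lies outside $g(w_iL_i)+\ell M^+$: these consist of the straight broken line together with a bounded collection of wrap-around lines contributing terms of the form $z^{-kg(w_iL_i)}$, $k\geq 1$. Multiplying these local Laurent expansions and comparing with the broken-line expansion of $\vartheta_{g(C_\ell),\sQ}$ via Proposition \ref{StructureConstants} and Lemma \ref{lem:fQ}, the $z^{g(C_\ell)}$-coefficient on both sides equals $1$ by Corollary \ref{cor:alpha-sum} (here $\Lambda(g(w_iL_i),g(w_jL_j))=0$ is forced by the commutativity of the corresponding bracelets together with bar-invariance), while any theta-basis discrepancy is supported in $g(C_\ell)+\ell M^+$. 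Since both sides lie in $\s{A}_t^{\up}$ by Theorem \ref{fFG-surfaces} and agree to every order $\ell$, the multiplicativity holds.

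The main obstacle I anticipate is the geometric setup in the second paragraph: arranging a \emph{single} chamber whose closure simultaneously approximates several independent rays $\pr_{I_\ufv}g(w_iL_i)$. The single-loop case of Lemma \ref{ChebyLem} required iterating one Dehn twist, whereas here several commuting twists must be iterated jointly, and one must verify that the approximation provided by Proposition \ref{prop:closure_laminate_cone} accommodates all the $L_i$ simultaneously rather than just the combined lamination. Once that is in place, the broken-line combinatorics in paragraph three reduce to parallel copies of the single-loop analysis used in Lemmas \ref{ChebyLem} and \ref{Zp}.
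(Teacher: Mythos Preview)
Your reduction of the arc part via Proposition~\ref{prop:cluster-commute-theta} is correct and in fact mirrors what the paper does later in the punctured setting (Lemma~\ref{lem:cluster_tagged_bracelet_theta}); the paper's own unpunctured argument instead cuts along interior arcs via the Gluing Lemma (Lemma~\ref{lem:max-and-boundary}), but your route is equally valid. You have also correctly identified the geometric core: finding a single chamber of $\f{D}_\ell$ containing the entire cone $\sigma_\beta=\sum_i\bb{R}_{\geq 0}g(w_iL_i)$ (this is exactly the paper's Lemma~\ref{lem-sigma-beta}), and the local $z^{\pm g(w_iL_i)}$-structure of each factor (Lemma~\ref{loop-broken-pm}).

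The gap is in your final paragraph. The assertion that ``any theta-basis discrepancy is supported in $g(C_\ell)+\ell M^+$'' is not justified and is in fact false as stated. Writing $\prod_i\vartheta_{g(w_iL_i)}=\sum_m c_m\vartheta_m$, Lemma~\ref{lem:fQ} with $\sQ$ close to $g(C_\ell)$ computes only the single coefficient $c_{g(C_\ell)}$; it says nothing about $c_m$ for other $m$. And such other $m$ are genuinely possible a priori: when you multiply the local expansions $(z^{g(w_iL_i)}+z^{-g(w_iL_i)}+\cdots)$, the cross-terms $z^{\sum_i\epsilon_ig(w_iL_i)}$ with some $\epsilon_i=-1$ lie in $g(C_\ell)+M^\oplus$ (since $-2g(L_i)\in M^+$ by \eqref{eq:loop_deg_monoid}) but not in $g(C_\ell)+\ell M^+$. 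To apply Proposition~\ref{StructureConstants} to such an $m$, you would need $\sQ$ close to $m$, not to $g(C_\ell)$; but $m\notin\sigma_\beta$, so your chamber need not be close to $m$.

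The paper closes this gap in two steps you are missing. First, it reduces to a \emph{maximal} collection $\beta$ (Lemma~\ref{lem:max-and-boundary}) and then uses Dehn-twist invariance: since each $\tau_{L_i}$ fixes the product, Lemma~\ref{cor:tau-m} forces every $m$ with $c_m\neq 0$ to have finite $\tau_{L_i}$-orbit, and maximality combined with Lemma~\ref{lem:infinite-orbit} then pins these $m$ to $\sigma_\beta$ (Lemma~\ref{lem:int-closed}). Second, once $m\in\sigma_\beta$, the chosen $\sQ$ \emph{is} close to $m$, Proposition~\ref{StructureConstants} applies, and the linear independence of the $g(w_iL_i)$ (Lemma~\ref{lem:lin-ind}) forces $\sum_i\epsilon_ig(w_iL_i)\in\sigma_\beta$ to have all $\epsilon_i=+1$. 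Both ingredients are essential and neither appears in your sketch.
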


		Let $\Sigma$ denote a marked surface, possibly with punctures, with an ideal triangulation $\Delta$. Let $C=\bigsqcup_{i=1}^s w_i C_i$ denote a tagged simple multicurve, $w_i>0$. Denote $\beta_i=w_i C_i$ and define the collection $\beta=\{\beta_1,\ldots,\beta_s\}$. 
		
		In the following, unless otherwise specified, we will assume that $\Sigma$ is unpunctured, in which case we will simply take $\sd=\sd_\Delta$.
		
		We know from Proposition \ref{SkAProp} that weighted arcs are cluster monomials, which are theta functions.  Also, weighted non-peripheral loops are theta functions by Lemmas \ref{ArcLoop1Lem} and \ref{ChebyLem}. Moreover, the quantum bracelet $\BracE{C}$ is a bar-invariant element in $\Sk_t(\Sigma)$. Therefore, Theorem \ref{thm:bracelet-theta-no-punct} will follow once we prove Lemma \ref{LemDisjoint}:

		\begin{lem}\label{LemDisjoint}
			For unpunctured $\Sigma$, we have \begin{align}\label{ProdDisjointThetas}
				t^{\alpha}\vartheta_{g(\beta_1)}\cdots\vartheta_{g(\beta_s)}=\vartheta_{g(\beta_1)+\ldots+g(\beta_s)}.
			\end{align}
			where the factor $t^{\alpha}$ is chosen so that the product is pointed, i.e., $\alpha=-\sum_{1\leq i<j\leq s} \Lambda(g(\beta_i),g(\beta_j))$
		\end{lem}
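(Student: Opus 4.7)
By Proposition \ref{SkAProp} (when $C_i$ is a tagged arc), Lemma \ref{ArcLoop1Lem} (when $C_i$ is an annular loop), and Lemma \ref{ChebyLem} (when $C_i$ is a non-annular loop), each factor satisfies $\vartheta_{g(\beta_i)}=\BracE{\beta_i}$. By the definition \eqref{Eq:BracDef} of the bracelet element together with the appropriate choice of the bar-invariance factor, the left-hand side of \eqref{ProdDisjointThetas} equals $\BracE{C}$. Since $\BracE{C}$ is bar-invariant, pointed at $g(C)=\sum_i g(\beta_i)$, and universally positive (being a product of theta functions), its theta-basis expansion takes the form
\begin{align*}
    \BracE{C}=\vartheta_{g(C)}+\sum_{m\neq g(C)}c_m\vartheta_m,\qquad c_m\in\bb{Z}_{\geq 0}[t^{\pm 1}].
\end{align*}
Thus the lemma reduces to showing $c_m=0$ for all $m\neq g(C)$.

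My plan is to adapt the Dehn twist plus broken line strategy from \S\ref{subsub:nonannular1}--\S\ref{subsub:Dehn} to the multi-component situation. First I would apply Proposition \ref{prop:closure_laminate_cone} to the unbounded lamination $\sum_i w_i e(C_i)$ (with $e$ the elementary lamination of Definition \ref{def:el-lam}), producing triangulations $\Delta^{(N)}$ obtained from a fixed starting ideal triangulation by iterated Dehn twists $\tw_{L_j}^{N_j}$ along the loop components of $C$. Since the curves $C_i$ are pairwise disjoint, the laminates $e(C_i)$ are compatible, and by increasing the exponents $N_j$ one can arrange the chamber $C_{\Delta^{(N)}}$ to share a chamber of $\f{D}_\ell$ with $g(C)$ for any prescribed $\ell$, using \eqref{eq:g_vector_shear} to translate between shear coordinates and $g$-vectors. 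Pick a generic $\sQ\in C_{\Delta^{(N)}}$ that is sufficiently close to $g(C)$ in the sense required by Lemma \ref{lem:fQ} and Proposition \ref{StructureConstants}.

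Next, observe that $C$ is invariant under each Dehn twist $\tau_{L_j}$ around any loop component $L_j$ of $C$, since the components of $C$ are pairwise disjoint. Hence $\BracE{C}$ is $\tau_{L_j}$-invariant in $\Sk_t(\Sigma)$ for every such $j$, and Lemma \ref{cor:tau-m} forces each $m$ with $c_m\neq 0$ to have finite orbit under every $\tau_{L_j}$. A multi-loop adaptation of the cone analysis in Lemma \ref{kg} restricts these $m$ to the lattice generated by the $g(L_j)$ and the $g(\alpha_{L_j})$ (with $\alpha_{L_j}$ supplied by Lemma \ref{non-annular-L} for each non-annular $L_j$, and by Lemma \ref{ArcLoop1Lem} for each annular $L_j$). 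A broken-line analysis at the chosen $\sQ$, in the spirit of Lemma \ref{Zp} and the Chebyshev argument of Lemma \ref{ChebyLem}, then forces $c_m=0$ for all remaining candidates $m\neq g(C)$.

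The main obstacle is controlling the interaction between Dehn twists along different loop components simultaneously: for a single loop $L$ the $\tau_L$-invariant directions were spanned just by $g(L)$ and $g(\alpha_L)$, giving a Chebyshev recursion, but for several loops one must rule out ``cross-term'' theta functions $\vartheta_m$ with $m$ a non-trivial integer combination of the various $g(L_j)$ and $g(\alpha_{L_j})$. One clean way to handle this is to first peel off the tagged-arc factors $\vartheta_{g(\beta_i)}$ (which are cluster monomials by Proposition \ref{SkAProp}) using Proposition \ref{prop:cluster-commute-theta}, reducing to the case where all $C_i$ are loops; then the pairwise disjointness of these loops (together with the fact that the laminates $e(C_i)$ lie in a common cone generated by the shear coordinates of $\Delta^{(N)}$) ensures that the Laurent expansions of the various $\vartheta_{g(\beta_i)}$ at $\sQ$ have compatible leading behavior, and a direct comparison with the expansion of $\vartheta_{g(C),\sQ}$ via Proposition \ref{StructureConstants} completes the proof.
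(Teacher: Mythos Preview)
Your overall strategy is close to the paper's, and your reduction of the arc factors via Proposition~\ref{prop:cluster-commute-theta} is a legitimate alternative to the paper's approach (which instead cuts along the interior arcs using the Gluing Lemma, so that all arcs become boundary arcs and hence frozen). After this reduction, both you and the paper are left with the case where all the $\beta_i$ are weighted loops, and both approaches need a base-point $\sQ$ close to $g(C)$ obtained via iterated Dehn twists along the loop components.

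However, your sketch has a genuine gap at the decisive step. You write that ``a multi-loop adaptation of the cone analysis in Lemma~\ref{kg} restricts these $m$ to the lattice generated by the $g(L_j)$ and the $g(\alpha_{L_j})$'', and that a broken-line analysis ``in the spirit of'' Lemmas~\ref{Zp} and~\ref{ChebyLem} then finishes. But a lattice restriction is far too weak: you need a \emph{cone} restriction. The paper proves (Lemma~\ref{lem:int-closed}, using maximality of $\beta$ and Lemma~\ref{lem:infinite-orbit}) that every $m$ with $c_m\neq 0$ lies in the cone $\sigma_\beta=\sum_i\bb{R}_{\geq 0}g(\beta_i)$. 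Separately, the paper shows (Lemma~\ref{loop-broken-pm}) that for $\sQ$ in a chamber of $\f{D}_{\ell'}$ containing all of $\sigma_\beta$, the only broken lines for $\vartheta_{g(\beta_i),\sQ}$ with final exponent outside $g(\beta_i)+\ell M^+$ have final exponent exactly $\pm g(\beta_i)$. The key piece you never mention is the \emph{linear independence} of the $g(\beta_i)$ (Lemma~\ref{lem:lin-ind}, proved via injectivity of shear/intersection coordinates on laminations): this is what forces $\sum_i\epsilon_i g(\beta_i)\in\sigma_\beta$ to imply every $\epsilon_i=+1$, and hence $m=g(C)$. Without linear independence, a sum like $g(\beta_1)-g(\beta_2)+g(\beta_3)$ could in principle land back in $\sigma_\beta$, and nothing in your outline excludes this. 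Your phrase ``compatible leading behavior'' does not capture this argument.
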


		To prove this, we will need some more lemmas.  We begin by showing that it suffices to prove a simplified version of Lemma \ref{LemDisjoint}. The collection $\beta={\beta_1,\ldots,\beta_s}$ is said to be \textbf{maximal} if one cannot extend $C$ to a weighted simple multicurve $C\sqcup w_{s+1}C_{s+1}$ with one more component $C_{s+1}$ (the existence of maximal $\beta$ follows, e.g., from Lemma \ref{lem:lin-ind} below).

		\begin{lem}\label{lem:max-and-boundary}
			To prove Lemma \ref{LemDisjoint}, it suffices to prove the cases where $t=1$ and the collection $\beta$ is maximal.  Furthermore, one may assume that this maximal $\beta$ includes no interior arcs.
		\end{lem}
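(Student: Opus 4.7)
The plan is to chain together three reductions, carried out in the order: first the classical limit, then removal of interior arcs, then passage to maximal $\beta$.

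\emph{Step 1 (Reduction to $t=1$).} The product $P\coloneqq t^{\alpha}\vartheta_{g(\beta_{1})}\cdots\vartheta_{g(\beta_{s})}$ is bar-invariant by the choice of $\alpha$ together with bar-invariance of quantum theta functions (Lemma~\ref{lem:theta-bar-inv}), and by strong positivity of the theta basis (Proposition~\ref{AtomicProp}) its theta-basis expansion has bar-invariant non-negative coefficients in $\bb{Z}_{\geq 0}[t^{\pm 1/D}]$. Pointedness pins the coefficient of $\vartheta_{g(\beta_{1})+\cdots+g(\beta_{s})}$ at $1$. Lemma~\ref{lem:1theta-implies-qtheta} then lifts the classical identity \eqref{ProdDisjointThetas} to its quantum counterpart, so it suffices to work at $t=1$.

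\emph{Step 2 (Removal of interior arcs).} Induct on the number of interior-arc components of $\beta$. An interior arc on an unpunctured surface corresponds to a cluster variable (Proposition~\ref{SkAProp}), so any factor $\vartheta_{g(w_{s}\gamma_{s})}$ coming from an interior-arc component $w_{s}\gamma_{s}$ is a quantum cluster monomial (up to a power of $t$). Because the components of a tagged simple multicurve are pairwise disjoint, the bracelets $\vartheta_{g(\beta_{i})}$ pairwise $t$-commute via the superposition product on $\Sk_{t}(\Sigma)$, so $\vartheta_{g(\beta_{s})}$ $t$-commutes with the partial product $\vartheta_{g(\beta_{1})}\cdots\vartheta_{g(\beta_{s-1})}$. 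By the inductive hypothesis this partial product equals $\vartheta_{p}$ (up to a bar-invariant power of $t$) with $p=g(\beta_{1})+\cdots+g(\beta_{s-1})$, and it still $t$-commutes with $\vartheta_{g(\beta_{s})}$. Proposition~\ref{prop:cluster-commute-theta} merges $\vartheta_{p}$ with the cluster monomial $\vartheta_{g(\beta_{s})}$ into $\vartheta_{p+g(\beta_{s})}$ up to a $t$-power, and bar-invariance of the full product fixes the overall factor as $t^{-\alpha}$, closing the induction.

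\emph{Step 3 (Passage to maximal).} Starting from a $\beta$ with no interior arcs, extend it to a maximal tagged simple multicurve $\beta^{\max}$ by appending weight-one components chosen from boundary arcs and loops (never interior arcs), which is always possible since any collection of pairwise compatible loops and boundary arcs can be enlarged within that class. For an appended boundary arc $b$, the factor $\vartheta_{g(b)}=z^{e_{b}^{*}}$ is a frozen variable, central up to powers of $t$ and invertible, so the identity~\eqref{ProdDisjointThetas} for $\beta\cup\{b\}$ is equivalent to the identity for $\beta$. For an appended loop $L$, the bracelet $\langle L\rangle=\vartheta_{g(L)}$ is a basis element of $\Sk(\Sigma)$ (Lemma~\ref{Lem:bracelet_basis}), and one cancels $\langle L\rangle$ coherently on both sides of the identity by expanding in the bracelets basis and invoking strong positivity of that basis (Theorem~\ref{Thurst-Brac-Bas}).

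The main obstacle is the loop-cancellation in Step~3: extension by a loop $L$ is not directly reversible inside the upper cluster algebra because $\vartheta_{g(L)}$ is not a cluster variable. The cleanest workaround is precisely to execute Step~1 first, so that the cancellation is performed in the classical skein algebra, where the bracelets form an honest $\kk$-basis (Lemma~\ref{Lem:bracelet_basis}) whose structure constants are non-negative (Theorem~\ref{Thurst-Brac-Bas}); this ensures that the identity for the maximal $\beta^{\max}$ restricts unambiguously to any sub-multicurve of the required form.
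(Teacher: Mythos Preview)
Your Step~1 is correct and essentially matches the paper's argument. Your Step~2, using Proposition~\ref{prop:cluster-commute-theta} to peel off interior arcs, is a valid standalone reduction and is genuinely different from the paper's use of the Gluing Lemma. The problem is that Step~2 does not preserve maximality, and this forces the weight onto Step~3, where your argument breaks.

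The central gap is in Step~3. Your claim that a collection of loops and boundary arcs can always be enlarged \emph{within that class} to a maximal simple multicurve is false. Take $\Sigma$ a genus-one surface with one boundary component and one marked point, and let $\beta=\{b,L_b,L_a\}$ with $b$ the boundary arc, $L_b$ the boundary-parallel loop, and $L_a$ a non-separating loop. Cutting along $L_a$ yields a pair of pants in which every simple loop is homotopic to an existing component, so no further loop or boundary arc can be added; yet the interior arc from the marked point around one copy of $L_a$ is disjoint from $\beta$, so $\beta$ is not maximal. On most marked surfaces, any maximal collection \emph{must} contain interior arcs. Your loop-cancellation argument is also problematic: you appeal to strong positivity of the \emph{bracelets} basis to cancel $\langle L\rangle$, but knowing that $f_1\cdot\langle L\rangle$ equals a single theta function does not tell you it equals a single bracelet unless you already know bracelets and theta functions coincide, which is precisely what Lemma~\ref{LemDisjoint} feeds into.

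The paper avoids both issues by reversing the order. First it reduces to maximal $\beta$ via a theta-basis argument: if $f_1,f_2$ are theta positive and $f_1f_2$ is a single theta function, then counting coefficients (using strong positivity of the \emph{theta} basis, Proposition~\ref{AtomicProp}) forces each $f_i$ to be a single theta function; hence the identity for a maximal $\beta$ implies it for any sub-collection. Only then does it remove interior arcs, and it does so by \emph{cutting the surface} along them via the Gluing Lemma (Lemma~\ref{glue}): on the cut surface the interior arcs become boundary arcs, and crucially $\beta$ remains maximal there because any new compatible curve on the cut surface would already be a compatible curve on the original $\Sigma$. Your Proposition~\ref{prop:cluster-commute-theta} approach cannot replicate this, since it stays on the same surface and therefore cannot produce a maximal $\beta$ without interior arcs.
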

		\begin{proof} To reduce to the $t=1$ setting, note from Corollary \ref{cor:alpha-sum} that the coefficient of $\vartheta_{g(\beta_1)+\ldots+g(\beta_s)}$ in the product $\vartheta_{g(\beta_1)}\cdots \vartheta_{g(\beta_s)}$ is indeed $t^{-\alpha}$ for $\alpha$ as in Lemma \ref{LemDisjoint}.  If any other theta functions contributed to this product, strong positivity ensures that they would have coefficients in $\bb{Z}_{\geq 0}[t^{\pm 1}]$, so these theta functions would still contribute in the $t=1$ setting.  So proving Lemma \ref{LemDisjoint} when $t=1$ would rule this out.
		
		So now fix $t=1$.  To show that we can assume $\beta$ is maximal, it suffices to show that if $f_1,f_2$ are theta positive and $f_1f_2$ is a theta function, then each of the factors $f_1$ and $f_2$ must be theta functions. For $i=1,2$, let $f_i=\sum_{j=1}^{s_i} a_{ij}\vartheta_{m_{ij}}$ be a positive linear combination of theta functions, so $s_i \geq 1$ and each $a_{ij}\in \bb{Z}_{\geq 1}$.  By strong positivity, if $\sum_{j=1}^{s_i} a_{ij} > 1$ for either $i=1,2$, then the sum of the coefficients in the theta function expansion of the product $f_1f_2$ will also be larger than $1$. Therefore, $f_1f_2$ being theta function indeed implies that both $f_1$ and $f_2$ are theta functions.  In particular, a product of bracelets being a theta function implies that any product of a subset of the bracelets is a theta function.

			The second claim follows from Lemma \ref{glue}, the gluing lemma, because one can always cut $\Sigma$ along any interior arcs in $\beta$ to obtain a surface in which all arcs of $\beta$ are boundary arcs.
		\end{proof}

		Let $e(\beta_i):=w_i e(C_i)$ denote the laminate associated to $\beta_i=w_i C_i$, where $e(C_i)$ denotes the elementary laminate as in Definition \ref{def:el-lam}.
		\begin{lem}\label{lem:lin-ind}
			Consider $\Sigma$ possibly with punctures (in which case we work with a quantum seed $\sd$ similar to $\sd_{\Delta}$) and a collection $\beta=\{\beta_1,\ldots,\beta_s\}$ consisting of compatible weighted loops and arcs. Let $\beta'$ denote the subset consisting only of weighted loops. Then for any ideal triangulation $\Delta$, the following statements are true:
			\begin{enumerate}[label=(\roman*), noitemsep]
				\item The shear coordinate vectors $b^\Delta(e(\beta_i))$, $\beta_i\in \beta'$, are linearly independent.
				
				\item If the only arcs in $\beta$ are boundary arcs, then the corresponding $g$-vectors $g(\beta_1),\ldots,g(\beta_s)$ are linearly independent.
			\end{enumerate}
		\end{lem}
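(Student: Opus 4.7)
The plan is to derive both statements from Proposition \ref{prop:shear_coord}, which provides a bijection $b^\Delta: \s{X}_L(\Sigma,\bb{Z}) \simeq \bigoplus_{\gamma \in \Delta_\ufv} \bb{Z} e^*_\gamma$ sending an integer unbounded lamination $\sum w_i [l_i]$ to $\sum w_i b^\Delta(l_i)$. The key tool is additivity under taking unions of pairwise non-intersecting laminates.

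For (i), given a rational linear relation $\sum_i a_i b^\Delta(e(\beta_i)) = 0$ with $\beta_i = w_i L_i \in \beta'$, I would clear denominators to assume $a_i \in \bb{Z}$ and split the index set into $I^\pm := \{i : \pm a_i > 0\}$. Because the loops $L_i$ in $\beta'$ come from pairwise compatible components of $C$, they are pairwise non-intersecting, non-homotopic, and non-peripheral; hence the formal $\bb{N}$-weighted sums
\[
L^+ := \sum_{i \in I^+} a_i w_i [L_i], \qquad L^- := \sum_{i \in I^-} (-a_i) w_i [L_i]
\]
are genuine integer unbounded laminations in $\s{X}_L(\Sigma,\bb{Z})$. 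The additivity formula in Proposition \ref{prop:shear_coord} then yields $b^\Delta(L^+) = b^\Delta(L^-)$, so injectivity of $b^\Delta$ gives $L^+ = L^-$ as laminations. Since $I^+$ and $I^-$ are disjoint and the loops $L_i$ are pairwise non-homotopic, matching coefficients of each $[L_i]$ forces every $a_i$ to vanish.

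For (ii), write $\beta = \beta' \sqcup \beta''$ with $\beta''$ the collection of weighted boundary arcs. For $\beta_j = w_j \alpha_j \in \beta''$, the boundary arc $\alpha_j$ corresponds to a frozen cluster variable, so $g(\beta_j) = w_j e^*_{\alpha_j} \in M_F$, and in particular $\pr_{I_\ufv} g(\beta_j) = 0$. Given a relation $\sum_i a_i g(\beta_i) = 0$, applying $\pr_{I_\ufv}$ and using \eqref{eq:g_vector_shear} for the loop terms produces $\sum_{\beta_i \in \beta'} a_i b^\Delta(e(\beta_i)) = 0$, so part (i) kills all $a_i$ with $\beta_i \in \beta'$. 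The remaining relation $\sum_{\beta_j \in \beta''} a_j w_j e^*_{\alpha_j} = 0$ involves pairwise distinct basis vectors of $M$ indexed by distinct boundary arcs of $\Delta$, forcing the remaining $a_j$ to vanish.

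The only point requiring care is verifying that the positive/negative splitting in (i) really produces valid integer unbounded laminations in the sense of Definition \ref{def:lamination}; the pairwise non-intersection comes from the compatibility of the components of $C$, and the non-special condition is automatic because loops in $\wSMulti(\Sigma)$ are non-peripheral. No serious obstacle is expected; the argument works uniformly whether $\Sigma$ has punctures or not, and for any quantum seed $\sd$ similar to $\sd_\Delta$, since the $g$-vector statements used are preserved under similarity.
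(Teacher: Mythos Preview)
Your proof is correct and follows essentially the same approach as the paper: for (i), split a hypothetical dependence by sign to build two integer unbounded laminations with the same shear coordinates and invoke the bijectivity in Proposition~\ref{prop:shear_coord}; for (ii), project via $\pr_{I_\ufv}$ using \eqref{eq:g_vector_shear} to reduce to (i), then finish with the linear independence of the frozen basis vectors $e^*_{\alpha_j}$. The paper's write-up is nearly identical, differing only cosmetically (it first reduces to weight $1$ rather than carrying the $w_i$'s through).
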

		\begin{proof}
			It suffices to prove the cases where each $\beta_i$ has weight $1$ since changing the weight just scales the shear coordinates and $g$-vector.  
			
			(i)
			If this were not the case, it would mean that there is an equality
			\begin{align}\label{g-vect-eq}
				\sum_{j\in J_1} a_j  b^\Delta(e(\beta_j)) = \sum_{j\in J_2} a_j  b^\Delta(e(\beta_j))
			\end{align}
			for some non-empty disjoint subsets $J_1,J_2 \subset \{1,\ldots,s\}$ with $\beta_j\in \beta'$ whenever $j\in J_1\sqcup J_2$, and some positive integers $a_j\in \bb{Z}_{\geq 1}$, $j\in J_1\sqcup J_2$.  Correspondingly, we have two unbounded integral laminations $L_1$, $L_2$, given by $L_i=\bigsqcup_{j\in J_i} a_je(\beta_j)$ for $j=1,2$, respectively.. Then $L_1$ and $L_2$ are non-homotopic, but they have the same shear coordinates.  But, by Proposition \ref{prop:shear_coord} (\cite[Theorems 12.3, 13.6]{fomin2018cluster}), non-homotopic $L_1$ and $L_2$ must have distinct shear coordinates. This is a contradiction.
			
			(ii) 
			Notice that the quantum seed $\sd$ satisfies the Injectivity Assumption.  The $g$-vectors of frozen variables are distinct elements of $\{e_i^*|i\in F\}$, which are in particular linearly independent.  The claim follows now from (i) and the equality $\pr_{I_{\uf}}(\beta_i)=-b^\Delta(e(\beta_i))$ (see \eqref{eq:g_vector_shear}).
		\end{proof}

		Recall that we can also associate an intersection coordinate vector $\pi(L)\in N_{\bb{Q}}$ to an integer bounded lamination $L\in \cA_{L}(\Sigma,\Q)$ by \eqref{pil}.
		\begin{lem}\label{lem:independ_intersection_coord}
			For $\Sigma$ a possibly punctured surface and $\beta$ a collection of compatible weighted loops and arcs, and for any ideal triangulation $\Delta$, the corresponding intersection coordinate vectors $\pi(e(\beta_1)),\ldots$ $,\pi(e(\beta_s))$ are linearly independent.
		\end{lem}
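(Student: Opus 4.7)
The approach mirrors the proof of Lemma \ref{lem:lin-ind}(i), but with Proposition \ref{prop:intersection_coordinate} in place of Proposition \ref{prop:shear_coord}. Suppose for contradiction that there is a nontrivial linear dependence $\sum_{i=1}^{s} a_i \pi(e(\beta_i)) = 0$ with $a_i \in \bb{Q}$. After clearing denominators and separating by sign, rewrite this as $\pi(L_+) = \pi(L_-)$, where $L_+ = \sum_{i \in J_+} a_i\, e(\beta_i)$ and $L_- = \sum_{j \in J_-} (-a_j)\, e(\beta_j)$ for a nonempty disjoint union $J_+ \sqcup J_- \subseteq \{1,\ldots,s\}$ chosen according to the signs of the $a_i$. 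The compatibility of the components $C_i$ ensures that the elementary laminates $e(C_i)$ can be placed in pairwise disjoint tubular neighborhoods, so $L_+$ and $L_-$ are honest non-negative rational laminations.

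The main subtlety is that Proposition \ref{prop:intersection_coordinate} furnishes injectivity of $\pi$ only on the bounded lamination set $\cA_L(\Sigma,\bb{Q})$, whereas the elementary laminate of a tagged arc with a puncture endpoint spirals and is therefore unbounded. To reduce to the bounded case, I would truncate each spiraling end of every $e(C_i)$: after enough windings about a puncture $p$ to realize the minimal intersection number with each arc of $\Delta$ incident to $p$, cut off the remaining spiral and redirect it to terminate on a small circle excising $p$. The intersection numbers with arcs of $\Delta$ are preserved, and the truncated laminates are bounded and pairwise non-intersecting on the modified surface $\tilde{\Sigma}$ obtained from $\Sigma$ by replacing each puncture with an unmarked boundary circle. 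Proposition \ref{prop:intersection_coordinate} applied on $\tilde{\Sigma}$ (with the arcs of $\Delta$ as part of a triangulation) yields $\tilde{L}_+ = \tilde{L}_-$, and since the spiraling direction of each truncated end is recorded by the tagging of the corresponding $C_i$, the untruncated versions must also agree: $L_+ = L_-$.

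Since the $e(C_i)$ are pairwise non-isotopic, and since $L_+$ and $L_-$ are non-negative combinations supported on disjoint index sets $J_+, J_-$, the equality $L_+ = L_-$ forces $J_+ = J_- = \emptyset$, contradicting our assumption and yielding $a_i = 0$ for all $i$. The main obstacle is the careful setup of the truncation step: verifying that intersection numbers really are preserved and that spiraling directions are unambiguously recovered from the truncated data (using the tag-compatibility of $\beta$). An alternative route, which may be technically cleaner, is a covering-space argument lifting to a suitable cover $\wt{\Sigma} \to \Sigma$ ramified at the punctures, where every spiraling end unwinds into a bounded end and Proposition \ref{prop:intersection_coordinate} can be applied to the lifted laminations directly.
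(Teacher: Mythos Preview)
Your core approach matches the paper's exactly: its proof simply says to mirror Lemma \ref{lem:lin-ind}, replacing Proposition \ref{prop:shear_coord} with Proposition \ref{prop:intersection_coordinate}, which is precisely your first paragraph.

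You then raise a subtlety the paper does not address: Proposition \ref{prop:intersection_coordinate} concerns the bounded lamination set $\cA_L(\Sigma,\bb{Q})$, whereas $e(\gamma)$ spirals when $\gamma$ ends at a puncture. This is a fair observation about the statement as literally written. Note, however, that the only place the paper invokes this lemma is in the proof of Lemma \ref{lem-sigma-beta}, where $\beta$ is explicitly assumed to consist of weighted loops and boundary arcs; in that setting every $e(\beta_i)$ is bounded and Proposition \ref{prop:intersection_coordinate} applies without further work. The paper's terse proof is adequate for that intended scope.

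Your proposed truncation argument to handle the general case has a gap. For a laminate spiraling into a puncture $p$, the crossing number with any arc of $\Delta$ incident to $p$ is infinite, so $\pi(e(\beta_i))$ is not even defined as an element of $N_{\bb Q}$ via the formula in Proposition \ref{prop:intersection_coordinate}. Consequently there is no ``minimal intersection number'' to realize before truncating, and after excising a disk the crossing numbers become finite but depend on the truncation, so the claim that ``intersection numbers with arcs of $\Delta$ are preserved'' does not make sense as stated. If you want the lemma in full generality, you would first need to say what $\pi$ means on spiraling laminates; absent that, it is cleanest to restrict the hypothesis to the case (loops and boundary arcs) the paper actually uses.
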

		\begin{proof}
			The proof is similar to that of Lemma \ref{lem:lin-ind} except that we work with $N_\Q$ instead of $\Z^{I_\ufv}$, and we use Proposition \ref{prop:intersection_coordinate} instead of Proposition \ref{prop:shear_coord} for seeing that distinct laminations have different intersection coordinate vectors.
		\end{proof}

		Given $\ell\in \bb{Z}_{\geq 1}$, let $C_j^\circ$ denote the connected components of $M_\R\setminus \f{D}_{\ell}$ (indexed by $j$) and $C_j$ the corresponding closures, called \textbf{chambers}. Then the chambers $C_j$ are convex cones, and these chambers together with their faces form a complete fan in $M_\R$, see \cite[Thm. 3.1]{reading2020scattering} (alternatively, for $\Sigma$ not a once-punctured surface, $\f{D}_{\ell}$ is equivalent to Bridgeland's stability scattering diagram \cite{bridgeland2017scattering} which gives a fan structure, see \cite{qin2019bases}).

		The following Lemma is crucial for our proof.
		\begin{lem}\label{lem-sigma-beta}
			Consider $\Sigma$ possibly with punctures, and assume that the collection $\beta$  consists only of weighted loops and boundary arcs (we do not require here that $\beta$ is maximal).
			Let $\beta'\subset \beta$ be the subset consisting of weighted loops.  Let $\sigma_{\beta}$ denote the cone in $M_{\bb{R}}$ spanned by $$\{g(\beta_1),\ldots,g(\beta_s)\}\cup\{-g(\beta_i)|\beta_i\in \beta \setminus \beta'\}.$$  
			Then for any $\ell\in \bb{Z}_{\geq 1}$, there exists a chamber of $\f{D}_{\ell}$ which contains both $\sigma_{\beta}$ and the cluster complex chamber $C_{\Delta'}$ associated to some ideal triangulation $\Delta'$ without self-folded triangles.  Furthermore, if $\Delta_0=\{\gamma_1,\ldots,\gamma_k\}$ is a set of interior ideal arcs, none of which are nooses, such that the elements of $\beta\cup \Delta_0$ are pairwise compatible, then $\Delta'$ may be chosen to contain $\Delta_0$.
		\end{lem}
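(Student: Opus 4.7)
The plan is to construct $\Delta'$ via iterated Dehn twists along the loops in $\beta'$, using the identification between shear coordinates of loops and principal $g$-vectors of their bracelets. Fix an initial ideal triangulation $\Delta$ of $\Sigma$ without self-folded triangles that contains $\Delta_0$; this is possible because the arcs of $\Delta_0$ are not nooses and any self-folded triangles arising can be flipped away without disturbing $\Delta_0$ (cf.\ \cite[Lem.\ 2.13]{FominShapiroThurston08}). By \eqref{gLoop} and \eqref{eq:loop_g}, the principal $g$-vector $\pr_{I_\uf}(g(\BracE{w_iL_i}))$ equals $-w_i b^{\Delta}(L_i)$, while by \eqref{eq:shear_g} the principal $g$-vectors of the cluster variables of $\sd_{\Delta'}$ (in $\sd_\Delta$-coordinates) span the cone $-C(b^{\Delta}(e(\Delta'_\uf)))$. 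The $g$-vectors of the boundary-arc components of $\beta$ are $\pm e_i^*$ for frozen $i$, and they lie in every $C_{\jj}^+$ by \eqref{Cjplus}. So the problem reduces to constructing $\Delta' \supset \Delta_0$ free of self-folded triangles such that $b^{\Delta}(L_i) \in C(b^{\Delta}(e(\Delta'_\uf)))$ for each loop $L_i$ of $\beta'$.

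Writing $\beta' = \{w_1 L_1, \ldots, w_r L_r\}$, I would set $\Delta' := \tw_{L_1}^{m_1} \circ \cdots \circ \tw_{L_r}^{m_r}(\Delta)$ for sufficiently large $m_1, \ldots, m_r \in \Z_{\geq 1}$. The disjointness of the $L_i$ makes the $\tw_{L_i}$ commute and fix $\Delta_0$ (by compatibility of $\Delta_0$ with the loops). Since Dehn twists are homeomorphisms preserving the marked points, the preimage of a noose is a noose, so $\Delta'$ remains free of self-folded triangles.

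The main obstacle is showing that sufficiently large $m_i$ force $b^{\Delta}(L_i) \in C(b^{\Delta}(e(\Delta'_\uf)))$ for all $i$ simultaneously. This is a multi-loop strengthening of Proposition \ref{prop:closure_laminate_cone}: by Theorem \ref{Thm:Yurikusa_Dehn}, iterating $\tw_{L_i}^{m_i}$ produces arcs $\gamma'_i \in \Delta'$ whose shear coordinates $b^{\Delta}(e(\gamma'_i))$ grow linearly in the direction $b^{\Delta}(L_i)$, so as $m_i \to \infty$ some generator of $C(b^{\Delta}(e(\Delta'_\uf)))$ becomes arbitrarily close to the ray $\R_{\geq 0} \cdot b^{\Delta}(L_i)$ and the cone captures the vector $b^{\Delta}(L_i)$ itself. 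The linear independence of the $b^{\Delta}(L_i)$ (Lemma \ref{lem:lin-ind}(i)), combined with the observation that Dehn twists along distinct $L_i, L_j$ act non-trivially only on disjoint subsets of generators (those associated to arcs crossing $L_i$ versus those crossing $L_j$), allows a careful joint choice of the $m_i$ achieving simultaneous containment. Granting this, $\sigma_\beta \subset C_{\Delta'}$; and since $C_{\Delta'}$ is a chamber of the scattering diagram $\f{D}$ by Proposition \ref{Chambers}, while $\f{D}_\ell$ is obtained from $\f{D}$ by dropping walls whose scattering functions project trivially to $G_\ell^t$, the chamber $C_{\Delta'}$ lies in a single closed chamber of $\f{D}_\ell$, which then also contains $\sigma_\beta$.
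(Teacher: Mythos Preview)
Your argument contains a genuine gap: the claim that for sufficiently many Dehn twists the cone $C_{\Delta'}$ itself contains the loop $g$-vectors $g(w_iL_i)$ (equivalently, that $b^{\Delta}(L_i)\in C(b^{\Delta}(e(\Delta'_\uf)))$) is false in general. A loop $L$ is not a tagged arc, so $\BracE{L}$ is never a cluster monomial; hence $g(L)$ never lies in any cluster chamber $C_{\Delta'}$ (if it did, $\vartheta_{g(L)}$ would be a cluster monomial). Proposition~\ref{prop:closure_laminate_cone} only places $b^{\Delta}(L_i)$ in the \emph{closure} of the union of the Dehn-twisted cones, not in any one of them. In the Kronecker example, $g(L)$ sits on the limiting diagonal wall, outside every cluster chamber no matter how many times you twist. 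Consequently your concluding sentence ``$\sigma_\beta \subset C_{\Delta'}$'' cannot hold, and the final deduction from it to a $\f{D}_\ell$-chamber collapses. (A secondary issue: arcs of $\Delta$ can cross several $L_i$ at once, so the twists do not act on disjoint subsets of generators as you assert.)

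The paper's proof avoids this by never trying to place $\sigma_\beta$ inside a cluster chamber. Instead it argues by contradiction on the fan that $\f{D}_\ell$ induces on $\sigma_{\beta'}^\circ$: if $\sigma_{\beta'}$ were cut by some $\f{D}_\ell$-wall, one could find two nearby rational points $m_\pm(\lambda)\in\sigma_{\beta'}^\circ$ lying in different $\f{D}_\ell$-chambers. A careful Dehn-twist construction (using intersection coordinates and Theorem~\ref{Thm:Yurikusa_Dehn}) then produces a single cluster chamber $C_{\Delta(K+K_0)}$ containing points arbitrarily close to both $m_+(\lambda)$ and $m_-(\lambda)$, forcing them into the same $\f{D}_\ell$-chamber and yielding a contradiction. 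The triangulation $\Delta'$ thus depends on $\ell$, and what is shown is only that $\sigma_\beta$ and $C_{\Delta'}$ share a $\f{D}_\ell$-chamber, not that one contains the other.
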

		\begin{proof} 
			Let $\sigma_{\beta'}\subset \sigma_{\beta}$ be the subcone spanned by $\{g(\beta_i)|\beta_i\in \beta'\}$.  Note that $\sigma_{\beta'}\subset M_{\uf,\bb{R}}:=\omega_1 (N_{\uf}) \otimes \R$ by \eqref{eq:loop_deg_monoid}. It suffices to show that $\sigma_{\beta'}$ is contained in a chamber of $\f{D}_{\ell}$, because one then extends the result from $\sigma_{\beta'}$ to $\sigma_{\beta}$ by noting that, in general, every chamber of $\f{D}_{\ell}$ includes $\pm g(A_i)$ for every frozen variable $A_i$, hence for every boundary arc.
			
			(i) Here we consider the fan structure induced by $\f{D}_{\ell}$ on the relative interior $\sigma_{\beta'}^{\circ}$ of $\sigma_{\beta'}$.
			
			For any collection of chambers $C_j$ of $\f{D}_{\ell}$, $j\in J$, define the the intersection $C_J=\bigcap_{j\in J} C_j$. Further define the restriction $\uC_J=C_J \cap \sigma_{\beta'}^\circ$. Then $\uC_J$ is still a convex cone. Note that if $\uC_J=\uC_{J'}$, then $\uC_J=\uC_{J'}=\uC_{J\cup J'}$. Correspondingly, we say $J$ is maximal for a cone $\uC_J$ if whenever $\uC_{J'}=\uC_J$, we have $J'\subset J$. Similarly, note that $J'\subset J$ implies $\uC_J\subset \uC_{J'}$.  Since cones of the form $C_J$ form a complete fan for $M_\R$, their restrictions $\uC_J$ form a complete fan for $\sigma_{\beta'}^\circ$ as well.
			
			(ii) Suppose that $\sigma_{\beta'}$ was not contained in any $C_J$. Let us find two points $m_\pm(\lambda)$ in $\sigma_{\beta'}$ which are close to each other but do not belong to a common chamber.
			
			Note that $\sigma_{\beta'}$ and the cones $C_J$ are closed in $M_\R$. By our assumption, the relative interior $\sigma_{\beta'}^\circ$ is not contained in any $\uC_{J}$. Take any non-empty maximal cone $\uC_{J}$ in $\sigma_{\beta'}^\circ$ with maximal $J$, and choose a point $m$ with rational coordinates which is in $\sigma_{\beta'}^{\circ}$ and on the relative boundary of $\uC_{J}$ (the assumption $\uC_J\neq \sigma_{\beta'}^{\circ}$ ensures that such points exist). Take a point $\delta$ with rational coordinates in the relative interior $\uC_{J}^\circ$. Then for any $\lambda>0$ sufficiently small, the point $m_+(\lambda):=m+\lambda \delta $ lies in the interior of $\uC_J$, and the point $m_-(\lambda)=m-\lambda \delta$ lies outside $\uC_J$ but still in $\sigma_{\beta'}^{\circ}$.
			
			We claim that $m_-(\lambda)\notin C_j$ for any $j\in J$, i.e. $m_-(\lambda)$ and $m_+(\lambda)$ are not contained in a common chamber $C_j$. For $p\in \sigma_{\beta'}^\circ$, define $J(p)=\{j|p\in \uC_j\}$. Then $J(m_+(\lambda))=J$ because $m_+(\lambda)$ is an interior point of the top dimensional cone $\uC_J$ and $J$ is maximal. Define $J'=J(m_-(\lambda))$. If our claim was false, then $J\cap J'\neq\emptyset$. Moreover, $\uC_J$ is \textit{properly} contained in $\uC_{J\cap J'}$ since the latter contains $m_-(\lambda)$. This contradicts our assumption that $\uC_J$ is maximal, so our claim holds.
			
			(iii) Take the points $m$, $m_\pm(\lambda)$, the chamber $\uC_J$, and the vector $\delta=\sum \delta_i g(\beta_i)$, $\delta_i\in\Q$ as above. We will show that some points near $m_+(\lambda)$ must be contained in the same chamber as some points near $m_-(\lambda)$ and thus obtain a contradiction.
			
			Let $\Delta$ denote an ideal triangulation without self-folded triangles such that $\Delta\supset \Delta_0$, and let $\gamma_1,\ldots,\gamma_r$ be its internal arcs. Define the matrix $(c_{ij})=(e(\beta_i)\cdot \gamma_j)_{1\leq i\leq s,1\leq j\leq r}$, where we write $e(\beta_i)\cdot \gamma_j$ to denote the crossing number $\cc(e(\beta_i),\gamma_j)$. The row vectors of $(c_{ij})$ are linearly independent by Lemma \ref{lem:independ_intersection_coord}. Consequently, the column rank of $(c_{ij})$ is $s$.
			
			Let $S$ be the set of $i=1,\ldots,s$ such that $\beta_i\in \beta'$.  Let us denote $$m=\sum_{i\in S} b_i g(\beta_i)=\sum_{i\in S} \left(\frac{b_i}{\sum_j c_{ij}}\right)\left(\sum_j c_{ij}\right)g(\beta_i),$$ so each $b_i\in \Q_{>0}$. Further define $w_i\in \NN_{>0}$ for each $i\in S$ via $w_i= \nu\left(\frac{b_i}{\sum_j c_{ij}}\right)$ for all $i\in S$ and for some fixed $\nu\in \NN_{>0}$. Then we can rewrite
			\[
			m=\frac{1}{\nu}\sum_{i\in S} w_i\left(\sum_j c_{ij}\right)g(\beta_i).
			\]
			
			Since the column rank of $(c_{ij})$ equals $s$, its column vectors span $\Q^s$ over $\Q$. Recall the numbers $\delta_i\in \Q$ from Step (ii). For the vector $(\frac{\delta_i}{w_i})_{1\leq i\leq s}\in \Q^s$, we can find rational numbers $\frac{d_j}{d}\in \Q$, where $1\leq j\leq r$, $d_j\in \bb{Z}$, $d\in \NN_{>0}$, such that $\frac{\delta_i}{w_i}=\sum_j c_{ij}\frac{d_j}{d}$ for any $i$. 
			
			Then we have, for any $D\in \NN_{>0}$:
			\begin{align*}
				m_\pm(\lambda)&=m\pm\lambda \sum_{i\in S} \delta_i g(\beta_i)\\
				&=\frac{1}{\nu }\sum_{i\in S} w_i\left(\sum_j c_{ij}\right)g(\beta_i)\pm\frac{\lambda}{d} \sum_{i\in S} \left(w_i\sum_j c_{ij}d_j\right) g(\beta_i)\\
				&=\frac{1}{\nu D}\sum_{i\in S} w_i\left(\sum_j c_{ij}D\right)g(\beta_i)\pm\frac{\lambda}{d} \sum_{i\in S} \left(w_i\sum_j c_{ij}d_j\right) g(\beta_i).
			\end{align*}
			Now set $\lambda=\frac{d}{\nu D}$ with a sufficiently large $D$ such that $\lambda$ is sufficiently small and $D>|d_j|$ for all $j$. We obtain
			\begin{align*}
				m_\pm(\lambda)=\frac{1}{\nu D}\sum_{i\in S} w_i\left(\sum_j c_{ij}(D\pm d_j)\right)g(\beta_i)
			\end{align*}
			
			Define weighted simple multicurves $\gamma_\pm=\bigcup_j (D\pm d_j)\gamma_j$. Then $g(\gamma_\pm)$ are interior points in the chamber  $C_\Delta$  associated to $\Delta$ in the scattering diagram $\f{D}$.
			
			Consider the composition of Dehn twists $\tau=\prod_{\beta_i\in \beta'} \tau^{w_i}_{\beta_i}$. For any given $K\in \NN_{>0}$, define $\Delta(K):=\tau^K (\Delta)$ and $\gamma_\pm(K):=\tau^K (\gamma_\pm)$. Then $g(\gamma_\pm(K))$ belong to the chamber $C_{\Delta(K)}$ in $\f{D}$ associated to $\Delta(K)$. In addition, for each $\beta_i\in \beta$, $e(\beta_i)$ is $\tau$-invariant, so $e(\beta_i) \cdot \gamma_\pm (K)=e(\beta_i) \cdot \gamma_\pm =\sum_j c_{ij}(D\pm d_j)$.
			
			By Theorem \ref{Thm:Yurikusa_Dehn}, \eqref{eq:shear_g}, and Remark \ref{rmk:tw}, there exists some $K_0\in \NN_{>0}$, such that, modulo frozen coordinates $\Z^F$,
			\begin{align*}
				g(\gamma_\pm(K+K_0))&\equiv g(\gamma_\pm(K_0))+\sum_{i\in S} K w_i(e(\beta_i)\cdot \gamma_\pm(K_0)) g(\beta_i)\\
				&=g(\gamma_\pm(K_0))+\sum_{i\in S} K w_i\left(\sum_j c_{ij}(D\pm d_j)\right) g(\beta_i)\\
				&=g(\gamma_\pm(K_0))+K \nu D m_\pm(\lambda).
			\end{align*}
			Consequently, we get 
			\begin{align*}
				m_\pm(\lambda)+\frac{1}{K \nu D}g(\gamma_\pm(K_0))\in \frac{1}{K\nu D} g(\gamma_\pm(K+K_0))+\Z^F \subset C_{\Delta(K+K_0)}.
			\end{align*}
			Note that $C_{\Delta(K+K_0)}$ must be contained in a chamber $C'$ of $\f{D}_{\ell}$.
			
			Recall that $m_+(\lambda)\in \uC_J^\circ$ with maximal $J$, and for all $j\in J$, $m_-(\lambda)\notin \uC_j$.  By choosing $K$ sufficiently large, we still have $m_+(\lambda)+\frac{1}{K \nu D}g(\gamma_+(K_0))\in \uC_J^\circ$ and, for all $j\in J$, $m_-(\lambda)+\frac{1}{K \nu D}g(\gamma_-(K_0))\notin \uC_j$ --- hence, $m_\pm(\lambda)+\frac{1}{K \nu D}g(\gamma_\pm(K_0))$ are not contained in a common chamber in $\f{D}_{\ell}$. But they are equal to $\frac{1}{K \nu D}g(\gamma_\pm(K+K_0))$ respectively, which both belong to $C_{\Delta(K+K_0)}\subset C'$. This contradiction implies that the assumption in (ii) is false, i.e., $\sigma_{\beta'}$ must be contained in a chamber of $\f{D}_{\ell}$.
			
			  (iv)  Finally, take a top dimension cone $\uC_J=C_J \cap \sigma_{\beta'}^\circ$ in $\sigma_{\beta'}^\circ$ as in step (i). We have shown that $\uC_J=\sigma_{\beta'}^\circ$. Our arguments in step (iii) show that for any interior point $m_+(\lambda)$ in $\uC_J$, we can construct a chamber $C_{\Delta(K+K_0)}$ of the cluster complex containing a point of $\sigma_{\beta'}^{\circ}$ near to $m_+(\lambda)$. Recall that $C_{\Delta(K+K_0)}$ is contained in a chamber $C'$ of $\f{D}_{\ell}$. So $C'$ contains an interior point of $\uC_J$, and we deduce $C_J\subset C'$.  It follows that $\sigma_{\beta'}\subset C_J\subset C'$ and $C_{\Delta'}\subset C'$ for $\Delta'=\Delta(K+K_0)$.  Since the Dehn twists act trivially upon $\Delta_0$, we have $\Delta_0\subset \Delta'$, as desired.
		\end{proof}

		\begin{lem}\label{lem:int-closed}
			Under the assumptions of Lemma \ref{lem-sigma-beta}, plus the assumption that $\beta$ is maximal, we have that for any $m_1,m_2\in \sigma_{\beta}\cap M$, the decomposition $$\vartheta_{m_1}\vartheta_{m_2}=\sum_{m\in M} a_m \vartheta_m$$ satisfies $a_m=0$ whenever $m\notin \sigma_{\beta}\cap M$. 
		\end{lem}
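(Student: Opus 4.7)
The plan is to combine mapping class group equivariance with the bracelets basis of the classical skein algebra, using the maximality of $\beta$ to pin down the possible $g$-vectors appearing in the expansion.

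First I would reduce via Lemma \ref{lem:max-and-boundary} to the classical ($t=1$) setting with $\beta$ maximal, consisting only of weighted loops (the subset $\beta'\subset\beta$) and boundary arcs. Fix $m_1,m_2\in\sigma_\beta\cap M$ and suppose $a_m\neq 0$ in the expansion $\vartheta_{m_1}\vartheta_{m_2}=\sum a_m\vartheta_m$; the goal is to show $m\in\sigma_\beta$. For each $\beta_i\in\beta'$, the Dehn twist $\tau_{\beta_i}\in\Gamma_\Sigma$ preserves every component of $\beta$ (they are pairwise non-intersecting) and every boundary arc, so by Lemma \ref{GammaTheta} it fixes each $\BracE{\beta_j}=\vartheta_{g(\beta_j)}$, and by Lemma \ref{cor:tau-m} it fixes each $g(\beta_j)\in M$.

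Next I would show that $\tau_{\beta_i}$ fixes $\sigma_\beta$ pointwise. The piecewise-linear action of $\Gamma_\Sigma$ on $M$ is linear on each cluster chamber, and by Lemma \ref{lem-sigma-beta} the cone $\sigma_\beta$ is contained in a common chamber of $\f{D}_\ell$ with some cluster chamber for every $\ell$; combined with the realization of $\sigma_\beta$ in the proof of Lemma \ref{lem-sigma-beta} as the limit of cluster chambers $C_{\Delta(K+K_0)}$ under iterated Dehn twists, one concludes that $\tau_{\beta_i}$ acts linearly on $\sigma_\beta$. Since $\tau_{\beta_i}$ fixes the simplicial generators $\{g(\beta_j)\}$ of $\sigma_\beta$, it fixes $\sigma_\beta$ pointwise, so in particular $\tau_{\beta_i}(m_j)=m_j$ for $j=1,2$ and hence $\tau_{\beta_i}(\vartheta_{m_1}\vartheta_{m_2})=\vartheta_{m_1}\vartheta_{m_2}$. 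Applying Lemma \ref{cor:tau-m} again, each $m$ appearing in the expansion satisfies $\tau_{\beta_i}(m)=m$ for every $\beta_i\in\beta'$.

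To conclude, I would expand $\vartheta_m$ in the bracelets basis of $\Sk(\Sigma)$ as $\vartheta_m=\sum c_{C''}\BracE{C''}$ (Lemma \ref{Lem:bracelet_basis}). Since $\tau_{\beta_i}$ fixes $\vartheta_m$ and permutes the bracelets basis, the expansion is $\tau_{\beta_i}$-invariant; but $\tau_{\beta_i}$ acts with infinite orbit on any bracelet whose loop components intersect $\beta_i$, so only bracelets with all loop components disjoint from $\beta_i$ can appear with nonzero coefficient. Intersecting over all $\beta_i\in\beta'$ and invoking the maximality of $\beta$ (no new compatible loop can be adjoined), the loop components of each contributing $C''$ must be homotopic to loops in $\beta'$. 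Therefore $g([C''])\in\sigma_\beta\cap M$ for each such $C''$, and since $\vartheta_m$ is pointed at $m$ with leading term equal to that of some $\BracE{C''}$, we obtain $m=g([C''])\in\sigma_\beta\cap M$.

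The main obstacle is the linearity claim for $\tau_{\beta_i}$ on $\sigma_\beta$: we need the piecewise-linear action, which in general changes formulas across walls of the cluster complex, to restrict to a single linear map on the cone $\sigma_\beta$. This is where Lemma \ref{lem-sigma-beta}'s realization of $\sigma_\beta$ as a limit of cluster chambers is essential, as it identifies $\sigma_\beta$ with a face (or limit) on which the twist's action assembles into one linear map. A secondary subtlety is the identification of the $\tau_{\beta_i}$-invariant part of the bracelets basis with multicurves supported on $\beta$; here the maximality of $\beta$ is what promotes ``disjoint from each $\beta_i$'' to ``supported on $\beta$,'' so that the resulting $g$-vectors actually land in $\sigma_\beta$.
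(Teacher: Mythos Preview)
Your overall strategy is sound, but the linearity claim in your second paragraph is a genuine gap. The piecewise-linear action of $\tau_{\beta_i}$ on $M_{\bb{R}}$ is linear only on the cones of a fan determined by the particular mutation sequence realizing $\tau_{\beta_i}$, and there is no reason these linearity domains should contain $\sigma_\beta$. Lemma~\ref{lem-sigma-beta} only places $\sigma_\beta$ and some cluster chamber together in a common chamber of $\f{D}_\ell$ for each finite $\ell$; this is not the same thing, and the ``limit of cluster chambers'' picture does not assemble $\tau_{\beta_i}|_{\sigma_\beta}$ into a single linear map. You flag this yourself as the main obstacle, and indeed it is not resolved.

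The paper sidesteps the issue by only claiming \emph{finite orbit}, not pointwise fixity. Since $\tau_{\beta_i}$ fixes each bracelet $\BracE{\beta_j}=\vartheta_{g(\beta_j)}$, Lemma~\ref{cor:tau-m} gives that $\tau_{\beta_i}$ acts with finite orbit on every $g\in\sigma_\beta\cap M$ (the tropical maps are positively homogeneous, so one passes from non-negative integer combinations of the $g(\beta_j)$ to all lattice points of $\sigma_\beta$). Hence some power of $\tau_{\beta_i}$ fixes $\vartheta_{m_1}\vartheta_{m_2}$, and every $m$ with $a_m\neq 0$ has finite $\tau_{\beta_i}$-orbit. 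For the other direction, the paper bypasses your bracelet-expansion of $\vartheta_m$ entirely: if $m\notin\sigma_\beta\cap M$, maximality of $\beta$ (which contains no interior arcs) forces the bracelet with $g$-vector $m$ to cross some loop $\beta_i$, hence to have infinite $\tau_{\beta_i}$-orbit, and Lemma~\ref{lem:infinite-orbit} then gives that $m$ itself has infinite orbit---contradiction. Your third paragraph can be repaired by replacing ``fixed'' with ``finite orbit'' throughout, but the route via Lemma~\ref{lem:infinite-orbit} is shorter and never needs to expand $\vartheta_m$ in the bracelets basis. (Also, the reduction to $t=1$ via Lemma~\ref{lem:max-and-boundary} is for Lemma~\ref{LemDisjoint}, not for this lemma; the argument here works uniformly in $t$.)
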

		\begin{proof}	
			Since the elements of $\beta$ are non-intersecting, Dehn twists by any loop in $\beta$ will act trivially on each $\beta_i\in \beta$, so they will act with finite orbit on each $g\in \sigma_{\beta}$ by Lemma \ref{cor:tau-m}.  In particular, each such Dehn twist acts with finite orbit on $m_1$ and $m_2$, hence on $\vartheta_{m_1}\vartheta_{m_2}$.
			
			On the other hand, for $m\in M\setminus (\sigma_{\beta}\cap M)$, the maximality of $\beta$ implies that the bracelet $\beta_m$ with $g$-vector $m$ has positive crossing number with some $\beta_i\in \beta$, and this $\beta_i$ is a loop by the assumption that $\beta$ contains no interior arcs.  So by Lemma \ref{lem:infinite-orbit}, the action of $\tau_{\beta_i}$ on such $m$ has infinite orbit.  Thus, $\vartheta_m$ with $m\notin \sigma_{\beta}\cap M$ cannot appear in the theta function expansion of $\vartheta_{m_1}\vartheta_{m_2}$.
		\end{proof}

		\begin{lem}\label{loop-broken-pm}
			Let $\Sigma$ and $\beta$ be given as in Lemma \ref{lem-sigma-beta} with $\Sigma$ unpunctured. For $\ell' \gg \ell \gg 1$, fix generic $\sQ$ in a chamber of $\f{D}_{\ell'}$ which contains the cone $\sigma_{\beta}$ as in Lemma \ref{lem-sigma-beta}.  Then for each $\beta_i\in \beta'$, there are precisely two broken lines with ends $(g(\beta_i),\sQ)$ and final exponents in $M\setminus (p+\ell M^+)$, and their final attached monomials are $z^{g(\beta_i)}$ and $z^{-g(\beta_i)}$.  If $\beta_i$ is a weighted boundary arc, then the only broken line with ends $(g(\beta_i),\sQ)$ is the straight broken line with final attached monomial $z^{g(\beta_i)}$.
		\end{lem}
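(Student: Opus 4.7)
The plan is to use Lemma \ref{lem-sigma-beta} to place $\sQ$ and all of the $g(\beta_i)$ in a single chamber of $\f{D}_{\ell'}$, then analyze each $\beta_i$ separately using the bracelet-theta identifications from Lemmas \ref{ArcLoop1Lem} and \ref{ChebyLem}. Specifically, I would choose $\ell'$ large enough relative to $\ell$ so that (a) Lemma \ref{Zp} applies with $\sQ$ playing the role of a point sufficiently close to each $g(L_i)$ and (b) the Chebyshev substitution described below preserves the required containment, and then invoke Lemma \ref{lem-sigma-beta} to produce a chamber $C'$ of $\f{D}_{\ell'}$ containing $\sigma_\beta$; this chamber then contains $\sQ$ and each $g(\beta_i)=w_i g(L_i)$ as well as each $g(L_i)$. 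For a weighted boundary arc $\beta_i=w_i\alpha_i$, the bracelet $[\beta_i]$ is the monomial $A_{\alpha_i}^{w_i}=z^{g(\beta_i)}$, so $\vartheta_{g(\beta_i),\sQ}$ admits only the straight broken line and the lemma's claim is immediate.

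For a weighted loop $\beta_i=w_i L_i$, I first treat $w_i=1$. By Lemmas \ref{ArcLoop1Lem} and \ref{ChebyLem} we have $\vartheta_{k g(L_i)}=\langle kL_i\rangle=T_k([L_i])$, so $\vartheta_{g(L_i)}^s=[L_i]^s$ expands as a $\bb{Z}_{\geq 0}$-combination of the $\vartheta_{k g(L_i)}$ via the Chebyshev recursion. Lemma \ref{Zp} applied with $p=g(L_i)$ then restricts the relevant broken lines' final exponents to $g(L_i)$ and $-b g(L_i)$ for some $b\geq 1$. To eliminate $b\geq 2$ and pin the coefficient of $z^{-g(L_i)}$ at exactly $1$, I would rerun the skein computation from step (i) of Lemma \ref{ChebyLem}'s proof, using that the constant term of $[L_i]^2$ equals $2$ and that the $\vartheta_{g(\gamma)}$-coefficient of $[\gamma][L_i]^{b+1}$ equals $\nu_{0,b+1}$ for any non-boundary arc $\gamma$; although originally carried out in the cut surface $\Sigma_{L_i}$, this transfers to the ambient $\Sigma$ via the gluing lemma (Lemma \ref{glue}). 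The upshot is
\[
\vartheta_{g(L_i),\sQ}=z^{g(L_i)}+z^{-g(L_i)}+R_{\sQ},
\]
with $R_\sQ$ supported on exponents in $g(L_i)+\ell_0 M^+$ for an $\ell_0$ that grows with $\ell'$.

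For $w_i\geq 2$, I would plug this low-order expansion into $\vartheta_{w_i g(L_i)}=T_{w_i}(\vartheta_{g(L_i)})$ and apply the Chebyshev identity $T_{w_i}(y+y^{-1})=y^{w_i}+y^{-w_i}$ from \eqref{Tkex} with $y=z^{g(L_i)}$, producing
\[
\vartheta_{w_i g(L_i),\sQ}=z^{w_i g(L_i)}+z^{-w_i g(L_i)}+[\text{contributions with at least one factor of }R_\sQ].
\]
Each such remaining contribution is a product of at most $w_i$ factors drawn from $\{z^{\pm g(L_i)},R_\sQ\}$ with at least one $R_\sQ$ factor, so its exponent has the form $k g(L_i)+v$ with $|k|\leq w_i$ and $v\in\ell_0 M^+$; using that $-g(L_i)\in\bb{Q}_{\geq 0}M^\oplus$ by \eqref{eq:loop_deg_monoid} and taking $\ell_0$ large relative to $\ell$ and $\max_i w_i$, these exponents all lie in $w_i g(L_i)+\ell M^+$ and thus are excluded from the lemma's enumeration. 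Combined with the positivity of broken lines (Lemma \ref{BLpos}), the coefficient $1$ at $z^{\pm w_i g(L_i)}$ forces exactly two contributing broken lines, as claimed. The main obstacle is this final containment check — coordinating the depth $\ell_0$ of $R_\sQ$ with the threshold $\ell$ and the finitely many weights $w_i$ so that a single chamber $C'$ of a single $\f{D}_{\ell'}$ works uniformly for all loop components $\beta_i$ at once.
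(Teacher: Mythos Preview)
Your proposal is essentially correct but takes a more roundabout path than the paper. Two simplifications the paper exploits would remove the bookkeeping you flag as the main obstacle.

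First, the paper applies Lemma~\ref{Zp} directly with $p=g(\beta_i)=w_i g(L_i)$ rather than reducing to the weight-$1$ case. This works because Chebyshev polynomials satisfy $T_k\circ T_{w_i}=T_{kw_i}$, so from $\vartheta_{m g(L_i)}=T_m(\vartheta_{g(L_i)})$ (Lemmas~\ref{ArcLoop1Lem} and~\ref{ChebyLem}) one immediately gets $\vartheta_{kg(\beta_i)}=T_k(\vartheta_{g(\beta_i)})$ for all $k\geq 1$, hence $\vartheta_{g(\beta_i)}^s$ expands in the $\vartheta_{kg(\beta_i)}$. Lemma~\ref{Zp} then gives the form $z^{g(\beta_i)}+\sum_{b\geq 1}c_b z^{-bg(\beta_i)}+(\text{deep terms})$ directly, bypassing your Chebyshev-substitution step and the associated depth-tracking for the remainder $R_\sQ$.

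Second, to pin down $c_1=1$ and $c_b=0$ for $b\geq 2$, the paper does not rerun the skein computation from $\Sigma_{L}$. That computation relied on features specific to the cut surface (every interior arc meets $L$ twice, a single boundary arc $\alpha_L$, the analysis of $[\alpha_L][L_0]$), and the gluing lemma transfers only the theta-function property, not those specific coefficient computations. Instead, the paper reads off the coefficients directly: since $\vartheta_{g(\beta_i)}^k=\sum_j \nu_{j,k}\vartheta_{jg(\beta_i)}$ by inverting the Chebyshev relations, the $\vartheta_0$-coefficient is $\nu_{0,k}$; comparing this with what the structure-constants formula (Proposition~\ref{StructureConstants}) gives for the $z^0$-coefficient in terms of the $c_b$'s forces $c_1=1$ and $c_b=0$ for $b\geq 2$. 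Your version reaches the same conclusion but via an unnecessary detour whose transfer to the ambient surface is not justified by Lemma~\ref{glue} alone.
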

		\begin{proof}
			For $\beta_i$ a weighted loop, the fact that $\vartheta_{kg(\beta_i)}=T_k(\vartheta_{g(\beta_i)})$ for each $k\in \Z_{\geq 1}$ (by Lemmas \ref{ArcLoop1Lem} and \ref{ChebyLem}) implies that $\vartheta_{g(\beta_i)}^k=\sum_{b=0}^k a_{k,b}\vartheta_{bg(\beta_i)}$ for some $a_{k,b}\in \bb{Z}_{\geq 0}$ (cf. \cite[Lem. 3.18]{Allegretti2015duality}). So by Lemma \ref{Zp}, $\vartheta_{g(\beta_i),\sQ}$ has the form $z^{g(\beta_i)}+\sum_{b\in \bb{Z}_{\geq 1}} c_b z^{-bg(\beta_i)}+\ldots$ with each $c_b$ in $\bb{Z}_{\geq 0}$ and with any remaining terms having exponents in $p+\ell M^+$ coefficients in $\bb{Z}_{\geq 0}$.  By the Chebyshev relations, the constant term in $\vartheta_{g(\beta_i)}^k$ equals $0$ if $k$ is odd and equals $\binom{k}{k/2}$ if $k$ is even.  It follows that $c_1=1$ and $c_k=0$ for $k\geq 2$, as desired.

			Finally, the claim for boundary arcs follows from the fact that boundary arcs correspond to frozen cluster variables and that all walls are parallel to $M_F:=\bb{Z}\langle e_i^*|i\in F\rangle$.
		\end{proof}
		
		\begin{myproof}[Proof of Lemma \ref{LemDisjoint}]
				We know from Corollary \ref{cor:alpha-sum}  that the theta function expansion of the product on the left-hand side of \eqref{ProdDisjointThetas} includes the term on the right-hand side, so our goal is just to show that there are no contributions from any other theta functions.
			
			We apply Lemma \ref{lem:max-and-boundary} to assume that $\beta$ is maximal and includes no interior arcs, and that $t=1$.  By Lemma \ref{lem:int-closed}, $$\vartheta_{g(\beta_1)} \cdots \vartheta_{g(\beta_s)} = \sum_{m\in \sigma_{\beta}\cap M} a_m \vartheta_m$$
			with each $a_m\in \bb{Z}_{\geq 0}$.  Take any $m$ such that $a_m\neq 0$. 
			Choose $\ell,\ell'$ as in Lemma \ref{loop-broken-pm} with $\ell$ large enough so that $m-g(\beta_1)-\ldots-g(\beta_s)\notin \ell M^+$.  Then choose a generic $\sQ$ in a chamber $\sigma$ of $\f{D}_{\ell'}$ which contains $\sigma_{\beta}$ as in Lemma \ref{lem-sigma-beta}.  So by Lemma \ref{loop-broken-pm}, if $\gamma_1,\ldots,\gamma_s$ is a tuple of broken lines with ends $(g(\beta_i),\sQ)$, $i=1,\ldots,s$ respectively, contributing to $a_m$ as in Proposition \ref{StructureConstants}, then the final exponent of $\gamma_i$ is $g(\beta_i)$ if $\beta_i$ is a weighted boundary arc, and it is $\epsilon_i g(\beta_i)$ for $\epsilon_i=\pm 1$ if $\beta_i$ is a weighted loop.  By the linear independence of the $g$-vectors from Lemma \ref{lem:lin-ind}, the sum of these final exponents can only lie in $\sigma_{\beta}$ if each $\epsilon_i$ is non-negative.  This means that $$m=g(\beta_1)+\ldots+g(\beta_s),$$
			as desired.
		\end{myproof}

		\section{Results for general cluster algebras of surface type}\label{sec:general_surface}
  		\subsection{Bracelets are theta functions: punctured cases}\label{sec:theta_punctured}

		Let $\Sigma=(\SSS,\MM)$ denote a triangulable surface, possibly with punctures. By Lemmas \ref{lem:similar_theta_function} and \ref{lem:similar_theta_function_2}, in order to show that weighted tagged bracelet elements are theta functions for (quantum) cluster algebras, it suffices to work with any (quantum) cluster algebra of type $\Sigma$ with $M_{\sd}^{\oplus}$ strongly convex. In view of Lemma \ref{union-product}, we can work with connected $\Sigma$ without loss of generality. 
		
		Let there be given any weighted simple multicurve $L=\bigcup w_i L_i$, such that $L_i$ are loops. By Lemma \ref{lem:triangulation}, we can choose internal plain arcs $\gamma_1,\ldots,\gamma_s$ which do not intersect $L_i$, such that the surface $\Sigma'$ obtained by cutting $\Sigma$ along $\gamma_i$ is unpunctured, and, moreover, $\{\gamma_1,\ldots,\gamma_s\}$ can be extended to an ideal triangulation $\Delta$ without self-folded triangles. Let $\Delta'$ denote the triangulation of $\Sigma'$ corresponding to $\Delta$. We associate to it the principal coefficient seed $\sd'=\sd_{\Delta'}^{\prin}$. 
		
		Following \S \ref{sec:gluing_frozen_vertices}, for $1\leq i\leq s$, gluing the frozen vertices $\gamma_i^{(1)},\gamma_i^{(2)}$ corresponding to $\gamma_i$  (and also gluing the corresponding principal coefficient indices to $\gamma_i'$ and changing $\omega(\gamma_i,\gamma_i')$ to $1$), we obtain a seed $\?{\sd'}$ from $\sd'$. Following \S \ref{sec:unfreezing}, let $\sd$ denote the seed obtained from $\?{\sd'}$ by unfreezing $\gamma_i$. Then $\sd$ is of full-rank and is similar to $\sd_\Delta$. In fact, $\sd=\sd_{\Delta}^{\prin}$.

		We make $\sd'$ into a quantum seed by choosing the canonical bilinear form $\Lambda_{\sd'}$ as in \eqref{LambdaPrin}. We similarly choose a compatible $\Lambda$ for $\sd$ as in \eqref{LambdaPrin} (viewing $\sd$ as $\sd_{\Delta}^{\prin}$), and this induces a compatible pair for $\?{\sd'}$. As before, we have a $\kk$-linear map $\pi_{\sd'}:\kk_t[\sd']\rightarrow \kk_t[\?{\sd'}]$ for the gluing and a $\kk_t$-linear map $\f{i}:\kk_t[\?{\sd'}]\rightarrow \kk_t[\sd]$ for the unfreezing.
		
		Recall that $L$ represents a bracelet element $\E{L}$ in the (quantum) upper cluster algebra $\s{A}^{\up}_t(\sd)$, see \S \ref{sec:qbracelet_loop} and \eqref{eq:LBrac}.
		
		\begin{lem}\label{lem:loop_puncture_bracelet}
			The bracelet element $\E{L}$ is a theta function of $\s{A}^{\up}_t(\sd)$.
		\end{lem}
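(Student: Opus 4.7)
The plan is to propagate the unpunctured bracelet/theta identification of Theorem \ref{thm:bracelet-theta-no-punct} through the three seed operations set up just before the lemma: similarity ($\sd_{\Delta'} \rightsquigarrow \sd' = \sd_{\Delta'}^{\prin}$), gluing of frozen vertices ($\sd' \rightsquigarrow \?{\sd'}$), and unfreezing ($\?{\sd'} \rightsquigarrow \sd = \sd_\Delta^{\prin}$). By the construction \eqref{eq:LBrac}, the bracelet $\E{L} \in \s{A}^{\up}_t(\sd)$ is exactly the image of the unpunctured bracelet $\E{L'} \in \s{A}^{\up}_t(\sd_{\Delta'})$ under the composition $\f{i} \circ \pi_{\sd'}$ applied to the similar lift $\langle L\rangle' \in \s{A}^{\up}_t(\sd')$, so the content of the lemma is precisely that each of these three operations preserves the property of being a theta function in this setting.

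First I would verify that the first two operations are automatic. Theorem \ref{thm:bracelet-theta-no-punct} identifies $\E{L'}$ with a quantum theta function in $\s{A}^{\up}_t(\sd_{\Delta'})$. Since $\sd' = \sd_{\Delta'}^{\prin}$ is similar to $\sd_{\Delta'}$ and satisfies the Injectivity Assumption, Lemma \ref{lem:similar_theta_function} gives that $\langle L\rangle'$ is a theta function in $\s{A}^{\up}_t(\sd')$. The gluing map $\pi_{\sd'}$ sends theta functions to theta functions by \eqref{eq:piM-prin} and \eqref{pi-theta}, producing a theta function in $\s{A}^{\up}_t(\?{\sd'})$.

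The hard step is the unfreezing. The criterion provided by Lemma \ref{lem:positive_unfreeze_theta} is that $\f{i}(\pi_{\sd'}(\langle L\rangle')) = \E{L}$ will be a theta function in $\s{A}^{\up}_t(\sd)$ provided its coefficients in the theta-basis expansion are non-negative in the classical limit. By Proposition \ref{AtomicProp}, this reduces to showing that $\E{L}|_{t=1}$ is universally positive with respect to the scattering atlas of $\sd$.

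To establish this classical positivity---the main obstacle---I would invoke Theorem \ref{Thurst-Brac-Bas}, which says that the classical bracelet basis of $\Sk(\Sigma)$ is strongly positive; Lemma \ref{PosImplications} then gives universal positivity of every bracelet, and in particular of $\E{L}$, with respect to the cluster atlas of $\sd_\Delta$. Corollary \ref{cor:surface-theta-atomic} identifies the scattering and cluster atlases for $\sd_\Delta$ except when $\Sigma$ has once-punctured closed components; for those remaining components, Lemma \ref{Lem:pos-1-p} supplies the needed positivity directly, and since loop bracelets are invariant under the tag-switching automorphism of Remark \ref{rmk:tag-switch}, positivity along untagged triangulations extends to all tagged triangulation charts. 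The classical Laurent coefficients of $\E{L}$ are unchanged under passage from $\sd_\Delta$ to its principal coefficient seed $\sd$ (only the exponents are relabelled, via Lemma \ref{lem:similar_theta_function_2}), so the required theta positivity of $\E{L}|_{t=1}$ in $\s{A}^{\up}(\sd)$ follows, completing the argument.
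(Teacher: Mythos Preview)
Your overall structure---similarity, then gluing, then unfreezing, with the hard step being the classical theta positivity required by Lemma~\ref{lem:positive_unfreeze_theta}---matches the paper's proof exactly. The gap is in how you establish that classical positivity.

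You invoke Theorem~\ref{Thurst-Brac-Bas} (strong positivity of the untagged bracelets basis of $\Sk(\Sigma)$) together with Lemma~\ref{PosImplications} to conclude cluster positivity. But Lemma~\ref{PosImplications} requires the basis to contain \emph{all} cluster monomials of $\s{A}^{\up}$. For a punctured surface $\Sigma$ that is not a once-punctured closed surface, the cluster variables include tagged arcs with notches at punctures, and these are \emph{not} elements of $\Brac(\Sigma)$. So the hypothesis of Lemma~\ref{PosImplications} fails precisely in the main case of interest (compare the proofs of Lemmas~\ref{ClassicalPosNoPunct} and~\ref{Lem:pos-1-p}, which explicitly note that the bracelets include all cluster monomials in the unpunctured and once-punctured-closed settings, respectively---this is exactly what breaks here). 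Strong positivity of bracelets thus gives positivity in the \emph{ideal} triangulation atlas, but not automatically in the full tagged/cluster atlas, and it is the latter that coincides with the scattering atlas via Corollary~\ref{cor:surface-theta-atomic}.

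The paper closes this gap by a different argument: rather than proving positivity in every chart, it shows that only ideal-triangulation charts are actually needed. Using a Dehn-twist argument (via Lemma~\ref{lem:infinite-orbit} and Lemma~\ref{lem-sigma-beta}), it proves that any $g$-vector $g$ with $c_g\neq 0$ in the theta expansion \eqref{eq:EL} corresponds to a tagged bracelet none of whose components is an arc with an end at a puncture, and hence such $g$ can be approached by chambers $C_{\Delta'}$ for ideal triangulations $\Delta'$ without self-folded triangles. Positivity in those charts then follows directly from the explicit monodromy formula \eqref{eq:ell-pi-pointed}. (For once-punctured closed surfaces the argument is simpler: all exponents lie in the hyperplane $H=n_0^{\perp}$, which the plain-arc cluster complex closure covers by Proposition~\ref{gdense}.)
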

		\begin{proof}
			Notice that $L$ can be naturally identified with a weighted simple multicurve $L'$ in $\Sigma'$ consisting of non-intersecting weighted loops and corresponding to a bracelet element $\langle L'\rangle$ in $\s{A}_t^{\up}(\sd')$. By construction, the corresponding bracelet elements satisfy $\E{L}=\f{i}\circ \pi_{\sd'}\E{L'}$, cf. \eqref{eq:LBrac}. 
			
			Since $\Sigma'$ is an unpunctured surface, Theorem \ref{thm:bracelet-theta-no-punct} and Lemma \ref{lem:similar_theta_function_2} imply that $\E{L'}$ is a theta function for $\s{A}^{\up}_t(\sd')$. Then $\pi_{\sd'}\E{L'}$ is a theta function for $\?{\sd'}$, see \eqref{pi-theta}.  So by Lemma \ref{lem:positive_unfreeze_theta},  in order to show that $\E{L}=\f{i}(\pi_{\sd'}\E{L'})$ is a theta function, it suffices to show that $\langle L\rangle$ is theta positive when $t=1$.  That is, we wish to show in the classical setting that in the expansion
			\begin{align}\label{eq:EL}
				\E{L}=\sum _{g\in g(L)+M^\oplus} c_g \vartheta_g,
			\end{align}
            the coefficients $c_g$ always lie in $\bb{Z}_{\geq 0}$.  Note that it suffices to check this positivity in the case where $L$ consists of a single weighted component, so we assume this is the case.
            
            Suppose $\Sigma$ is a once-punctured closed surface.  Let $H$ denote the hyperplane in $M_{\R}$ with the normal vector $\sum_{i\in I_{\ufv}} e_i$. Observe that $M_\ufv:=\omega_1 (N_\ufv)$ is contained in $H$. In addition, we have $g(L)\in M_\ufv$ by \eqref{eq:loop_deg_monoid}. We deduce that all $g$ appearing in \eqref{eq:EL} are contained in $H$.  Thus, for any such $g$, we can take a base point $\sQ$ which is arbitrarily close to $g$ and belongs to a chamber of the (plain arc) cluster complex (by Proposition \ref{gdense}).
            
            We wish to show that the same is true when $\Sigma$ is not a once-punctured closed surface.  Suppose $c_g\neq 0$.  Then $\?{g}\coloneqq \pr_{I_1}(g)$ is the extended $g$-vector of a tagged bracelet element $\beta_{\?{g}}$.  We claim that no component of $\beta_{\?{g}}$ can be an interior tagged arc with an end at a puncture; in particular, no component can be an arc with a notch.  It then follows from Lemma \ref{lem-sigma-beta} that we can find $\sQ$ arbitrarily close to $g$ and contained in a chamber $C_{\Delta'}$ associated to an ideal triangulation $\Delta'$ without self-folded triangles.
            
            To show our claim, suppose some component of $\beta_{\?{g}}$ is an interior tagged arc $\gamma$ with at least one end at a puncture $P$.  If $\SSS\setminus \gamma$ is connected, then there exists a loop $L_{\gamma}$ disjoint from $L$ which intersects $\gamma$ exactly once.  If $\SSS\setminus \gamma$ is not connected, then both ends of $\gamma$ must lie at the same puncture $P$.  Let $\SSS_1,\SSS_2$ be the components of $\SSS\setminus \gamma$ which contain $L$ and do not contain $L$, respectively.  Since $\gamma$ does not cut out an unpunctured or once-punctured monogon, we can find another arc $\alpha\in \?{\SSS_2}$ with both ends at $P$ such that $\gamma$ and $\alpha$ are not homotopic.  By gluing the ends of $\alpha$ together and moving this glued point slightly into $\SSS_1$, we again construct a loop $L_{\gamma}$ in $\Sigma$ having essential intersection with $\gamma$ and no intersections with $L$.  In either case, we have that the orbit of $\gamma$, hence of $\beta_{\?{g}}$, hence of $g$ (by Lemma \ref{lem:infinite-orbit}) under the action of the Dehn twist $\tau_{L_{\gamma}}$ is infinite.  Since $\tau_{L_{\gamma}}$ acts trivially on $\E{L}$, this situation cannot happen (the expansion \eqref{eq:EL} in these cases is finite by Theorem \ref{fFG-surfaces}).
            
            We have thus shown that for any $g$ with $c_g\neq 0$, we can find $\sQ$ arbitrarily close to $g$ and contained in $C_{\Delta'}$ for some ideal triangulation $\Delta'$ without self-folded triangles.  So by Lemma \ref{lem:fQ}, to prove the positivity of $c_g$, it suffices to show that $\E{L}$ is positive in every cluster associated to such a triangulation.  This follows immediately from \eqref{Lprin-def} and the description of $\bb{I}(L)$ given in \eqref{eq:ell-pi-pointed} (in this setting, $X^n$ in \eqref{eq:ell-pi-pointed} is identified with $z^n$).
		\end{proof}

It now follows from Lemma \ref{lem:similar_theta_function} or \ref{lem:similar_theta_function_2} that $\langle L\rangle_{\Brac}^{\sd}$ is a theta function for arbitrary $\sd$ similar to $\sd_{\Delta}$, not just for $\sd_{\Delta}^{\prin}$.

	\begin{lem}\label{lem:commute_arc_loop}
		Let $\gamma$ denote a tagged arc and $L$ a simple loop not intersecting $\gamma$.  Then $\BracE{\gamma}$ and $\BracE{L}$ commute in $\s{A}_t^{\up}(\sd)$.  Here, if $\Sigma$ is a once-punctured torus, then we require that $\gamma$ is not a doubly-notched arc so that $\BracE{\gamma}$ is defined.
	\end{lem}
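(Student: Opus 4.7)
My plan is to reduce the commutativity to $t$-commutativity via bar-invariance, then establish $t$-commutativity through a support argument. Both $\BracE{\gamma}$ and $\BracE{L}$ are quantum theta functions (the latter by Lemma \ref{lem:loop_puncture_bracelet}; the former as a cluster variable in most cases, or as $\DT(\BracE{\gamma^\diamond})$ in the doubly-notched case from \S \ref{subsub:qtag}), so they are bar-invariant by Lemma \ref{lem:theta-bar-inv}. Consequently, if $\BracE{\gamma}\BracE{L} = t^k \BracE{L}\BracE{\gamma}$ for some $k \in \Z$, applying the bar involution to both sides yields $\BracE{\gamma}\BracE{L} = t^{-k}\BracE{L}\BracE{\gamma}$, forcing $k = 0$. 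So it suffices to establish $t$-commutativity.

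For the main case, where $\gamma$ is not a doubly-notched arc in a once-punctured closed surface so that $\BracE{\gamma}$ is a cluster variable, I would choose a tagged triangulation $\Delta$ of $\Sigma$ containing $\gamma$, pass to the principal coefficient seed $\sd_\Delta^{\prin}$ (which satisfies the Injectivity Assumption), and invoke Lemma \ref{lem:similar_theta_function} to reduce the commutativity to this setting for arbitrary $\sd$ similar to $\sd_\Delta$. In $\sd_\Delta^{\prin}$, $\BracE{\gamma} = z^{e_\gamma^*}$, and following the proof of Proposition \ref{prop:cluster-commute-theta}, $t$-commutativity of $\BracE{L}$ with this element is equivalent to $\gamma \notin \supp \BracE{L}$ in the initial cluster. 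To verify this support condition, I would apply the surface cutting construction of \S \ref{sec:cut}: use Lemma \ref{lem:triangulation} to choose internal cut arcs (within an ideal triangulation containing $\gamma$, hence disjoint from $\gamma$, and chosen disjoint from $L$) whose removal produces an unpunctured surface $\Sigma'$ in which $\gamma$ and $L$ persist as disjoint curves. In $\Sk_t(\Sigma')$ the superposition product gives $[\gamma'][L'] = [L'][\gamma']$ with no $t$-factor (loops carry no endpoint crossings; the interior is crossing-free by disjointness), so $\BracE{L'} = T_w([L'])$ commutes with $[\gamma']$, forcing $\gamma \notin \supp \BracE{L'}$ in $\s{A}_t^{\up}(\sd_{\Delta'}^{\prin})$. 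The cut-and-paste construction $\BracE{L} = \f{i} \circ \pi_{M^{\prin}}(\langle L\rangle')$ of \eqref{eq:LBrac} then transfers this support property, since $\pi_{M^{\prin}}$ fixes the basis vector at the index $\gamma$ (which is unaffected by the gluing of $\gamma_i^{(1)},\gamma_i^{(2)}$) and $\f{i}$ is merely the identification accompanying the unfreezing.

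For the remaining case where $\gamma$ is doubly-notched in a once-punctured closed surface other than a torus, I would use $\BracE{\gamma} = \DT(\BracE{\gamma^\diamond})$. Since $\gamma^\diamond$ is the same underlying arc with tags flipped, it also does not intersect $L$, and the main case gives $\BracE{\gamma^\diamond}\BracE{L} = \BracE{L}\BracE{\gamma^\diamond}$. Applying the quantum DT-transformation, which is an algebra automorphism by the results of \S \ref{sec:DT}, and using that $\DT$ fixes $\BracE{L}$ for loops $L$ (loops carry no tagging for $\DT$ to alter), we obtain $\BracE{\gamma}\BracE{L} = \BracE{L}\BracE{\gamma}$.

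The main obstacle will be the careful bookkeeping in the cut-and-paste step: while the commutativity in the unpunctured skein algebra is immediate from the skein relations, rigorously tracking the support condition through the similarity map, the $\kk_t$-linear (but non-algebra) map $\pi_{M^{\prin}}$, and the subsequent unfreezing requires verifying that the $e_\gamma^*$-component of the Laurent exponents is preserved under these operations---which follows because $\gamma$ is never among the glued indices and because the compatible form $\Lambda^{\prin}$ satisfies $\Lambda^{\prin}(e_\gamma^*,\omega_1^{\prin}(e_j)) = -\delta_{\gamma j}$, linking the support condition directly to the required $t$-commutativity. A secondary concern is rigorously deducing $\DT(\BracE{L}) = \BracE{L}$ for loops in the once-punctured closed non-torus setting from the results of \S \ref{sec:DT}.
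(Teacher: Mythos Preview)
Your reduction to $t$-commutativity via bar-invariance is fine, and the overall strategy of showing $\gamma\notin\supp\BracE{L}$ with respect to a tagged triangulation containing $\gamma$ matches the paper's. However, two steps do not go through as stated.

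\textbf{Notched arcs in the main case.} Your cut-and-paste argument requires an \emph{ideal} triangulation containing $\gamma$, and that $\gamma$ persist as a tagged arc in the unpunctured surface $\Sigma'$. If $\gamma$ has a notched end at a puncture $p$, neither is possible: notched arcs do not belong to ideal triangulations, and removing $p$ by cutting turns it into a boundary marked point, where notches are forbidden. Moreover, any ideal cut arc through $p$ would carry a plain end there and hence be incompatible with $\gamma$ in the tagged sense, so the two cannot share a tagged triangulation. The paper avoids this by a direct computation rather than cutting: one isolates $L$ inside a subsurface $\Sigma_L$ (annular or non-annular) not containing $\gamma$, with $\partial\Sigma_L$ compatible with $\gamma$'s plain end, and extends an ideal triangulation of $\Sigma_L$ to a tagged triangulation $\Delta$ of $\Sigma$ containing $\gamma$. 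The trace formula \eqref{eq:classical_trace}--\eqref{eq:ell-pi-pointed} (with positivity from \cite{cho2020laurent} to pass to the quantum setting) shows that every Laurent exponent of $\BracE{L}$ lies in $g(L)+\omega_1(N_L)$, where $N_L$ is spanned by $e_{\gamma_k}$ for arcs $\gamma_k$ that $L$ actually crosses. Since $L$ does not cross $\gamma$, one computes $\Lambda(e^*_\gamma,g(L))=0$ and $\Lambda(e^*_\gamma,\omega_1(e_{\gamma_k}))=0$ directly from \eqref{Lambda-B}, giving genuine commutativity.

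\textbf{Once-punctured closed surfaces.} For doubly-notched $\gamma$ in a once-punctured closed surface of genus $\geq 2$, you invoke $\DT\BracE{L}=\BracE{L}$ at the quantum level. The paper proves this only classically (Lemma~\ref{lem:1p-classical-DT}); Proposition~\ref{prop:surface-DT} explicitly excludes once-punctured closed surfaces. The paper instead applies $\DT^{-1}$ and uses Lemma~\ref{lem:support_DT_L} to show that $\DT^{-1}\BracE{L}$ is bi-pointed at $\pm g(L)$, so all its exponents lie in $\omega_1(N_L)$. This support constraint suffices to show $\DT^{-1}\BracE{L}$ commutes with $\BracE{\gamma^\diamond}$ and with frozen variables, without knowing that $\DT^{-1}\BracE{L}$ equals $\BracE{L}$ quantumly.
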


\begin{proof}

(i) We first assume that $\gamma$ has a plain tag at one of its ending.
	
	Recall from \S \ref{S:BracTheta} that one of the following two cases applies:
	\begin{enumerate}
		\item (annular type, Figure \ref{AnnCut}) $L$ is contained in an annulus $\Sigma_L$ with one marked point on each boundary component.
		
		\item (non-annular type, Figure \ref{DLoop}) $L$ is contained in a marked surface $\Sigma_L$ with one component and one boundary marked point, such that $L$ is homotopic to the boundary component. 
	\end{enumerate}
	Since $\gamma$ has a plain ending, in both cases, we can further choose $\Sigma_L$ such that it does not contain $\gamma$ and its boundary arcs are compatible with $\gamma$ (in the sense of \S \ref{S:tagged_sk}).

	Choose an ideal triangulation $\Delta'$ of $\Sigma_L$. Extend $\Delta'\cup\{\gamma\}$ to a tagged triangulation $\Delta$ of $\Sigma$. Let $(\sd,\Lambda)$ be a quantum seed with $\sd$ similar to $\sd_{\Delta}$.  Since $\Delta'$ is an ideal triangulation, we can understand $\bb{I}(L)$ using \eqref{eq:classical_trace} and \eqref{eq:ell-pi-pointed}.
 
     Now let $\gamma_k\in \Delta$ denote any vertex belonging to the support of the $g(L)$-pointed element $\BracE{L}$ in the quantum torus algebra $\s{A}_t^{\sd}$. Recall that $g(L)=-\omega_1(\pi(L))$ where $\pi(L)$ denotes the intersection coordinates \eqref{eq:intersection_shear_loop}. Since $L$ does not intersect $\gamma$, the $e_{\gamma}$ component of $\pi(L)$ is $0$, hence by \eqref{Lambda-B},
    \begin{align*}
    \Lambda(e^*_{\gamma},g(L))=\langle \pi(L),e^*_{\gamma}\rangle = 0.
    \end{align*}

    So now it suffices to show that $\Lambda(e^*_{\gamma},\omega_1(e_{\gamma_k}))=0$, or using \eqref{Lambda-B} again, that $\langle e_{\gamma_k},e^*_{\gamma}\rangle=0$.  By construction \eqref{eq:q_loop_bracelet} and using the description of $\bb{I}(L)$ coming from \eqref{eq:classical_trace} and \eqref{eq:ell-pi-pointed}), $\BracE{L}$ in the $t=1$ setting is supported on the vertex $\gamma_k\in \Delta$ if and only if $L$ intersects $\gamma_k$.  This carries over to the quantum setting by Lemma \ref{lem:I1} and \cite[Thm. 1.1]{cho2020laurent} (quantum universal positivity of $\bb{I}_t(L)$).  So $L$ not intersecting $\gamma$ implies $\langle e_{\gamma_k},e^*_{\gamma}\rangle=0$, as desired.

(ii) It remains to treat a doubly-notched arc $\gamma$ (so we now assume that $\Sigma$ is not a once-punctured torus).  Let $\gamma^{\diamond}$ denote the corresponding plain arc.  Let $\Delta$ be an ideal triangulation containing $\gamma^{\diamond}$, and work with an initial seed $\sd$ similar to $\sd_{\Delta}$.  Suppose that $\Sigma$ is not a once-punctured closed surface.  Applying the automorphism $\DT$ of $\mr{\s{A}}_t^{\sd}$ as in \S \ref{sec:DT}, it suffices to show that $\DT\BracE{\gamma}$ and $\DT\BracE{L}$ commute.  By Proposition \ref{prop:DT_tagged_rotation}, $\DT\BracE{\gamma}$ and $\BracE{\gamma^{\diamond}}$ agree up to frozen components, and by Proposition \ref{prop:surface-DT}, $\DT\BracE{L} = \BracE{L}$.  We know from (i) that $\BracE{L}$ commutes with $\BracE{\gamma^{\diamond}}$. 
 Also, it follows from \eqref{Lambda-B} that $\BracE{L}$ always commutes with frozen variables because all exponents in Laurent expansions of $\BracE{L}$ lie in $\omega_1(N_{\uf})$.  The desired commutativity follows.
 
 Now suppose that $\Sigma$ is a once-punctured closed surface.  Applying $\DT^{-1}$, it suffices to show that $\DT^{-1}\BracE{\gamma}$ and $\DT^{-1}\BracE{L}$ commute. Proposition \ref{prop:DT_tagged_rotation} implies that $\DT^{-1}\BracE{\gamma}$ and $\BracE{\gamma^{\diamond}}$ agree up to frozen components.  Moreover, Lemma \ref{lem:support_DT_L} says that $\DT^{-1}\BracE{L}$ is bi-pointed at $g(L)$ and $-g(L)$, i.e., has the form of the right-hand side of \eqref{eq:bipointed}.  Hence, all exponents appearing in the Laurent expansion of $\DT^{-1}\BracE{L}$ lie in $\omega_1(N_L)$ where $N_L$ denotes the $\bb{Z}$-span of $$\{e_{\gamma_i}|\gamma_i \text{ an arc in $\Delta$ intersecting }L\}.$$ Since $L$ does not intersect $\gamma^{\diamond}$, it follows from \eqref{Lambda-B} again that $\DT^{-1}\BracE{L}$ commutes with $\BracE{\gamma^{\diamond}}$ and with all frozen variables.  The desired commutativity follows.

\end{proof}

		Let $L\cup C$ be a weighted tagged simple multicurve in a triangulable surface $\Sigma$ with $L=\bigcup_j w_jL_j$ consisting of weighted loops and $C=\bigsqcup_i w_i C_i$ consisting of weighted tagged arcs. 
		\begin{lem}\label{lem:cluster_tagged_bracelet_theta}
		If no component of $\Sigma$ is a once-punctured closed surface or if $C$ does not contain a doubly-notched arc, then the (quantum) bracelet element $\BracE{L\sqcup C}$ is a theta function in $\s{A}^{\up}_t(\sd)$.
		\end{lem}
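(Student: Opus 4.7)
The plan is to decompose $\BracE{L \sqcup C}$ as the product of a loop bracelet and a cluster monomial, each of which is already known to be a theta function, and then invoke Proposition \ref{prop:cluster-commute-theta} to conclude the product is also a theta function. From the definition in \S\ref{sec:general_quantum_bracelet}, we have $\BracE{L \sqcup C} = \BracE{L} \cdot \BracE{C}$, so it suffices to understand each factor separately. The first factor is handled directly by Lemma \ref{lem:loop_puncture_bracelet}, which asserts that $\BracE{L}$ is a quantum theta function in $\s{A}_t^{\up}(\sd)$.

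For the second factor, I first observe that under the stated hypothesis no component of $C$ is a notched arc on a once-punctured closed component of $\Sigma$. By the definition in \S\ref{subsub:qtag}, $\BracE{C}$ is then set equal to the quantum cluster monomial associated to the pairwise-compatible collection $\{w_i \gamma_i\}$---the pairwise compatibility of the tagged arcs means they lie together in some tagged triangulation, hence in a common quantum cluster in $\s{A}_t^{\up}(\sd)$ (using Proposition \ref{SkAPropPun} combined with the similarity formalism of \S\ref{sec:similarity} to transport from $\sd_{\Delta}^{\prin}$ to an arbitrary $\sd$ of surface type). Since quantum cluster monomials are quantum theta functions by Corollary \ref{UpTheta}, $\BracE{C}$ is a theta function as well.

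The last step is to show the product $\BracE{L}\cdot \BracE{C}$ is itself a theta function. By Lemma \ref{lem:commute_arc_loop}, each loop component $L_j$ of $L$ commutes with each tagged arc component $C_i$ of $C$ (their non-intersection is built into the definition of a tagged simple multicurve), and so $\BracE{L}$ commutes with $\BracE{C}$. Proposition \ref{prop:cluster-commute-theta} then yields $\BracE{L}\cdot \BracE{C} = t^{\alpha}\vartheta_{g(L)+g(C)}$ for some integer $\alpha$. Since $\BracE{L}$ and $\BracE{C}$ are both bar-invariant by construction and they commute, their product is bar-invariant; combined with the bar-invariance of $\vartheta_{g(L)+g(C)}$ (Lemma \ref{lem:theta-bar-inv}), this forces $\alpha = 0$, giving $\BracE{L\sqcup C} = \vartheta_{g(L)+g(C)}$.

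No genuine obstacle remains: the proof is essentially the assembly of Lemma \ref{lem:loop_puncture_bracelet}, the definition of $\BracE{C}$ in \S\ref{subsub:qtag}, Lemma \ref{lem:commute_arc_loop}, and Proposition \ref{prop:cluster-commute-theta}. The only delicate bookkeeping is verifying that the power $t^\alpha$ is exactly $1$, but this is immediate from bar-invariance of the two commuting factors. The limitations of this argument---namely that it breaks down when $C$ contains doubly-notched arcs in once-punctured closed surfaces---simply reflect that in those excluded cases, $\BracE{C}$ is not a cluster monomial and must be treated by a separate covering-space argument (carried out elsewhere in the paper via \S\ref{sec:closed_surface} and the DT-transformation).
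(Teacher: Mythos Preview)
Your proof is correct and follows essentially the same approach as the paper: factor $\BracE{L\sqcup C}=\BracE{L}\cdot\BracE{C}$, identify the factors as a theta function (Lemma \ref{lem:loop_puncture_bracelet}) and a cluster monomial respectively, invoke Lemma \ref{lem:commute_arc_loop} for commutativity, and conclude with Proposition \ref{prop:cluster-commute-theta}. Your explicit verification that $\alpha=0$ via bar-invariance of the commuting factors is a detail the paper leaves implicit.
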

		\begin{proof}
		    By Lemma \ref{union-product}, it suffices to treat connected $\Sigma$. It also suffices to work in the quantum setting.  So $\BracE{C}$ is a quantum cluster monomial, and $\BracE{L}$ is a quantum theta function by Lemma \ref{lem:loop_puncture_bracelet}.  By the construction in \S \ref{sec:general_quantum_bracelet}, $\BracE{L\sqcup C} = \BracE{L} \BracE{C}$.  The two factors here commute by Lemma \ref{lem:commute_arc_loop}, so Proposition \ref{prop:cluster-commute-theta} implies that $\E{L\sqcup C}$ is a theta function.
		\end{proof}

		Once-punctured closed surfaces will be treated in \S \ref{sub:bracelets-1p} by working with their covering spaces (Theorem \ref{thm:1p-bracelet}, Lemma \ref{lem:1p_general_bracelet}).

		\begin{thm}\label{thm:punctured_bracelet_theta}
			Let $L\cup C$ be a weighted tagged simple multicurve in a triangulable surface $\Sigma$ with $L=\bigcup_j w_jL_j$ consisting of weighted loops and $C=\bigsqcup_i w_i C_i$ consisting of weighted tagged arcs.  If no component of $C$ is a notched arc in a once-punctured closed torus, then the (quantum) bracelet element $\BracE{L\sqcup C}$ is a theta function in $\s{A}^{\up}_t(\sd)$.
			
			Otherwise, let $\gr(g(C))$ be the sum of the weights of all notched arcs in once-punctured closed torus components of $\Sigma$.  Then in the coefficient-free classical setting, we have
			$$\BracE{L\cup C}=4^{\gr (g(C))}\vartheta_{g(L\sqcup C)}.$$
		\end{thm}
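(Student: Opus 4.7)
The plan is to reduce to a single connected component via Lemma \ref{union-product} together with the multiplicativity of theta functions for unions of seeds (Lemma \ref{lem:theta-union-product} and Remark \ref{rem:surface-union}), so that each connected component of $\Sigma$ can be treated independently. For a connected component which is not a once-punctured closed surface, or whose restriction of $C$ contains no doubly-notched arc, the already-established Lemma \ref{lem:cluster_tagged_bracelet_theta} gives the theta function statement directly. Thus the substantive work is confined to a connected component $\Sigma_0$ which is a once-punctured closed surface and for which the restriction $C_0$ of $C$ contains at least one doubly-notched arc.

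For such $\Sigma_0$, decompose the portion of the multicurve supported on $\Sigma_0$ as $L_0\sqcup C_0^+\sqcup C_0^-$, where $C_0^-$ collects the doubly-notched arc components and $C_0^+$ the remaining tagged arc components. By construction in \S \ref{subsub:qtag}, $\BracE{C_0^-}\coloneqq \DT(\BracE{(C_0^-)^{\diamond}})$, where $(C_0^-)^{\diamond}$ is the plain-tagged version of $C_0^-$. When $\Sigma_0$ is not a once-punctured torus, $\BracE{(C_0^-)^{\diamond}}$ is a quantum cluster monomial for the plain cluster structure, hence a quantum theta function, and Proposition \ref{DT-monomial} implies $\BracE{C_0^-}$ is a quantum theta function as well. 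Lemma \ref{lem:commute_arc_loop} shows that $\BracE{L_0}$, $\BracE{C_0^+}$, and $\BracE{C_0^-}$ pairwise commute, and $\BracE{L_0}$ is a theta function by Lemma \ref{lem:loop_puncture_bracelet} while $\BracE{C_0^+}$ is a cluster monomial. Iterated application of Proposition \ref{prop:cluster-commute-theta} then identifies the bar-invariantly-normalized product with the theta function of $g$-vector $g(L_0\sqcup C_0^+\sqcup C_0^-)$, completing the non-torus case (this will be organized as Theorem \ref{thm:1p-bracelet}/Lemma \ref{lem:1p_general_bracelet} in \S \ref{sub:bracelets-1p}).

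The main obstacle is the once-punctured torus component, where the doubly-notched arc is not a cluster variable in either tagged cluster structure, so the DT strategy no longer yields a theta function directly. Here the plan is to work in the coefficient-free classical setting and exploit the triple cover from Example \ref{ex:doubly-notched-arcloop}. For a single doubly-notched arc $\?{\gamma}^{\diamond}$, the digon relation upstairs yields $\?{\gamma}^{\diamond}\cdot\?{\gamma}'=(L_1+L_2)^2$, where $L_1$ and $L_2$ are the two boundary loops of a tubular neighborhood of $\?{\gamma}^{\diamond}$ in the cover. The special feature of the torus is that $L_1$ and $L_2$ descend to the \emph{same} loop $L$ on $\Sigma_0$ (unlike in higher genus, where they remain distinct), so projecting gives $\?{\gamma}^{\diamond}\cdot\?{\gamma}'=4L^2$. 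The tagged arc $\?{\gamma}'$ (which has mixed tagging, hence is not notched at both ends) and the loop $L$ are each theta functions by the already-handled cases, and a direct scattering-diagram computation (as in \cite{zhou2020cluster}) identifies $\vartheta_{g(\?{\gamma}^{\diamond})}=L^2/\?{\gamma}'$ in the coefficient-free classical setting, yielding $\BracE{\?{\gamma}^{\diamond}}=4\,\vartheta_{g(\?{\gamma}^{\diamond})}$.

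Finally, iterating this identification across all notched-arc components, using $\BracE{w_iC_i}=\BracE{C_i}^{w_i}$ for tagged arc components, and combining with the already-established theta function identity for $L_0\sqcup C_0^+$ via Proposition \ref{prop:cluster-commute-theta} and the classical analog of Proposition \ref{AtomicProp} (strong positivity plus bar-invariance), the accumulated scalar is $4^{\sum w_i}=4^{\gr(g(C))}$, and the product represents $4^{\gr(g(C))}\vartheta_{g(L\sqcup C)}$. The hardest step is the torus identification $\vartheta_{g(\?{\gamma}^{\diamond})}=L^2/\?{\gamma}'$, which requires verifying that the scattering diagram produces exactly this expression without additional corrections; this is where the once-punctured torus stands out from other cases and is ultimately why the scattering atlas must be enlarged by the extra wall $(H,f_H)$ described in the introduction.
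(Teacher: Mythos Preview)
Your overall structure is right, and the torus discussion is essentially the paper's argument. But there is a genuine gap in the genus $\geq 2$ once-punctured closed case.

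On a once-punctured closed surface there is a single puncture, and compatibility of tagged arcs forces all arcs in a tagged simple multicurve to carry the \emph{same} tagging at that puncture. So if $C_0^-\neq\emptyset$ then $C_0^+=\emptyset$, and the product you must analyze is $\BracE{L_0}\cdot\BracE{C_0^-}$. Here Proposition \ref{prop:cluster-commute-theta} does not apply: it requires one of the two commuting factors to be a cluster monomial, but on a once-punctured closed surface the doubly-notched arcs are \emph{not} cluster variables (Proposition \ref{SkAPropPun}), and $\BracE{L_0}$ is not a cluster monomial either. So your ``iterated application of Proposition \ref{prop:cluster-commute-theta}'' has no leverage in exactly the case where you invoke it.

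The paper circumvents this by a two-step argument. First, the \emph{classical} identity $\BracE{L_0\sqcup C_0^-}=\vartheta_{g(L_0\sqcup C_0^-)}$ (and the torus version with the factor of $4$) is established via the folding/covering-space machinery of \S\ref{sec:closed_surface} (Theorem \ref{thm:1p-bracelet} and Lemma \ref{lem:1p_general_bracelet}), not via Proposition \ref{prop:cluster-commute-theta}. Second, in the quantum setting for genus $\geq 2$, one observes that $\BracE{L_0\sqcup C_0^-}=\BracE{L_0}\BracE{C_0^-}$ is a product of quantum theta functions, hence theta positive; it is bar-invariant by Lemma \ref{lem:commute_arc_loop}; and its classical limit is a theta function by the first step. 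Then Lemma \ref{lem:1theta-implies-qtheta} forces it to be a quantum theta function. This bar-invariance-plus-classical-limit argument is the key idea you are missing.

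A minor correction: in Example \ref{ex:doubly-notched-arcloop} the arc $\?{\gamma}'$ is the \emph{plain}-tagged version of $\?{\gamma}^{\diamond}$, not mixed-tagged.
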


		\begin{proof}
		    By Lemma \ref{union-product}, it suffices to treat connected $\Sigma$. We have seen that $\E{L\sqcup C}$ is a theta function under the assumption of Lemma \ref{lem:cluster_tagged_bracelet_theta}.
		    
		    Now suppose that $\Sigma$ is a once-punctured closed surface and $C$ includes (doubly) notched arcs.  In the classical $t=1$ setting, the desired results (including the torus case) are Lemma \ref{lem:1p_general_bracelet}.
		    
		    Finally, consider the quantum setting for $\Sigma$ a once-punctured closed surface of genus $\geq 2$ and $C$ consisting of notched arcs.  Then $\BracE{L}$ is a quantum theta function by Lemma \ref{lem:loop_puncture_bracelet}, and $\BracE{C}$ is also a quantum theta function (see \S \ref{subsub:qtag}). So, as their product, $\BracE{L\cup C}$ is theta positive. Finally, by Lemma \ref{lem:1theta-implies-qtheta}, $\BracE{L\cup C}$ is a quantum theta function because it is bar-invariant by Lemma \ref{lem:commute_arc_loop} and its classical limit is a theta function by the previous paragraph.
		\end{proof}

		\subsection{Skein algebras and their atomic bases}\label{sec:skein_atomic_bases}

		By Theorem \ref{thm:bracelet-theta-no-punct} combined with Theorem \ref{fFG-surfaces} and Propositions \ref{SkAProp}, \ref{gdense} and \ref{AtomicProp}, we have the following result:

		\begin{thm}\label{thm:sk_unpunct_basis}
			When $\Sigma$ is unpunctured, its quantum localized skein algebra $\Sk_t(\Sigma)$ coincides with the corresponding quantum upper cluster algebra $\s{A}^{\up}_t(\sd_\Delta)$. Moreover, the quantum bracelets $\Brac_t(\Sigma)$ coincide with the quantum theta functions, and they form the atomic basis for $\Sk_t(\Sigma)$ with respect to the cluster atlas.
		\end{thm}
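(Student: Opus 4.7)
The plan is to assemble three previously established ingredients: Theorem \ref{thm:bracelet-theta-no-punct} (each quantum bracelet $\BracE{C}$ equals the theta function $\vartheta_{g(C)}$), Theorem \ref{fFG-surfaces} (the full Fock--Goncharov conjecture for unpunctured $\Sigma$), and Proposition \ref{AtomicProp} (theta functions are atomic with respect to the scattering atlas).

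First I would establish that the map $C \mapsto g(C)$ from weighted simple multicurves on $\Sigma$ to $M$ is a bijection. Injectivity is immediate from Lemma \ref{Lem:bracelet_basis} combined with Theorem \ref{thm:bracelet-theta-no-punct} and the linear independence of distinct theta functions. For surjectivity, Proposition \ref{prop:shear_coord} gives a bijection between $\cX_L(\Sigma,\Z)$ and $\bigoplus_{\gamma\in\Delta_{\uf}}\Z e^*_\gamma = M_{\uf}$ via shear coordinates $b^{\Delta}$, and \eqref{eq:g_vector_shear} identifies $-b^{\Delta}(e(C))$ with the principal $g$-vector $\pr_{I_{\uf}}g(C)$. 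Since boundary arcs may be included in weighted simple multicurves with arbitrary integer weight (see \S \ref{sec:weighted_curve}), the frozen coordinates $e_i^*$ with $i\in F$ can be freely adjusted, so every $m\in M$ is realized as $g(C)$ for some $C\in\wSMulti(\Sigma)$.

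Next, combining this bijection with the identification of bracelets with theta functions gives the first two assertions of the theorem. Proposition \ref{SkAProp} provides the inclusion $\Sk_t(\Sigma)\subseteq \s{A}_t^{\up}(\sd_\Delta)$, and Lemma \ref{Lem:bracelet_basis} says the bracelets form a $\kk_t$-basis of $\Sk_t(\Sigma)$. By Proposition \ref{gdense}, for unpunctured triangulable $\Sigma$ the cluster complex is dense in $M_{\bb{R}}$, so Theorem \ref{fFG-surfaces} yields $\s{A}_t^{\up}=\s{A}_t^{\midd}=\s{A}_t^{\can}$ with topological basis $\{\vartheta_m\}_{m\in M}$. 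The bracelets exhaust this theta basis by the bijection above, so the inclusion $\Sk_t(\Sigma)\subseteq \s{A}_t^{\up}$ is an equality, and the set of quantum bracelets coincides with the set of quantum theta functions.

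For the atomicity statement, Proposition \ref{AtomicProp} identifies the theta functions as precisely the elements of $\s{A}_t^{\can}$ which are atomic with respect to the scattering atlas. Proposition \ref{gdense} again plays the key role: density of the cluster complex in $M_{\bb{R}}$ means the cluster atlas and scattering atlas coincide for unpunctured $\Sigma$, so atomicity with respect to the cluster atlas follows from atomicity with respect to the scattering atlas. Since the main technical content has already been proved in Theorem \ref{thm:bracelet-theta-no-punct} and the FG conjecture results, the hardest bookkeeping step here is the surjectivity check in the first paragraph; that step reduces cleanly to the shear coordinate bijection together with the freedom to prescribe frozen coordinates via boundary arc weights, and so presents no real obstacle.
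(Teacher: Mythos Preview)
Your proof is correct and follows the same approach as the paper, which simply cites Theorem~\ref{thm:bracelet-theta-no-punct}, Theorem~\ref{fFG-surfaces}, and Propositions~\ref{SkAProp}, \ref{gdense}, and \ref{AtomicProp} without further elaboration. You supply the one detail the paper leaves implicit: surjectivity of $C\mapsto g(C)$ onto $M$, which is needed to conclude that the bracelets exhaust \emph{all} theta functions (the proof of Theorem~\ref{thm:bracelet-theta-no-punct} only shows each bracelet equals some $\vartheta_{g(C)}$). Your shear-coordinate argument for this is fine. One minor notational slip: what you call $M_{\uf}$ should be $\bigoplus_{\gamma\in\Delta_{\uf}}\Z e^*_\gamma\cong\Z^{I_{\uf}}$; in the paper's conventions $M_{\uf}$ denotes $\omega_1(N_{\uf})$, which is a different sublattice.
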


		\begin{thm}\label{thm:tag_sk_up_cl_alg}
			Let $\Sigma$ denote a connected marked surface, possibly with punctures. Let $\Sk(\Sigma)$ (resp. $\Sk^{\Box}(\Sigma)$) denote the (resp. tagged) classical localized skein algebra.  Suppose $\Sigma$ is not a once-punctured torus.  Then the following claims are true:
			\begin{enumerate}[label=(\roman*),ref=(\roman*), noitemsep]
				\item $\nu: \s{A}^{\midd}(\sd_\Delta)\rightarrow \s{A}^{\up}(\sd_\Delta)$ is injective, $\s{A}^{\midd}(\sd_\Delta)=\s{A}^{\can}(\sd_\Delta)$, and $\Sk^{\Box}(\Sigma) = \nu(\s{A}^{\midd}(\sd_\Delta))$.\label{item:nu_injectivity}
				
				\item The tagged bracelets form the atomic basis for $\Sk^{\Box}(\Sigma)$ with respect to the scattering atlas, or equivalently, with respect to the tagged triangulation atlas.
				
				\item The bracelets form the atomic basis for $\Sk(\Sigma)$ with respect to the ideal triangulations atlas.
			\end{enumerate}
            Now suppose $\Sigma$ is a once-punctured torus.  Then the above claims still hold if, for this case, we construct the theta functions, $\s{A}^{\midd}$ and $\s{A}^{\can}$, $\nu$, and the scattering atlas using a certain scattering diagram $\f{D}'$ in place of $\f{D}^{\sd^{\prin}_{\Delta}}$ (recall that frozen variables are set to $1$ after computing the theta functions).  This $\f{D}'$ has the form $\f{D}^{\sd^{\prin}_{\Delta}} \sqcup \{(H,f_H)\}$ for $H$ the hyperplane orthogonal to $n_0\coloneqq\sum_{i\in I_{\uf}} e_i$ and $f_H\in \f{g}_{m_0}^{\parallel}$, $m_0\coloneqq(0,n_0)$, satisfying $\Ad_{f_H}^{-\langle n_0,m\rangle}(z^m)=z^m(P_H)^{|\langle n_0,m\rangle|}$ where $P_H$ is a polynomial in $1+z^{m_0}\bb{Z}_{\geq 0}[z^{m_0}]$ whose coefficients add up to $4$.
		\end{thm}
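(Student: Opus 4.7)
The plan is to leverage Theorem \ref{thm:punctured_bracelet_theta} (tagged bracelets are theta functions up to the torus caveat) together with Theorem \ref{fFG-surfaces} (injectivity of $\nu$, equality $\s{A}^{\midd}=\s{A}^{\can}$, and $\?\Xi = M_\bb{R}$) to package everything into the desired structural statements. By Lemma \ref{union-product} and Remark \ref{rem:surface-union}, it suffices to work one connected component at a time, so I assume $\Sigma$ is connected. I treat the non-torus case first, then address the once-punctured torus separately.

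For (i), assume $\Sigma$ is not a once-punctured torus. Theorem \ref{fFG-surfaces} already gives injectivity of $\nu$ and the equality $\s{A}^{\midd}(\sd_\Delta)=\s{A}^{\can}(\sd_\Delta)$, so it remains to identify the image $\nu(\s{A}^{\midd})$ with $\Sk^{\Box}(\Sigma)$. The inclusion $\Sk^{\Box}(\Sigma)\subseteq\nu(\s{A}^{\can})$ follows from Proposition \ref{SkAPropPun} combined with the fact that the (localized version of the) tagged bracelets spanning statement (Lemma \ref{lem:Brac-spans}) lets me reduce to verifying that each tagged bracelet is a theta function, which is precisely Theorem \ref{thm:punctured_bracelet_theta}. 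For the reverse inclusion $\nu(\s{A}^{\can})\subseteq\Sk^{\Box}(\Sigma)$, I use \eqref{eq:g_vector_shear}, Proposition \ref{prop:shear_coord}, and the free $\bb{Z}$-choice of frozen weights to see that every $g\in M$ arises as $g(C)$ for some weighted tagged simple multicurve $C$; combined with Theorem \ref{thm:punctured_bracelet_theta} and the linear independence of theta functions from Theorem \ref{fFG-surfaces}, the map $C\mapsto \BracE{C}$ thus gives a bijection between such multicurves and $\{\vartheta_p\colon p\in M\}$, so every $\vartheta_p$ lies in $\Sk^{\Box}(\Sigma)$.

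Once (i) is established, parts (ii) and (iii) follow quickly. For (ii), Proposition \ref{AtomicProp} applies since $\?\Xi=M_\bb{R}\supset\Theta^{\midd}=M$, identifying the theta functions as the $\Xi$-atomic elements of $\s{A}^{\midd}=\Sk^{\Box}(\Sigma)$; Corollary \ref{cor:surface-theta-atomic} then identifies $\Xi$-positivity (equivalently scattering-atlas positivity) with tagged-triangulation-atlas positivity outside the torus case. For (iii), Lemma \ref{lem:untag_skein_atom_basis} applies directly: we have just verified the hypothesis that the untagged bracelets basis for $\Sk(\Sigma)$ consists of theta functions, and the lemma identifies them as the ideal-triangulation-atomic elements of $\Sk(\Sigma)$.

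The hard part is the once-punctured torus. Here Theorem \ref{thm:punctured_bracelet_theta}(ii) gives $\BracE{L\cup C}=4^{\gr(g(C))}\vartheta_{g(L\cup C)}^{\f{D}^{\sd_\Delta^{\prin}}}$, so the bracelets are not theta functions of the standard scattering diagram but differ by the scalar $4^{\gr(g(C))}$ whenever notched arcs appear. The plan is to absorb this discrepancy into a single additional wall $(H,f_H)$ placed along the hyperplane $H=n_0^\perp$ which, by Proposition \ref{gdense}, separates the plain-arc cluster complex from its notched counterpart. The wall must be chosen so that for any $m$ on the notched side (i.e. $\langle n_0,m\rangle<0$), computing the $\f{D}'$-theta function by broken lines produces exactly the extra $4^{|\langle n_0,m\rangle|}$ accounting for the bracelet's notched arcs, while theta functions for $m$ on the plain side are unchanged. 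The stated condition $\Ad_{f_H}^{-\langle n_0,m\rangle}(z^m)=z^m P_H^{|\langle n_0,m\rangle|}$ with $P_H\in 1+z^{m_0}\bb{Z}_{\geq 0}[z^{m_0}]$ having coefficient sum $4$ precisely enforces this: the sum-$4$ condition gives the classical specialization $4^{|\langle n_0,m\rangle|}$ after broken lines cross $H$, and the $\f{g}_{m_0}^{\parallel}$ condition guarantees that the added wall is compatible with the existing (pro-unipotent, $M$-graded) structure so that the modified $\f{D}'$ remains a scattering diagram. Since $f_H$ commutes with the full scattering automorphism group attached to the ray $\bb{R}_{\geq 0}m_0$, consistency of $\f{D}'$ follows from consistency of $\f{D}^{\sd^{\prin}_\Delta}$, and the resulting new theta basis, which equals the bracelets basis by construction, satisfies the full analog of (i)--(iii); existence of such an $f_H$ reduces to a direct algebraic construction inside the abelian Lie subalgebra $\f{g}_{m_0}^{\parallel}$. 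The main obstacle is verifying cleanly that this single wall suffices—i.e. that the only places the $\f{D}^{\sd^{\prin}_\Delta}$-theta functions fail to agree with bracelets are along $H$, with the precise power $|\langle n_0,\cdot\rangle|$—which I expect to follow from the description of notched arcs via \eqref{eqn:gamma-gamma-tag} and the cover-and-project arguments used in \S \ref{sec:closed_surface}.
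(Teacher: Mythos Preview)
Your argument for parts (i)--(iii) in the non-torus case is correct and matches the paper's proof: both invoke Theorem~\ref{fFG-surfaces} for injectivity and $\s{A}^{\midd}=\s{A}^{\can}$, then Theorem~\ref{thm:punctured_bracelet_theta} to identify bracelets with theta functions, and finally Corollary~\ref{cor:surface-theta-atomic} and Lemma~\ref{lem:untag_skein_atom_basis} for atomicity. Your explicit bijection between weighted tagged simple multicurves and $M$ via shear coordinates makes a step the paper leaves implicit more transparent.

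For the once-punctured torus, however, you invert the logic in a way that leaves a gap. The paper does not construct $f_H$ abstractly; it takes $\f{D}'$ to be the concrete folded scattering diagram $\?{\f{D}^{\sd^{\prin}_{\wt{\Delta}}}}$ built from a covering space $\wt{\Sigma}\to\Sigma$ in \S\ref{sec:unfolding-closed-surfaces}. Lemma~\ref{lem:H} then shows this $\f{D}'$ differs from $\f{D}^{\sd^{\prin}_{\Delta}}$ by a single central wall supported on $H$, and Lemma~\ref{lem:1p_general_bracelet} (whose proof uses the covering space directly, via $\iota^*$ applied to theta functions upstairs) shows that the $\f{D}'$-theta functions are exactly the bracelets. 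The form of $P_H$ is read off \emph{a posteriori} from \eqref{eq:adfh} and Theorem~\ref{thm:1p-bracelet}.

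Your route---build $f_H$ by hand from an arbitrary $P_H$ with coefficient sum $4$, then check---could be made to work, but the step you label ``by construction'' is not. You need to prove that for your abstract $\f{D}'$ the relation $\?{\vartheta}_{m,\sQ}=\vartheta_{m,\sQ}\cdot P_H^{[-\langle n_0,m\rangle]_+}$ holds (this is \eqref{eq:project_theta} in the paper, stated there only for the specific folded $\f{D}'$). This requires two ingredients you do not supply: first, that $M^+\subset H$ (which follows from $n_0\in\ker\omega_1$, cf.\ Lemma~\ref{lem:n0}), so every broken line has constant $n_0$-pairing along its trajectory and hence crosses $H$ exactly once when $\langle n_0,m\rangle<0$ and never when $\langle n_0,m\rangle\geq 0$; second, that centrality of $f_H$ lets you commute the $H$-crossing to the end of the broken line. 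Without these, your ``single wall suffices'' worry is not resolved by pointing to \eqref{eqn:gamma-gamma-tag}---that formula is about lambda-length relations, not about broken lines in the modified diagram. Once you fill this in, your argument would show that \emph{any} $P_H$ with the stated properties works after setting frozens to $1$, which is a mild sharpening of the paper's existence claim.
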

		
		\begin{proof}
			
			(i)  By Theorem \ref{fFG-surfaces}, $\s{A}^{\midd}(\sd_\Delta)=\s{A}^{\can}(\sd_\Delta)$, and $\nu$ is injective.  Recall from \S \ref{S:tagged_sk} that $\Sk^{\Box}(\Sigma)$ is the subalgebra of $\s{A}^{\up}(\sd_{\Delta})$ generated by the tagged bracelets. By Theorem \ref{thm:punctured_bracelet_theta}, the tagged bracelets coincide with the theta functions, so $\Sk^{\Box}(\Sigma)$ is in fact equal to $\nu(\s{A}^{\midd}(\sd_\Delta))$.

			(ii) The claim follows from (i), Theorem \ref{thm:punctured_bracelet_theta}, and Corollary \ref{cor:surface-theta-atomic} (the atomicity of theta functions).

			(iii) Now that we know the bracelets are theta functions, this follows from Lemma \ref{lem:untag_skein_atom_basis}.

            Finally, suppose that $\Sigma$ is a once-punctured torus.  Take the scattering diagram $\f{D}'$ from the statement of the theorem to be $\?{\f{D}^{{\sd^{\prin}_{\wt{\Delta}}}}}$ as in \S \ref{sec:unfolding-closed-surfaces}.  We see from Lemma \ref{lem:H}, \eqref{eq:adfh}, and Theorem \ref{thm:1p-bracelet} that this $\f{D}'$ indeed has the desired from.  Furthermore, using this $\f{D}'$ and Lemma \ref{lem:1p_general_bracelet}, the arguments from the cases where $\Sigma$ was not a once-punctured torus now apply in this setting as well.
		\end{proof}

			Let there be given an initial seed $\sd$ and some $i\in F$. As in \cite{qin2022freezing}, we say a Laurent polynomial $Z=\sum c_m z^m \in \kk[M]$ is \textbf{regular on $A_i=0$} if $\langle e_i, m\rangle \geq 0$ whenever $c_m\neq 0$. By the mutation rule, this property for $Z\in \s{A}^{\up}$ is independent of the choice of the initial seed $\sd$ (see \cite[\S A]{qin2022freezing}, or using the viewpoint of valuations and tropicalization \cite{CMMM}).  Recall the sets $\?{\Brac}_t(\Sigma)$ and $\?{\Brac}^{\Box}(\Sigma)$ of \S \ref{sec:bracelet_band} and \S \ref{Sec:tag-brac}, respectively.

			\begin{cor}\label{cor:tag_sk_up_cl_alg}
			Let $\Sigma$ denote a connected marked surface, possibly with punctures. Then the following claims are true (with the same caveats for the once-punctured torus case as in Theorem \ref{thm:tag_sk_up_cl_alg}).
			\begin{enumerate}[label=(\roman*),ref=(\roman*), noitemsep]
				\item $\?{\Brac}^{\Box}(\Sigma)$ coincides with $\{\vartheta_m|\text{$\vartheta_m$ is regular on $A_i=0$ for $i\in F$}\}$.

				\item The set $\?{\Brac}^{\Box}(\Sigma)$ is the atomic basis for $\?{\Sk}^{\Box}(\Sigma)$ with respect to the scattering atlas, or equivalently, with respect to the tagged triangulation atlas.
				
				\item The set $\?{\Brac}(\Sigma)$ is the atomic basis for $\?{\Sk}(\Sigma)$ with respect to the ideal triangulations atlas.
			\end{enumerate}
		\end{cor}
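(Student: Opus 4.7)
The plan is to deduce all three parts from Theorem \ref{thm:tag_sk_up_cl_alg} by combining it with the positivity-driven observation that an element $f=\sum a_m\vartheta_m$ of $\Sk^{\Box}(\Sigma)=\s{A}^{\can}(\sd_\Delta)$ lies in $\?{\s{A}}^{\up}$ precisely when each $\vartheta_m$ with $a_m\neq 0$ does, since cancellations between positive Laurent expansions cannot occur.

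For part (i), one direction is immediate: any $\E{C}$ with $C\in\SMultiTag$ is a product of loop bracelets, non-boundary tagged arc bracelets, and non-negative powers of boundary arc variables, all lying in $\?{\Sk}^{\Box}(\Sigma)\subset\?{\s{A}}^{\up}$ by Proposition \ref{SkAPropPun}. Conversely, given $\vartheta_m$ regular on every $A_j=0$, Theorem \ref{thm:tag_sk_up_cl_alg}(ii) writes $\vartheta_m=\E{C}$ for some $C=\bigcup w_iC_i\in\wSMultiTag$, and the product formula for bracelets splits $\E{C}=\E{C^\circ}\cdot\prod_{j\in F}A_j^{w_j^b}$ with $C^\circ$ collecting the non-boundary components. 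If some $w_{j_0}^b<0$, then regularity of $\E{C}$ forces $\E{C^\circ}$ to be Laurent-divisible by $A_{j_0}^{-w_{j_0}^b}$. But $v_{j_0}(\E{C^\circ})=0$: each non-boundary cluster variable $A_\gamma$ is a monomial in the cluster containing $\gamma$ with $A_{j_0}$-exponent $0$, and each loop $L$ has $g(L)_{j_0}=0$ by the triangle-crossing argument (the two non-boundary edges of the triangle of $b_{j_0}$ contribute with opposite $B$-signs and with equal numbers of $L$-crossings, since $L$ cannot cross the boundary arc $b_{j_0}$ itself). This contradicts the assumption, so all $w_j^b\geq 0$ and $\vartheta_m\in\?{\Brac}^{\Box}(\Sigma)$.

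Part (ii) then follows in two short steps: $\?{\Brac}^{\Box}(\Sigma)$ spans $\?{\Sk}^{\Box}(\Sigma)$ by Lemma \ref{lem:Brac-spans} and is linearly independent as a subset of the theta basis, so it is a basis; and any decomposition of a bracelet inside $\?{\Sk}^{\Box}(\Sigma)$ into positive elements also occurs inside $\Sk^{\Box}(\Sigma)$, violating Theorem \ref{thm:tag_sk_up_cl_alg}(ii), while conversely any positive $f=\sum a_m\vartheta_m\in\?{\Sk}^{\Box}(\Sigma)$ has each $\vartheta_m\in\?{\Sk}^{\Box}(\Sigma)$ by part (i), forcing $f$ to be a single bracelet with coefficient one. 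Part (iii) follows by the identical argument with $\?{\Sk}(\Sigma)$, $\?{\Brac}(\Sigma)$, and Theorem \ref{thm:tag_sk_up_cl_alg}(iii) in place of their tagged analogs, using the ideal triangulation atlas. The main obstacle is the valuation computation $v_{j_0}(\E{C^\circ})=0$ for $C^\circ$ not involving $b_{j_0}$, which is the sole technical ingredient not already packaged into Theorem \ref{thm:tag_sk_up_cl_alg}.
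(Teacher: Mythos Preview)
Your argument is correct and follows essentially the same strategy as the paper's proof. There are two small differences worth noting.

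For part (i), your valuation computation is equivalent to the paper's, but the paper packages it slightly differently: rather than computing $v_{j_0}$ factor-by-factor across different clusters, it works in the tagged triangulation $\Delta'$ containing all arc components, observes that $\E{C}$ is a single monomial there whose frozen exponents are exactly the boundary-arc weights, and then shows $\E{L}$ is regular but not divisible by any $A_i$ by invoking \eqref{eq:loop_g} (your ``triangle-crossing argument'') together with the fact that the $z^{g(L)}$-coefficient of $\vartheta_{g(L),\sQ}$ is nonzero for every $\sQ$ (a consequence of the straight broken line and positivity, cf.\ Lemma~\ref{pmQ}). Your argument implicitly uses that the $A_{j_0}$-valuation is cluster-independent so that the contributions from arcs (computed in $\Delta'$) and loops (computed in $\Delta$) can be added; this follows from the cited cluster-independence of regularity, but you might make it explicit.

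For part (ii), the paper uses a slightly different (and arguably cleaner) route to conclude that the summands in the decomposition $Z=\sum a_i\E{C_i}$ lie in $\?{\Brac}^{\Box}(\Sigma)$: since $\?{\Brac}^{\Box}(\Sigma)$ spans $\?{\Sk}^{\Box}(\Sigma)$ and $\Brac^{\Box}(\Sigma)$ is linearly independent, the only bracelets that can appear are those already in $\?{\Brac}^{\Box}(\Sigma)$. This avoids appealing to part (i) at this step. Your route via part (i) and the no-cancellation argument is equally valid.

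One minor inaccuracy: your opening plan asserts that $f=\sum a_m\vartheta_m$ lies in $\?{\s{A}}^{\up}$ ``precisely when each $\vartheta_m$ with $a_m\neq 0$ does.'' This biconditional requires $a_m\ge 0$ (otherwise cancellations can occur), but since you only invoke it for universally positive $f$, the actual proof is unaffected.
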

		\begin{proof}
(i)
If $\Sigma$ has empty boundary, then this reduces to the claim that $\Brac^{\Box}(\Sigma)$ coincides with the set of theta functions (Theorem \ref{thm:tag_sk_up_cl_alg}).  So now assume that $\Sigma$ has non-empty boundary.

Take any weighted tagged simple multicurve $C\sqcup L$, such that $C$ denotes a union of weighted tagged arcs and $L$ a union of weighted simple loops. Since $\Sigma$ has non-empty boundary, $\E{C}$ must be a cluster monomial in some tagged triangulation $\Delta'$. Then the Laurent expansion of $\E{C}$ in $\sd_{\Delta'}$ is regular on the frozen $A_i=0$ if and only if the multiplicity of the boundary arc $A_i$ in $C$ is non-negative.

Choose $\Delta$ any ideal triangulation. Then the Laurent expansion of $\E{L}$ in $\sd_{\Delta}$ is regular on $A_i=0$ for $i\in F$ by Corollary \ref{Deltak}. Moreover, since $g(L)$ has vanishing frozen coordinates by \eqref{eq:loop_g}, and since the coefficient of $z^{g(L)}$ in $\vartheta_{g(L),\sQ}$ is nonzero for all $\sQ$, the Laurent expansion of $\E{L}$ in $\sd_{\Delta'}$ (or any cluster) is not divisible by $A_i$ for any $i\in F$.

Therefore, $\E{C\sqcup L}=\E{C}\E{L}$ is regular on $A_i=0$ for frozen $i$ if and only if $C$ is regular, or equivalently, if $\E{C}\in \?{\Sk}^{\Box}(\Sigma)$.  This is equivalent to $\E{C\sqcup L} \in \?{\Brac}(\Sigma)$, as desired.

(ii)  We know that $\Brac^{\Box}(\Sigma)$ forms a basis for $\Sk^{\Box}(\Sigma)$ by Theorem \ref{thm:tag_sk_up_cl_alg}, so the elements of $\?{\Brac}^{\Box}(\Sigma)$ are linearly independent.  The fact that $\?{\Brac}^{\Box}(\Sigma)$ spans $\?{\Sk}^{\Box}(\Sigma)$ is Lemma \ref{lem:Brac-spans}.

Now take any element $Z\in \?{\Sk}^{\Box}(\Sigma)$ which is universally positive with respect to the scattering atlas (equivalence between this and the tagged triangulation atlas is part of Corollary \ref{cor:surface-theta-atomic}).  Then Theorem \ref{thm:tag_sk_up_cl_alg}(ii) implies we can write $Z$ as a linear combination
\begin{align}\label{eq:Z}
Z=\sum a_i \E{C_i} \text{ for $\E{C_i}\in \Brac^{\Box}(\Sigma)$ and $a_i\in \bb{Z}_{\geq 0}$.}
\end{align}
Since elements of $\Brac^{\Box}(\Sigma)$ are linearly independent and $\?{\Brac}(\Sigma)$ spans $\?{\Sk}^{\Box}(\Sigma)$, the $\E{C_i}$ appearing in \eqref{eq:Z} must in fact all be in $\?{\Brac}^{\Box}(\Sigma)$.  So $\?{\Brac}^{\Box}(\Sigma)$ is indeed an atomic basis with respect to the scattering atlas.

(iii) The proof is similar to (ii), but using Lemma \ref{Lem:bracelet_basis} in place of Lemma \ref{lem:Brac-spans}.

\end{proof}

		\begin{cor}\label{cor:q_cluster_skein_equal}
		    For $\Sigma$ unpunctured, $\?{\Sk}_t(\Sigma) = \?{\s{A}}_t^{\up}(\sd_{\Delta})$. Moreover,  $\?{\Brac}_t(\Sigma)$ coincides with the set of theta functions which are regular on $A_i=0$ for all $i\in F$.
		\end{cor}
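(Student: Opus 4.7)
The plan is to leverage Theorem \ref{thm:sk_unpunct_basis}: in the unpunctured case it identifies the quantum bracelets $\Brac_t(\Sigma)$ with the quantum theta functions and gives $\Sk_t(\Sigma) = \s{A}_t^{\up}(\sd_{\Delta})$. Since $\Brac_t(\Sigma)$ is a $\kk_t$-basis for $\Sk_t(\Sigma)$ and its subset $\?{\Brac}_t(\Sigma)$ is a $\kk_t$-basis for $\?{\Sk}_t(\Sigma)$ (both by Lemma \ref{Lem:bracelet_basis}), the two claims reduce to characterizing intrinsically which bracelets lie in $\?{\Brac}_t(\Sigma)$.

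First I would establish the second claim. For any weighted simple multicurve $C$, decompose $C = C_{\mathrm{arc}} \sqcup L$ into weighted arcs and weighted loops. Then $\BracE{C} = t^{\alpha}\BracE{C_{\mathrm{arc}}}\BracE{L}$ for an appropriate $\alpha$. By \eqref{eq:loop_g} and \eqref{eq:loop_deg_monoid}, $g(L) \in M_{\uf}$, and $\BracE{L}$ has Laurent support in $g(L) + M^{\oplus} \subset M_{\uf}$; hence every Laurent monomial of $\BracE{L}$ in the initial cluster has vanishing $A_i$-exponent for each $i \in F$, so $\BracE{L}$ is automatically regular on $A_i=0$. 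The arc factor $\BracE{C_{\mathrm{arc}}}$ is a cluster monomial whose $A_i$-exponent in the initial Laurent expansion equals the multiplicity $w_i$ of the boundary arc $A_i$ in $C_{\mathrm{arc}}$ (zero if $A_i$ does not occur). Hence $\BracE{C}$ is regular on $A_i=0$ for every $i \in F$ iff every boundary arc in $C$ has non-negative weight, i.e.\ iff $\BracE{C} \in \?{\Brac}_t(\Sigma)$. Equivalently, since bracelets are theta functions, a theta function $\vartheta_m$ is regular on $A_i = 0$ for all $i \in F$ iff $\langle e_i, m\rangle \geq 0$ for all $i \in F$; indeed $\vartheta_m$ is pointed at $m$ with support in $m + M^{\oplus} \subset m + M_{\uf,\bb{R}}$, so every monomial has the same $A_i$-exponent $\langle e_i, m\rangle$.

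For the first claim, the inclusion $\?{\Sk}_t(\Sigma) \subset \?{\s{A}}_t^{\up}$ is given by Proposition \ref{SkAProp}. For the reverse inclusion, take $Z \in \?{\s{A}}_t^{\up} \subset \s{A}_t^{\up} = \Sk_t(\Sigma)$ and expand $Z = \sum_C a_C \BracE{C}$ in the bracelets basis. Since $Z$ is regular on $A_i = 0$ for every $i \in F$, and since each $\BracE{C} = \vartheta_{g(C)}$ contributes a pointed leading term $z^{g(C)}$ with coefficient $1$ that cannot be cancelled by leading terms of other bracelets (distinct $g(C)$'s give distinct exponents), every $\BracE{C}$ with $a_C \neq 0$ must itself be regular on $A_i=0$. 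By the previous paragraph, such $\BracE{C}$ lies in $\?{\Brac}_t(\Sigma)$, so $Z \in \?{\Sk}_t(\Sigma)$ as desired.

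The main obstacle I expect is checking that ``regularity on $A_i = 0$ in the initial cluster'' agrees with membership in $\?{\s{A}}_t^{\up}$, which a priori demands the analogous condition in every cluster. As recalled in the proof of Corollary \ref{cor:tag_sk_up_cl_alg}(i) (and discussed more thoroughly in \cite{qin2022freezing}), this property for elements of $\s{A}^{\up}$ is in fact independent of the initial seed, so checking regularity in one cluster suffices; the quantum analog follows from the same mutation-compatible valuation argument.
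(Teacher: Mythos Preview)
Your overall strategy matches the paper's---reduce to characterizing which theta functions/bracelets have non-negative boundary weights---but the execution contains a genuine error.

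The problem is your claim that elements of $M_{\uf}=\omega_1(N_{\uf})$ have vanishing $A_i$-exponent for $i\in F$. This would require $\langle e_i,\omega_1(n)\rangle=\omega(n,e_i)=0$ for all $n\in N_{\uf}$ and $i\in F$, which fails whenever a frozen vertex is adjacent to an unfrozen one in the quiver. In the annulus (Example~\ref{QuivEx}), $\omega_1(e_1)=(0,2,-1,-1)$ has nonzero frozen components. Consequently your assertion that ``every monomial of $\vartheta_m$ has the same $A_i$-exponent $\langle e_i,m\rangle$'' is false, and so is the characterization ``$\vartheta_m$ is regular on $A_i=0$ iff $\langle e_i,m\rangle\ge 0$ for all $i\in F$.'' A concrete counterexample: still in the annulus, take $m=(-1,0,0,1)=g(\gamma_1')-e_3^*$. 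Then $\langle e_3,m\rangle=0$ and $\langle e_4,m\rangle=1$, yet $\vartheta_m=z^{(-1,0,0,1)}+z^{(-1,2,-1,0)}$ is \emph{not} regular on $A_3=0$. This $m$ is exactly the $g$-vector of the bracelet $\gamma_1'\cup(-1)b_1\notin\?{\Brac}_t(\Sigma)$. Your argument for the first claim then collapses as well, since it deduces regularity of each $\BracE{C}$ from non-negativity of the leading exponent alone.

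The paper instead invokes Corollary~\ref{cor:tag_sk_up_cl_alg}(i), whose proof sidesteps this by checking regularity of $\BracE{C_{\mathrm{arc}}}$ in a cluster $\Delta'$ \emph{containing} those arcs (where it is a genuine monomial with exponent $\sum w_j e_j^*$), using the cluster-independence of regularity that you mention at the end. For the loop part it uses Corollary~\ref{Deltak} to get $\E{L}\in\?{\s{A}}_t^{\up}$, combined with the observation that $g(L)$ has vanishing frozen components (so $\E{L}$ is not \emph{divisible} by any $A_i$, even though its non-leading terms may have positive $A_i$-exponents). The passage from the classical Corollary~\ref{cor:tag_sk_up_cl_alg}(i) to the quantum statement uses positivity of theta functions: $\vartheta_m$ is regular iff $\vartheta_m|_{t=1}$ is.
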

		\begin{proof}
This follows from Theorem \ref{thm:sk_unpunct_basis} and Corollary \ref{cor:tag_sk_up_cl_alg}(i) (positivity ensures that $\vartheta_m$ is regular on $A_i=0$ if and only if its $t=1$ limit is).
\end{proof}

\subsubsection{The tagged skein algebra from generators and relations}\label{subsub:tagged-skein}

The following alternative construction of $\?{\Sk}^{\Box}(\Sigma)$ was suggested to us by Greg Muller. Recall that a generalized tagged multicurve is the same as a multicurve, except that the arcs are allowed to be generalized tagged arcs.  Let $\kk^{\Box}(\Sigma)$ be the free $\kk$-module generated by homotopy equivalence classes of generalized tagged multicurves.  Let $R^{\Box}\subset \kk^{\Box}(\Sigma)$ be module of relations generated by the following (applied locally):
\begin{itemize}
    \item Contractible arcs are equivalent to $0$;
    \item Contractible loops are equivalent to $-2$;
    \item Peripheral loops are equivalent to $+2$;
    \item The $q=1$ case of the skein relation from Figure \ref{SkeinFig};
    \item The local digon relation of Figure \ref{fig:localDigon}.
\end{itemize}

\begin{cor}\label{cor:tag-skein}
    The tagged skein algebra $\?{\Sk}^{\Box}(\Sigma)$ from \S \ref{S:tagged_sk} can be equivalently defined as the $\kk$-algebra $\kk^{\Box}(\Sigma)/R^{\Box}$, equipped with $\sqcup$ as the product.
\end{cor}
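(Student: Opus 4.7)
The strategy is to construct a natural $\kk$-algebra homomorphism $\Phi\colon \kk^{\Box}(\Sigma)/R^{\Box}\to \?{\Sk}^{\Box}(\Sigma)$, then show it is both surjective and injective using the fact that $\?{\Brac}^{\Box}(\Sigma)$ is a $\kk$-basis for $\?{\Sk}^{\Box}(\Sigma)$ (Corollary \ref{cor:tag_sk_up_cl_alg}(ii)). First, I would define the map on generators: a generalized tagged multicurve $C$ is sent to the product (in $\?{\Sk}^{\Box}(\Sigma)$) of the associated loops and generalized tagged arcs, each of which lies in $\?{\Sk}^{\Box}(\Sigma)$ by Proposition \ref{prop:compound}. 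To check well-definedness, I would verify each of the five listed relations in $R^{\Box}$ holds in $\?{\Sk}^{\Box}(\Sigma)$: the contractible arc and loop relations and the peripheral loop relation are direct from the interpretation via horocycle lambda lengths and traces of $\PSL_2(\bb{R})$-monodromies on $\s{T}(\Sigma)$; the skein relation is Lemmas \ref{lem:tag-skein} and \ref{lem:tag-skein2}; and the local digon relation is Proposition \ref{prop:loc-digon}. To make sense of the product $\sqcup$ on the quotient, I would note that different transverse perturbations of the same pair of classes differ by crossings that are identified by the skein relation, so $\sqcup$ descends.

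Next I would establish surjectivity: by definition $\?{\Sk}^{\Box}(\Sigma)$ is generated by loops and tagged arcs (and inverses of boundary arcs are not allowed since we are in the partially compactified version), all of which lie in the image of $\Phi$.

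For injectivity, the key is to show $\kk^{\Box}(\Sigma)/R^{\Box}$ is spanned by the classes of weighted tagged simple multicurves with non-negative weights---call these classes \emph{bangles} in the quotient, by analogy with \S \ref{sec:weighted_curve}. This is exactly the content (and proof) of Lemma \ref{lem:Brac-spans}, which uses only the skein relation and the local digon relation to reduce an arbitrary product of loops and (generalized) tagged arcs into a positive linear combination of compatible, crossingless configurations; the same argument works verbatim in $\kk^{\Box}(\Sigma)/R^{\Box}$ since both types of relations are built into $R^{\Box}$. The bangles can then be re-expressed as $\kk$-linear combinations of tagged bracelets using the Chebyshev change-of-basis for each loop component (the relation $[L]^w=\sum_k a_{w,k}T_k([L])$ is a universal identity in $\kk[x]$, not a skein-theoretic one, so it does not require any additional relations). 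Thus $\?{\Brac}^{\Box}(\Sigma)$ (or rather, its class in $\kk^{\Box}(\Sigma)/R^{\Box}$) spans the quotient.

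Finally I would conclude by applying $\Phi$: the spanning set $\?{\Brac}^{\Box}(\Sigma)$ in $\kk^{\Box}(\Sigma)/R^{\Box}$ is sent bijectively onto the set $\?{\Brac}^{\Box}(\Sigma)\subset \?{\Sk}^{\Box}(\Sigma)$, which is a $\kk$-basis by Corollary \ref{cor:tag_sk_up_cl_alg}(ii). A surjection from one module onto another which carries a spanning set to a basis must be an isomorphism, so $\Phi$ is an isomorphism of algebras. The main potential obstacle is ensuring that the proof of Lemma \ref{lem:Brac-spans} really only uses the relations in $R^{\Box}$ (rather than, for instance, implicit features of the Teichm\"uller interpretation), but a careful inspection of that proof confirms this: every invocation is either a skein resolution of a crossing or a local digon replacement of an incompatible pair of arcs, both of which are among the defining relations of $R^{\Box}$.
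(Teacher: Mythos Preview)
Your proposal is correct and follows essentially the same approach as the paper's proof: construct the natural map $\kk^{\Box}(\Sigma)/R^{\Box}\to \?{\Sk}^{\Box}(\Sigma)$ using that all relations in $R^{\Box}$ hold in $\?{\Sk}^{\Box}(\Sigma)$, then prove injectivity by showing (via the argument of Lemma~\ref{lem:Brac-spans}) that $\?{\Brac}^{\Box}(\Sigma)$ spans the quotient and forms a basis of the target by Corollary~\ref{cor:tag_sk_up_cl_alg}. Your version is more explicit about which lemmas justify each relation and about the well-definedness of the product, but the core argument is the same.
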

\begin{proof}
    We saw in \S \ref{S:tagged_sk} that $\?{\Sk}^{\Box}(\Sigma)$ is generated by the same elements of $\kk^{\Box}(\Sigma)$ and satisfies all the relations coming from $R^{\Box}$.  We thus have a map $\kk^{\Box}(\Sigma)/R^{\Box}\rar \?{\Sk}^{\Box}(\Sigma)$.  It remains to check that this map has trivial kernel (i.e., we want to ensure that $\?{\Sk}(\Sigma)$ does not include any additional relations).  The proof of Lemma \ref{lem:Brac-spans} applies to show that the bracelets span $\kk^{\Box}(\Sigma)/R^{\Box}$.  So if there were a nontrivial element in the kernel, this would correspond to a nontrivial linear combination of bracelets in $\?{\Sk}^{\Box}(\Sigma)$ which equals $0$, contradicting the claim in Corollary \ref{cor:tag_sk_up_cl_alg} that the bracelets form a basis for $\?{\Sk}^{\Box}(\Sigma)$.
\end{proof}

    \subsubsection{Thurston's Conjecture}
	The construction of the quantum bracelets basis in \S \ref{sec:bracelet_band} applies for all unpunctured surfaces $\Sigma$, including those which are not triangulable.  D. Thurston conjectured that these quantum bracelets bases should be strongly positive for all (not necessarily triangulable) unpunctured marked surfaces $\Sigma$ \cite[Conj. 4.20]{Thurst}.  We prove this for cases with non-empty boundary.

    \begin{thm}\label{Thurst-conj}
    Let $\Sigma$ be an unpunctured marked surface, every component of which has non-empty boundary.  The quantum bracelets basis for $\?{\Sk}_t(\Sigma)$ is strongly positive.
    \end{thm}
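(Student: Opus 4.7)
The plan is to reduce the statement to the triangulable case, which has already been handled via theta functions. First, by Lemma~\ref{union-product} applied to the bracelets basis, I would reduce to the case where $\Sigma$ is connected. If $\Sigma$ is triangulable, then Corollary~\ref{cor:q_cluster_skein_equal} identifies $\?{\Brac}_t(\Sigma)$ with the set of quantum theta functions regular at each frozen variable $A_i=0$, and Proposition~\ref{AtomicProp} gives strong positivity of the full theta basis. Positivity of Laurent expansions then ensures that the product of two regular theta functions expands as a non-negative combination of regular theta functions (a non-regular summand would contribute uncancellable Laurent monomials with a negative exponent at some frozen $A_i$), so strong positivity restricts to $\?{\Brac}_t(\Sigma)$.

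Now suppose $\Sigma$ is connected, unpunctured, with non-empty boundary, but not triangulable. The obstructions are either that $\SSS$ is a disk with $\leq 2$ marked points, or that $\SSS$ has an unmarked boundary component, or that $\SSS$ carries no markings at all. In the disk cases the quantum skein algebra is, up to $t$-normalization, a (q-commutative) polynomial algebra in the boundary arcs, and strong positivity of bracelet multiplication is immediate. In the remaining cases I would adjoin finitely many points to $\partial\SSS$ to produce a set $\MM^+\supseteq\MM$ with $\Sigma^+:=(\SSS,\MM^+)$ triangulable, and then construct a natural $\kk_t$-algebra embedding
\[
\iota:\?{\Sk}_t(\Sigma)\hookrightarrow \?{\Sk}_t(\Sigma^+)
\]
sending $\langle C\rangle_{\Brac}^{\Sigma}\mapsto \langle C\rangle_{\Brac}^{\Sigma^+}$ for every weighted simple multicurve $C$ in $\Sigma$ with non-negative weights. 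Since $\iota$ is an algebra embedding that carries $\?{\Brac}_t(\Sigma)$ into $\?{\Brac}_t(\Sigma^+)$, the structure constants for bracelet multiplication in $\Sigma$ agree with those of their images in $\Sigma^+$, and the latter are non-negative by the previous paragraph.

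The main obstacle will be verifying that $\iota$ is well-defined. The Kauffman skein relation is purely local and is transparently preserved; the only remaining relations depend on contractibility of arcs and loops, which in turn depends on the choice of marked-point set. The key observation is that, for an unpunctured surface, every point of $\MM^+\setminus\MM$ lies on $\partial\SSS$, and removing finitely many boundary points from $\SSS$ yields a homotopy-equivalent space, even relatively to any basepoint $p\notin \MM^+\setminus\MM$ (the removed cusps can be pushed inward through the boundary collar). Hence a loop is contractible in $\SSS\setminus\MM$ iff it is contractible in $\SSS\setminus\MM^+$, and similarly for arcs based at a point of $\MM$, so the contractibility relations survive under $\iota$. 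Injectivity of $\iota$ on the bracelet basis is straightforward, since any homotopy through multicurves in $\Sigma^+$ is automatically a homotopy through multicurves in $\Sigma$, so distinct classes in $\?{\Brac}_t(\Sigma)$ map to distinct (hence linearly independent, by Lemma~\ref{Lem:bracelet_basis}) classes in $\?{\Brac}_t(\Sigma^+)$. Finally, the equality $\iota(\langle C\rangle_{\Brac}^{\Sigma})=\langle C\rangle_{\Brac}^{\Sigma^+}$ follows because the Chebyshev recursion of~\eqref{Eq:BracDef} and the bar-invariance factor $t^{-\alpha}$ depend only on the algebra structure and on local cyclic orderings at common endpoints, each of which is preserved by $\iota$.
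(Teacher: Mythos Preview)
Your proposal is correct and follows essentially the same approach as the paper: add boundary marked points to obtain a triangulable unpunctured surface $\Sigma^+$, use the natural embedding $\?{\Sk}_t(\Sigma)\hookrightarrow\?{\Sk}_t(\Sigma^+)$ that takes bracelets to bracelets, and invoke strong positivity of the bracelets/theta basis on $\Sigma^+$ (via Theorem~\ref{thm:sk_unpunct_basis}, Corollary~\ref{cor:q_cluster_skein_equal}, and Proposition~\ref{AtomicProp}). Your version is more detailed in justifying well-definedness and injectivity of the embedding, and your separate treatment of the disk cases and your regularity argument in the triangulable case are correct but unnecessary (the product of two elements of $\?{\Sk}_t(\Sigma)$ automatically lies in $\?{\Sk}_t(\Sigma)$, so expands in the basis $\?{\Brac}_t(\Sigma)$, with coefficients equal to the corresponding theta-basis structure constants).
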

    \begin{proof}
    By inserting extra marked points on the boundary components, we can form a triangulable unpunctured marked surface $\Sigma'$.  Note that there is a natural inclusion $\?{\Sk_t}(\Sigma)\hookrightarrow \?{\Sk}_t(\Sigma')$ which takes quantum bracelets to quantum bracelets.  The quantum bracelets basis for $\?{\Sk}_t(\Sigma')$ coincides with the quantum theta basis by Theorem \ref{thm:sk_unpunct_basis} and Corollary \ref{cor:q_cluster_skein_equal}, hence is strongly positive by Proposition \ref{AtomicProp}.  Strong positivity of the quantum bracelets basis for $\?{\Sk}_t(\Sigma)$ now follows.
    \end{proof}

\section{Canonical functions on cluster Poisson varieties from surfaces} \label{sec:cluster_poisson}
	
	\begin{ntn}
		When considering the cluster Poisson algebra $\s{X}^{\up}$ associated to a surface $\Sigma$, we shall write $\s{X}^{\up}_{\uf}$ for upper cluster algebra obtained by forgetting the frozen variables.  Note that $\s{X}^{\up}_{\uf}$ is a subalgebra of $\s{X}^{\up}$ via $N_{\uf}\hookrightarrow N$, and this inclusion identifies the theta functions in $\s{X}^{\up}_{\uf}$ with theta functions in $\s{X}^{\up}$.  Recall the map $\pi:\s{A}_L(\SSS,\bb{Q})\risom N_{\bb{Q}}$ as in \eqref{pil}.  Denote 
		\begin{align*}
			\s{A}_{\Sigma}(\bb{Z}^t):= \s{A}_{L}(\Sigma,\bb{Z})\cap \pi^{-1}(N) \quad \text{and} \quad  \s{A}_{\Sigma}^0(\bb{Z}^t):= \s{A}_{L}(\Sigma,\bb{Z})\cap \pi^{-1}(N_{\uf}).
		\end{align*} 
	\end{ntn}

	The skein algebras $\Sk(\Sigma)$ considered in \S \ref{SkeinSection} can be interpreted in terms of rational functions on the moduli spaces of decorated, twisted, $\SL_2$-local systems on $\Sigma$.  This perspective was developed in \cite{FockGoncharov06a}, with the connection to skein relations appearing in \cite[proof of Thm. 12.2]{FockGoncharov06a} (also cf. \cite[Prop. 4.12]{musiker2013matrix} which extends this connection to the principal coefficients setting).
	We shall review this interpretation in \S \ref{sec:dec_SL2_moduli}.
	
	On the other hand, \cite{FockGoncharov06a,FG4} also gives a moduli-theoretic interpretation for the cluster $\s{X}$-variety $\s{X}^{\up}_{\uf}$ associated to a marked surface $\Sigma$ and uses this to construct canonical functions on $\s{X}^{\up}_{\uf}$, parametrized by $\s{A}_{\Sigma}^0(\bb{Z}^t)\cong N_{\uf}$ and defined in terms of traces and eigenvalues of certain monodromies.  This moduli-theoretic interpretation is extended to the quantum setup in \cite{Allegretti2015duality} using the quantum Teichm\"uller spaces of \cite{CF}.  Allegretti and Kim \cite{Allegretti2015duality} then consider quantizations of the canonical functions of Fock-Goncharov, defined using the quantum trace maps of \cite{BW}.  We shall briefly review the classical version of these constructions in \S \ref{sec:mod_PGL2}.  We review the quantization in \S \ref{sec:Chek-Fock} and then review some useful properties for this quantum analog in \S \ref{sec:X-q-can}. The main result of this section---Theorem \ref{thm:ThetaX}, which says that the (quantum) canonical functions of \cite{FockGoncharov06a,Allegretti2015duality} are the (quantum) theta functions---will be proved in \S \ref{sec:ThetaX}.

	We shall work with the full $\s{X}^{\up}$ rather than just $\s{X}^{\up}_{\uf}$, and so we consider bases parametrized by $N\cong \s{A}_{\Sigma}(\bb{Z}^t)$, extending those parameterized by $\s{A}_{\Sigma}^0(\bb{Z}^t)$. See Remark \ref{pin} for a short summary of how \cite{goncharov2019quantum} uses ``pinnings'' to incorporate the frozen variables into the moduli theoretic viewpoint.

	\subsection{Moduli of framed $\PGL_2$-local systems}\label{sec:mod_PGL2}
	
	Let $\MM^{\partial}:=\partial \SSS\cap \MM$; i.e., $\MM^{\partial}$ is the set of marked points which are not punctures.  Let $\hat{\SSS}$ denote the surface with boundary obtained from $\Sigma$ by viewing punctures in $\SSS$ as unmarked boundary circles.  Let $\partial' \hat{\SSS}$ denote the \textbf{punctured boundary} of $\hat{\SSS}$, i.e., $$\partial'\hat{\SSS}:=\partial \hat{\SSS} \setminus \MM^{\partial}.$$
	\begin{dfn}[\cite{FockGoncharov06a}, Def. 1.2]\label{PGL2Moduli}
		A \textbf{framed $\PGL_2$-local system} on a marked surface $\Sigma=(\SSS,\MM)$ is the data of a pair $(\s{L},\beta)$, where $\s{L}$ is a $\PGL_2$-local system on $\SSS$ (i.e., a principal $\PGL_2$-bundle with a flat connection), and $\beta$ is a flat section of the induced $(\PGL_2/B)$-local system $(\s{L}/B)|_{\partial' \hat{\SSS}}$.  Here, $B$ is a maximal Borel subgroup of $\PGL_2$ (e.g., the projection of the subgroup of upper triangular matrices), so $\PGL_2/B\cong \bb{P}^1$.  The space $\s{X}_{\Sigma}$ is the moduli space of framed $\PGL_2$-local systems on $\Sigma$. 
	\end{dfn}
	
	An equivalent definition of $\s{X}_{\Sigma}$ can be given as follows.  Choose a hyperbolic structure with geodesic boundary on $\SSS$.  Let $F_{\infty}(\Sigma)$ denote the preimage of the punctured boundary $\partial' \hat{\SSS}$ in the universal cover.  Note that there is a natural action of $\pi_1(\SSS)$ on $F_{\infty}(\Sigma)$.  
	
	Consider the data of a pair $(\rho,\psi)$, where $\rho:\pi_1(\SSS)\rar \PGL_2$ is a group homomorphism and $\psi:F_{\infty}(\Sigma)\rar \bb{P}^1$ is a $(\pi_1(\SSS),\rho)$-equivariant map.  That is,
	\begin{align*}
		\psi(\gamma c)=\rho(\gamma)\psi(c)
	\end{align*}
	for each $\gamma\in \pi_1(\SSS)$ and $c\in F_{\infty}(\SSS)$.  By \cite[Lem. 1.1]{FockGoncharov06a} (also cf. \cite[Def. 2.7]{Allegretti2015duality}), a framed $\PGL_2$-local system on $\Sigma$ is equivalent to the data of a pair $(\rho,\psi)$ as above, modulo the action of $\PGL_2$ (via conjugation on $\rho$ and the tautological action on $\bb{P}^1$).  So $\s{X}_{\Sigma}$ can be interpreted as the moduli space of these pairs $(\rho,\psi)$ up to this $\PGL_2$-action.

	Consider the upper cluster algebra $\s{X}^{\up}_{\uf}$ associated to $\sd_{\Delta}$ for $\Delta$ an ideal triangulation of $\Sigma$. Fock and Goncharov \cite[\S 9]{FockGoncharov06a} have shown that $\s{X}_{\uf}^{\up}$ naturally contains and is birational to\footnote{Rather, \cite[\S 9]{FockGoncharov06a} shows that clusters associated to ideal triangulations form charts on $\s{X}_{\Sigma}$.  The extension to clusters associated to tagged triangulations follows via the $(\bb{Z}/2\bb{Z})^n$-action described in \cite[\S 12.6]{FockGoncharov06a}.} the coordinate ring $\Gamma(\s{X}_{\Sigma},\s{O}_{\s{X}_{\Sigma}})$ of $\s{X}_{\Sigma}$.

    In particular, each interior arc $i$ in $\Sigma$ determines a rational coordinate function $X_i$ on $\s{X}_{\Sigma}$.\footnote{The $X_j$ here are different from our $X$-variables $z^{e_j}$ in \eqref{Xi} when there exist self-folded triangles. We keep the symbol $X_j$ as in most literature.}  Briefly, the coordinate $X_i$ for an interior arc $i$ is geometrically characterized as follows: For $Q$ the quadrilateral in $\Delta$ containing $i$ as a diagonal, let $Q'$ be the quadrilateral obtained by rotating the vertices of $Q$ along $\partial \SSS$ by a small amount in the direction of the orientation (i.e., replacing the arcs with associated elementary laminates as in Definition \ref{def:el-lam}). The flags associated to the vertices of $Q'$ can all be parallel transported in $Q'$ to the fiber $(\s{L}/B)|_p$ at a vertex $p$ of the elementary laminate $e(i)$ and identified in cyclic counterclockwise order with the points $\infty,-1,0,X_i\in \bb{P}^1$; cf. \cite[\S 1.4; \S 9.4]{FockGoncharov06a}.

		Following \cite[Def. 9.2(ii)]{allegretti2020monodromy}, we identify $z^{e_j}$ with $X_j$ if $j$ is not an interior arc of a self-folded triangle; otherwise, we identify $z^{e_j}$ with $X_j X_k$ where $k$ is the noose containing $j$. The skew-symmetric form $\varepsilon$ in \cite[\S 12]{FockGoncharov06a} is our $\omega=B^T$.  The map $p^*$ of loc. cit. (which we review in \S \ref{sec:dec_SL2_moduli}) is our $\omega_1$.

    \begin{rem}[Pinnings and frozen variables]\label{pin}
    Following \cite{goncharov2019quantum}, one may incorporate frozen variables by introducing the additional data of a pinning.  A \textbf{pinning} of a framed $\PGL_2$-local system $(\s{L},\beta)\in \s{X}_{\Sigma}$ is a choice of lift $\alpha\in (\s{L}/U)|_{\partial \SSS \setminus \MM^{\partial}}$ of $\beta|_{\partial \SSS \setminus \MM^{\partial}}$, where $U$ is the maximal unipotent subgroup of $B$ \cite[Def. 2.3, Lem.-Def. 3.7]{goncharov2019quantum}.  One considers the moduli space $\s{P}_{\Sigma}$ of framed $\PGL_2$-local systems with pinning.  By \cite[Ex. 2, bottom of page 34]{goncharov2019quantum}, $\PGL_2/U$ parametrizes pairs $(v,\omega)$ for $v\in \bb{A}^2\setminus \{0\}$ and $\omega\in \Lambda^2(\bb{A}^2)^*\setminus \{0\}$ (i.e., $\omega$ is a symplectic form on $\bb{A}^2$), modulo the $\bb{G}_m$-action $(v,\omega) \mapsto (tv,t^{-2}\omega)$.  Now consider a triangle in the ideal triangulation $\Delta$ with edges labeled $1,2,3$ in counterclockwise order, with the edge $E_3$ being a boundary edge.  Let $L_i$, $i=1,2,3$, be the elements of $\PGL_2/B$ associated to the three edges, viewed as lines in $\bb{A}^2$. Let $A_3=(v_3,\omega)$ be the lift of $L_3$ to $\PGL_2/U$ given by the pinning.  By loc. cit. and \cite[Eq. 343]{goncharov2019quantum}, the associated frozen variable $X_{E_3}$ is given by $$X_{E_3}=\frac{\omega(l_1,l_2)}{\omega(v,l_1)\omega(v,l_2)}$$ for $l_1,l_2$ arbitrary nonzero points in $L_1,L_2$, respectively.  By \cite[Thm. 8.11]{goncharov2019quantum}, these coordinates, along with those associated to interior arcs as above, yield a cluster Poisson structure $\s{X}_{\sd_{\Delta}}$ on $\s{P}_{\Sigma}$.
    \end{rem}
	
	Given an integer bounded lamination $\ell \in \s{A}_{\Sigma}(\bb{Z}^t)$, one associates the following canonical function $\bb{I}(\ell)\in \s{X}^{\up}$: 
	
	\begin{enumerate}
		\item If $\ell$ is a peripheral loop of weight $w$ with underlying unweighted loop $\?{\ell}$ clockwise-oriented around a puncture $p$, then $\bb{I}(\ell):=\lambda_{p}^w$, where $\lambda_p$ is the eigenvalue of the monodromy $\rho(\?{\ell})\in \PGL_2$ whose associated eigenspace is the monodromy-invariant flag associated to the point $p$ by the framing data.  Note: $\ell\in \s{A}_{\Sigma}(\bb{Z}^t)$ implies that $w$ is even, so the sign ambiguity in identifying $\lambda_p$ is irrelevant after raising to the $w$-th power.  		
		
		Equivalently (e.g., using \eqref{eq:classical_trace}), when expressed in the cluster associated to a fixed choice of ideal triangulation $\Delta$ without self-folded triangles, one has $$\rho(\ell)=X^{-\pi(\ell)}$$
		for $\pi$ the map as in \eqref{pil} associated to $\Delta$. 
		\item If $\ell$ is a non-peripheral loop of weight $w$ and underlying unweighted loop $\?{\ell}$, then $\bb{I}(\ell):=\Tr(\rho(\?{\ell})^w)$. As before, there is a sign-ambiguity when defining $\rho(\?{\ell})$, but this is resolved when applying the trace of the $w$-th power.
		\item If $\ell=w\?{\ell}\in \s{A}_{\Sigma}(\bb{Z}^t)$  for $\?{\ell}$ a bounded arc-laminate, let $\gamma$ be the ideal arc obtained by translating the ends of $\?{\ell}$ counterclockwise along $\partial \SSS$ until they are at points of $\MM$, so $\ell=e(\gamma)$. Let $\Delta$ be a tagged triangulation containing $\gamma$; note then that $-\wt{b}^{\Delta}(\ell)\in we_{\gamma}^*+M_F\subset C_{\sd_{\Delta}}^+$. In view of Lemma \ref{lem:intersection_shear_loop}, define $\bb{I}(\ell):=z^{-\pi(\ell)}$ in the cluster associated to $\Delta$.
		\item If $\ell=\sum_i w_i\ell_i$ where $\ell_i$ are the curves of $\ell$, with each homotopy class of curves appearing at most once in the sum and $w_i\in \bb{Z}$, then
		\begin{align*}
			\bb{I}(\ell):=\prod_i \bb{I}(w_i\ell_i).
		\end{align*}
	\end{enumerate}

Restricting to $\bb{I}:\s{A}_{\Sigma}^0(\bb{Z}^t)\rar \Gamma(\s{X}_{\Sigma},\s{O}_{\s{X}_{\Sigma}})\subset \s{X}^{\up}_{\uf}$ recovers the canonical functions of  \cite{FG4}.

Given a fixed choice of ideal triangulation $\Delta$, the monodromies $\rho(\?{\ell})$ considered above admit the following description in terms of the variables $X_i$, cf. \cite[\S 2.2]{Allegretti2015duality} or \cite[Eq. 12.19]{FockGoncharov06a}.  For each $i$, fix a square-root $X_i^{1/2}$ (the choice of square-root will not matter thanks to the definition of $\pi$ and the requirement that $\pi(\ell)\in N$).  Let $\ell$ be a component of a lamination of weight $w$ with underlying unweighted curve $\?{\ell}$.  Assume by possibly deforming $\ell$ that it intersects each arc of $\Delta$ in the minimal possible number of points (after truncating the parts spiraling around punctures).  Let $i_1,\ldots,i_s$ denote the arcs of $\Delta$ which $\?{\ell}$ crosses, in order, possibly with repetition.  After crossing an arc $i_k$, $\?{\ell}$ enters a triangle $T$ of $\Delta$ and then turns either left or right before exiting the triangle through the next arc $i_{k+1}$ (cyclically ordered).  If this turn is to the left, let 
\begin{align*}
	M_{k}=\left(\begin{matrix}
		X_{i_k}^{1/2} & X_{i_k}^{1/2} \\
		0 & X_{i_k}^{-1/2}
	\end{matrix}
	\right)
	=X_{i_k}^{-1/2}\cdot\left(\begin{matrix}
		X_{i_k} & X_{i_k} \\
		0 & 1
	\end{matrix}
	\right),
\end{align*}
and if it is to the right, then let
\begin{align*}
	M_{k}=\left(\begin{matrix}
		X_{i_k}^{1/2} & 0 \\
		X_{i_k}^{-1/2} & X_{i_k}^{-1/2}
	\end{matrix}
	\right)
	=X_{i_k}^{-1/2}\cdot\left(\begin{matrix}
		X_{i_k} & 0 \\
		1 & 1
	\end{matrix}
	\right).
\end{align*}
Then
\begin{align}\label{eq:classical_trace}
	\rho(\?{\ell})=M_{\?{\ell}}:=M_1 \cdots M_s.
\end{align}

A straightforward induction argument reveals that for any product of matrices of the form $\left(\begin{matrix}
	X_{i_k} & X_{i_k} \\
	0 & 1
\end{matrix}
\right)$ and $\left(\begin{matrix}
	X_{i_k} & 0 \\
	1 & 1
\end{matrix}
\right)$, the bottom-right entry will be of the form $1+\sum_{n\in N^+} a_n X^n$ for some coefficients $a_n\in \bb{Z}_{\geq 0}$, while the top-left entry will be of the form $\sum_{n\in N^+} b_n X^n$ for $b_n\in \bb{Z}_{\geq 0}$.  Thus, for $\ell$ a non-peripheral loop, the above description implies that
	\begin{align}\label{eq:ell-pi-pointed}
		\bb{I}(\ell)=\Tr(\rho(\?{\ell})^w)=X^{-\pi(\ell)}\left(1+\sum_{n\in N^+} c_n X^n\right)
	\end{align}
for some coefficients $c_n\in \bb{Z}_{\geq 0}$. 

Assume there exist no self-folded triangles. It is now clear from our definitions that $\bb{I}(\ell)$ is $(-\pi(\ell))$-pointed for all $\ell\in \s{A}_{\Sigma}(\bb{Z}^t)$.

\subsection{Moduli of twisted decorated $\SL_2$-local systems}\label{sec:dec_SL2_moduli}

We now briefly review the definition of twisted decorated $\SL_2$-local systems on $\Sigma$ as in \cite[Def. 2.4]{FockGoncharov06a} (also cf. \cite[Def. 10.1]{GonSh}).

Consider a marked surface $\Sigma = (\SSS,\MM)$.  Let $T'\SSS$ denote the punctured tangent space to $\SSS$, i.e., the complement of the $0$-section in $T\SSS$.  For any chosen base point $x\in \SSS$, $\pi_1(T'_x\SSS,\bb{Z})\cong \bb{Z}$.  Let $O$ denote a curve representing a chosen generator for $\pi_1(T'_x\SSS,\bb{Z})$.  A twisted $\SL_2$-local system $\s{L}$ on $\SSS$ is an $\SL_2$-local system on $T'\SSS$ whose monodromy around $O$ is $-\Id$.

Given such an $\s{L}$, the associated decorated flag bundle is $\s{L}_{\s{A}}:=\s{L}/U$ where $U$ is a maximal unipotent subgroup of $\SL_2$ (e.g., upper triangle matrices with $1$'s on the diagonal --- one assumes that $U$ is chosen to be the unipotent radical of the maximal Borel subgroup $B$ considered in Definition \ref{PGL2Moduli}).

For each component $C$ of $\partial'\hat{\SSS}$, let $\sigma:C\rar T'C$ denote the canonical-up-to-isotopy section of $T'\SSS|_C$ given by tangent vectors to $C$ directed according to the orientation induced on $C$ by the orientation of $\SSS$.

\begin{dfn}[\cite{FG1}, Def. 2.4]
	A twisted decorated $\SL_2$-local system on $\Sigma$ is a pair $(\s{L},\alpha)$, where $\s{L}$ is a twisted $\SL_2$-local system on $\SSS$, and $\alpha$ is a flat section of $\s{L}_{\s{A}}|_{\sigma(C)}$ for each component $C$ of $\partial'\hat{\SSS}$.  The moduli space of twisted decorated $\SL_2$-local systems on $\Sigma$ is denoted by $\s{A}_{\Sigma}$.
\end{dfn}

As shown in \cite[\S 10]{FockGoncharov06a} (and reviewed in \cite{FG4}), $\s{A}$ admits a cluster structure --- more precisely, the upper cluster algebra associated to $\Sigma$ in \S \ref{sub:ClSk} can be identified with the ring of global regular functions on (at least a dense subset of) $\s{A}_{\Sigma}$.

Recall the set $\s{X}_L(\Sigma,\bb{Z})$ of integral unbounded laminations as in Definition \ref{def:lamination}.  As reviewed in \S \ref{sec:shear_coord} (cf. Figure \ref{fig:elementary_laminates}), one associates an elementary laminate $e(\gamma)$ to each tagged arc $\gamma$, and in this way one naturally constructs bijection between $\s{X}_L(\Sigma,\bb{Z})$ and the set of tagged bracelets on $\Sigma$ which do not include boundary arcs.  In \cite[Def. 12.4]{FockGoncharov06a}, Fock and Goncharov define canonical coordinates on $\s{A}_{\Sigma}$ associated to the $\s{X}$-laminations $\s{X}_L(\Sigma,\bb{Z})$---as alluded to in Remark \ref{rem:FG-Z2Z}, laminates which spiral counterclockwise into punctures (i.e., associated to arcs with notched ends) are understood using the $(\bb{Z}/2\bb{Z})^k$-action of \cite[\S 12.6]{FockGoncharov06a}.  These canonical coordinates on $\s{A}_{\Sigma}$ are precisely the bracelets basis elements (without boundary coefficients), cf. \cite[Prop. 4.12]{musiker2013matrix}---indeed, the relationship to the skein relations was already observed in \cite[Proof of Thm. 12.2]{FockGoncharov06a}.

In particular, if $\ell=\sum_i w_i \ell_i$ is a weighted multicurve consisting of pairwise compatible and non-isotopic loops $\ell_i$ with weight $w_i$, then the associated bracelets basis element can equivalently be interpreted as the function on $\s{A}_{\Sigma}$ given by
\begin{align}\label{eq:HolA}
	\bb{I}^{\vee}(\ell) = \prod_i \Tr(\Mono_{\ell_i}^{w_i})
\end{align}
where $\Mono_{\ell_i}$ denotes the monodromy of the twisted $\SL_2$-local system around $\ell_i$ (or rather, the lift of $\ell_i$ to $T'C$).

We extend $\bb{I}^{\vee}$ from $\s{X}_L(\Sigma,\bb{Z})$ to $\wt{\s{X}}_L(\Sigma,\bb{Z})$ as in \eqref{def:lamination} as follows.  Let $\ell=\ell_0+\ell_1$ for $\ell_0\in \s{X}_L(\Sigma,\bb{Z})$ and $\ell_1=\sum w_ie(\gamma_i)$ for boundary arcs $\gamma_i$.  Then $$\bb{I}^{\vee}(\ell)=\bb{I}^{\vee}(\ell_0)\prod_i z^{w_ie_{\gamma_i}^*}.$$
Note that these factors $z^{w_ie_{\gamma_i}^*}$ are just the associated boundary coefficients.  I.e., we extend $\bb{I}^{\vee}$ so that the elements $\bb{I}^{\vee}(\ell)$ for $\ell\in \wt{\s{X}}_L(\Sigma,\bb{Z})$ are precisely the tagged bracelets bases with frozen variables for boundary arcs.

Let $p:\s{A}_{\Sigma}\rar \s{X}_{\Sigma}$ denote the canonical projection induced by $\SL_2\rar \PGL_2$.  By \cite[Prop. 9.1]{FockGoncharov06a}, the induced map on functions $p^*:\Gamma(\s{X}_{\Sigma},\s{O}_{\s{X}_{\Sigma}}) \rar \Gamma(\s{A}_{\Sigma},\s{O}_{\s{A}_{\Sigma}})$ can be identified with the map induced by $\omega_1$ acting on exponents.  One finds the following:
\begin{lem}\label{PullbackLem}
	Let $\gamma=\sum_i w_i \gamma_i$ be a weighted simple multicurve consisting of pairwise compatible and non-isotopic loops and tagged arcs $\gamma_i$ with weights $w_i$.  Assume that none of the arcs in $\gamma$ have any ends at a puncture (specifying that the arcs are tagged just serves to exclude nooses).  Associate to $\gamma$ the integral bounded lamination $\ell:=\sum_i w_i e(\gamma_i)$ for $e(\gamma_i)$ the elementary laminate as in \S \ref{sec:shear_coord}. Suppose $\ell\in \s{A}_{\Sigma}(\bb{Z}^t)$, so $\bb{I}(\ell)$ is defined.  Then 
	\begin{align}\label{eq:I}
		\bb{I}^{\vee}(\gamma)z^{f_{\gamma}} = p^*(\bb{I}(\ell))
	\end{align}
	where $z^{f_{\gamma}}\in \kk[M]$ is some monomial with $f_{\gamma}\in M_F=\bb{Z}\langle e^*_i|i\in F\rangle$.  If $\gamma$ contains no arcs, then $f_{\gamma}=0$.
\end{lem}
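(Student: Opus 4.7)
The plan is to reduce to a single weighted component and then handle loops and tagged arcs separately. Both $\bb{I}$ and $\bb{I}^{\vee}$ are multiplicative: if $\gamma = \bigsqcup_i w_i\gamma_i$ then $\ell = \sum_i w_i e(\gamma_i)$ by definition, and both sides of \eqref{eq:I} factor over $i$ (the $\s{X}$-side because $\bb{I}$ is defined componentwise in case (4), and the $\s{A}$-side because the bracelet of a compatible multicurve is the product of the components' bracelets). Since $p^*$ is an algebra homomorphism, it suffices to verify \eqref{eq:I} for each weighted component $w_i\gamma_i$ separately, at which point $f_\gamma = \sum_i f_{\gamma_i}$ and the ``no arcs'' clause is automatic.

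For a single non-peripheral loop $\gamma_i$ of weight $w_i$, I would compare the matrix formulas directly. The $\s{X}$-side gives $\bb{I}(w_i e(\gamma_i)) = \Tr(\rho(\gamma_i)^{w_i})$ with $\rho(\gamma_i) = M_1\cdots M_s$ as in \eqref{eq:classical_trace}, the entries being monomials in the $X_{i_k}^{\pm 1/2}$. The pullback $p^*$ acts on exponents by $\omega_1$, sending $X_j$ to $z^{\omega_1(e_j)}$ (with the product-of-two convention for arcs inside a self-folded triangle recalled just before \eqref{eq:classical_trace}). Under this substitution each $M_k$ becomes the $\SL_2$-transport matrix for the twisted decorated local system across the corresponding edge of the snake traversal of $\gamma_i$, so $p^*(M_{\gamma_i}) = \Mono_{\gamma_i}$ (as an element of $\SL_2$ acting on sections of the decorated flag bundle), and taking the trace yields $p^*(\bb{I}(w_i e(\gamma_i))) = \Tr(\Mono_{\gamma_i}^{w_i}) = \bb{I}^{\vee}(w_i\gamma_i)$. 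No frozen factor is needed, so $f_{\gamma_i} = 0$.

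For a tagged arc $\gamma_i$ of weight $w_i$ with no ends at punctures, case (3) of the definition gives $\bb{I}(w_i e(\gamma_i)) = z^{-w_i\pi(e(\gamma_i))}$ in the cluster associated to an ideal triangulation $\Delta$ containing $\gamma_i$. Applying $p^*$ yields $z^{-w_i\omega_1(\pi(e(\gamma_i)))}$, which by Lemma \ref{lem:intersection_shear_loop} equals $z^{-w_i\wt{b}^{\Delta}(e(\gamma_i))}$. On the $\s{A}$-side we have $\bb{I}^{\vee}(w_i\gamma_i) = z^{w_i e_{\gamma_i}^*}$ (a cluster variable when $\gamma_i$ is interior, a frozen variable when it is a boundary arc). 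Proposition \ref{prop:shear_g} (and the obvious extension to boundary arcs via the contributions in Figure \ref{fig:shear2}) gives $-\pr_{I_\ufv}\wt{b}^{\Delta}(e(\gamma_i)) = \pr_{I_\ufv}(e_{\gamma_i}^*)$, so the unfrozen coordinates of the two exponents agree. The remaining discrepancy lies entirely in $M_F$ and is absorbed into a monomial $z^{f_{\gamma_i}}$ with $f_{\gamma_i}\in M_F$, as claimed.

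The main obstacle I anticipate is in the loop case: the $\SL_2$-monodromy is only defined for the twisted local system with its $-\Id$ monodromy around the fiber circle, so one must carefully verify that the half-integer exponents $X_{i_k}^{1/2}$ on the $\s{X}$-side are correctly matched, after pullback, with the $\SL_2$-lifts of edge-transport matrices on the $\s{A}$-side (rather than the $-1$-multiples thereof). The hypothesis $\ell\in\s{A}_\Sigma(\bb{Z}^t)$, i.e.\ $\pi(\ell)\in N$, is precisely what guarantees that the signs introduced by the twist cancel consistently as one traverses $\gamma_i$ a total of $w_i$ times, so that $\Tr(\Mono_{\gamma_i}^{w_i})$ is unambiguously a function on $\s{A}_\Sigma$ rather than only well-defined up to the $\pm$ sign coming from the $\SL_2 \to \PGL_2$ covering.
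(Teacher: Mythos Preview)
Your overall structure---reduce to single components by multiplicativity, then handle loops and arcs separately---matches the paper, and your arc argument is essentially identical to the paper's (work in a triangulation containing $\gamma_i$, apply $\omega_1$ to $-\pi(\ell)$, invoke Lemma~\ref{lem:intersection_shear_loop} and Proposition~\ref{prop:shear_g}, absorb the frozen discrepancy into $f_{\gamma_i}$).

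For loops, however, the paper's route is much simpler than yours. Both $\bb{I}(\ell)$ and $\bb{I}^{\vee}(\gamma)$ are \emph{defined} as traces of monodromies---the former of the $\PGL_2$-local system, the latter of the twisted $\SL_2$-local system---and the map $p:\s{A}_{\Sigma}\to\s{X}_{\Sigma}$ is by construction induced by the projection $\SL_2\to\PGL_2$. Thus the $\PGL_2$-monodromy of $p(\s{L},\alpha)$ around $\gamma_i$ is the image of the $\SL_2$-monodromy $\Mono_{\gamma_i}$, and the traces agree (the sign ambiguity being resolved by $\ell\in\s{A}_\Sigma(\bb{Z}^t)$). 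The paper dispatches this in one line: ``immediate from the definitions in terms of traces of monodromies.'' Your route through the explicit matrix formula \eqref{eq:classical_trace} is not wrong, but it rests on the claim that applying $p^*$ entry-wise to each $M_k$ produces the $\SL_2$-transport matrix of the twisted decorated local system---a statement which is true but not proved or cited in the paper, and which generates exactly the half-integer/sign complications you flag as the main obstacle. Working directly at the moduli level sidesteps all of that.
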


We note that this is essentially \cite[Thm. 12.2(4)]{FockGoncharov06a} with frozen variables. 

\begin{proof}
	For $\gamma$ a combination of weighted loops, the claims are immediate from the definitions in terms of traces of monodromies.  

	For $\gamma$ consisting entirely of arcs, work in a cluster associated to an ideal triangulation $\Delta$ which contains all the arcs of $\gamma$ and no self-folded triangles.  Then we have $p^*(\bb{I}(\ell))=z^{\omega_1(-\pi(\ell))}$, which by \eqref{eq:intersection_shear_loop} is equal to $z^{-\wt{b}^{\Delta}(\ell)}$.  By Proposition \ref{prop:shear_g}, if $\gamma$ consists only of interior arcs, then 
	this agrees with $z^{g^{\sd_\Delta}(\gamma)}$ up to a frozen factor, and $z^{g^{\sd_\Delta}(\gamma)}$ is precisely the corresponding bracelet element, i.e., $\bb{I}^{\vee}(\ell)$, as desired.
	
	Now consider some $\gamma_i$ which is a boundary arc.  Then both $g^{\sd}_{\Delta}(\gamma_i)$ and $b^{\Delta}(e(\gamma_i))$ are seen to be contained in $M_F$, so the claim extends to cases with boundary arcs.
	
	One extends to general $\gamma$ by observing that adding weighted simple multicurves with pairwise-compatible non-isotopic components corresponds to multiplication of the associated elements on either side of \eqref{eq:I}.
\end{proof}

\subsection{Quantization: The Chekhov-Fock Algebra}\label{sec:Chek-Fock}

The quantum upper cluster algebra $\s{X}_q^{\up}$ associated to $\Sigma$ can be interpreted via the following construction of Chekhov-Fock \cite{CF}.  Let $\Delta$ be an ideal triangulation of $\Sigma$.  Let $T_i$ be a triangle of $\Delta$.  Then the triangle algebra $\s{T}_{T_i}^t$ associated to $T_i$ is the quantum torus algebra $$\s{T}_{T_i}^t=\kk_t^{\omega_{T_i}}[N_{T_i}]:=\kk_t[z^n|n\in N_{T_i}]/\langle z^{n_1}z^{n_2}:=t^{\omega_{T_i}(n_1,n_2)}z^{n_1+n_2}\rangle$$ where $N_{T_i}:=\bb{Z}\langle e_{i1},e_{i2},e_{i3}\rangle$ and $\omega(e_{ij},e_{i(j+1)}):=-1$ for $e_{i1},e_{i2},e_{i3}$ denoting the sides of $T_i$ in clockwise order.

	Now define the tensor product algebra
	\begin{align*}
		\s{T}_{\Delta}:=\bigotimes_{T_i\subset \Delta} \s{T}_{T_i}^t.
	\end{align*}
	We omit the factors $1$ when describing the elements of $\s{T}_{\Delta}$.
	If $e\in \Delta$ separates two triangles $T_i$ and $T_j$, and if $z^{e_{ia}}=:X_{ia}\in \s{T}_{T_i}$ and $z^{e_{jb}}=:X_{jb}\in \s{T}_{T_j}$ are the associated elements of $\s{T}_{T_i}\subset \s{T}$ and $\s{T}_{T_j}\subset \s{T}$, then define
	\begin{align*}
		X_e:=X_{ia}\otimes X_{jb}\in \s{T}_{\Delta}.
	\end{align*}
	If $e$ is the arc in the center of a self-folded triangle $T_i$, corresponding to the vectors $e_{i1},e_{i2}\in N_{T_i}$, then $$X_e:=z^{e_{i1}+e_{i2}}\in \s{T}_{T_i}\subset \s{T}_{\Delta}.$$
	For $e\in T_i$ a boundary arc of $\Sigma$ with associated element $z^{e_{ia}}\in \s{T}_i$, we take $X_e:=z^{e_{ia}}\in \s{T}_i\subset \s{T}_{\Delta}$.
	\begin{dfn}
		The \textbf{Chekhov-Fock algebra} $\s{X}_{t}^{\Delta}$ is the subalgebra of $\s{T}_{\Delta}$ generated by the elements $X_e$ and their inverses.
	\end{dfn}
	Equivalently, $\s{X}_t^{\Delta}$ is the quantum torus algebra $\s{X}_t^{\sd_{\Delta}}$ for the seed $\sd_{\Delta}$ associated to $\Delta$.
	
	Given different ideal triangulations $\Delta_1,\Delta_2$, \cite{CF} shows that the fraction fields $\mr{\s{X}}_{t}^{\Delta_1}$ and $\mr{\s{X}}_{t}^{\Delta_2}$ are related by a certain isomorphism $$\Phi_{\Delta_1\Delta_2}^t:\mr{\s{X}}_{t}^{\Delta_2}\risom \mr{\s{X}}_{t}^{\Delta_1},$$
	see \cite[Proposition 5]{liu2009quantum} for a precise definition. These isomorphisms agree with the corresponding quantum mutation maps $\mu_{\jj}^{\s{X}}$ considered in \eqref{mujjX} (for $\jj$ corresponding to a sequence of flips relating the two triangulations).  
	
	\begin{rem}
		We recall that some clusters of $\s{X}_t^{\up}$ might be related only to tagged triangulations, not ideal triangulations.  D. Allegretti has suggested to us that, by interpreting tagged triangulations as equivalence classes of signed triangulations (as defined in \cite[\S 8]{BS}), one could associate $q$-deformed canonical functions to laminations and signed (hence tagged) triangulations via essentially the same procedure used in \cite{Allegretti2015duality} for the ideal triangulations.
	\end{rem}

	\subsection{Quantum canonical coordinates}\label{sec:X-q-can}
	
	In \cite[Thm. 11]{BW}, Bonahon and Wong defined a ``quantum trace map'' $\Tr_{\Delta}^q$ as a certain homomorphism from a skein algebra on $\Sigma$ to $\mr{\s{X}}_{\Delta}^{t^{1/4}}$ (also cf. \cite{Le-qtrace} for a construction of the quantum trace map based on Muller's quantum skein algebra \cite{muller2016skein}).  Using this, \cite[Definitions 3.1, 3.4, 3.8, and 3.11]{Allegretti2015duality} defines quantum analogs of the Fock-Goncharov canonical coordinates as follows:\footnote{\cite{Allegretti2015duality} restricts to surfaces without boundary, so arc-laminates as in (3) here do not actually appear. 
	}

	\begin{enumerate}
		\item If $\ell$ is a peripheral loop, then $\bb{I}_t(\ell):=z^{-\pi(\ell)}$ for $\pi$ as in \eqref{pil}.   
		\item If $\ell$ is a non-peripheral loop of weight $w$ and underlying unweighted loop $\?{\ell}$, then  $\bb{I}_t(\ell):=T_w(\Tr_{\Delta}^q(\?{\ell}))$, where $T_w$ denote the $w$-th Chebyshev polynomial of the first kind as in \eqref{Cheb1}.  Here, $\Tr_{\Delta}^q(\?{\ell})$ is interpreted as applying $\Tr_{\Delta}^q$ to a certain framed link projecting to $\?{\ell}$ (one with constant elevation and vertical framing); cf. \cite[Def. 3.4]{Allegretti2015duality}.
		\item If $\ell=w\?{\ell}\in \s{A}_{\Sigma}(\bb{Z}^t)$ for $\?{\ell}$ a bounded arc-laminate, let $\gamma$ be the ideal arc obtained by translating the ends of $\?{\ell}$ along $\partial \SSS$ against the orientation until they are at points of $\MM$, so $\?{\ell}=e(\gamma)$. Let $\Delta'$ be an ideal triangulation containing $\gamma$. Define $\bb{I}_t(\ell)$ to be the element given by the monomial $z^{-\pi(\ell)}$ in the cluster associated to $\Delta'$.  I.e., $\bb{I}_t(\ell)=\Phi_{\Delta\Delta'}^t(z^{-\pi(\ell)})$.
		\item Let $\ell=\sum_{j\in J} w_j\ell_j\in \s{A}_{\Sigma}(\bb{Z}^t)$ with each laminate $\ell_j$ appearing at most once in the sum.   Let $\{\ell_j| j\in J'\subset J\}$ be the arc-laminates contributing to $\ell$, and let $\Delta'$ be an ideal triangulation such that, for each $j\in J'$, there is some $\gamma_j\in \Delta'$ with $\ell_j=e(\gamma_j)$.   Then
		\begin{align}\label{eq:It-ell}
			\bb{I}_t(\ell):=\Phi^t_{\Delta\Delta'}(z^{\sum_{j\in J'} -\pi(w_j\ell_j)})\prod_{j\in J\setminus J'} \bb{I}_t(w_j\ell_j).
		\end{align}
		\end{enumerate}

	We note that the factors in the product over $J\setminus J'$ commute by \cite[Lem 3.10]{Allegretti2015duality}. We will see in the proof of Lemma \ref{lem:no-pun-class-pos} that these factors also commute with the factor $\Phi^t_{\Delta\Delta'}(z^{\sum_{j\in J'} -\pi(w_j\ell_j)})$.
	
	Lemmas \ref{lem:I1}, \ref{lem:Xglue}, and \ref{lem:no-pun-class-pos} review the properties of $\bb{I}_t$ which shall be useful to us.
	
	\begin{lem}\label{lem:I1}
		For all $\ell\in \s{A}_{\Sigma}(\bb{Z}^t)$, $\bb{I}_1(\ell) = \bb{I}(\ell)$. 
	\end{lem}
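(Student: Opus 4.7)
The plan is to verify the equality by going through the four cases in the definitions of $\bb{I}_t$ and $\bb{I}$ and checking that each quantum definition specializes at $t=1$ to the corresponding classical one. The multiplicativity in case (4) of both definitions reduces the problem to the other three cases, so it suffices to check each type of laminate separately.

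The peripheral loop case (1) is immediate: both $\bb{I}_t(\ell)$ and $\bb{I}(\ell)$ are \emph{defined} to be the monomial $z^{-\pi(\ell)}$, interpreted in the Chekhov--Fock algebra for $\sd_\Delta$ versus its classical limit. For the arc-laminate case (3), both definitions again set $\bb{I}_t(\ell)=z^{-\pi(\ell)}$ in the cluster associated to a triangulation $\Delta'$ containing the corresponding ideal arc $\gamma$; transporting to any other cluster uses the quantum mutation $\Phi^t_{\Delta\Delta'}$, which by construction reduces at $t=1$ to the classical $\s{X}$-mutation used implicitly in $\bb{I}(\ell)$ (cf. \S \ref{sec:seed-mut} and Equation \eqref{Xmut}). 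So the only substantive case is (2), the non-peripheral loops.

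For case (2), I would invoke two ingredients. First, by a result of Bonahon--Wong \cite{BW}, the classical limit $\Tr_\Delta^q|_{t=1}$ of their quantum trace map sends a simple loop $\bar\ell$ to the classical trace $\Tr(\rho(\bar\ell))$ of the $\SL_2$-monodromy of the associated twisted local system; this is encoded in the matrix-product formula \eqref{eq:classical_trace}, which is precisely the $t=1$ evaluation of the local weights that Bonahon--Wong attach to the elementary left/right turns. Second, since $\rho(\bar\ell)\in\SL_2$, its trace satisfies the Chebyshev identity $T_w(\Tr(A))=\Tr(A^w)$ (an immediate consequence of \eqref{Tkex} applied to the two eigenvalues $\lambda,\lambda^{-1}$ of $A$). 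Combining these, one obtains
\[
\bb{I}_t(\ell)|_{t=1}=T_w\bigl(\Tr_\Delta^q(\bar\ell)\bigr)\bigr|_{t=1}=T_w\bigl(\Tr(\rho(\bar\ell))\bigr)=\Tr(\rho(\bar\ell)^w)=\bb{I}(\ell),
\]
which completes case (2).

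The main obstacle is locating the precise form of the classical limit of $\Tr_\Delta^q$ in the literature; while the matrix-product formula \eqref{eq:classical_trace} and Bonahon--Wong's construction are manifestly compatible at $t=1$ (the quantum weights are exactly $q$-deformations of the entries of the matrices $\tfrac{1}{X_{i_k}^{1/2}}M_k$), some care is needed to match conventions on framings, orderings of strands, and the square-root choices $X_{i_k}^{1/2}$. I expect this identification is essentially in \cite{BW} or \cite{Allegretti2015duality}, but the alternative is to give a direct inductive proof by cutting $\bar\ell$ into arcs through the triangles of $\Delta$, using the State Sum Property of $\Tr_\Delta^q$ to reduce to checking the local assignments on a single triangle, where the identification with \eqref{eq:classical_trace} at $t=1$ is a direct computation.
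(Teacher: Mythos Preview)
Your proposal is correct and follows essentially the same approach as the paper: reduce by multiplicativity to the three basic laminate types, observe that the peripheral-loop and arc cases are immediate from the definitions, and for non-peripheral loops combine the classical limit of the Bonahon--Wong quantum trace with the Chebyshev identity $T_w(\Tr A)=\Tr(A^w)$. The paper handles the non-peripheral loop case simply by citing \cite[Prop.~3.12]{Allegretti2015duality}, which is exactly the literature reference you were looking for in your ``main obstacle'' paragraph.
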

	\begin{proof}
		This is clear for arcs and peripheral loops.  The case of non-peripheral loops is part of \cite[Prop. 3.12]{Allegretti2015duality}.  The extension to disjoint unions / products is straightforward.
	\end{proof}

	Let $\Sigma$ be a (not necessarily connected) marked surface with triangulation $\Delta$.  Suppose we cut $\Sigma$ along some interior arcs of $\Delta$ to form a new surface $\Sigma'$ with triangulation $\Delta'$ as in \S \ref{sec:cut}.  It is clear that this induces a projection $\f{glue}:\s{T}_{\Delta'}\rar \s{T}_{\Delta}$ --- if $e_1,e_2\in \Delta'$ glue together to form an edge $e$, then the projection maps $X_{e_1}\mapsto X_{e}$ and $X_{e_2}\mapsto X_e$.  Note that $\f{glue}$ is the identity on Laurent polynomials in the $X$-variables which do not correspond to any of the edges being glued.
	
	\begin{lem}\label{lem:Xglue}
		Consider $\Sigma',\Delta'$ obtained from $\Sigma,\Delta$ via cutting as above.  Let $\ell$ be a lamination in $\s{A}_{\Sigma}(\bb{Z}^t)$ which is disjoint from the arcs being cut, so $\ell$ can also be viewed as an element $\ell'\in \s{A}_{\Sigma'}(\bb{Z}^t)$.  Then
		\begin{align*}
			\f{glue}(\bb{I}_t(\ell')) = \bb{I}_t(\ell).
		\end{align*}
	\end{lem}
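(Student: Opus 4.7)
The plan is to check the equality component-by-component, using that $\bb{I}_t$ is defined multiplicatively and that $\f{glue}$ is a $\kk_t$-algebra homomorphism on its domain. Write $\ell=\sum_j w_j\ell_j$ as in definition (4) preceding the lemma, with all $\ell_j$ disjoint from the cut arcs (so the same decomposition holds for $\ell'$ in $\Sigma'$). Since $\f{glue}$ sends products of commuting elements to the corresponding products, I can reduce to verifying the equality for each $w_j\ell_j$ separately, provided I then assemble the factors in the right order.

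The three cases from the definition of $\bb{I}_t$ are handled separately. For a peripheral loop $w_j\ell_j$, we have $\bb{I}_t(w_j\ell_j)=z^{-\pi(w_j\ell_j)}$, and since $\ell_j$ is disjoint from the cut arcs, its intersection coordinates in $\Delta'$ are supported on edges that are not being glued. Thus $\f{glue}$ acts as the identity on this monomial, and the resulting monomial is precisely $z^{-\pi(w_j\ell_j)}$ computed with respect to $\Delta$. For an arc-laminate $w_j\ell_j=w_j e(\gamma_j)$, the key is to choose the auxiliary triangulation $\Delta_2'$ of $\Sigma'$ in \eqref{eq:It-ell} to contain both $\gamma_j$ and the cut arcs; then $\Delta_2'$ descends to an ideal triangulation $\Delta_2$ of $\Sigma$ containing $\gamma_j$, and any sequence of flips in $\Delta'$ realizing $\Phi^t_{\Delta'\Delta_2'}$ (none of which involves the cut arcs) descends to the same sequence of flips realizing $\Phi^t_{\Delta\Delta_2}$. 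Since the quantum mutation formulas are local in the sense that the exchange relation at an index $k$ only involves $X_k$ and the $X$-variables adjacent to it in the quiver, and since $\f{glue}$ fixes all the relevant $X$-variables, we get $\f{glue}\circ\Phi^t_{\Delta'\Delta_2'}=\Phi^t_{\Delta\Delta_2}\circ\f{glue}$ on the monomial $z^{-\pi(w_j\ell_j)}$, establishing the desired equality.

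The main obstacle is the non-peripheral loop case, for which the equality $\f{glue}(\Tr^q_{\Delta'}(\bar\ell_j'))=\Tr^q_\Delta(\bar\ell_j)$ is the content of the \emph{State Sum Property} of the Bonahon--Wong quantum trace map; once this is in hand, applying $\f{glue}$ to $\bb{I}_t(w_j\ell_j)=T_{w_j}(\Tr^q_{\Delta'}(\bar\ell_j'))$ and using that $\f{glue}$ is a ring homomorphism yields $T_{w_j}(\Tr^q_\Delta(\bar\ell_j))=\bb{I}_t(w_j\ell_j)$ on the $\Sigma$-side. The State Sum Property is a fundamental feature of the Bonahon--Wong construction, and since $\bar\ell_j$ has empty intersection with the cut arcs, it has a canonical framed lift whose quantum trace is computed locally in each triangle and then multiplied, with the variables at the glued edges receiving the product of the two corresponding local contributions---this is exactly the statement that $\f{glue}$ applied to $\Tr^q_{\Delta'}(\bar\ell_j')$ recovers $\Tr^q_\Delta(\bar\ell_j)$.

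Finally, to reassemble the factors, note that by \cite[Lem.~3.10]{Allegretti2015duality} the factors $\bb{I}_t(w_j\ell_j)$ for $j\in J\setminus J'$ commute among themselves, and as remarked just after \eqref{eq:It-ell} (to be used in Lemma \ref{lem:no-pun-class-pos}) they also commute with the arc-laminate factor $\Phi^t_{\Delta\Delta'}(z^{\sum_{j\in J'}-\pi(w_j\ell_j)})$. The same commutativity holds on the $\Sigma'$-side, and $\f{glue}$ preserves it. Therefore $\f{glue}(\bb{I}_t(\ell'))$ equals the product of the $\f{glue}$-images of the individual factors, in any order, and we conclude $\f{glue}(\bb{I}_t(\ell'))=\bb{I}_t(\ell)$.
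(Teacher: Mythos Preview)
Your proposal is correct and follows the same approach as the paper: non-peripheral loops via the State Sum Property of the Bonahon--Wong quantum trace, peripheral loops and arcs by inspection of the definitions, and disjoint unions by multiplicativity. You have simply filled in details that the paper leaves as ``immediate from the definitions.''

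One minor point: the commutativity argument in your last paragraph is unnecessary. Since $\f{glue}$ is an algebra homomorphism, once you have checked $\f{glue}$ on each factor of \eqref{eq:It-ell} (in the order written there), you immediately get $\f{glue}(\bb{I}_t(\ell'))=\bb{I}_t(\ell)$ without appealing to the commutation of the factors. Invoking the commutativity established later in Lemma~\ref{lem:no-pun-class-pos} is a forward reference you can simply drop; while it is not actually circular (that part of the later proof does not rely on the present lemma), it is cleaner to avoid it.
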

	\begin{proof}
		For non-peripheral loops, this is an easy consequence of the State Sum Property of the quantum trace map \cite[Thm. 11(1)]{BW}.  The claim for weighted peripheral loops, weighted arcs, and disjoint unions of non-isotopic laminations is then immediate from the definitions.
	\end{proof}

	\begin{lem}\label{lem:no-pun-class-pos}
		If $\Sigma$ has no punctures and $\ell\in \s{A}_{\Sigma}(\bb{Z}^t)$, $\bb{I}_t(\ell)$ is bar-invariant and universally positive with respect to the scattering atlas.
	\end{lem}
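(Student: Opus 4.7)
The plan is to identify each factor $\bb{I}_t(w_j\ell_j)$ appearing in \eqref{eq:It-ell} with a quantum theta function in $\s{X}_t^{\can}$, and then to appeal to Lemma \ref{lem:theta-bar-inv} and Proposition \ref{AtomicProp} (equivalently Proposition \ref{prop:X-pos}) for the bar-invariance and universal positivity.

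First I would reduce to a single factor. Because $\Sigma$ has no punctures, \eqref{eq:It-ell} expresses $\bb{I}_t(\ell)$ as a product $\prod_j \bb{I}_t(w_j \ell_j)$ whose factors commute pairwise---for loop pairs this is \cite[Lem. 3.10]{Allegretti2015duality}; for the remaining pairs it follows from the fact that, after passing to a common ideal triangulation $\Delta'$ containing all the relevant arcs, the arc-laminate factors become quantum torus monomials and their $\omega$-pairings with the Laurent exponents of the loop factors vanish in the absence of interior intersections (alternatively, one may transfer commutativity from the $\s{A}$-side using that $\omega_1(\bb{I}_t(w_i\ell_i))$ and $\omega_1(\bb{I}_t(w_j\ell_j))$ are bracelets of non-intersecting multicurves and applying the Injectivity Assumption as in the next paragraph). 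Since products of commuting bar-invariant and universally positive elements remain bar-invariant and universally positive, it suffices to treat each factor $\bb{I}_t(w_j\ell_j)$. If $\ell_j = e(\gamma)$ is an arc-laminate, $\bb{I}_t(we(\gamma))$ is by construction the Laurent monomial $z^{-w\pi(e(\gamma))}$ in the cluster $\s{X}_t^{\sd_{\Delta'}}$, hence a quantum cluster monomial, which is bar-invariant by construction and universally positive by the quantum Laurent positivity theorem.

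For a single non-peripheral loop factor, the definition of the quantum loop bracelet in \S\ref{sec:qbracelet_loop}, together with its agreement with the skein-theoretic bracelet in the unpunctured case, yields
\[
\omega_1^{\sd_\Delta}\bigl(\bb{I}_t(w\ell_j)\bigr) \;=\; \BracE{w\ell_j}.
\]
Theorems \ref{thm:sk_unpunct_basis} and \ref{thm:bracelet-theta-no-punct} identify the right-hand side with $\vartheta_{g(w\ell_j)} \in \s{A}_t^{\can}$, while Proposition \ref{prop:omega-theta} and \eqref{gLoop} give $\omega_1\bigl(\vartheta_{-\pi(w\ell_j)}\bigr) = \vartheta_{\omega_1(-\pi(w\ell_j))} = \vartheta_{g(w\ell_j)}$. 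Combining these,
\[
\omega_1\bigl(\bb{I}_t(w\ell_j) \;-\; \vartheta_{-\pi(w\ell_j)}\bigr) \;=\; 0.
\]
Because a non-peripheral loop in an unpunctured surface is disjoint from every boundary arc, $\pi(w\ell_j) \in N_\uf$. The explicit matrix formula \eqref{eq:classical_trace} together with the integrality of $T_w$ shows that the Laurent expansion of $\bb{I}_t(w\ell_j)$ in $\s{X}_t^{\sd_\Delta}$ is supported in $N_\uf$, and the broken-line construction (whose bend directions lie in $\omega_1(N_\uf)$, cf. \eqref{DXin}) shows the same for $\vartheta_{-\pi(w\ell_j)}$. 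The Injectivity Assumption (automatic in the unpunctured case) makes $\omega_1|_{N_\uf}$ injective, so the displayed vanishing forces $\bb{I}_t(w\ell_j) = \vartheta_{-\pi(w\ell_j)}$ in $\s{X}_t^{\can}$, and bar-invariance and universal positivity with respect to the scattering atlas follow from Lemma \ref{lem:theta-bar-inv} and Proposition \ref{AtomicProp}.

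The main obstacle I expect is the clean verification of pairwise commutativity of the factors $\bb{I}_t(w_j\ell_j)$ in the first step, particularly when a loop and an arc-laminate appear together; this reduces to showing that the relevant $\omega$-pairing vanishes, which should follow from the absence of interior intersections but requires careful bookkeeping across the change of triangulations implicit in \eqref{eq:It-ell}. A minor additional detail is the consistent tracking of the rescaling $t \mapsto t^d$ (here $d = 4$) built into the morphism $\omega_1$.
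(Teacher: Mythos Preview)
Your argument is circular at the key step for loops. The identity $\omega_1^{\sd_\Delta}\bigl(\bb{I}_t(w\ell_j)\bigr)=\BracE{w\ell_j}$, where the right-hand side is the skein-theoretic bracelet of \S\ref{sec:bracelet_band}, is exactly what the paper in \S\ref{sec:qbracelet_loop} justifies by citing Lemmas~\ref{PullbackLem} and~\ref{lem:unpunctured_Poisson_theta}. The latter lemma is proved using the very Lemma~\ref{lem:no-pun-class-pos} you are trying to establish. Lemma~\ref{PullbackLem} alone only gives the classical ($t=1$) agreement; promoting it to the quantum level is precisely the content of Lemma~\ref{lem:unpunctured_Poisson_theta}, whose proof feeds bar-invariance and theta positivity (i.e., Lemma~\ref{lem:no-pun-class-pos}) into Lemma~\ref{lem:1theta-implies-qtheta}. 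So as written you have assumed what you set out to prove.

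The paper avoids this loop by not attempting to identify $\bb{I}_t(\ell)$ with a theta function at this stage. Instead, for loop factors it embeds $\Sigma$ into a closed surface by gluing along boundary arcs, invokes the external positivity result \cite[Thm.~1.1]{cho2020laurent} and the bar-invariance from \cite[Thm.~1.2(4)]{Allegretti2015duality} (both stated for closed surfaces), and transports these back to $\Sigma$ via the State Sum Property (Lemma~\ref{lem:Xglue}). Arc factors are cluster monomials, and the product is then handled as you suggest. Your route could be salvaged if you supplied an independent proof that $\omega_1\circ\Tr_\Delta^q$ agrees with the Muller skein-algebra element for loops---plausibly extractable from \cite{Le-qtrace}---but that is a genuine gap, not just bookkeeping, and it is not the obstacle you flagged.
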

	\begin{proof}
		Suppose that $\ell$ consists only of weighted loops.  In unpunctured cases, all clusters correspond to ideal triangulations.  Let $\Delta$ be an arbitrary such ideal triangulation.  Let $\Sigma'$ be another surface, with triangulation $\Delta'$, which can be glued to $\Sigma$ along boundary edges to produce a new surface $\wt{\Sigma}$ with triangulation $\wt{\Delta}$ such that $\wt{\Sigma}$ has no boundary components (except for punctures).  Then $\bb{I}_t(\ell)\in \s{X}_t^{\wt{\Delta}}$ is positive (i.e., has coefficients in $\bb{Z}_{\geq 0}[t^{\pm 1}]$) by \cite[Thm. 1.1]{cho2020laurent}.  So by Lemma \ref{lem:Xglue}, $\bb{I}_t(\ell)$ is positive as an element of $\s{X}_t^{\Delta}\subset \s{X}_t^{\Delta\sqcup \Delta'}$.  Since $\Delta$ here is arbitrary, universal positivity with respect to the cluster atlas follows. Universal positivity with respect to the scattering atlas then follows from Proposition \ref{prop:X-pos}.    
		Bar-invariance follows via a similar argument using \cite[Thm. 1.2(4)]{Allegretti2015duality}.
		
		One the other hand, combinations of weighted arcs are defined to be cluster monomials, hence theta functions, so these are always bar-invariant and universally positive with respect to the scattering atlas.  By \eqref{eq:It-ell}, we now see that $\bb{I}_t(\ell)$ for arbitrary $\ell\in\s{A}_{\Sigma}(\bb{Z}^t)$ is a product of theta positive elements, hence the general $\bb{I}_t(\ell)$ is theta positive. 
        
        To see the bar-invariance, it remains to check that the factors $\bb{I}_t(w_j\ell_j)$ in \eqref{eq:It-ell} for $\ell_j$ a loop commute with the monomial $z^{\sum_{j\in J'} -\pi(w_j\ell_j)}$ (this can be checked in any cluster, so we may assume $\Delta=\Delta'$).  By the theta positivity and Lemma \ref{lem:I1}, the exponents appearing in the Laurent expansion of $\bb{I}_t(w_j\ell_j)$ are the same as those appearing $\bb{I}_1(w_j\ell_j)$.  The desired commutativity can now be deduced from the description of $\rho(\ell_j)$ in \eqref{eq:classical_trace}.
	\end{proof}

	\subsection{The canonical functions are theta bases}\label{sec:ThetaX}
	
	Our goal is to prove the following:
	\begin{thm}\label{thm:ThetaX}
		The elements $\bb{I}_t(\ell)$ for $\ell \in \s{A}_{\Sigma}(\bb{Z}^t)$ are precisely the quantum theta function $\{\vartheta_n|n\in N\}$ in $\s{X}_t^{\up}$.  More precisely, $\bb{I}_t(\ell)=\vartheta_{-\pi(\ell)}$ for each $\ell$.
	\end{thm}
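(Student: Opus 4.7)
The plan is to reduce the statement to the $\s{A}$-side via the canonical embedding $\xi \colon \kk_t\llb N\rrb \hookrightarrow \kk_t\llb M^{\prin}\rrb$ associated to the principal coefficient seed $\sd_{\Delta}^{\prin}$. Recall from Definition \ref{X-Aprin} that the $\s{X}$-side theta functions are defined by the characterizing property $\xi(\vartheta_{p,\sQ}) = \vartheta_{\xi(p),\xi(\sQ)} \in \s{A}_t^{\prin,\up}$, so proving $\bb{I}_t(\ell) = \vartheta_{-\pi(\ell)}$ in $\s{X}_t^{\up}$ is equivalent, via the injectivity of $\xi$, to proving $\xi(\bb{I}_t(\ell)) = \vartheta_{\xi(-\pi(\ell))}$ in $\s{A}_t^{\prin,\up}$. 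Since $\xi$ agrees with the restriction of $\omega_1^{\sd^{\prin}}$ to $N \subset N^{\prin}$, the construction \eqref{eq:q_loop_bracelet} of the quantum bracelet for principal coefficients gives $\xi(\bb{I}_t(L)) = \BracE{L}^{\prin}$; this is also how the classical principal bracelet was defined in \eqref{Lprin-def}.

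I would then proceed component-wise on the decomposition $\ell = \sum_j w_j \ell_j$ of Definition \ref{def:lamination} (noting that components are pairwise non-intersecting). For bounded arc-laminates or peripheral loop-laminates, $\bb{I}_t(\ell_j)$ is defined to be a Laurent monomial $z^{-\pi(\ell_j)}$ in a cluster containing the corresponding arc/puncture data, so $\xi(\bb{I}_t(\ell_j)) = z^{\xi(-\pi(\ell_j))}$ is a cluster monomial in $\s{A}_t^{\prin,\up}$, hence equal to the theta function $\vartheta_{\xi(-\pi(\ell_j))}$ by Corollary \ref{UpTheta}. For non-peripheral loop components $\ell_j = w_j L_j$, we apply Lemma \ref{lem:loop_puncture_bracelet} to $\sd = \sd_{\Delta}^{\prin}$ to conclude that $\BracE{w_jL_j}^{\prin} = \xi(\bb{I}_t(w_j L_j))$ is a quantum theta function in $\s{A}_t^{\prin,\up}$, with $g$-vector equal to $-\omega_1^{\sd^{\prin}}(\pi(w_jL_j)) = \xi(-\pi(w_jL_j))$ by \eqref{gLoop}.

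For the general multi-component case, the Allegretti-Kim formula \eqref{eq:It-ell} expresses $\bb{I}_t(\ell)$ as a product of $t$-commuting factors of the forms treated above. Applying $\xi$ (which is an algebra embedding) gives a product of compatible principal-coefficient bracelet factors, each a theta function. Since the components of $\ell \in \s{A}_{\Sigma}(\bb{Z}^t)$ are bounded laminates and so in particular contain no arc spiraling into a puncture (thus no notched arcs arise), Theorem \ref{thm:punctured_bracelet_theta} identifies this entire product with the single principal bracelet $\BracE{\ell}^{\prin}$ and in turn with the theta function $\vartheta_{\xi(-\pi(\ell))}$, where the $g$-vector additivity follows from adding the component-wise identities.

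The main technical point that will require care is verifying that the $t$-commuting factor structure in the Allegretti-Kim product formula \eqref{eq:It-ell} matches exactly, with no extra powers of $t$, the $t$-commuting product structure that defines the quantum tagged bracelet $\BracE{\ell}^{\prin}$. Both products are bar-invariant (for $\bb{I}_t$ via Lemma \ref{lem:no-pun-class-pos} combined with the cut-and-glue reduction of \S \ref{sec:cut}/Lemma \ref{lem:Xglue}, and for bracelets via Lemma \ref{lem:commute_arc_loop} and Lemma \ref{lem:theta-bar-inv}) and agree in the classical limit, so the normalization coefficient must in fact be $1$; once this is confirmed, the $g$-vector match combined with Definition \ref{X-Aprin} completes the proof.
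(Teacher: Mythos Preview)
Your approach is valid but takes a genuinely different route from the paper's. The paper first establishes the unpunctured classical case by pushing forward along $p^*=\omega_1^{\sd_{\Delta}}$ and invoking Lemma \ref{PullbackLem} together with the injectivity of $\omega_1|_{N_{\uf}}$ (Lemma \ref{lem:no-pun-classical}); it then lifts to the unpunctured quantum setting via bar-invariance and theta positivity (Lemma \ref{lem:unpunctured_Poisson_theta}); finally, punctured surfaces are handled by cutting along arcs to an unpunctured piece (Lemma \ref{lem:triangulation}) and using that both $\bb{I}_t$ (Lemma \ref{lem:Xglue}) and the $\s{X}$-theta functions behave compatibly under this gluing. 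You instead pass directly to $\s{A}_t^{\prin}$ via $\xi$, invoke Lemma \ref{lem:loop_puncture_bracelet} for the loop part and the cluster-monomial property for the arc and peripheral-loop parts, and combine via Proposition \ref{prop:cluster-commute-theta}. This avoids the explicit second cutting step (the first is already buried in Lemma \ref{lem:loop_puncture_bracelet}), so your route is somewhat more direct while relying on the same underlying ingredients.

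One correction is needed: for an arc-laminate $\ell_j=w\,e(\gamma_j)$ with $\gamma_j\in\Delta'$, the cluster monomial $\xi(\bb{I}_t(\ell_j))$ has $g$-vector $\xi(-\pi(\ell_j))=(-\omega_1(\pi(\ell_j)),-\pi(\ell_j))$, whereas the principal bracelet $\BracE{w\gamma_j}^{\prin}$ has the standard cluster-variable $g$-vector in $M^{\prin}$. These differ by an element of $M_{F^{\prin}}=M_F\oplus N$, i.e., by a frozen monomial---not merely by a power of $t$. Consequently $\xi(\bb{I}_t(\ell))$ is not $\BracE{\ell}^{\prin}$ on the nose, and your last paragraph's bar-invariance/classical-limit argument (which assumes only a $t$-power discrepancy) does not resolve this mismatch. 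But the fix-up is unnecessary: since a cluster monomial times a frozen monomial is again a cluster monomial, and the non-peripheral loop factor $\BracE{L}^{\prin}=\xi(\bb{I}_t(L))$ commutes with it (either by Lemma \ref{lem:commute_arc_loop} on the $\s{A}^{\prin}$-side, or by transporting the commutativity noted after \eqref{eq:It-ell} through the algebra embedding $\xi$), Proposition \ref{prop:cluster-commute-theta} yields $\xi(\bb{I}_t(\ell))=\vartheta_{\xi(-\pi(\ell))}^{\prin}$ directly, with the $g$-vector read off from pointedness. Citing Theorem \ref{thm:punctured_bracelet_theta} here is superfluous; Proposition \ref{prop:cluster-commute-theta} (which underlies that theorem's proof) already does the job.
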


	\begin{lem}\label{lem:no-pun-classical}
		Theorem \ref{thm:ThetaX} holds for $t=1$ when $\Sigma$ is an unpunctured surface. 
	\end{lem}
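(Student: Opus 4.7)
The plan is to prove $\bb{I}(\ell) = \vartheta_{-\pi(\ell)}$ in $\s{X}^{\up}$ (using Lemma \ref{lem:I1} to identify $\bb{I}_1$ with $\bb{I}$) by decomposing $\ell \in \s{A}_\Sigma(\bb{Z}^t)$ into its non-peripheral loop part $\ell_L$ and its arc-laminate part $\ell_A$, handling each separately, and then combining multiplicatively.

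For $\ell_A$ consisting only of weighted arc-laminates, the pairwise-disjointness of the underlying arcs lets me pick a single ideal triangulation $\Delta'$ containing all of them. In the cluster $\s{X}^{\Delta'}_t$, $\bb{I}(\ell_A) = z^{-\pi(\ell_A)}$ by definition, which is a cluster monomial, hence equal to $\vartheta_{-\pi(\ell_A)}$ by Corollary \ref{UpTheta}.

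For $\ell_L$ with underlying simple multicurve $\gamma$, the key idea is to transfer the identity to the $\s{A}$-side via $p^* = \omega_1$. First, by Lemma \ref{lem:no-pun-class-pos} and Theorem \ref{fFG-surfaces} (full Fock--Goncharov for $\s{X}$ in the unpunctured setting), $\bb{I}(\ell_L) \in \s{X}^{\can}$ admits a theta-basis expansion $\bb{I}(\ell_L) = \sum_p a_p \vartheta_p$ with $a_p \in \bb{Z}_{\geq 0}$. The pointedness from \eqref{eq:ell-pi-pointed} forces every $p$ with $a_p \neq 0$ to lie in $-\pi(\ell_L) + N^\oplus$. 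Applying $\omega_1$ and invoking Proposition \ref{prop:omega-theta} (valid because the Injectivity Assumption holds for unpunctured $\sd_\Delta$) gives $\sum_p a_p \vartheta_{\omega_1(p)}$ on the left. On the right, Lemma \ref{PullbackLem} (with $f_\gamma = 0$ as $\gamma$ has no arcs) identifies $\omega_1(\bb{I}(\ell_L))$ with the bracelet $\bb{I}^\vee(\gamma)$, which by Theorem \ref{thm:bracelet-theta-no-punct} and \eqref{gLoop} equals $\vartheta_{-\omega_1(\pi(\ell_L))}$. Linear independence of theta functions (Proposition \ref{indep}, applicable since $\Theta^{\midd} = M$ and $\?{\Xi} = M_{\bb{R}}$ in this setting by Theorem \ref{fFG-surfaces}) forces the coefficients of each distinct $\vartheta_q$ on the two sides to match. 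Since $N^\oplus \subset N_\uf$ where $\omega_1$ is injective by the Injectivity Assumption, the only $p \in -\pi(\ell_L) + N^\oplus$ satisfying $\omega_1(p) = -\omega_1(\pi(\ell_L))$ is $p = -\pi(\ell_L)$ itself. Hence $a_{-\pi(\ell_L)} = 1$, all other $a_p = 0$, and $\bb{I}(\ell_L) = \vartheta_{-\pi(\ell_L)}$.

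To combine, $\bb{I}(\ell) = \bb{I}(\ell_L)\bb{I}(\ell_A) = \vartheta_{-\pi(\ell_L)}\vartheta_{-\pi(\ell_A)}$, and since $\vartheta_{-\pi(\ell_A)}$ is a cluster monomial, Proposition \ref{prop:cluster-commute-theta} (with the $t$-commutativity hypothesis trivial at $t=1$) yields a product equal to $\vartheta_{-\pi(\ell_L) - \pi(\ell_A)} = \vartheta_{-\pi(\ell)}$. The main obstacle is the loop case: we must rule out extra theta functions in the expansion of $\bb{I}(\ell_L)$. The delicate point is that $\omega_1:N \to M$ can fail to be injective in the frozen directions, so passing to the $\s{A}$-side could in principle collapse distinct theta functions; the saving grace is that pointedness together with $\pi(\ell_L) \in N_\uf$ constrains every candidate index to $-\pi(\ell_L) + N^\oplus \subset N_\uf$, where injectivity is restored by the Injectivity Assumption.
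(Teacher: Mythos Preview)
Your loop argument is correct and captures the paper's core idea: push $\bb{I}(\ell_L)$ forward via $p^* = \omega_1$ to the $\s{A}$-side, identify the image with a bracelet (hence a theta function by Theorem~\ref{thm:bracelet-theta-no-punct} and Lemma~\ref{PullbackLem}), and use injectivity of $\omega_1|_{N_{\uf}}$ to pull the equality back. Your intermediate theta-basis expansion of $\bb{I}(\ell_L)$ is unnecessary but harmless---the paper just observes that $\omega_1(\bb{I}(\ell))$ and $\omega_1(\vartheta_{-\pi(\ell)})$ are the \emph{same} $\s{A}$-theta function (same leading exponent), so injectivity of $\omega_1$ on the coset $-\pi(\ell)+N_{\uf}$ finishes immediately.

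The paper runs this single argument for \emph{all} $\ell$ at once, with no loop/arc decomposition. Lemma~\ref{PullbackLem} applies to arbitrary weighted simple multicurves with no ends at punctures (automatic here), and the frozen factor $z^{f_\gamma}$ is absorbed via Lemma~\ref{lem:similar_theta_function}. Since $\bb{I}(\ell)$ is $(-\pi(\ell))$-pointed for all $\ell$ (end of \S\ref{sec:mod_PGL2}), the same injectivity argument works uniformly. This sidesteps your separate arc case and the recombination step.

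Your arc step is where the decomposition actually costs you. Corollary~\ref{UpTheta} is stated for $\s{A}$-theta functions, not $\s{X}$; and $z^{-\pi(\ell_A)}$ is not a cluster monomial in the $\s{X}$-sense, since $-\pi^{\Delta'}(\ell_A)$ has nonpositive coordinates in $\sd_{\Delta'}$ rather than lying in a positive cluster chamber. Similarly, Proposition~\ref{prop:cluster-commute-theta} is formulated for quantum $\s{A}$-seeds. Both gaps are fixable (e.g.\ by the same $\omega_1$-injectivity argument, or by passing through $\xi$ into $\s{A}^{\prin}$), but the paper's uniform treatment never encounters them.
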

	\begin{proof}
		Note that for unpunctured surfaces, $\wt{\s{X}}_L(\Sigma,\bb{Z})$ is the same as $\s{A}_L(\Sigma,\bb{Z})$, see Definition \ref{def:lamination}. Given $n\in N$, let  $\ell:=\pi^{-1}(n)\in \s{A}_{\Sigma}(\bb{Z}^t)$ be the corresponding integer bounded lamination. By Lemma \ref{PullbackLem} and the fact that the elements $\bb{I}^{\vee}(\ell)$ are bracelets, hence theta functions, we have that $p^*(\bb{I}(\ell))=z^f\vartheta_m$ for some $m\in M$, $f\in M_F$.   So by Lemma \ref{lem:similar_theta_function} (multiplying a theta function by $z^f$ for $f\in M_F$ yields another theta function), $p^*(\bb{I}(\ell))$ is a theta function.  By Proposition \ref{prop:omega-theta}, $\omega_1$ takes $\s{X}$-type theta functions to $\s{A}$-type theta functions, hence $p^*$ (which equals $\omega_1$) takes $\s{X}$-type theta functions for $\sd_{\Delta}$ to $\s{A}$-type theta functions for $\sd_{\Delta}$.  Since the Injectivity Assumption is satisfied for unpunctured surfaces, we know that $\omega_1|_{-\pi(\ell)+N_{\uf}}$ is injective.  So since all exponents of $\vartheta_{-\pi(\ell)}$ and $\bb{I}(\ell)$ lie in $-\pi(\ell)+N^\oplus$ (because they are $(-\pi(\ell))$-pointed) and both map under $\omega_1$ to $\vartheta_{\omega_1(-\pi(\ell))}$, we must have $\vartheta_{-\pi(\ell)}=\bb{I}(\ell)$, as desired.
	\end{proof}
	
	\begin{lem}\label{lem:unpunctured_Poisson_theta}
		Theorem \ref{thm:ThetaX} holds when $\Sigma$ has no punctures.
	\end{lem}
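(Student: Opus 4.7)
The plan is to upgrade the classical result (Lemma \ref{lem:no-pun-classical}) to the quantum setting by combining the known positivity and bar-invariance of $\bb{I}_t(\ell)$ with the rigidity criterion of Lemma \ref{lem:1theta-implies-qtheta}. The strategy is short and does not require any new construction.

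First, for any $\ell\in \s{A}_{\Sigma}(\bb{Z}^t)$, I would invoke Lemma \ref{lem:no-pun-class-pos} to record two structural facts that hold because $\Sigma$ is unpunctured: the element $\bb{I}_t(\ell)\in \s{X}_t^{\up}$ is bar-invariant, and it is universally positive with respect to the scattering atlas (i.e., theta positive). These are precisely the hypotheses required to apply Lemma \ref{lem:1theta-implies-qtheta} to $\bb{I}_t(\ell)$, which reduces the claim $\bb{I}_t(\ell)=\vartheta_{-\pi(\ell)}$ to an identification at $t=1$.

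Next, I would compute the classical limit: by Lemma \ref{lem:I1} we have $\lim_{t\to 1}\bb{I}_t(\ell) = \bb{I}(\ell)$, and by Lemma \ref{lem:no-pun-classical} this equals the classical theta function $\vartheta_{-\pi(\ell)}$. Applying Lemma \ref{lem:1theta-implies-qtheta} with $f=\bb{I}_t(\ell)$ then forces $\bb{I}_t(\ell)=\vartheta_{-\pi(\ell)}$ as elements of $\s{X}_t^{\can}$. This gives the quantum identification for every $\ell$. Since the map $\ell\mapsto -\pi(\ell)$ surjects onto $N$ (recall $\s{A}_\Sigma(\bb{Z}^t)=\s{A}_L(\Sigma,\bb{Z})\cap \pi^{-1}(N)$), we conclude that the elements $\bb{I}_t(\ell)$ exhaust the quantum theta basis $\{\vartheta_n:n\in N\}$, completing the proof.

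There is essentially no obstacle here: all the hard work has already been done: positivity and bar-invariance come from \cite{cho2020laurent} and \cite{Allegretti2015duality} via Lemma \ref{lem:no-pun-class-pos}; the classical identification is Lemma \ref{lem:no-pun-classical}, which in turn rests on Lemma \ref{PullbackLem} and the compatibility of $\omega_1$ with theta functions (Proposition \ref{prop:omega-theta}); and the lifting principle is Lemma \ref{lem:1theta-implies-qtheta}. The only thing one should take care with is that Lemma \ref{lem:1theta-implies-qtheta} is stated for $\s{X}_t^{\can}$, so one should note that $\bb{I}_t(\ell)$ indeed lies in $\s{X}_t^{\can}$ (equivalently, in $\s{X}_t^{\up}$), which is automatic from its definition via quantum trace maps and Chebyshev polynomials together with the integrality already used in proving bar-invariance.
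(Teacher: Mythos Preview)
Your proposal is correct and follows essentially the same approach as the paper's own proof: invoke Lemma~\ref{lem:no-pun-class-pos} for bar-invariance and theta positivity, combine Lemmas~\ref{lem:I1} and~\ref{lem:no-pun-classical} for the classical identification, and conclude via Lemma~\ref{lem:1theta-implies-qtheta}.
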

	\begin{proof}
		By Lemma \ref{lem:no-pun-class-pos}, the elements $\bb{I}_t(\ell)$ are bar-invariant and theta positive, and by Lemmas \ref{lem:I1} and  \ref{lem:no-pun-classical}, they yield theta functions in the classical limit.  The claim now follows from Lemma \ref{lem:1theta-implies-qtheta}.
	\end{proof}

	\begin{proof}[Proof of Theorem \ref{thm:ThetaX}]
		By Lemma \ref{lem:triangulation}, given an integer bounded lamination $\ell$ without peripheral loops, we can find an ideal triangulation $\Delta$ of $\Sigma$ without self-folded triangles and then cut $\Sigma$ along arcs of $\Delta$ which do not intersect $\ell$ to obtain an unpunctured surface $\Sigma'$ with induced triangulation $\Delta'$.  Then $\ell$ corresponds to a quantum theta function in $\s{X}_t^{\Delta'}$ by Lemma \ref{lem:unpunctured_Poisson_theta}. Notice that $\ell$ does not cross any arcs being cut/glued, so by Lemma \ref{lem:Xglue}, $\bb{I}_t(\ell)$ has the same Laurent expansion whether viewed as an element of $\s{X}_t^{\Delta}$ or $\s{X}_t^{\Delta'}$.

		Choose initial quantum seeds associated to $\Delta$ and $\Delta'$ as in \S \ref{sec:theta_punctured}. Similarly, the Laurent expansions of the corresponding theta functions $\vartheta_{-\pi(\ell)}$ do not change under the gluing---otherwise their $p^*$-projections $p^*(\vartheta_{-\pi(\ell)})=z^{f_{\ell}}\vartheta_{g(\ell)}$ ($f_{\ell}\in M_F$ is given in Lemma \ref{PullbackLem}) in $\s{A}_t^{\up}$ would have different $F$-polynomials after gluing, and we know this is not the case since these theta functions are bracelets by Theorem \ref{thm:punctured_bracelet_theta}. Thus, $\ell$ corresponds to a theta function in $\s{X}_t(\sd_{\Delta})\subset \s{X}_t^{\Delta}$, as desired.

		To extend to cases with peripheral loops, note that for such a loop $\ell_j$, $\pi(\ell_j)\in \ker(\omega_1)$ by \cite[Lem 3.9]{Allegretti2015duality}.  Thus, $z^{-\pi(\ell_j)}$ commutes with all wall-crossings, and all walls are closed under addition by $\bb{R}\pi(\ell_j)$.  It follows that for $\ell_j$ a peripheral loop and any $n\in N$, we have $\vartheta_{-w\pi(\ell_j)}\vartheta_n=z^{-w\pi(\ell_j)}\vartheta_n=\vartheta_{n-w\pi(\ell_j)}$. The claim now follows from the definition $\bb{I}_t(\ell):=z^{-\pi(\ell)}$ and from \eqref{eq:It-ell}.
	\end{proof}

	\appendix
	
	\section{Folding of cluster structures}\label{GenFoldApp}

	In this section we consider the operation of ``folding'' for cluster algebras, as previously considered in \cite{felikson2012cluster,huang2018unfolding}, but now understood from the perspective of scattering diagrams.  A relationship between folding and scattering diagrams was previously examined in \cite{zhou2020cluster}; our goal is to show that a slightly weaker version of \cite[Thm. 2.20]{zhou2020cluster} applies to our more general version of folding.  This will be applied in \S \ref{sec:closed_surface} to relate the scattering diagram for a surface $\Sigma$ to the scattering diagram for a covering space $\wt{\Sigma}$.
	
	\subsection{Skew-symmetrizable cluster algebras}\label{sec:skew-symmetrizable}
	
	So far we have focused on skew-symmetric seeds because these are sufficient for understanding cluster algebras from surfaces.  However, the results of this appendix naturally apply to the more general skew-symmetrizable setup, and since these results may be of independent interest, we choose to work with skew-symmetrizable seeds here.  We begin by reviewing the modifications needed for this setup. For technical simplicity, our convention differs from the standard one, see Remark \ref{rem:compare_seed_convention}.
	
	A (Langlands dual)\footnote{In \eqref{omega-di}, one would typically have a factor of $d_j$ on the right-hand side of \eqref{omega-di} rather than $d_i$.  However, our folding construction will require the equation to be as in \eqref{omega-di}.  This change amounts to working with the Langlands dual of a seed as in \cite{FG1}.  Using $-\omega(\ ,e_i)$ in place of $\omega(e_i,\ )$ in \eqref{DIn-gen-class} (i.e., taking the Langlands dual scattering diagram) would allow us to avoid taking the Langlands dual here. 
	See Remark \ref{rem:compare_seed_convention} for more on  Langlands dual seeds.} \textbf{skew-symmetrizable} seed $\sd$ is data $(N,I,E=\{e_i\}_{i\in I},F,\omega^{\circ},\{d_i|i\in I\})$, where $N$, $I$, $E$, and $F$ are as in the skew-symmetric setup of \S \ref{Cluster_Section}, $\omega^{\circ}$ is a $\bb{Q}$-valued skew-symmetric form on $N$, and the values $d_i$ are positive rational numbers such that the form $\omega$ defined by
	\begin{align}\label{omega-di}
		\omega(e_i,e_j)=d_i\omega^{\circ}(e_i,e_j)
	\end{align}
	satisfies $\omega(e_i,e_j)\in \bb{Z}$ whenever $i$ and $j$ are not both in $F$.  We shall use the notation $\omega_1:N\rar M_{\bb{Q}}$, $n\mapsto \omega(n,\cdot)$ as before.  Note that the integrality condition implies that $\omega_1|_{N_{\uf}}$ has image in $M$.  We again define the bilinear form $B:=\omega^T$, i.e., $B(e_i,e_j)=\omega(e_j,e_i)=d_j\omega^{\circ}(e_j,e_i)$. We may also view $B$ as the matrix $(B_{ij})$, $B_{ij}=B(e_i,e_j)$.    Given a skew-symmetrizable seed $\sd$ as above, a compatible form $\Lambda$ is a $\bb{Q}$-valued skew-symmetric form on $M$ such that, for all $i\in I\setminus F$, we have
	\begin{align}\label{Lambda-gen}
		\Lambda_2(\omega_1(e_i)) = d_i'e_i
	\end{align}
	where $d'_i=\alpha d_i\in \Q_{>0}$ for some fixed\footnote{If the associated quiver $Q$ is connected, then the condition $d'_i=\alpha d_i$ is forced by the skew-symmetry and \eqref{Lambda-gen}.  More generally, one could allow different choices of $\alpha$ for each connected component of $Q$. See \cite[Lemma 2.1.11(1)]{qin2020dual} (which only looks different from \eqref{Lambda-gen} because we use $M$ instead of $M^{\circ}$) and \cite[Thm. 2.1]{GekhtmanShapiroVainshtein05}.} $\alpha\in \bb{Q}_{>0}$.  Note that \eqref{Lambda-gen} is equivalent to $\Lambda_2(\omega_1^{\circ}(e_i))= \alpha e_i$ for all $i\in I\setminus F$; i.e.,
	\begin{align}\label{eq:Lambda-circ}
		\Lambda_2(\omega_1^{\circ}(n))=\alpha n
	\end{align}
	for all $n\in N_{\uf}$.

	We note that the data of a skew-symmetric seed with compatible $\Lambda$ as in \S \ref{sec:review} is equivalent, up to re-scaling $\alpha$, to the data of a skew-symmetrizable seed with compatible $\Lambda$ having each $d_i$ equal to $d/\alpha$ for $d$ as in \eqref{Lambda-B}. 
		
		We will focus mostly on the classical (as opposed to quantum) setup, so the consideration of a compatible form $\Lambda$ is not strictly necessary.  Nevertheless, such $\Lambda$ will exist whenever the Injectivity Assumption is satisfied, and this form is still useful in understanding the classical setup, so we will use it here for convenience and for consistency with the rest of the paper.
		
		Now given a skew-symmetrizable seed $\sd$ with compatible $\Lambda$ as above, one may define associated torus algebras, dilogarithms, scattering diagrams, and theta functions almost exactly as in \S \ref{Section_Scat}.\footnote{The elements $\hat{z}^m$ of \eqref{hatz} are generalized as 	$\hat{z}^m\coloneqq${\large$\frac{z^m}{t^{|\Lambda_2(m)|}-t^{-|\Lambda_2(m)|}}$}. Here, for $n\in N_{\bb{Q}}$, $|n|:=\frac{1}{D}|Dn|$ for $D\in \bb{Z}_{\geq 1}$ such that $Dn\in N$ and $|Dn|$ is the index in $N$, so in particular, $|\Lambda_2(\omega_1(e_i))|=d'_i$.  The elements $\hat{z}^m$ generate the sub Lie algebra $\f{g}^t$, and the classical limit from $\f{g}^t$  to $\f{g}$ is defined such that $\hat{z}^m\mapsto \frac{1}{\alpha |m|}z^m$.}
	Then the initial scattering diagram $\f{D}^{\s{A}_t}_{\In}$ is defined by replacing the $d$ in \eqref{DIn} with $d'_i$ --- that is, one defines
		\begin{align}\label{DIn-gen}
			\f{D}^{\s{A}_t}_{\In} = \{(e_i^{\perp},\Psi_{t^{d'_i}}(z^{\omega_1(e_i)})) |i \in I\setminus F\}.
		\end{align}
		Unfortunately, the positivity results of \cite{davison2019strong} (and thus the proofs of most of the other main results of \cite{davison2019strong}) do not extend to the quantum skew-symmetrizable setup,\footnote{It is known that quantum positivity fails in some skew-symmetrizable cases, cf. \cite[\S 2.4.1]{cheung2020quantization} and \cite[\S 5.3]{nakanishi2022pentagon} for counterexamples motivated by the non-positivity of quantum greed bases \cite[\S 3]{LLRZpnas}.} and the generalized folding arguments below also do not apply in the quantum setting. 
		
		We therefore restrict to the classical limit.  By \eqref{eq:classical_dilogarithm}, the initial scattering diagram is
		\begin{align}\label{DIn-gen-class}
			\f{D}_{\In}^{\s{A}} = \{(e_i^{\perp},\Psi(z^{\omega_1(e_i)})^{1/d'_i})|i\in I\setminus F\};
		\end{align}
		as in \eqref{eq:mu-class}, mutation is given by 
		\begin{align}\label{eq:mut-Ad}
			(\mu_i^{\s{A}})^{-1}(z^p)=\Ad^{-1}_{\Psi(z^{\omega_1(e_i)})^{1/d'_i}}(z^p)=z^p(1+z^{\omega_1(e_i)})^{-\Lambda(\omega_1(e_i),p)/d'_i} = z^p(1+z^{\omega_1(e_i)})^{\langle e_i,p\rangle}.
		\end{align}
		Classical skew-symmetrizable cluster algebras are within the generality of \cite{gross2018canonical}, so the classical limits of the results of \S \ref{Section_Scat} will all still hold as in the skew-symmetric cases.

			\begin{rem}\label{rem:compare_seed_convention}
				Let us recall the standard convention of a seed $\sd$, see \cite[\S 2.1]{qin2019bases} \cite[Lem. 2.1.6]{qin2020dual}. It has symmetrizers $d_i \in \Z_{>0}$ and a $\Q$-valued skew-symmetric bilinear form $\{\  ,\ \}$ on $N(\sd)=\bigoplus_{i\in I} \Z e_i$. In addition, the bilinear form $B$ defined by $B(e_j,e_i)=B_{ji}=\{e_i,e_j\}d_j$ should satisfy $B_{ji}\in \Z$ whenever $i,j$ are not both in $F$. Denote the matrix $B=(B_{ij})$. It is further a quantum seed if it is endowed with a compatible $\Q$-valued skew-symmetric bilinear form $\Lambda$ on $M(\sd)$ such that $\Lambda_2(\{e_k,\ \})= d'_k d_k e_k$ for some multipliers $d'_k\in \Q_{>0}$ for all $k\in I\setminus F$. Denote $v_i=\{e_i,\ \} =\sum_{j\in I}B_{ji}\frac{1}{d_j}e_j^*$. The corresponding quantum scattering diagram $\f{D}(\sd)$ has the initial walls $(e_i^\bot, \Psi_{t^{d'_i}}(z^{v_i}))$.
				
				Let $D$ denote the least common multiple of $\{d_i|i\in I\}$. The Langlands dual seed $\sd^\vee$ has the basis vectors $e_i^\vee:=d_i e_i$, the symmetrizers $d_i^\vee:=\frac{D}{d_i}$, and the bilinear form $\{\ ,\ \}^\vee:=\frac{1}{D}\{\ ,\ \}$. Then we have
				\begin{align*}
					d_i^\vee \{e_i^\vee,e_j^\vee\}^\vee =\frac{1}{d_i}\{d_i e_i, d_j e_j\} =B_{ji}.
				\end{align*}
				Notice that $(e_j^\vee)^*=\frac{1}{d_j}e_j^*$ and $v_i=d_i^\vee\{e_i^\vee,\ \}^\vee=\frac{1}{d_i}\{e_i^\vee,\ \}$.
				
				Our basis vector $e_i$ is identified with the above $e_i^\vee$. The matrix $B$, the bilinear forms $B$, $\Lambda$, and multipliers $d_i'$ in our convention coincide with those above. The symbols $d_i$ and $\omega^\circ$ in our convention could be identified with $d_i^\vee$ and  $\{\ ,\ \}^\vee$, or with $\frac{1}{d_i}$ and $\{\ ,\ \}$ above. In either case, $\omega_1(\sum_{i\in I} n_i e_i)=\omega^{\circ}_1(\sum_{i\in I} n_i d_i e_i)$ in our convention coincides with $\{\sum_{i\in I} n_i e_i,\ \}$ above. The walls $((e_i)^\bot, \Psi_{t^{d'_i}}(z^{\omega_1(e_i)}))$ in our convention coincide with the initial walls of $\f{D}(\sd)$ as defined in this remark.
			\end{rem}

		\subsection{Covering of seeds}\label{sec:folding_seeds}

		Fix a skew-symmetrizable seed $\sd=(N,I,E,F,\omega^{\circ},\{d_i\}_{i\in I})$.  Let $\Pi$ be a partition of $I$, and for each $i\in I$, let $\Pi i\subset I$ denote the part of the partition containing $i$. We assume that $\Pi$ satisfies the following:
		\begin{enumerate}
			\item For each $i\in I$ and $i'\in \Pi i$, we have $d_i=d_{i'}$;
			\item For each $i,j\in I$ and all $i'\in \Pi i$,
			\begin{align}\label{FoldEqn}
				\sum_{j' \in \Pi j} \omega(e_{i},e_{j'}) = \sum_{j' \in \Pi j} \omega(e_{i'},e_{j'}).
			\end{align}
			We note that Condition (1) implies that \eqref{FoldEqn} is equivalent to the equation obtained by replacing each $\omega$ by $\omega^{\circ}$.
			\item Each $\Pi i$ is contained either entirely in $I\setminus F$ or entirely in $F$.
		\end{enumerate}

		\begin{eg}\label{eg:group-cover}
			Consider a finite group $\Pi$ acting on $I$, so the orbits of $\Pi$ yield a partition of $I$, also denoted $\Pi$.  Assume that Conditions (1) and (3) are satisfied for this partition.  Suppose that for all $g\in \Pi$ and $i,j\in I$ we have $\omega(e_{gi},e_{gj})=\omega(e_i,e_j)$. Then Condition (2) will be satisfied as well.  In our examples of primary interest (i.e., covering spaces), $\Pi$ will be a group of deck transformations.  We note that in the quiver folding considered in \cite{zhou2020cluster}, one imposes the stronger condition that $\omega(e_{g_1 i},e_{g_2 j})=\omega(e_i,e_j)$ for all $g_1,g_2\in \Pi$.
		\end{eg}

		We define a new skew-symmetrizable seed $\?{\sd}$ as follows: let $\?{I}$ be a set indexing the components of the partition $\Pi$.  Let $\?{N}=\bb{Z}^{\?{I}}$ with the natural basis $\?{E}=\{\?{e}_i|i\in \?{I}\}$.  Let $\?{F}\subset \?{I}$ be the indices corresponding to subsets of $F$.  For each $i\in \?{I}$, let 
		\begin{align*}
			\?{d}_i:=\frac{d_i}{|\Pi i|}.
		\end{align*}
		Define $\?{\omega}$ by 
		\begin{align}\label{omega-bar}
			\?{\omega}(e_{\Pi i},e_{\Pi j}) := \sum_{j' \in \Pi j} \omega(e_{i},e_{j'})  = \frac{1}{|\Pi i|}\sum_{\substack{i'\in \Pi i \\  j'\in \Pi j}} \omega(e_{i'},e_{j'}) = \?{d}_i \sum_{\substack{i'\in \Pi i \\  j'\in \Pi j}} \omega^{\circ}(e_{i'},e_{j'}).
		\end{align}
		Equivalently, one defines
		\begin{align}\label{omega-circ-bar}
			\?{\omega}^{\circ}(e_{\Pi i},e_{\Pi j}) = \sum_{\substack{i'\in \Pi i \\  j'\in \Pi j}} \omega^{\circ}(e_{i'},e_{j'}).
		\end{align}
		
		Following \cite[Def. 2.4]{huang2018unfolding}, one says that $\sd$ is a \textbf{covering} of $\?{\sd}$. We will also say that $\sd$ is a covering of a seed $\?{\sd}$ if just the data $\?{N}, \?{I}, \?{E}, \?{F}$, and $\?{\omega}$ associated to $\?{\sd}$ is as above (so $\?{\omega}^{\circ}$ may differ by some $\bb{Q}_{>0}$ re-scaling, and the $\?{d}_i$'s by the inverse re-scaling, but this re-scaling has no effect in the classical setup).  We say the partition $\Pi$ realizes $\sd$ as a covering of $\?{\sd}$.  Being an unfolding will require the additional condition that composite mutations of $\sd$ are also coverings of the corresponding mutations of $\?{\sd}$, cf. \S \ref{sec:fold}.
		
		\begin{example}\label{ex:Pi-prin}
			Let $\sd$ be any seed, possibly not satisfying the Injectivity Assumption, and let $\Pi$ be a partition of $I$ satisfying the conditions (1)-(3) above, thus realizing $\sd$ is a covering of some $\?{\sd}$. Then we extend $\Pi$ to a partition of $I^{\prin}=I\sqcup I$ satisfying (1)-(3) by taking the partition of each copy of $I$ to be the same as for the original $\Pi$.  This extension of $\Pi$ thus realizes $\sd^{\prin}$ as a covering of $\?{\sd}^{\prin}$.  
		\end{example}
		
		Assume now that $\sd$ satisfies the Injectivity Assumption.  Fix a compatible form $\Lambda$ and let $d'_i=\alpha d_i$ for some fixed $\alpha\in \bb{Q}_{>0}$ as in \eqref{Lambda-gen}.  Our goal is to show that the scattering diagram $\f{D}^{\s{A}}(\?{\sd})$ is naturally contained in a ``slice'' of the scattering diagram $\f{D}^{\s{A}}(\sd)$.
		
		We consider the embedding $\iota:\?{N}\hookrightarrow N$ defined by
		\begin{align*}
			\iota:e_{\Pi i} \mapsto \sum_{i'\in \Pi i} e_{i'}.
		\end{align*}
		Let $\?{M}:=\Hom(\?{N},\bb{Z})$, and let $\iota^*:M\rar \?{M}$ be the projection dual to $\iota$. It follows that 
		\begin{align}\label{eq:iota-star}
			\iota^*(e^*_i)=e^*_{\Pi i}.
		\end{align}
		We have the following diagram
		\[
		\begin{tikzcd}
			\?{N}\times \?{N} \arrow{r}{\?{\omega}}\arrow{d}{\iota\times\iota}\arrow[r, shift right=2, "\?{\omega}^\circ"']& \Q\\
			N\times N \arrow{r}{\omega} \arrow[r, shift right=2, "\omega^\circ"']& \Q
		\end{tikzcd}
		\]

		Consider the bilinear pairing $\iota^*\omega^{\circ}$ on $\?{N}$ given by $$(\iota^*\omega^{\circ})(n_1,n_2):=\omega^{\circ}(\iota(n_1),\iota(n_2))$$
		and similarly for $\iota^*\omega$.  We see from \eqref{omega-circ-bar} that \begin{align*}
			\iota^*\omega^{\circ} = \?{\omega}^{\circ},
		\end{align*}
		We similarly see from the second expression for $\?{\omega}$ in \eqref{omega-bar} that
		\begin{align*}
			\?{\omega}(e_{\Pi i},e_{\Pi j})= \frac{1}{|\Pi i|}(\iota^*\omega)(e_{\Pi i}, e_{\Pi j}),
		\end{align*}
		so
			\begin{align*}
				\?{\omega}_1(e_{\Pi i})= \frac{1}{|\Pi i|} (\iota^* \omega)(e_{\Pi i},\cdot),
			\end{align*}
		or equivalently, \begin{align}\label{i-omega-i}
			|\Pi i|\?{\omega}_1(e_{\Pi i}) = \iota^*(\omega_1(\iota(e_{\Pi i}))).
		\end{align}
		
		Similarly, the first expression for $\?{\omega}$ in \eqref{omega-bar} implies 
		\begin{align}\label{iota-omega}
			\?{\omega}_1(e_{\Pi i}) = \iota^*(\omega_1(e_i))
		\end{align}
		for each $i\in I$.  
		
		We also consider the inclusion
		\begin{align}\label{eq:rho-def}
			\kappa:\?{M}_{\bb{R}}\rightarrow M_{\bb{R}}, \qquad
			e_{\Pi i}^* \mapsto \frac{1}{|\Pi i|} \sum_{i'\in \Pi i} e_{i'}^*
		\end{align}
		with dual
		\begin{align*}
			\kappa^*:N_{\bb{R}}\rar \?{N}_{\bb{R}}, \qquad e_i\mapsto \frac{1}{|\Pi i|} e_{\Pi i}.
		\end{align*}
		Note that $\kappa$ is a section for the projection $\iota^*$, i.e., \begin{align}\label{eq:iota-rho-Id}
			\iota^* \circ \kappa = \Id_{\?{M}_{\bb{R}}}.    
		\end{align}
		We have the following 
		diagram:
		\[
		\begin{tikzcd}[every arrow/.append style={shift left}]
			\?{N} \arrow{r}{\iota}\arrow{d}{\?{\omega}_1} & N \arrow{d}{\omega_1} \\
			\?{M_{\bb{R}}} \arrow{r}{\kappa} & M_{\bb{R}} \arrow{l}{\iota^*} 
		\end{tikzcd}
		\]
		Note that $\kappa$ and $\iota$ are compatible with the dual pairings, i.e, for $\?{n}\in \?{N}$ and $\?{m}\in \?{M}_\R$, we have 
		\begin{align}\label{iota-rho}
			\langle \?{n},\?{m}\rangle = \langle \iota(\?{n}),\kappa(\?{m})\rangle.
		\end{align}
		Let $\kappa^{-1}(Z)$ denote the preimage of any subset $Z\subset M_\R$ in $\?{M}_\R$. Note that $\kappa^{-1}(Z)$ may be viewed as the intersection of $Z\subset M_\R$ with $\kappa (\?{M}_\R)$.  
		By \eqref{iota-rho}, for each $i\in I$, we have
		\begin{align}\label{rho-perp}
			\kappa^{-1}(e_i^{\perp}) = e_{\Pi i}^{\perp}.
		\end{align}

		We also note that for all $i,j\in I$ and each $j'\in \Pi j$, $$\omega(\iota(e_{\Pi i}),e_{j'})=-\frac{d_i}{d_j} \omega\left(e_{j'},\sum_{i'\in \Pi i} e_{i'}\right),$$ and by \eqref{FoldEqn} this is the same for each $j' \in \Pi j$.  Hence, $\omega_1\circ \iota$ has image in $\kappa(\?{M}_\R)$.  Since $\iota^*\circ \kappa = \Id_{\?{M}_{\bb{R}}}$ by \eqref{eq:iota-rho-Id}, we have $\kappa \circ \iota^*\circ \kappa = \kappa$, i.e., $\kappa\circ \iota^*$ restricts to the identity on $\kappa(\?{M}_\R)$.  Hence 
		\begin{align}\label{rho-iota-omega-iota}
			\kappa(\iota^*(\omega_1(\iota(e_{\Pi i})))) = \omega_1(\iota(e_{\Pi i})),    
		\end{align}
		so by \eqref{i-omega-i} we have
		\begin{align}\label{rho-omega-bar}
			|\Pi i|\kappa(\?{\omega}_1(e_{\Pi i})) = \omega_1(\iota(e_{\Pi i})).
		\end{align}

		By \eqref{Lambda-gen}, the compatible form $\Lambda$ on $M$ satisfies $$\Lambda_2(\omega_1(e_i))=d_i'e_i$$
		for $i\in I\setminus F$, $d'_i=\alpha d_i$ for some fixed $\alpha\in \bb{Q}_{>0}$.  We claim then that $\?{\Lambda}:=\kappa^*\Lambda$ is a compatible form for $\?{\sd}$ with $\?{d}'_i = \alpha \?{d}_i = \frac{d'_i}{|\Pi i|}$. Indeed, for any $\Pi i \in \?{I}\setminus \?{F}$, $\Pi j\in \?{I}$, we have
		\begin{align*}
			(\kappa^*\Lambda)(e^*_{\Pi j},\?{\omega}_1(e_{\Pi i})) &= \Lambda(\kappa(e_{\Pi j}^*),\kappa(\?{\omega}_1(e_{\Pi i}))) \\
			&= \frac{1}{|\Pi i|}\Lambda(\kappa(e_{\Pi j}^*),\omega_1(\iota(e_{\Pi i})))  &\text{(by \eqref{rho-omega-bar})}\\
			&= \frac{d'_i}{|\Pi i|}\langle \iota(e_{\Pi i}), \kappa(e_{\Pi j}^*)\rangle &\text{(by \eqref{Lambda-gen})} \\
			&= \?{d}'_i\langle e_{\Pi i},e_{\Pi j}^*\rangle &\text{(by \eqref{iota-rho})}
		\end{align*}
		as desired.  In particular, the compatibility implies that the seed $\?{\sd}$ satisfies the Injectivity Assumption.

		\subsection{Restriction of scattering diagrams}\label{sec:restriction_scattering_diagram}

		Now, recall from \eqref{DIn-gen-class} that the initial scattering diagram for $\sd$ is
		\begin{align}\label{D-nobar-in}
			\f{D}^{\sd}_{\In} = \{(e_i^{\perp},\Psi(z^{\omega_1(e_i)})^{1/d'_i})|i\in I\setminus F\}.
		\end{align}
		Similarly the initial scattering diagram for $\?{\sd}$ is
		\begin{align}\label{D-bar-in}
			\f{D}^{\?{\sd}}_{\In} &= \{(e_{\Pi i}^{\perp}, \Psi(z^{\?{\omega}_1(e_{\Pi i})})^{1/\?{d}'_i})|\Pi i\in \?{I}\setminus \?{F}\} \nonumber\\
			&= \{(\kappa^{-1} (e_i^{\perp}),\Psi(z^{\iota^*(\omega_1(e_{ i}))})^{1/\?{d}'_i})|\Pi i\in \?{I}\setminus \?{F}\}  \qquad \mbox{~(by \eqref{iota-omega} and \eqref{rho-perp})} \nonumber\\
			&\equiv \{(\kappa^{-1} (e_i^{\perp}),\Psi(z^{\iota^*(\omega_1(e_{ i}))})^{1/d'_i})|i\in I\setminus F\}
		\end{align}
		where the ``$\equiv$'' in the last line denotes equivalence of scattering diagrams --- here, for each $\Pi i\in \?{I}\setminus \?{F}$, we factor the corresponding wall on the left-hand side of the equivalence into $|\Pi i|$-many walls on the right-hand side which are identical to each other.  
		
		As usual, one may consider the corresponding consistent scattering diagrams $\f{D}^{\sd}$ and $\f{D}^{\?{\sd}}$, respectively.  We note that the walls of $\f{D}^{\?{\sd}}_{\In}$ may be obtained from those of $\f{D}^{\sd}_{\In}$ by applying $\kappa^{-1}$ to the supports and $\iota^*$ to the attached group elements.  Our goal now is to extend this operation on $\f{D}^{\sd}$ to construct a scattering diagram $\?{\f{D}^{\sd}}$ which is closely related to (and often equivalent to) $\f{D}^{\?{\sd}}$.

		\subsubsection{The support of $\?{\f{D}^{\sd}}$}\label{subsub:support}

		Viewed as a cone complex, the cones of the finite scattering diagram $\?{\f{D}^{\sd}_k}$ are defined to be the non-empty cones of the form $\kappa^{-1}(\sigma)$ for $\sigma$ a cone in $\f{D}^{\sd}_k$.  The codimension-one (in $\?{M}_{\bb{R}}$) such cones will be the supports of the walls of $\?{\f{D}^{\sd}_k}$.  Note that the codimension of a cone might change when applying $\kappa^{-1}$.
		
		On the other hand, walls of $\f{D}^{\sd}_k$ are always contained in $v^{\Lambda\perp}$ for some $v\in M^+$. By Lemma \ref{lem:compatible-Lambda-perp} below, we have $\kappa^{-1}(v^{\Lambda \perp})=(\iota^* v)^{\?{\Lambda}\bot}$ whenever $v\in M^+$. Notice that $\iota^*(M^+)=\?{M}^+$ by \eqref{iota-omega}, so in particular, $(\iota^* v)^{\?{\Lambda}\bot}$ has codimension-one for $v\in M^+$. 
		Thus, $\kappa^{-1}$ of any wall or higher-codimension cone in $\f{D}^{\sd}_k$ will have codimension at least one in $\?{M}_{\bb{R}}$.

			\begin{lem}\label{lem:compatible-Lambda-perp}
				For any $\?{m}\in \?{M}$ and $v\in M^+$, we have $\Lambda(\kappa(\?{m}),v)=\?{\Lambda}(\?{m},\iota^*v)$.
			\end{lem}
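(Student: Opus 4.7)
The plan is to reduce the identity to a direct verification on generators of $M^+$ by exploiting the definition $\?{\Lambda} = \kappa^*\Lambda$. First I would rewrite the right-hand side as
\[
\?{\Lambda}(\?{m},\iota^* v) = \Lambda(\kappa(\?{m}),\kappa(\iota^* v)),
\]
so the claim becomes $\Lambda(\kappa(\?{m}), v - \kappa(\iota^* v)) = 0$ for every $v\in M^+$. By bilinearity it suffices to check this when $v$ is one of the generators $\omega_1(e_i)$ with $i\in I\setminus F$, so the rest of the argument focuses on such $v$.

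Next, for $v = \omega_1(e_i)$, I would compute both sides separately using the compatibility conditions. For the left-hand side, compatibility $\Lambda_2(\omega_1(e_i)) = d'_i e_i$ gives
\[
\Lambda(\kappa(\?{m}),\omega_1(e_i)) = \langle d'_i e_i,\kappa(\?{m})\rangle = d'_i\,\langle e_i,\kappa(\?{m})\rangle.
\]
For the right-hand side, I would use \eqref{iota-omega} to identify $\iota^*(\omega_1(e_i)) = \?{\omega}_1(e_{\Pi i})$, then apply the analogous compatibility for $\?{\Lambda}$ (established earlier, with $\?{d}'_i = d'_i/|\Pi i|$) to obtain
\[
\?{\Lambda}(\?{m},\?{\omega}_1(e_{\Pi i})) = \?{d}'_i\,\langle e_{\Pi i},\?{m}\rangle.
\]

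Finally, I would invoke the dual pairing compatibility \eqref{iota-rho} to rewrite $\langle e_{\Pi i},\?{m}\rangle = \langle \iota(e_{\Pi i}),\kappa(\?{m})\rangle = \sum_{i'\in\Pi i}\langle e_{i'},\kappa(\?{m})\rangle$. The decisive observation is that $\kappa(e_{\Pi j}^*) = \frac{1}{|\Pi j|}\sum_{j'\in\Pi j}e_{j'}^*$ forces $\langle e_{i'},\kappa(\?{m})\rangle$ to depend only on the class $\Pi i$ of $i'$, so this sum equals $|\Pi i|\,\langle e_i,\kappa(\?{m})\rangle$. Multiplying by $\?{d}'_i = d'_i/|\Pi i|$ recovers exactly $d'_i\,\langle e_i,\kappa(\?{m})\rangle$, matching the left-hand side.

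There is no real obstacle here: the lemma is a compatibility check among three pieces of data already built into the covering construction, namely the rescaled multipliers $\?{d}'_i = d'_i/|\Pi i|$, the duality between $\iota$ and $\kappa$ from \eqref{iota-rho}, and the averaging nature of $\kappa$ that makes $\kappa(\?{m})$ constant on each $\Pi$-orbit. The only subtlety to flag is ensuring that the reduction to generators of $M^+$ is justified by bilinearity of $\Lambda$ (straightforward) and that \eqref{iota-omega} and the compatibility of $\?{\Lambda}$ are exactly the ingredients needed to align the factors of $|\Pi i|$.
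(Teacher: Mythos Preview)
Your proof is correct and follows essentially the same approach as the paper: reduce to generators $v=\omega_1(e_i)$, apply the compatibility conditions for $\Lambda$ and $\?{\Lambda}$ together with \eqref{iota-omega}, and match the factors of $|\Pi i|$ via $\?{d}'_i = d'_i/|\Pi i|$. The paper additionally specializes $\?{m}$ to basis vectors $e^*_{\Pi j}$, whereas you keep $\?{m}$ general and instead invoke the $\Pi$-invariance of the coordinates of $\kappa(\?{m})$; these are equivalent manoeuvres.
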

			
		\begin{proof}
		It suffices to check the equality for $\?{m}=e^*_{\Pi j}$ and $v=\omega_1(e_i)$. We have
		\begin{align*}
		\Lambda(\kappa(e^*_{\Pi j}),\omega_1(e_i))&=\frac{1}{|\Pi j|}\Lambda\left(\sum_{j'\in \Pi j}e^*_{j'}, \omega_1(e_i)\right)\\
		&=\frac{1}{|\Pi j|}d_j' \delta_{\Pi j,\Pi i}.
		\end{align*}
		By \eqref{iota-omega}, we have
		\begin{align*}
		\?{\Lambda}(e^*_{\Pi j},\iota^* \omega_1(e_i))=	\?{\Lambda}(e^*_{\Pi j},\?{\omega}_1 (e_{\Pi i}))=\?{d}_j'\delta_{\Pi j,\Pi i}.
		\end{align*}
		The claim follows from $\?{d}_j'=\frac{d_j}{|\Pi j|}$.
		\end{proof}

		\subsubsection{The invariant Lie sub algebra}\label{subsub:inv-Lie}
		
		It is clear that $\iota^*$ gives a morphism $\f{g}_{\sd}\rar \f{g}_{\?{\sd}}$ as $\kk$-modules (via action on the exponents).  Furthermore, it is evident from \eqref{iota-omega} that $\iota^*$ maps $\f{g}_{\sd}^{\geq k}$ to $\f{g}_{\?{\sd}}^{\geq k}$, hence gives a map $(\f{g}_{\sd})_k\rar (\f{g}_{\?{\sd}})_k$ on the level of modules (notation as in \eqref{eq:gk}).  However, it is not generally true that these maps of modules respect the Lie brackets as required to give a map of Lie algebras.
		
		To get a map of Lie algebras, first note (as in Example \ref{eg:group-cover}) that we may view the partition $\Pi$ as the set of orbits in $I$ under the action of a finite group which we also denote $\Pi$ (abusing notation).   Let $\f{g}_{\sd}^{\Pi}$ denote the sub Lie algebra of $\f{g}_{\sd}$ consisting of the $\Pi$-invariant elements (under the induced action of $\Pi$ on $\f{g}_{\sd}$).  
		
		\begin{lem}\label{lem:iota-g-inv}
			The restriction $\iota^*:\f{g}_{\sd}^{\Pi}\rar \f{g}_{\?{\sd}}$ of $\iota^*$ respects the Lie brackets and thus gives a well-defined morphism of Lie algebras.
		\end{lem}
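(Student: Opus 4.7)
The plan is to reduce the lemma to a single averaging identity for $\kappa \circ \iota^*$. In the classical setting, $\f{g}_\sd$ is the $\kk$-module with basis $\{z^m : m \in M^+\}$ equipped with the bracket $[z^a, z^b] = \Lambda(a,b) z^{a+b}$ from \eqref{classical-bracket}, and $\Pi$ acts by permuting exponents. The invariant subalgebra $\f{g}_\sd^\Pi$ is spanned over $\kk$ by the full orbit sums $T_a := \sum_{g \in \Pi} z^{g \cdot a}$ for $a \in M^+$. Since $\iota^*(e_i^*) = e_{\Pi i}^*$ is $\Pi$-invariant, we have $\iota^*(g \cdot m) = \iota^* m$ for all $g \in \Pi$ and $m \in M$, so $\iota^* T_a = |\Pi|\, z^{\iota^* a}$.

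First I would check that $\iota^*$ is well-defined as a $\kk$-module map $\f{g}_\sd \to \f{g}_{\?{\sd}}$: by \eqref{iota-omega}, $\iota^*(\omega_1(e_i)) = \?{\omega}_1(e_{\Pi i})$, which lies in $\?{M}^+$ and is nonzero since $\?{\sd}$ satisfies the Injectivity Assumption. It then suffices to verify $\iota^*[T_a, T_b] = [\iota^* T_a, \iota^* T_b]$ for all $a, b \in M^+$. Expanding,
\begin{align*}
\iota^*[T_a, T_b] &= \Bigl( \sum_{g, h \in \Pi} \Lambda(g a, h b) \Bigr) z^{\iota^* a + \iota^* b}, \\
[\iota^* T_a, \iota^* T_b] &= |\Pi|^2 \, \?{\Lambda}(\iota^* a, \iota^* b) \, z^{\iota^* a + \iota^* b}.
\end{align*}
So the lemma reduces to the scalar identity $\sum_{g, h \in \Pi} \Lambda(ga, hb) = |\Pi|^2\, \?{\Lambda}(\iota^* a, \iota^* b)$. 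Using $\?{\Lambda} = \kappa^* \Lambda$ and bilinearity of $\Lambda$, this follows immediately once we establish the averaging identity
\begin{equation*}
\kappa \iota^*(m) = \frac{1}{|\Pi|} \sum_{g \in \Pi} g \cdot m \qquad \text{for all } m \in M.
\end{equation*}

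The main step is to verify this averaging identity. By $\kk$-linearity it is enough to check it on $m = e_i^*$: the left side equals $\kappa(e_{\Pi i}^*) = \frac{1}{|\Pi i|} \sum_{i' \in \Pi i} e_{i'}^*$ by \eqref{eq:rho-def}, while the right side equals $\frac{1}{|\Pi|}\sum_{g \in \Pi} e_{g i}^* = \frac{|\Stab(i)|}{|\Pi|} \sum_{i' \in \Pi i} e_{i'}^*$ since each $i' \in \Pi i$ arises from exactly $|\Stab(i)|$ group elements. The orbit-stabilizer identity $|\Pi i|\cdot|\Stab(i)| = |\Pi|$ gives equality. No substantial obstacle is anticipated; the lemma is essentially a bilinear consequence of the fact that $\kappa \circ \iota^*$ is the averaging projection onto the $\Pi$-invariants of $M_\bb{R}$.
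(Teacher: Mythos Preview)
Your proof is correct and follows essentially the same route as the paper's: both reduce to the scalar identity
\[
\Lambda\Bigl(\textstyle\sum_{u'\in\Pi u} u',\ \sum_{v'\in\Pi v} v'\Bigr)=\?{\Lambda}(\iota^*u,\iota^*v)\cdot|\Pi u|\,|\Pi v|,
\]
which is precisely the statement that $\kappa\circ\iota^*$ is the $\Pi$-averaging projection onto $\kappa(\?{M}_{\bb{Q}})$. The paper phrases this as ``the orbit sum is $\Pi$-invariant, hence equals $\kappa(u_0)$, and $\kappa\iota^*\kappa=\kappa$''; you phrase it via the explicit orbit-stabilizer computation. Same content.

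One small wrinkle: your claim that the full group sums $T_a=\sum_{g\in\Pi} z^{g\cdot a}$ span $\f{g}_\sd^\Pi$ over $\kk$ is not quite right in general, since $T_a=|\Stab(a)|\cdot S_a$ where $S_a=\sum_{a'\in\Pi a} z^{a'}$ is the orbit sum, and the stabilizer order need not be invertible in $\kk$. The paper sidesteps this by working with the orbit sums $S_a$ directly. In your argument the fix is painless: the scalar identity you establish lives in $\bb{Q}$, so you may divide through by $|\Stab(a)|\,|\Stab(b)|$ there and then transport the resulting identity for $S_a,S_b$ back to $\kk$.
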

		
		Note then that $\iota^*$ also induces Lie algebra morphisms $\iota^*:(\f{g}^{\Pi}_{\sd})_k\rar (\f{g}_{\?{\sd}})_k$ and $\iota^*:\hat{\f{g}}_{\sd}^{\Pi}\rar \hat{\f{g}}_{\?{\sd}}$.
		
		\begin{proof}
			Note that the $\Pi$-invariant elements of $\kk[M]\supset \f{g}_{\sd}$ are spanned by those of the form $\sum_{u'\in \Pi u} z^{u'}$ for $u\in M$, $\Pi u$ denoting the orbit of $u$ under the action of $\Pi$ on $M$.  Recall that the Lie brackets on $\f{g}_{\sd}$ is the Poisson bracket $\{z^u,z^v\}=\Lambda(u,v)z^{u+v}$, and similarly on $\f{g}_{\?{\sd}}$ using the compatible form $\kappa^* \Lambda$.  Note that $\iota^*(u')=\iota^*(u)=:\?{u}$ for each $u'\in \Pi u$.  We have
			\begin{align*}
				\iota^*\left(\left\{\sum_{u'\in \Pi u} z^{u'},\sum_{v'\in \Pi v} z^{v'}\right\}\right) &= \iota^*\left(\sum_{u'\in \Pi u, v'\in \Pi v} \Lambda\left(u',v'\right)z^{u'+v'}\right)\\
				&= \sum_{u'\in \Pi u, v'\in \Pi v} \Lambda(u',v')z^{\?{u}+\?{v}} \\
				&= \Lambda\left(\sum_{u'\in \Pi u} u',\sum_{v'\in \Pi v} v'\right)z^{\?{u}+\?{v}}
			\end{align*}
			and
			\begin{align*}
				\left\{\iota^*\left(\sum_{u'\in \Pi u} z^{u'}\right),\iota^*\left(\sum_{v'\in \Pi v} z^{v'}\right)\right\} &= \left\{\sum_{u'\in \Pi u} z^{\iota^*(u')},\sum_{v'\in \Pi v} z^{\iota^*(v')}\right\} \\
				&= \sum_{u'\in \Pi u, v'\in \Pi v} \kappa^*\Lambda\left(\iota^*(u'),\iota^*(v')\right)z^{\?{u}+\?{v}}\\
				&=\Lambda\left(\kappa\circ\iota^*\left(\sum_{u'\in \Pi u} u'\right), \kappa\circ\iota^*\left(\sum_{v'\in \Pi v} v'\right)\right) z^{\?{u}+\?{v}}
			\end{align*}
			So it suffices to check that
			\begin{align}\label{eq:wts}
				\Lambda\left(\sum_{u'\in \Pi u} u',\sum_{v'\in \Pi v} v'\right) =  \Lambda\left(\kappa\circ\iota^*\left(\sum_{u'\in \Pi u} u'\right), \kappa\circ\iota^*\left(\sum_{v'\in \Pi v} v'\right)\right)
			\end{align}
			Since $\sum_{u'\in \Pi u} u'$ is $\Pi$-invariant, it must lie in the $\Pi$-invariant part of $M$, i.e., in $\kappa(\?{M}_{\bb{Q}})\cap M$, so it equals $\kappa(u_0)$ for some $u_0\in \?{M}_{\bb{Q}}$.  Similarly, $\sum_{v'\in \Pi v} v' = \kappa(v_0)$ for some $v_0\in \?{M}_{\bb{Q}}$.  We now see that both sides of \eqref{eq:wts} are equal to
			\begin{align*}
				\Lambda(\kappa(u_0),\kappa(v_0)) 
			\end{align*}
			where for the right-hand side we use \eqref{eq:iota-rho-Id}.  This proves the claim.
		\end{proof}

		Recall that $\f{g}^{\Pi}_{\sd}$ and $\f{g}_{\?{\sd}}$ act on $\kk\llb M\rrb$ and $\kk\llb \?{M}\rrb$ respectively as in \eqref{eq:classical_Lie_mod}. By Lemma \ref{lem:compatible-Lambda-perp}, these actions are compatible with the maps $\iota^*$, $\kappa$; that is, for $g\in \f{g}_{\sd}^{\Pi}$ and $\?{m}\in \?{M}\cap \kappa^{-1}(M)$, we have $\kappa(\iota^*(g).z^{\?{m}}) = g.\kappa(z^{\?{m}})$.

		\subsubsection{Scattering functions for $\?{\f{D}^{\sd}}$.}
		
		Let $\?{\f{d}}=\kappa^{-1}(\f{d})$ be the support of a wall in $\?{\f{D}^{\sd}_k}$ as in \S \ref{subsub:support}.  We define the scattering function $f_{\?{\f{d}}}\in (\f{g}_{\?{\sd}})_k$ of the the wall $(\?{\f{d}},f_{\?{\f{d}}})\in \?{\f{D}^{\sd}_k}$ as follows: 
		
		We identify $\?{M}_{\bb{R}}$ with its image $\kappa(\?{M}_{\bb{R}})\subset M_{\bb{R}}$.   Let $\gamma$ be a path in $\?{M}_{\bb{R}}$ crossing $\?{\f{d}}$ transversely at time $\tau$ and intersecting no other codimension-one cells of $\?{\f{D}^{\sd}_k}$. Then we define $f_{\?{\f{d}}}$ so that the path-ordered product $\theta_{\gamma,\?{\f{D}^{\sd}}}$  agrees with $\iota^*(\theta_{\wt{\gamma}})$, where $\wt{\gamma}$ is a small generic perturbation of $\kappa(\gamma)$ in $M_{\bb{R}}$ with the same endpoints as $\gamma$, but which crosses the walls of $\f{D}^{\sd}_k$ transversely while avoiding joints.  That is,
        \begin{align}\label{fprod}
        f_{\?{\f{d}}} = \iota^*(\theta_{\wt{\gamma}})^{\sign \Lambda(-\gamma'(\tau),v_{\?{\f{d}}})}
        \end{align}
        where $v_{\?{\f{d}}}$ is the primitive element of $\?{M}^+$ with $\?{\f{d}}\subset v_{\?{\f{d}}}^{\?{\Lambda}\perp}$. We note that for $\sigma=\bigcap_i \f{d}_i$ the smallest cone of $\f{D}^{\sd}_k$ containing $\?{\f{d}}$, $f_{\?{\f{d}}}$ is $\iota^*(f_{\sigma})$ where $f_{\sigma}$ is the scattering function attached to the cone $\sigma$ in the perspective on consistent scattering diagrams considered in \cite[\S 2.3]{mou2019scattering}.

		To see that this is well-defined, let $\{(\f{d}_i,f_{\f{d}_i})\}_i$ be the walls crossed by $\wt{\gamma}$. Note that for each $i$, we have $\iota^*(v_{\f{d_i}})\in \?{M}^+:=\?{\omega}_1(\?{N}^+)$ because $\iota^*(M^+)=\?{M}^+$ as an easy consequence of \eqref{iota-omega}.    Now notice that $\kappa(\?{\f{d}})\subset \f{d}_i\subset v_{\f{d}_i}^{\Lambda \bot}$, so by Lemma \ref{lem:compatible-Lambda-perp}, we have  $\?{\f{d}}\subset (\iota^* v_{\f{d}_i})^{\?{\Lambda} \bot}$. Hence, all the vectors $\iota^*(v_{\f{d}_i})$ are parallel to the element $v_{\?{\f{d}}}\in \?{M}^+$. 
		Since multiplication in $G_k^{\sd}$ respects the $M$-grading, and since $\iota^*$ maps degree-$m$ elements to degree-$\iota^*(m)$ elements, it follows that $f_{\?{\f{d}}}$ lies in $G_{v_{\?{\f{d}}}}^{\parallel}$ for $v_{\?{\f{d}}}^{\?{\Lambda}\perp}\supset \?{\f{d}}$, as desired.

        From now on, we assume that the action of the finite group $\Pi$ respects the bilinear form $\omega|_{N_{\uf}}$, i.e., $\omega(e_i,e_j)=\omega(ge_i,ge_j)$ for all $g\in \Pi$ and $i,j\in I\setminus F$ (cf. Example \ref{eg:group-cover}). In this case, we say $\sd$ has $\Pi$-symmetry. This assumption will be satisfied in \S \ref{sec:closed_surface} where $\Pi$ consists of deck transformations.

		\begin{lem}\label{lem:theta-Pi-inv}
			Assume that $\sd$ has $\Pi$-symmetry.  Let $\gamma$ be any smooth path in $M_{\bb{R}}$ with endpoints in $\kappa(\?{M}_{\bb{R}})$ which avoids the joints of $\f{D}^{\sd}_k$. Then $\theta_{\gamma,\f{D}^{\sd}}\in (\f{g}_{\sd}^{\Pi})_k$.
		\end{lem}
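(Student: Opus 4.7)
The plan is to exploit the uniqueness of the consistent scattering diagram together with the $\Pi$-symmetry assumption. First, I would make precise the $\Pi$-action on the ambient data. Since $\sd$ has $\Pi$-symmetry, the group $\Pi$ acts on $N$ by permuting the basis $E$ (preserving $F$), hence on $M$ by the dual action, on the quantum/classical torus algebra $\kk[M]$, and on the Lie algebra $\f{g}_{\sd}$, the nilpotent quotients $(\f{g}_{\sd})_k$, and the pro-unipotent groups $G^{\sd}_k$. The invariant subalgebras are precisely $\f{g}_{\sd}^{\Pi}$ and $(\f{g}_{\sd}^{\Pi})_k = ((\f{g}_{\sd})_k)^{\Pi}$ (the latter identification uses that $\Pi$ is a finite group acting linearly on a filtered object, and that $\f{g}^{\geq k+1}_{\sd}$ is $\Pi$-stable). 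Crucially, the fixed locus $M_{\bb{R}}^{\Pi}$ of the induced action on $M_{\bb{R}}$ coincides with the image $\kappa(\?{M}_{\bb{R}})$, as is clear from the formula \eqref{eq:rho-def} defining $\kappa$.

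Next I would show that $\f{D}^{\sd}$ is $\Pi$-equivariant (as a set of walls, up to equivalence). The initial scattering diagram $\f{D}^{\sd}_{\In}$ from \eqref{D-nobar-in} is manifestly $\Pi$-equivariant: since $\Pi$ preserves $I \setminus F$, the multipliers $d_i'$, and the form $\omega$ on $N_{\uf}$, the wall $(e_i^{\perp}, \Psi(z^{\omega_1(e_i)})^{1/d_i'})$ is mapped to $(e_{gi}^{\perp}, \Psi(z^{\omega_1(e_{gi})})^{1/d_{gi}'})$ for each $g \in \Pi$, which is another wall of $\f{D}^{\sd}_{\In}$. For any $g \in \Pi$, the translated diagram $g \cdot \f{D}^{\sd}$ is still consistent (the wall-crossing automorphisms transform equivariantly), still contains $\f{D}^{\sd}_{\In}$, and has all non-initial walls outgoing. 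By the uniqueness part of Theorem \ref{KSGS}, $g \cdot \f{D}^{\sd} \equiv \f{D}^{\sd}$.

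Finally, I would combine $\Pi$-equivariance with consistency to conclude. Let $\gamma$ be a path as in the statement and let $g \in \Pi$. Since both endpoints of $\gamma$ lie in $M_{\bb{R}}^{\Pi} = \kappa(\?{M}_{\bb{R}})$, the translated path $g \cdot \gamma$ has the same endpoints as $\gamma$, and it still avoids joints (because $\Pi$ preserves $\Joints(\f{D}^{\sd})$ by the equivariance). On the one hand, consistency of $\f{D}^{\sd}$ gives $\theta_{g \cdot \gamma, \f{D}^{\sd}} = \theta_{\gamma, \f{D}^{\sd}}$; on the other hand, unpacking the definition \eqref{pathprod} of the path-ordered product and using that $g$ sends the wall-crossing automorphism at $(\f{d}, f_{\f{d}})$ to that at $(g \cdot \f{d}, g \cdot f_{\f{d}})$ while preserving the sign $\sign \Lambda(-\gamma'(\tau), v_{\f{d}})$ (since $\Pi$ preserves $\Lambda$), we get $\theta_{g \cdot \gamma, \f{D}^{\sd}} = g \cdot \theta_{\gamma, \f{D}^{\sd}}$. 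Hence $g \cdot \theta_{\gamma, \f{D}^{\sd}} = \theta_{\gamma, \f{D}^{\sd}}$ for every $g \in \Pi$, so $\theta_{\gamma, \f{D}^{\sd}} \in (G_k^{\sd})^{\Pi} = \exp((\f{g}_{\sd}^{\Pi})_k)$, which is the desired conclusion.

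The only subtle point—and the main place I would need to exercise care—is verifying that the $\Pi$-action on path-ordered products really does coincide, on the nose (not merely up to equivalence), with the action induced by translating the path. The equivalence $g \cdot \f{D}^{\sd} \equiv \f{D}^{\sd}$ produced by Theorem \ref{KSGS} is only up to equivalence of scattering diagrams, so one must invoke the standard fact that path-ordered products are invariant under equivalence to conclude the pointwise equality used in the last step.
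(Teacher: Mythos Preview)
Your argument is correct and matches the paper's proof essentially step for step: both establish $\Pi$-invariance of $\f{D}^{\sd}$ (you via the uniqueness in Theorem~\ref{KSGS}, the paper by direct assertion from the $\Pi$-symmetry), then combine the general equivariance $\theta_{g\cdot\gamma,\,g\cdot\f{D}^{\sd}_k}=g\cdot\theta_{\gamma,\f{D}^{\sd}_k}$ with consistency and the fact that the endpoints are $\Pi$-fixed. The paper writes this with $\gamma_0=g^{-1}\gamma$ rather than $g\gamma$, but that is only a relabeling; your closing remark about equivalence versus equality of scattering diagrams is a point the paper handles implicitly.
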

		\begin{proof}
			Consider the natural induced actions of $\Pi$ on paths in $M_{\bb{R}}$, on scattering diagrams over $\f{g}^{\sd}_k$ in $M_{\bb{R}}$, and on the unipotent group $G^{\sd}_k$.  Note (using the $\Pi$-symmetry assumption) that for any $g\in \Pi$ and any path $\gamma_0$, we have
			\begin{align}\label{eq:gtheta1}
				\theta_{g\cdot \gamma_0,g \cdot\f{D}^{\sd}_k}=g\cdot \theta_{\gamma_0,\f{D}^{\sd}_k}.
			\end{align}
			
			Now take $\gamma_0:=g^{-1}\cdot \gamma$.  Then \eqref{eq:gtheta1} becomes 
			\begin{align}\label{eq:gtheta2}
				\theta_{\gamma,g\cdot \f{D}^{\sd}_k}=g\cdot \theta_{\gamma_0.\f{D}^{\sd}_k}.
			\end{align}
			Since $\f{D}^{\sd}_k$ is consistent over $\f{g}^{\sd}_k$, and since the endpoints of $\gamma$ and $\gamma_0$ are the same, we can replace $\gamma_0$ in \eqref{eq:gtheta2} with $\gamma$.  Furthermore, we note that the scattering diagram $\f{D}^{\sd}_k$ is invariant under the $\Pi$-action (using the $\Pi$-symmetry assumption again).  Thus, \eqref{eq:gtheta2} becomes  
			\begin{align*}
				\theta_{\gamma,\f{D}^{\sd}_k}=g\cdot \theta_{\gamma.\f{D}^{\sd}_k}.
			\end{align*}
			Since this holds for all $g\in \Pi$, the claim follows.
		\end{proof}

		\begin{thm}\label{thm:folding-D}
			Assume that $\sd$ has $\Pi$-symmetry. Then there is a well-defined, unique-up-to-equivalence scattering diagram $\?{\f{D}^{\sd}}$ in $\?{M}_{\bb{R}}$ such that the walls of the finite sub scattering diagram $\?{\f{D}^{\sd}_k}$ are as described above, up to equivalence.  Furthermore, $\?{\f{D}^{\sd}}$ is consistent. Up to equivalence preserving the positivity of the walls, $\f{D}^{\?{\sd}}\subset \?{\f{D}^{\sd}}$.
		\end{thm}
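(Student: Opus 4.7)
The plan is to realize $\?{\f{D}^{\sd}}$ as the restriction of $\f{D}^{\sd}$ to the embedded subspace $\kappa(\?{M}_{\bb{R}})\subset M_{\bb{R}}$, with scattering functions obtained by pushing forward along the Lie algebra morphism $\iota^*\colon \f{g}^{\Pi}_{\sd}\to \f{g}_{\?{\sd}}$ from Lemma \ref{lem:iota-g-inv}. I address the three assertions—well-definedness, consistency, and containment of $\f{D}^{\?{\sd}}$—in turn.

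For each codimension-one cone $\?{\f{d}}\subset \?{M}_{\bb{R}}$ obtained from a cone of $\f{D}^{\sd}_k$ as in \S\ref{subsub:support}, the function $f_{\?{\f{d}}}$ is defined by \eqref{fprod} using a generic small perturbation $\wt{\gamma}$ of $\kappa(\gamma)$ in $M_{\bb{R}}$. By Lemma \ref{lem:theta-Pi-inv}—which uses the $\Pi$-symmetry hypothesis—the element $\theta_{\wt{\gamma},\f{D}^{\sd}_k}$ lies in the $\Pi$-invariant subgroup $(G_{\sd}^{\Pi})_k$, and Lemma \ref{lem:iota-g-inv} then makes $\iota^*\theta_{\wt{\gamma},\f{D}^{\sd}_k}\in (G_{\?{\sd}})_k$ a well-defined element. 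Consistency of $\f{D}^{\sd}_k$ (Theorem \ref{KSGS}) implies that this element depends only on the endpoints of $\wt{\gamma}$, which can be fixed equal to those of $\kappa(\gamma)$, hence only on the side of $\?{\f{d}}$ traversed; inverse-limiting over $k$ yields $\?{\f{D}^{\sd}}$, unique up to equivalence by construction. For consistency, any closed loop $\?{\gamma}$ in $\?{M}_{\bb{R}}\setminus \Joints(\?{\f{D}^{\sd}_k})$ admits a closed perturbation $\wt{\gamma}$ of $\kappa(\?{\gamma})$ transverse to the walls of $\f{D}^{\sd}_k$ and avoiding its joints; the construction then gives $\theta_{\?{\gamma},\?{\f{D}^{\sd}_k}}=\iota^*\theta_{\wt{\gamma},\f{D}^{\sd}_k}=\iota^*(\id)=\id$ by consistency of $\f{D}^{\sd}$.

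Finally, to see $\f{D}^{\?{\sd}}\subset \?{\f{D}^{\sd}}$, I identify the initial incoming contributions on each hyperplane $e_{\Pi i}^{\perp}$. By \eqref{rho-perp} and Condition (1) of \S\ref{sec:folding_seeds}, the $|\Pi i|$ initial walls of $\f{D}^{\sd}$ indexed by $i'\in \Pi i$ all restrict to the same hyperplane $e_{\Pi i}^{\perp}\subset \?{M}_{\bb{R}}$. Using \eqref{iota-omega} together with $\?{d}'_i = d'_i/|\Pi i|$, the product of their $\iota^*$-images matches $\Psi(z^{\?{\omega}_1(e_{\Pi i})})^{1/\?{d}'_i}$, the initial scattering function of $\f{D}^{\?{\sd}}$ at $e_{\Pi i}^{\perp}$. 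Combined with the consistency already established and the uniqueness clause of Theorem \ref{KSGS} applied to $\?{\f{D}^{\sd}}$, this forces the outgoing walls of $\f{D}^{\?{\sd}}$ to appear among those of $\?{\f{D}^{\sd}}$ up to an equivalence preserving positivity; positivity is inherited because each wall of $\f{D}^{\sd}$ is a positive product of quantum dilogarithms (the outgoing completion in Theorem \ref{KSGS} preserves positivity) and $\iota^*$ is positive at the level of coefficients. The principal obstacle is this last step: the initial functions $\Psi(z^{\omega_1(e_{i'})})^{1/d'_{i'}}$ for distinct $i'\in \Pi i$ need not commute, so the crossing-order ambiguity along $\wt{\gamma}$ produces commutator corrections in higher pieces $\f{g}_{\sd,kv}$, $k\geq 2$. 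One must argue that after all $|\Pi i|$ crossings the resulting element is $\Pi$-invariant and its $\iota^*$-image is independent of the chosen order, so any remaining discrepancy appears only in outgoing walls attached to joints of $\f{D}^{\sd}$ meeting $e_{\Pi i}^{\perp}$ in codimension one — and these are absorbed into the equivalence.
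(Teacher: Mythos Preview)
Your argument for well-definedness and consistency is correct and matches the paper's proof closely: both use Lemma \ref{lem:theta-Pi-inv} to land in the $\Pi$-invariant subalgebra, then Lemma \ref{lem:iota-g-inv} to push forward via $\iota^*$, and deduce consistency of $\?{\f{D}^{\sd}}$ from consistency of $\f{D}^{\sd}$ applied to perturbed closed loops.

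Your final paragraph, however, manufactures an obstacle that is not really there. There is no ``crossing-order ambiguity'': you have already used consistency of $\f{D}^{\sd}$ to show that $\theta_{\wt{\gamma},\f{D}^{\sd}_k}$ depends only on the endpoints of $\wt{\gamma}$, so the order in which $\wt{\gamma}$ meets the hyperplanes $e_{i'}^{\perp}$ is irrelevant. What is true is that $f_{\?{\f{d}}}$ on $e_{\Pi i}^{\perp}$ may differ from the bare product $\Psi(z^{\?{\omega}_1(e_{\Pi i})})^{1/\?{d}'_i}$ by higher-order factors---coming both from the non-abelian nature of the product before applying $\iota^*$ and from outgoing walls of $\f{D}^{\sd}$ passing through the joint $\bigcap_{i'\in\Pi i} e_{i'}^{\perp}$. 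But this only says that the incoming part of $\?{\f{D}^{\sd}}$ contains $\f{D}^{\?{\sd}}_{\In}$ with a possibly larger (still positive, by the positivity of $\f{D}^{\sd}$ and of $\iota^*$) scattering function. The paper's proof simply observes this containment of incoming walls via \eqref{D-nobar-in}--\eqref{D-bar-in} and concludes $\f{D}^{\?{\sd}}\subset\?{\f{D}^{\sd}}$ directly; your invocation of the uniqueness clause of Theorem \ref{KSGS} together with positivity is the right way to make this step explicit, and no further argument about commutators is needed.
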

		\begin{proof}
			We showed the well-definedness above. The uniqueness statement is automatic since every wall $\?{\f{d}}$ lies in $\?{\f{D}^{\sd}_{k}}$ for all $k$ greater than or equal to some $k_{\?{\f{d}}}$.
			
			For consistency, it suffices to check consistency of $\?{\f{D}^{\sd}_k}$ for each $k$.  By construction, for any generic closed loop $\gamma \subset \?{M}_{\bb{R}}$, the path-ordered $\theta^{\?{\f{D}^{\sd}_k}}_{\gamma}$ is equal to $\iota^*(\theta^{{\f{D}^{\sd}_k}}_{\gamma'})$ for some perturbation $\gamma'$ of $\kappa(\gamma)$ in $M_{\bb{R}}$, and the consistency of $\f{D}^{\sd}$ ensures that $\theta^{{\f{D}^{\sd}_k}}_{\gamma'}=1$.  Here we use Lemmas \ref{lem:iota-g-inv} and \ref{lem:theta-Pi-inv} to ensure that the action of $\iota^*$ on the path-ordered products is via homomorphism.
			
			Finally, we see from  \eqref{D-nobar-in} and \eqref{D-bar-in} that, up to equivalence, the incoming walls of $\?{\f{D}^{\sd}}$ include all the incoming walls of $\f{D}^{\?{\sd}}$.  The claim that $\f{D}^{\?{\sd}}\subset \?{\f{D}^{\sd}}$ follows.
		\end{proof}

		The following general result will not be used in this paper.
		\begin{cor}\label{cor:fold-Theta-mid}
			Assume that $\sd$ has $\Pi$-symmetry.  Then $\kappa^{-1}(\Theta^{\midd}_{\bb{R}}(\sd))\subset \Theta^{\midd}_{\bb{R}}(\?{\sd})$.
		\end{cor}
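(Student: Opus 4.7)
The plan is to exploit the inclusion $\f{D}^{\?{\sd}} \subset \?{\f{D}^{\sd}}$ from Theorem \ref{thm:folding-D} together with the broken-line construction of theta functions to transfer the Laurent polynomial property from $\sd$ to $\?{\sd}$ via the projection $\iota^*$.

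Since $\Theta^{\midd}_{\bb{R}}(\sd)$ and $\Theta^{\midd}_{\bb{R}}(\?{\sd})$ are real cones generated by their integer points, I would first reduce to verifying that whenever $\?{m} \in \?{M}$ satisfies $\kappa(\?{m}) \in \Theta^{\midd}(\sd)$, one has $\?{m} \in \Theta^{\midd}(\?{\sd})$, rescaling $\?{m}$ by a positive integer if needed to ensure $\kappa(\?{m}) \in M$. If the Injectivity Assumption fails for $\sd$ (and hence for $\?{\sd}$), I would replace both seeds by their principal coefficient counterparts, using Example \ref{ex:Pi-prin} to extend the folding. Set $m := \kappa(\?{m}) \in M^{\Pi}$, and choose a $\Pi$-invariant generic basepoint $\sQ \in \kappa(\?{M}_{\bb{R}})$ in the positive chamber of $\sd$, with $\?{\sQ} := \iota^*(\sQ)$.

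The central step is to establish the key identity
\begin{align*}
\iota^*\bigl(\vartheta^{\sd}_{m,\sQ}\bigr) = \wt{\vartheta}_{\?{m},\?{\sQ}},
\end{align*}
where $\wt{\vartheta}_{\?{m},\?{\sQ}}$ denotes the broken-line theta function for the consistent scattering diagram $\?{\f{D}^{\sd}}$. Both sides are $\?{m}$-pointed elements of $\kk\llb \?{M}\rrb$, and by construction of $\?{\f{D}^{\sd}}$ (wall-crossings given by $\iota^*$ of the corresponding $\f{D}^{\sd}$-wall-crossings, cf. \eqref{fprod}), together with Lemmas \ref{lem:iota-g-inv} and \ref{lem:theta-Pi-inv} ensuring compatibility on $\Pi$-invariant elements, they transform identically under wall-crossings in $\?{\f{D}^{\sd}}$ as $\?{\sQ}$ varies; uniqueness of the $\?{m}$-pointed solution to the wall-crossing compatibility (as in Lemma \ref{CPS}) then forces equality. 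By hypothesis $\vartheta^{\sd}_{m,\sQ}$ is a Laurent polynomial, so $\wt{\vartheta}_{\?{m},\?{\sQ}}$ is too. Finally, applying Theorem \ref{thm:folding-D} to expand $\wt{\vartheta}_{\?{m},\?{\sQ}}$ in the theta basis of $\f{D}^{\?{\sd}}$, the positivity of broken lines (Lemma \ref{BLpos}) yields a decomposition $\wt{\vartheta}_{\?{m},\?{\sQ}} = \sum_{\?{p}} c_{\?{p}}\,\vartheta^{\?{\sd}}_{\?{p},\?{\sQ}}$ with $c_{\?{m}} = 1$ and all $c_{\?{p}} \in \bb{Z}_{\geq 0}$. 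Since each $\vartheta^{\?{\sd}}_{\?{p},\?{\sQ}}$ is theta positive, no cancellations can occur, so each contributing $\vartheta^{\?{\sd}}_{\?{p},\?{\sQ}}$ is itself a Laurent polynomial; in particular $\vartheta^{\?{\sd}}_{\?{m},\?{\sQ}}$ is, proving $\?{m} \in \Theta^{\midd}(\?{\sd})$.

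The main obstacle is making the key identity $\iota^*(\vartheta^{\sd}_{m,\sQ}) = \wt{\vartheta}_{\?{m},\?{\sQ}}$ fully precise. Although natural, the comparison is delicate: broken lines in $M_{\bb{R}}$ starting at $m$ may bend in directions $v_{\f{d}}$ lying outside $\kappa(\?{M}_{\bb{R}})$, so only the aggregate contribution of an entire $\Pi$-orbit of broken lines projects cleanly under $\iota^*$. Tracking the multiplicities that arise when $\iota^*$ collapses a $\Pi$-orbit of monomials --- and verifying that these aggregated contributions exactly reproduce the broken-line contributions for $\?{\f{D}^{\sd}}$ --- is the technical heart of the proof, and is where the $\Pi$-symmetry hypothesis on $\sd$ together with Lemmas \ref{lem:iota-g-inv} and \ref{lem:theta-Pi-inv} must be used carefully.
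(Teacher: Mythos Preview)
Your overall architecture is sound, and your second step (passing from $\?{\f{D}^{\sd}}$ to $\f{D}^{\?{\sd}}$ via positivity and the inclusion in Theorem \ref{thm:folding-D}) is exactly what the paper does. However, your central step --- the exact identity $\iota^*(\vartheta^{\sd}_{m,\sQ}) = \wt{\vartheta}_{\?{m},\?{\sQ}}$ --- is both harder than needed and not fully justified as stated. Your uniqueness claim (``uniqueness of the $\?{m}$-pointed solution to the wall-crossing compatibility'') is not correct: Lemma \ref{CPS} says theta functions transform by wall-crossing automorphisms, but this does not characterize them among $\?{m}$-pointed elements, since any element of $\s{A}^{\can}$ satisfies the same compatibility. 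To establish the identity you would need an initial condition, e.g.\ checking equality at some basepoint where both sides are visibly equal, as the paper does later via Lemma \ref{lem:iota-star-theta} in \S \ref{sub:bracelets-1p} (where $\?{m}$ lies in a chamber so both sides reduce to monomials). For a general $\?{m}$ this is not immediate.

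The paper sidesteps this entirely with a simpler injection argument: rather than proving equality, it observes that every broken line for $\?{\f{D}^{\sd}}$ with ends $(\?{m},\?{\sQ})$ can be mapped via $\kappa$ into $M_{\bb{R}}$ and then perturbed (using the very definition of the scattering functions on $\?{\f{D}^{\sd}}$ in terms of perturbed paths in $M_{\bb{R}}$, cf.\ \eqref{fprod}) to a broken line for $\f{D}^{\sd}$ with ends $(\kappa(\?{m}),\sQ')$ and the same final coefficient. By positivity of broken lines, this injection of broken lines (not equality of theta functions) already forces $\wt{\vartheta}_{\?{m},\?{\sQ}}$ to be a Laurent polynomial whenever $\vartheta^{\sd}_{\kappa(\?{m}),\sQ'}$ is. This is all that is needed, and it avoids the orbit-counting multiplicity issues you correctly flag as the main obstacle.
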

		\begin{proof}
			Let $\sQ\in C_{\?{\sd}}^+$ and let $\sQ'\in C_{\sd}^+$ be a generic point (with respect to $\f{D}^{\sd}$) very near $\kappa(\sQ)$.  For any $m\in \kappa^{-1}(M)$, any broken line $\Gamma$ in $\kappa(\?{M}_{\bb{R}})$ with ends $(m,\sQ)$ and final monomial $c_{\Gamma}z^{m_{\Gamma}}$ with respect to $\?{\f{D}^{\sd}}$ can be deformed in $\?{M}_{\bb{R}}$ to a broken line $\Gamma'$ with ends $(\kappa(m),\sQ')$ and final monomial $c_{\Gamma}z^{\kappa(m_{\Gamma})}$.  If $\kappa(m)\in \Theta^{\midd}(\sd)$, then $\vartheta^{\f{D}^{\sd}}_{\kappa(m),\sQ'}$ is a finite Laurent polynomial (writing the relevant scattering diagram in the exponent for clarity), so $\vartheta^{\?{\f{D}^{\sd}}}_{m,\sQ}$ must be as well (because we just saw that any broken line contributing to  $\vartheta^{\?{\f{D}^{\sd}}}_{m,\sQ}$ also yields a broken line contributing to $\vartheta^{\f{D}^{\sd}}_{\kappa(m),\sQ'}$).
			
			Since $\f{D}^{\?{\sd}}\subset \?{\f{D}^{\sd}}$ as positive scattering diagrams by Theorem \ref{thm:folding-D}, we similarly have that every broken line contributing to $\vartheta^{\f{D}^{\?{\sd}}}_{m,\sQ}$ must also contribute to $\vartheta^{\?{\f{D}^{\sd}}}_{m,\sQ}$, so finiteness of $\vartheta^{\?{\f{D}^{\sd}}}_{m,\sQ}$ implies finiteness of $\vartheta^{\f{D}^{\?{\sd}}}_{m,\sQ}$.  Putting all this together, we see that $\kappa(m)\in \Theta^{\midd}(\sd)$ implies $m\in \Theta^{\midd}(\?{\sd})$.  The claim follows.
		\end{proof}

		\begin{lem}\label{lem:iota-star-theta}
			Suppose $\?{m}=\iota^*(m)$.  Fix $\sQ,\sQ'\in M_{\bb{R}}$ generic and sharing chambers of $\f{D}^{\sd}_k$ with $\kappa(\?{\sQ})$, $\kappa(\?{\sQ}')$, respectively, for some generic $\?{\sQ},\?{\sQ}'\in \?{M}_{\bb{R}}$.  If $\vartheta^{\?{\f{D}^{\sd}}}_{\?{m},\?{\sQ}}\equiv \iota^*(\vartheta^{\f{D}^{\sd}}_{m,\sQ})$ (modulo $m+kM^+$), then $\vartheta^{\?{\f{D}^{\sd}}}_{\?{m},\?{\sQ}'}\equiv \iota^*(\vartheta^{\f{D}^{\sd}}_{m,\sQ'})$ (modulo $m+kM^+$).
		\end{lem}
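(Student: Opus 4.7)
The plan is to use the path-ordered product compatibility (Lemma \ref{CPS}) on both sides and then identify the two sides via the construction of $\?{\f{D}^{\sd}}$. First, choose a generic smooth path $\?{\gamma}$ in $\?{M}_{\bb{R}}$ from $\?{\sQ}$ to $\?{\sQ}'$, transverse to the walls of $\?{\f{D}^{\sd}_k}$ it crosses and avoiding joints. By Lemma \ref{CPS} applied to $\?{\f{D}^{\sd}}$ (which is consistent by Theorem \ref{thm:folding-D}),
\[
\vartheta^{\?{\f{D}^{\sd}}}_{\?{m},\?{\sQ}'} \equiv \Ad_{\theta_{\?{\gamma},\?{\f{D}^{\sd}_k}}}(\vartheta^{\?{\f{D}^{\sd}}}_{\?{m},\?{\sQ}}) \pmod{\?{m}+k\?{M}^+}.
\]

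Next, by breaking $\?{\gamma}$ into segments each crossing exactly one wall of $\?{\f{D}^{\sd}_k}$ and invoking the wall-by-wall definition of scattering functions for $\?{\f{D}^{\sd}}$ together with Lemmas \ref{lem:iota-g-inv} and \ref{lem:theta-Pi-inv} to concatenate the pieces, we obtain
\[
\theta_{\?{\gamma},\?{\f{D}^{\sd}_k}} = \iota^*(\theta_{\wt{\gamma},\f{D}^{\sd}_k})
\]
where $\wt{\gamma}$ is a generic perturbation in $M_{\bb{R}}$ of $\kappa(\?{\gamma})$ with the same endpoints $\kappa(\?{\sQ}), \kappa(\?{\sQ}')$. (The endpoints of each segment of $\wt{\gamma}$ lie on or near $\kappa(\?{M}_{\bb{R}})$, so Lemma \ref{lem:theta-Pi-inv} guarantees $\Pi$-invariance of each piece, and Lemma \ref{lem:iota-g-inv} ensures $\iota^*$ intertwines the concatenations.)

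Now take a smooth path $\gamma$ in $M_{\bb{R}}$ from $\sQ$ to $\sQ'$ obtained by concatenating: a short path from $\sQ$ to $\kappa(\?{\sQ})$ within their shared chamber of $\f{D}^{\sd}_k$; the path $\wt{\gamma}$; and a short path from $\kappa(\?{\sQ}')$ to $\sQ'$ within their shared chamber. The first and third pieces contribute trivially to the path-ordered product in $G_k^{\sd}$, so $\theta_{\gamma,\f{D}^{\sd}_k} = \theta_{\wt{\gamma},\f{D}^{\sd}_k}$. By Lemma \ref{CPS} applied to $\f{D}^{\sd}$,
\[
\vartheta^{\f{D}^{\sd}}_{m,\sQ'} \equiv \Ad_{\theta_{\gamma,\f{D}^{\sd}_k}}(\vartheta^{\f{D}^{\sd}}_{m,\sQ}) \pmod{m+kM^+}.
\]
Since $\theta_{\gamma,\f{D}^{\sd}_k}$ has endpoints in $\kappa(\?{M}_{\bb{R}})$ it is $\Pi$-invariant by Lemma \ref{lem:theta-Pi-inv}, so we may apply $\iota^*$ as a morphism; using compatibility of the Lie algebra action on $\kk\llb M\rrb$ with $\iota^*$ (via Lemma \ref{lem:compatible-Lambda-perp} and the discussion following Lemma \ref{lem:iota-g-inv}), we get
\[
\iota^*(\vartheta^{\f{D}^{\sd}}_{m,\sQ'}) \equiv \Ad_{\iota^*(\theta_{\gamma,\f{D}^{\sd}_k})}(\iota^*(\vartheta^{\f{D}^{\sd}}_{m,\sQ})) = \Ad_{\theta_{\?{\gamma},\?{\f{D}^{\sd}_k}}}(\iota^*(\vartheta^{\f{D}^{\sd}}_{m,\sQ})) \pmod{\?{m}+k\?{M}^+}.
\]
Combining with the hypothesis $\vartheta^{\?{\f{D}^{\sd}}}_{\?{m},\?{\sQ}} \equiv \iota^*(\vartheta^{\f{D}^{\sd}}_{m,\sQ})$ modulo $\?{m}+k\?{M}^+$ and the first displayed equation finishes the proof.

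The main technical point to track carefully is that $\Ad_{\theta}$ preserves the congruence modulo $\?{m}+k\?{M}^+$ (respectively $m+kM^+$): this holds because each wall-crossing acts as $z^p \mapsto z^p + (\text{terms with exponents in } p + M^+)$, so a term in $\?{m}+k\?{M}^+$ stays in $\?{m}+k\?{M}^+$ under adjoint action. This ensures the two chains of congruences above can be legitimately composed.
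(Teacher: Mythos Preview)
Your proof is correct and follows the same approach as the paper: both use Lemma~\ref{CPS} together with the defining property $\theta_{\?{\gamma},\?{\f{D}^{\sd}_k}}=\iota^*(\theta_{\gamma,\f{D}^{\sd}_k})$ of the restricted diagram. Your version is simply a careful unpacking of the paper's one-sentence argument.

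One point deserves a word of care: the module-action compatibility you invoke, namely $\iota^*(\Ad_{\theta}(f))=\Ad_{\iota^*(\theta)}(\iota^*(f))$ for $\Pi$-invariant $\theta$ and \emph{arbitrary} $f\in\kk\llb M\rrb$, is slightly stronger than what is spelled out after Lemma~\ref{lem:iota-g-inv} (which only treats $f=z^{\kappa(\?{m})}$). It is nonetheless true: for $\Pi$-invariant $x=\sum_{u'\in\Pi u}z^{u'}$ with $u\in M^+$, one computes $\iota^*(\ad_x(z^m))=(\sum_{u'}\Lambda(u',m))z^{\?{u}+\?{m}}$, and since $\sum_{u'}u'=\omega_1(n_0)$ for $\Pi$-invariant $n_0\in N_{\uf}$, the compatibility condition~\eqref{Lambda-gen} gives $\sum_{u'}\Lambda(u',m)=-\sum_i (n_0)_i d_i' m_i$, which depends only on $\iota^*(m)$ because $(n_0)_i d_i'$ is constant on $\Pi$-orbits. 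This matches $\ad_{\iota^*(x)}(z^{\?{m}})$. The paper's terse proof relies on the same fact implicitly.
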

		\begin{proof}
			This follows from Lemma \ref{CPS} and the fact that, by construction, $\theta_{\?{\gamma}}^{\?{\f{D}}^{\sd}_k}=\iota^*(\theta_{\gamma}^{\f{D}^{\sd}_k})$ for any paths $\?{\gamma},\gamma$ from $\?{\sQ}$ to $\?{\sQ}'$ and $\sQ$ to $\sQ'$, respectively.
		\end{proof}
		
		\subsection{Examples of covering}
		
		\begin{example}\label{eg:folding_cyclic_A3}
			Consider the seed $\sd$ given such that $I=\{1,2,3\}$, $d_i=1$ for all $i$, and $\omega^\circ$ is given by
			\[
			\omega^\circ(e_i,e_j)=
			\begin{pmatrix}
				0&1&-1\\
				-1&0&1\\
				1&-1&0
			\end{pmatrix} 
			\]
			Let $\sd^{\prin}$ denote the corresponding principal coefficient seed with framing vertices $j'$ for each $j\in I$ such that $\omega^{\circ}(i,j')=\delta_{ij}$. Choose the compatible form $\Lambda$ given by
			\[
			\Lambda=
			\begin{pmatrix}
				0& \id_3\\
				-\id_3&-\omega^\circ
			\end{pmatrix}
			\]
			so $\Lambda (\omega^{\prin})^T=\Id_6$.

			Choose $\Pi$ such that $\Pi i=\{1,2,3\}$ and $\Pi i'=\{1',2',3'\}$. Then the corresponding seed $\?{\sd^{\prin}}$ has an unfrozen vertex $\Pi 1$ and a frozen vertex $\Pi 1'$. Its skew-symmetric bilinear form in the basis $e^*_{\Pi 1}$ and $e^*_{\Pi 1'}$ is given by 
			\[
			\?{\omega}^\circ=\begin{pmatrix}
				0&3\\
				-3&0
			\end{pmatrix}
			\]
			and we have $\?{d}_{\Pi 1}=\?{d}_{\Pi 1'}=\frac{1}{3}$.  It is compatible with the matrix $\kappa^*\Lambda$ given by
			\[
			\kappa^*\Lambda=
			\begin{pmatrix}
				0&\frac{1}{3}\\
				-\frac{1}{3}&0
			\end{pmatrix}. 
			\]
			
			Identify $i$ with $i+3$ for simplicity (i.e., assume $I$ is cyclically ordered).  The walls $\f{d}$ for $\f{D}^{\sd^{\prin}}$ are the three initial walls $(e_i^\perp,\Psi(z^{v_i}))$ where $v_i\coloneqq \omega^{\prin}_1(e_i)=e_{i+1}^*-e_{i-1}^*+e_{i'}^*$, and the three non-initial walls $(\f{d}_i,\Psi(z^{u_i}))$ for $\f{d}_i=(e_{i+1}^{\perp}\cap e_{i-1}^{\perp})+\R_{\geq 0}(- u_i)$ and $$u_i:=v_{i+1}+v_{i-1}=e_{i-1}^*-e_{i+1}^*+e_{(i+1)'}^*+e_{(i-1)'}^*.$$   
			For $m=-\sum_{i=1}^{3} e_i^*$, one can compute that
			\begin{align*}
					\vartheta_{m}=z^{m}\prod_{i=1}^{3}(1+z^{v_i}+z^{v_i+v_{i-1}}).
			\end{align*}

			The space $\?{M}_{\R}$ is spanned by $e^*_{\Pi 1}=\kappa^{-1}(\frac{1}{3}\sum _i e^*_i)$ and $e^*_{\Pi 1'}=\kappa^{-1}(\frac{1}{3}\sum_i e^*_{i'})$. The only wall of $\f{D}_{\?{\sd^{\prin}}}$ (up to equivalence) is $(\?{\f{d}},f_{\?{\f{d}}})$ where $\?{\f{d}}=e_{\Pi 1}^{\perp} = \bb{R}e_{\Pi 1'}^*$, and $f_{\f{d}}=\Psi(z^{v_{\?{\f{d}}}})^3$ for $v_{\?{\f{d}}}=e_{\Pi 1'}^*$. So for $\?{m}=-3e_{\Pi 1}^*$, $\f{D}_{\?{{\sd}^{\prin}}}$ has the theta function $\vartheta_{\?{m}}=z^{\?{m}}(1+z^{v_{\?{\f{d}}}})^3$.
			On the other hand, the unique wall $(\?{\f{d}}',f_{\?{\f{d}'}})$ of  $\?{\f{D}^{\sd^{\prin}}}$ has $\?{\f{d}}'=\?{\f{d}}$ but a different scattering function $f_{\?{\f{d}}'}$, see \eqref{fprod}. In particular, for $\?{m}=-3 e_{\Pi 1}^*$, $\?{\f{D}^{\sd^{\prin}}}$ has the  theta function
			\begin{align*}
				\?{\vartheta}_{\?{m}}=z^{\?{m}}(1+z^{v_{\?{\f{d}}}}+z^{2v_{\?{\f{d}}}})^3.
			\end{align*}

		\end{example}

        \begin{rem}
        In Example \ref{eg:folding_cyclic_A3}, the group $\Pi$  could be constructed from the cyclic permutation $i\mapsto i+1$ and $i'\mapsto (i+1)'$. It would be interesting to know if one can generalize Example \ref{eg:folding_cyclic_A3}, and Theorem \ref{thm:folding-D} more broadly, to drop the $\Pi$-symmetry assumption (which was only needed for the proof of Lemma \ref{lem:theta-Pi-inv}).
        \end{rem}

		\subsection{Folding}\label{sec:fold}
		
		Suppose that $\sd$ is a covering of $\?{\sd}$ (not necessarily satisfying the $\Pi$-symmetry assumption).  We would like for a mutation $\mu_{\Pi j}$ of $\?{\sd}$ to naturally correspond to a mutation $\hat{\mu}_{\Pi j}:=\prod_{j'\in \Pi j} \mu_{j'}$ of $\sd$, but in general, either of the following might fail to be true:
		\begin{enumerate}
			\item the mutations $\mu_{j'}$, $j'\in \Pi j$, should commute with each other; i.e., we require $\omega(e_{j_1},e_{j_2})=0$ for all $j_1,j_2\in \Pi j$.  This allows for the definition of composite mutations $\hat{\mu}_{\Pi j}:=\prod_{j'\in \Pi j} \mu_{j'}$. 
			\item The composite mutation $\hat{\mu}_{\Pi j}(\sd)$ should be a covering of $\mu_{\Pi j}(\?{\sd})$.
		\end{enumerate}
		We say the covering $\sd$ is $\Pi$-mutable if conditions (1) and (2) above hold for all $\Pi j\in \?{I}$.  
		Given a sequence $\Pi \jj=(\Pi j_1, \ldots, \Pi j_k)$ of elements of $\?{I}$, define $\hat{\mu}_{\Pi \jj}=\hat{\mu}_{\Pi j_k} \cdots  \hat{\mu}_{\Pi j_2} \hat{\mu}_{\Pi j_1} $.  If $\sd$ is totally $\Pi$-mutable---i.e., for any sequence $\Pi \jj$, $\hat{\mu}_{\Pi \jj}(\sd)$ is a $\Pi$-mutable covering of $\mu_{\Pi \jj}(\sd)$---then we say that the covering $\sd$ is an \textbf{unfolding} of $\?{\sd}$, or that $\?{\sd}$ is a \textbf{folding} of $\sd$.

		As in \cite[\S 4]{felikson2012cluster} and \cite{huang2018unfolding}, Conditions (1) and (2) above hold whenever the following is true:
		\begin{align}\label{eq:sign-condition}
			\text{For all $i\in I$, $j\in I\setminus F$, and $j'\in \Pi j$, if $\omega(e_i,e_j)>0$, then $\omega(e_i,e_{j'})\geq 0$.}
		\end{align}
		The condition (1) above---i.e., that $\omega(e_i,e_{i'})=0$ whenever $i'\in \Pi i \in \?{I}\setminus \?{F}$---follows easily from \eqref{eq:sign-condition} combined with \eqref{FoldEqn}.  Using \eqref{eprime}, additional computations reveal that \eqref{eq:sign-condition} implies Condition (2) above as well (cf. \cite[Lem. 2.5]{huang2018unfolding}; note that our conditions $\omega(e_i,e_{i'})=0$ when $i'\in \Pi i$ and our \eqref{eq:sign-condition} correspond to the conditions of having no $\Gamma$-loops nor $\Gamma$-$2$-cycles in loc. cit., respectively). So the covering $\sd$ will be an unfolding whenever \eqref{eq:sign-condition} holds for all seeds $\hat{\mu}_{\Pi \jj}(\sd)$ as above (defined recursively after checking \eqref{eq:sign-condition} for the shorter sequences of composite mutations).
		
		\begin{prop}\label{prop:fold-upper}
			Suppose $\?{\sd}$ is a folding of $\sd$.  Then the projection $\iota^*$ maps elements of $\s{A}_{\sd}^{\up}$ to $\s{A}_{\?{\sd}}^{\up}$.
		\end{prop}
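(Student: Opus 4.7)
The plan is to show that $\iota^*$ intertwines cluster mutations in a suitable sense, and then to deduce the proposition from the definition of the upper cluster algebra.

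For any mutation sequence $\Pi\jj=(\Pi j_1,\ldots,\Pi j_k)$ in $\?{I}\setminus \?{F}$, set $\sd_\jj:=\hat{\mu}_{\Pi\jj}(\sd)$ and $\?{\sd}_{\Pi\jj}:=\mu_{\Pi\jj}(\?{\sd})$. The unfolding hypothesis guarantees that the composite mutation $\hat{\mu}_{\Pi\jj}$ is well-defined and that $\sd_\jj$ is a $\Pi$-mutable covering of $\?{\sd}_{\Pi\jj}$ via the same partition $\Pi$ of $I$. The mutated basis vectors thus yield a projection $\iota^*_\jj:M_{\sd_\jj}\to \?{M}_{\?{\sd}_{\Pi\jj}}$ sending $e^*_{\sd_\jj,i}\mapsto e^*_{\?{\sd}_{\Pi\jj},\Pi i}$; for the empty sequence this recovers $\iota^*_{()}=\iota^*$.

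The key step is to verify that these projections intertwine the $\s{A}$-mutations, i.e.\ that the square
\[
\begin{tikzcd}
\mr{\s{A}}^\sd \arrow{r}{\iota^*}\arrow{d}{\mu_\jj^{\s{A}}} & \mr{\s{A}}^{\?{\sd}} \arrow{d}{\mu_{\Pi\jj}^{\s{A}}} \\
\mr{\s{A}}^{\sd_\jj} \arrow{r}{\iota^*_\jj} & \mr{\s{A}}^{\?{\sd}_{\Pi\jj}}
\end{tikzcd}
\]
commutes. I would prove this by induction on the length of $\Pi\jj$. The inductive step reduces to the local commutativity $\iota^*_{(\jj,\Pi j)}\circ (\hat{\mu}_{\Pi j}^{\s{A}})^{-1}=(\mu_{\Pi j}^{\s{A}})^{-1}\circ \iota^*_\jj$ on each cluster variable $A_{\sd_{(\jj,\Pi j)},i}$, for $\sd_\jj$ a $\Pi$-mutable covering of $\?{\sd}_{\Pi\jj}$ and $\Pi j\in \?{I}\setminus \?{F}$. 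For $i\notin \Pi j$ the cluster variable is unchanged by the composite mutation, and both sides send it to $A_{\?{\sd}_{\Pi\jj},\Pi i}$. For $i\in \Pi j$, the commutativity $\omega^{\sd_\jj}(e_{\sd_\jj,j_1},e_{\sd_\jj,j_2})=0$ for $j_1,j_2\in \Pi j$ (part of $\Pi$-mutability) reduces $(\hat{\mu}_{\Pi j}^{\s{A}})^{-1}(A_{\sd_{(\jj,\Pi j)},i})$ to the single-mutation formula \eqref{mujAInverse} at the index $i$; applying $\iota^*_\jj$ regroups the resulting sum over $I$ into a sum over the $\Pi$-orbits, the sign condition \eqref{eq:sign-condition} (which holds at $\sd_\jj$ by $\Pi$-mutability) allows the positive-part operation to pass through each orbit-sum, and the covering relation \eqref{omega-bar} (combined with skew-symmetry of $\omega$) identifies each orbit-sum with the corresponding value of $\?{\omega}^{\?{\sd}_{\Pi\jj}}$. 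The output is exactly $(\mu_{\Pi j}^{\s{A}})^{-1}(A_{\?{\sd}_{(\Pi\jj,\Pi j)},\Pi j})$.

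With the commutative diagram in hand, the proposition is immediate: for $f\in \s{A}^{\up}(\sd)$, every $\mu_\jj^{\s{A}}(f)$ is a Laurent polynomial in $\kk[M_{\sd_\jj}]$, hence $\mu_{\Pi\jj}^{\s{A}}(\iota^*(f))=\iota^*_\jj(\mu_\jj^{\s{A}}(f))$ is a Laurent polynomial in $\kk[\?{M}_{\?{\sd}_{\Pi\jj}}]$; since every cluster of $\?{\sd}$ arises as some $\?{\sd}_{\Pi\jj}$, we obtain $\iota^*(f)\in \s{A}^{\up}(\?{\sd})$. The main technical obstacle is the inductive verification of the local commutativity: one must correctly track how the mutation formula's sign structure, orbit partition, and covering identity interact, and in particular confirm that the $\Pi$-mutability of every intermediate seed $\sd_\jj$ (not only the initial one) is what makes the orbit regrouping go through. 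Once this is in place, the remainder of the argument is purely formal.
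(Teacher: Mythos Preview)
Your overall strategy matches the paper's: show that $\iota^*$ intertwines the composite mutation $\hat{\mu}_{\Pi j}$ on $\sd$ with the single mutation $\mu_{\Pi j}$ on $\?{\sd}$, then conclude from the definition of the upper cluster algebra.

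There is a gap in your verification of the intertwining. You assert that the sign condition \eqref{eq:sign-condition} ``holds at $\sd_\jj$ by $\Pi$-mutability'', but in the paper's definition, $\Pi$-mutability consists only of conditions (1) and (2); \eqref{eq:sign-condition} is given as a \emph{sufficient} criterion for those, not a consequence. One can build coverings satisfying \eqref{FoldEqn} in which the signs $\omega(e_i,e_{j'})$ for $j'\in\Pi j$ are mixed, so passing $[\cdot]_+$ through the orbit regrouping in the exchange binomial is not justified from the folding hypothesis alone. (You also appeal to ``skew-symmetry of $\omega$'', but in the skew-symmetrizable setup only $\omega^\circ$ is skew-symmetric.) A related subtlety is that your maps $\iota^*_\jj$, defined basis-wise, need not a priori coincide with the fixed map $\iota^*$ on the lattice $M$; checking this equality again runs into the same sign issue.

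The paper circumvents all of this by verifying the intertwining on an arbitrary monomial $z^p$ using the adjoint description of mutation \eqref{eq:mut-Ad}. Condition (1) alone gives $\hat{\mu}_{\Pi j}^{-1}(z^p)=z^p\prod_{j'\in\Pi j}(1+z^{\omega_1(e_{j'})})^{\langle e_{j'},p\rangle}$; since $\iota^*$ is a ring homomorphism and $\iota^*(\omega_1(e_{j'}))=\?{\omega}_1(e_{\Pi j})$ for every $j'\in\Pi j$ by \eqref{iota-omega}, applying $\iota^*$ collapses the product to $z^{\iota^*(p)}(1+z^{\?{\omega}_1(e_{\Pi j})})^{\langle e_{\Pi j},\iota^*(p)\rangle}=\mu_{\Pi j}^{-1}(z^{\iota^*(p)})$. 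No positive-part operation appears, so no sign hypothesis is needed, and one works with the single fixed $\iota^*$ throughout.
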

		\begin{proof}
			By construction, every cluster $\s{A}^{\mu_{\Pi \jj}(\?{\sd})}$ of $\s{A}_{\?{\sd}}^{\up}$ is $\iota^*$  of a corresponding cluster $\s{A}^{\hat{\mu}_{\Pi \jj}(\sd)}$ of $\s{A}_{\sd}^{\up}$.  Furthermore, one checks that $\iota^*\circ \hat{\mu}_{\Pi j}^{\s{A}} = \mu_{\Pi j}^{\s{A}}\circ \iota^*$---indeed, this is clear from $$\iota^*\circ \hat{\mu}_{\Pi j}(z^p)=\iota^*\left(\prod_{j'\in \Pi j} \Ad^{-1}_{ \Psi(z^{\omega_1(e_i)})^{1/d_i'}}(z^p)\right)$$
			(cf. \eqref{eq:mut-Ad}) and
			$$\mu_{\Pi j}\circ \iota^*(z^p)=\prod_{j'\in \Pi j} \Ad^{-1}_{ \Psi(z^{\iota^*(\omega_1(e_j))})^{1/d'_j}}(z^{\iota^*(p)})
			$$
			(cf. the factorization of $\f{D}_{\In}^{\?{\sd}}$ in \eqref{D-bar-in}).  Thus, $f$ being a universal Laurent polynomial for the clusters of $\sd$ implies $\iota^*(f)$ is a universal Laurent polynomial for the clusters of $\?{\sd}$. 
		\end{proof}

		We assume from now on that $\sd$ has $\Pi$-symmetry.
		
		\begin{lem}\label{lem:restriction_cluster_complex}
		Suppose $\?{\sd}$ is a folding of $\sd$, and assume that $\sd$ has $\Pi$-symmetry. Then $\f{D}^{\?{\sd}}$ and $\?{\f{D}^{\sd}}$ agree up to equivalence on the cluster complex $\s{C}_{\?{\sd}}$ and on the opposite complex $-\s{C}_{\?{\sd}}$.
		\end{lem}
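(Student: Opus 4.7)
The plan is to combine the inclusion $\f{D}^{\?{\sd}}\subset\?{\f{D}^{\sd}}$ from Theorem \ref{thm:folding-D} with a direct comparison of chambers and walls inside $\s{C}_{\?{\sd}}$. Concretely, I will show (i) that $\kappa$ sends each cluster chamber of $\s{C}_{\?{\sd}}$ into the corresponding composite-mutation chamber of $\s{C}_{\sd}$, (ii) that no walls of $\?{\f{D}^{\sd}}$ cross the interior of any such chamber, and (iii) that on the facet between two adjacent chambers of $\s{C}_{\?{\sd}}$, the scattering function of $\?{\f{D}^{\sd}}$ agrees with that of $\f{D}^{\?{\sd}}$.

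For (i), I would proceed by induction on the length of $\Pi\jj=(\Pi j_1,\ldots,\Pi j_k)$. The $\Pi$-mutability hypothesis says that $\hat{\mu}_{\Pi\jj}(\sd)$ is again a covering of $\mu_{\Pi\jj}(\?{\sd})$, and $\Pi$-symmetry of $\sd$ propagates to $\hat{\mu}_{\Pi\jj}(\sd)$ (since $\Pi$ acts on $I$ and this action is compatible with the composite mutation). Consequently, the extended $g$-vectors $e^*_{\hat{\mu}_{\Pi\jj}(\sd),i'}$ for $i'\in\Pi i$ are $\Pi$-related, so $\kappa(e^*_{\mu_{\Pi\jj}(\?{\sd}),\Pi i})=\frac{1}{|\Pi i|}\sum_{i'\in\Pi i}e^*_{\hat{\mu}_{\Pi\jj}(\sd),i'}$ lies in $C^+_{\hat{\mu}_{\Pi\jj}(\sd)}$ by convexity, giving $\kappa(C^+_{\mu_{\Pi\jj}(\?{\sd})})\subset C^+_{\hat{\mu}_{\Pi\jj}(\sd)}$.

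For (ii), Proposition \ref{Chambers} says that within $\s{C}_{\sd}$ the walls of $\f{D}^{\sd}$ appear only on the facets of cluster chambers; so any wall of $\?{\f{D}^{\sd}}$ meeting the interior of $C^+_{\mu_{\Pi\jj}(\?{\sd})}$ would have to come from a wall of $\f{D}^{\sd}$ meeting the interior of $C^+_{\hat{\mu}_{\Pi\jj}(\sd)}$, which does not exist. For (iii), a facet of $C^+_{\mu_{\Pi\jj}(\?{\sd})}$ corresponds to mutation at some $\Pi j$, and under $\kappa$ maps into the intersection of the $|\Pi j|$ hyperplanes $e_{j'}^{\perp}$ (for $j'\in\Pi j$) in the frame of $\hat{\mu}_{\Pi\jj}(\sd)$; these mutations commute by condition (1) of $\Pi$-mutability. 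By \eqref{fprod} the scattering function on this facet in $\?{\f{D}^{\sd}}$ is $\iota^*\bigl(\prod_{j'\in\Pi j}\Psi(z^{\omega_1(e_{j'})})^{1/d'_{j'}}\bigr)$, and using \eqref{iota-omega} together with $d'_{j'}=d'_j=|\Pi j|\,\?{d}'_{\Pi j}$ this collapses to $\Psi(z^{\?{\omega}_1(e_{\Pi j})})^{1/\?{d}'_{\Pi j}}$, exactly the wall of $\f{D}^{\?{\sd}}$. The argument for $-\s{C}_{\?{\sd}}$ is parallel, using the fact that the opposite-sign cones $-C^+_{\mu_{\Pi\jj}(\?{\sd})}$ again tile $-\s{C}_{\?{\sd}}$ and that the wall-crossing convention \eqref{WallCross} is sign-symmetric.

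The main obstacle is step (iii): keeping the bookkeeping honest requires the commutativity of the quantum dilogarithms indexed by $j'\in\Pi j$ (so the product is unambiguous), the equality $\iota^*\omega_1(e_{j'})=\?{\omega}_1(e_{\Pi j})$ for every $j'\in\Pi j$, and the exponent balancing $|\Pi j|/d'_j=1/\?{d}'_{\Pi j}$. Each of these is a consequence of the hypotheses, but marshalling them simultaneously---after composite mutation in a general cluster frame---is where one must be careful; here the $\Pi$-symmetry assumption is what allows us to apply Lemma \ref{lem:iota-g-inv} to the mutated scattering function rather than just to $\f{D}^{\sd}_{\In}$.
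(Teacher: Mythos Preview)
Your approach is correct but takes a different route from the paper's. The paper avoids your chamber-mapping step (i) entirely by arguing directly about wall directions. Given $\f{D}^{\?{\sd}}\subset\?{\f{D}^{\sd}}$ from Theorem~\ref{thm:folding-D}, any extra wall of $\?{\f{D}^{\sd}}$ arises as $\kappa^{-1}(\f{d})$ for some non-initial wall $\f{d}\subset v^{\Lambda\perp}$ of $\f{D}^{\sd}$. By Lemma~\ref{lem:compatible-Lambda-perp}, this lies in $(\iota^*v)^{\?{\Lambda}\perp}$; since $\iota^*v=\?{\omega}_1(\?{n})$ with $\?{n}\in\?{N}^+$, compatibility gives $(\iota^*v)^{\?{\Lambda}\perp}\cap C^{\pm}_{\?{\sd}}=\{\?{m}\in C^{\pm}_{\?{\sd}}:\langle e_{\Pi k},\?{m}\rangle=0\text{ whenever }\?{n}_{\Pi k}\neq 0\}$, which has codimension~$1$ only if $\?{n}$ is supported on a single orbit $\Pi k$. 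But then $v=\omega_1(n)$ with $n$ supported on $\Pi k$, and since $\omega(e_{k'},e_{k''})=0$ for $k',k''\in\Pi k$ (folding condition~(1)), the inductive construction of $\f{D}^{\sd}$ produces no non-initial walls of this form. The extension from $C^{\pm}_{\?{\sd}}$ to the full cluster complex then uses the folding hypothesis that $\hat{\mu}_{\Pi\jj}(\sd)$ is again a covering, just as in your induction.

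Your approach has the advantage of making the chamber correspondence $\kappa(C^+_{\mu_{\Pi\jj}(\?{\sd})})\subset C^+_{\hat{\mu}_{\Pi\jj}(\sd)}$ explicit, which is useful elsewhere; the cost is that verifying $\kappa(e^*_{\mu_{\Pi\jj}(\?{\sd}),\Pi i})=\frac{1}{|\Pi i|}\sum_{i'\in\Pi i}e^*_{\hat{\mu}_{\Pi\jj}(\sd),i'}$ after mutation requires the sign-coherence condition~\eqref{eq:sign-condition} to commute the $\max$ with the sum in~\eqref{fprime}, which is precisely the bookkeeping you flag as the main obstacle. The paper's direction argument sidesteps this.
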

		\begin{proof}
			Recall that the complexes $\pm\s{C}_{\?{\sd}}$ are generated by the chambers by $C^{\pm}_{\?{\sd}}$ by mutations.  It suffices to show that, for any non-initial wall $\f{d}\subset v^{\Lambda\bot}$ of $\f{D}^{\sd}$, the associated cone $\kappa^{-1}(\f{d})$ does not intersect the chambers $C^{\pm}_{\?{\sd}}$ at codimension $1$. By Lemma \ref{lem:compatible-Lambda-perp}, $\kappa^{-1}(\f{d})$ is contained in $(\iota^* v)^{\?{\Lambda}\bot}$. Notice that the hyperplane $(\iota^* v)^{\?{\Lambda}\bot}$ intersects the chamber $C^{\pm}_{\?{\sd}}$ at codimension $1$ if and only if $\iota^* v$ is parallel to $\?{\omega}_1(e_{\Pi k})$ for some $k\in I\setminus F$. So there must exists some $\Pi k$ such that $v$ is a linear combination of $e_{k'}$ for $k'\in \Pi k$. But such a non-initial wall $\f{d}$ does not exist since $\omega(e_k,e_{k'})=0$ for all $k'\in \Pi k$; see the order-by-order inductive construction of $\f{D}^{\sd}$ in \cite[\S C]{gross2018canonical}.
		\end{proof}

		\begin{cor}\label{cor:gtr}
			 Suppose $\?{\sd}$ is a folding of $\sd$, and assume that $\sd$ has $\Pi$-symmetry. Then $\?{\f{D}^{\sd}}=\f{D}^{\?{\sd}}$ (up to equivalence) when $\?{\sd}$ admits a green-to-red sequence (meaning that $C_{\?{\sd}}^+$ and $C_{\?{\sd}}^-$ are both contained in the cluster complex of $\?{\sd}$, cf. \cite[\S 4.4]{MuGreen}; or, equivalently, $\sd$ is injective-reachable \cite{qin2017triangular}).
		\end{cor}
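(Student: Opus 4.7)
By Theorem~\ref{thm:folding-D}, we already have $\f{D}^{\?{\sd}}\subset \?{\f{D}^{\sd}}$ as positive scattering diagrams up to equivalence, so the task is to show that any wall of $\?{\f{D}^{\sd}}$ not already in $\f{D}^{\?{\sd}}$ is equivalent to the trivial wall. Comparing \eqref{D-nobar-in}, \eqref{D-bar-in}, and the construction of $\?{\f{D}^{\sd}}$ from folding $\f{D}^{\sd}$, the initial walls of the two diagrams coincide up to equivalence. Consequently, any putative extra wall must lie in $\?{\f{D}^{\sd}}\setminus\?{\f{D}^{\sd}}_{\In}$ and support a nontrivial non-initial scattering function.

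The existence of a green-to-red sequence for $\?{\sd}$ is equivalent to having $C_{\?{\sd}}^+$ and $C_{\?{\sd}}^-=-C_{\?{\sd}}^+$ both lie in the cluster complex $\s{C}_{\?{\sd}}$, connected by a finite chain of mutation-adjacent chambers contained in $\s{C}_{\?{\sd}}$. Realizing this chain by a smooth generic path $\gamma$ in $\?{M}_{\bb{R}}$ from a generic $\?{\sQ}\in C_{\?{\sd}}^+$ to $\?{\sQ}'\in C_{\?{\sd}}^-$, Lemma~\ref{lem:restriction_cluster_complex} tells us that $\?{\f{D}^{\sd}}$ and $\f{D}^{\?{\sd}}$ are equivalent on $\s{C}_{\?{\sd}}$, so the wall-crossings encountered by $\gamma$ in the two diagrams coincide, yielding $\theta_{\gamma,\?{\f{D}^{\sd}}}=\theta_{\gamma,\f{D}^{\?{\sd}}}$. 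Since both scattering diagrams are consistent, any other path $\gamma'$ from $\?{\sQ}$ to $\?{\sQ}'$ satisfies $\theta_{\gamma',\?{\f{D}^{\sd}}}=\theta_{\gamma,\?{\f{D}^{\sd}}}=\theta_{\gamma,\f{D}^{\?{\sd}}}=\theta_{\gamma',\f{D}^{\?{\sd}}}$, and by varying the endpoints throughout $C^+_{\?{\sd}}$ and $C^-_{\?{\sd}}$, the equality of path-ordered products persists for all such endpoint choices.

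The main obstacle is the final rigidity step: upgrading this equality of path-ordered products from $C^+$ to $C^-$ to a global equivalence of the two scattering diagrams. My plan is to argue inductively on the order $k$ of the nilpotent quotient $(\f{g}_{\?{\sd}})_k$ introduced in \eqref{eq:gk}. At order $k$, suppose $\?{\f{D}^{\sd}_k}$ contains an extra wall $(\?{\f{d}},f_{\?{\f{d}}})$ not equivalent into $\f{D}^{\?{\sd}}_k$, with $f_{\?{\f{d}}}$ nontrivial in $(\f{g}_{\?{\sd}})_k$ and having minimal order. Since this wall cannot meet the interior of any cluster-complex chamber at codimension one (by Lemma~\ref{lem:restriction_cluster_complex}), its support lies in $\?{M}_{\bb R}\setminus\s{C}_{\?{\sd}}^\circ$. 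The agreement of path-ordered products from $C^+$ to $C^-$ along all paths, combined with the consistency of both diagrams around small loops isolating this wall (obtained by perturbing a fixed reference path off of $\?{\f{d}}$ and comparing with the cluster-complex path), forces the wall's contribution in $(\f{g}_{\?{\sd}})_k$ to be trivial, contradicting minimality. This induction should parallel the uniqueness portion of the Kontsevich--Soibelman--Gross--Siebert theorem (Theorem~\ref{KSGS}), with the green-to-red sequence playing the role of a ``global reference path'' from $C^+$ to $C^-$ that the standard argument takes for granted.
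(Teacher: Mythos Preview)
Your proposal is correct and follows the same route as the paper: both use Lemma~\ref{lem:restriction_cluster_complex} to see that the two scattering diagrams agree on the cluster complex, then use the green-to-red sequence to obtain a path from $C_{\?{\sd}}^+$ to $C_{\?{\sd}}^-$ entirely inside the cluster complex, and conclude by the rigidity principle that a consistent scattering diagram is uniquely determined by the path-ordered product $\theta_{-,+}$ from the positive to the negative chamber.

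The only substantive difference is that what you call ``the main obstacle'' --- the rigidity step --- is a known result that the paper simply cites: see \cite[Thm.~1.17]{gross2018canonical} (or \cite[Thm.~2.1.6]{WCS}). Your inductive sketch in the third paragraph is heading in the right direction (it is indeed how such uniqueness statements are proved), but it is unnecessary here and the details you give are a bit vague (the step ``forces the wall's contribution in $(\f{g}_{\?{\sd}})_k$ to be trivial'' is asserted rather than argued). You can replace that entire paragraph with a one-line citation of the rigidity theorem, and the proof becomes complete and clean.
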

		\begin{proof}
			A scattering diagram $\f{D}$ is uniquely determined by the path-ordered product $\theta^{\f{D}}_{-,+}$ from the positive chamber to the negative chamber; cf. \cite[Thm. 2.1.6]{WCS}, or see \cite[Thm. 1.17]{gross2018canonical} for this result in the present context.  When $\?{\sd}$ admits a green-to-red sequence, there exists a path from $C^+_{\?{\sd}}$ to $C^-_{\?{\sd}}$ contained entirely within the cluster complex of $\?{\sd}$.  Since $\?{\f{D}^{\sd}}$ and $\f{D}^{\?{\sd}}$ agree on the cluster complex by Lemma \ref{lem:restriction_cluster_complex}, it follows that they agree everywhere.
		\end{proof}

            Let $\?{\s{C}}^{\circ}$ denote the interior of the closure of the cluster complex $\s{C}_{\?{\sd}}$.  Similarly, let $-\?{\s{C}}^{\circ}$ denote the interior of the closure of the opposite complex $-\s{C}_{\?{\sd}}$.

            \begin{lem}\label{lem:restriction-strong}
            Let $\?{\sd}$ be a folding of $\sd$,  and assume that $\sd$ has $\Pi$-symmetry. Then $\f{D}^{\?{\sd}}$ and $\?{\f{D}^{\sd}}$ agree up to equivalence on $\?{\s{C}}^{\circ}$ and on $-\?{\s{C}}^{\circ}$.
            \end{lem}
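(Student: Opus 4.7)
The plan is to adapt the proof of Lemma \ref{lem:restriction_cluster_complex} by replacing the analysis of intersections with $C^{\pm}_{\?{\sd}}$ by an analysis of intersections with $\?{\s{C}}^{\circ}$. By Theorem \ref{thm:folding-D}, we have $\f{D}^{\?{\sd}} \subset \?{\f{D}^{\sd}}$ up to positivity-preserving equivalence, so it suffices to rule out that ``extra'' walls of $\?{\f{D}^{\sd}}$ meet $\?{\s{C}}^{\circ}$ in a codimension-one set. Each extra wall has support in a hyperplane $H_v = (\iota^* v)^{\?{\Lambda}\perp}$ for some non-initial $v \in M^+$ (i.e., $v$ not of the form $\omega_1(e_j)$), by the same computation using Lemma \ref{lem:compatible-Lambda-perp} as before. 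Since both diagrams are determined order by order, it suffices to work at each finite order $k$, where both scattering diagrams contain only finitely many walls.

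I would then argue $H_v$ cannot meet $\?{\s{C}}^{\circ}$ at codimension one. Suppose otherwise: since $\?{\s{C}}^{\circ}$ is the interior of the closure of $\s{C}_{\?{\sd}}$ and so cluster chambers of $\?{\sd}$ are dense in $\?{\s{C}}^{\circ}$, one can choose a small open ball $B \subset \?{\s{C}}^{\circ}$ straddling $H_v$, meeting no other walls (of either diagram) whose hyperplanes differ from $H_v$, and containing generic points $x_\pm$ on opposite sides of $H_v$ lying in cluster chambers $C_{\sd'_\pm}$. A straight-line path $\gamma$ from $x_+$ to $x_-$ in $B$ then has nontrivial path-ordered product in $\?{\f{D}^{\sd}}_k$ (picking up the extra wall) but agrees — by Lemma \ref{lem:restriction_cluster_complex} and consistency — with any path through $\s{C}_{\?{\sd}}$ linking $C_{\sd'_+}$ and $C_{\sd'_-}$, which can be computed either in $\f{D}^{\?{\sd}}$ or in $\?{\f{D}^{\sd}}$. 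Comparing the two gives a contradiction unless the extra wall is trivial. The only way the extra wall can survive is if $H_v$ is itself a wall of the cluster complex locally, which forces $\iota^* v$ to be parallel to $\?{\omega}_1(e_{\Pi k})$ for some $\Pi k$; by the unfolding condition $\omega(e_k,e_{k'}) = 0$ for $k' \in \Pi k$, no non-initial wall of $\f{D}^{\sd}$ has this direction, yielding the desired contradiction.

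The main obstacle is making the comparison step rigorous: the path in $\s{C}_{\?{\sd}}$ connecting $C_{\sd'_+}$ and $C_{\sd'_-}$ may need to traverse many cluster chambers (only finitely many at order $k$, but geometrically nontrivial), and one must verify that the composite path-ordered product along this detour matches that of $\gamma$ modulo walls common to both diagrams. One also needs to confirm that cluster chambers $C_{\sd'_\pm}$ with the desired position relative to $B$ can actually be chosen — this uses density of cluster chambers in $\?{\s{C}}^{\circ}$. In the cases of principal interest (namely covering spaces of once-punctured closed surfaces, where by Proposition \ref{gdense} $\?{\s{C}}^{\circ}$ is an open half-space), the geometry is simple enough that these density and approximation arguments reduce to a straightforward limiting argument: any extra wall supported in $\?{\s{C}}^{\circ}$ would have cluster chambers on both sides approaching it, whereas in the half-space setting the boundary hyperplane is precisely the locus where extra walls can appear (outside $\?{\s{C}}^{\circ}$), so the conclusion follows cleanly.
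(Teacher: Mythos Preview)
Your core argument is correct and matches the paper's proof: work at finite order $k$, pick generic points $x_\pm$ in cluster chambers on either side of the extra wall, compare the straight-line path-ordered product (which differs between the two diagrams by positivity) against a detour path through $\s{C}_{\?{\sd}}$ (where the two diagrams agree by Lemma~\ref{lem:restriction_cluster_complex}), and invoke consistency of both diagrams to get a contradiction. The density of cluster chambers in $\?{\s{C}}^{\circ}$ and path-connectedness of $\s{C}_{\?{\sd}}$ are exactly the ingredients the paper uses, so your worries in the third paragraph are unfounded.

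However, your second paragraph contains a superfluous and incorrectly justified case split. You suggest the argument might fail when $H_v$ coincides with a cluster wall, and then claim this forces $\iota^* v$ parallel to some $\?{\omega}_1(e_{\Pi k})$. That claim is only valid for walls meeting the interior of the \emph{initial} chamber $C^\pm_{\?{\sd}}$ (as in the proof of Lemma~\ref{lem:restriction_cluster_complex}); walls of the cluster complex meeting other chambers can have many other directions. Fortunately this case distinction is unnecessary: even if $H_v$ contains a cluster wall, the straight segment $\gamma_0$ crosses both that cluster wall (common to both diagrams) and the extra wall $\f{d}$, and since all scattering functions in $G_v^\parallel$ are positive, the extra nontrivial factor cannot cancel. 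So $\theta_{\gamma_0}^{\?{\f{D}^\sd}_k} \neq \theta_{\gamma_0}^{\f{D}^{\?{\sd}}_k}$ regardless, and the contradiction via the detour path goes through uniformly. The paper's proof simply omits this case split.
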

            \begin{proof}
            It suffices to work up to arbitrary finite order $k$.  As in Theorem \ref{thm:folding-D}, we may assume by taking equivalent scattering diagrams that $\f{D}^{\?{\sd}}\subset \?{\f{D}^{\sd}}$ and that both scattering diagrams have only positive walls.  Suppose there is some non-trivial wall $\f{d}\in \?{\f{D}_k^{\sd}}\setminus \f{D}_k^{\?{\sd}}$ which intersects $\?{\s{C}}^{\circ}$.  Choose generic points $x,x'\in \s{C}_{\?{\sd}}$ on either side of $\f{d}$ so that a line segment $\gamma_0$ from $x$ to $x'$ crosses no walls of $\?{\f{D}_k^{\sd}}$ other than $\f{d}$ (and possibly walls parallel to $\f{d}$).  Then $\theta_{\gamma_0}^{\?{\f{D}_k^{\sd}}}\neq \theta_{\gamma_0}^{\f{D}_k^{\?{\sd}}}$.  On the other hand, since $\s{C}_{\?{\sd}}$ is path-connected, we can find a path $\gamma$ from $x$ to $x'$ contained entirely within $\s{C}_{\?{\sd}}$. Then $\theta_{\gamma_0}^{\?{\f{D}_k^{\sd}}}= \theta_{\gamma_0}^{\f{D}_k^{\?{\sd}}}$ by Lemma \ref{lem:restriction_cluster_complex}.  This contradicts the consistency of $\f{D}^{\?{\sd}}$ and $\?{\f{D}^{\sd}}$, so the result for  $\?{\s{C}}^{\circ}$ follows.  A similar argument yields the result for $-\?{\s{C}}^{\circ}$.
            \end{proof}

		\begin{rem}
			In general, $\?{\f{D}^{\sd}}$ and $\f{D}^{\?{\sd}}$ may differ.  We will see that this is indeed the case for the once-punctured torus.  A natural question is whether this folding construction correctly relates the associated stability scattering diagrams of \cite{bridgeland2017scattering}.  We leave this question to future work \cite{chen2023comparison}.
		\end{rem}

		\begin{rem}
			We note that $\?{\f{D}^{\sd}}$ can equivalently be constructed as the unique-up-to-equivalence consistent scattering in $\?{M}_{\bb{R}}$ with $\theta^{\?{\f{D}^{\sd}}}_{-,+}=\iota^*(\theta^{\f{D}^{\sd}}_{-,+})$.  This is the approach used in \cite[\S 2]{zhou2020cluster}.
		\end{rem}

			\begin{lem}\label{lem:prin-unfold}
				If $\sd$ is an unfolding of $\?{\sd}$, then $\sd^{\prin}$ is an unfolding of $\?{\sd}^{\prin}$.  
			\end{lem}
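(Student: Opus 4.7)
The plan is to extend $\Pi$ to a partition $\Pi^{\prin}$ of $I^{\prin} = I_1 \sqcup I_2$ as in Example~\ref{ex:Pi-prin}, which already realizes $\sd^{\prin}$ as a covering of $\?{\sd}^{\prin}$.  The remaining task is to show that this covering is totally $\Pi^{\prin}$-mutable, which I would establish by induction on the length of a composite mutation sequence $\Pi\jj$.

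The key observation is that mutations of $\sd^{\prin}$ occur only at indices in $I^{\prin} \setminus F^{\prin} = I_1 \setminus F_1 = I \setminus F$, and the mutation formula \eqref{eprime} applied to $\sd^{\prin}$ decomposes naturally: for a basis vector indexed by $i \in I_1$, the update depends only on $\omega^{\prin}((e_i,0),(e_j,0)) = \omega(e_i, e_j)$, matching the mutation formula for $\sd$, while basis vectors indexed by $I_2$ are fixed by any such mutation (since the relevant pairings are $\leq 0$).  Consequently, the ``principal part'' of $\hat{\mu}_{\Pi\jj}(\sd^{\prin})$---the restriction of its basis and bilinear form to $I_1 \times I_1$---is naturally identified with $\hat{\mu}_{\Pi\jj}(\sd)$.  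In particular, Condition~(1) for $\Pi^{\prin}$-mutability at each step reduces immediately to the corresponding condition for $\sd$, which holds by the unfolding hypothesis on $\sd$.

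For Condition~(2)---that $\hat{\mu}_{\Pi\jj}(\sd^{\prin})$ is a covering of $\mu_{\Pi\jj}(\?{\sd}^{\prin})$---the covering equation \eqref{FoldEqn} holds automatically on pairs in $I_1 \times I_1$ (by the unfolding property of $\sd$) and on pairs in $I_2 \times I_2$ (since $\omega^{\prin}$ vanishes there and remains zero under mutation at $I_1$-indices).  What remains is to verify \eqref{FoldEqn} on the mixed pairs in $I_1 \times I_2$.  Initially these pairings are given by $\omega^{\prin}((e_i,0),(0,e_k^*)) = \delta_{ik}$, which is visibly $\Pi^{\prin}$-compatible because $\sum_{k' \in \Pi k}\delta_{ik'} = [\Pi i = \Pi k]$ depends only on $\Pi i$.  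Under a composite mutation $\hat{\mu}_{\Pi j_0}$, the updated mixed pairings receive correction terms of the form $\max(0,\omega(e_i,e_{j'}))$ summed over $j' \in \Pi j_0$, and I would verify inductively that the required $\Pi^{\prin}$-compatibility of these corrections persists along the mutation sequence.

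The main obstacle is this inductive verification of the mixed-pairing symmetry, because the correction terms involve the one-sided expression $\max(0,\omega)$ rather than $\omega$ itself, so $\Pi^{\prin}$-compatibility is not immediate from \eqref{FoldEqn} alone.  The standard way to overcome this---the same mechanism that underlies the sufficient criterion \eqref{eq:sign-condition} stated just before this lemma---is to observe that for a genuine unfolding the $C$-matrix data (which encode exactly these $\max(0,\omega)$-type corrections) inherit $\Pi$-symmetry from the initial seed at every stage of the mutation sequence.  I would therefore carry out the induction tracking both the covering condition and this sign-coherent $\Pi$-compatibility of the $C$-matrix data in parallel, with the base case being the trivial fact that the initial $C$-matrix is the identity.
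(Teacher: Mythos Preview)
Your overall strategy is sound and closely matches the paper's, but the execution differs at the crucial step.  You propose to verify Conditions~(1) and~(2) directly by induction, tracking $\Pi$-compatibility of the $C$-matrix data along the way.  The paper instead verifies the stronger criterion~\eqref{eq:sign-condition} at every stage (which then gives~(1) and~(2) automatically), and for the mixed $I_1 \times I_2$ pairings it invokes sign-coherence of $c$-vectors from \cite[Cor.~5.5]{gross2018canonical} as an external input: for each $j \in I_1 \setminus F$, the quantities $\omega^{\prin}(e_i,e_j)$ for $i \in I_2$ are either all $\geq 0$ or all $\leq 0$.  Combined with the already-established covering equation~\eqref{FoldEqn} on $I_1 \times I_2$, this forces $\omega^{\prin}(e_i,e_j)$ and $\omega^{\prin}(e_i,e_{j'})$ to have the same sign for all $j' \in \Pi j$, giving~\eqref{eq:sign-condition}.

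The difference matters because sign-coherence is a deep theorem, not something recoverable by a naive induction on the mutation sequence.  Your closing sentence suggests you might try to establish it as part of the induction (``tracking both the covering condition and this sign-coherent $\Pi$-compatibility \ldots\ in parallel, with the base case being the trivial fact that the initial $C$-matrix is the identity'').  That will not work: the inductive step for sign-coherence is precisely the hard part, and no elementary argument is known.  If instead you meant to \emph{cite} sign-coherence and use it to push the $\Pi$-compatibility through the $\max(0,\omega)$ corrections, then your argument is essentially the same as the paper's, just organized around Conditions~(1)--(2) rather than around~\eqref{eq:sign-condition}.
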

			\begin{proof}
				Let us denote $I^{\prin}=I_1\sqcup I_2$ with $F\subset I_1$, $F^{\prin}=F\sqcup I_2$ as in \S \ref{prinsub}.  We already noted in Example \ref{ex:Pi-prin} that $\sd^{\prin}$ is a covering of $\?{\sd}^{\prin}$.  By the assumption that $\sd$ is an unfolding of $\?{\sd}$, we see that Condition \eqref{eq:sign-condition} holds for $i\in I_1,j\in I_1\setminus F$, and all $\hat{\mu}_{\Pi \jj}(\sd^{\prin})$ (the extension to principal coefficients does not affect the restriction of $\omega$ to the original lattice $N$, even under sequences of mutations).
    
            To extend \eqref{eq:sign-condition} to $i\in I_2$, first recall that the sign coherence of $c$-vectors \cite[Cor. 5.5]{gross2018canonical} says that for $j\in I_1\setminus F$, $\omega^{\prin}(e_i,e_j)$ (hence also $-\omega^{\prin}(e_j,e_i)$) is either non-negative for all $i\in I_2$ or non-positive for all $i\in I_2$.  Then \eqref{FoldEqn} implies that $\omega^{\prin}(e_j,e_i)$ and $\omega^{\prin}(e_{j'},e_i)$ have the same sign for all $j'\in \Pi j$ and all $i\in I_2$.  In particular, for each $i\in I_2$ and $j'\in \Pi j$, we have that $\omega^{\prin}(e_i,e_j)$ and $\omega^{\prin}(e_i,e_{j'})$ have the same sign; i.e., \eqref{eq:sign-condition} holds, as desired.
			\end{proof}

  \section{Once-punctured closed surfaces}\label{sec:closed_surface}
		
		\subsection{Covering spaces and scattering diagrams}\label{sec:finite_covering}
		
		Fix $\Sigma=(\SSS,\MM)$ an arbitrary triangulable marked surface. Let $\pi:\wt{\SSS}\rightarrow \SSS$ be a finite $s:1$ oriented cover\footnote{Note that we do not consider the more general covers of $\SSS\setminus \MM$ as in \S \ref{Sk-relations} (i.e., covers ramified at punctures).  Such coverings spaces do yield seed coverings in the sense of \S \ref{sec:folding_seeds}, but if $\Sigma$ contains once-punctured digons or monogons, then the folding condition \eqref{eq:sign-condition} may fail in the covering space of the punctured digon or monogon; cf. Footnote \ref{foot:folding-by-sign}.  We note that this is not a problem for once-punctured closed surfaces since they do not contain punctured monogons or digons---i.e., covering spaces of $\SSS\setminus \MM$ (as opposed to just of $\SSS$) do yield unfoldings for once-punctured closed surfaces.} of $\SSS$ for $s\in \bb{Z}_{\geq 1}$. We consider the covering marked surface $\wt{\Sigma}=(\wt{S},\wt{M})$ with $\wt{M}:=\pi^{-1}M$.  Note that any tagged triangulation $\Delta$ of $\SSS$ lifts to a tagged triangulation $\wt{\Delta}$ of $\wt{\SSS}$ whose arcs are the components $\pi^{-1}(\gamma)$ for $\gamma\in \Delta$.    Let $\Pi$ be the group of deck transformations of the covering $\pi$.  Note that $\Pi$ acts on $\wt{\Delta}$, thus determining a partition $\Pi$ of $\wt{\Delta}$.  
		
		\begin{lem}\label{lem:cover-unfold}
			The partition $\Pi$ realizes $\sd_{\wt{\Delta}}$ as an unfolding of $\sd_{\Delta}$. 
			The map $\iota^*:\Sk^{\Box}(\wt{\Sigma})\rar \Sk^{\Box}(\Sigma)$ induced by $\iota^*$ as in \eqref{eq:iota-star} is $[C]\mapsto [\pi(C)]$ for any tagged simple multicurve $C$.  That is, $\iota^*$ coincides with the map $\pi_*$ of \eqref{eq:pi-star}.
		\end{lem}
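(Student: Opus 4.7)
The plan is to verify the two claims separately: first that $\Pi$ realizes $\sd_{\wt{\Delta}}$ as an unfolding of $\sd_{\Delta}$, and second that $\iota^*$ and $\pi_*$ agree on $\Sk^{\Box}(\wt{\Sigma})$.

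For the first claim, I would begin by checking that $\Pi$ realizes $\sd_{\wt{\Delta}}$ as a covering of $\sd_{\Delta}$ in the sense of \S \ref{sec:folding_seeds}. Condition (1) is trivial ($d_i \equiv 1$ in the skew-symmetric setting), and condition (3) holds because boundary arcs of $\wt{\Sigma}$ are exactly preimages of boundary arcs of $\Sigma$. The key equation \eqref{FoldEqn} follows from $\Pi$-symmetry as in Example \ref{eg:group-cover}: the deck transformations act on $\wt{\Delta}$ by orientation-preserving homeomorphisms and hence preserve the signed adjacency form $\omega_{\wt{\Delta}}$, so re-indexing the sum $\sum_{j' \in \Pi j} \omega(e_{i'}, e_{j'})$ using any $\sigma \in \Pi$ with $\sigma(i) = i'$ yields the desired equality.

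For total $\Pi$-mutability, the plan is to argue geometrically rather than verifying the sign condition \eqref{eq:sign-condition} directly. Condition (1) of $\Pi$-mutability---that the mutations $\mu_{j'}$ for $j' \in \Pi j$ commute---follows from the observation that distinct lifts of a single arc are disjoint arcs in $\wt{\Delta}$ (since $\pi$ is a homeomorphism on each triangle), which gives $\omega(e_{j_1}, e_{j_2}) = 0$ for $j_1 \ne j_2 \in \Pi j$. Condition (2) reduces to the geometric fact that $\hat{\mu}_{\Pi j}(\wt{\Delta})$ equals the lift of $\mu_{\pi(j)}(\Delta)$: since a flip is a local operation inside a quadrilateral and each lift sits inside a lifted quadrilateral, simultaneously flipping all lifts of $\pi(j)$ in $\wt{\Delta}$ produces the lift of the flipped triangulation. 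Iterating over any sequence $\Pi \jj$ shows that every $\hat{\mu}_{\Pi \jj}(\sd_{\wt{\Delta}})$ remains a surface-type covering of $\mu_{\pi(\jj)}(\sd_{\Delta})$, giving total $\Pi$-mutability.

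For the second claim, both $\iota^*$ and $\pi_*$ are $\kk$-algebra homomorphisms defined on the cluster torus algebra $\s{A}^{\sd_{\wt{\Delta}}} = \kk[M_{\sd_{\wt{\Delta}}}]$, inside which $\Sk^{\Box}(\wt{\Sigma})$ sits as a subalgebra by Proposition \ref{SkAPropPun}. The plan is then to check agreement on a generating set of $\kk[M_{\sd_{\wt{\Delta}}}]$, namely the initial cluster variables $A_{\gamma} = z^{e_{\gamma}^*}$ associated to arcs $\gamma \in \wt{\Delta}$ (together with their inverses for boundary arcs). For such $\gamma$, we compute $\iota^*([\gamma]) = z^{\iota^*(e_{\gamma}^*)} = z^{e_{\Pi\gamma}^*} = A_{\pi(\gamma)} = [\pi(\gamma)]$ using \eqref{eq:iota-star}, matching $\pi_*([\gamma]) = [\pi(\gamma)]$ by the definition of $\pi_*$ in \eqref{eq:pi-star} (since arcs are lambda lengths that transparently pull back along $\pi^* : \s{T}(\Sigma) \hookrightarrow \s{T}(\wt{\Sigma})$).

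The hard part is the geometric verification of total $\Pi$-mutability for tagged (as opposed to ideal) triangulations, where self-folded triangles complicate the description of flips. For the application in \S \ref{sec:closed_surface} to once-punctured closed surfaces, one can largely circumvent these subtleties by choosing $\Delta$ with no self-folded triangles, whose existence is guaranteed by \cite[Lem. 2.13]{FominShapiroThurston08}, and handling the notched case afterward via the tag-changing automorphism of Lemma \ref{lem:tag-change}.
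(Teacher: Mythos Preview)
Your proposal is correct and follows essentially the same approach as the paper: verify the covering conditions via $\Pi$-symmetry of the deck transformation action, establish total $\Pi$-mutability by the geometric observation that flipping all lifts of an arc yields the lift of the flipped triangulation, and check $\iota^* = \pi_*$ on the generators $[\gamma]$ for $\gamma \in \wt{\Delta}$ before extending by multiplicativity.

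The only divergence is your final paragraph. The paper does not single out self-folded triangles as a difficulty; it simply notes that the flip-lifting argument applies to an arbitrary tagged triangulation, so iterating over any sequence $\Pi\jj$ keeps one inside the same geometric picture (lifted tagged triangulation over tagged triangulation), and total $\Pi$-mutability follows immediately. Your proposed workaround---restricting to $\Delta$ without self-folded triangles and patching the notched cases via $\iii_P$---is unnecessary and in fact does not suffice as stated, since total $\Pi$-mutability requires the covering property after \emph{every} composite mutation sequence, not just from a nice starting triangulation. The cleaner route, which both you and the paper implicitly use, is that tagged flips are always well-defined and local, and the covering $\pi$ intertwines them with the composite flips upstairs; this holds uniformly without case analysis. (The paper also records, in a footnote, an alternative verification via the sign condition \eqref{eq:sign-condition}, observing that a sign violation would force a once-punctured digon downstairs whose simply-connected preimage contradicts the assumed signs.)
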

		\begin{proof}
			Note that each arc in $\Delta$ lifts to $s$ arcs in $\wt{\Delta}$ related by the action of $\Pi$.  Furthermore, triangles in $\Delta$ lift to triangles in $\wt{\Delta}$ with the same orientations.  The conditions for being a covering now follow easily from the definition of the signed adjacency matrix $B=\omega^T$ in \S \ref{sub:ClSk} and \S \ref{S:tagged_sk}.  Flipping an arc of $\Delta$ will correspond to flipping all lifts of that arc in $\wt{\Delta}$, and since the tagged triangulation for $\Sigma$ was arbitrary, the folding conditions of \S \ref{sec:fold} are satisfied.\footnote{Alternatively, the folding condition \eqref{eq:sign-condition} is satisfied because if $\omega(e_i,e_j)>0$ and $\omega(e_i,e_{j'})<0$ for some $j'\in \Pi j$, then the two contributing triangles would necessarily map to a once-punctured digon in $\Sigma$. But since digons are simply connected, the two triangles in $\wt{\Sigma}$ should have also formed a digon, contradicting the supposed values of $\omega$.  Here we use that $\wt{\SSS}$ is a covering space of the \textit{unpunctured} surface $\SSS$.\label{foot:folding-by-sign}}
			
			The claim regarding $\iota^*$ is clear for arcs in $\wt{\Delta}$.  Since the map $[C]\mapsto [\pi(C)]$ is an algebra homomorphism and since every $[C]$ is a Laurent polynomial in the arcs of $\wt{\Delta}$, the claim for general skeins $[C]$ follows.
		\end{proof}

		Recall from \S \ref{FailInj} that all walls of principal coefficients scattering diagrams are closed under addition by $(0,N_{\bb{R}})$, so the scattering diagrams can be completely understood in terms of their projections via $\rho:(m,n)\mapsto m$ (keeping the scattering functions the same).  We work with these projections via $\rho$ throughout \S \ref{sec:closed_surface}.
		
		\subsection{Unfolding and scattering diagrams for once-punctured closed surfaces}\label{sec:unfolding-closed-surfaces}
		
		By Lemma \ref{lem:prin-unfold}, $\sd_{\wt{\Delta}}$ being an unfolding of $\sd_{\Delta}$ implies that $\sd_{\wt{\Delta}}^{\prin}$ is an unfolding of $\sd_{\Delta}^{\prin}$.  When no component of $\SSS$ is a once-punctured closed surface, $\sd_{\Delta}^{\prin}$ admits a green-to-red sequence by Proposition \ref{gdense}, and so $\f{D}^{\sd^{\prin}_{\Delta}}=\?{\f{D}^{\sd^{\prin}_{\wt{\Delta}}}}$ by Corollary \ref{cor:gtr}.  On the other hand, if $\Sigma$ is a once-punctured closed surface,  then Proposition \ref{gdense} and Lemma \ref{lem:restriction-strong} imply that $\f{D}^{\sd^{\prin}_{\Delta}}$ and $\?{\f{D}^{{\sd^{\prin}_{\wt{\Delta}}}}}$ agree except possibly on the half-space $H$ which forms the boundary of $\?{\s{C}}$ and $-\?{\s{C}}$.

		\begin{lem}\label{lem:n0}
			Let $\Sigma$ be a marked surface with tagged triangulation $\Delta$, and let $n_0=\sum_{i\in \Delta} e_{i}$ (i.e., $n_0=(1,1,\ldots,1)$).  Then $\omega_1(N)\subset n_0^{\perp}$.  
			
			If $\Sigma$ is a once-punctured closed surface, then $n_0^{\perp}$ is the hyperplane $H$ forming the boundary of $\?{\s{C}}$ and $-\?{\s{C}}$.  Thus, for any scattering diagram in $M_{\bb{R}}$ over the associated $\f{g}$ and any wall $(\f{d},f)$ with $\f{d}\subset H$, we have that $\f{g}_{\omega_1^{\prin}(n_0)}^{\parallel}\ni f$ is central in $\wh{\f{g}}$.
		\end{lem}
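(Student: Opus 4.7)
The first claim $\omega_1(N) \subset n_0^{\perp}$ reduces to showing $\langle n_0, \omega_1(e_j) \rangle = \omega(e_j, n_0) = \sum_{i \in I} B(e_i, e_j) = 0$ for each $j \in I$, using $\omega = B^T$. Recalling from \S \ref{sub:ClSk} that $B(e_i, e_j)$ is a sum of contributions $\pm 1$ over non-self-folded triangles $T$ of $\Delta$ containing both $i'$ and $j'$, with the sign determined by whether $j'$ is immediately clockwise or counterclockwise of $i'$ in $T$: each non-self-folded triangle $T$ containing $j'$ has exactly two other arcs, one immediately clockwise of $j'$ (contributing $-1$) and one immediately counterclockwise (contributing $+1$), so these cancel in $\sum_i B(e_i, e_j)$. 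Self-folded triangles contribute nothing, and summing over all triangles yields zero.

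For the once-punctured closed case, I first identify $H = n_0^{\perp}$. By Proposition \ref{gdense}, $\overline{\s{C}}$ is a half-space, so $H$ is some hyperplane. The first claim ensures that the Laurent expansion of any cluster variable (Theorem \ref{thm:cluster_expansion}) is homogeneous with respect to $\langle n_0, \cdot \rangle$, so the degree $\langle n_0, g_{\jj,i}\rangle$ is well-defined on $g$-vectors. I will argue by induction that every cluster variable has $g$-vector of degree exactly $1$. The base case is $\langle n_0, e_i^*\rangle = 1$. For the inductive step, the exchange $A_j A'_j = \prod_i A_i^{\max(0, \omega(e_i, e_j))} + \prod_i A_i^{\max(0, -\omega(e_i, e_j))}$ is itself homogeneous (since the difference of the degrees of the two terms is $\sum_i \omega(e_i, e_j) = 0$ by the first claim), with each term having degree $\sum_i \max(0, \omega(e_i, e_j))$. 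For a plain tagged triangulation of a once-punctured closed surface---equivalently, an ideal triangulation without self-folded triangles---every arc $j$ is shared between exactly two triangles, each contributing one $+1$ and one $-1$ to $\omega(\cdot, e_j)$, giving $\sum_i \max(0, \omega(e_i, e_j)) = 2$ and hence $\langle n_0, g(A'_j)\rangle = 2 - 1 = 1$. Combined with the fact (Proposition \ref{SkAPropPun}) that cluster variables of $\s{A}^{\up}(\sd_{\Delta})$ correspond to non-notched tagged arcs only, mutations remain in the plain-tagged regime and the induction goes through. Hence all rays of $\s{C}$ lie in $\{\langle n_0, \cdot\rangle \geq 0\}$, and being a full half-space, $\overline{\s{C}} = \{\langle n_0, \cdot\rangle \geq 0\}$, so $H = n_0^{\perp}$.

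For the centrality claim, since Injectivity fails for $\sd$, the Lie algebra $\hat{\f{g}}$ is constructed via $\sd^{\prin}$ with compatible form $\Lambda^{\prin}$, and walls of the projected scattering diagram in $M_{\bb{R}}$ lift to walls in $M^{\prin}_{\bb{R}}$ containing $\ker(\rho)$. Using the compatibility $\Lambda^{\prin}_2(\omega^{\prin}_1(n_0)) = n_0 \in N \hookrightarrow N^{\prin}$ together with the dual pairing $\langle (n_0, 0), (m, n)\rangle_{N^{\prin}\times M^{\prin}} = \langle n_0, m\rangle$, one obtains $(\omega^{\prin}_1(n_0))^{\Lambda^{\prin}\perp} = \rho^{-1}(n_0^{\perp})$. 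Hence a wall $(\f{d}, f)$ in $M_{\bb{R}}$ with $\f{d} \subset H = n_0^{\perp}$ lifts to a wall in $M^{\prin}_{\bb{R}}$ with direction $\omega^{\prin}_1(n_0) = (0, n_0)$, so $f \in \f{g}^{\parallel}_{\omega^{\prin}_1(n_0)}$. For centrality in $\hat{\f{g}}$, compute $\Lambda^{\prin}(v, \omega^{\prin}_1(n_0)) = \langle n_0, \rho(v)\rangle$; for any wall direction $v = \omega^{\prin}_1(n, m') = (\omega_1(n) - m', n)$, this equals $\langle n_0, \omega_1(n) - m'\rangle = -\langle n_0, m'\rangle$ using the first claim. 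Since the initial walls of $\f{D}^{\sd^{\prin}}$ correspond to $(n, m') = (e_i, 0)$ with $m' = 0$, and brackets of elements $\omega^{\prin}_1(n_1, 0), \omega^{\prin}_1(n_2, 0)$ in the scattering Lie subalgebra only produce new directions $\omega^{\prin}_1(n_1 + n_2, 0)$ with zero $m'$-component, every wall of $\f{D}^{\sd^{\prin}}$ has $m'$-part zero. Therefore $\Lambda^{\prin}(\omega^{\prin}_1(n_0), v) = 0$ for every wall direction $v$, so $f$ commutes with the scattering functions of all walls, confirming centrality.

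The main obstacle is the inductive degree-preservation step in the second paragraph: while $\sum_i \max(0, \omega(e_i, e_j)) = 2$ holds cleanly for plain tagged (equivalently, self-folded-free ideal) triangulations of once-punctured closed surfaces, care is needed to confirm that mutations starting from $\sd_{\Delta}$ never leave this plain-tagged regime. This is consistent with Proposition \ref{SkAPropPun} but requires checking that the correspondence between cluster variables and non-notched tagged arcs is compatible with the combinatorics of mutations at every stage.
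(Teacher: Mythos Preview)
Your proof of the first claim is correct and matches the paper's approach.

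For the identification $H=n_0^{\perp}$, your inductive route is genuinely different from the paper's. The paper observes that $H$ is spanned by $g$-vectors of loops, and by \eqref{gLoop} these lie in $\omega_1(N_{\bb Q})\subset n_0^{\perp}$; one line. Your induction instead requires $\sum_i [b_{ij}]_+=2$ for every arc $j$ in every (mutation-reachable) triangulation, and here there is a real gap. Your justification---``each of the two triangles contributes one $+1$ and one $-1$''---does not rule out cancellation: if the two triangles $T_1,T_2$ adjacent to $j$ share a \emph{second} arc $i$, it could a priori sit ``after $j$'' in $T_1$ and ``before $j$'' in $T_2$, giving $B(e_i,e_j)=0$ and dropping the sum to $1$. (This is not excluded by the absence of self-folded triangles, and it is not the issue you flag as the ``main obstacle''.) The configuration is in fact impossible on an oriented surface: since $T_1,T_2$ induce opposite orientations on both $j$ and $i$, the physical end of $i$ appearing in $T_1$'s $(j,i)$-corner must coincide with the one in $T_2$'s $(i,j)$-corner at the same endpoint of $j$; but these two corners sit on opposite sides of the $j$-end in the link of the puncture, forcing a single arc-end to occupy two distinct link positions. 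So your induction can be completed, but the missing orientation argument is not automatic. The paper's loop-$g$-vector argument sidesteps all of this.

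Your centrality argument is correct. It is more elaborate than necessary: since $n_0\in N_{\uf}$ (there being no boundary arcs), the paper simply computes $\Lambda^{\prin}(\omega_1^{\prin}((n,0)),\omega_1^{\prin}(n_0))=\omega^{\prin}((n,0),(n_0,0))=\omega(n,n_0)=0$ directly from compatibility and the first claim, without tracking the $m'$-component separately.
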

		Here, the notation $\f{g}_{m}^{\parallel}$ is as in the classical limit of \eqref{g-parallel}.
		\begin{proof}
			For each $i\in \Delta$, each triangle of $\Delta$ containing $i$ contributes $+1$ to $\omega(e_i,e_j)$ for one $j$ and $-1$ to $\omega(e_i,e_{j'})$ for some other $j'$ (possible $j=j'$).  Thus, $\sum_{j\in \Delta} \omega(e_i,e_j)=0$ for all $i\in \Delta$; i.e., $\omega_1(e_i)\in n_0^{\perp}$ for all $i\in I$, so the claim $\omega_1(N)\subset n_0^{\perp}$ follows.
			
			Now suppose $\Sigma$ is a once-punctured closed surface.  The hyperplane $H$ is generated by the $g$-vectors of loops, and by \eqref{eq:loop_deg_monoid}, these lie in $\omega_1(N_{\bb{Q}})$, hence in $n_0^{\perp}$.  The claim $H= n_0^{\perp}$ follows.
			
			For the centrality claim, note that $$\Lambda^{\prin}(\omega_1^{\prin}((n,0)),\omega_1^{\prin}(n_0))= \langle n_0,\omega_1^{\prin}((n,0))\rangle =\omega(n,n_0)=0.$$  The centrality now follows from the definition of the Lie bracket in \eqref{classical-bracket}.
		\end{proof}
		
		Note that $n_0\in \ker \omega_1$ implies $\omega_1^{\prin}(n_0)=(0,n_0)$.  Let us denote $m_0:=(0,n_0)\in M^{\prin}$.
		
		\begin{lem}\label{lem:H}
			If $\Sigma$ is a once-punctured closed surface, then, up to equivalence, $\?{\f{D}^{{\sd^{\prin}_{\wt{\Delta}}}}}=\f{D}^{\sd^{\prin}_{\Delta}} \sqcup \{(H,f_H)\}$ for $H$ the bounding hyperplane as above and $f_H\in \f{g}_{m_0}^{\parallel}$.
		\end{lem}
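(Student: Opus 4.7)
The plan is to combine the general unfolding machinery developed in Appendix \ref{GenFoldApp} with the specific half-space structure of the cluster complex in the once-punctured closed surface case. First I would verify the hypotheses needed to apply Lemma \ref{lem:restriction-strong}. By Lemma \ref{lem:cover-unfold} together with Lemma \ref{lem:prin-unfold}, $\sd^{\prin}_{\wt{\Delta}}$ is an unfolding of $\sd^{\prin}_{\Delta}$. Moreover, the $\Pi$-symmetry condition holds because deck transformations permute the arcs of $\wt{\Delta}$ by orientation-preserving homeomorphisms, and therefore preserve $\omega$ on the nose (this $\Pi$-symmetry is inherited by the principal coefficient seed since $\Pi$ acts trivially on the frozen part).

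With these hypotheses verified, Lemma \ref{lem:restriction-strong} yields, up to equivalence, agreement of $\f{D}^{\sd^{\prin}_{\Delta}}$ and $\?{\f{D}^{\sd^{\prin}_{\wt{\Delta}}}}$ on $\?{\s{C}}^\circ$ and $-\?{\s{C}}^\circ$. Since $\Sigma$ is a once-punctured closed surface, Proposition \ref{gdense} together with Lemma \ref{lem:n0} identifies these two open sets with the two open half-spaces of $M^{\prin}_{\bb{R}}$ bounded by $H = n_0^\perp$. Consequently, any discrepancy between the two consistent scattering diagrams is confined to walls whose supports are contained in $H$.

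Next I would analyze walls on $H$ using Lemma \ref{lem:n0}. Any wall $(\f{d},f)$ with $\f{d}\subset H$ has $f$ lying in the central abelian subgroup $(G_{m_0})^{\parallel} = \exp(\f{g}_{m_0}^{\parallel})$ of $\wh{G}^{\sd^{\prin}}$, where $m_0 = \omega_1^{\prin}(n_0) = (0,n_0)$. Centrality means that crossing any such wall commutes with crossing any other wall (on or off $H$), so within an equivalence class I may replace all walls supported on $H$ by a single ``effective'' wall of support $H$, whose scattering function is the product (in any order, since the factors commute) of the individual wall functions. Concretely, if $g_{\Delta}, g_{\wt{\Delta}}\in (G_{m_0})^{\parallel}$ denote the respective effective $H$-crossing elements for $\f{D}^{\sd^{\prin}_{\Delta}}$ and $\?{\f{D}^{\sd^{\prin}_{\wt{\Delta}}}}$, I would set $f_H := g_{\wt{\Delta}}\, g_{\Delta}^{-1}\in (G_{m_0})^{\parallel}$ (which by the abelian structure lies in $\exp(\f{g}_{m_0}^{\parallel})$, as required). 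Adding the wall $(H,f_H)$ to $\f{D}^{\sd^{\prin}_{\Delta}}$ then produces a scattering diagram equivalent to $\?{\f{D}^{\sd^{\prin}_{\wt{\Delta}}}}$: they agree off $H$ by Step 1, and their total $H$-crossing factors now both equal $g_{\wt{\Delta}}$.

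The main obstacle is justifying the replacement of possibly many sub-cone walls inside $H$ by a single wall of support $H$ with a well-defined scattering function. This is exactly where the centrality supplied by Lemma \ref{lem:n0} is essential: without it, different transverse paths crossing $H$ at different points could pick up different contributions from walls sitting on proper sub-cones of $H$, and there would be no way to combine them. Centrality makes all such crossings indistinguishable, so consistency of the scattering diagram forces the $H$-contribution to be homogeneous, allowing the single-wall description $\f{D}^{\sd^{\prin}_{\Delta}}\sqcup\{(H,f_H)\}$.
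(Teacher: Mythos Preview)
Your proposal is correct and follows essentially the same approach as the paper: reduce the discrepancy to walls supported on $H$ via Lemma~\ref{lem:restriction-strong} and Proposition~\ref{gdense}, then use the centrality of $\f{g}_{m_0}^{\parallel}$ (Lemma~\ref{lem:n0}) to collapse everything on $H$ into a single wall. The paper phrases the last step slightly differently---it observes that $\?{\f{D}^{\sd^{\prin}_{\wt{\Delta}}}}\setminus \f{D}^{\sd^{\prin}_{\Delta}}$ is itself consistent (by centrality) and that the only consistent diagram supported entirely in $H$ is a single wall on all of $H$---but this is equivalent to your ``effective $H$-crossing'' computation. One small wording issue: $\Pi$ does not act trivially on the frozen part of $\sd^{\prin}_{\wt{\Delta}}$ (it permutes the $I_2$-copies just as it permutes $I_1$); however, the $\Pi$-symmetry condition in \S\ref{subsub:inv-Lie} only concerns $\omega|_{N_{\uf}}$, so your conclusion that $\Pi$-symmetry is inherited remains correct.
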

		\begin{proof}
            The only part of this claim which we did not see above is that, up to equivalence, there is only one wall in $H$, and the support is all of $H$.  The fact that $\?{\f{D}^{{\sd^{\prin}_{\wt{\Delta}}}}}$ and $\f{D}^{\sd^{\prin}_{\Delta}}$ are both consistent and $\f{g}_{m_0}^{\parallel}$ is central implies that $\?{\f{D}^{{\sd^{\prin}_{\wt{\Delta}}}}}\setminus \f{D}^{\sd^{\prin}_{\Delta}}$ must be consistent.  Since we know that any walls of $\?{\f{D}^{{\sd^{\prin}_{\wt{\Delta}}}}}\setminus \f{D}^{\sd^{\prin}_{\Delta}}$ must have support in $H$, the only possibility (up to equivalence) which is consistent is indeed where there is a single wall whose support is all of $H$.
		\end{proof}
		
		\subsection{Bracelets and theta functions for once-punctured closed surfaces}\label{sub:bracelets-1p}
		
		For $f_H$ as in Lemma \ref{lem:H} and any $m\in M$, the wall-crossing automorphism associated to crossing $H$ from the side containing $m$ to the other side will act on $z^m$ via 
		\begin{align}\label{eq:adfh}
			\Ad_{f_H}^{-\langle n_0,m\rangle}(z^m)=z^m(P_H)^{|\langle n_0,m\rangle|}
		\end{align}
		for some $P_H\in 1+z^{m_0}\bb{Z}_{\geq 0}\llb z^{m_0}\rrb$.  Fix $m\in -\s{C}\cap M$.  Let $\vartheta_{m,\sQ}$ denote the theta function constructed with respect to $\f{D}^{\sd^{\prin}_{\Delta}}$, and let $\?{\vartheta}_{m,\sQ}$ be the theta function constructed using using $\?{\f{D}^{{\sd^{\prin}_{\wt{\Delta}}}}}=\f{D}^{\sd^{\prin}_{\Delta}} \sqcup \{(H,f_H)\}$.  Then for any generic $\sQ\in \s{C}$, we have
		\begin{align}\label{eq:project_theta}
			\?{\vartheta}_{m,\sQ} = \vartheta_{m,\sQ}P_H^{-\langle n_0,m\rangle}.
		\end{align}

		On the other hand, consider $\sQ_0$ contained in a chamber of $-\s{C}$ which contains $m$, say a chamber corresponding to a triangulation $\Delta^{\diamond}$ consisting of doubly-notched arcs.  Let $\wt{m}\in (\iota^*)^{-1}(m)$ and let $\wt{\sQ}_0$ be in the chamber of $\wt{\s{C}}$ (the cluster complex for $\wt{\Sigma}$) associated to $\pi^{-1}(\Delta^{\diamond})$---i.e., the chamber containing $\kappa(\sQ_0)$ for $\kappa$ as in \S \ref{sec:folding_seeds}.  Then $\?{\vartheta}_{m,\sQ_0}=z^m$ and $\vartheta_{\wt{m},\wt{\sQ}_0}=z^{\wt{m}}$ (to see this, it suffices to work with the initial seed $\sd_\Delta$ in the coefficient-free setting and apply the tag-change automorphism $\iii_P$). So $\?{\vartheta}_{m,\sQ_0}=\iota^*(\vartheta_{\wt{m},\wt{\sQ}_0})$.  We thus know from Lemma \ref{lem:iota-star-theta} that $\?{\vartheta}_{m,\sQ}=\iota^*(\vartheta_{\wt{m},\wt{\sQ}})$.  Since $\vartheta_{\wt{m}}$ is given by the appropriate (laminated) lambda length in $\wt{\Sigma}$ for principal coefficients \cite{fomin2018cluster}, Lemma \ref{lem:cover-unfold} (the agreement of $\iota^*$ and $\pi_*$) implies that $\?{\vartheta}_{m,\sQ}$ is given by the desired (laminated) lambda length;\footnote{For computing laminated Lambda lengths in \cite{fomin2018cluster}, we need to consider opened surfaces $\Sigma'$, $\wt{\Sigma}'$ associated to $\Sigma$ and $\wt{\Sigma}$, which are constructed by replacing the punctures by holes. We then naturally extend $\pi$ to a covering map between the opened surfaces. Choose a multi-lamination $L$ on $\Sigma$ corresponding to the principal coefficients and denote $\wt{L}:=\pi^{-1}(L)$. By using the covering map $\pi$, a point in the laminated Teichmüller space $\?{\mathcal{T}}(\Sigma,L)$ is naturally sent to a point in $\?{\mathcal{T}}(\wt{\Sigma},\wt{L})$ (see \cite[Definition 10.8, Remark 14.8, Definition 15.1]{fomin2018cluster}). Fix a lift $L'$ of $L$ on $\Sigma'$ and lift $\wt{L}$ to $\pi^{-1}(L')$ on $\wt{\Sigma}'$. Then the laminated lambda length of a tagged arc $\wt{\gamma}$ on $\wt{\Sigma}$ is the same as that of $\pi(\wt{\gamma})$ on $\Sigma$ (see \cite[Definition 15.3]{fomin2018cluster}). \label{footnote:lambda_length}} i.e., by the bracelet element $\BracE{C_m}$ with $g$-vector $m$ (see \S \ref{subsub:brace-arcs}). We have thus found that
		\begin{align}\label{eq:1p_tag_arc_theta}
			\BracE{C_m} = \vartheta_{m}P_H^{-\langle n_0,m\rangle}.
		\end{align}
		
		It remains to understand this factor $P_H$.  Let us set our principal coefficients equal to $1$ (i.e., apply the map $\rho:(m,n)\mapsto m$), so $P_H$ becomes a constant $c_H\in \bb{Z}_{\geq 1}$ equal to the sum of the coefficients of $P_H$.  To compute $c_H$, fix a doubly-plain arc $\gamma$ in $\Sigma$ with corresponding doubly-nothced arc $\gamma^{\diamond}$, and let $m$ be the $g$-vector of $\gamma^{\diamond}$.  By Example \ref{ex:doubly-notched-arcloop}, $\gamma^{\diamond}*\gamma=(L_1+L_2)^2$ where $L_1$ and $L_2$ are the two loops forming the boundary for a tubular neighborhood of $\gamma$.
		
		For genus $g\geq 2$, $L_1$ and $L_2$ are distinct, and we compute
		\begin{align}\label{eq:gamma-diamond}
			\?{\vartheta}_m\vartheta_{-m}=\gamma^{\diamond}*\gamma=(L_1+L_2)^2 = 4+\BracE{2L_1}+2\BracE{L_1\sqcup L_2}+\BracE{2 L_2}.
		\end{align}
		Since $\?{\vartheta}_m\vartheta_{-m}=c_H\vartheta_m\vartheta_{-m}$, and since the theta function expansion of $\vartheta_m\vartheta_{-m}$ has non-negative integer coefficients, all coefficients on the right-hand side of \eqref{eq:gamma-diamond} should be divisible by $c_H\in \bb{Z}_{\geq 1}$.  Since two of the coefficients are equal to $1$, it must be the case that $c_H=1$.  Hence, $P_H=1$ as well, so $\BracE{C_m}=\vartheta_m$ even in the principal coefficient setup.
		
		On the other hand, for $g=1$, $L_1$ and $L_2$ are isotopic, so we have $[L_1]=[L_2]=:[L]$.  So in this case we have
		\begin{align}\label{eq:g1-gamma-diamond}
			\?{\vartheta}_m\vartheta_{-m}=\gamma^{\diamond}*\gamma=(2[L])^2 = 8+2\BracE{2L}.
		\end{align}
		
		We compare this with the description of the theta function for this case in \cite[\S 5.3]{zhou2020cluster}.  Fix a triangulation $\{\gamma_1,\gamma_2,\gamma_3\}$ of $\Sigma$ with $\gamma=\gamma_1$, and let $A_i$ denote the cluster variable associated to $\gamma_i$; i.e., $\vartheta_{g(\gamma_i)}=A_i$..  Then $g(\gamma^{\diamond})=-g(\gamma)$, and Zhou finds that
		\begin{align*}
			\vartheta_{-g(\gamma)} = \frac{(A_1^2+A_2^2+A_3^2)^2}{A_1A_2^2A_3^2}.
		\end{align*}
				Thus,
		\begin{align}\label{eqn:theta-gam-gam}
			\vartheta_{g(\gamma)}\vartheta_{-g(\gamma)} = \frac{1}{A_2^2A_3^2}\left(A_1^4+A_2^4+A_3^4+2A_1^2A_2^2+2A_2^2A_3^2+2A_1^2A_3^2\right).
		\end{align}
		Note that the constant coefficient of the above expression is $2$.  On the other hand, the constant coefficient for $\gamma*\gamma^{\diamond}=8+2\BracE{2L}$ as in \eqref{eq:g1-gamma-diamond} is $8$.  So, given that $\gamma=\vartheta_{g(\gamma)}$ and $\gamma^{\diamond}=c_H\vartheta_{-g(\gamma)}$, it follows that we must have $c_H=4$.  That is,
		\begin{align}\label{eq:torus_tag_arc}
			\gamma^{\diamond} = 4\vartheta_{-g(\gamma)}.
		\end{align}
		As a further check, we note that $\vartheta_{g(L)}$ (denoted $\vartheta_{\?{1}-\?{2}}$ in \cite[\S 5.3]{zhou2020cluster}) is shown in loc. cit. to equal \begin{align}\label{eq:theta12}
			\frac{A_1^2+A_2^2+A_3^2}{A_2A_3}.
		\end{align}  So by \eqref{eqn:theta-gam-gam}, we have
		\begin{align}\label{eq:theta-gL2}
			\vartheta_{g(L)}^2 = \vartheta_{g(\gamma)}\vartheta_{-g(\gamma)}.
		\end{align}
		We know from \eqref{eq:g1-gamma-diamond} that $\vartheta_{g(L)}^2=[L]^2=\frac{1}{4}\?\gamma^{\diamond}*\vartheta_{-g(\gamma)}$, and comparing this with \eqref{eq:theta-gL2} recovers \eqref{eq:torus_tag_arc} again.

		To summarize, we have found the following:
		\begin{thm}\label{thm:1p-bracelet}
			Let $\BracE{C}$ be any bracelet consisting of weighted doubly-notched arcs in the once-punctured closed surface $\Sigma$ of genus $g$.  Let $m$ be the $g$-vector of $\BracE{C}$ and $n_0:=(1,1,\ldots,1)$, $m_{0}=(0,n_0)\in M\oplus N$.  For $g\geq 2$,
			\begin{align*}
				\BracE{C}=\vartheta_m
			\end{align*}
			in either the principal coefficient or coefficient-free setting.  For $g=1$,
			\begin{align*}
				\BracE{C}= 4^{\langle n_0,-m\rangle}\vartheta_m
			\end{align*}
			in the coefficient-free setting.  In the principal coefficients setting with $g=1$, 
			\begin{align*}
				\BracE{C}= P_H^{\langle n_0,-m\rangle}\vartheta_m
			\end{align*}
			for some polynomial $P_H\in 1+z^{m_0}\bb{Z}_{\geq 0}[z^{m_0}]$ whose coefficients add up to $4$.
		\end{thm}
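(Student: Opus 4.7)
The plan is to leverage the covering space / unfolding machinery of Appendix \ref{GenFoldApp} and \S\ref{sec:unfolding-closed-surfaces}, reducing to the case of a surface with at least two punctures where doubly-notched arcs are genuine cluster variables. Fix an ideal triangulation $\Delta$ of $\Sigma$ and choose a finite covering $\pi\colon\wt{\Sigma}\to\Sigma$ such that $\wt{\Sigma}$ is not a once-punctured closed surface (for instance, the cover determined by a nontrivial map $\pi_1(\SSS\setminus\MM)\to\bb{Z}/s\bb{Z}$ that produces multiple preimages of the puncture). By Lemma \ref{lem:cover-unfold} and Lemma \ref{lem:prin-unfold}, $\sd^{\prin}_{\wt{\Delta}}$ is an unfolding of $\sd^{\prin}_{\Delta}$, and the map $\iota^{*}$ of Appendix \ref{GenFoldApp} agrees with the push-forward $\pi_{*}$ on skein elements.

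Next, invoke Lemma \ref{lem:H}, which identifies $\?{\f{D}^{\sd^{\prin}_{\wt{\Delta}}}}=\f{D}^{\sd^{\prin}_{\Delta}}\sqcup\{(H,f_H)\}$ with $H=n_0^{\perp}$ the bounding hyperplane of the (doubly-plain) cluster complex, and $f_H\in \f{g}^{\parallel}_{m_0}$. The wall-crossing across $H$ acts on a monomial $z^{m}$ from the $-\s{C}$-side by multiplication with $P_H^{|\langle n_0,m\rangle|}$ for some $P_H\in 1+z^{m_0}\bb{Z}_{\geq 0}\llb z^{m_0}\rrb$ (cf.\ \eqref{eq:adfh}), and hence for any generic $\sQ\in \s{C}$ we have the fundamental identity \eqref{eq:project_theta} relating $\?\vartheta_{m,\sQ}$ and $\vartheta_{m,\sQ}$. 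Now given a weighted collection $C$ of compatible doubly-notched arcs with $g$-vector $m$, lift it to $\wt{C}=\pi^{-1}(C)$ on $\wt{\Sigma}$, which remains a compatible collection of tagged arcs (lifting taggings consistently, as in Footnote \ref{footnote:lambda_length}). Since $\wt{\Sigma}$ has $\geq 2$ punctures, $\BracE{\wt{C}}$ is a product of cluster variables, hence a theta function $\vartheta_{\wt{m}}$ in $\s{A}^{\prin}(\wt{\Sigma})$ with $\iota^{*}(\wt{m})=m$. Choosing $\sQ_0$ in the chamber of $-\s{C}$ corresponding to a tagged triangulation $\Delta^{\diamond}\supset C$ (with $\wt\sQ_0=\kappa(\sQ_0)$ in the corresponding chamber of $\wt{\s{C}}$), one verifies directly in initial coordinates that $\?\vartheta_{m,\sQ_0}=\iota^{*}(\vartheta_{\wt{m},\wt{\sQ}_0})=z^{m}$. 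Applying Lemma \ref{lem:iota-star-theta} to propagate this across the full range of $\sQ\in \s{C}$ yields $\?\vartheta_{m,\sQ}=\pi_{*}(\BracE{\wt{C}})=\BracE{C}$, and combining with \eqref{eq:project_theta} gives the expression $\BracE{C}=P_H^{-\langle n_0,m\rangle}\vartheta_{m}$.

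It then remains to evaluate the factor $P_H$ (or in the coefficient-free setting, its coefficient-sum $c_H$). The key input is Example \ref{ex:doubly-notched-arcloop}: for a single doubly-notched arc $\gamma^{\diamond}$ with corresponding plain arc $\gamma$ and tubular-neighborhood boundary loops $L_1,L_2$, we have $\gamma^{\diamond}*\gamma=(L_1+L_2)^{2}$. For $g\geq 2$ the loops $L_1$ and $L_2$ are distinct, so the theta expansion $(L_1+L_2)^{2}=4+\BracE{2L_1}+2\BracE{L_1\sqcup L_2}+\BracE{2L_2}$ contains coefficients equal to $1$; since positivity forces $c_H$ to divide every coefficient on the right of $\?\vartheta_{m}\vartheta_{-m}=c_H\vartheta_{m}\vartheta_{-m}$, we deduce $c_H=1$, and Laurent positivity then forces $P_H=1$ outright. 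For $g=1$ the two loops coincide, giving $(2[L])^{2}=8+2\BracE{2L}$ with all coefficients divisible by $4$; comparison with the explicit formulas of \cite[\S 5.3]{zhou2020cluster} as in \eqref{eqn:theta-gam-gam}–\eqref{eq:torus_tag_arc} pins down $c_H=4$. The passage from the single-arc case to an arbitrary weighted compatible collection $C$ is immediate because $\BracE{\cdot}$ is multiplicative in compatible components and lifts $\pi^{-1}$ respect this product structure. The principal-coefficient statement for $g=1$ then records precisely what the argument produces without the coefficient-free specialization.

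The main obstacle is making the covering/folding identifications entirely precise at the level of tagged arcs and scattering-diagram chambers: one must check that the lifted taggings on $\wt C$ are well-defined and compatible, that the initial chamber of $-\wt{\s{C}}$ associated to $\pi^{-1}(\Delta^{\diamond})$ really does map to the desired chamber of $-\s{C}$ via $\kappa$, and that the application of Lemma \ref{lem:iota-star-theta} is valid despite the presence of the extra wall $(H,f_H)$ which lies precisely on the boundary of these two complexes. Once these compatibilities are verified, the theorem follows by assembling the pieces above.
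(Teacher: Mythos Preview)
Your proposal is correct and follows essentially the same approach as the paper: lift to a cover with multiple punctures, use Lemma~\ref{lem:H} and Lemma~\ref{lem:iota-star-theta} to identify $\BracE{C}$ with $\?\vartheta_m$ and hence with $P_H^{-\langle n_0,m\rangle}\vartheta_m$, then determine $c_H$ (and hence $P_H$ for $g\geq 2$) via Example~\ref{ex:doubly-notched-arcloop} and, for $g=1$, via comparison with Zhou's explicit formulas. One minor slip: you write $\wt{C}=\pi^{-1}(C)$ and then claim $\iota^*(\wt{m})=m$, but the full preimage has $\iota^*(\wt{m})=s\cdot m$ for an $s$-sheeted cover; the paper instead takes a single lift $\wt{m}\in(\iota^*)^{-1}(m)$ (so $\vartheta_{\wt m}$ is a cluster monomial in the chamber of $\pi^{-1}(\Delta^{\diamond})$) and then $\pi_*(\vartheta_{\wt m})$ is directly the single-arc laminated lambda length.
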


		\begin{rem}
		Working out $\gamma^{\diamond}*\gamma$ as in \eqref{eq:g1-gamma-diamond} in the principal coefficients setting should suffice to determine $P_H$.  In fact, Min Huang has used the perfect matching approach of \cite{MusikerSchifflerWilliams09} to compute the $F$-polynomial of $\gamma^{\diamond}$ in comparison to that of \cite[\S 5.3]{zhou2020cluster} for us, and his computation indicates that $P_H$ should equal $(1+z^{m_0})^2$; i.e., the scattering function $f_H$ should equal $\Psi(z^{m_0})^2$.
		\end{rem}

			We denote $[a]_+:=\max\{0,a\}$. The following result allows us to generalize Theorem \ref{thm:1p-bracelet} to general tagged bracelets. 

			\begin{lem}\label{lem:1p_general_bracelet}
				 Let $C$ denote a collection of tagged arcs and $L$ a collection of loops, such that $C\cup L$ is a tagged simple multicurve. Then at $t=1$ we have
				\begin{enumerate}[label=(\roman*),ref=\roman*]
                    \item If $\Sigma$ is not a once-punctured torus, then $\BracE{C\cup L}=\vartheta_{g(C)+g(L)}$ (including in the principal coefficients setting).
				\item If $\Sigma$ is a once-punctured torus, then $\BracE{C\cup L}=\lambda(g(C)+g(L))\vartheta_{g(C)+g(L)}$, where $\lambda(m):=4^{[-\langle m,n_0 \rangle]_+}$. 
				\end{enumerate}
			\end{lem}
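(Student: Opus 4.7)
My plan is to reduce the statement to already-established cases. On a once-punctured closed surface, every tagged arc has both endpoints at the puncture, so compatibility of tags forces $C$ to consist either entirely of plain arcs (call this $C=C_+$) or entirely of doubly-notched arcs ($C=C_-$). When $C=C_+$, the conclusion $\BracE{C\cup L}=\vartheta_{g(C)+g(L)}$ follows immediately from Lemma \ref{lem:cluster_tagged_bracelet_theta}, and $\lambda(g(C)+g(L))=1$ holds because $g(C_+)$ is a non-negative combination of vectors $e_i^*$ (each satisfying $\langle n_0,e_i^*\rangle=1$) and $g(L)\in n_0^\perp$ by Lemma \ref{lem:n0}, so $[-\langle g(C)+g(L),n_0\rangle]_+=0$.

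The main work is the case $C=C_-$. At $t=1$ everything commutes, so $\BracE{C\cup L}=\BracE{C}\cdot\BracE{L}$, and combining Theorem \ref{thm:1p-bracelet} with Lemma \ref{lem:loop_puncture_bracelet} yields $\BracE{C\cup L}=\lambda(g(C))\vartheta_{g(C)}\vartheta_{g(L)}$ (with the scalar being $1$ for genus $\geq 2$ and $4^{\sum w_i}$ in the torus case). Since $\langle g(L),n_0\rangle=0$ and $\langle g(C),n_0\rangle\leq 0$, a direct computation gives $\lambda(g(C)+g(L))=\lambda(g(C))$, so the lemma reduces to the product identity $\vartheta_{g(C)}\vartheta_{g(L)}=\vartheta_{g(C)+g(L)}$. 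The main obstacle is that $\vartheta_{g(C)}$ is not a cluster monomial in the plain-tagged cluster structure (its $g$-vector lies in $-\overline{\s{C}}$), so Proposition \ref{prop:cluster-commute-theta} does not apply directly.

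To establish this identity I will extend the covering-space strategy already used in \S\ref{sub:bracelets-1p} to prove Theorem \ref{thm:1p-bracelet}. Fix a finite connected cover $\pi:\wt{\Sigma}\to\Sigma$ unramified at the puncture and such that the puncture of $\Sigma$ has at least two preimages; then $\wt{\Sigma}$ is not a once-punctured closed surface. Set $\wt{C}\cup\wt{L}:=\pi^{-1}(C\cup L)$, a compatible tagged simple multicurve in $\wt{\Sigma}$ with no doubly-notched components in a once-punctured closed component. Lemma \ref{lem:cluster_tagged_bracelet_theta} applied to $\wt{\Sigma}$ gives $\BracE{\wt{C}\cup\wt{L}}=\vartheta_{g(\wt{C})+g(\wt{L})}$ in the principal coefficients cluster algebra of $\wt{\Sigma}$. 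Applying $\iota^*=\pi_*$ (Lemma \ref{lem:cover-unfold}) to both sides, and using Lemma \ref{lem:iota-star-theta} together with Lemma \ref{lem:H} to rewrite the projected theta function as $P_H^{-\langle n_0,\iota^*(g(\wt{C})+g(\wt{L}))\rangle}\vartheta_{\iota^*(g(\wt{C})+g(\wt{L}))}$, one obtains the desired identity; the $P_H$-contribution is trivial for genus $\geq 2$ and produces the correct power of $4$ in the torus case, matching Theorem \ref{thm:1p-bracelet}.

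The hardest step will be controlling multiplicities under the descent $\pi_*$: a loop in $\wt{\Sigma}$ covering a loop in $\Sigma$ with degree $d$ descends via the Chebyshev composition $T_w\circ T_d=T_{wd}$ rather than multiplicatively, while lifted arcs descend directly, so the precise relationship between $\pi_*(\BracE{\wt{C}\cup\wt{L}})$ and a power of $\BracE{C\cup L}$ requires a careful case analysis depending on how many components of $\pi^{-1}(L_j)$ each loop $L_j$ has and with what covering degrees. Once it is verified that $\pi_*(\BracE{\wt{C}\cup\wt{L}})$ equals the expected scalar multiple of the expected power of $\BracE{C\cup L}$, the identity $\vartheta_{g(C)}\vartheta_{g(L)}=\vartheta_{g(C)+g(L)}$ follows from the pointedness of both sides and the linear independence of theta functions established in Theorem \ref{fFG-surfaces}, completing the proof.
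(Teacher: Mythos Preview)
Your overall strategy is the same as the paper's: pass to a covering $\wt{\Sigma}$ with at least two punctures, apply Lemma \ref{lem:cluster_tagged_bracelet_theta} there, and push down via $\iota^*=\pi_*$ using the comparison of $\?{\f{D}^{\sd^{\prin}_{\wt{\Delta}}}}$ with $\f{D}^{\sd^{\prin}_{\Delta}}$ (Lemma \ref{lem:H} and \eqref{eq:project_theta}). The reduction $\lambda(g(C)+g(L))=\lambda(g(C))$ and the separate handling of the all-plain case via Lemma \ref{lem:cluster_tagged_bracelet_theta} are also as in the paper.

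Where you diverge is in taking the \emph{full} preimage $\wt{C}\cup\wt{L}:=\pi^{-1}(C\cup L)$. This is exactly what generates the Chebyshev complication you flag in your last paragraph: a connected component of $\pi^{-1}(L_i)$ covering $L_i$ with degree $d$ descends under $\pi_*$ to $T_d([L_i])$, so $\pi_*\BracE{\pi^{-1}(C\cup L)}$ is not simply a power of $\BracE{C\cup L}$, and matching it against $\iota^*(\vartheta_{g(\pi^{-1}(C\cup L))})$ becomes a genuine bookkeeping problem. The paper sidesteps this entirely. It first \emph{chooses} the $s{:}1$ cover so that every loop component $L_i$ lifts to $s$ disjoint degree-one loops (such covers always exist, e.g.\ via an abelian cover through $H_1(\SSS;\bb{Z})$), and then fixes a \emph{single} lift $\wt{C}\cup\wt{L}$ of $C\cup L$, one component for each component downstairs. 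With that choice, $\pi_*\BracE{\wt{C}\cup\wt{L}}=\BracE{C\cup L}$ on the nose, and one directly obtains
\[
\iota^*\vartheta_{g(\wt{C}\cup\wt{L})}=\iota^*\BracE{\wt{C}\cup\wt{L}}=\BracE{C}\cdot\BracE{L}=\lambda(g(C))\,\vartheta_{g(C)}\vartheta_{g(L)},
\]
after which \eqref{eq:project_theta} gives $\lambda(g(C)+g(L))\vartheta_{g(C)+g(L)}$ on the left, finishing the proof in one step with no case analysis. Your approach can presumably be pushed through, but the single-lift trick makes the ``hardest step'' you identify disappear.
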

			\begin{proof}
			  For convenience, in Case (i) above, let $\lambda(m):=1$ for all $m\in M$.  Now recall in general that $\BracE{C\cup L}=\BracE{C}\BracE{L}$, see \S \ref{sec:classical_general_bracelets}. In addition, we have $\BracE{L}=\vartheta_{g(L)}$ by Lemma \ref{lem:loop_puncture_bracelet} and $\BracE{C}=\lambda(g(C))\vartheta_{g(C)}$ by Theorem \ref{thm:1p-bracelet}.

				Choose an $s:1$ covering $\pi:\wt{\Sigma}\rar \Sigma$ ($s\geq 2$) such that each component of $L$ lifts to $s$ disjoint loops in $\wt{\Sigma}$, and fix a lift $\wt{C}\cup \wt{L}$ of $C\cup L$. This way, $\pi_*(\BracE{\wt{C}\cup \wt{L}}) = \BracE{C\cup L}$. Now, the bracelet $\BracE{\wt{C}\cup \wt{L}}$ for $\wt{\Sigma}$ is a theta function (Lemma \ref{lem:cluster_tagged_bracelet_theta}). Applying $\iota^*$ to the factorization of theta functions $\BracE{\wt{C}\cup \wt{L}}= \BracE{\wt{C}} \BracE{\wt{L}}$ and using Theorem \ref{thm:1p-bracelet}, we obtain \begin{align*}
				\iota^*{\vartheta_{g(\wt{C}\cup \wt{L})}}&=\iota^*(\BracE{\wt{C}\cup\wt{L}})\\
				&=\iota^*(\BracE{\wt{C}})\cdot\iota^*(\BracE{\wt{L}})\\
				&=\lambda(g(C))\vartheta_{g(C)}\cdot \iota^*\BracE{\wt{L}}\\
				&=\lambda(g(C))\vartheta_{g(C)}\cdot \BracE{L}\\
				&=\lambda(g(C))\vartheta_{g(C)}\cdot \vartheta_{g(L)}.
				\end{align*}
				 Applying \eqref{eq:project_theta} to the left hand side, we obtain 
				\begin{align*}
				\lambda(g(C)+g(L))\vartheta_{g(C)+g(L)}=\lambda(g(C))\vartheta_{g(C)}\cdot \vartheta_{g(L)}.
				\end{align*}
				The desired claims follows from $\lambda(g(C)+g(L))=\lambda(g(C))$ and $\iota^*\BracE{\wt{C}\cup \wt{L}}=\BracE{C\cup L}$ (by Lemma \ref{lem:cover-unfold}).
			\end{proof}

\section{Donaldson-Thomas transformation}\label{sec:DT}

Consider the consistent (quantum) scattering diagram $\f{D}=\f{D}^{\sd}$ associated to any initial (quantum) seed $\sd$.  Let $\Delta^+$ denote the collection of seeds $\sd_{\jj}$ mutation-equivalent to $\sd$.  Recall that the corresponding cones $C_{\jj}^+$ as in \eqref{Cjplus} are chambers of $\f{D}$ and form a fan $\s{C}^+\coloneqq \s{C}$, see  Proposition \ref{Chambers}.

Now define the seed $\sd^-$ by replacing each basis element $e_{i}$ of $E_{\sd}$ with $-e_i$.  Similarly construct $\sd_{\jj}^-$ for each $\jj$.  Let $\Delta^-$ denote the collection of these seeds.  The corresponding cones $C_{\jj}^-=-C_{\jj}^+$ again are chambers of $\f{D}$ with wall-crossings corresponding to mutations, and these chambers form a fan $\s{C}^-$, see \cite[Construction 1.3]{gross2018canonical}.  When $C_{\sd}^-=C^+_{\sd^-}$ coincides with some $C_{\sd_{\jj}}^+$, $\sd$ is said to be injective-reachable \cite{qin2017triangular} or, equivalently, one says there exists a green to red sequence \cite{MuGreen}. 
In the following, we do NOT assume this condition holds.

For any two seeds $\sd_1,\sd_2$ in $\Delta^+\cup \Delta^-$, we let $\cross_{\sd_2,\sd_1}=\cross_{\gamma}$ denote the action $\Ad_{\theta_{\gamma,\f{D}^\sd}}$ such that $\gamma$ is a generic path from a generic point in $C_{\sd_1}$ to a generic point in $C_{\sd_2}$. 

Fix a generic basepoint $\sQ\in C^+$.  We assume that all theta functions are Laurent polynomials with respect to $\sQ$. By \eqref{alpha} and the positivity of broken lines, this condition is equivalently to $\vartheta_{-f_i,\sQ}$ being Laurent polynomials (recall $f_i\coloneqq e_i^*$). Since $\cross_{\sd,\sd^-} (z^{-f_i})=\vartheta_{-f_i,\sQ}$, $\cross_{\sd,\sd^-}$ acts on the skew-field of fractions $\mr{\s{A}}_{t}^{\sd}$ ($t=1$ for the classical case). Let $\nu=\nu^\sd$ denote the automorphism of $M_{\sd}$ sending $m$ to $-m$, and also the induced action on $\mr{\s{A}}_{t}^{\sd}$ sending $z^{m}$ to $z^{-m}$. Building on \cite{GS-DT} \cite{kimura2022twist}, one defines the Donaldson-Thomas transformation associated to $\sd$ as the composition
\begin{align}\label{eq:DT-def}
\DT^{\sd}=\cross_{\sd,\sd^-}\circ\nu^\sd.
\end{align}
This construction is a natural generalization of the twist endomorphism of DT-type for injective-reachable seeds in \cite{qin2020dual} \cite[\S 5.1]{kimura2022twist}.

\begin{rem}

The automorphism $\nu$ can be understood as follows. Let $\sd^\op$ denote the opposite seed, whose basis vectors are the same as that of $\sd$ but it has the inverse bilinear form $-\omega$. The opposite scattering diagram $\f{D}^{\sd^\op}$ was considered in \cite[\S A.2]{qin2019bases} (and the quantum version in \cite[\S 2.3.3]{davison2019strong}). Then, for any broken line $\gamma$ with ends $(m,\sQ)$ and attaching monomials $c_i z^{v_i}$ in $\f{D}^\sd$, there is a broken line $-\gamma$ with ends $(-m,-\sQ)$ and attaching monomials $c_i z^{-v_i}$ in $\f{D}^{\sd^\op}$. It follows that, for any generic base point $\sQ$ and any $m\in M$, we have
\begin{align}\label{eq:nu}
\nu (\vartheta_{m,\sQ})=\vartheta^{\sd^\op}_{-m,-\sQ}.
\end{align}
\end{rem}

\begin{rem}
The inverse of $\DT^\sd$ can be constructed as follows, assuming that all theta functions are Laurent polynomials with respect to generic base points $-\sQ$ in $C^-$. By \eqref{alpha}, the positivity of broken lines, and Lemma \ref{lem:negative_cluster_mono}, this condition is equivalently to $\vartheta_{f_i,-\sQ}$ being Laurent polynomials. Since $\cross_{\sd^-,\sd} (z^{f_i})=\vartheta_{f_i,-\sQ}$, $\cross_{\sd^-,\sd}$ acts on the skew-field of fractions $\mr{\s{A}}_{t}^{\sd}$. Then we obtain the inverse of $\DT^\sd$:
\begin{align}\label{eq:DT-1}
(\DT^{\sd})^{-1}=(\nu^\sd)^{-1} \circ \cross_{\sd^-,\sd}=\nu^\sd \circ \cross_{\sd^-,\sd}.
\end{align}

\end{rem}

The tropical mutation $T_k$ in \S \ref{sec:seed-mut} provides an isomorphism between scattering diagrams $\f{D}^\sd$ and $\f{D}^{\sd_k}$, see \cite[Construction 1.3]{gross2018canonical} and \cite[\S A]{davison2019strong}. Moreover, its action on the exponents of Laurent monomials gives us $T_k(\vartheta^\sd_{m,\sQ})=\vartheta^{\sd_k}_{T_k(m),T_k(\sQ)}$ \cite[Prop. A.3]{davison2019strong}. Recall that we have the mutation birational map $\mu_k: \mr{\s{A}}_{t}^{\sd}\rightarrow \mr{\s{A}}_{t}^{\sd_k}$ as in \eqref{mujAInverse}. We have 
$\mu_k (\vartheta^\sd_{m,\sQ})=\vartheta^{\sd_k}_{T_k(m),\sQ'}$ for $\sQ$ and $\sQ'$ generic base points in the positive chambers of $\f{D}$ and $\f{D}^{\sd_k}$ respectively (\cite{gross2018canonical}\cite[Thm. A.1.4]{qin2019bases}).

Denote $\sd_k=\sd'$. We will often omit the superscript $\sd$, and we replace the superscript $\sd'$ by $'$.

Recall that we have $\vartheta_{m,\sQ}=z^m$ if both $m$ and the generic point $\sQ$ are contained in some $C_{\sd_{\jj}}$. 
\begin{lem}\label{lem:negative_cluster_mono}
We have $\vartheta_{m,\sQ}=z^m$ if both $m$ and the generic point $\sQ$ are contained in some $C_{\sd_{\jj}^-}$. 
\end{lem}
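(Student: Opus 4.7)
The plan is to reduce the claim to the positive-chamber version already established in Corollary \ref{UpTheta}, via the symmetry between $\f{D}^{\sd}$ and the scattering diagram $\f{D}^{\sd^{\op}}$ associated to the opposite seed. Equation \eqref{eq:nu} already records the key identity $\vartheta_{m,\sQ}^{\sd}=\nu(\vartheta_{-m,-\sQ}^{\sd^{\op}})$, so it is enough to prove $\vartheta_{-m,-\sQ}^{\sd^{\op}}=z^{-m}$ and then apply $\nu$, which sends $z^{-m}$ to $z^m$.

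First I would verify that the reflection $\varphi\colon M_{\bb{R}}\to M_{\bb{R}}$, $m\mapsto -m$, identifies $\f{D}^{\sd^{\op}}$ with $\f{D}^{\sd}$ up to equivalence. The initial walls of $\f{D}^{\sd^{\op}}$ are $(e_i^\perp,\Psi(z^{-\omega_1(e_i)}))$, and $\varphi$ sends them to the initial walls $(e_i^\perp,\Psi(z^{\omega_1(e_i)}))$ of $\f{D}^{\sd}$; the uniqueness in Theorem \ref{KSGS} then promotes this to an identification of the full consistent diagrams. Under this reflection, broken lines correspond via $\gamma\leftrightarrow -\gamma$ with exponents negated (which is precisely how \eqref{eq:nu} is derived), and the positive orthant $C^+_{\sd^{\op}}$ maps to the negative orthant $C^-_{\sd}$.

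Next I would observe that because each cluster complex is generated from its initial chamber by iterated wall-crossings, the reflection identifies $\s{C}^+(\f{D}^{\sd^{\op}})$ with $\s{C}^-(\f{D}^{\sd})$ as chamber complexes. In particular, given $m,\sQ\in C_{\jj}^-(\f{D}^{\sd})$, the points $-m$ and $-\sQ$ lie in a common chamber of $\s{C}^+(\f{D}^{\sd^{\op}})$. Corollary \ref{UpTheta} applied to $\f{D}^{\sd^{\op}}$ then gives $\vartheta^{\sd^{\op}}_{-m,-\sQ}=z^{-m}$, and applying $\nu$ yields $\vartheta_{m,\sQ}^{\sd}=z^m$, as required.

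The main technical point is verifying that the reflection genuinely intertwines the scattering data of $\sd$ and $\sd^{\op}$: concretely, that with the natural choice of compatible form $\Lambda^{\op}=-\Lambda$, the sign $\sign\Lambda(-\gamma'(\tau),v_{\f{d}})$ governing wall-crossings transforms correctly, as the two reversals $\gamma'\mapsto -\gamma'$ and $v_{\f{d}}\mapsto -v_{\f{d}}$ cancel, while the sign flip in $\Lambda$ is absorbed by the inversion $z^v\mapsto z^{-v}$ of the scattering functions. A more direct alternative would be to argue that by convexity of $C_{\jj}^-$ the straight broken line from $\sQ$ to infinity in direction $m$ stays inside the chamber and is the only one contributing, but tracking the final exponent along any hypothetical non-straight broken line is considerably more involved than the symmetry argument above.
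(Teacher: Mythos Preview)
Your proposal is correct and follows essentially the paper's alternative argument via \eqref{eq:nu}. The only minor difference is that the paper first reduces to the initial negative chamber $C^-$ by applying the tropical mutations $T_k$ (which carry $\f{D}^{\sd}$ to $\f{D}^{\sd_{\jj}}$ and identify $C_{\sd_{\jj}}^-$ with the initial negative chamber), and then invokes \eqref{eq:nu} only for $m,\sQ\in C^-$; you instead argue directly that the reflection $m\mapsto -m$ identifies $\s{C}^+(\sd^{\op})$ with $\s{C}^-(\sd)$ as chamber complexes, which handles all negative chambers at once. Both routes appeal to the same symmetry and the same positive-chamber fact from Corollary~\ref{UpTheta}.
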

\begin{proof}
By applying the tropical mutations $T_k$, it suffices to verify the claim for $C^-$.   The statement may be proved similar to that for $C^+$. Alternatively, using \eqref{eq:nu}, we have $\vartheta_{m,\sQ}=\nu(\vartheta^{\sd^\op}_{-m,-\sQ})=\nu(z^{-m})=z^m$. 
\end{proof}

In particular, for $m\in C^+\cap M$ and generic $\sQ\in C^+$, we have
\begin{align}\label{DT-theta-m}
\DT^{\sd}(\vartheta_{m,\sQ})=\DT^{\sd}(z^m)=\f{p}_{\sd,\sd^-}(z^{-m})=\f{p}_{\sd,\sd^-}(\vartheta_{-m,-\sQ})=\vartheta_{-m,\sQ}.
\end{align}

\begin{lem}\label{lem:DT_change_initial_seed}
	For any $k\in I\setminus F$, we have $\mu_{k}\circ \DT^{\sd}=\DT^{\sd_k}\circ \mu_k$.
\end{lem}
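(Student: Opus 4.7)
The plan is to verify the identity on a generating set of $\mr{\s{A}}_t^{\sd}$. Both $\mu_k \circ \DT^{\sd}$ and $\DT^{\sd_k} \circ \mu_k$ are $\kk_t$-algebra isomorphisms from $\mr{\s{A}}_t^{\sd}$ to $\mr{\s{A}}_t^{\sd_k}$, so equality on generators implies equality throughout. The cluster variables $\{A_{\sd,i} = z^{e_i^*}\}_{i \in I}$, together with the inverses of the frozen variables, generate this skew field. An initial observation is that the involution $\nu \colon z^m \mapsto z^{-m}$ is intrinsic to the lattice $M$ and hence independent of the choice of initial seed, so $\nu^{\sd} = \nu^{\sd_k}$, which I denote simply by $\nu$.

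For frozen $i$ and for non-frozen $i \neq k$, both sides agree by a direct calculation using \eqref{DT-theta-m}. The frozen case is immediate since $\mu_k$ fixes frozen variables and the scattering automorphisms act trivially on monomials supported on $M_F$. For $i \in I \setminus F$ with $i \neq k$, we have $e_{\sd,i}^* = e_{\sd_k,i}^*$ (so $\mu_k$ fixes $z^{e_i^*}$) and $T_k^{\sd}(-e_i^*) = -e_i^*$ (since $\langle e_k, -e_i^*\rangle = 0$); combining these with \eqref{DT-theta-m} and the intertwining rule $\mu_k(\vartheta^{\sd}_{m,\sQ}) = \vartheta^{\sd_k}_{T_k^{\sd}(m), \sQ'}$ shows that both sides reduce to $\vartheta^{\sd_k}_{-e_i^*, \sQ'}$.

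The essential case is $i = k$, where $\mu_k(z^{e_k^*})$ is a nontrivial rational function $\vartheta^{\sd_k}_{T_k^{\sd}(e_k^*), \sQ'}$ with $T_k^{\sd}(e_k^*) = e_k^* + \omega_1(e_k)$ generally lying outside $C_{\sd_k}^+$, so \eqref{DT-theta-m} does not apply directly to $\DT^{\sd_k}$ of this element. The cleanest way to handle this case is via the scattering-diagram interpretation: by Proposition \ref{Chambers}, $\mu_k$ equals the wall-crossing automorphism $\cross_{\sd_k, \sd}$ in the consistent diagram $\f{D}^{\sd}$. Using consistency of $\f{D}^{\sd}$ together with $\DT^{\sd} = \cross_{\sd, \sd^-} \circ \nu$, the left-hand side becomes $\cross_{\sd_k, \sd} \circ \cross_{\sd, \sd^-} \circ \nu = \cross_{\sd_k, \sd^-} \circ \nu$, while the right-hand side becomes $\cross_{\sd_k, \sd_k^-} \circ \nu \circ \cross_{\sd_k, \sd}$.

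The main obstacle is showing that the latter also equals $\cross_{\sd_k, \sd^-} \circ \nu$. This reduces to the identity $\nu \circ \cross_{\sd_k, \sd} = \cross_{\sd_k^-, \sd^-} \circ \nu$, which expresses the symmetry of the cluster complex under negation: conjugation by $\nu$ should interchange the wall-crossing between the positive chambers $C_{\sd}^+$ and $C_{\sd_k}^+$ with that between their negative counterparts. Rigorous verification of this identity requires careful bookkeeping of how $\nu$ acts on the path-ordered products in $\hat{G}^t$, noting in particular that $\nu$ conjugates $\Ad_{\Psi_{t^d}(z^v)}$ to $\Ad_{\Psi_{t^d}(z^{-v})}$ rather than to its inverse, so one must track the full composite path from $C_\sd^-$ to $C_{\sd_k}^+$ (going through $C_{\sd_k}^-$) rather than rely on a single-wall identity. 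Alternatively, one can invoke the intrinsic, seed-independent nature of the cluster Donaldson--Thomas transformation as studied in \cite{GS-DT,kimura2022twist}.
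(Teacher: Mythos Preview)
Your handling of the frozen case and the non-frozen case $i \neq k$ is correct and essentially matches the paper. The issue lies entirely in the case $i = k$, where your argument has a genuine gap.

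You reduce the claim to the identity $\nu \circ \cross_{\sd_k,\sd} = \cross_{\sd_k^-,\sd^-} \circ \nu$, but this identity is \emph{false} as stated. A direct classical computation gives $\nu \circ \mu_k \circ \nu(z^p) = z^p(1+z^{-v})^{-\langle e_k,p\rangle}$ for $v = \omega_1(e_k)$, whereas $(\mu_k)^{-1}(z^p) = z^p(1+z^v)^{-\langle e_k,p\rangle}$; these differ by the monomial factor $z^{\langle e_k,p\rangle v}$, which is precisely the piecewise-linear map $T_k$ intervening between $\f{D}^{\sd}$ and $\f{D}^{\sd_k}$. You acknowledge this obstruction yourself, but you neither carry out the ``careful bookkeeping'' nor show how the composite path resolves it. The alternative appeal to seed-independence from \cite{GS-DT,kimura2022twist} does not help either: those results are established under injective-reachability, and the paper explicitly works in a setting where this is \emph{not} assumed (indeed, the once-punctured closed surfaces of \S\ref{sec:DT} are the motivating non-injective-reachable examples).

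The paper avoids this altogether by a direct computation. Rather than trying to conjugate the wall-crossing by $\nu$, it uses Lemma~\ref{lem:negative_cluster_mono} to identify $\vartheta_{T_k^{-1}(-f'_k),-\sQ'} = z^{T_k^{-1}(-f'_k)}$ in the chamber $C_{(\sd_k)^-}$, then applies the single wall-crossing from $C_{(\sd_k)^-}$ to $C_{\sd^-}$ to obtain an explicit two-term exchange relation among $\vartheta_{-f_k}$, $\vartheta_{T_k^{-1}(-f'_k)}$, and the $\vartheta_{-\sum [\pm b_{ik}]_+ f_i}$. The analogous relation in $\f{D}^{\sd_k}$ is then written down, and the two are matched under $\mu_k$. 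This sidesteps the symmetry question entirely by working with the concrete mutation relation rather than an abstract identity of automorphisms.
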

\begin{proof}
The mutation $\mu_k$ sends the initial cluster variables $\vartheta_{f_i}$ to
\begin{align*}
\mu_k (\vartheta_{f_i})&=\vartheta'_{f'_i},\ i\neq k\\
\mu_k(\vartheta_{f_k})&=(t^{\alpha} \vartheta'_{\sum_i [b_{ik}]_+ 
f'_i}+t^\beta \vartheta'_{\sum_j [-b_{jk}]_+ f'_j})(\vartheta_{f'_k})^{-1},
\end{align*}
where $\alpha:=\Lambda(f_k,\sum_i [b_{ik}]_+ f_i)$ and $\beta:=\Lambda(f_k,\sum_j [-b_{jk}]_+ f_j)$

Notice that $T_k^{-1}(-f'_k)=-f_k+\sum_i [b_{ik}]_+ f_i$ is contained in the chamber $C^{(\sd_k)^-}$. We have $\vartheta_{T_k^{-1}(-f'_k),-\sQ'}=z^{T_k^{-1}(-f'_k)}$ for a generic point $-\sQ'\in C^{(\sd_k)^-}$ by Lemma \ref{lem:negative_cluster_mono}. A generic path $\gamma$ from $-\sQ'$ to a generic base point $-\sQ$ in $C^-$ only crosses the $k$-th chamber. Applying $\cross_\gamma$, we get 
\begin{align*}
\vartheta_{T_k^{-1}(-f'_k),-\sQ}&=\cross_\gamma (\vartheta_{T_k^{-1}(-f'_k),-\sQ'})\\
&=z^{f_k-\sum_i [b_{ik}]_+ f_i}+z^{f_k-\sum_j [-b_{jk}]_+ f_j}\\
&=(z^{-f_k} )^{-1}(t^\alpha z^{-\sum_i [b_{ik}]_+ f_i}+t^\beta z^{-\sum_j [-b_{jk}]_+ f_j}).
\end{align*}
In fact, this is the $k$-th mutation at the opposite seed $\sd^\op$. Using Lemma \ref{lem:negative_cluster_mono}, we rewrite it as a relation between theta functions:
$$
\vartheta_{-f_k} \vartheta_{T_k^{-1}(-f'_k)}=t^\alpha \vartheta_{-\sum_i [b_{ik}]_+ f_i}+t^\beta \vartheta_{-\sum_j [-b_{jk}]_+ f_j},
$$

By a similar computation (or by applying $\mu_k$), we have the following relation in $\f{D}'$:
$$
\vartheta'_{T_k(-f_k)} \vartheta'_{-f'_k} =t^\alpha \vartheta'_{-\sum_i [b_{ik}]_+ f'_i}+t^\beta \vartheta'_{-\sum_j [-b_{jk}]_+ f'_j}.
$$

Notice that $\mu_k(\vartheta_{-\sum m_i f_i})=\vartheta'_{-\sum m_i f'_i}$ for $m_i\geq 0$, $m_k=0$. In addition, $\mu_k(\vartheta_{T_k^{-1}(-f'_k)})=\vartheta'_{-f'_k}$. Combining with $\DT(\vartheta_{f_i})=\vartheta_{-f_i}$ and $\DT'(\vartheta'_{f'_i})=\vartheta'_{-f'_i}$, $\forall i$, it is straightforward to check the following relation
\begin{align*}
\mu_k \circ \DT(\vartheta_{f_i})&= \DT' \circ\mu_k (\vartheta_{f_i}),\ \forall i.
\end{align*}
Notice that $\vartheta_{f_i,\sQ}=z^{f_i}$, $\forall i,$ generate the fraction field $\mr{\s{A}}_{t}^{\sd}$. The desired claim follows.
\end{proof}
We often identify the fraction field $\mr{\s{A}}_{t}^{\sd}$ with $\mr{\s{A}}_{t}^{\mu_k \sd}$ by the mutation $\mu_k$. Then, by Lemma \ref{lem:DT_change_initial_seed}, the DT-transformation $\DT^{\sd}$ is identified with $\DT^{\mu_k \sd}$, so we simply denote these by $\DT$.  From this point of view, $\DT$ is independent of the choice of the initial seed.

By Lemma \ref{lem:DT_change_initial_seed}, $\DT$ sends cluster monomials for $\sd$ to cluster monomials for $\sd^-$.  By applying sequences of mutations, we extend this to $\sd_{\jj}$ and $\sd_{\jj}^-$.In combination with \eqref{DT-theta-m}, we have the following:

\begin{prop}\label{DT-monomial}
$\DT$ maps the cluster monomials of $\sd_{\jj}$ to the cluster monomials of $\sd_{\jj}^-$.  In particular, for $m\in C_{\sd}$, $\DT^{\sd}(\vartheta_m)=\vartheta_{-m}$.
\end{prop}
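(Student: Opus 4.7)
The plan is to reduce the proposition to the base case already established in equation \eqref{DT-theta-m}, using the mutation-equivariance of the Donaldson--Thomas transformation proved in Lemma \ref{lem:DT_change_initial_seed}. The starting observation is that by Corollary \ref{UpTheta} every cluster monomial of $\sd_{\jj}$ is of the form $\vartheta_m$ for some $m \in C_{\sd_{\jj}}^+ \cap M$, and conversely every such theta function is a cluster monomial of $\sd_{\jj}$. Thus the two halves of the proposition amount to the single statement $\DT(\vartheta_m) = \vartheta_{-m}$ for $m \in C_{\sd_{\jj}}^+ \cap M$, after which membership in $C_{\sd_{\jj}^-}^+ = -C_{\sd_{\jj}}^+$ is automatic.

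First I would treat the case $\jj = ()$. Here $m \in C_{\sd}^+ \cap M$ and we may pick a generic base point $\sQ$ in $C_{\sd}^+$ that shares the chamber with $m$, so $\vartheta_{m,\sQ} = z^m$ (the positive-chamber version of Lemma \ref{lem:negative_cluster_mono}). Equation \eqref{DT-theta-m} then gives $\DT^{\sd}(\vartheta_m) = \vartheta_{-m}$ directly, proving the ``in particular'' statement and establishing the proposition for $\jj = ()$.

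For general $\jj = (j_1,\dots,j_s)$, I would compose $s$ applications of Lemma \ref{lem:DT_change_initial_seed} to obtain
\[
\mu_{\jj} \circ \DT^{\sd} = \DT^{\sd_{\jj}} \circ \mu_{\jj},
\]
where $\mu_{\jj} \colon \mr{\s{A}}_t^{\sd} \risom \mr{\s{A}}_t^{\sd_{\jj}}$ is the composite mutation birational map. Under this identification the cone $C_{\sd_{\jj}}^+$, viewed inside $M_{\sd}$, corresponds to the positive chamber $C_{\sd_{\jj}}^+$ computed with $\sd_{\jj}$ as initial seed (via $T_{\jj}$). Applying the $\jj = ()$ case already proved, but now with $\sd_{\jj}$ as the initial seed, yields $\DT^{\sd_{\jj}}(\vartheta_m) = \vartheta_{-m}$, and pulling back by $\mu_{\jj}$ gives the same identity for $\DT^{\sd}$. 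Since $-m \in -C_{\sd_{\jj}}^+ = C_{\sd_{\jj}^-}^+$, the image is a cluster monomial of $\sd_{\jj}^-$, and by linearity/multiplicativity of $\DT$ on cluster monomials the conclusion extends to all monomials in the cluster variables of $\sd_{\jj}$.

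The only subtle step is verifying that the identification of $\DT^{\sd}$ with $\DT^{\sd_{\jj}}$ via $\mu_{\jj}$ is consistent with the identification of theta functions used in Corollary \ref{UpTheta}: one must check that $\mu_{\jj}$ intertwines $\vartheta_m$ on the $\sd$-side with the theta function on the $\sd_{\jj}$-side whose $g$-vector (with respect to $\sd_{\jj}$) lies in $C_{\sd_{\jj}}^+$, i.e., with the corresponding cluster monomial. This is the content of \cite[Prop. A.3]{davison2019strong} as recorded in the remarks before Lemma \ref{lem:negative_cluster_mono}. Once this matching is in place, the argument is purely bookkeeping, so I expect no further obstacle.
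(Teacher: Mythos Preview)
Your proposal is correct and follows essentially the same route as the paper: establish the base case $\jj=()$ via \eqref{DT-theta-m}, then propagate to arbitrary $\sd_{\jj}$ using the mutation-equivariance of Lemma \ref{lem:DT_change_initial_seed} iterated along $\jj$. The paper's own argument is the terse two-sentence remark preceding the proposition; you have simply unpacked the bookkeeping (the identification of theta functions under $\mu_{\jj}$ via \cite[Prop.~A.3]{davison2019strong}) more carefully than the paper does.
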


More generally, one expects the following:

\begin{conj}\label{conj:DT-theta}
$\DT$ sends theta functions to theta functions.
\end{conj}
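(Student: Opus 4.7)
\textbf{Proof proposal for Conjecture \ref{conj:DT-theta}.} The plan is to prove the strengthened statement that $\DT(\vartheta_m)=\vartheta_{-m}$ for every $m\in\Theta^{\midd}$, extending Proposition \ref{DT-monomial} from cluster monomials to arbitrary theta functions. First, combining Proposition \ref{DT-monomial} with the mutation equivariance of Lemma \ref{lem:DT_change_initial_seed}, the identity $\DT(\vartheta_m)=\vartheta_{-m}$ is automatic whenever $m$ lies in the cluster complex $\s{C}$. The substance of the proof will be to handle $m\in\Theta^{\midd}\setminus\s{C}$.

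Next, I would check that $\DT$ preserves bar-invariance: the map $\nu$ fixes $t$ and negates exponents, so it commutes with the involution $t\mapsto t^{-1}$; and $\cross_{\sd,\sd^-}$ is conjugation by a path-ordered product of the bar-invariant quantum dilogarithms $\Psi_{t^{d'_i}}(z^v)$, so it also commutes with the bar involution. Hence $\DT(\vartheta_m)$ is bar-invariant for every theta function $\vartheta_m$. I would then compute the Laurent expansion of $\DT(\vartheta_m)$ at a generic $\sQ\in C^+$. Unwinding the definitions and using \eqref{eq:nu}, one obtains
\[
\iota_{\sQ}(\DT(\vartheta_m)) \;=\; \cross_{\sd,\sd^-}\bigl(\vartheta^{\sd^{\op}}_{-m,-\sQ}\bigr),
\]
and the task is to identify this with $\vartheta^{\sd}_{-m,\sQ}$. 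The key observation is that the chambers $C^\pm$ sit inside both $\f{D}^{\sd}$ and $\f{D}^{\sd^{\op}}$ as chamber-free cones, so for $m\in\s{C}$ the computation immediately reduces to $z^{-m}$; for general $m$ the broken lines for $\f{D}^{\sd^{\op}}$ and $\f{D}^{\sd}$ traverse different walls, and one must argue that this discrepancy is absorbed by $\cross_{\sd,\sd^-}$. The hoped-for mechanism is that the $\nu$-pullback of $\f{D}^{\sd}$ equals $\f{D}^{\sd^{\op}}$, so composing with the full wall-crossing $\cross_{\sd,\sd^-}$ around the origin returns to $\f{D}^{\sd}$.

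The cleanest finish would be via atomicity: by Proposition \ref{AtomicProp}, the theta functions are precisely the bar-invariant, theta-positive, pointed atomic elements of $\s{A}_t^{\can}$. Bar-invariance is handled above; $(-m)$-pointedness of $\DT(\vartheta_m)$ in a cluster near $-m$ follows from Proposition \ref{DT-monomial} combined with the mutation equivariance of Lemma \ref{lem:DT_change_initial_seed} applied to a $\sd_{\jj}^-$ whose chamber contains $-m$ in its relative interior (or, for $m$ outside the cluster complex, to a base point approaching $-m$). This reduces the whole conjecture to establishing that $\DT$ preserves theta-positivity. For bracelets on surfaces this can be read off from Propositions \ref{prop:DT_tagged_rotation} and \ref{prop:surface-DT} combined with Theorem \ref{thm:punctured_bracelet_theta}; in that setting $\DT$ becomes a geometric operation (tag-switching together with a shift), which manifestly preserves theta-positivity.

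The main obstacle is proving theta-positivity of $\DT(\vartheta_m)$ in full generality. The composition $\cross_{\sd,\sd^-}\circ\nu$ is not manifestly positive: each individual wall-crossing $\Ad_{\Psi_{t^{d'_i}}(z^v)^{\pm 1}}$ acts on Laurent monomials with non-negative coefficients in $\bb{Z}_{\geq 0}[t^{\pm 1}]$, but $\nu$ inverts the sign of every exponent, and bookkeeping the combined action across the infinitely many walls of a non-finite-type $\f{D}^{\sd}$ seems to require a categorical input (e.g., the characterization of $\DT$ via the Hall algebra / stability scattering diagram of \cite{bridgeland2017scattering}, or the categorification of theta functions). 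A possible workaround, adapted from the strategy used in \S \ref{S:BracTheta} for bracelets, is to first establish positivity in a dense family of ``good'' clusters (those containing sufficiently many cluster monomials near $-m$) via Lemma \ref{lem:DT_change_initial_seed}, and then propagate it to all clusters through Lemma \ref{CPS}, but verifying the hypothesis in that propagation appears to again require a categorical positivity input.
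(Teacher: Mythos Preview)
This statement is labeled as a \emph{conjecture} in the paper, not a theorem; the paper does not prove it. Immediately after stating it, the paper only remarks that the conjecture is known for injective-reachable seeds (citing Kimura--Qin), and later establishes it in the surface setting as a consequence of the bracelets-are-theta-functions results (Propositions \ref{prop:surface-DT} and Lemma \ref{lem:1p-classical-DT}). There is no general proof to compare your proposal against.

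That said, your own analysis correctly identifies where the argument breaks down: the atomicity route via Proposition \ref{AtomicProp} reduces everything to showing that $\DT$ preserves theta-positivity, and you have no mechanism for this outside the surface case. The composition $\cross_{\sd,\sd^-}\circ\nu$ is not manifestly positive, and your proposed workarounds (propagating positivity from dense clusters, or invoking stability scattering diagrams) are exactly the kind of categorical input the paper does not supply. Your sketch of $(-m)$-pointedness is also incomplete for $m$ outside $\s{C}\cup(-\s{C})$: Lemma \ref{lem:DT_change_initial_seed} only lets you move to other cluster chambers, not to arbitrary points near $-m$, so the ``base point approaching $-m$'' step has no justification when $-m$ lies outside the cluster complex. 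In short, your proposal is a reasonable outline of why the conjecture \emph{should} be true and where the difficulty lies, but it is not a proof---which is consistent with the paper leaving it open.
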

Conjecture \ref{conj:DT-theta} is known to be true for injective-reachable seeds, see \cite{kimura2022twist}.

Let us now consider a triangulable connected marked surface $\Sigma$.  Following \cite{brustle2015tagged}, given a tagged arc $\gamma$ in $\Sigma$, define the \textbf{tagged rotation} $\gamma^{\diamond}$ of $\gamma$ to be the tagged arc obtained as follows:
\begin{itemize}
\item Change the taggings of any ends of $\gamma$ at punctures;
\item For any end of $\gamma$ not at a puncture, slide the end along $\partial \SSS$, against the orientation (i.e. counterclockwise in Definition \ref{def:el-lam}), until reaching the next marked point.
\end{itemize}

\begin{prop}\label{prop:DT_tagged_rotation}
Assume that $\gamma$ is not a doubly-notched arc in a once-punctured closed surface. If $\Sigma$ is not a once-punctured torus, then $\DT(\BracE{\gamma})$ agrees with $\BracE{\gamma^{\diamond}}$ up to adding some fixed element of $M_{\sd,F}$ to each exponent (i.e, up to a frozen monomial factor times  a power of $t$).  
If $\Sigma$ is a once-punctured torus, then in the coefficient-free classical setting, $\BracE{\gamma^{\diamond}}=4\DT(\BracE{\gamma})$ (and $\BracE{\gamma^{\diamond}}=(P_H)\DT(\BracE{\gamma})$ in the classical principal coefficients setting for $P_H$ as in Theorem \ref{thm:1p-bracelet}).
\end{prop}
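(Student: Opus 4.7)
The plan is to reduce the statement to the identity $\DT(\vartheta_m) = \vartheta_{-m}$ for $m$ in the positive chamber, from Proposition \ref{DT-monomial}, via the observation that both $\BracE{\gamma}$ and $\BracE{\gamma^{\diamond}}$ are (multiples of) theta functions in all cases allowed by the hypotheses. Under these hypotheses, $\gamma$ always corresponds to a cluster variable; its tagged rotation $\gamma^{\diamond}$ either is also a cluster variable (when $\Sigma$ has no once-punctured closed component, or whenever $\gamma^{\diamond}$ remains plain-tagged) or is a doubly-notched arc in a once-punctured closed surface, in which case Theorem \ref{thm:1p-bracelet} describes $\BracE{\gamma^{\diamond}}$ as $\vartheta_{g(\gamma^{\diamond})}$ (genus $\geq 2$) or as $4\vartheta_{g(\gamma^{\diamond})}$ (once-punctured torus, coefficient-free), or as $P_H^{\langle n_0,\,-g(\gamma^{\diamond})\rangle}\vartheta_{g(\gamma^{\diamond})}$ (once-punctured torus, principal coefficients). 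Thus the proposition reduces to a comparison of $g$-vectors.

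The central technical claim is the identity
$$g(\BracE{\gamma^{\diamond}}) \equiv -g(\BracE{\gamma}) \pmod{M_F},$$
which, via \eqref{eq:g_vector_shear}, translates to the shear-coordinate identity $b^{\Delta'}(e(\gamma^{\diamond})) = -b^{\Delta'}(e(\gamma))$ for every tagged triangulation $\Delta'$ without self-folded triangles. I would verify this by a direct local analysis using Definition \ref{def:shear} and Figure \ref{fig:shear_coordinate}. At a puncture endpoint, the effect of $\gamma \mapsto \gamma^{\diamond}$ is to flip the spiral direction of the corresponding end of the elementary laminate, which converts every S-crossing into a Z-crossing and vice versa, flipping each local shear contribution. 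At a boundary endpoint, although the endpoints of $e(\gamma)$ and $e(\gamma^{\diamond})$ sit on adjacent boundary arcs, a careful triangle-by-triangle examination near the endpoints shows that the two laminates traverse the same interior edges of $\Delta'$ with opposite bending patterns, again yielding the sign flip. The remaining frozen discrepancy between $-g(\gamma)$ and $g(\gamma^{\diamond})$ then automatically lies in $M_F$.

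Granting the $g$-vector identity, the proof concludes quickly. Since $\BracE{\gamma}$ is a cluster variable for a seed $\sd_{\Delta'}$ with $\gamma \in \Delta'$, its $g$-vector lies in $C_{\sd_{\Delta'}}$; mutation equivariance of $\DT$ (Lemma \ref{lem:DT_change_initial_seed}) lets us apply Proposition \ref{DT-monomial} in that seed to obtain $\DT(\BracE{\gamma}) = \DT(\vartheta_{g(\gamma)}) = \vartheta_{-g(\gamma)}$. By Lemma \ref{lem:similar_theta_function}, $\vartheta_{-g(\gamma)}$ and $\vartheta_{g(\gamma^{\diamond})}$ differ only by a frozen monomial factor (times a power of $t$ in the quantum setting) since their $g$-vectors differ by an element of $M_F$; this gives the first clause. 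In the once-punctured torus case I then multiply by the factor from Theorem \ref{thm:1p-bracelet}; noting that $\langle n_0, g(\gamma)\rangle = 1$ for a doubly-plain cluster variable $\gamma$ on the torus yields the stated multiplier $4$ in the coefficient-free setting and $P_H$ in the principal coefficients setting.

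The principal obstacle is the boundary-endpoint portion of the shear-coordinate computation: since $e(\gamma)$ and $e(\gamma^{\diamond})$ enter the triangulation at different points, the sign flip must be established triangle-by-triangle rather than as a single global statement, and self-folded triangles or arcs sharing a puncture endpoint require individual care. A fallback strategy, if the combinatorial check proves too delicate, is to invoke the lambda-length interpretation of \S \ref{subsub:tagged-skein}: tagged rotation is realized by a specific reparametrization of decorated Teichm\"uller space, so the identity can be read off from how $\DT$ acts on horocycles and boundary cusps, bypassing the shear-coordinate bookkeeping entirely.
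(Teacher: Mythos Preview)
Your approach is correct and follows essentially the same architecture as the paper's proof: reduce to $\DT(\vartheta_m)=\vartheta_{-m}$ via Proposition \ref{DT-monomial} and Lemma \ref{lem:DT_change_initial_seed}, then verify $g(\gamma^{\diamond})\equiv -g(\gamma)\pmod{M_F}$ using shear coordinates and \eqref{eq:g_vector_shear}, and finally invoke Theorem \ref{thm:1p-bracelet} for the once-punctured torus correction.

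The only substantive difference is in how you establish the $g$-vector identity. You propose verifying $b^{\Delta'}(e(\gamma^{\diamond}))=-b^{\Delta'}(e(\gamma))$ by a local S/Z-crossing analysis in a general triangulation, and you correctly flag the boundary-endpoint case as delicate. The paper bypasses this entirely: it mutates to a triangulation $\Delta'$ \emph{containing} $\gamma$. In that triangulation $g(\gamma)=f'_i$ is a basis vector trivially, and one only needs to compute $b^{\Delta'}(e(\gamma^{\diamond}))$ for this single specific $\Delta'$. Since $e(\gamma^{\diamond})$ crosses $\gamma$ once (contributing $+1$) and crosses no other interior arc of $\Delta'$, the computation is immediate from Definition \ref{def:shear}, with no separate boundary-endpoint analysis needed. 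Your general argument would work, but the paper's choice of $\Delta'$ makes the combinatorics collapse to a single crossing; this is worth adopting.
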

\begin{proof}
 There exists a tagged triangulation $\Delta'$ which can be obtained from $\Delta$ by a sequence of flips $\jj$ and which contains $\gamma$.  By applying mutations along the sequence $\jj$, it suffices to verify the claim for the triangulation $\Delta'$ and the initial seed $\sd_{\jj}$. Then $g(\gamma)$ becomes a basis vector $f'_i$ and $-b^{\Delta'} (\gamma^\diamond)$ becomes $-f'_i$, see Definition \ref{def:shear}.  So using \eqref{eq:shear_g}, the first claim follows from the equality $\DT(\vartheta_{f'_i})=\vartheta_{-f'_i}$.  The claim for the once-punctured torus follows similarly using Theorem \ref{thm:1p-bracelet}.  
\end{proof}

As a consequence, we have the following result.
\begin{cor}\label{prop:DT}
Suppose we are in the coefficient-free classical setting.  If $\Sigma$ is a closed punctured surface but not a once-punctured torus, then $\DT$ coincides with $\prod_{P\in \MM} \iii_P$.  If $\Sigma$ is a once-punctured torus, then $\DT$ coincides with $\iii_P\circ \lambda$ where $\lambda$ is defined by $\lambda:z^m\mapsto 4^{-\langle m,n_0 \rangle}z^m$ for $n_0$ as in Lemma \ref{lem:n0}. In either case, $\DT$ is an involution.
\end{cor}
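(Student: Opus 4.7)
The plan is to show that $\DT$ coincides with the proposed automorphism by checking agreement on a generating set of the fraction field $\mr{\s{A}}$, then to verify the involution claim via a direct computation on the same generators. I would fix an ideal triangulation $\Delta$ of $\Sigma$ without self-folded triangles (which exists by \cite[Lem.~2.13]{FominShapiroThurston08}); since $\Sigma$ is closed, there are no boundary arcs, so the frozen shift $M_{\sd,F}$ appearing in Proposition \ref{prop:DT_tagged_rotation} is trivial, and the cluster variables $\{\BracE{\gamma}\}_{\gamma \in \Delta}$ generate $\mr{\s{A}}$ as a field.

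When $\Sigma$ is closed but not a once-punctured torus, I would observe that every arc $\gamma \in \Delta$ has both ends at punctures, so its tagged rotation $\gamma^{\diamond}$ is obtained simply by changing all tags at punctures; that is, $\gamma^{\diamond} = \prod_{P \in \MM}\iii_P(\gamma)$ at the level of tagged arcs. Combined with Lemma \ref{lem:tag-change} (which gives $\iii_P(\BracE{\gamma'}) = \BracE{\iii_P(\gamma')}$) and Proposition \ref{prop:DT_tagged_rotation}, this yields $\DT(\BracE{\gamma}) = \BracE{\gamma^{\diamond}} = \prod_P \iii_P(\BracE{\gamma})$ on generators, hence on all of $\mr{\s{A}}$. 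The $\iii_P$'s pairwise commute (they affect distinct punctures) and each is an involution, so $\DT^2 = \left(\prod_P \iii_P\right)^2 = \id$ follows automatically.

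For $\Sigma$ the once-punctured torus with puncture $P$, the argument hinges on Lemma \ref{lem:n0}: since $g(\gamma) = e_{\gamma}^*$ and $\omega_1(N) \subset n_0^{\perp}$, every monomial $z^m$ in the $F$-polynomial expansion of $\BracE{\gamma} = \vartheta_{g(\gamma)}$ satisfies $\langle m, n_0\rangle = 1$, so $\lambda$ acts as the scalar $\tfrac{1}{4}$ on $\BracE{\gamma}$. Thus $(\iii_P \circ \lambda)(\BracE{\gamma}) = \tfrac{1}{4}\BracE{\gamma^{\diamond}} = \DT(\BracE{\gamma})$ by Proposition \ref{prop:DT_tagged_rotation}, establishing equality on generators. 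For involution, Theorem \ref{thm:1p-bracelet} gives $\BracE{\gamma^{\diamond}} = 4\vartheta_{-g(\gamma)}$, whose monomials all satisfy $\langle \cdot, n_0\rangle = -1$; hence $\lambda(\BracE{\gamma^{\diamond}}) = 4\BracE{\gamma^{\diamond}}$ and
\[
(\iii_P\circ\lambda)^2(\BracE{\gamma}) = (\iii_P\circ\lambda)\left(\tfrac{1}{4}\BracE{\gamma^{\diamond}}\right) = \iii_P(\BracE{\gamma^{\diamond}}) = \BracE{\gamma}.
\]
The main subtlety is recognizing that $\lambda$ acts as a scalar on each bracelet $\BracE{\gamma}$ or $\BracE{\gamma^\diamond}$ even though $\lambda^2 \neq \id$ globally: this is special to the hyperplane structure of Lemma \ref{lem:n0}, which forces all exponents in a given $\vartheta_p$ to share a common value of $\langle \cdot, n_0\rangle$, and it is precisely what makes the scalar factor $\tfrac{1}{4}$ from Proposition \ref{prop:DT_tagged_rotation} get absorbed cleanly into the formula $\iii_P \circ \lambda$.
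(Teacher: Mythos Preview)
Your proposal is correct and follows exactly the route the paper intends: the corollary is stated in the paper with no explicit proof, only the phrase ``As a consequence, we have the following result,'' so it is meant to be read off directly from Proposition~\ref{prop:DT_tagged_rotation} together with the observation that $M_{\sd,F}=0$ in the coefficient-free closed setting. Your argument---checking agreement on the initial cluster variables $\{\BracE{\gamma}\}_{\gamma\in\Delta}$, using that for closed $\Sigma$ the tagged rotation $\gamma\mapsto\gamma^{\diamond}$ is precisely $\prod_P\iii_P$, and handling the torus case via the scalar action of $\lambda$ on each $\vartheta_{g(\gamma)}$ and $\vartheta_{-g(\gamma)}$ forced by Lemma~\ref{lem:n0}---is the natural way to make this explicit.
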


\begin{prop}\label{prop:surface-DT}
When $\Sigma$ is not a once-punctured closed surface, $\DT$ is a permutation on the theta functions. In addition, we have $\DT \E{L}=\E{L}$ for any simple loop $L$. Moreover, if $\Sigma$ is closed and has at least two punctures, then $\DT$ is involutive up to frozen variables.
\end{prop}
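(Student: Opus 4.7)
The plan is to establish the unified statement that, for every weighted tagged simple multicurve $C$ on $\Sigma$ satisfying the hypotheses of the proposition, one has $\DT(\E{C}) = z^{f_C}\cdot \E{C^{\diamond}}$ for some $f_C \in M_F$ in the frozen lattice, where $C^{\diamond}$ denotes the componentwise tagged rotation, and moreover $f_C = 0$ whenever every component of $C$ is a loop. Given this, claims (1)--(3) follow: Claim (1) is immediate because $C \mapsto C^{\diamond}$ is a bijection on multicurves, and by Theorem \ref{thm:punctured_bracelet_theta} the bracelets $\E{C}$ agree with theta functions $\vartheta_{g(C)}$ up to nonzero scalars. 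Claim (2) is the special case $C = L$ combined with the vanishing $f_L = 0$. Claim (3) is the closed case with at least two punctures: since $\Sigma$ has no boundary, tagged rotation reduces to simultaneous tag-switching, giving $(C^{\diamond})^{\diamond} = C$ and hence $\DT^2(\E{C}) = z^{f_C + f_{C^{\diamond}}}\E{C}$, i.e., involutivity modulo frozen variables.

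The main identity will be proved using the factorization $\E{C} = \E{C_0}\cdot \E{L}$ for $C = C_0 \cup L$, with $C_0$ a union of compatible tagged arcs and $L$ a union of loops; the two factors commute by Lemma \ref{lem:commute_arc_loop}. For the arc factor, the argument extends Proposition \ref{prop:DT_tagged_rotation}: choose a tagged triangulation $\Delta'$ containing all arcs of $C_0$, let $\jj$ be the flip sequence from $\Delta$ to $\Delta'$, and use Lemma \ref{lem:DT_change_initial_seed} to transfer the computation to the seed $\sd_{\jj}$. In $\sd_{\jj}$, $g(C_0)$ is a non-negative integer combination of basis vectors $f'_i$ (after separating out any negatively-weighted boundary arcs into an explicit frozen monomial), so it lies in $C_{\sd_{\jj}}^+$, and \eqref{DT-theta-m} yields $\DT(\vartheta_{g(C_0)}) = \vartheta_{-g(C_0)}$; a componentwise application of the shear-coordinate identity \eqref{eq:shear_g} identifies $-g(C_0)$ with $g(C_0^{\diamond})$ modulo $M_F$, supplying $f_{C_0}$. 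The loop factor, $\DT(\E{L}) = \E{L}$, is the heart of the matter, since loops cannot be placed in a triangulation. Our approach is to invoke mapping class group equivariance of $\DT$: by Lemma \ref{lem:DT_change_initial_seed}, $\DT$ is intrinsic to $\s{A}_t^{\up}$ and commutes with the $\Gamma_{\Sigma}$-action reviewed in \S \ref{subsub:mcg}; since $\tau_L$ fixes $\E{L}$, it also fixes $\DT(\E{L}) = \vartheta_m$. By Lemma \ref{cor:tau-m} combined with Lemma \ref{lem:infinite-orbit}, such $m$ must be the $g$-vector of a bracelet whose components do not intersect $L$; bar-invariance, positivity, and the Dehn-twist approximation of $g(L)$ by cluster chambers (Lemma \ref{lem-sigma-beta} applied with $\beta = \{L\}$) will be combined to rule out all candidates other than $m = g(L)$, yielding both $\DT(\E{L}) = \E{L}$ and $f_L = 0$.

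The principal obstacle is ruling out the spurious $\tau_L$-fixed points in the step above: $\tau_L$-invariance alone permits a whole lattice of candidates for $m$, and eliminating them will require combining bar-invariance, the positivity of broken-line expansions, and the structural constraint from Proposition \ref{DT-monomial} that $\DT$ sends $\Delta^+$-cluster monomials to $\Delta^-$-cluster monomials. Extending the dichotomy of Lemma \ref{kg} beyond the unpunctured setting it was proved in, and carefully tracking the frozen contributions when $\Sigma$ has boundary, is the most delicate technical ingredient. A covering-space reduction via \S \ref{GenFoldApp} would be attractive but appears blocked, since loops in $\Sigma$ do not generally lift to arcs under the unramified seed coverings treated there.
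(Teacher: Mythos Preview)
Your approach diverges substantially from the paper's, and in two places it runs into trouble.

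\textbf{Circularity.} You invoke Theorem~\ref{thm:punctured_bracelet_theta} (bracelets are theta functions) and Lemma~\ref{lem:commute_arc_loop} (loops commute with disjoint tagged arcs). In the paper's logical order, both of these \emph{depend on} the proposition you are trying to prove: part~(ii) of the proof of Lemma~\ref{lem:commute_arc_loop} explicitly uses $\DT\E{L}=\E{L}$ from Proposition~\ref{prop:surface-DT}, and Theorem~\ref{thm:punctured_bracelet_theta} is built on Lemma~\ref{lem:commute_arc_loop}. So your plan, as written, is circular.

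\textbf{The loop step.} The paper's argument for $\DT\E{L}=\E{L}$ is much shorter than your Dehn-twist strategy and avoids the ``principal obstacle'' you flag. First, claim~(1) is obtained by a one-line citation: since $\Sigma$ is not once-punctured closed, $\sd$ is injective-reachable (Proposition~\ref{gdense}), and then \cite[Thm.~6.1.1]{kimura2022twist} gives that $\DT$ permutes theta functions. For claim~(2), the paper exploits the \emph{bi-pointedness} of $\E{L}$ (equation~\eqref{eq:bipointed}, from the trace description \eqref{eq:classical_trace}): the Laurent support of $\E{L}$ lies in $[g(L)+M^{\oplus}]\cap[-g(L)-M^{\oplus}]$, a set invariant under negation, so $\nu(\E{L})$ is again $g(L)$-pointed. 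Since $\DT\E{L}=\vartheta_m$ for some $m$ by claim~(1), we have $\vartheta_{m,-\sQ}=\nu(\E{L})$, and pointedness forces $m=g(L)$. No mapping-class-group argument or case analysis of $\tau_L$-fixed $g$-vectors is needed.

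For claim~(3), the paper combines claim~(1) with Corollary~\ref{prop:DT} (which identifies $\DT$ with $\prod_P\iii_P$ in the coefficient-free classical limit, hence involutive there) to conclude that $\DT^2(\vartheta_m)=\vartheta_{m'}$ with $m'\in m+M_F$. Your route via $(C^{\diamond})^{\diamond}=C$ would also work once claim~(2) is established, but it is not how the paper argues and it requires the full identity $\DT(\E{C})=z^{f_C}\E{C^{\diamond}}$, which you never actually need for the proposition as stated.
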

\begin{proof}
We've seen that $\DT$ is invertible \eqref{eq:DT-1}. The first claim holds by \cite[Thm. 6.1.1]{kimura2022twist} since Proposition \ref{gdense} implies these cases are injective-reachable. 

For the claim regarding $\E{L}$, note that it suffices to work in the setting of principal coefficients.  It is known that $\E{L}$ is bi-pointed at $g(L)$ and $-g(L)$, meaning that 
\begin{align}\label{eq:bipointed}
\E{L}=z^{g(L)} + \left(\sum_{m\in [g(L)+M^+]\cap [-g(L)-M^+]} c_mz^m \right)+z^{-g(L)}
\end{align}
for some coefficients $c_m$; cf. \cite[Thm. 12.2.1]{FockGoncharov06a} (a consequence of \eqref{eq:classical_trace}).  It follows that the theta function $\DT \E{L}=\cross_{\sd,\sd^-} (\nu^{\sd}\E{L})$ is also pointed with lowest degree $g(L)$, so it must coincide with $\E{L}$.

Now suppose $\Sigma$ is a closed surface.  We have shown that for any $m\in M$, we have $(\DT)^2\vartheta_m=\vartheta_{m'}$ for some $m'$. By Proposition \ref{prop:DT}, $\vartheta_{m'}$ coincides with $\vartheta_m$ when we evaluate $t$ and the frozen variables to $1$, since $\DT=\prod_{P\in \MM} \iii_P$ is involutive in this setting. It follows that $m'\in m+M_F$.
\end{proof}

\begin{lem}\label{lem:1p-classical-DT}
If $\Sigma$ is a once-punctured closed surface, then at the classical limit $t=1$, $\DT$ is involutive up to frozen variables. In addition, we have $\DT \E{L}=\E{L}$ for any simple loop $L$. Moreover, $\DT$ is a permutation on the theta functions.
\end{lem}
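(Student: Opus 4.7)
The plan is to prove all three claims in the coefficient-free classical setting first, then promote to principal coefficients as in the proof of Proposition \ref{prop:surface-DT}. In the coefficient-free classical setting, Corollary \ref{prop:DT} gives an explicit description: $\DT = \iii_P$ when $\Sigma$ is not a once-punctured torus, and $\DT = \iii_P \circ \lambda$ when $\Sigma$ is a once-punctured torus, where $\iii_P$ is the tag-change involution of $\Sk^{\Box}(\Sigma)$ and $\lambda(z^m) = 4^{-\langle m, n_0\rangle}z^m$ with $n_0 = \sum_{i\in \Delta}e_i$.

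For (2), I would observe that $\iii_P$ fixes $\E{L}$ because a loop has no tagged endpoints, and $\lambda$ acts trivially on $\E{L}$ because Lemma \ref{lem:n0} places the support of $\E{L}$ inside $\omega_1(N)\subset n_0^{\perp}$. For (3), I would use that $\iii_P$ permutes the tagged bracelets (mapping each tagged-arc component to its tag-rotation while fixing loop components), while $\lambda$ acts on a general bracelet $\E{C}$ by the constant scalar $4^{-\langle g(C), n_0\rangle}$, since Lemma \ref{lem:n0} implies the entire support of $\E{C}$ lies on a single level set of $\langle \cdot, n_0 \rangle$; combined with the identification of bracelets with theta functions up to scalars (Theorem \ref{thm:1p-bracelet} and Lemma \ref{lem:1p_general_bracelet}), this shows $\DT$ permutes theta functions up to scalars. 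For (1), the non-torus case is immediate since $\iii_P$ is an involution. In the torus case, the key observation $\langle g(\iii_P(C)), n_0\rangle = -\langle g(C), n_0\rangle$ (loop components contribute $0$ and tagged-arc contributions are negated under tag-rotation) reduces $(\iii_P \circ \lambda)^2(\E{C})$ to $\iii_P^2(\E{C}) = \E{C}$.

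To extend to the classical principal coefficient setting, I would establish (3) by combining Proposition \ref{DT-monomial} (which gives it for $g$-vectors in $\s{C}^+ \cup \s{C}^-$) with a description of $\DT$ on bracelets whose $g$-vectors lie on the boundary hyperplane $H$: Proposition \ref{prop:DT_tagged_rotation} gives the action on plain arcs directly, and applying the same proposition to a plain arc $\gamma$ and then inverting via \eqref{eq:DT-1} yields the action of $\DT$ on the doubly-notched bracelet $\E{\gamma^\diamond}$. Statement (2) with principal coefficients then follows from the bi-pointedness of $\E{L}$ via the argument of Proposition \ref{prop:surface-DT}. Statement (1) is deduced by the same mechanism as in Proposition \ref{prop:surface-DT}: (3) gives $\DT^2(\vartheta_m)=\vartheta_{m'}$ for some $m'$, and the coefficient-free involutivity forces $m' \in m + M_F$.

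The main obstacle is the doubly-notched arc case in the principal coefficient setting, which is explicitly excluded from Proposition \ref{prop:DT_tagged_rotation}. Beyond the approach via \eqref{eq:DT-1} sketched above, a more robust alternative would be to lift to a covering $\pi:\wt{\Sigma}\to \Sigma$ with $\wt{\Sigma}$ closed with at least two punctures: Proposition \ref{prop:surface-DT} applies on $\wt{\Sigma}$, and the unfolding framework of Appendix \ref{GenFoldApp}, together with Lemma \ref{lem:H} identifying the scattering-diagram discrepancy as a single extra wall on $H$ in the torus case, would transfer the result back to $\Sigma$.
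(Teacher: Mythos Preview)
Your coefficient-free argument via Corollary~\ref{prop:DT} is clean and correct, but it does not suffice: for a once-punctured closed surface the seed $\sd_{\Delta}$ has no frozen variables, so ``involutive up to frozen variables'' is vacuous there, and the lemma (as used in Lemma~\ref{lem:support_DT_L}) is really about the principal-coefficients setting.

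Your principal-coefficients argument has a genuine gap. Proposition~\ref{DT-monomial} only tells you that $\DT(\vartheta_m)$ is a theta function for $m\in\s{C}^+$, landing in $\s{C}^-$; it says nothing about $\DT(\vartheta_m)$ for $m\in\s{C}^-$. Your ``inverting via \eqref{eq:DT-1}'' produces $\DT^{-1}(\E{\gamma^\diamond})=\E{\gamma}$ up to frozen, not $\DT(\E{\gamma^\diamond})$; converting one into the other is exactly the involutivity you are trying to prove. Since your arguments for (1) and (2) both rest on (3) being known on all of $M$ (in particular on $\s{C}^-$), the gap propagates: for (1) you need $\DT^2(\vartheta_{f_i})=\DT(\vartheta_{-f_i})$ to be a theta function before comparing with the coefficient-free limit, and the bi-pointedness step borrowed from Proposition~\ref{prop:surface-DT} for (2) likewise presupposes that $\DT\E{L}$ is already a theta function.

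Your fallback is exactly what the paper does. The paper passes to a $d{:}1$ cover $\wt{\Sigma}$, establishes the intertwining relation $\iota^*\circ\DT^{\wt{\sd}}=\lambda\circ\DT^{\sd}\circ\iota^*$ on the generators $\vartheta_{d e_{\Pi i}^*}$ (using Theorem~\ref{thm:1p-bracelet} and Lemma~\ref{lem:restriction_cluster_complex}), and then transfers involutivity, the loop statement, and the permutation property from $\wt{\Sigma}$ (where Proposition~\ref{prop:surface-DT} applies since $\wt{\Sigma}$ has $\geq 2$ punctures) down to $\Sigma$. This covering-space route sidesteps the $\s{C}^-$ issue entirely because on $\wt{\Sigma}$ the seed is injective-reachable and \cite{kimura2022twist} already gives (3).
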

We conjecture that the claims hold at the quantum level as well.
\begin{proof}
It suffices to work with principal coefficients. Choose any $1<d\in \NN$. As in \S \ref{sec:closed_surface}, let $\pi:\wt{\Sigma}\rar \Sigma$ denote a $d:1$ covering space of $\Sigma$. Choose ideal triangulations $\Delta$, $\wt{\Delta}$ as before and let $\wt{\sd}$, $\sd$ denote the associated seeds of principal coefficients. 

For any $i\in \wt{\Delta}$, denote $m=d e_{\Pi i}^*$ and $\wt{m}=\kappa (m)$. Then $\vartheta_m$ and $\vartheta_{\wt{m}}$ are bracelets associated to plain arcs and $\lambda(-m)\vartheta_{-m}$ and $\vartheta_{-\wt{m}}$ are associated to the corresponding doubly notched arcs, where the constant $ \lambda(-m)$ equals $P_H^d$ (for $P_H$ as in Theorem \ref{thm:1p-bracelet}) when $\Sigma$ is a once-punctured torus and it equals to $1$ otherwise. Since $m$ lies in a chamber of $\s{C}$, we have $\iota^* (\vartheta_{\wt{m}})= \vartheta_{m}$ by Lemma \ref{lem:restriction_cluster_complex}.  Moreover, our findings in \S \ref{sub:bracelets-1p} imply $\iota^* (\vartheta_{-\wt{m}})= \lambda(-m)\vartheta_{-m}$. Combining with $\DT^{\sd}(\vartheta_m)=\vartheta_{-m}$ and $\DT^{\wt{\sd}}(\vartheta_{\wt{m}})=\vartheta_{-\wt{m}}$ (Proposition \ref{DT-monomial}), we deduce that 
\begin{align*}\iota^* \circ \DT^{\wt{\sd}}=\lambda\circ \DT^{\sd}\circ \iota^*
\end{align*}
where $\lambda(z^m)=\lambda(-m)^{-\langle m,n_0\rangle}z^m$.  In particular, $\lambda(\vartheta_{\mp m})=\lambda(-m)^{\pm 1} \vartheta_{\mp m}$. Using this and the fact that $\DT^{\wt{\sd}}$ is involutive up to frozen variables (Proposition \ref{prop:surface-DT}), we compute
\begin{align*}
(\DT^{\sd})^2(\vartheta_{m})=\DT^{\sd}(\vartheta_{-m})&=\frac{1}{\lambda(-m)}\DT^{\sd}(\iota^*(\vartheta_{-\wt{m}})) \\ &= \frac{1}{\lambda(-m)}\lambda^{-1}\circ  \iota^*(\DT^{\wt{\sd}}(\vartheta_{-\wt{m}})) \\ &= \frac{1}{\lambda(-m)}\lambda^{-1}\circ\iota^*(\vartheta_{\wt{m}+f}) \qquad \text{for some $f\in M_F$}\\
&=\frac{1}{\lambda(-m)}\lambda^{-1}\vartheta_{m+f}=\vartheta_{m+f}.
\end{align*}
So $\DT^{\sd}$ is indeed involutive up to frozen variables

For any simple loop $L$, denote $\pi^{-1}(L)=\cup_{1\leq j\leq d} L_j$, where $L_j$ are connected components of $\pi^{-1}(L)$. Then $\iota^* \E{L_j}=\E{L}$ by the construction of bracelets and Lemma \ref{lem:cover-unfold}. Using Proposition \ref{prop:surface-DT}, we have $$\DT^{\sd}\E{L}^d=\DT^{\sd}\iota^*(\prod_j \E{L_j})=\lambda^{-1}(\iota^*\DT^{\wt{\sd}}(\prod_j \E{L_j}))=\lambda^{-1}\iota^* (\prod_j \E{L_j})=\lambda^{-1}(\E{L}^d)=\E{L}^d.$$ It follows that $\DT$ sends $\E{L}$ to $\E{L}$.

Combining the above results with the relationship between the classical tagged bracelets and theta functions given in Lemma \ref{lem:1p_general_bracelet}, we deduce that $\DT$ sends theta functions to theta functions.
\end{proof}

The following observation is used in the proof of Lemma \ref{lem:commute_arc_loop}.  
\begin{lem}\label{lem:support_DT_L}
Assume that $\Sigma$ is a once-punctured closed surface. Take any initial triangulation $\Delta$ and let $\sd$ denote a seed similar to $\sd_{\Delta}$ and satisfying the Injectivity Assumption.   
 Then $\DT^{-1}\E{L}$ is bi-pointed with highest Laurent degree $g(L)$ and the lowest Laurent degree $-g(L)$ (in the ordering of Definition \ref{def:dom}); i.e., it has the form of the right-hand side of \eqref{eq:bipointed}. 
\end{lem}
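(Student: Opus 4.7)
The plan is to unfold the identity $\DT^{-1}=\nu^{\sd}\circ\cross_{\sd^-,\sd}$ from \eqref{eq:DT-1} and show that the bi-pointedness of $\E{L}$ is preserved by each factor.

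I first check that $\E{L}$ itself is quantum bi-pointed at $g(L)$ and $-g(L)$, each with coefficient $1$. From the definition $\E{L}=\omega_1^{\sd}(\bb{I}_t(L))$ in \eqref{eq:q_loop_bracelet} and the quantum trace construction, $\bb{I}_t(L)$ is a bar-invariant positive Laurent polynomial in the $X$-variables pointed at $-\pi(L)$, with the opposite extreme at $\pi(L)$; the bi-pointedness comes from the all-bottom-right and all-top-left diagonal paths in the quantum analog of the matrix product \eqref{eq:classical_trace}.  Bar-invariance plus positivity plus classical unit coefficient at each extreme forces the quantum extreme coefficients to equal $1$, since any bar-invariant element of $\bb{Z}_{\geq 0}[t^{\pm 1/D}]$ reducing to $1$ at $t=1$ must be exactly $1$.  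Applying $\omega_1^{\sd}$ and using $g(L)=-\omega_1(\pi(L))$ from \eqref{gLoop}, $\E{L}$ is bi-pointed at $\pm g(L)$ with coefficient $1$ at each extreme.

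Next, since $\E{L}=\vartheta_{g(L)}$ is a quantum theta function by Lemma \ref{lem:loop_puncture_bracelet}, Lemma \ref{CPS} gives $\cross_{\sd^-,\sd}(\E{L})=\vartheta_{g(L),\sQ^-}$ for generic $\sQ^-\in C^-$.  Positivity of broken lines (Lemma \ref{BLpos}) implies the support is contained in $g(L)+M^{\oplus}$, with the straight broken line contributing the extreme $z^{g(L)}$ with coefficient $1$.  For the opposite extreme I invoke the classical equality $\DT\,\E{L}|_{t=1}=\E{L}|_{t=1}$ from Lemma \ref{lem:1p-classical-DT}: this gives $\cross_{\sd^-,\sd}(\E{L})|_{t=1}=\nu(\E{L})|_{t=1}$, whose support lies in $-g(L)-M^{\oplus}$ with extreme $z^{-g(L)}$ of coefficient $1$.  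By bar-invariance of $\cross_{\sd^-,\sd}(\E{L})$ (both $\cross$ and $\nu$ preserve bar-invariance per \cite[Thm.~2.15 and Lem.~3.4]{davison2019strong}) together with positivity of broken lines, the quantum extreme coefficient at $-g(L)$ remains $1$ and the support remains sandwiched between $g(L)$ and $-g(L)$.

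Finally, $\nu^{\sd}:z^m\mapsto z^{-m}$ is an involution preserving the bi-pointed support set $[g(L)+M^+]\cap[-g(L)-M^+]$ (invariant under $m\mapsto -m$) and swapping the extreme degrees $\pm g(L)$ while preserving unit coefficients.  Hence $\DT^{-1}\E{L}=\nu^{\sd}(\cross_{\sd^-,\sd}(\E{L}))$ is bi-pointed at $\pm g(L)$ with coefficient $1$ at each extreme, matching the form of \eqref{eq:bipointed}.

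The main obstacle is Step 2: while the top bound on the support of $\vartheta_{g(L),\sQ^-}$ follows directly from broken-line positivity, the bottom bound (support in $-g(L)-M^{\oplus}$) and the identification of the extreme $z^{-g(L)}$ with coefficient exactly $1$ require the classical equality of Lemma \ref{lem:1p-classical-DT}; the quantum upgrade then uses bar-invariance combined with positivity, in the spirit of Lemma \ref{lem:1theta-implies-qtheta}.
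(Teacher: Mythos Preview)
Your proof is correct and follows essentially the same approach as the paper: both compute $\DT^{-1}\E{L}=\nu^{\sd}\circ\cross_{\sd^-,\sd}(\E{L})=\nu^{\sd}(\vartheta_{g(L),-\sQ})$, obtain one extreme directly from pointedness of $\vartheta_{g(L),-\sQ}$, and obtain the other extreme by combining the classical equality $\DT^{-1}\E{L}|_{t=1}=\E{L}|_{t=1}$ from Lemma~\ref{lem:1p-classical-DT} with positivity of broken lines and bar-invariance. The only cosmetic difference is that the paper passes through a theta function decomposition $\DT^{-1}\E{L}=\sum_m c_m\vartheta_m$ to isolate the maximal-degree term, whereas you argue more directly that positivity forces the quantum and classical Laurent supports to coincide; your Step~1 (quantum bi-pointedness of $\E{L}$ itself) is not strictly needed, since only the classical bi-pointedness of $\E{L}$ is used in Step~2.
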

\begin{proof}
Let $\sQ$ denote any generic point in $C^+$. We have $\DT^{-1}\E{L}=\nu^{\sd} (\cross_{\sd^-,\sd}(\vartheta_{g(L),\sQ}))=\nu^{\sd} (\vartheta_{g(L),-\sQ})$. Now since $\vartheta_{g(L),-\sQ}$ is a bar-invariant positive $g(L)$-pointed Laurent polynomial, $\DT^{-1}\E{L}$ must be a bar-invariant positive Laurent polynomial with lowest Laurent degree $-g(L)$ with coefficient $1$.

Let us decompose $\DT^{-1}\E{L}$ into a convergent sum $\DT^{-1}\E{L}=\sum_{m} c_m \vartheta_m$.  By Lemma \ref{lem:1p-classical-DT}, its classical limit at $t=1$ is $\DT^{-1}\E{L}=\E{L}$.  If $m_0$ is maximal out of the terms with $c_m\neq 0$, then the coefficient of $z^{m_0}$ in $\DT^{-1}\E{L}$ is $c_{m_0}$.  Since $\DT^{-1}\E{L}$ is positive, we must have $c_{m_0}|_{t=1}\in \bb{Z}_{\geq 1}$, hence $m_0=g(L)$ and $c_{m_0}|_{t=1}=1$.  Bar-invariance then implies $c_{m_0}=1$. Therefore, the decomposition must take the form $\DT^{-1}\E{L}=\E{L}+\sum_{m\prec_{\sd} g(L)} c_m \vartheta_m$. In particular, $\DT^{-1}\E{L}$ has the highest Laurent degree $g(L)$ with coefficient $1$.
\end{proof}

		\bibliographystyle{amsalphaURL2}		
		\bibliography{biblio,referenceEprint}        

		\index{Bibliography@\emph{Bibliography}}

	\end{document}